\documentclass[a4paper,12pt]{amsart}
\pdfoutput=1
\usepackage{mathmode}
\usepackage{tikz,tikz-cd}
\usepackage[hmargin=1.29in,vmargin=1.37in]{geometry}
\usepackage{graphicx,array,subfigure}
\usepackage{stackengine,scalerel}
\usepackage{amsthm}
\usepackage{enumitem}
\usepackage{mathtools}
 \usepackage{multirow}
\usepackage[T2A]{fontenc}
\usepackage[utf8]{inputenc}
\usepackage{caption}
\usepackage{textcomp}

\NewDocumentCommand{\circledtext}{m}{%
  \raisebox{0.23ex}{%
    \begin{tikzpicture}[baseline=0pt]
      \node[circle,draw,inner sep=0pt,minimum size=0.85em,font=\tiny,line width=0.3pt,anchor=base] (char) {#1};
    \end{tikzpicture}%
  }%
}
\usepackage{pgffor}
\usepackage{float}
\floatstyle{plain} %
\newfloat{diagram}{tbp}{lod} %
\floatname{diagram}{Diagram} %

\edef\strudelccode{\the\catcode`\@}
\catcode`\@=11

\ifx\@gobble\undefined
    \long\def\@gobble#1{}
\fi

\def\m@strip{\expandafter\@gobble\string}

\ifx\@ifnextchar\undefined
    \def\@@ifnextchar{%
        \ifx\@reg@D\@reg@A %
            \expandafter\@reg@B%
        \else%
            \expandafter\@reg@C%
        \fi%
    }

    \long\def\@ifnextchar#1#2#3{%
        \let\@reg@A=#1\relax\def\@reg@B{#2}\def\@reg@C{#3}%
        \futurelet\@reg@D\@@ifnextchar%
    }
\fi

\ifx\@firstoftwo\undefined
    \long\def\@firstoftwo#1#2{#1}
    \long\def\@secondoftwo#1#2{#2}
\fi

\long\def\pdfmsym@afterfi#1#2\fi{\fi#1}

\def\pdfmsym@repeated#1#2{%
    \ifnum#1>0 %
        \pdfmsym@afterfi{#2\expandafter\pdfmsym@repeated\expandafter{\the\numexpr #1-1\relax}{#2}}%
    \fi%
}

\begingroup\lccode`\?=`\p \lccode`\!=`\t %
\lowercase{\endgroup
\def\@ignorept#1.#2?!{\ifnum#2=0 #1\else \ifnum#1=0 \expandafter\@remzero\fi#1.#2\fi}}
\def\@remzero#10{#1}
\def\@nopt#1{\expandafter\@ignorept\the\dimexpr #1\relax}

\def\pdf@literal#1{}

\unless\ifx\saveboxresource\undefined
    \let\pdfxform=\saveboxresource
    \let\pdfrefxform=\useboxresource
    \let\pdflastxform=\lastsavedboxresourceindex
\fi

\def\@normtrans/{0.996264 0 0 0.996264 0 0 cm}
\def\pdfmsymsettransforms{%
    \edef\@pdfmsym@ytrans/{0.996264 0 0 \@nopt{0.996264pt * \@font@scale} 0 0 cm}%
    \edef\@pdfmsym@trans/{\@nopt{0.996264pt * \@font@scale} 0 0 \@nopt{0.996264pt * \@font@scale} 0 0 cm}%
}

\def\pdfmsymsetscalefactor#1{%
    \edef\@font@scale{#1 / 10}%
    \pdfmsymsettransforms%
    \math@sym@defs%
}

\def\@@linehead@type#1#2#3#4[#5][#6]{\hbox to \dimexpr #4pt * #5 * \@font@scale\relax{%
    \pdf@literal{%
        q \@pdfmsym@trans/
        0 #6 0 0 #6 0 0 cm
        0 1 0 0 1 0 #2 cm
        1 j 1 J #1 w
        #3 Q%
    }\hss}%
}
\def\@linehead@type#1#2#3#4{%
    \@ifnextchar[ {\@@linehead@type{#3}{#4}{#1}{#2}}%
                  {\@@linehead@type{#3}{#4}{#1}{#2}[1][1]}%
}

\def\@emptylinehead{\@linehead@type{}{0}}

\def\@rarrow {\@linehead@type{0 0 m 2 0 l 1 .2 0 1.2 0 1.5 c 2 0 m 1 -.2 0 -1.2 0 -1.5 c S}{2}}
\def\@larrow {\@linehead@type{2 0 m 0 0 l 1 .2 2 1.2 2 1.5 c 0 0 m 1 -.2 2 -1.2 2 -1.5 c S}{2}}
\def\@rharp  {\@linehead@type{0 0 m 2 0 l 1 .2 0 1.2 0 1.5 c S}{2}}
\def\@lharp  {\@linehead@type{2 0 m 0 0 l 1 .2 2 1.2 2 1.5 c S}{2}}
\def\@rdharp {\@linehead@type{0 0 m 2 0 l 1 -.2 0 -1.2 0 -1.5 c S}{2}}
\def\@ldharp {\@linehead@type{2 0 m 0 0 l 1 -.2 2 -1.2 2 -1.5 c S}{2}}
\def\@linecap{\@linehead@type{0 0 m 1 0 l S}{1}}
\def\@mapcap {\@linehead@type{0 1.5 m 0 -1.5 l 0 0 m 1 0 l S}{1}}
\def\@mapsfromcap {\@linehead@type{1 1.5 m 1 -1.5 l 1 0 m 0 0 l S}{1}}
\def\@rsarrow{\@linehead@type{0 0 m 2 0 l 0 1 l 2 0 m 0 -1 l S}{2}}
\def\@lsarrow{\@linehead@type{2 0 m 0 0 l 2 1 l 0 0 m 2 -1 l S}{2}}
\def\@backuphook{\@linehead@type{1 2 m .5 2 0 1.5 0 1 c 0 .5 .5 0 1 0 c S}{1}}
\def\@frontuphook{\@linehead@type{0 2 m .5 2 1 1.5 1 1 c 1 .5 .5 0 0 0 c S}{1}}
\def\@backdownhook{\@linehead@type{1 0 m .5 0 0 -.5 0 -1 c 0 -1.5 .5 -2 1 -2 c S}{1}}
\def\@frontdownhook{\@linehead@type{0 0 m .5 0 1 -.5 1 -1 c 1 -1.5 .5 -2 0 -2 c S}{1}}
\def\@doublerarrow{\@linehead@type{0 0 m 2 0 l 1 .2 0 1.2 0 1.5 c 2 0 m 1 -.2 0 -1.2 0 -1.5 c 2 0 m 4 0 l 3 .2 2 1.2 2 1.5 c 4 0 m 3 -.2 2 -1.2 2 -1.5 c S}{4}}
\def\@doublelarrow{\@linehead@type{4 0 m 2 0 l 3 .2 4 1.2 4 1.5 c 2 0 m 3 -.2 4 -1.2 4 -1.5 c 2 0 m 0 0 l 1 .2 2 1.2 2 1.5 c 0 0 m 1 -.2 2 -1.2 2 -1.5 c S}{4}}
\def\@circlecap{\@linehead@type{0 0 m 0 .5 .5 1 1 1 c 1.5 1 2 .5 2 0 c 2 -.5 1.5 -1 1 -1 c .5 -1 0 -.5 0 0 c S}{2}}

\def\@Rarrow    {\@linehead@type{0 -1 m 1 -1 l 0 1 m 1 1 l 3 0 m 1.5 .3 0 2 0 2.5 c 3 0 m 1.5 -.3 0 -2 0 -2.5 c S}{3}}
\def\@Larrow    {\@linehead@type{3 -1 m 2 -1 l 3 1 m 2 1 l 0 0 m 1.5 .3 3 2 3 2.5 c 0 0 m 1.5 -.3 3 -2 3 -2.5 c S}{3}}
\def\@Linecap   {\@linehead@type{0 -1 m 1 -1 l 0 1 m 1 1 l S}{1}}
\def\@Rightcirclecap{\@linehead@type{0 1 m .5 1 1 .5 1 0 c 1 -.5 .5 -1 0 -1 c S}{1}}
\def\@Leftcirclecap{\@linehead@type{1 1 m .5 1 0 .5 0 0 c 0 -.5 .5 -1 1 -1 c S}{1}}
\def\@Rightsquarecap{\@linehead@type{0 1 m 1 1 l 1 -1 l 0 -1 l S}{1}}
\def\@Leftsquarecap{\@linehead@type{1 1 m 0 1 l 0 -1 l 1 -1 l S}{1}}
\def\@Rightribboncap{\@linehead@type{0 1 m 2 1 l 0 0 l 2 -1 l 0 -1 l S}{2}}
\def\@Leftribboncap{\@linehead@type{1.5 1 m 0 1 l 1.5 0 l 0 -1 l 1.5 -1 l S}{1.5}}
\def\@BigLinecap{\@linehead@type{0 -1.5 m 1 -1.5 l 0 1.5 m 1 1.5 l S}{1}}
\def\@Rightarrows{\@linehead@type{0 -1.5 m 2 -1.5 l 1 -1.3 0 -.5 0 0 c 2 -1.5 m 1 -1.7 0 -2.5 0 -3 c 0 1.5 m 2 1.5 l 1 1.7 0 2.5 0 3 c 2 1.5 m 1 1.3 0 .5 0 0 c S}{2}}
\def\@Leftarrows {\@linehead@type{2 -1.5 m 0 -1.5 l 1 -1.3 2 -.5 2 0 c 0 -1.5 m 1 -1.7 2 -2.5 2 -3 c 2 1.5 m 0 1.5 l 1 1.7 2 2.5 2 3 c 0 1.5 m 1 1.3 2 .5 2 0 c S}{2}}
\def\@Rightunderarrow{\@linehead@type{0 -1.5 m 2 -1.5 l 1 -1.3 0 -.5 0 0 c 2 -1.5 m 1 -1.7 0 -2.5 0 -3 c 0 1.5 m 2 1.5 l S}{2}}
\def\@Leftunderarrow{\@linehead@type{2 -1.5 m 0 -1.5 l 1 -1.3 2 -.5 2 0 c 0 -1.5 m 1 -1.7 2 -2.5 2 -3 c 2 1.5 m 0 1.5 l S}{2}}
\def\@Rightoverarrow{\@linehead@type{0 1.5 m 2 1.5 l 1 1.7 0 2.5 0 3 c 2 1.5 m 1 1.3 0 .5 0 0 c 0 -1.5 m 2 -1.5 l S}{2}}
\def\@Leftoverarrow{\@linehead@type{2 1.5 m 0 1.5 l 1 1.7 2 2.5 2 3 c 0 1.5 m 1 1.3 2 .5 2 0 c 2 -1.5 m 0 -1.5 l S}{2}}
\def\@Circlescap{\@linehead@type{0 -1 m 0 -.5 .5 0 1 0 c 1.5 0 2 -.5 2 -1 c 2 -1.5 1.5 -2 1 -2 c .5 -2 0 -1.5 0 -1 c
    0 1 m 0 1.5 .5 2 1 2 c 1.5 2 2 1.5 2 1 c 2 .5 1.5 0 1 0 c .5 0 0 .5 0 1 c S}{2}}

\def\@TripleLinecap{\@linehead@type{0 -2 m 1 -2 l 0 0 m 1 0 l 0 2 m 1 2 l S}{1}}
\def\@TripleRarrow {\@linehead@type{0 -2 m .7 -2 l 0 0 m 3.5 0 l 0 2 m .7 2 l 3.5 0 m 2 .5 .5 2 .2 3.2 c 3.5 0 m 2 -.5 .5 -2 .2 -3.2 c S}{3.5}}
\def\@TripleLarrow {\@linehead@type{3.5 -2 m 2.8 -2 l 3.5 0 m 0 0 l 3.5 2 m 2.8 2 l 0 0 m 1.5 .5 3 2 3.3 3.2 c 0 0 m 1.5 -.5 3 -2 3.3 -3.2 c S}{3.5}}

\def\vecc@w{.4} \def\vecc@hw{.2}  \def\vecc@skew{2.5}
\def\vecc@displaystyle@s{1}       \def\vecc@displaystyle@sf{1}
\def\vecc@textstyle@s{1}          \def\vecc@textstyle@sf{1}
\def\vecc@scriptstyle@s{.8}       \def\vecc@scriptstyle@sf{8 / 10}
\def\vecc@scriptscriptstyle@s{.6} \def\vecc@scriptscriptstyle@sf{6 / 10}
\def\vecc@skip{.4}

\def\@@@vecc@type#1#2#3#4#5#6{%
    \vbox{\offinterlineskip%
        \ialign{##\cr
            $\m@th\mkern\vecc@skew mu%
            #2{\vecc@w}{\vecc@hw}[#4][#5]%
            \cleaders\hrule height \dimexpr \vecc@w pt * #4 * \@font@scale\relax\hfill%
            #3{\vecc@w}{\vecc@hw}[#4][#5]%
            \mkern\vecc@skew mu$\cr\noalign{\kern\dimexpr\vecc@skip ex * #4\relax}
            $\m@th#6#1$\cr%
       }%
    }%
}

\def\@@vecc@type#1#2{\@@@vecc@type#2{\csname vecc@\m@strip#1@sf\endcsname}{\csname vecc@\m@strip#1@s\endcsname}{#1}}
\def\@vecc@type#1#2#3{\mathpalette\@@vecc@type{{#1}{#2}{#3}}}

\def\@@@undervecc@type#1#2#3#4#5#6{%
    \vtop{\offinterlineskip%
        \ialign{##\cr
            $\m@th#6#1$\cr\noalign{\kern\dimexpr\vecc@skip ex * #4\relax}%
            $\m@th\mkern\vecc@skew mu%
            #2{\vecc@w}{\vecc@hw}[#4][#5]%
            \cleaders\hrule height \dimexpr \vecc@w pt * #4 * \@font@scale\relax\hfill%
            #3{\vecc@w}{\vecc@hw}[#4][#5]%
            \mkern\vecc@skew mu$\cr%
       }%
    }%
}

\def\@@undervecc@type#1#2{\@@@undervecc@type#2{\csname vecc@\m@strip#1@sf\endcsname}{\csname vecc@\m@strip#1@s\endcsname}{#1}}
\def\@undervecc@type#1#2#3{\mathpalette\@@undervecc@type{{#1}{#2}{#3}}}

\def\@vecc@def#1#2#3{%
    \expandafter\def\csname #1\endcsname##1{\@vecc@type{##1}{#2}{#3}}%
    \expandafter\def\csname short#1\endcsname##1{{\def\vecc@skew{0}\csname #1\endcsname{##1}}}%
}

\def\@undervecc@def#1#2#3{%
    \expandafter\def\csname #1\endcsname##1{\@undervecc@type{##1}{#2}{#3}}%
    \expandafter\def\csname short#1\endcsname##1{{\def\vecc@skew{0}\csname #1\endcsname{##1}}}%
}

\def\@overunder@def#1#2#3{\@vecc@def{over#1}{#2}{#3}\@undervecc@def{under#1}{#2}{#3}}

\@vecc@def{vecc}\@linecap\@rarrow
\@vecc@def{lvecc}\@larrow\@linecap
\@vecc@def{straightvecc}\@linecap\@rsarrow
\@vecc@def{straightlvecc}\@lsarrow\@linecap
\@undervecc@def{undervecc}\@linecap\@rarrow
\@undervecc@def{underlvecc}\@larrow\@linecap
\@undervecc@def{understraightvecc}\@linecap\@rsarrow
\@undervecc@def{understraightlvecc}\@lsarrow\@linecap

\@overunder@def{rightharp}\@linecap\@rharp
\@overunder@def{leftharp}\@lharp\@linecap
\@overunder@def{leftrightvecc}\@larrow\@rarrow
\@overunder@def{leftrightharp}\@lharp\@rdharp
\@overunder@def{rightleftharp}\@ldharp\@rharp

\unless\ifx\pdfxform\undefined 
    \def\@@constvec#1#2#3{{%
        \setbox1=\hbox{$\m@th#1#3$}%
        \setbox0=\hbox{$\m@th#1\smash{#3}\vrule height1.3ex width0pt depth\dimexpr\dp1+1pt\relax$}%
        \pdfxform0%
        #2{\smash{\pdfrefxform\pdflastxform}\vphantom{x}}%
    }}
    \def\@constvec#1#2{\@@constvec#1#2}
    \def\constvec#1#2{\mathpalette\@constvec{{#1}{#2}}}
\fi

\def\arrow@skip{\mkern1mu}
\def\xarrow@buffer{1}
\def\@@@xarrow@type#1#2#3#4#5#6#7{\mathrel{\arrow@skip%
    \vcenter{\hbox{$\m@th#7%
        #1{\vecc@w}{\vecc@hw}[#5][#6]%
        \vrule width\dimexpr \xarrow@buffer pt * \@font@scale\relax height \dimexpr \vecc@w pt * #5 * \@font@scale\relax depth0pt%
        \smash{\mathord{\mathop{\kern\z@\leaders\hrule height \dimexpr \vecc@w pt * #5 * \@font@scale\relax\hfill}\limits^{#3}_{#4}}}%
        \vrule width\dimexpr \xarrow@buffer pt * \@font@scale\relax height \dimexpr \vecc@w pt * #5 * \@font@scale\relax depth0pt%
        #2{\vecc@w}{\vecc@hw}[#5][#6]%
    $}}%
    \vphantom{\mathop{-}^{#3}_{#4}}
    \arrow@skip%
}}

\def\@@xarrow@type#1#2{\@@@xarrow@type#2{\csname vecc@\m@strip#1@sf\endcsname}{\csname vecc@\m@strip#1@s\endcsname}{#1}}
\def\@xarrow@type#1#2#3#4{\mathpalette\@@xarrow@type{{#1}{#2}{#3}{#4}}}

\def\@@@arrow@type#1#2#3#4#5#6{\mathrel{\arrow@skip%
    \vcenter{\hbox{$\m@th#6%
        #1{\vecc@w}{\vecc@hw}[#4][#5]%
        \vrule width \dimexpr #3pt * #4 * \@font@scale\relax height \dimexpr \vecc@w pt * #4 * \@font@scale\relax depth 0pt%
        #2{\vecc@w}{\vecc@hw}[#4][#5]%
    $}}\arrow@skip%
}}

\def\@@arrow@type#1#2{\@@@arrow@type#2{\csname vecc@\m@strip#1@sf\endcsname}{\csname vecc@\m@strip#1@s\endcsname}{#1}}

\def\@arrow@type#1#2#3{\mathpalette\@@arrow@type{{#1}{#2}{#3}}}

\def\arrow@len{6}
\def\long@arrow@len{12}
\def\@arrow@def#1#2#3{%
    \expandafter\def\csname #1\endcsname{\@arrow@type{#2}{#3}{\arrow@len}}%
    \expandafter\def\csname long#1\endcsname{\@arrow@type{#2}{#3}{\long@arrow@len}}%
    \expandafter\def\csname @x#1\endcsname##1[##2]{\@xarrow@type{#2}{#3}{##1}{##2}}%
    \expandafter\def\csname x#1\endcsname##1{%
        \@ifnextchar[ {\csname @x#1\endcsname{##1}}%
                      {\csname @x#1\endcsname{##1}[]}%
    }%
}

\@arrow@def{varrightarrow}\@linecap\@rarrow
\@arrow@def{varleftarrow}\@larrow\@linecap
\@arrow@def{varrightharp}\@linecap\@rharp
\@arrow@def{varleftharp}\@lharp\@linecap
\@arrow@def{varleftrightarrow}\@larrow\@rarrow
\@arrow@def{varleftrightharp}\@lharp\@rdharp
\@arrow@def{varrightleftharp}\@ldharp\@rharp
\@arrow@def{varmapsto}\@mapcap\@rarrow
\@arrow@def{varmapsfrom}\@larrow\@mapsfromcap
\@arrow@def{varuphookrightarrow}\@backuphook\@rarrow
\@arrow@def{varuphookleftarrow}\@larrow\@frontuphook
\@arrow@def{vardownhookrightarrow}\@backdownhook\@rarrow
\@arrow@def{vardownhookleftarrow}\@larrow\@frontdownhook
\@arrow@def{vardoublerightarrow}\@linecap\@doublerarrow
\@arrow@def{vardoubleleftarrow}\@doublelarrow\@linecap
\@arrow@def{varcirclerightarrow}\@circlecap\@rarrow
\@arrow@def{varcircleleftarrow}\@larrow\@circlecap

\def\@subscriptconvdef#1#2{\expandafter\let\csname \m@strip#1@subconv\endcsname=#2}
\def\@subscriptconv#1{\csname \m@strip#1@subconv\endcsname}
\@subscriptconvdef\displaystyle\scriptstyle
\@subscriptconvdef\textstyle\scriptstyle
\@subscriptconvdef\scriptstyle\scriptscriptstyle
\@subscriptconvdef\scriptscriptstyle\scriptscriptstyle

\def\@@Arrow@rule#1#2#3{%
    \lower\dimexpr #2 * #3 * \@font@scale\relax
    \vbox{%
        \hrule width #1 height \dimexpr \vecc@w pt * #3 * \@font@scale\relax depth 0pt%
        \kern\dimexpr #2 * #3 * \@font@scale * 2 - \vecc@w pt * #3 * \@font@scale\relax%
        \hrule width #1 height \dimexpr \vecc@w pt * #3 * \@font@scale\relax depth 0pt%
    }%
}

\def\@Arrow@rule#1#2#3{%
    \@@Arrow@rule{\dimexpr #1pt * #3 * \@font@scale\relax}{#2}{#3}%
}

\def\@@@xArrow@type#1#2#3#4#5#6#7#8{\mathrel{\arrow@skip%
    {\setbox0=\hbox{$\m@th#8-$}\raise.5\dimexpr\ht0-\dp0-(\vecc@w pt * #6 * \@font@scale)\relax\hbox{$\m@th#8%
        \setbox0=\hbox{$\m@th\@subscriptconv#8#5$}%
        #1{\vecc@w}{\vecc@hw}[#6][#7]%
        \@Arrow@rule{\xarrow@buffer}{#3 pt}{#6}%
        \lower\dimexpr (#3 pt + \vecc@skip ex) * #6 * \@font@scale+\ht0\relax
        \vbox{\offinterlineskip%
            \ialign{\hfil##\hfil\cr
                $\m@th\@subscriptconv#8#4$\cr\noalign{\kern\dimexpr\vecc@skip ex * #6 * \@font@scale\relax}%
                \leaders\hrule height \dimexpr \vecc@w pt * #6 * \@font@scale\relax\hfill\cr%
                \noalign{\kern\dimexpr #3 pt * #6 * \@font@scale * 2 - \vecc@w pt * #6 * \@font@scale\relax}%
                \leaders\hrule height \dimexpr \vecc@w pt * #6 * \@font@scale\relax\hfill\cr\noalign{\kern\dimexpr\vecc@skip ex * #6 * \@font@scale\relax}%
                $\m@th\@subscriptconv#8#5$\cr%
            }%
        }%
        \@Arrow@rule{\xarrow@buffer}{#3 pt}{#6}%
        #2{\vecc@w}{\vecc@hw}[#6][#7]%
    $}}%
    \arrow@skip%
}}

\def\@@xArrow@type#1#2{\@@@xArrow@type#2{\csname vecc@\m@strip#1@sf\endcsname}{\csname vecc@\m@strip#1@s\endcsname}{#1}}
\def\@xArrow@type#1#2#3#4#5{\mathpalette\@@xArrow@type{{#1}{#2}{#3}{#4}{#5}}}

\def\@@@Arrow@type#1#2#3#4#5#6#7{\mathrel{\arrow@skip%
    \vcenter{\hbox{$\m@th#7%
        #1{\vecc@w}{\vecc@hw}[#5][#6]%
        \@Arrow@rule{#3}{#4pt}{#5}%
        #2{\vecc@w}{\vecc@hw}[#5][#6]%
    $}}\arrow@skip%
}}

\def\@@Arrow@type#1#2{\@@@Arrow@type#2{\csname vecc@\m@strip#1@sf\endcsname}{\csname vecc@\m@strip#1@s\endcsname}{#1}}
\def\@Arrow@type#1#2#3#4{\mathpalette\@@Arrow@type{{#1}{#2}{#3}{#4}}}

\def\Arrow@len{5}
\def\long@Arrow@len{9}
\def\@Arrow@def#1#2#3#4{%
    \expandafter\def\csname #1\endcsname{\@Arrow@type{#2}{#3}{\Arrow@len}{#4}}%
    \expandafter\def\csname long#1\endcsname{\@Arrow@type{#2}{#3}{\long@Arrow@len}{#4}}%
    \expandafter\def\csname @x#1\endcsname##1[##2]{\@xArrow@type{#2}{#3}{#4}{##1}{##2}}%
    \expandafter\def\csname x#1\endcsname##1{%
        \@ifnextchar[ {\csname @x#1\endcsname{##1}}%
                      {\csname @x#1\endcsname{##1}[]}%
    }%
}

\@Arrow@def{varRightarrow}\@Linecap\@Rarrow{1}
\@Arrow@def{varLeftarrow}\@Larrow\@Linecap{1}
\@Arrow@def{varCirclerightarrow}\@Leftcirclecap\@Rarrow{1}
\@Arrow@def{varCircleleftarrow}\@Larrow\@Rightcirclecap{1}
\@Arrow@def{varSquarerightarrow}\@Leftsquarecap\@Rarrow{1}
\@Arrow@def{varSquareleftarrow}\@Larrow\@Rightsquarecap{1}
\@Arrow@def{varRibbonrightarrow}\@Leftribboncap\@Rarrow{1}
\@Arrow@def{varRibbonleftarrow}\@Larrow\@Rightribboncap{1}
\@Arrow@def{roundedarrow}\@Leftcirclecap\@Rightcirclecap{1}
\@Arrow@def{squaredarrow}\@Leftsquarecap\@Rightsquarecap{1}
\@Arrow@def{varrightarrows}\@BigLinecap\@Rightarrows{1.5}
\@Arrow@def{varleftarrows}\@Leftarrows\@BigLinecap{1.5}
\@Arrow@def{varrightleftarrows}\@Leftunderarrow\@Rightoverarrow{1.5}
\@Arrow@def{varleftrightarrows}\@Leftoverarrow\@Rightunderarrow{1.5}
\@Arrow@def{rightPP}\@Circlescap\@Rightcirclecap{1}
\@Arrow@def{leftPP}\@Leftcirclecap\@Circlescap{1}

\def\@multi@Arrow@rule#1#2#3#4{%
    \lower\dimexpr #3 * #4 * \@font@scale * (#1 - 1)\relax
    \vbox{%
        \pdfmsym@repeated{\numexpr #1 - 1\relax}{%
            \hrule width \dimexpr #2pt * #4 * \@font@scale\relax height \dimexpr \vecc@w pt * #4 * \@font@scale\relax depth\z@%
            \kern\dimexpr #3 * #4 * \@font@scale * 2 - \vecc@w pt * #4 * \@font@scale\relax%
        }%
        \hrule width \dimexpr #2pt * #4 * \@font@scale\relax height \dimexpr \vecc@w pt * #4 * \@font@scale\relax depth\z@%
    }%
}

\def\@@@multi@xArrow@type#1#2#3#4#5#6#7#8#9{\mathrel{\arrow@skip%
    {\setbox0=\hbox{$\m@th#9-$}\raise.5\dimexpr\ht0-\dp0-(\vecc@w pt * #7 * \@font@scale)\relax%
    \hbox{$\m@th#9%
        \setbox0=\hbox{$\m@th\@subscriptconv#9#6$}%
        #2{\vecc@w}{\vecc@hw}[#7][#8]%
        \@multi@Arrow@rule{#1}{\xarrow@buffer}{#4pt}{#7}%
        \lower\dimexpr (#4pt * (#1 - 1) + \vecc@skip ex) * #7 * \@font@scale +\ht0\relax
        \vbox{\offinterlineskip%
            \ialign{\hfil##\hfil\cr%
                $\m@th\@subscriptconv#9#5$\cr%
                \noalign{\kern\dimexpr\vecc@skip ex * #7 * \@font@scale\relax}%
                \pdfmsym@repeated{\numexpr #1-1\relax}{%
                    \leaders\hrule height \dimexpr \vecc@w pt * #7 * \@font@scale\relax\hfill\cr%
                    \noalign{\kern\dimexpr #4pt * #7 * \@font@scale * 2 - \vecc@w pt * #7 * \@font@scale\relax}%
                }%
                \leaders\hrule height \dimexpr \vecc@w pt * #7 * \@font@scale\relax\hfill\cr
                \noalign{\kern\dimexpr\vecc@skip ex * #7 * \@font@scale\relax}%
                $\m@th\@subscriptconv#9#6$\cr%
            }%
        }%
        \@multi@Arrow@rule{#1}{\xarrow@buffer}{#4pt}{#7}%
        #3{\vecc@w}{\vecc@hw}[#7][#8]%
    $}}%
    \arrow@skip%
}}

\def\@@multi@xArrow@type#1#2{\@@@multi@xArrow@type#2{\csname vecc@\m@strip#1@sf\endcsname}{\csname vecc@\m@strip#1@s\endcsname}{#1}}
\def\@multi@xArrow@type#1#2#3#4#5#6{\mathpalette\@@multi@xArrow@type{{#1}{#2}{#3}{#4}{#5}{#6}}}

\def\@@@multi@Arrow@type#1#2#3#4#5#6#7#8{\mathrel{\arrow@skip%
    \vcenter{\hbox{$\m@th#8%
        #2{\vecc@w}{\vecc@hw}[#6][#7]%
        \@multi@Arrow@rule{#1}{#4}{#5pt}{#6}%
        #3{\vecc@w}{\vecc@hw}[#6][#7]%
    $}}\arrow@skip%
}}

\def\@@multi@Arrow@type#1#2{\@@@multi@Arrow@type#2{\csname vecc@\m@strip#1@sf\endcsname}{\csname vecc@\m@strip#1@s\endcsname}{#1}}
\def\@multi@Arrow@type#1#2#3#4#5{\mathpalette\@@multi@Arrow@type{{#1}{#2}{#3}{#4}{#5}}}

\def\@multi@Arrow@def#1#2#3#4#5{%
    \expandafter\def\csname #1\endcsname{\@multi@Arrow@type{#2}{#3}{#4}{\Arrow@len}{#5}}%
    \expandafter\def\csname long#1\endcsname{\@multi@Arrow@type{#2}{#3}{#4}{\long@Arrow@len}{#5}}%
    \expandafter\def\csname @x#1\endcsname##1[##2]{\@multi@xArrow@type{#2}{#3}{#4}{#5}{##1}{##2}}%
    \expandafter\def\csname x#1\endcsname##1{%
        \@ifnextchar[ {\csname @x#1\endcsname{##1}}%
                      {\csname @x#1\endcsname{##1}[]}%
    }%
}

\@multi@Arrow@def{varRrightarrow}{3}\@TripleLinecap\@TripleRarrow{1}
\@multi@Arrow@def{varLleftarrow}{3}\@TripleLarrow\@TripleLinecap{1}
\@multi@Arrow@def{varLleftRrightarrow}{3}\@TripleLarrow\@TripleRarrow{1}

\def\accent@skew{.4}
\def\accent@raise{.25}
\def\@@@wide@accent#1#2#3{{%
    \setbox0=\hbox{$\m@th#3#1$}%
    \vbox{\offinterlineskip%
        \ialign{##\cr
            \hbox to \wd0{\hskip\dimexpr \accent@skew ex * \@font@scale\relax%
                \pdf@literal{%
                    q \@pdfmsym@ytrans/
                    \@nopt{\wd0 - \accent@skew ex * \@font@scale} 0 0 1 0 -.5 cm
                    #2
                Q}%
                \hss%
            }\cr\noalign{\kern\accent@raise ex\relax}%
            \box0\cr%
        }%
    }%
}}

\def\@@wide@accent#1#2{\@@@wide@accent#2#1}
\def\@wide@accent#1#2{\mathpalette\@@wide@accent{{#2}{#1}}}

\def\varwidecheck{\@wide@accent{0 1.3 m .5 -.4 l 1 1.3 l 1 1.6 l .5 .3 l 0 1.6 l f}}
\def\varwidehat{\@wide@accent{0 0 m .5 .9 l 1 0 l 1 .3 l .5 1.6 l 0 .3 l f}}
\def\varwidetilde{\@wide@accent{0 0 m .25 1.5 .45 1.1 .5 1 c .55 .9 .75 0 1 1.5 c 1 1.75 l .75 .5 .55 1.4 .5 1.5 c .45 1.6 .25 2 0 .25 c f}}

\def\@skewedlim@op@done#1#2#3#4#5{#1^{\mkern-#2mu#4}_{\mkern-#3mu#5}}
\def\@skewedlim@op@supsub#1#2#3#4_#5{\@skewedlim@op@done{#1}{#2}{#3}{#4}{#5}}
\def\@skewedlim@op@subsup#1#2#3#4^#5{\@skewedlim@op@done{#1}{#2}{#3}{#5}{#4}}
\def\@skewedlim@op@sub#1#2#3#4#5{%
    \@ifnextchar^ {\@skewedlim@op@subsup{#1}{#2}{#3}{#5}}%
                  {\@skewedlim@op@done{#1}{#2}{#3}{}{#5}}%
}
\def\@skewedlim@op@sup#1#2#3#4#5{%
    \@ifnextchar_ {\@skewedlim@op@supsub{#1}{#2}{#3}{#5}}%
                  {\@skewedlim@op@done{#1}{#2}{#3}{#5}{}}%
}
\def\@skewedlim@op@nsup#1#2#3{%
    \@ifnextchar_ {\@skewedlim@op@sub{#1}{#2}{#3}}%
                  {\@skewedlim@op@done{#1}{#2}{#3}{}{}}%
}
\def\@@skewedlim@op#1#2#3{%
    \@ifnextchar^ {\@skewedlim@op@sup{#1}{#2}{#3}}%
                  {\@skewedlim@op@nsup{#1}{#2}{#3}}%
}

\def\@skewedlim@op@limits{%
    \ifx\@skewedlim@char\nolimits%
        \pdfmsym@afterfi{\@@skewedlim@op{\@skewedlim@atom\nolimits}{\@skewedlim@nolim@supskew}{\@skewedlim@nolim@subskew}}%
    \else%
        \pdfmsym@afterfi{\ifx\@skewedlim@char\limits%
            \pdfmsym@afterfi{\@@skewedlim@op{\@skewedlim@atom\limits}{\@skewedlim@lim@supskew}{\@skewedlim@lim@subskew}}%
        \else%
            \pdfmsym@afterfi{\@@skewedlim@op{\@skewedlim@atom\@skewedlim@preflim}{\@skewedlim@preflim@supskew}{\@skewedlim@preflim@subskew}\@skewedlim@char}%
        \fi}%
    \fi%
}

\def\@skewedlim@op#1#2#3#4#5#6#7#8{%
    \def\@skewedlim@atom{#1}%
    \def\@skewedlim@nolim@supskew{#2}\def\@skewedlim@nolim@subskew{#3}%
    \def\@skewedlim@lim@supskew{#4}\def\@skewedlim@lim@subskew{#5}%
    \def\@skewedlim@preflim@supskew{#6}\def\@skewedlim@preflim@subskew{#7}%
    \def\@skewedlim@preflim{#8}%
    \afterassignment\@skewedlim@op@limits%
    \let\@skewedlim@char=%
}

\def\exsym@displaystyle@s{1}                \def\exsym@displaystyle@sf{1}
\def\exsym@textstyle@s{.7}                  \def\exsym@textstyle@sf{7 / 10}
\def\exsym@scriptstyle@s{.7}                \def\exsym@scriptstyle@sf{7 / 10}
\def\exsym@scriptscriptstyle@s{.6}          \def\exsym@scriptscriptstyle@sf{6 / 10}
\def\@@putexsym#1#2#3#4#5#6#7#8{{%
    \setbox0=\hbox{$\m@th#8#1$}%
    \setbox1=\hbox{$\m@th#2{\vecc@w}{\vecc@hw}[#6][#7]$}%
    \kern\dimexpr\wd1 - #5 pt * #6 * \@font@scale\relax\rlap{$\m@th#8#1$}\kern-\dimexpr\wd1 - #5pt * #6 * \@font@scale\relax%
    \raise.5\dimexpr\ht0-\dp0\relax
    \vbox{\hbox{$\m@th#8#2{\vecc@w}{\vecc@hw}[#6][#7]\@@Arrow@rule{\dimexpr\wd0- #5 pt * #6 * \@font@scale * 2\relax}{#4 pt}{#6}#3{\vecc@w}{\vecc@hw}[#6][#7]$}}%
}}

\def\@putexsym#1#2{\@@putexsym#2{\csname exsym@\m@strip#1@sf\endcsname}{\csname exsym@\m@strip#1@s\endcsname}{#1}}
\def\putexsym#1#2#3#4#5{\mathpalette\@putexsym{{#1}{#2}{#3}{#4}{#5}}}

\def\@BigRightcirclecap{\@linehead@type{0 2.5 m 1 2.5 2 1.25 2 0 c 2 -1.25 1 -2.5 0 -2.5 c S}{2}}
\def\@BigLeftcirclecap{\@linehead@type {2 2.5 m 1 2.5 0 1.25 0 0 c 0 -1.25 1 -2.5 2 -2.5 c S}{2}}
\def\@BigRightsquarecap{\@linehead@type {0 2.5 m 1 2.5 l 1 -2.5 l 0 -2.5 l S}{1}}
\def\@BigLeftsquarecap{\@linehead@type{1 2.5 m 0 2.5 l 0 -2.5 l 1 -2.5 l S}{1}}
\def\@BigCirclescap{\@linehead@type{0 -2 m 0 -1 1 0 2 0 c 3 0 4 -1 4 -2 c 4 -3 3 -4 2 -4 c 1 -4 0 -3 0 -2 c
    0 2 m 0 3 1 4 2 4 c 3 4 4 3 4 2 c 4 1 3 0 2 0 c 1 0 0 1 0 2 c S}{4}}

\def\iNint@kern@{\mkern-10mu\mathchoice{\mkern-5mu}{}{}{}}
\def\@iNint#1{\pdfmsym@repeated{\numexpr #1-1\relax}{\int\iNint@kern@}\int}
\def\@oiNint#1{\putexsym{\@iNint{#1}}\@BigLeftcirclecap\@BigRightcirclecap{2.5}{4}}
\def\@biNint#1{\putexsym{\@iNint{#1}}\@BigLeftsquarecap\@BigRightsquarecap{2.5}{4}}

\def\oiNint#1{\@skewedlim@op{\mathop{\@oiNint{#1}}}{-4}{6}{-20}{20}{-4}{6}\nolimits}
\def\biNint#1{\@skewedlim@op{\mathop{\@biNint{#1}}}{-6}{6}{-20}{20}{-6}{6}\nolimits}
\def\iNint#1{\@skewedlim@op{\mathop{\@iNint{#1}}}{0}{0}{-20}{20}{0}{13}\nolimits}

\def\@@putsym#1#2#3{{\setbox0=\hbox{$\m@th#1#2$}\rlap{\hbox to \wd0{\hss$\m@th#1#3$\hss}}}}
\def\@putsym#1#2{\@@putsym#1#2}
\def\putsym#1#2{\mathpalette\@putsym{{#1}{#2}}\mathopen{}#1}

\long\def\@pdf@drawing@macro@def#1#2#3#4#5#6{%
    \hbox to \dimexpr#3 * \@font@scale\relax{\hskip\dimexpr#6 * \@font@scale\relax%
    \vrule width0pt height0pt depth\dimexpr#5 * \@font@scale\relax%
    \vbox to \dimexpr#4 * \@font@scale\relax{\vss\pdf@literal{q \@pdfmsym@trans/ #2 Q}}\hss}%
}

\unless\ifx\pdfxform\undefined 
    \long\def\pdf@drawing@macro@def#1#2#3#4#5#6{{%
        \setbox0=\@pdf@drawing@macro@def{#1}{#2}{#3}{#4}{#5}{#6}%
        \pdfxform0%
        \expandafter\xdef\csname @#1@xformno\endcsname{\the\pdflastxform}%
    }}

    \long\def\pdf@drawing@macro#1#2#3#4#5#6{%
        \pdf@drawing@macro@def{#1}{#2}{#3}{#4}{#5}{#6}%
        \expandafter\edef\csname #1\endcsname{\noexpand\leavevmode\noexpand\pdfrefxform\csname @#1@xformno\endcsname}%
    }

    \def\math@drawing@get#1#2{\expandafter\pdfrefxform\csname @#1@\m@strip#2@xformno\endcsname}
\else
    \long\def\pdf@drawing@macro#1#2#3#4#5#6{%
        \expandafter\def\csname #1\endcsname{\@pdf@drawing@macro@def{#1}{#2}{#3}{#4}{#5}{#6}}%
    }
    \let\pdf@drawing@macro@def=\pdf@drawing@macro

    \def\math@drawing@get#1#2{\csname #1@\m@strip#2\endcsname}
\fi

\long\def\pdf@drawing@math@macro#1#2#3#4#5#6#7#8#9{%
    \pdf@drawing@macro@def{#1@displaystyle}{#2}{#3}{#4}{#5}{#6}%
    \pdf@drawing@macro@def{#1@textstyle}
        {\@secondoftwo#7 0 0 \@secondoftwo#7 0 0 cm #2}
        {#3 * \@firstoftwo#7}
        {#4 * \@firstoftwo#7}
        {#5 * \@firstoftwo#7}
        {#6 * \@firstoftwo#7}%
    \pdf@drawing@macro@def{#1@scriptstyle}
        {\@secondoftwo#8 0 0 \@secondoftwo#8 0 0 cm #2}
        {#3 * \@firstoftwo#8}
        {#4 * \@firstoftwo#8}
        {#5 * \@firstoftwo#8}
        {#6 * \@firstoftwo#8}%
    \pdf@drawing@macro@def{#1@scriptscriptstyle}
        {\@secondoftwo#9 0 0 \@secondoftwo#9 0 0 cm #2}
        {#3 * \@firstoftwo#9}
        {#4 * \@firstoftwo#9}
        {#5 * \@firstoftwo#9}
        {#6 * \@firstoftwo#9}%
    \expandafter\def\csname @#1@\endcsname##1##2{\math@drawing@get{#1}{##1}}%
    \expandafter\def\csname #1\endcsname{\expandafter\mathpalette\csname @#1@\endcsname{}}%
    \expandafter\def\csname @center@#1@\endcsname##1##2{\vcenter{\hbox{\math@drawing@get{#1}{##1}}}}%
    \expandafter\def\csname center@#1\endcsname{\expandafter\mathpalette\csname @center@#1@\endcsname{}}%
}

\def\math@sym@defs{%
    \pdf@drawing@macro{lightning}
        {.86603 -.5 .5 .86603 0 0 cm
        1 J 1 j .6 w
        -3 10 m -3 4.133975 l 0 5.866025 l 0 0 l -1.125 1.5 l 0 0 l 1.125 1.5 l S}
        {4.2pt}{10.5pt}{.5pt}{.9pt}
    \pdf@drawing@macro{(m<3y)}
{.2 w 0 0 0 RG q 1 .75 0 rg 1 J 1 j 12.5 18 m  16 22 l  16.35 22.3 17 19
 17.5 13 c  7.5 18 m  4 22 l  3.65 22.3 3 19 2.5 13 c  B Q q 1 .5 .5 rg 1 J 1 j
 12.2 16.7 m  15.7 20.7 l  16.05 21 16.7 17.7 17.2 11.7 c  7.8 16.7 m  4.3 20.7
 l  3.95 21 3.3 17.7 2.8 11.7 c  f Q q 19.0 10 m 19.0 14.97063 14.97063 19.0 10
 19.0 c 5.02937 19.0 1.0 14.97063 1.0 10 c 1.0 5.02937 5.02937 1.0 10 1.0 c 14.97063 
1.0 19.0 5.02937 19.0 10 c  W n q 1 .75 0 rg 19.0 10 m 19.0 14.97063 14.97063 
19.0 10 19.0 c 5.02937 19.0 1.0 14.97063 1.0 10 c 1.0 5.02937 5.02937 1.0 
10 1.0 c 14.97063 1.0 19.0 5.02937 19.0 10 c  f Q q 1 1 1 rg 7.75 12 m 7.75 12.82843 
6.9665 13.5 6 13.5 c 5.0335 13.5 4.25 12.82843 4.25 12 c 4.25 11.17157 5.0335 
10.5 6 10.5 c 6.9665 10.5 7.75 11.17157 7.75 12 c  15.75 12 m 15.75 12.82843 
14.9665 13.5 14 13.5 c 13.0335 13.5 12.25 12.82843 12.25 12 c 12.25 11.17157
 13.0335 10.5 14 10.5 c 14.9665 10.5 15.75 11.17157 15.75 12 c  B Q q .3 .1 0 rg
7.4 12 m 7.4 12.7732 6.7732 13.4 6 13.4 c 5.2268 13.4 4.6 12.7732 4.6 12 c 4.6
11.2268 5.2268 10.6 6 10.6 c 6.7732 10.6 7.4 11.2268 7.4 12 c  15.4 12 m 15.4
 12.7732 14.7732 13.4 14 13.4 c 13.2268 13.4 12.6 12.7732 12.6 12 c 12.6 11.2268 
13.2268 10.6 14 10.6 c 14.7732 10.6 15.4 11.2268 15.4 12 c  B Q q 0 0 0 rg 7.2 
12 m 7.2 12.66273 6.66273 13.2 6 13.2 c 5.33727 13.2 4.8 12.66273 4.8 12 c 4.8 
11.33727 5.33727 10.8 6 10.8 c 6.66273 10.8 7.2 11.33727 7.2 12 c  15.2 12 m 
15.2 12.66273 14.66273 13.2 14 13.2 c 13.33727 13.2 12.8 12.66273 12.8 12 c 12.8 
11.33727 13.33727 10.8 14 10.8 c 14.66273 10.8 15.2 11.33727 15.2 12 c  f Q q
 1 1 1 rg 1 j 8 21 m 9.5 15 9.5 10 v  9.5 9 8 6.5 v  7 3.5 10 3.75 v  13 
3.5 12 6.5 v  10.5 9 10.5 10 v  10.5 15 12 21 v  B Q q 0 0 0 rg 12.3 6.5 m 12.3
 7.32843 11.27026 8.0 10 8.0 c 8.72974 8.0 7.7 7.32843 7.7 6.5 c 7.7 5.67157 8.72974 
5.0 10 5.0 c 11.27026 5.0 12.3 5.67157 12.3 6.5 c  B Q q 1 j 1 J 10 6.5 m
  10 4.5 l  9.5 4 9 4 8.3 4.2 c  10 4.5 m  10.5 4 11 4 11.7 4.2 c  S Q 8.5 10 m
  8 7 l  11.5 10 m  12 7 l  S Q 19.0 10 m 19.0 14.97063 14.97063 19.0 10 19.0 c
 5.02937 19.0 1.0 14.97063 1.0 10 c 1.0 5.02937 5.02937 1.0 10 1.0 c 14.97063 1.0 
19.0 5.02937 19.0 10 c  S q 1 1 1 rg 1 j 1 J 7 1.5 m  8 0 10 -2 v  11 0 13 1.5
v  11.5 1.2 10.5 1.2 10 1.2 c  7 1.5 m  8.5 1.2 9.5 1.2 10 1.2 c  B* Q}
        {20pt}{22.4pt}{2.30pt}{0pt}
    \pdf@drawing@math@macro{@ndivs}
        {0.4 w 1 j
        0 3 m 5 7 l s
        1.3 w
        2.5 1 0 .1 re
        2.5 5 0 .1 re
        2.5 9 0 .1 re B}
        {5.4pt}{10pt}{0pt}{.2pt}
        {{1}{1}}{{7 / 10}{.7}}{{11 / 20}{.55}}
    \def\ndivs{\mathrel{\@ndivs}}
    \pdf@drawing@math@macro{@divs}
        {1.3 w 1 j
        2.5 1 0 .1 re
        2.5 5 0 .1 re
        2.5 9 0 .1 re B}
        {5.4pt}{10pt}{0pt}{.2pt}
        {{1}{1}}{{7 / 10}{.7}}{{11 / 20}{.55}}
    \def\divs{\mathrel{\@divs}}

    \pdf@drawing@math@macro{@bigforall}
        {.8 w 1 J 1 j
        0 9.5 m 6 -4 l 12 9.5 l
        2.8 4 m 9.2 4 l S}
        {14pt}{10.8pt}{4.8pt}{1pt}
        {{7 / 10}{.7}}{{5 / 10}{.5}}{{35 / 100}{.35}}
    \def\bigforall{\mathop{\@bigforall}}

    \pdf@drawing@math@macro{@bigexists}
        {.8 w 1 J 1 j
        0 9.5 m 8 9.5 l 8 -4.5 l 0 -4.5 l
        1 2.5 m 7.9 2.5 l S}
        {10pt}{10.8pt}{4.8pt}{1pt}
        {{7 / 10}{.7}}{{5 / 10}{.5}}{{35 / 100}{.35}}
    \def\bigexists{\mathop{\@bigexists}}

    \pdf@drawing@math@macro{@smallcircle}
        {.3 w 0 .5 m 0 .75 .25 1 .5 1 c .75 1 1 .75 1 .5 c 1 .25 .75 0 .5 0 c .25 0 0 .25 0 .5 c S}
        {1.6pt}{1.6pt}{.3pt}{.3pt}
        {{1}{1}}{{7 / 10}{.7}}{{6 / 10}{.6}}
    \let\smallcircle=\center@@smallcircle
}

\unless\ifx\pdfxform\undefined
    \def\@@slice#1{%
        \pdfxform\csname #1\endcsname%
        \global\setbox\csname #1\endcsname=\hbox{\pdfrefxform\pdflastxform}%
    }

    \def\@inner@newbox{\alloc@ 4\box \chardef \insc@unt} 

    \def\@slice#1#2#3#4{{%
        \setbox0=\hbox{#1}%
        \expandafter\@inner@newbox\csname #2@L\endcsname%
        \expandafter\@inner@newbox\csname #2@C\endcsname%
        \expandafter\@inner@newbox\csname #2@R\endcsname%
        \global\setbox\csname #2@L\endcsname=\hbox{\copy0 \kern\dimexpr #3\wd0 - \wd0\relax}%
        \global\setbox\csname #2@C\endcsname=\hbox{\kern-#3\wd0 \copy0 \kern\dimexpr #4\wd0 - \wd0\relax}%
        \global\setbox\csname #2@R\endcsname=\hbox{\kern-#4\wd0 \copy0}%
        \@@slice{#2@L}\@@slice{#2@C}\@@slice{#2@R}%
    }}

    \def\@show@slices#1{%
        \hbox{\copy\csname #1@L\endcsname\vrule width .1pt
        \copy\csname #1@C\endcsname\vrule width .1pt
        \copy\csname #1@R\endcsname}
    }

    \def\@wide@operator#1#2#3#4{%
        \@slice{$\m@th\displaystyle#2$}{#1}{#3}{#4}%
        \expandafter\def\csname #1\endcsname{%
            \mathop{\copy\csname #1@L\endcsname \xleaders\copy\csname #1@C\endcsname\hfill \copy\csname #1@R\endcsname}\limits%
        }%
    }

    \@wide@operator{suum}\sum{.52}{.6}
    \@wide@operator{prood}\prod{.48}{.52}
\fi

\catcode`\@=\strudelccode

\pdfmsymsetscalefactor{11}
\makeatletter
\renewcommand{\thediagram}{\Alph{diagram}} %
\renewcommand{\fnum@diagram}{Diagram~\thediagram.}

\makeatother
\graphicspath{ {./figures/} }

\let\originalwidehat\widehat
\makeatletter
\newcommand{\autowidehat}[1]{%
  \settowidth{\@tempdima}{$#1$}%
  \ifdim\@tempdima > 15pt\relax
    \varwidehat{#1}%
  \else
    \originalwidehat{#1}%
  \fi
}
\makeatother

\renewcommand{\widehat}[1]{\autowidehat{#1}}

\def\H{\mathbf{H}}
\def\bfC{\mathbf{C}}
\def\T{\mathbf{T}}
\def\R{\mathbb{R}}
\def\N{\mathbb{N}}
\def\Fp{\mathbb{F}_p}
\def\G{\mathcal{G}}
\def\X{\mathbf{X}}

\def\Q{\mathbb{Q}}
\def\C{\mathbb{C}}
\def\GL{\mathrm{GL}}
\def\SL{\mathrm{SL}}
\def\PSL{\mathrm{PSL}}
\def\Th#1{\left \| {#1} \right \|_{\mathrm{Th}}}
\def\CC{\mathfrak{C}}
\def\FF{\mathfrak{F}}
\def\FFp{\mathfrak{F}_p}
\def\PD{\mathrm{PD}}
\def\tr{\operatorname{tr}}

\def\Ab#1{{#1}^{\mathrm{Ab}}}
\def\ab#1{{#1}^{\mathrm{ab}}}

\def\Hom{\mathrm{Hom}}
\def\Aut{\mathrm{Aut}}
\def\Out{\mathrm{Out}}
\def\Inn{\mathrm{Inn}}
\def\Stab{\mathrm{Stab}}
\def\stab{\Stab}

\def\limi{\varprojlim}

\def\tto{\longrightarrow}

\def\Z{\mathbb{Z}}

\def\Zx{\widehat{\Z}^{\times}}
\def\abs#1{\left |#1 \right |}
\def\Cone{\mathcal{C}}
\def\CFr{\mathcal{C}_{\mathrm{Fr}}}
\def\CTh{\mathcal{C}_{\mathrm{Th}}}
\def\BTh{\mathcal{B}_{\mathrm{Th}}}

\def\psiA{\psi_{\mathrm{A}}}
\def\psiF{\psi_{\mathrm{F}}}
\def\psiAyi{\psi_{\mathrm{A},1}}
\def\psiAer{\psi_{\mathrm{A},2}}
\def\psiFyi{\psi_{\mathrm{F},1}}
\def\psiFer{\psi_{\mathrm{F},2}}

\def\traj{\mathfrak{T}}
\def\Traj{\traj}
\def\Mod{\mathrm{Mod}}

\def\tensor{\widehat{\mathbb{Z}}{\otimes}_{\mathbb{Z}}}
\def\tens{\widehat{\mathbb{Z}}\otimes}
\def\Int{\operatorname{int}}
\def\piorb{\pi_1^{\mathrm{orb}}}

\tikzset{%
  symbol/.style={
    draw=none,
    every to/.append style={
      edge node={node [sloped, allow upside down, auto=false]{$#1$}}
    },
  },
}
\tikzcdset{
  display/.style={
    cells={
     /tikz/font=\everymath\expandafter{\the\everymath\displaystyle},
    },
  },
}

\stackMath
\newcommand\reallywidehat[1]{%
\savestack{\tmpbox}{\stretchto{%
  \scaleto{%
    \scalerel*[\widthof{\ensuremath{#1}}]{\kern.1pt\mathchar"0362\kern.1pt}%
    {\rule{0ex}{\textheight}}
  }{\textheight}%
}{2.4ex}}%
\stackon[-6.9pt]{#1}{\tmpbox}%
}

\newtheorem*{cor3mfd}{\autoref{maincor}}

\makeatletter
\newcommand*\nss[3]{%
  \begingroup
  \setbox0\hbox{$\m@th\scriptstyle\cramped{#2}$}%
  \setbox2\hbox{$\m@th\scriptstyle#3$}%
  \dimen@=\fontdimen8\textfont3
  \multiply\dimen@ by 4             
  \advance \dimen@ by \ht0
  \advance \dimen@ by -1.7\fontdimen17\textfont2
  \@tempdima=\fontdimen5\textfont2  
  \multiply\@tempdima by 4
  \divide  \@tempdima by 5          
  \ifdim\dimen@<\@tempdima
    \ht0=0pt                        
    \@tempdima=\fontdimen5\textfont2
    \divide\@tempdima by 4          
    \advance \dimen@ by -\@tempdima 
    \ifdim\dimen@>0pt
      \@tempdima=\dp2
      \advance\@tempdima by \dimen@
      \dp2=\@tempdima
    \fi
  \fi
  #1_{\box0}^{\box2}%
  \endgroup
}
\makeatother

\date{\today}

\author{Xiaoyu Xu}
\address{Beijing International Center for Mathematical Research, Peking University, Beijing, P.R.China, 100871}
\email{xuxiaoyu@stu.pku.edu.cn}

\title[Profinite rigidity in lattices of $\mathrm{PSL}(2,\mathbb{C})$]{Profinite rigidity in lattices of $\mathrm{PSL}(2,\mathbb{C})$}

\keywords{Profinite completion, hyperbolic 3-manifolds, pseudo-Anosov flow, Bass--Serre theory, character variety}

\begin{document}

\maketitle

\begin{abstract}
A finitely generated group is profinitely rigid among a class of finitely generated groups if it can be distinguished among this class by its set of finite quotient groups. This paper proves that all lattices in $\mathrm{PSL}(2,\mathbb{C})$ are profinitely rigid among themselves. In addition, for any lattice $\Gamma\le \mathrm{PSL}(2,\mathbb{C})$, it is proven that $\mathrm{Out}(\widehat{\Gamma})\cong \mathrm{Out}(\Gamma)$, where $\widehat{\Gamma}$ denotes the profinite completion of $\Gamma$.  
\end{abstract}

\setcounter{tocdepth}{1}
\tableofcontents 

\section{Introduction}
It is an old and natural question that to what extent a finitely generated residually finite group can be recovered from its finite quotient groups. For a finitely generated group $\Gamma$, we denote
$$
\mathcal{C}(\Gamma)=\{ G\mid G\text{ is a finite quotient group of }\Gamma\},
$$
where $\mathcal{C}(\Gamma)$ is considered  merely  as a set of isomorphism classes of finite groups. 
 
\begin{definition}
Suppose $\Gamma$ belongs to a class $\mathscr{A}$ of finitely generated groups. We say that $\Gamma$ is {\em profinitely rigid} among the class $\mathscr{A}$ if for any $\Delta\in \mathscr{A}$,   $\mathcal{C}(\Delta)=\mathcal{C}(\Gamma)$ implies that $\Delta\cong \Gamma$. 
\end{definition}

For instance, finitely generated abelian groups are profinitely rigid {\em in the absolute sense} \cite[Proposition 3.1]{Rei18}, i.e.\ profinitely rigid among the class of all finitely generated residually finite groups. However, the question of profinite rigidity becomes considerably difficult when the groups being concerned become complicated. For instance, it is still an open question whether finitely generated non-abelian free groups are profinitely rigid in the absolute sence \cite[Question 15]{NRR82}. 

The question becomes more approachable when we restrict ourselves to some specific class of groups arising from low-dimensional topology, where a number of  topological and geometric techniques come into play. In this paper, we focus on lattices in $\PSL_2(\C)$, which are the fundamental groups of orientable complete finite-volume hyperbolic 3-orbifolds.

\begin{mainthm}\label{mainthm1}
Any lattice $\Gamma$ in $\PSL_2(\C)$ is profinitely rigid among the class of  all lattices in $\PSL_2(\C)$.
\end{mainthm}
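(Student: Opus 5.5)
Suppose $\Gamma,\Delta\le\PSL_2(\C)$ are lattices with $\mathcal{C}(\Gamma)=\mathcal{C}(\Delta)$. Both groups are finitely generated and residually finite, so this gives an isomorphism $\widehat{\Gamma}\cong\widehat{\Delta}$. The plan is to pass to a torsion-free finite-index subgroup, use the known profinite rigidity results for hyperbolic 3-manifolds there, and then climb back to the orbifold groups.

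\textbf{Step 1: reduce to finite-volume hyperbolic 3-manifolds with matching finite covers.}
\begin{itemize}
\item By Selberg's lemma, $\Gamma$ contains a torsion-free characteristic subgroup $\Gamma_0$ of finite index.
\item The closure $\overline{\Gamma_0}$ is open in $\widehat{\Gamma}\cong\widehat{\Delta}$. Its intersection with $\Delta$ is a finite-index subgroup $\Delta_0$ with $\widehat{\Gamma_0}\cong\widehat{\Delta_0}$.
\item Torsion in $\Delta_0$ would survive in $\widehat{\Delta_0}$, since finite-order elements of residually finite groups inject. I still have to rule out that $\widehat{\Gamma_0}$ contains such an element. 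I would argue this with cohomological goodness: 3-manifold groups are good, so $\widehat{\Gamma_0}$ has finite virtual cohomological dimension, indeed is torsion-free. After shrinking $\Gamma_0$, I can assume $\Delta_0$ is torsion-free.
\item By Liu's theorem and the work of Wilton--Zalesskii, finite-volume hyperbolic 3-manifold groups are profinitely rigid up to finitely many candidates among 3-manifold groups. Hyperbolicity, volume and cusp data are profinite invariants.
\end{itemize}
The cleaner input is Liu's result, plus a cusped extension, that the profinite isomorphism can be taken to be $\Zx$-regular.

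\textbf{Step 2: upgrade to an isomorphism on a deep subgroup.} Rather than aiming for $\Gamma_0\cong\Delta_0$ directly, I would show that the induced profinite isomorphism $\Phi\colon\widehat{\Gamma}\to\widehat{\Delta}$ restricts, on suitable deep finite-index subgroups, to the completion of an abstract isomorphism, up to conjugation and an automorphism.
\begin{itemize}
\item Use the tools in the keywords. For closed fibered pieces, pseudo-Anosov flows and Thurston norm detection handle the fibered faces. For cusps, Bass--Serre theory on the peripheral structure applies.
\item The character variety enters through Mostow rigidity. The trace field and the representation variety of the profinite completion detect which $\PSL_2(\C)$ representation is discrete faithful, up to Galois conjugation.
\item The target is a commensurability statement: $\Gamma_0$ and $\Delta_0$ are isomorphic by some $\psi$ whose completion agrees with $\Phi|_{\widehat{\Gamma_0}}$ up to an inner automorphism of $\widehat{\Gamma_0}$.
\end{itemize}

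\textbf{Step 3: descend to the orbifold groups.} The finite group $\Gamma/\Gamma_0$ acts on $\Gamma_0$, and $\Gamma$ is determined by $\Gamma_0$ together with an extension class.
\begin{itemize}
\item By Mostow rigidity, $\Gamma$ is the normalizer-determined subgroup of $\mathrm{Isom}(\H^3)$ containing $\Gamma_0$ with prescribed image in $\Out(\Gamma_0)$.
\item Since $\Out(\Gamma_0)$ is finite, it suffices to show $\Out(\Gamma_0)\to\Out(\widehat{\Gamma_0})$ is injective and the relevant finite subgroup images match.
\item Transporting the image of $\Gamma$ under $\Phi$, the image of $\Gamma/\Gamma_0$ in $\Out(\widehat{\Gamma_0})$ corresponds to that of $\Delta/\Delta_0$ in $\Out(\widehat{\Delta_0})$.
\item So I need: every finite subgroup of $\Out(\widehat{\Gamma_0})$ coming this way lies in the image of $\Out(\Gamma_0)$, up to conjugacy. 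This is where the Abstract's statement $\Out(\widehat{\Gamma})\cong\Out(\Gamma)$ naturally appears, and I would prove it along the way.
\item Given that, $\Delta$ and $\Gamma$ are the same extension of isomorphic groups, hence isomorphic.
\end{itemize}

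\textbf{Main obstacle.} The hard part is Step 2/3: proving that automorphisms, or isomorphisms, of profinite completions of hyperbolic 3-manifold groups are, up to inner automorphisms, completions of genuine ones, at least on the finite pieces used. I would first try to establish $\Zx$-regularity of $\Phi$. Then I would use it to show $\Phi$ preserves the set of conjugacy classes of peripheral subgroups and fibered classes. Finally I would pin down the representation by a trace or character variety argument: $\Phi$ induces a bijection on $\PSL_2$ characters over finite fields, which should force it to come from a Galois-twisted discrete faithful representation, hence from an actual isomorphism.
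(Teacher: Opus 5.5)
Your Steps 1 and 3 are sound and match the paper's reductions. Torsion-freeness of $\Delta_0$ follows from cohomological goodness, as in the paper's use of \cite[Corollary 2.16]{BCR16}. Once the restriction $\Phi_0\colon\widehat{\Gamma_0}\to\widehat{\Delta_0}$ is known to equal $\Inn_{\hat g}\circ\widehat{\psi}$ for an abstract isomorphism $\psi$, the descent to $\Gamma\cong\Delta$ is the paper's extension lemma. That lemma uses trivial centers of $\Gamma_0$ and $\widehat{\Gamma_0}$, and injectivity of $\Out(\Gamma_0)\to\Out(\widehat{\Gamma_0})$, which the paper derives from $N_{\widehat{\Gamma_0}}(\Gamma_0)=\Gamma_0$.

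\textbf{The gap is Step 2.} The whole proof rests on it, and there you give no argument. ``Use the tools in the keywords'' is a list of ingredients, not an argument. Liu's theorem only bounds the number of candidates and plays no further role. The one concrete mechanism you propose does not work. A profinite isomorphism does induce a bijection on representations into finite groups, but this does not assemble into a map between $\SL_2(\C)$- or $\SL_2(\overline{\Q})$-character varieties. Suppose you extend $\rho_\Delta\colon\Delta\to\SL_2(\mathcal O_{K,\mathfrak p})$ to $\widehat{\Delta}\to\SL_2(\widehat{\mathcal O}_{K,\mathfrak p})$ and precompose with $\Phi$. The resulting representation of $\Gamma$ has a $K_{\mathfrak p}$-valued character with no reason to be algebraic, so there is nothing to compare with the discrete faithful character of $\Gamma$. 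Detecting the discrete faithful representation ``up to Galois conjugation'' is only available in special Galois-rigid examples such as \cite{BMRS20}. Moreover, even an algebraic isomorphism $X(\Delta)\to X(\Gamma)$ would not produce an element of $\Out$. For a closed hyperbolic 3-manifold the character variety can be a finite set, and nothing identifies its automorphisms with outer automorphisms.

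\textbf{What the paper does instead.} It applies the character-variety idea not to the 3-manifold group but to a fiber surface group, via two ideas missing from your plan.
First, \emph{semiauthenticity}. Suppose $\Phi$ sends each element of a generating semigroup $S\subseteq\Gamma_1$ into a conjugate of an element of $\Gamma_2$. A Cayley--Hamilton argument then shows that $\{\tr_s\}_{s\in S}$ generates the trace algebra over $\Q$. This forces the $\mathfrak p$-adic character to be $K$-valued and yields an algebraic map $X_n(\Gamma_2)\to X_n(\Gamma_1)$, which is an isomorphism when $\Phi$ is bi-semiauthentic.
Second, for surfaces of genus $\ge 3$, March\'e--Simon identify $\Aut_{\mathrm{alg}}(X(\Sigma))$ with $H^1(\Sigma;\Z/2)\rtimes\Mod^{\pm}(\Sigma)$. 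So the induced automorphism comes from a mapping class, and a finite-cover argument over standard characteristic subgroups upgrades this to genuineness.

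Producing bi-semiauthenticity is where the geometry enters:
\begin{enumerate}
\item Pass virtually to a crossfibered manifold.
\item Combine Liu's matching of periodic trajectories with regularity $\mu=\pm1$, proved by a profinite Bass--Serre cut-and-glue argument and a cap-product computation. This gives an isomorphism $\phi$ of the facial fiber surface groups with $\phi(g_1)=g_2$ that intertwines the completed monodromies.
\item Realize the surface group as a quotient of uniform lattices: drill $\alpha\times\{0\}\cup f^n(\alpha)\times\{\frac12\}$ from $\Sigma\times\mathbb{S}^1$ and do orbifold Dehn filling. Virtual fibering of these lattices and Liu's transition-graph semigroup then give semiauthenticity.
\end{enumerate}
The cusped case needs a further step: peripheral regularity, orbifold fillings along $n!\,d_i$, the uniform case, Mostow rigidity, and finiteness of the mapping class group. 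None of this, nor a substitute for it, appears in your plan, so as it stands it does not prove the theorem.
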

Bridson and Reid have long conjectured that lattices in $\PSL_2(\C)$ are profinitely rigid in the absolute sense,  e.g.\ \cite[Conjecture 18.2.1]{Bri23} and \cite[Question 4.3]{Rei18}. \autoref{mainthm1} can be seen as a new positive step towards their conjecture. 

Previously, Liu \cite{Liu23} proved that at most finitely many lattices in $\PSL_2(\C)$ share the same set of finite quotient groups. Some infinite families of non-uniform lattices were proven to be profinitely rigid among lattices in $\PSL_2(\C)$, utilizing their specific topological features \cite{BR20,CWX26,Xu24}. Furthermore, a finite list of arithmetic lattice in $\PSL_2(\C)$  were confirmed to be absolutely profinitely rigid \cite{BMRS20,BR22,CW24}, including $\PSL_2(\Z[\omega])$ and the fundamental group of the Weeks manifold, etc. 

The phenomenon of profinite rigidity varies among lattices of Lie groups. Bridson--Conder--Reid \cite{BCR16} proved that lattices of $\PSL_2(\R)$ are profinitely rigid among all lattices of connected Lie groups.  
Stover \cite{Sto19}, however, proved that for every $n\ge 2$, lattices in $\mathrm{PU}(n,1)$   are not profinitely rigid among themselves; in fact, there exist non-commensurable lattices in  $\mathrm{PU}(n,1)$ with the same set of finite quotient groups. 
There are also various 
examples of lattices in  higher-rank (semi)simple Lie groups 
that are non-isomorphic but share the same set of finite quotient groups \cite{ Aka12, KK23,MS10}. 
In the context of 3-manifold geometries, $\PSL_2(\C)$ is sometimes compared  with the solvable Lie group $Sol$, whereas lattices in $Sol$ are not profinitely rigid among themselves \cite{Fun13,Ste72}. 

One significant application of profinite rigidity is to solve the isomorphism problem in a rather simple way. 
Indeed, if a group $\Gamma$ is profinitely rigid within a class $\mathscr{A}$ of finitely presented groups, then there exists an algorithm that, given a finite presentation of any $\Delta \in \mathscr{A}$, decides in finite time whether $\Gamma$ and $\Delta$ are isomorphic; see \cite[Page 5]{BCR16}. This proceeds by simultaneously searching either for a finite group belonging to $\mathcal{C}(\Gamma)\vartriangle\mathcal{C}(\Delta)$, or for an explicit isomorphism between $\Gamma$ and $\Delta$. Consequently, we obtain a new solution to the isomorphism problem among lattices in $\PSL_2(\C)$, which differs from Sela's machine and its generalizations \cite{DG08,DG11,Sel95}, as well as other geometric methods \cite{Mat07,SS14}. 

\begin{maincor}
The isomorphism problem among lattices in $\PSL_2(\C)$ is solvable. 
\end{maincor}

The data of finite quotient groups are encoded in an algebraic construction named profinite completion. 
\begin{definition}\label{DEF}
Let $\Gamma$ be an abstract group. The {\em profinite completion} of $\Gamma$ is a profinite group defined by 
$$
\widehat{\Gamma}= \limi_{N\unlhd_{f.i.} \Gamma} \Gamma/N,
$$
where $N$ ranges through all finite-index normal subgroups of $\Gamma$. 
\end{definition}

\begin{fact}[{\cite{DFPR82}}]\label{FACT}
For two finitely generated groups $\Gamma$ and $\Delta$, $\mathcal{C}(\Gamma)=\mathcal{C}(\Delta)$ if and only if $\widehat{\Gamma}\cong \widehat{\Delta}$. 
\end{fact}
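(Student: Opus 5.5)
The plan is to prove the two implications separately. Throughout, only the continuous finite quotients of the profinite completions are used, so no deep results on abstract quotients of profinite groups (such as Nikolov--Segal) are needed.

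\emph{Step 1: $\widehat{\Gamma}\cong\widehat{\Delta}$ implies $\mathcal{C}(\Gamma)=\mathcal{C}(\Delta)$.} I will show that $\mathcal{C}(\Gamma)$ equals the set of finite quotients $\widehat{\Gamma}/U$ with $U$ an open normal subgroup. This set is visibly invariant under isomorphisms of profinite groups, which gives the implication.
\begin{itemize}
\item If $N\unlhd\Gamma$ has finite index, then $\Gamma\to\Gamma/N$ extends to a continuous surjection $\widehat{\Gamma}\to\Gamma/N$ by the universal property of the inverse limit in \autoref{DEF}.
\item Conversely, let $U$ be an open normal subgroup and $q:\widehat{\Gamma}\to\widehat{\Gamma}/U$ the quotient map. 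The image of $\Gamma$ in $\widehat{\Gamma}$ is dense and $U$ is open, so $q$ restricted to $\Gamma$ is still surjective. Hence $\widehat{\Gamma}/U\in\mathcal{C}(\Gamma)$.
\end{itemize}

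\emph{Step 2: $\mathcal{C}(\Gamma)=\mathcal{C}(\Delta)$ implies $\widehat{\Gamma}\cong\widehat{\Delta}$.} Here finite generation is essential: a finitely generated group has only finitely many subgroups of index at most $n$, since they correspond to actions on $\{1,\dots,n\}$. Define $\Gamma_n$ as the intersection of all normal subgroups of $\Gamma$ of index $\le n$, and put $Q_n(\Gamma)=\Gamma/\Gamma_n$.
\begin{itemize}
\item $Q_n(\Gamma)$ is finite, and the subgroups $\Gamma_n$ are cofinal among finite-index normal subgroups. Therefore $\widehat{\Gamma}=\varprojlim_n Q_n(\Gamma)$.
\item $Q_n(\Gamma)$ has the intrinsic property that the intersection of its normal subgroups of index $\le n$ is trivial.
\end{itemize}

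\emph{Step 3: the quotients $Q_n$ agree.} Since $Q_n(\Gamma)\in\mathcal{C}(\Delta)$, pick a surjection $\Delta\to Q_n(\Gamma)$ with kernel $K$. By the intrinsic property, $K$ is an intersection of normal subgroups of $\Delta$ of index $\le n$, so $K\supseteq\Delta_n$. This gives $|Q_n(\Gamma)|\le|Q_n(\Delta)|$. By symmetry the orders are equal, so $K=\Delta_n$ and $Q_n(\Delta)\cong Q_n(\Gamma)$.

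\emph{Step 4: passing to the limit.} This compatibility step is the main point to check. Let $I_n$ be the set of isomorphisms $Q_n(\Gamma)\to Q_n(\Delta)$; it is finite, and nonempty by Step 3.
\begin{itemize}
\item The kernel $\Gamma_n/\Gamma_{n+1}$ of $Q_{n+1}(\Gamma)\to Q_n(\Gamma)$ is defined intrinsically inside $Q_{n+1}(\Gamma)$: it is the intersection of the normal subgroups of index $\le n$, since such subgroups correspond exactly to those of $\Gamma$ containing $\Gamma_{n+1}$.
\item Hence every isomorphism in $I_{n+1}$ carries $\Gamma_n/\Gamma_{n+1}$ to $\Delta_n/\Delta_{n+1}$ and descends to an element of $I_n$. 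This gives maps $I_{n+1}\to I_n$.
\item An inverse limit of nonempty finite sets is nonempty (by compactness, or König's lemma). So there is a compatible family of isomorphisms $Q_n(\Gamma)\to Q_n(\Delta)$.
\item This family induces an isomorphism $\varprojlim_n Q_n(\Gamma)\cong\varprojlim_n Q_n(\Delta)$, that is, $\widehat{\Gamma}\cong\widehat{\Delta}$.
\end{itemize}
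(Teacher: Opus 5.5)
Your proof is correct. The paper gives no argument of its own and simply cites \cite{DFPR82}. What you wrote is the standard proof from that source: characteristic quotients by the intersection $\Gamma_n$ of all normal subgroups of index $\le n$, an order comparison via pull-backs forcing $Q_n(\Gamma)\cong Q_n(\Delta)$, and a compactness argument on the nonempty finite sets of compatible isomorphisms. It is also the same inverse-limit-of-finite-sets device the paper itself uses with the standard characteristic subgroups $\Gamma^{(m)}$ in the proof of \autoref{lem: conjugacy equivalence criteria}; your variant with normal subgroups of index $\le n$ works equally well.
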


Based on \autoref{FACT}, \autoref{mainthm1} can be reformulated as follows. If $\Gamma$ and $\Delta$ are lattices in $\PSL_2(\C)$  and there exists an isomorphism $\Phi: \widehat{\Gamma}\to \widehat{\Delta}$, then $\Gamma\cong \Delta$. Provided only that $\Gamma$ and $\Delta$ are abstractly isomorphic, one might further ask  what the isomorphism $\Phi$ might be. Our next theorem gives a satisfactory answer to this question. 

For a profinite group $G$, we denote by $\Out(G)=\Aut(G)/\Inn(G)$ its outer automorphism group, in which automorphisms of $G$ are required to be continuous. However, a deep theorem of Nikolov--Segal \cite{NS07} guarantees that whenever $G$ is topologically finitely generated, any automorphism of $G$ as an abstract group is necessarily continuous. This in particular  applies to the profinite completion of a finitely generated group. 

\begin{mainthm}\label{mainthm2}
For any lattice $\Gamma$ in $\PSL_2(\C)$, the homomorphism $\Out(\Gamma)\to \Out(\widehat{\Gamma})$ induced by profinite completion is an isomorphism. 
\end{mainthm}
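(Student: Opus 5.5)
Both maps in question, $\Out(\Gamma)\to\Out(\widehat{\Gamma})$, are compared via the natural map, and I will prove injectivity and surjectivity separately.

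\emph{Injectivity.} By Mostow--Prasad rigidity $\Out(\Gamma)$ is finite and is realized by isometries of the quotient orbifold $\mathbf{H}^3/\Gamma$. Lattices in $\PSL_2(\C)$ are residually finite, and they are conjugacy separable, or at least have the property that an automorphism acting on $\widehat{\Gamma}$ as an inner automorphism must be inner in $\Gamma$. For Kleinian groups this follows from the virtual specialness of Agol and Wise together with the results of Minasyan--Zalesskii. So if $\phi\in\Aut(\Gamma)$ becomes inner in $\widehat{\Gamma}$, then $\phi$ preserves every conjugacy class of $\Gamma$. Since $\phi$ is realized by an isometry, preserving all conjugacy classes forces the isometry to act trivially on the character variety point, i.e.\ $\phi$ is inner. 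This is the easier half.

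\emph{Surjectivity.} Let $\Phi\in\Aut(\widehat{\Gamma})$. The strategy is to show that $\Phi$ is, up to an inner automorphism of $\widehat{\Gamma}$, the completion of an abstract automorphism. I would carry this out in four steps.

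First, I would use the rigidity machinery underlying \autoref{mainthm1}. Liu's character-variety argument shows that a profinite isomorphism induces a Galois-twisted correspondence of $\mathrm{SL}_2$ (or $\PSL_2$) characters over $\overline{\Q}$. Applied to $\Phi$, the holonomy character $\chi$ is sent to a Galois conjugate $\sigma\chi$ of itself, possibly composed with an automorphism. Since the discrete faithful character is distinguished among its Galois conjugates by being discrete, the proof of \autoref{mainthm1} should force $\sigma\chi=\chi\circ\phi$ for some $\phi\in\Aut(\Gamma)$, up to complex conjugation.

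Second, I would replace $\Phi$ by $\Phi\circ\widehat{\phi}^{-1}$. This reduces to the case where $\Phi$ preserves the character $\chi$ composed with all finite-index restrictions, so $\Phi$ fixes the trace function on $\widehat{\Gamma}$ (extended continuously via the adelic completion of the trace field).

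Third, I would show that such a $\Phi$ sends each element of $\Gamma$ to a conjugate of itself in $\widehat{\Gamma}$, i.e.\ $\Phi$ is class-preserving on the dense subgroup. Here I would use that loxodromic elements are determined up to conjugacy and finite ambiguity by their traces in all finite covers, and the parabolic and elliptic cases are handled via peripheral subgroups.

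Fourth, I would show that a class-preserving automorphism of $\widehat{\Gamma}$ is inner. For surface-like or hyperbolic virtually special groups this is known (Grossman's property, profinite version); I would invoke a profinite Grossman property, proved via a finite-index special subgroup and Minasyan--Zalesskii.

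The main obstacle is the first step: showing that the character twist induced by $\Phi$ comes from an actual automorphism of $\Gamma$ rather than from a genuine Galois conjugation. For this I would rely on the same key step used to prove \autoref{mainthm1}, namely that the profinite isomorphism respects the discrete faithful representation up to complex conjugation, together with the fact that complex conjugation corresponds to an orientation-reversing isometry, which is realized in $\Out(\Gamma)$ exactly when one exists. A secondary difficulty is orbifold torsion, which complicates the Grossman-type step. I would pass to a torsion-free characteristic finite-index subgroup and descend, using finiteness of the relevant outer automorphism groups.
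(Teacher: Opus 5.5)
There is a genuine gap in the surjectivity half, and it is in your Step~1. An arbitrary automorphism $\Phi$ of $\widehat{\Gamma}$ does not come with an induced map on character varieties. If you pull back the $\mathfrak p$-adic holonomy along $\Phi$, the character of the resulting representation of $\Gamma$ is controlled only at those $\gamma$ for which $\Phi(\gamma)$ is conjugate into $\Gamma$. That is precisely the semiauthenticity hypothesis of \autoref{thm: semiauthentic induce Qbar}.

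Concluding that this character equals $\sigma\chi\circ\phi$ for an abstract automorphism $\phi$ is a Galois-rigidity statement, known only for special examples. The paper's proof of \autoref{mainthm1} does not supply it. \autoref{mainthm1} and \autoref{mainthm2} are proved together by showing that every isomorphism $\widehat{\Gamma}\to\widehat{\Delta}$ is genuine, and the holonomy character never enters.

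The missing idea is how to obtain a usable character-variety rigidity at all. The paper gets one only for surface groups, via March\'e--Simon (\autoref{thm: MS theorem}), and the uniform case runs as follows:
\begin{enumerate}
\item pass to a crossfibered cover and restrict $\Phi$ to the facial fiber subgroup;
\item combine Liu's matching of periodic trajectories with the drilling/orbifold Dehn filling construction to show this restriction is virtually hereditarily bi-semiauthentic (\autoref{lem: technical VHBS});
\item deduce genuineness on the fiber (\autoref{thm: surface genuine});
\item return to $\pi_1M$ via the extension lemma.
\end{enumerate}
The non-uniform case reduces to this through peripheral regularity, orbifold Dehn fillings and Mostow rigidity (\autoref{prop: major step for non-uniform}).

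Steps~2--3 would fail even if Step~1 were granted. Lattices in $\PSL_2(\C)$ virtually have positive first Betti number, so they fail the congruence subgroup property. Hence the adelic holonomy $P:\widehat{\Gamma}\to\SL_2(\widehat{\mathcal O})$ has a huge kernel. Knowing that $\Phi$ preserves $P$ up to conjugacy tells you nothing beyond the effect of $\Phi$ on the image of $P$, and in particular cannot force $\Phi(\gamma)$ to be conjugate to $\gamma$ in $\widehat{\Gamma}$. ``Traces in all finite covers'' add no information either, since $\tr\rho(\gamma)$ does not change when you pass to a finite cover.

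The paper instead pins $\Phi$ down at every finite level. It applies the surface character-variety rigidity to every standard characteristic cover $\Sigma^{(m)}$ and obtains $\phi^\flat_{(m)}\simeq f^\flat_{(m)}$ for one fixed $f$ (the two claims in \autoref{lem: major step to realisation}). It then concludes by \autoref{cor: genuine criteria 1}. The ``profinite Grossman property'' in your Step~4 is also not an available result.

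Finally, the last step of your injectivity argument is a non sequitur. Every orientation-preserving isometry normalizing $\Gamma$ fixes the holonomy character, so fixing that character cannot detect innerness. What you actually need is that class-preserving automorphisms of $\Gamma$ are inner, which holds, e.g.\ via results on normal automorphisms of relatively hyperbolic groups. Alternatively, use the paper's route: $Z(\widehat{\Gamma})=1$ and $N_{\widehat{\Gamma}}(\Gamma)=\Gamma$ (\autoref{thm: lattice center free}, \autoref{cor: normalizer}, \autoref{prop: injective criterion}).
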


We point out that when $\Gamma$ is a finitely generated group, $\Out(\widehat{\Gamma})$ is a profinite group; see \cite[Corollary 4.4.4]{RZ10}. Hence, $\Out(\widehat{\Gamma})$ is either  finite  or  uncountable, so one could only expect $\Out(\widehat{\Gamma})\cong \Out(\Gamma)$ when $\Out(\Gamma)$ is finite. However, the finiteness of  $\Out(\Gamma)$ is not a sufficient condition for \autoref{mainthm2}. For instance, $\Out(\Z)=\{\pm1\}$ is finite, whereas $\Out(\widehat{\Z})=\Zx\cong \prod_p \Z_p^\times$ is uncountable. 
More generally, \autoref{mainthm2} could fail for lattices in higher rank Lie groups. The following example was pointed out to the author by Nikolay Nikolov. 
\begin{example}
For $n\ge 3$, consider the lattice $\SL_n(\Z)$ in $\SL_n(\R)$. On one hand, $\Out(\SL_n(\Z))$ is finite. Indeed,   Margulis  superrigidity \cite{Mar91} implies that   $\Out(\SL_n(\Z))\cong \Z/2$ or $\Z/2\times \Z/2$, which is generated by  transpose-inverse  and  additionally the conjugation by an element in $\GL_n(\Z)$ when $n$ is even. 
On the other hand,  $\SL_n(\Z)$ satisfies the congruence subgroup property when $n\ge 3$ \cite{BLS64,Men65}. Consequently,
$$
\widehat{\SL_n(\Z)}\cong \SL_n(\widehat{\Z}) \cong \mathop{{\textstyle \prod}}\limits_{p:\text{ prime}} \SL_n(\Z_p).
$$
An application of \cite{BN26} actually  implies that 
$$
\Out(\widehat{\SL_n(\Z)}) \cong \mathop{{\textstyle \prod}}\limits_{p } \Out(\SL_n(\Z_p)) \cong  \mathop{{\textstyle \prod}}\limits_{p }  (\Z/2)\ltimes (\Z_p^{\times}/(\Z_p^{\times})^n),
$$
where the $\Z/2$ subgroup is generated by transpose-inverse on the $\SL_n(\Z_p)$ factor. In particular, $\Out(\widehat{\SL_n(\Z)})$  is infinite, and $\Out(\widehat{\SL_n(\Z)})\not \cong \Out(\SL_n(\Z))$. 
\end{example}

An implication of \autoref{mainthm2} is that the rigidity phenomenon is preserved under finite extensions of this class of groups. Combining with the profinite classification of 3‑manifold groups, we obtain the following corollary.
\begin{maincor}\label{maincor}
Any finitely generated group that is  virtually a lattice in $\PSL_2(\C)$ is profinitely rigid among all finitely generated virtually 3-manifold groups. 
\end{maincor}
\autoref{maincor} can be compared with the fact that profinite rigidity even fails among the class of virtually cyclic groups \cite{Bau74}. 

\subsection{Ingredients of the proof}


To keep it simple for this subsection, let us focus on   torsion-free uniform lattices in $\PSL_2(\C)$, which are the fundamental groups of closed hyperbolic 3-manifolds. 

\subsubsection{Crossfibered manifolds}

In \cite{Liu23}, Liu 
proved that fibered classes in the first integral cohomology are profinite invariants in the class of hyperbolic 3-manifolds. 
This makes fibered manifolds technically most accessible in this context, where the profinite rigidity of the 3-manifold group can be reformulated as a problem regarding  profinite automorphisms of the fiber surface subgroup. 
On the other hand, there is a pseudo-Anosov suspension flow on the hyperbolic 3-manifold associated to each  fibered class. Liu \cite{Liu23,Liu25} also proved that the homotopy classes of the  periodic trajectories within this pseudo-Anosov  flow are profinitely distinguished. 

The first question  to be dealt with is how these two ingredients   can be related. In this paper, we shall work with a specific class of closed  fibered hyperbolic 3-manifolds called   {\em crossfibered} manifolds. Such a manifold posseses two (necessarily distinct) fibered classes, namely the axial class and the facial class, such that a certain  periodic trajectory of the pseudo-Anosov flow associated to the axial class can be freely homotoped onto the fiber surface dual to the facial class. Building on Agol's  theorems \cite{Ago08,Ago13}, we prove that closed hyperbolic 3-manifolds are virtually  crossfibered (\autoref{thm: virtual crossfibering}). 

Liu's theorems \cite{Liu23,Liu25} imply that crossfibering  is also a profinite invariant among closed hyperbolic 3-manifolds. The advantage of crossfibered manifolds is that any profinite isomorphism between them carries a  profinite isomorphism between the fiber surface subgroups dual to the facial classes, which matches up the procyclic subgroups generated by the loops homotopic to the periodic trajectories. 
This extra piece of information implies many  algebraic properties of the profinite surface group isomorphism. In particular, this construction is used to improve a theorem of Liu \cite[Theorem 1.2]{Liu23}, where we show  in  \autoref{thm: regular} that profinite isomorphisms between hyperbolic 3-manifold groups are regular in the sense of Boileau--Friedl \cite{BF20}. 
\subsubsection{Semiauthenticity}\label{subsubsec: semi}

Next, 
we show   that the profinite isomorphism between the fiber surface subgroups dual to the facial classes satisfies an algebraic condition named {\em virtually hereditary bi-semiauthenticity} (\autoref{lem: technical VHBS}). Given two finitely generated residually finite groups $\Gamma_1$ and $\Gamma_2$, an isomorphism $\phi: \widehat{\Gamma_1}\to \widehat{\Gamma_2}$ is virtually hereditarily  semiauthentic if there exists a finite-index subgroup $\Gamma_1'\le \Gamma_1$ such that for every further finite-index subgroup $\Gamma_1''\le \Gamma_1'$, one can find a semigroup $S\subseteq \Gamma_1''$, which generates $\Gamma_1''$ as a group, such that $\phi(s)$ conjugates into $\Gamma_2$ for any  $s\in S$. An isomorphism  $\phi: \widehat{\Gamma_1}\to \widehat{\Gamma_2}$ is virtually hereditarily bi-semiauthentic if both $\phi$ and $\phi^{-1}$ are virtually hereditarily  semiauthentic. 

 Our proof for this property relies on a geometric construction. 
To outline, we construct another pair of hyperbolic 3-orbifolds that contain our original fiber surfaces as embedded geometrically-finite surfaces, and we also construct a profinite isomorphism between these hyperbolic 3-orbifold groups that carries the prescribed profinite  isomorphism between the surface subgroups. Based on its regularity, we first prove that the profinite isomorphism between the  hyperbolic 3-orbifold groups is virtually hereditarily bi-semiauthentic. 
The conclusion is then deduced using   retraction maps from the 3-orbifolds to the surfaces. 

We breifly sketch the construction of the 3-orbifold. Assume for simplicity that the prescribed periodic trajectory   homotopes to a simple closed curve $\alpha$ on the fiber surface $S$ dual to the facial class, which can always be achieved up to taking a suitable finite cover. The monodromy of the facial class represents a pseudo-Anosov mapping class $f$ on $S$, and a sufficiently large iteration of $f$ produces a simple closed curve $f^n(\alpha)$ that fills the surface $S$  with $\alpha$. We then obtain a hyperbolic link  $\alpha \times\{0\} \cup f^n(\alpha) \times\{\frac{1}{2}\}$ in the 3-manifold $S\times \mathbb{S}^1$, and the hyperbolic 3-orbifold is constructed from $S\times \mathbb{S}^1$ by assigning a  sufficiently small cone angle  to this link. 
\subsubsection{Character variety} 


Finally,  from  its virtually hereditary bi-semi\-authen\-ti\-city, we prove that the  profinite isomorphism between the fiber surface subgroups dual to the facial classes is indeed a genuine one, i.e.\ it arises as the profinite completion of an isomorphism between the abstract surface groups up to differing by an inner automorphism (\autoref{thm: surface genuine}).  

The proof involves a bridge between the profinite completion of a finitely generated group and  its $\SL_2(\C)$-character variety. Specifically, we prove that any bi-semiauthentic isomorphism between the profinite completions of two finitely generated groups induces an algebraic isomorphism between their $\SL_2(\C)$-character varieties (\autoref{cor: bi-semiauthentic induce isomorphism}). In the case of surface groups, a theorem of March\'e--Simon \cite{MS21}  implies that such an algebraic isomorphism is always induced by a homeomorphism between the surfaces. Using techniques of finite covers, we finally recover  a surface homeomorphism from the profinite isomorphism between  the surface subgroups. 

This essentially completes the proof of  \autoref{mainthm1} and \autoref{mainthm2} for the fundamental groups of crossfibered manifolds, and the general case follows with fewer difficulties. 

\subsection{Organization of the paper}
\autoref{sec: Thurston norm} and \autoref{sec: crossfibered} focus on crossfibered manifolds, which serve as  the main target of the paper. We introduce some related concepts  in \autoref{sec: Thurston norm}, and prove the virtual crossfibering theorem (\autoref{thm: virtual crossfibering}) in \autoref{sec: crossfibered}.

\autoref{sec: profinite groups} serves as preliminary materials for profinite groups. In \autoref{sec: inj}, we prove the injectivity of the completion map $\Out(\Gamma)\to \Out(\widehat{\Gamma})$ appearing in \autoref{mainthm2}, and we provide two criteria for its surjectivity in \autoref{sec: surj}.

We briefly introduce the cohomology theory and the Bass--Serre theory for profinite groups in \autoref{sec: cohomology} and \autoref{sec: bstheory}. These serve as preparations for a topological  construction  in \autoref{sec: cut and glue}, where we are able to detect, in the profinite setting, splittings of surface groups arising from cutting along simple closed curves (\autoref{thm: cut and glue}). Using this construction, we prove the regularity of isomorphisms between lattices in $\PSL_2(\C)$ (\autoref{thm: regular}) in  \autoref{sec: regular}.

In \autoref{sec: drilling}, we construct the hyperbolic 3-orbifolds described in \autoref{subsubsec: semi}, and we establish profinite isomorphisms between these 3-orbifolds utilizing once again the profinite Bass--Serre theory. From this construction, we deduce  in \autoref{sec: semiauthentic}  that the profinite surface group isomorphisms being considered  are virtually hereditarily bi-semiauthentic (\autoref{lem: technical VHBS}). 

We then turn to the algebraic aspects of bi-semiauthentic isomorphisms. In  \autoref{sec: 11}, we study the action of  semiauthentic homomorphisms on the $\SL_n$-character variety. This is then applied  to surface groups, where we find out the desired surface homeomorphism in \autoref{sec: realisation}. 

We prove \autoref{mainthm1} and \autoref{mainthm2} for uniform lattices in \autoref{sec: Uniform}. Finally, we finish the proof for non-uniform lattices in  \autoref{sec: non-uniform}, based on Thurston's hyperbolic Dehn surgery theory. 

\subsection{Acknowledgements}
The author would like to express his sincere gratitude to his advisor Yi Liu for discussions and  encouragements throughout this project;  
particularly, the proof of \autoref{thm: virtual crossfibering} benefits from the discussion with Yi Liu. 
Andrei Jaikin-Zapirain and Nikolay Nikolov have previously asked the author about \autoref{mainthm2} and \autoref{maincor} repsectively, 
and he would  like to thank both of them for providing several interesting examples. 
The author would also like to thank Alan Reid and Henry Wilton for their comments on an earlier draft of this paper, and Yaoping Xie for helpful conversations.

\section{Thurston norm and pseudo-Anosov mapping tori}\label{sec: Thurston norm}

\subsection{Fibered classes}
Suppose $M$ is  a compact orientable 3-manifold with empty or toral boundary.  
\begin{definition}\label{def: fibered class}
A cohomology class $\psi\in H^1(M;\Z)$ is called a {\em fibered class} if it is induced by a fiber bundle map $\pi: M\to \mathbb{S}^1$. 
\end{definition}
When $\psi$ is a fibered class, the fiber bundle map $\pi$ satisfying the above property is unique up to isotopy; see \cite[Theorem 4]{Thu86}.   We denote a fiber of $\pi$ as $S_{M,\psi}$, whose isotopy class is uniquely determined by $\psi$. Then, $S_{M,\psi}$ is a compact, orientable,  properly embedded incompressible surface in $M$. We remark that $S_{M,\psi}$  is possibly disconnected when $\psi$ is a non-primitive class.  

Under the identification $H^1(M;\Z)\cong \mathrm{Hom}(\pi_1M,\Z)$, Stallings \cite{Sta62} proved that $\psi\in H^1(M;\Z)\setminus\{0\}$ is a fibered class if and only if $\ker(\psi)$ is   finitely generated. When $\psi$ is a fibered class, the embedding of each connected component $S\subseteq S_{M,\psi}$ into $M$ determines an injective homomorphism $i:\pi_1S \hookrightarrow \pi_1M$ up to an identification of basepoints. The image of $i$ is a normal subgroup of $\pi_1M$ independent with the choice of the connected component and the identifaction of basepoints. In fact, the image of $i$  is exactly $\ker(\psi)$. As a consequence, a free loop $\gamma \subset M$ (also viewed as a conjugacy class in $\pi_1M$) can be freely homotoped onto $S_{M,\psi}$ if and only if $\langle \psi,[\gamma]\rangle =0$, where $[\gamma]\in H_1(M;\Z)$ denotes its homology class. 

\subsection{Thurston norm}
Let $M$ be a compact orientable 3-manifold. We identify $H^1(M;\Z)$ and $H^1(M;\Q)$ with their images in $H^1(M;\R)$ via the inclusions $\Z\hookrightarrow \Q\hookrightarrow \R$, and refer to them as the {\em integral classes} and {\em rational classes} in $H^1(M;\R)$. For a finitely generated abelian group $A$, let $A_{\mathrm{tors}}$ be the torsion subgroup of $A$, and let $A_{\mathrm{free}}= A/A_{\mathrm{tors}}$. Following the same convention in cohomology, we identify $H_1(M;\Z)_{\mathrm{free}}$ and $H_1(M;\Q)$ with their images in $H_1(M;\R)$, and refer to them as the {\em integral classes} and {\em rational classes} in $H_1(M;\R)$. 

\begin{sloppypar}
Thurston \cite{Thu86} introduced a canonical seminorm on   $H^1(M;\R)$, now widely known as the Thurston norm, which we recall as follows. For a compact, orientable, possibly disconnected surface $S$ consisting of connected components $S_1,\cdots, S_n$, we denote $\chi_-(S)=\sum_{i=1}^{n}\mathrm{max}\{-\chi(S_i),0\}.$ 
\end{sloppypar}
\begin{defthm}[{\cite[Theorem 1]{Thu86}}]
The  {\em Thurston norm} on $M$ is the unique seminorm $\Th{\cdot} : H^1(M;\mathbb{R}) \to [0, +\infty)$ on the real vector space $H^1(M;\mathbb{R})$ that is continuous, convex, linear on rays through the origin, and satisfies the following property: for any integral class $\psi \in H^1(M;\mathbb{Z})$, $\|\psi\|_{\text{Th}}$ equals the minimal possible value of $\chi_-(S)$, where $S$ ranges over all properly embedded compact oriented surfaces in $M$ representing the Poincar\'e dual of $\psi$.
\end{defthm}

We remark that although we rely on an orientation of $M$ to make sense of Poincar\'e duality, the Thurston norm on  $M$ is independent of the choice of this orientation.
Before moving on, we recall some concepts in convex geometry.

\begin{definition}
Let $V$ be a finite-dimensional real vector space, with coordinates $(x_1,\cdots, x_n)$. 
Equip $V$ with the standard point-set topology. 
\begin{enumerate}[label=(\arabic*),leftmargin=*]
\item\label{P1} A (convex) {\em polyhedron} $P$ in $V$ is the intersection of a finite number (possibly zero) of closed half-spaces in $V$, i.e.\
$$
P=\{(x_1,\cdots,x_n)\in V\mid a_{i1}x_1+\cdots+a_{in}x_n\ge c_i,\,\forall 1\le i \le m\}
$$
for some $m\ge 0$ and $a_{ij},c_i\in \R$, where we define $P=V$ if $m=0$. 
Throughout this paper, every polyhedron is meant to be convex. 
\item $P$ is called a {\em polyhedral cone} if $c_1=\cdots=c_m=0$ under the hypothesis of~\ref{P1}.
\item The {\em dimension} of a polyhedron $P$ is the minimal dimension of a hyperplane in $V$ that contains $P$. 
\item A {\em face} of a polyhedron $P$ is a convex subset $F\subseteq P$ satisfying the following property: if $u,v\in P$  and there exists $\mu \in (0,1)$ such that $\mu u+(1-\mu)v\in F$, then $u,v\in F$. Note that a face of a polyhedron is again a polyhedron. 
\item A face of $P$ contained in $\partial P$ is abbreviated into {\em a face in $\partial P$}, and a face $F$ in $\partial P$ is {\em top-dimensional} if $\dim F=\dim P-1$. 
\item For a convex subset $X\subseteq V$, the {\em cone} over $X$ is defined as $Cone(X)=\{\lambda x\mid \lambda\ge 0, x\in X\}$. 
\end{enumerate}
\end{definition}


Let $M$ be a compact orientable 3-manifold. Denote $$\BTh(M)=\{\psi\in H^1(M;\R)\mid \Th{\psi}\le 1\}$$ as the {\em Thurston norm unit ball} of $M$.

\begin{proposition}[{\cite[Theorem 2]{Thu86}}]
The Thurston norm unit ball $\BTh(M)$ is a convex polyhedron in $H^1(M;\R)$. 
\end{proposition}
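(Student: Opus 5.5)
The plan is to follow Thurston's original argument. First I pass from the norm on integral classes to a finite list of integral homology classes that govern the whole norm. Concretely, I would show that the Thurston norm on the lattice $H^1(M;\Z)$ is the restriction of a seminorm that is \emph{integral-valued} on integral classes. The integrality is clear, since $\chi_-$ of a surface is an integer. By linearity on rays, the values on rational classes then lie in a discrete set $\frac{1}{n}\Z$ along each ray. The key claim is that the unit ball of a seminorm $x$ on $V=H^1(M;\R)$ which takes integer values on the lattice $L=H^1(M;\Z)_{\mathrm{free}}$ is a polyhedron of the form
$$
\BTh(M)=\{\psi \mid \langle \psi, \beta\rangle \le 1 \text{ for all } \beta \in B\},
$$
where $B\subset H_1(M;\R)$ is a finite set of points in the dual lattice $H_1(M;\Z)_{\mathrm{free}}$. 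Note that $\BTh(M)$ may be noncompact if $x$ has a kernel, which is why it is only a polyhedron and not a polytope.

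The first step is to reduce to a genuine norm. The kernel $W=\{x=0\}$ is a linear subspace. I would check that it is a rational subspace: the zero set is convex and symmetric, hence a subspace, and it is spanned by lattice points because the norm values on rational points are discrete. Then $x$ descends to a norm on $V/W$, which is integral on the image lattice $L/(L\cap W)$. The preimage of a polyhedron is again a polyhedron, so it suffices to treat genuine norms.

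For a norm $x$ that is integral on a lattice $L$, I pass to the dual unit ball $B^*=\{\beta\in V^* \mid \beta\le x\}$, which is compact and convex. It is enough to show that every extreme point of $B^*$ lies in the dual lattice $L^*$. Then $B^*$ has only finitely many extreme points, being a compact set meeting a discrete lattice, so $B^*$ is a polytope. Dualizing, $\BTh(M)=\{\psi\mid \beta(\psi)\le 1,\ \forall\beta\in B^*\}$ is then cut out by finitely many half-spaces.

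The main obstacle is the integrality of the extreme points of $B^*$. Here I would argue as Thurston does, by induction on $\dim V$. Take a lattice point $\psi$ in the interior of a cone over a maximal domain where $x$ is linear. The non-emptiness of such a domain needs its own argument: I would show local linearity near rational points using the integrality combined with the triangle inequality. In such a domain, $x$ agrees with a linear functional $\beta$, and $\beta$ is integral on a full-rank set of lattice points spanning the cone. Hence $\beta$ is rational with bounded denominator, and a determinant or covolume argument forces $\beta\in L^*$. The finiteness of the set of domains is the delicate point. I would try to get it by showing that the set of $\beta\in L^*$ with $\beta\le x$ is finite and already determines $x$ on all lattice points, via a supporting-hyperplane argument at each lattice point, using the fact that the norm is bounded by a multiple of an auxiliary Euclidean norm.
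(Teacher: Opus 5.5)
\paragraph{A gap in the main step.} Your reductions are sound. For a genuine norm it suffices to prove $x(v)=\max\{\beta(v)\mid \beta\in L^*,\ \beta\le x\}$. The finiteness of the set on the right, which you single out as delicate, is immediate from compactness of $B^*$.

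Two side remarks on the earlier steps. First, the values of $x$ on rational points are dense, not discrete. Rationality of $\ker x$ should instead come from Dirichlet approximation: an irrational kernel would yield lattice points outside $\ker x$ with $x<1$. Second, integrality of $\beta$ on a full-rank set of lattice points only gives $\beta\in\frac1d L^*$. What you need is that the lattice points of an open cone generate $L$, via $\ell=(N\ell_0+\ell)-N\ell_0$.

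The genuine gap is the step you call the main obstacle. Showing that the integral functionals $\beta\le x$ actually compute $x$ at every lattice point is the whole content of Thurston's theorem, and your sketch does not supply it.
\begin{itemize}
\item Working with ``maximal domains where $x$ is linear'' presupposes that $x$ is piecewise linear, which is what is being proved. For a strictly convex norm no such open domains exist, and integrality is exactly what has to rule this out.
\item ``Local linearity near rational points'' is false as stated: $|a|+|b|$ is not linear near $(1,0)$.
\item A supporting hyperplane at a lattice point need not be integral; at a vertex of the unit ball there is a whole family of them. So a supporting-hyperplane argument alone does not produce an element of $L^*$.
\end{itemize}

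\paragraph{The missing idea.} It is a stabilization argument feeding an induction on dimension.

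For $\lambda,v\in L$, the integers $a_n=x(n\lambda+v)-n\,x(\lambda)=n\bigl(x(\lambda+v/n)-x(\lambda)\bigr)$ are nonincreasing in $n$ by convexity and bounded below by $-x(-v)$. Hence they are eventually constant. Consequently the one-sided derivative $y_\lambda(v)=\lim_{t\to0^+}\bigl(x(\lambda+tv)-x(\lambda)\bigr)/t$ is integral on $L$. Moreover $y_\lambda$ is sublinear, $y_\lambda\le x$, and $y_\lambda(v+s\lambda)=y_\lambda(v)+s\,x(\lambda)$ for all $s\in\R$.

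For $\lambda$ primitive, write $L=\Z\lambda\oplus L'$. Then $y_\lambda$ restricted to $\R\otimes L'$ is sublinear and integral on $L'$, one dimension lower. It is typically not symmetric: for $x=\max(|a|,|b|,|a+b|)$ and $\lambda=(1,0)$ one gets $b\mapsto\max(0,b)$. So the induction must be run over sublinear functions, not seminorms, with the statement: for every sublinear $x$ integral on $L$ and every $\lambda\in L$, there is $\phi\in L^*$ with $\phi\le x$ and $\phi(\lambda)=x(\lambda)$.

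The inductive hypothesis supplies $\phi'\in(L')^*$ with $\phi'\le y_\lambda$ on $\R\otimes L'$. Then $\phi(v'+s\lambda)=\phi'(v')+s\,x(\lambda)$ is integral, satisfies $\phi\le y_\lambda\le x$, and has $\phi(\lambda)=x(\lambda)$.

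Hence on $L$ the function $x$ coincides with the maximum of the finitely many $\beta\in L^*$ with $\beta\le x$. By homogeneity this holds on $\Q\otimes L$, and by continuity on all of $V$. This exhibits $\BTh(M)$ as a polyhedron cut out by finitely many integral inequalities, and it makes your separate reduction to a genuine norm unnecessary.
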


We say that the Thurston norm on $M$ is {\em non-vanishing} if there exists some $\psi\in H^1(M;\R)$ such that $\Th{\psi}\neq 0$. In this case, $\partial \BTh(M)\neq \varnothing$ is the union of finitely many top-dimensional faces in $\partial\BTh(M)$. 
\begin{proposition}[{\cite[Theorem 5]{Thu86}}]\label{thm: fibered}
  If the Thurston norm on $M$ is non-vanishing, then there exists a collection of top-dimensional faces $F_1,\cdots, F_n$ in $\partial\BTh(M)$ such that the set of fibered classes in $H^1(M;\Z)$ equals
$$
\left( \bigcup_{i=1}^{n} \operatorname{int}(Cone(F_i)) \right) \cap  H^1(M;\Z) .
$$ 
\end{proposition}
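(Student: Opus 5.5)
This is Thurston's fibered face theorem, so the plan is to reconstruct Thurston's argument in three steps, taking as given the Thurston norm as a polyhedral seminorm, Stallings' criterion, and Tischler's theorem.

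\emph{Step 1: openness.} First I will show that the set of (real) classes represented by nonsingular closed $1$-forms is an open cone in $H^1(M;\R)$, and that its integral points are exactly the fibered classes. If $\pi: M\to \mathbb{S}^1$ is a fibration, then $\omega=\pi^*d\theta$ is a nowhere-vanishing closed $1$-form. A small perturbation $\omega+\eta$, with $\eta$ closed and $C^0$-small, is still nowhere zero by compactness of $M$; near $\partial M$ I will choose $\eta$ tangentially compatible so the boundary behaves. By Tischler's argument, a nonsingular closed form representing a rational class is cohomologous to a form pulled back from a fibration. Together with the openness this gives the identification with fibered integral classes.

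\emph{Step 2: the norm is linear on this cone.} For a nonsingular closed form $\omega$ representing $\psi$, I will pick a vector field $X$ with $\omega(X)=1$. The Euler class $e\in H^2(M,\partial M)$ of the plane field $\ker\omega$, equivalently of the tangent planes of the fibers, satisfies $\Th{\psi}=-\langle e,\mathrm{PD}(\psi)\rangle$ for integral fibered $\psi$. This holds because fibers are norm-minimizing, as leaves of a taut foliation (Thurston's Euler-class inequality). The class $e$ is locally constant on the open cone from Step 1, since the plane fields $\ker\omega_t$ vary continuously along a path of nonsingular forms. So on each connected component $\mathcal{C}$ of that cone, $\Th{\cdot}$ agrees with the linear functional $-e$.

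\emph{Step 3: identify $\mathcal{C}$ with $\operatorname{int}Cone(F)$.} Linearity of the seminorm on the open set $\mathcal{C}$ forces $\mathcal{C}\subseteq \operatorname{int}Cone(F)$ for a unique top-dimensional face $F$, namely the face where the supporting functional is $-e$. The converse inclusion is the \textbf{main obstacle}: every class in $\operatorname{int}Cone(F)$ must be fibered. I would argue by convexity. If $\psi_0\in\mathcal{C}$ and $\psi_1\in\operatorname{int}Cone(F)$, then for integral classes on the segment, Thurston's cut-and-paste (oriented double-curve sum) of a fiber with a norm-minimizing surface for the other class gives a norm-minimizing surface. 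I would then show this surface is a fiber, using the fact that a norm-minimizing surface which is a union of leaves of the suspension flow is transverse to the flow. More robustly, I would take Fried's cross-section theorem route: for the suspension flow $\phi$ of $\psi_0$, a class $\psi$ is fibered with fibers transverse to $\phi$ iff $\psi$ is positive on all homology directions of $\phi$ (Schwartzman–Fried). The set of such $\psi$ is an open convex cone $\mathcal{C}_\phi$, and it is contained in the fibered locus. One then shows that $-e$ pairs with closed orbits in a way that makes $\mathcal{C}_\phi$ exactly $\operatorname{int}Cone(F)$: a class in $\operatorname{int}Cone(F)$ that is nonpositive on some homology direction would make $\Th{\cdot}$ strictly smaller than $-e$ somewhere in $\operatorname{int}Cone(F)$, which is a contradiction. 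The distinct components $\mathcal{C}$ give the finite list of faces $F_1,\dots,F_n$.
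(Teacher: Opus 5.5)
The paper does not prove this statement; it quotes it from Thurston [Thu86, Theorem~5]. Your Steps~1--2 and the first half of Step~3 correctly reconstruct Thurston's argument: nonsingular classes form an open cone whose integral points are the fibered classes, $\Th{\cdot}=-e$ on each component $\mathcal C$, and hence $\mathcal C\subseteq\operatorname{int}Cone(F)$ for the face $F$ supported by $-e$. The gap is the reverse inclusion, that every integral class in $\operatorname{int}Cone(F)$ is fibered. This is the real content of the theorem, and neither of your routes proves it.

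In your first route, even granting that the double-curve sum is norm-minimizing, nothing you say makes it a fiber. Moreover, a surface that is a union of flow lines is tangent to the flow, not transverse to it.

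In your second route, $\mathcal C_\phi\subseteq\mathcal C$ is the easy direction, and the decisive sentence is circular. By the definition of $F$ one already has $\Th{\cdot}=-e$ on all of $Cone(F)$. So the claim that a class of $\operatorname{int}Cone(F)$ nonpositive on some homology direction ``would make the norm smaller than $-e$ somewhere in $\operatorname{int}Cone(F)$'' just restates that no such class exists, i.e.\ $\operatorname{int}Cone(F)\subseteq\mathcal C_\phi$. You give no mechanism that turns such a homology direction into a surface of smaller complexity. Quoting Fried's identification $\mathcal C_\phi=\operatorname{int}Cone(F)$ instead would assume exactly the inclusion to be shown.

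The missing ingredient is Thurston's positive-tangency argument, which needs no dynamics.

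\emph{Setup.} Fix an integral $\psi_0\in\mathcal C$ represented by a nonsingular closed form $\omega$, with kernel foliation $\mathcal F$ (the fibers). Let $S$ be an incompressible norm-minimizing surface, without sphere or disk components, dual to an integral class $\beta$ in the \emph{closed} cone $Cone(F)$.

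\emph{Tangency count.} By Roussarie--Thurston general position (using that $M$ is irreducible and that $S$ and the fibers are incompressible), isotope $S$ so that $\mathcal F|_S$ has only saddle tangencies. Say $p_+$ of them are points where the orientations of $S$ and $\mathcal F$ agree, and $p_-$ where they disagree. Poincar\'e--Hopf gives $\chi(S)=-(p_++p_-)$. Computing $e$ with a section of $T\mathcal F|_S$ tangent to $\mathcal F|_S$ gives $\langle e,[S]\rangle=-(p_+-p_-)$, normalized so that $\langle e,[\text{fiber}]\rangle=\chi(\text{fiber})$. Hence $\Th{\beta}=-\langle e,\beta\rangle$ forces $p_-=0$.

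\emph{Bump form.} Take a sufficiently thin collar $S\times[-1,1]$ with coordinate $s$, and a bump form $\delta=f'(s)\,ds$ with $f'\ge 0$, Poincar\'e dual to $S$. Wherever $\omega$ and $ds$ are linearly dependent (only near the tangencies, all of which are positive), one has $\omega=c\,ds$ with $c>0$. Therefore $\omega+t\delta$ is nowhere zero for every $t\ge 0$.

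\emph{Conclusion.} Given an integral $\alpha\in\operatorname{int}Cone(F)$, choose $k\in\N$ so large that $\beta=k\alpha-\psi_0\in Cone(F)$. Then $(\omega+\delta)/k$ is a nonsingular closed form with integral periods representing $\alpha$, so $\alpha$ is fibered. Note that the tangency count only uses $\beta\in Cone(F)$; the interior hypothesis on $\alpha$ enters exactly in the choice of $k$.
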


When the Thurston norm on $M$ is non-vanishing, each $Cone(F_i)$ as appearing in \autoref{thm: fibered} is called a {\em fibered cone} in $H^1(M;\R)$. For any fibered class $\psi\in H^1(M;\Z)$, there is a unique fibered cone $Cone(F_i)$ such that $\psi\in \operatorname{int}(Cone(F_i))$. We denote $\CTh(M,\psi)= Cone(F_i)$. Then, $\CTh(M,\psi)$ is a codimension~0 polyhedral cone.

\begin{remark}
\begin{enumerate}[leftmargin=*, label=(\arabic*)]
\item If the Thurston norm on $M$ is vanishing, then either all nonzero integral cohomology classes are fibered, or all are non-fibered.
\item In some literatures,  $\CTh(M,\psi)$ is taken to be $\operatorname{int}(Cone(F_i))$.  
We remind the readers that $\CTh(M,\psi)$ denotes  the closed polyhedral cone in our notation. 
\end{enumerate}
\end{remark}

In hyperbolic 3-manifolds, which is the main focus of this paper, the Thurston norm is indeed a norm. As such, the above notions for a  non-vanishing Thurston norm applies in our situation.

\begin{proposition}\label{prop: true norm}
If the interior of $M$ admits a complete hyperbolic structure of finite volume, then $\Th{\psi}\neq 0$ whenever $\psi\in H^1(M;\R)\setminus\{0\}$. In this case, $\BTh(M)$ is a compact polyhedron symmetric about the origin.
\end{proposition}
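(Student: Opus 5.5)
The plan is to show that a nonzero class $\psi\in H^1(M;\R)$ cannot have zero Thurston norm. I would first treat integral classes, then rational ones by homogeneity, and finally general real classes by using the structure of the zero set of a seminorm. The last assertion would follow from general convexity facts together with \cite[Theorem 2]{Thu86}.

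\emph{Integral classes.} Let $\psi\in H^1(M;\Z)\setminus\{0\}$. I would choose a properly embedded oriented surface $S$ dual to $\psi$ realizing $\Th{\psi}=\chi_-(S)$. I would first discard every component of $S$ that is null-homologous in $(M,\partial M)$, so that the remaining surface still represents $\psi$. Then I would compress $S$ and discard sphere components, which never increases $\chi_-$. After this, $S$ can be assumed incompressible, and it is nonempty because $\psi\neq0$.

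If $\chi_-(S)=0$, every component is a sphere, a disk, a torus or an annulus. I would rule out each case using hyperbolicity.
\begin{itemize}
\item Spheres are ruled out because $M$ is irreducible (its universal cover is $\mathbb{H}^3$), so every sphere bounds a ball and is null-homologous.
\item Disks are ruled out because $\partial M$ consists of tori, which are incompressible in $M$. An incompressible disk would then have inessential boundary on $\partial M$, so after capping it off we get a sphere, and the same argument applies.
\item An incompressible torus gives $\Z^2\le\pi_1M$. Such a subgroup must be peripheral, since a discrete subgroup of $\PSL_2(\C)$ isomorphic to $\Z^2$ is parabolic, and atoroidality of finite-volume hyperbolic manifolds then shows the torus is boundary-parallel. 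A boundary-parallel torus is homologous to a boundary component, which is zero in $H_2(M,\partial M)$.
\item An essential annulus would produce a $\Z$ subgroup containing two non-conjugate peripheral elements, or a Seifert-fibered piece. Both are excluded, because $M$ is anannular: a loxodromic element cannot commute with a parabolic one, and maximal parabolic subgroups are malnormal. Hence annuli are boundary-parallel and again null-homologous.
\end{itemize}
This contradicts $\psi\neq 0$, so $\Th{\psi}>0$. Linearity on rays then gives $\Th{\psi}>0$ for all nonzero rational classes.

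\emph{Real classes.} The zero set $N=\{\psi:\Th{\psi}=0\}$ of a seminorm is a linear subspace. The key point is that $N$ is defined over $\Q$. I would justify this by a standard consequence of Thurston's theorem: the norm is the maximum of finitely many linear functionals that are integral on $H_1$ (Thurston's dual polyhedron has integral vertices). Hence $\Th{\psi}=\max_i|\langle\psi,a_i\rangle|$ with $a_i\in H_1(M;\Z)_{\mathrm{free}}$, and $N=\bigcap_i a_i^\perp$ is a rational subspace. A nonzero rational subspace contains a nonzero rational class, which is impossible by the previous step, so $N=0$. This rationality step is the main obstacle, and it is where \cite[Theorem 2]{Thu86} is used beyond the bare polyhedrality statement.

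\emph{The unit ball.} Since the norm is now a genuine norm on a finite-dimensional space, it is bounded below by a positive multiple of a Euclidean norm, so $\BTh(M)$ is bounded and hence compact. The ball is symmetric about the origin because $\Th{-\psi}=\Th{\psi}$, since reversing the orientation of a surface preserves $\chi_-$. Polyhedrality is the cited proposition.
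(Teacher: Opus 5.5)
Your proposal is correct and is essentially the paper's argument: the paper also deduces from irreducibility, $\partial$-irreducibility, atoroidality and anannularity of $M$ that every surface representing a nonzero class in $H_2(M,\partial M;\Z)$ has negative Euler characteristic, and then cites \cite[Theorem 1]{Thu86} to conclude that the Thurston norm is definite on all of $H^1(M;\R)$. The differences are that you make this passage from integral to real classes explicit via rationality of the null space, and that you spell out the compactness and symmetry of $\BTh(M)$, which the paper leaves implicit; the rationality step does not even need the integral dual polyhedron, since $\Th{\cdot}$ is integer-valued and, by your first step, positive on nonzero integral classes, so a nonzero null space containing no nonzero integral class would produce integral classes of arbitrarily small positive norm.
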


In fact, $M$ is irreducible, $\partial$-irreducible, anannular, and atoroidal. Thus, any properly embedded surface in $M$ that represents a non-trivial homology class in $H_2(M,\partial M;\Z)$ has negative Euler characteristic. Hence, the Thurston norm on $M$ is a norm, see \cite[Theorem 1]{Thu86}.

\subsection{Periodic trajectories}\label{subsec:pertraj}

In this subsection, we focus on fibered hyperbolic 3-manifolds. 

Let $M$ be a closed orientable hyperbolic 3-manifold with a fibered class $\psi \in H^1(M;\Z)$. 
By \autoref{def: fibered class}, $\psi$ is induced by a fiber bundle map $\pi:M\to \mathbb{S}^1$, which gives rise to a monodromy map on the fiber surface $S_{M,\psi}$. Since $M$ is hyperbolic, the monodromy map represents a pseudo-Anosov mapping class, and is hence isotopic to a pseudo-Anosov automorphism $f: S_{M,\psi}\to S_{M,\psi}$.  
Let 
$$
M_f=\frac{S_{M,\psi}\times \R}{(x,s+1)\sim (f(x),s)}
$$
be the mapping torus of $f$, and let 
$$
\begin{tikzcd}[row sep=0cm, column sep=tiny]
 \pi_f: \hspace{-0.3cm} &  M_f \arrow[rrr] & & & \mathbb{S}^1= \R/\Z \\
        &  (x,s) \arrow[rrr, maps to] & & & s\, {\scalebox{1}{$(\mathrm{mod}\,1)$}}
\end{tikzcd} 
$$
be the distinguished fiber  bundle map. 
We can find a fiber-preserving homeomorphism $F:M\to M_f$ such that the following diagram commutes.
\begin{equation*}
\begin{tikzcd}
 M \arrow[rr, "F"] \arrow[rd,"\pi"']  &    &   M_f \arrow[ld, "\pi_f"] \\
                   & \hspace{0.04cm} \mathbb{S}^1 \hspace{-0.05cm} & 
\end{tikzcd}
\end{equation*}

Let 
\begin{equation*}
\begin{tikzcd}[row sep=0cm, column sep=tiny]
\theta_f^t: \hspace{-0.3cm}& M_f \arrow[rrrr] & & & & M_f \\
& (x,s) \arrow[rrrr, maps to] & & & & (x,s+t)
\end{tikzcd}
\end{equation*}
be the suspension flow on the mapping torus $M_f$. We pull back $\theta_f^t$ along $F$ to obtain a one-parameter flow $\Theta^t_\psi= F^{-1} \circ \theta^t_f \circ F$ on $M$. 

We say that two one-parameter flows $\Theta_1^t$ and $\Theta_2^t$ on a manifold $M$ are {\em strictly conjugate} if there exists a homeomorphism $g: M\to M$ isotopic to the identity such that $g\circ \Theta_1^t=\Theta_2^t \circ g$ for all $t$. Fried \cite[Theorem 14.11]{FLP} proved that the strict conjugacy class of $\Theta_\psi^t$ is uniquely determined by the fibered class $\psi$, i.e.\ it is independent with the choice of $\pi$, $f$, and $F$. His proof is based on two facts regarding a closed orientable surface $S$ of genus $g\ge 2$. First, pseudo-Anosov automorphisms of $S$ belonging to the same homotopy class are topologically conjugate by a homeomorphism that is isotopic to the identity \cite[Theorem 12.5]{FLP}. Second, $\mathrm{Homeo}_0(S)$ (the identity component of $\mathrm{Homeo}(S)$) is simply-connected \cite{Ham66}. 

With this justification, we refer to the strict conjugacy class of $\Theta_{\psi}^t$ as the {\em pseudo-Anosov suspension flow associated to $\psi$}.

Suppose $\Theta^t$ is a one-parameter flow on a manifold $M$. A  loop
\begin{equation*}
\gamma: \frac{[0,1]}{0\sim 1}\cong \mathbb{S}^1\to M
\end{equation*}
is called a {\em periodic trajectory} of $\Theta^t$ if there exists $\lambda>0$ such that $\gamma(t)=\Theta^{\lambda t}(\gamma(0))$ for any $t\in [0,1]$. Note that $\gamma$ is not required to be simple.

\begin{definition}
  Suppose $M$ is a closed  orientable  hyperbolic 3-manifold, and  $\psi\in H^1(M;\Z)$ is a fibered class. The set of {\em periodic trajectories associated to $\psi$}, denote by $\mathfrak{T}(M,\psi)$, is the set of free homotopy classes of all  periodic trajectories  of the pseudo-Anosov suspension flow $\Theta_\psi^t$.  $\mathfrak{T}(M,\psi)$ is also viewed as a set of conjugacy classes in $\pi_1M$. 
\end{definition}

\begin{remark}\label{rmk: solely CTh}
Note that the free homotopy classes of the periodic trajectories of a one-parameter flow are determined by its strict conjugacy class, so Fried's result as described above guarantees that $\mathfrak{T}(M,\psi)$ is well-defined by $\psi$. 

In fact, Fried \cite[Theorrem 14.11]{FLP} showed that the pseudo-Anosov suspension flows associated to fibered classes in a same fibered cone are orbit equivalent to each other via homeomorphisms of $M$ that are isotopic to the identity. Thus, the set $\mathfrak{T}(M,\psi)$ is determined solely by the fibered cone $\CTh(M,\psi)$. 
\end{remark}

\begin{definition}
Let $M$ be a closed orientable hyperbolic 3-manifold, and let $\psi\in H^1(M;\Z)$ be a fibered class. The {\em dual Thurston norm cone} (or {\em Fried cone}) of $\psi$ is a subset in $H_1(M;\R)$ defined by
$$
\CFr(M,\psi)=\{ x\in H_1(M;\R)\mid \langle \psi ,x \rangle \ge 0, \, \forall \psi \in \CTh(M,\psi)\}.
$$
\end{definition}
\begin{lemma}\label{lem: codim 0 cone}
$\CFr(M,\psi)$ is a codimension~0 polyhedral cone in $H_1(M;\R)$. 
\end{lemma}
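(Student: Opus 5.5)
The plan is to identify $\CFr(M,\psi)$ as the dual cone of $\CTh(M,\psi)$ and then use two standard facts about polyhedral cones: duality preserves polyhedrality, and the dual of a pointed cone is full-dimensional. Under the perfect pairing $H^1(M;\R)\times H_1(M;\R)\to\R$, the set $\CFr(M,\psi)$ is exactly the dual cone $\CTh(M,\psi)^{\vee}$.

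The first step is to write $\CTh(M,\psi)$ as a finitely generated cone. By \autoref{thm: fibered}, $\CTh(M,\psi)=Cone(F_i)$ for a top-dimensional face $F_i$ of the compact polyhedron $\BTh(M)$; compactness comes from \autoref{prop: true norm}. A face of a compact polyhedron is a compact polyhedron, so $F_i$ is the convex hull of finitely many vertices $v_1,\dots,v_k$, and hence
$$
\CTh(M,\psi)=\{\textstyle\sum_j \lambda_j v_j\mid \lambda_j\ge 0\}.
$$

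The second step shows that $\CFr(M,\psi)$ is a polyhedral cone. A class $x$ pairs nonnegatively with every element of $\CTh(M,\psi)$ if and only if $\langle v_j,x\rangle\ge 0$ for each $j$. So $\CFr(M,\psi)$ is the intersection of the finitely many closed half-spaces $\{\langle v_j,\cdot\rangle\ge 0\}$, all with zero constant term, which is a polyhedral cone by definition.

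The third step is codimension $0$, i.e.\ nonempty interior. It suffices that $\CTh(M,\psi)$ is pointed, meaning it contains no nonzero vector $u$ together with $-u$. Then, by the standard separation argument (or Gordan's lemma / Farkas), there is a linear functional $x$ with $\langle v_j,x\rangle>0$ for all $j$. Such an $x$ lies in the interior of $\CFr(M,\psi)$, since strict inequalities are open conditions.

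Pointedness follows from the norm. The Thurston norm is a genuine norm by \autoref{prop: true norm}, and it is linear on $Cone(F_i)$, since it is identically $1$ on $F_i$ and homogeneous. Suppose $u$ and $-u$ both lie in $\CTh(M,\psi)$. Then $0=u+(-u)$ lies in the cone, and linearity gives $\Th{u}+\Th{-u}=\Th{0}=0$. This forces $u=0$.

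An alternative for this step is to pick a supporting linear functional $\ell\in H_1(M;\R)$ with $\ell\equiv 1$ on $F_i$, which exists because $F_i$ is a face of $\partial\BTh(M)$. Then $\langle v_j,\ell\rangle=1>0$ for every $j$, so $\ell$ is an interior point directly.

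The main, though mild, obstacle is justifying this strict positivity, which is where the hyperbolicity hypothesis (via \autoref{prop: true norm}) is genuinely used.
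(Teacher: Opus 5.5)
Your proof is correct and follows essentially the same route as the paper: write $\CTh(M,\psi)$ as the cone on the vertices of the face $F$, express $\CFr(M,\psi)$ as the intersection of the finitely many half-spaces $\{\langle v_j,\cdot\rangle\ge 0\}$, and use $\CTh(M,\psi)\cap(-\CTh(M,\psi))=\{0\}$ to get a functional strictly positive on all vertices, hence an interior point. Your pointedness argument, via additivity of the norm on the cone or the supporting functional $\ell\equiv 1$ on $F$, just spells out what the paper attributes to $\BTh(M)$ being a compact symmetric polyhedron.
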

\begin{sloppypar}
\begin{proof}
Suppose that $\CTh(M,\psi)$ is spanned by a top-dimensional face $F$ in $\partial \BTh(M)$. Recall that $\BTh(M)$ is a compact polyhedron (\autoref{prop: true norm}), so $F$ is the convex hull of its vertices, which consist of finitely many points $\phi_1,\cdots, \phi_n$. Then, $\CFr(M,\psi)=\{x\in H_1(M;\R)\mid \langle \phi_i , x \rangle \ge 0, \, \forall 1\le i \le n\}$ is a polyhedral cone in $H_1(M;\R)$. Furthermore, $\CTh(M,\psi)\cap (-\CTh(M,\psi))=\{0\}$  since $\BTh(M)$ is a compact symmetric polyhedron. Thus,  there exists a codimension~$1$ linear subspace $W\subseteq H^1(M;\R)$ such that $\CTh(M,\psi)\setminus\{0\}$ belongs to the interior of a half-space $\mathcal H$ bounded by $W$. We can pick $x_0\in H_1(M;\R)$ such that $\langle \varphi,x_0\rangle =0$ for any $\varphi \in W$, and $\langle \phi,x_0\rangle >0$ for any $\phi\in \mathcal H$. Then, $\langle \phi_i,x_0\rangle >0$ for any $1\le i\le n$, so $\CFr(M,\psi)$ contains an open neighbourhood of $x_0$. Consequently, $\CFr(M,\psi)$ has codimension~$0$ since it has non-empty interior. 
\end{proof}
\end{sloppypar}

Note that periodic trajectories are always homologically non-trivial. 
The Fried cone of $\psi$ characterizes the homological directions of the periodic trajectories associated to $\psi$.

\begin{proposition}[{\cite[Theorem 5.14]{Liu20}}]\label{prop: Fried cone}
Let $M$ be a closed orientable hyperbolic 3-manifold, with $\psi\in H^1(M;\Z)$ being a fibered class. 
\begin{enumerate}[label=(\arabic*), leftmargin=*]
\item\label{Fc1} For any $\gamma \in \mathfrak{T}(M,\psi)$, the homology class $[\gamma]\in H_1(M;\R)$ belongs to $\CFr(M,\psi)$. 
\item\label{Fc2} For any rational class $x\in \operatorname{int}(\CFr (M, \psi))$, there exists  a periodic trajectory $\gamma\in \mathfrak{T}(M,\psi)$ whose homology class $[\gamma]$ is a positive scalar multiple of $x$. 
\end{enumerate}
\end{proposition}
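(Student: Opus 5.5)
This proposition is cited from Liu's work \cite[Theorem 5.14]{Liu20}. Here is how I would prove it, using Fried's theory of cross sections to flows.

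For part~\ref{Fc1}, let $\gamma$ be a periodic trajectory of $\Theta^t_\psi$ and let $\phi\in\CTh(M,\psi)$. Since $\CFr(M,\psi)$ is defined by closed linear inequalities, continuity reduces the claim to $\phi$ a rational, hence after scaling integral, class in $\operatorname{int}\CTh(M,\psi)$. By Fried's theorem quoted in \autoref{rmk: solely CTh}, the flow $\Theta^t_\phi$ is orbit equivalent to $\Theta^t_\psi$ via a homeomorphism isotopic to the identity, so free homotopy classes of periodic trajectories are preserved. Thus $\gamma$ is freely homotopic to a periodic trajectory $\gamma'$ of the suspension flow of $\phi$. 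That flow is transverse to the fibers $S_{M,\phi}$ and crosses them positively, since it is the suspension flow of the mapping torus $M_f$. Hence
$$
\langle\phi,[\gamma]\rangle=\langle \phi,[\gamma']\rangle = \#(\gamma'\cap S_{M,\phi})>0 .
$$
This gives $[\gamma]\in\CFr(M,\psi)$, in fact in its interior away from the boundary inequalities.

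For part~\ref{Fc2}, I would use Fried's characterization of the homology directions of the flow. Let $D\subseteq H_1(M;\R)$ be the closed cone generated by the homology classes of closed orbits; equivalently, by asymptotic cycles of invariant measures. Fried's theorem says that an integral class $\phi$ is represented by a cross section of $\Theta^t_\psi$ exactly when $\phi>0$ on $D\setminus\{0\}$. It also says these classes are precisely the integral classes in $\operatorname{int}\CTh(M,\psi)$. Dualizing, $D$ is the dual cone of $\CTh(M,\psi)$, namely $D=\CFr(M,\psi)$.

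It remains to show that every rational ray in the interior of $D$ is realized by an actual periodic orbit, not merely approximated by positive combinations of them. For this I would use a Markov partition for the pseudo-Anosov flow. Its transition graph $G$ is strongly connected, and closed orbits correspond, up to finitely many exceptions, to cycles in $G$. Homology gives a map from the cycle space of $G$ onto a spanning set of $H_1(M;\R)$. Given rational $x\in\operatorname{int}D$, write $x$ as a positive rational combination of classes of simple cycles $c_1,\dots,c_k$ whose classes span a neighbourhood of the ray through $x$. Clearing denominators gives $Nx=\sum n_i[c_i]$ with each $n_i>0$. Because $G$ is strongly connected, these cycles can be concatenated, with connecting paths, into a single closed path traversing each $c_i$ exactly $n_i$ times. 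The connecting paths perturb the homology class. I would absorb this error by choosing the decomposition with an extra positive slack in the interior directions, and by using connecting paths that close up into cycles. The result is a closed path with homology exactly $m x$ for some $m>0$. A closed path in a Markov graph of a pseudo-Anosov flow is realized by a periodic orbit, by shadowing. That orbit is the required $\gamma$.

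\textbf{Main obstacle.} The hard step is this exact realization of the ray: making the homology of the concatenated loop an exact multiple of $x$, rather than a nearby class, while handling the non-uniqueness of coding near the partition boundaries. The identification $D=\CFr(M,\psi)$ is standard Fried theory, and part~\ref{Fc1} is routine.
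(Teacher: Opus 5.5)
Your proposal is correct and follows the route the paper defers to: part~(1) is the consequence of Fried's orbit equivalence within a fibered cone explained in \autoref{rmk: solely CTh}, and part~(2) is the transition-graph argument of \cite[Theorem 5.14]{Liu20}. Your slack idea does give an exact multiple of $x$ once you make it precise. Take the $c_i$ to be \emph{all} simple dynamical cycles, so that $x$ still has positive coefficients. Write the closed walk $\eta$ formed by the connecting paths as $[\eta]=\sum e_i[c_i]$ with integers $e_i\ge 0$. Then insert each $c_i$ exactly $Kn_i-e_i$ times for $K\gg 0$; the resulting closed walk has class $KNx$.

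One correction: strict positivity of $\langle\phi,[\gamma]\rangle$ for $\phi\in\operatorname{int}\CTh(M,\psi)$ does not put $[\gamma]$ in $\operatorname{int}\CFr(M,\psi)$. Indeed, the paper notes that classes of periodic trajectories may lie on $\partial\CFr(M,\psi)$. This aside is false but not used in your argument.
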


In fact, item \ref{Fc1} is a direct consequence of \cite[Theorem 14.11]{FLP} as explained in \autoref{rmk: solely CTh}. Item \ref{Fc2} is proven in the second paragraph of the proof of {\cite[Theorem 5.14]{Liu20}}, see  subsequent remarks and notes therein for further explanations. 
We remark that $\partial \CFr(M,\psi)$ could contain homology classes of periodic trajectories in $\Traj(M,\psi)$, but it is also possible that some rational direction in $\partial \CFr(M,\psi)$ does not contain the homology class of any periodic trajectory associated to $\psi$. 
\section{Crossfibered manifolds}\label{sec: crossfibered}
We now introduce the class of hyperbolic 3‑manifolds that we will be mainly working with in this paper.
\begin{definition}\label{def: crossfibered}
A closed orientable hyperbolic 3-manifold $M$  is called a {\em crossfibered manifold} if $M$ possesses two primitive fibered classes $\psiA,\psiF \in H^1(M;\Z)$,  and  there exists a periodic trajectory $\gamma$ associated to $\psiA$ such  that  $\gamma$ can be freely homotoped onto the fiber surface dual to $\psiF$. 

The tuple $(M,\psiA,\psiF,\gamma)$ as described is called a {\em crossfibering datum} of $M$, in which $\psiA$ is called the {\em axial fibered class} and $\psiF$ is called the {\em facial fibered class}. 
\end{definition}

Note that the crossfibering datum for a crossfibered manifold may not be unique.  
The purpose of this section is to prove the following virtual crossfibering theorem. 


\begin{theorem}\label{thm: virtual crossfibering}
Any closed hyperbolic 3-manifold has a finite cover which is a crossfibered manifold. 
\end{theorem}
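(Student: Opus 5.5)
By Agol's virtual fibering theorem \cite{Ago13}, we first pass to a finite cover $M_1$ of the given closed hyperbolic 3-manifold that fibers. Using Agol's RFRS machinery \cite{Ago08} (virtual special cubulation gives a virtually RFRS fundamental group), we may pass to a further finite cover $M$ with $b_1(M)\ge 2$ that still fibers. We may also arrange that a fibered cone $\CTh(M,\psiA)$ is a genuine codimension~0 cone containing a primitive fibered class $\psiA$ in its interior. The aim is then to find a second primitive fibered class $\psiF$ and a periodic trajectory $\gamma\in\Traj(M,\psiA)$ with $\langle\psiF,[\gamma]\rangle=0$. By the discussion after \autoref{def: fibered class}, this condition is exactly what makes $\gamma$ freely homotopic onto $S_{M,\psiF}$.

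The key step is a convex-geometric choice of $\psiF$. By \autoref{lem: codim 0 cone}, $\CFr(M,\psiA)$ has non-empty interior, and by \autoref{prop: Fried cone}\ref{Fc2} every rational ray in $\operatorname{int}\CFr(M,\psiA)$ is realised by a periodic trajectory. So it suffices to find a fibered class $\psiF\neq\psiA$ whose kernel hyperplane $\{x:\langle\psiF,x\rangle=0\}$ meets $\operatorname{int}\CFr(M,\psiA)$ in a rational direction. Since $\psiF$ is integral, its kernel hyperplane is rational, and if it meets the open cone at all it meets it in a rational point.

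The hyperplane of $\psiF$ meets $\operatorname{int}\CFr(M,\psiA)$ iff $\psiF$ takes both signs on $\CFr(M,\psiA)$, i.e.\ iff $\psiF\notin \CTh(M,\psiA)\cup(-\CTh(M,\psiA))$ by cone duality. Thus we need a fibered class lying in a fibered cone other than $\pm\CTh(M,\psiA)$. This is the main obstacle, since $M$ might have only the one pair of fibered cones $\pm\CTh(M,\psiA)$. To overcome it, we would pass to a further finite cover $\widetilde M\to M$ in which the pullback of $\psiA$ lies in a fibered cone $\mathcal C$ and some other fibered cone exists not equal to $\pm\mathcal C$.

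To produce this second cone, we would use Agol's result \cite{Ago08} in the virtually RFRS setting. It allows one to pass to a finite cover in which a chosen non-fibered class becomes fibered after perturbation, that is, lies on the boundary of a fibered face. For instance, we could take the class dual to a non-fiber embedded surface (such surfaces exist virtually), which is not in the pullback of $\pm\mathcal C$. The fibered class it becomes must then lie in a distinct fibered cone, because its pairing sign on the lifted trajectory directions differs.

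Concretely, we would pick a rational class $x\in\partial\CFr$ direction and a class $\phi$ with $\phi$ negative somewhere on $\CFr(\widetilde M,\psiA)$ and positive elsewhere. Agol's theorem then gives a cover where some fibered $\psiF$ is close to the pullback of $\phi$, so it still takes both signs on the pulled-back Fried cone. Periodic trajectories pulled back to the cover remain periodic trajectories of the lifted flow, and the Fried cone pulls back compatibly, so the argument goes through.

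Finally, we replace $\psiA,\psiF$ by their primitive multiples, which lie in the same cones. The trajectory $\gamma$ obtained from \autoref{prop: Fried cone}\ref{Fc2} for a rational $x$ in $\ker\psiF\cap\operatorname{int}\CFr$ then gives the crossfibering datum $(\widetilde M,\psiA,\psiF,\gamma)$.
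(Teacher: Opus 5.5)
Your proposal is correct and follows the paper's proof: choose an integral class on the fibered cover with $b_1\ge 2$ that takes both strict signs on $\CFr(M,\psiA)$ (the paper picks $\varphi$ vanishing at a rational interior point of $\CFr$, which is equivalent by your duality remark), use \autoref{thm: fibered closure} to make its pullback lie in the closure of a fibered cone, perturb to a fibered class $\psiF$, check that the both-signs condition survives in the cover, and apply \autoref{prop: Fried cone}~\ref{Fc2} to a rational point of $\ker\psiF\cap\operatorname{int}\CFr$. For the survival step you use the surjectivity in \autoref{prop: finite cover}~\ref{fcov4}, while the paper lifts an interior point via \autoref{cor: interior cone}; the aside about classes dual to non-fiber surfaces and the stray choice of $x\in\partial\CFr$ are unnecessary (the former is not justified as stated), but the concrete argument in your penultimate paragraph is exactly what is needed.
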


\subsection{Finite cover}
Let $M$ be a closed orientable hyperbolic 3-manifold, and let $p: M' \to M$ be a finite-sheeted cover. Then $p$ induces a surjective linear map $p_\ast: H_1(M';\R)\to H_1(M;\R)$ and a dual injective linear map $p^\ast: H^1(M;\R)\to H^1(M';\R)$, both of which send integral (resp.\  rational) classes to integral (resp.\  rational) classes. 

\begin{proposition}\label{prop: finite cover}
Let $p: M'\to M$ be a finite cover of closed orientable hyperbolic 3-manifolds as described above. 
\begin{enumerate}[label=(\arabic*),leftmargin=*]
\item\label{fcov1} A cohomology class $\psi\in H^1(M;\Z)$ is a fibered class if and only if $p^\ast\psi \in H^1(M';\Z)$ is a fibered class.
\end{enumerate}

In the following, let $\psi \in H^1(M;\Z)$ be a fibered class.
\begin{enumerate}[label=(\arabic*),leftmargin=*,start=2]
\item\label{fcov2} The covering map $p$ projects each periodic trajectory in $\Traj(M',p^\ast\psi)$ to  a periodic trajectory in $\Traj(M,\psi)$. 
\item\label{fcov3} Conversely, every elevation of a periodic trajectory in $\Traj(M,\psi)$ is a periodic trajectory in $\Traj(M',p^\ast\psi)$. 
\item\label{fcov4} $p_\ast (\CFr(M',p^\ast\psi))= \CFr(M,\psi)$. 
\end{enumerate}
\end{proposition}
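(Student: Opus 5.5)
Items \ref{fcov1}–\ref{fcov3} come from lifting the fibration and its flow to the cover; item \ref{fcov4} then follows by combining \autoref{prop: Fried cone} with a projection argument.

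\emph{Item \ref{fcov1}.} Use Stallings' criterion. Regard $\psi$ as a homomorphism $\pi_1M\to\Z$. Then $p^\ast\psi$ is its restriction to the finite-index subgroup $\pi_1M'$. Its kernel is $\ker(\psi)\cap\pi_1M'$, which has finite index in $\ker(\psi)$.
\begin{itemize}
\item If $\ker\psi$ is finitely generated, so is this finite-index subgroup.
\item Conversely, if $\ker(p^\ast\psi)$ is finitely generated, then $\ker\psi$ contains it with finite index, so $\ker\psi$ is finitely generated.
\end{itemize}
Nonvanishing is preserved because $p^\ast$ is injective. Geometrically: if $\pi:M\to\mathbb{S}^1$ is a fiber bundle inducing $\psi$, then $\pi\circ p$ is a fiber bundle inducing $p^\ast\psi$.

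\emph{Items \ref{fcov2} and \ref{fcov3}.} Lift the suspension flow. Choose $\pi$ and a pseudo-Anosov monodromy $f$ realizing $\Theta^t_\psi$ on $M$, and pull $\Theta^t_\psi$ back to a flow $\widetilde\Theta^t$ on $M'$ via the covering $p$; this is possible because flows lift uniquely along coverings. The lifted flow is transverse to the fibers of $\pi\circ p$. Its first-return map to a fiber $S'$ of $\pi\circ p$ is a lift of an iterate of $f$ to a finite cover of $S_{M,\psi}$, hence is again pseudo-Anosov. So $\widetilde\Theta^t$ is a suspension flow of a pseudo-Anosov monodromy for $p^\ast\psi$ (if $p^\ast\psi$ is non-primitive, work componentwise). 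By Fried's uniqueness (\cite[Theorem 14.11]{FLP}, \autoref{rmk: solely CTh}), $\widetilde\Theta^t$ is strictly conjugate to $\Theta^t_{p^\ast\psi}$.
\begin{itemize}
\item Periodic orbits of $\widetilde\Theta^t$ project to periodic orbits of $\Theta^t_\psi$, giving \ref{fcov2}.
\item Since $p$ is finite-sheeted, every elevation of a periodic orbit of $\Theta^t_\psi$ is closed and is an orbit of the lifted flow, giving \ref{fcov3}.
\end{itemize}
The step needing most care is identifying the lifted flow with $\Theta^t_{p^\ast\psi}$ up to strict conjugacy, together with the bookkeeping that free homotopy classes are preserved.

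\emph{Item \ref{fcov4}.} By \autoref{lem: codim 0 cone} both Fried cones are closed polyhedral cones of codimension~0.

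For $\subseteq$, take a rational point $x'$ in the interior of $\CFr(M',p^\ast\psi)$. By \autoref{prop: Fried cone}\ref{Fc2}, $x'$ is a positive multiple of $[\gamma']$ for some $\gamma'\in\Traj(M',p^\ast\psi)$. By \ref{fcov2}, $p(\gamma')$ is a periodic trajectory, so \autoref{prop: Fried cone}\ref{Fc1} gives $p_\ast x'\in\CFr(M,\psi)$. Rational interior points are dense, the cone is closed, and $p_\ast$ is linear and continuous, so $p_\ast\CFr(M',p^\ast\psi)\subseteq\CFr(M,\psi)$.

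For $\supseteq$, take a rational $x$ in the interior of $\CFr(M,\psi)$. It is a positive multiple of $[\gamma]$ for some $\gamma\in\Traj(M,\psi)$. Let $\gamma'$ be an elevation of $\gamma$, of degree $d$. By \ref{fcov3}, $[\gamma']\in\CFr(M',p^\ast\psi)$, and $p_\ast[\gamma']=d[\gamma]$. Hence $x$ lies in the image.

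The image of a polyhedral cone under a linear map is a polyhedral cone, hence closed. It contains the dense set of rational interior points of $\CFr(M,\psi)$, so it contains all of $\CFr(M,\psi)$. This gives equality.
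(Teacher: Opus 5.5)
Your proposal is correct and follows essentially the same route as the paper: Stallings' criterion together with the composed bundle map $\pi\circ p$ for (1); lifting the suspension flow of a pseudo-Anosov representative to $M'$ and invoking Fried's uniqueness for (2)--(3); and, for (4), using \autoref{prop: Fried cone} with (2) and (3) to match the rays spanned by periodic trajectories, then closing up via the fact that the image of a polyhedral cone is a closed polyhedral cone. The differences are only cosmetic: you treat the lifted return map componentwise, and you run the density argument through rational interior points, where the paper instead identifies each Fried cone as the closure of the set $\Lambda$ of nonnegative rays through periodic-trajectory classes and shows $p_\ast(\Lambda')=\Lambda$.
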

\begin{proof}
To prove \ref{fcov1}, first suppose that $\psi$ is a fibered class, which is induced by a fiber bundle map $\pi: M\to \mathbb{S}^1$. Then, $p^\ast \psi$ is induced by $\pi \circ p: M' \to \mathbb{S}^1$, which is also a fiber bundle map. Thus, $p^\ast \psi$ is also a fibered class. Conversely, suppose $p^\ast\psi$ is a fibered class. We identify $H^1(M';\Z)$ and $H^1(M;\Z)$ with $\Hom(\pi_1M',\Z)$ and $\Hom(\pi_1M,\Z)$ respectively. Up to a choice  of basepoints, $p_\ast(\pi_1M')$ represents a finite-index subgroup in $\pi_1M$, so $\ker(\pi_1M\xrightarrow{\psi} \Z)$ contains $\ker(\pi_1M' \xrightarrow{p_\ast} \pi_1M \xrightarrow{\psi}\Z)=\ker(\pi_1M' \xrightarrow{p^\ast\psi}  \Z)$ as a finite-index subgroup. Note that $\ker(p^\ast \psi)=\pi_1S_{M',p^\ast \psi}$ is finitely generated, so $\ker(\psi)$ is also finitely generated. Hence, it follows from Stallings's theorem \cite{Sta62} that $\psi$ is also a fibered class.  

For \ref{fcov2} and \ref{fcov3}, it suffices to show that the pseudo-Anosov suspension flow $\Theta_{p^\ast \psi}^t$ on $M'$ is the lift of $\Theta_{\psi}^t$ via the covering map $p$. 
As described in \autoref{subsec:pertraj}, we may identify $M$ with a pseudo-Anosov mapping torus 
$$
M_f=\frac{ S\times \mathbb{R}}{(x,s+1)\sim (f(x),s)}
$$
where $f:S\to S$ is a pseudo-Anosov automorphism on the (possibly disconnected) surface $S$, such that the fibered class $\psi$ is induced by the distinguished fiber bundle map $\pi_f:M_f\to \mathbb{S}^1 $. As such, we take  the suspension flow $\theta^t_f$ on $M_f$ as a representative for the strict conjugacy class of $\Theta^t_\psi$. 

The fibered class $p^\ast \psi\in H^1(M';\Z)$ is induced by the fiber bundle map 
\begin{equation*}
\begin{tikzcd}
\pi':\,M'\arrow[r,"p"] & M=M_f \arrow[r,"\pi_f"] & \mathbb{S}^1.
\end{tikzcd}
\end{equation*}
Let $\widetilde{\theta^t_f}$ be the lift of $\theta^t_f$ onto $M'$ via the covering map $p:M'\to M=M_f$. Let $S'=p^{-1}(S\times \{0\})\subseteq M'$, which is a (possibly disconnected) finite cover of $S$. Then for each $t\in \mathbb{R}$ (also viewed as a point in $\mathbb{S}^1=\R/\Z$), $\widetilde{\nss{\theta}{f}{t}}(S')=p^{-1}(\theta^t_f(S\times \{0\}))=p^{-1}(S\times \{t\})=(\pi')^{-1}(t)$. Thus, $\widetilde{\nss{\theta}{f}{t}}$  defines a suspension flow on the fiber bundle $M'\xrightarrow{\pi'}\mathbb{S}^1$. The return map $\widetilde{\nss{\theta}{f}{1}}:S'\to S'$ is a lift of the return map $f=\theta^1_f:S\to S$ via the covering map $p:S'\to S$. Hence $f'=\widetilde{\nss{\theta}{f}{1}}:S'\to S'$ is also a pseudo-Anosov automorphism on $S'$, and $\widetilde{\nss{\theta}{f}{t}}$ defines a pseudo-Anosov suspension flow on $(M',\pi')$. As explained in \autoref{subsec:pertraj}, $\widetilde{\nss{\theta}{f}{t}}$ is a representative for the strict conjugacy class of $\Theta^t_{p^\ast\psi}$. In other words, $\Theta^t_{p^\ast\psi}$ is the lift of $\Theta^t_\psi$ via $p$, and assertions $\ref{fcov2}$ and $\ref{fcov3}$ follow  directly. 

Now we prove \ref{fcov4}. According to \autoref{lem: codim 0 cone} and \autoref{prop: Fried cone}, $\CFr(M,\psi)$ is the topological closure of $\Lambda=\{\lambda[\gamma]\in H_1(M;\R)\mid \lambda\in [0,+\infty), \gamma \in \Traj( M,\psi)\}$; and similarly, $\CFr(M',p^\ast \psi)$ is the topological closure of $\Lambda'=\{\lambda[\gamma]\in H_1(M';\R)\mid \lambda\in [0,+\infty), \gamma \in \Traj( M',p^\ast \psi)\}$. We claim that $p_\ast(\Lambda')=\Lambda$. 

On one hand, for any $\gamma \in \Traj(M',p^\ast \psi)$, $p_\ast(\gamma)\in \Traj(M,\psi)$ by \ref{fcov2}, and hence $p_\ast(\lambda[\gamma])=\lambda[p_\ast(\gamma)]\in \Lambda$ for any $\lambda\ge 0$. In other words, $p_\ast(\Lambda')\subseteq \Lambda $. On the other hand, for any $\gamma\in \Traj(M,\psi)$, we can pick an elevation $\gamma'$ of $\gamma$ in $M'$, and   \ref{fcov3} asserts that $\gamma'\in \Traj(M',p^\ast \psi)$. 
Suppose that $\gamma'$ is an $n$-fold cover of $\gamma$, where $n\in \N$. Then, $\lambda[\gamma]=p_\ast(\frac{\lambda}n[\gamma'])\in p_\ast(\Lambda')$ for any $\lambda\ge 0$. In other words, $p_\ast(\Lambda')\supseteq \Lambda $. 

Thus, $p_\ast(\Lambda')=\Lambda$. This implies that $p_\ast(\CFr(M',p^\ast\psi))=\CFr(M,\psi)$, since a linear map between finite-dimensional real vector spaces sends any polyhedral cone to a polyhedral cone. 
\end{proof}

\begin{lemma}\label{lem: polyhedral map}
Suppose $V$ and $W$ are finite-dimensional real vector spaces, and $f: V\to W$ is a linear map. Suppose $P\subseteq V$ and $Q\subseteq W$ are codimension~0 polyhedrons such that $f(P)=Q$. Then, for any $q\in \operatorname{int}(Q)$, $f^{-1}(q)\cap \operatorname{int}(P)\neq \varnothing$. 
\end{lemma}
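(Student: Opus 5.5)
The idea is to use that a surjective linear map is an open map, so interior points of $P$ map to interior points of $Q$, and then to push a point of $P$ over $q$ slightly toward a chosen interior point of $P$.

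\textbf{Step 1: $f$ is surjective.} Since $Q$ has codimension~$0$, it has non-empty interior. The image $f(V)$ is a linear subspace containing $Q$, so it must be all of $W$. A surjective linear map between finite-dimensional spaces is open; concretely, choose a linear section $\sigma: W\to V$ with $f\circ\sigma=\mathrm{id}$. In particular $f(\operatorname{int}(P))$ is an open subset of $Q$, hence lies in $\operatorname{int}(Q)$.

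\textbf{Step 2: a point of $P$ over $q$ and an interior point of $P$.} Since $f(P)=Q$ and $q\in Q$, pick $v\in P$ with $f(v)=q$. Since $P$ also has codimension~$0$, pick $u\in\operatorname{int}(P)$, and set $w=f(u)\in Q$.

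\textbf{Step 3: reflect $w$ through $q$.} Since $q\in\operatorname{int}(Q)$, for small $\epsilon>0$ the point
$$
q'=q+\epsilon(q-w)
$$
lies in $Q$. Pick $v'\in P$ with $f(v')=q'$. Then
$$
q=\tfrac{1}{1+\epsilon}\,q'+\tfrac{\epsilon}{1+\epsilon}\,w,
$$
so the point
$$
x=\tfrac{1}{1+\epsilon}\,v'+\tfrac{\epsilon}{1+\epsilon}\,u
$$
satisfies $f(x)=q$.

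\textbf{Step 4: $x$ is interior to $P$.} This uses the standard convexity fact that a convex combination of a point of $P$ and an interior point of $P$, with positive weight on the interior point, is interior. Indeed, if $B(u,r)\subseteq P$, then by convexity
$$
B\!\left(x,\tfrac{\epsilon}{1+\epsilon}\,r\right)=\tfrac{1}{1+\epsilon}\,v'+\tfrac{\epsilon}{1+\epsilon}\,B(u,r)\subseteq P.
$$
Hence $x\in f^{-1}(q)\cap\operatorname{int}(P)$, which is therefore non-empty.

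The only step needing care is Step 3, which must produce a point of $P$ over $q$ that is genuinely interior rather than merely in $P$. The reflection trick handles this, and it needs only the convexity of $P$ and $Q$, not that they are polyhedra.
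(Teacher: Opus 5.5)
Your proof is correct, but it takes a different route from the paper.

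The paper argues by contradiction. If the affine subspace $f^{-1}(q)$ missed the open convex set $\operatorname{int}(P)$, a separating hyperplane containing $f^{-1}(q)$ would give a functional $h$ with $h(f^{-1}(q))=\{c\}$ and $h(\operatorname{int}(P))\subseteq(c,+\infty)$. Then $\ker f\subseteq\ker h$, so $h$ descends through the surjection $f$ to a functional $g$ on $W$ with $g(q)=c$ and $g(Q)=h(P)\subseteq[c,+\infty)$. This contradicts $q\in\operatorname{int}(Q)$.

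You instead construct an interior preimage directly. You reflect $f(u)$ through $q$ slightly, lift that point to $v'\in P$, and take the convex combination of $v'$ with the interior point $u$. This avoids the separation (Hahn--Banach) theorem entirely. It uses only the elementary fact that a convex combination with positive weight on an interior point is interior, and it produces an explicit point of $f^{-1}(q)\cap\operatorname{int}(P)$.

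Two pieces of your write-up are never used: Step~1 (surjectivity and openness of $f$) and the point $v$ chosen in Step~2. Your argument actually needs only three things: $P$ convex with nonempty interior, a neighbourhood of $q$ contained in $f(P)$, and the choice of $\epsilon$. Neither argument uses that $P$ and $Q$ are polyhedra; only convexity matters.
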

\begin{proof}
Since $Q$ has codimension 0, $Q$ spans $W$ as a vector space, and $f$ is necessarily surjective. Suppose by contradiction that $f^{-1}(q)\cap \operatorname{int}(P) = \varnothing$. Note that $f^{-1}(q)\subseteq V$ is a hyperplane, so by convexity of $\operatorname{int}(P)$, there exists a codimension~1 hyperplane $H\subseteq V$ such that $f^{-1}(q)\subseteq H$ and that $\operatorname{int}(P)$ is contained in the interior of a  halfspace bounded by $H$. In other words, there exist  a surjective linear map $h: V\to \R$ and $c\in \R$ such that  $h(f^{-1}(q))=\{c\}$  and $h(\operatorname{int}(P))\subseteq (c,+\infty)$. As such, $\ker(f)\subseteq \ker(h)$. Since $f$ is surjective, $h$ descends to a surjective linear map $g: W\to \R$ such that $h=g\circ f$. Then, $g(q)=c$. In addition, $h(P)\subseteq [c,+\infty)$ by continuity, so $g(Q)=g(f(P))=h(P)\subseteq [c,+\infty)$. This contradicts with the fact that $q\in \operatorname{int}(Q)$ and $g$ is surjective. 
\end{proof}

\begin{corollary}\label{cor: interior cone}
Suppose $M$ is a closed orientable hyperbolic 3-manifold, and $\psi\in H^1(M;\Z)$ is a fibered class. Let  $p: M'\to M$ be a finite cover. Then, for any $x\in \operatorname{int}(\CFr(M,\psi))$,  there exists $x'\in \operatorname{int}(\CFr(M',p^\ast \psi))$ such that $p_\ast(x')=x$. 
\end{corollary}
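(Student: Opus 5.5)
This follows directly by combining \autoref{prop: finite cover}\ref{fcov4} with \autoref{lem: polyhedral map}. We apply the lemma with
\[
V=H_1(M';\R),\quad W=H_1(M;\R),\quad f=p_\ast,\quad P=\CFr(M',p^\ast\psi),\quad Q=\CFr(M,\psi).
\]

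First we check the hypotheses of \autoref{lem: polyhedral map}. The map $p_\ast$ is linear between finite-dimensional real vector spaces. We need $p^\ast\psi$ to be a fibered class of $M'$; this holds by \autoref{prop: finite cover}\ref{fcov1}. Also $M'$ is again a closed orientable hyperbolic 3-manifold, being a finite cover of one. So \autoref{lem: codim 0 cone} applies to both $(M,\psi)$ and $(M',p^\ast\psi)$, and $P$ and $Q$ are codimension~$0$ polyhedral cones, hence codimension~$0$ polyhedrons. Finally, \autoref{prop: finite cover}\ref{fcov4} gives exactly $p_\ast(P)=Q$.

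Now take $x\in\operatorname{int}(Q)$. \autoref{lem: polyhedral map} gives $p_\ast^{-1}(x)\cap\operatorname{int}(P)\neq\varnothing$. Any $x'$ in this intersection satisfies $x'\in\operatorname{int}(\CFr(M',p^\ast\psi))$ and $p_\ast(x')=x$, as required.

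There is no real obstacle here. The only point needing care is that $p^\ast\psi$ is a fibered class on a closed hyperbolic manifold, so that its Fried cone is defined and of codimension~$0$; this is supplied by \autoref{prop: finite cover}\ref{fcov1} and \autoref{lem: codim 0 cone}.
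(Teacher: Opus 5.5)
Your proof is correct and is exactly the paper's argument, which simply combines \autoref{lem: codim 0 cone}, \autoref{prop: finite cover}~\ref{fcov4}, and \autoref{lem: polyhedral map}. Your check that $p^\ast\psi$ is fibered (via \autoref{prop: finite cover}~\ref{fcov1}) and that $M'$ is again closed, orientable and hyperbolic spells out hypotheses the paper leaves implicit.
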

\begin{proof}
This corollary is a combination of \autoref{lem: codim 0 cone}, \autoref{prop: finite cover}~\ref{fcov4}, and \autoref{lem: polyhedral map}. 
\end{proof}

\subsection{Virtual specialization}

Nowadays, rich constructions of finite covers of hyperbolic 3-manifolds arise from the technique of virtual specialization. 

\begin{theorem}[{Agol \cite{Ago13}, Wise \cite{Wis21}}]\label{thm: virtually special}
The fundamental group of a complete finite-volume hyperbolic 3-manifold is virtually compact special. 
\end{theorem}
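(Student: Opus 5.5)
This statement is quoted from the literature and the paper does not prove it; I would not re-prove it but reduce it to the published results it combines. The plan is to split into the closed case, where $\pi_1M$ is word-hyperbolic, and the cusped case, where $\pi_1M$ is hyperbolic relative to its $\Z^2$ cusp subgroups. In both cases the argument has two stages: first show that $\pi_1M$ acts properly (and cocompactly, or relatively cocompactly) on a CAT(0) cube complex, and then promote cubulation to virtual specialness.

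\emph{Step 1: cubulation.}
\begin{itemize}
\item In the closed case I would invoke Kahn--Markovic. Their theorem supplies, for every great circle in $\partial\mathbb{H}^3$, nearly geodesic closed immersed surfaces whose limit sets approximate that circle. Hence $\pi_1M$ contains enough quasiconvex codimension-one surface subgroups to separate any pair of points at infinity.
\item The Bergeron--Wise criterion then applies: Sageev's construction produces a proper cocompact action of $\pi_1M$ on a CAT(0) cube complex $X$.
\item In the cusped case, Wise's work gives the cubulation through a geometrically finite quasiconvex hierarchy. One can also treat this case through Haken hierarchies, or through the relative version of the Bergeron--Wise criterion, yielding a relatively cocompact proper action.
\end{itemize}

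\emph{Step 2: from cubulation to virtual specialness.} The input is Agol's theorem that a word-hyperbolic group acting properly and cocompactly on a CAT(0) cube complex is virtually compact special. Its proof has three ingredients:
\begin{itemize}
\item Wise's Malnormal Special Quotient Theorem.
\item The Agol--Groves--Manning weak separation results for quasiconvex subgroups under Dehn filling.
\item A colouring/induction argument on hyperplane stabilisers. This argument builds a finite-index subgroup whose quotient cube complex has no self-intersecting, one-sided, directly self-osculating or inter-osculating hyperplanes, which are exactly Haglund--Wise's conditions for specialness.
\end{itemize}
For the cusped case I would not pass through Agol's theorem. Wise's quasiconvex hierarchy theorem applies directly: a group with a quasiconvex (or, in the relatively hyperbolic version, relatively quasiconvex) hierarchy terminating in finite groups is virtually compact special. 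Geometrically finite Haken hierarchies supply such a hierarchy, after Dehn filling arguments in the relatively hyperbolic setting.

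The main obstacle is Step 2 in the closed case, namely Agol's virtual specialness theorem. The Kahn--Markovic input gives only cubulation. The delicate part is controlling the hyperplane pathologies in a finite cover, which requires the full machinery of the Malnormal Special Quotient Theorem and hyperbolic Dehn filling. Since none of this is developed in the present paper, the write-up will simply cite \cite{Ago13} and \cite{Wis21} for these steps.
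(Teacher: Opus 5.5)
Your proposal is correct and takes the same approach as the paper. The paper does not prove this statement either: it cites Agol \cite{Ago13} for the closed case, noting that this builds on Kahn--Markovic \cite{KM12} and Bergeron--Wise \cite{BW12}, and Wise \cite[Theorem 17.14]{Wis21} for the cusped case. Your sketch of the ingredients behind these citations is an accurate high-level account. One caution: the ``relatively cocompact'' cubulation you mention as an alternative in the cusped case would not by itself yield \emph{compact} specialness. That conclusion rests on Wise's theorem, which is what you ultimately use.
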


Agol \cite{Ago13} proved \autoref{thm: virtually special} for closed hyperbolic 3-manifolds, building on the works of Kahn--Markovic \cite{KM12} and Bergeron--Wise \cite{BW12}, etc. Wise \cite[Theorem 17.14]{Wis21} proved the theorem for cusped hyperbolic manifolds. 
\autoref{thm: virtually special} resolves the virtual Haken  conjecture, producing plenty of applications in finite covers. 
\begin{proposition}[{\cite[Theorem 9.2]{Ago13}}]\label{thm: virtually fiber}
Let $M$ be a closed hyperbolic 3-manifold.
\begin{enumerate}[label=(\arabic*), leftmargin=*]
\item There exist finite covers of $M$ with arbitrarily large first betti numbers.
\item\label{thmvf2} There exists a finite cover of $M$ which fibers over $\mathbb{S}^1$. 
\end{enumerate}
\end{proposition}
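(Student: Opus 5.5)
The plan is to deduce both items from \autoref{thm: virtually special}, using standard consequences of being virtually compact special. Fix a closed hyperbolic 3-manifold $M$. By \autoref{thm: virtually special}, after passing to a finite cover we may assume $\pi_1M=\pi_1X$ for a compact special cube complex $X$. By Haglund--Wise, $\pi_1M$ then embeds in a right-angled Artin group $A$. Moreover, the embedding is induced by a local isometry $X\to$ Salvetti complex, so $\pi_1M$ is a virtual retract of $A$ and has separable quasiconvex subgroups.

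For item (1), the intended route is largeness. Right-angled Artin groups are RFRS, and RFRS passes to subgroups, so $\pi_1M$ is RFRS. RFRS means there is a cofinal chain of finite-index normal subgroups $\pi_1M=G_0\ge G_1\ge\cdots$ with $\bigcap G_i=1$, such that $G_{i+1}$ contains $\ker\bigl(G_i\to H_1(G_i;\Q)\bigr)$. Hence $b_1(G_i)>0$ for all $i$.

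To get $b_1$ unbounded, I would argue instead that $\pi_1M$ is large. It is non-elementary hyperbolic, so it contains a quasiconvex free subgroup $F_2$. Since quasiconvex subgroups of virtually compact special hyperbolic groups are virtual retracts (Haglund--Wise), some finite-index subgroup $G$ retracts onto $F_2$. The finite-index subgroups of $F_2$ have arbitrarily large rank, and pulling them back through the retraction $G\to F_2$ gives finite-index subgroups of $G$ with arbitrarily large $b_1$. The corresponding covers of $M$ prove (1).

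For item (2), I would invoke Agol's virtual fibering criterion \cite{Ago08}. If $M$ is an irreducible 3-manifold with $\chi(M)=0$ and $\pi_1M$ is RFRS, then $M$ has a finite cover that fibers over $\mathbb{S}^1$. Its proof starts from a non-fibered class $\psi$ given by a taut surface, using $b_1>0$, which holds here by (1). It then passes along the RFRS tower, decomposing the sutured manifold complementary to the taut surface and showing that the sutured-manifold complexity decreases until the complement is a product. The resulting class in some $G_i$ is then fibered.

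The main obstacle is the RFRS input, namely going from virtual specialness to RFRS. I would handle it by citing that subgroups of right-angled Artin groups are RFRS. For that step I would check that the characteristic-subgroup filtration of $A$ intersected with $\pi_1M$ satisfies the RFRS condition, using that $A$ is residually torsion-free nilpotent.
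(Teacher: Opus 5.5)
Your route is the standard one. It is essentially Agol's own derivation of \cite[Theorem 9.2]{Ago13} from virtual specialness, which the paper cites without proof; the paper's remark after the statement uses the same chain, namely virtually special $\Rightarrow$ virtually RFRS \cite[Corollary 2.3]{Ago08} $\Rightarrow$ Agol's fibering criterion. Your argument for (1) is correct as written: the Haglund--Wise virtual retraction onto a quasiconvex free subgroup, followed by pulling back finite-index subgroups of $F_2$, gives arbitrarily large $b_1$. Your argument for (2) is correct as long as the RFRS input is taken from the citation. Two points need fixing.

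\textbf{The RFRS verification.} The check you propose for the step you call the main obstacle would fail, because residual torsion-free nilpotence does not imply RFRS. Take the integral Heisenberg group $\langle x,y,z\mid [x,y]=z,\ [x,z]=[y,z]=1\rangle$, which is torsion-free nilpotent. Any finite-index subgroup $G_i$ containing $z$ also contains some $x^a$ and $y^b$ with $a,b\ge 1$. Then $z^{ab}=[x^a,y^b]\in[G_i,G_i]$, so $z$ maps to zero in $H_1(G_i;\Q)$. Inductively, $z$ lies in every term of any RFRS chain, so no such chain has trivial intersection.

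In addition, the characteristic subgroups supplied by residual torsion-free nilpotence are the lower central series terms, and these have infinite index. The RFRS property of right-angled Artin and Coxeter groups comes instead from the cube-complex structure: embedded two-sided hyperplanes give integral cohomology classes, and the tower is built from them. Simply cite \cite[Corollary 2.3]{Ago08}; virtual RFRS is all you need.

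Relatedly, drop the word ``cofinal'' from your definition of RFRS. The quotients $G/G_i$ along an RFRS chain are solvable, so such a chain cannot be cofinal in a group like $\pi_1M$, which has non-solvable finite quotients.

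\textbf{The output of Agol's criterion.} \cite[Theorem 5.1]{Ago08} does not say that the class you end with in the cover is fibered. It says that the pulled-back class lies in the closure of a fibered cone; this is exactly \autoref{thm: fibered closure} in the paper. A fibered class is then obtained by perturbing into the open cone. This does not affect your conclusion that some finite cover fibers, only your description of how the proof ends.
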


\autoref{thm: virtually special} also implies that the fundamental group of a finite-volume hyperbolic 3-manifold is virtually RFRS \cite[Corollary 2.3]{Ago08}, which, via Agol's criterion, yields a more precise version of \autoref{thm: virtually fiber}~\ref{thmvf2}. 

\begin{proposition}[{\cite[Theorem 5.1]{Ago08}}]\label{thm: fibered closure}
Let $M$ be an orientable complete finite-volume hyperbolic 3-manifold. Then, for any $\psi\in H^1(M;\Z)\setminus\{0\}$, there exists a finite cover $p: M'\to M$ such that $p^\ast\psi$ lies in the closure of a fibered cone in $H^1(M';\Z)$. 
\end{proposition}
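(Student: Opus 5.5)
The plan is to follow Agol's RFRS argument and use sutured manifold theory to make the non-product part of the complement of a norm-minimizing surface disappear after passing to covers.

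\textbf{Setup.} By \autoref{thm: virtually special} and \cite[Corollary 2.3]{Ago08}, after a preliminary finite cover (which I suppress, since pulling back preserves everything in sight) we may assume $\pi_1M$ is RFRS. Concretely, there is a cofinal chain of finite-index normal subgroups $\pi_1M=G_0\ge G_1\ge\cdots$ with $\bigcap_i G_i=\{1\}$. Each $G_{i+1}$ contains the kernel of $G_i\to H_1(G_i;\Q)$, and hence is the preimage of a subgroup of $H_1(G_i;\Z)_{\mathrm{free}}$. Let $M_i\to M$ be the corresponding covers.

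\textbf{Sutured complement.} Let $S$ be a Thurston-norm minimizing surface dual to $\psi$. Cut $M$ along $S$ to obtain a taut sutured manifold $(M_S,\gamma)$. By Gabai, it decomposes into a product region $S\times I$ together with a compact non-product part, the \emph{guts}. Attach to $\psi$ a complexity $c(\psi)$ recording the guts, for instance the sum of $-\chi$ of the non-product components of $R_\pm$ of the guts. The statement is that some $p^\ast\psi$ has a representative with empty guts relative to a nearby fibered class. Equivalently, there is a class $p^\ast\psi+\epsilon\phi$, with $\epsilon$ arbitrarily small, whose associated sutured manifold is a product, hence fibered. Then $p^\ast\psi$ lies in the closure of that fibered cone. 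Note that $\psi$ itself need not be fibered, so this is the right formulation.

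\textbf{Inductive step (main obstacle).} Suppose the guts are nonempty. Gabai's theory gives a non-separating, nontrivial decomposing surface in the guts. Its boundary cycle, or more precisely a product-disk obstruction loop, is a loop $\delta$ lying in the guts that is not carried by $S\times I$. Using the RFRS tower, we find a level $i$ and a class $\phi\in H^1(M_i;\Z)$ such that:
\begin{itemize}
\item $\phi$ vanishes on the lifted copies of $S$;
\item $\phi$ is nonzero on a lift of $\delta$.
\end{itemize}
The RFRS condition is exactly what lets us detect the rational homology class of a lift of $\delta$ in a cover where $S$ lifts homeomorphically.

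Then take $\psi'=p_i^\ast\psi+\epsilon\phi$ for small $\epsilon>0$. For small $\epsilon$, the norm is linear on the segment from $p_i^\ast\psi$ to $\psi'$. An oriented double-curve sum of the lifted $S$ with a norm-minimizing surface for $\phi$ represents $\psi'$ and is norm-minimizing. Its sutured complement is obtained from the lifted $(M_S,\gamma)$ by a further taut decomposition along the $\phi$-surface restricted to the guts. This strictly decreases the complexity of the guts.

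I expect the hardest part to be verifying the strict decrease. One must show that the decomposition does not create new non-product pieces elsewhere and that tautness is preserved. My first attempt would be to choose the $\phi$-surface to intersect the product region only in vertical pieces; this keeps the product region a product. I would then invoke Gabai's theorem that decomposing along a conditioned surface preserves tautness.

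\textbf{Termination.} The complexity $c$ is a nonnegative integer, so after finitely many steps we reach a class $p^\ast\psi+\sum_k\epsilon_k\phi_k$, with all $\epsilon_k$ arbitrarily small, whose sutured complement is a product. This class is therefore fibered. Letting the $\epsilon_k\to0$ within a single fibered cone, which is possible since fibered cones are open and we may fix the cover, shows that $p^\ast\psi$ lies in the closure of that fibered cone, as claimed.
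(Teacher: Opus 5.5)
The paper does not prove this statement: it quotes \cite[Theorem 5.1]{Ago08}, with the virtual RFRS hypothesis supplied by \autoref{thm: virtually special}. You are instead reproving Agol's theorem. Your framework (norm-minimizing surface, taut sutured complement, guts, RFRS tower) is the right one, but the inductive step, which is the entire content of that theorem, is not established.

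The first gap is the existence of the class $\phi$. If $\delta\in G_i\setminus G_{i+1}$, the RFRS condition only gives $[\widetilde\delta]\neq 0$ in $H_1(M_i;\Q)$. You need more. A class vanishing on all lifts of $S$ and nonzero on $\widetilde\delta$ exists if and only if $[\widetilde\delta]$ lies outside $V=\operatorname{im}(H_1(\widetilde S;\Q)\to H_1(M_i;\Q))$. By excision, this says that the class of $\widetilde\delta$ in $H_1$ of the lifted sutured manifold is not in the image of $H_1(R_+\cup R_-;\Q)$. That is a relative statement about the lifted $M_S$. Nothing in the RFRS property of $\pi_1M$ gives it, and your remark about detecting the homology class of a lift of $\delta$ only addresses $[\widetilde\delta]\neq 0$.

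Even granting $\phi$, the next two claims fail as stated.

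\emph{Norm-minimality of the sum.} Linearity of the norm on the segment only gives $\|p_i^\ast\psi+\epsilon\phi\|=\|p_i^\ast\psi\|+\epsilon\,\ell(\phi)$, where $\ell$ is a supporting functional with $\ell(\phi)\le\|\phi\|$. The double-curve sum, however, has $\chi_-$ at least $\|p_i^\ast\psi\|+\epsilon\|\phi\|$. Both of your conditions on $\phi$ are preserved by $\phi\mapsto\phi-m\,p_i^\ast\psi$, since $p_i^\ast\psi$ vanishes on $\widetilde S$ and on loops in the lifted $M_S$. Once $m\|p_i^\ast\psi\|>\|\phi\|$ and $\epsilon m<1$, the sum is visibly not norm-minimizing. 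So $\phi$ must be chosen together with a taut decomposition from the start, and you do not say how.

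\emph{The complexity.} Vanishing on $\widetilde S$ is exactly what lets the $\phi$-surface be taken disjoint from $\widetilde S$. For closed $M$ it is then a closed surface in the lifted $M_S$, horizontal rather than vertical in the product region. Cutting the guts along it adds its two sides to $R_\pm$ of the guts, so your complexity can increase unless you show that new product regions appear.

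Thus the strict decrease of a well-founded complexity, which you flag as the hardest point, is the missing idea, and without it the induction does not terminate. The final step should also go through convexity and norm-additivity, not through letting $\epsilon_k\to0$ after the covers are fixed. For this paper the statement is meant to be cited, not reproved.
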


\subsection{Virtual crossfibering}We are now ready to prove \autoref{thm: virtual crossfibering}. 

\begin{proof}[Proof of \autoref{thm: virtual crossfibering}]
Let $M$ be a closed hyperbolic 3-manifold. According to \autoref{thm: virtually fiber}, we can find an orientable finite cover $N$ of $M$ which fibers over $\mathbb{S}^1$ such that $b_1(N)\ge 2$. 
Let $\psi\in H^1(N;\Z)$ be a fibered class.  
By \autoref{lem: codim 0 cone}, the Fried cone $\CFr(N,\psi)$ has non-empty interior, so we can find a   rational class $x\in \operatorname{int}(\CFr (N,\psi))$.   
Since $b_1(N)\ge 2$, there exists a non-zero integral cohomology class $\varphi\in H^1(N;\Z)$ such that $\langle \varphi, x \rangle =0$. According to \autoref{thm: fibered closure}, there exists a further finite cover $p:M' \to N$, such that $p^\ast \varphi \in H^1(M';\Z)$ lies in the closure of a fibered cone $\Cone_0\subseteq H^1(M';\R)$. 
We will take the manifold $M'$ constructed in this way as a crossfibered finite cover of $M$. 

Let $\psiA   \in H^1(M';\Z)$ be a  positive rational scaling  of $p^\ast \psi\in H^1(M';\Z)$ such that $\psiA$ is a primitive integral class, which we will take as the axial fibered class for $M'$. Indeed, $\psiA$ is a fibered class according to \autoref{prop: finite cover}~\ref{fcov1}, and $\CFr (M', \psiA)= \CFr (M', p^\ast \psi)$. 

Since $x\in \operatorname{int}(\CFr (N,\psi))$, by \autoref{cor: interior cone},   there exists a homology class $x'\in \operatorname{int}(\CFr (M', \psiA))$ such that $p_{\ast}(x')= x$. Then, $\langle p^\ast \varphi, x'\rangle= \langle \varphi, p_\ast  (x') \rangle =\langle \varphi, x \rangle=0$. In other words, if we view $p^\ast \varphi\in H^1(M';\R)$ as a linear functional on  $H_1(M';\R)$, then $\ker(p^\ast \varphi)$ intersects non-trivally with $\operatorname{int}(\CFr (M',\psiA))$.  Note that the map 
\begin{equation*}
\begin{tikzcd}[row sep=0cm]
H^1(M';\R)-\{0\} \arrow[r] & \mathrm{Gr}_{b_1(M')-1}(H_1(M';\R))  \\
\phi \arrow[r,maps to] & \ker(\phi)  
\end{tikzcd}
\end{equation*}
is continuous, where $\mathrm{Gr}_{b_1(M')-1}(H_1(M';\R)) $ denotes the space of codimension~1 linear subspaces in $H_1(M';\R)$. Also note that rational classes are dense in $H^1(M';\R)$. Thus, there exists a non-zero rational cohomology class $\phi\in \operatorname{int}(\Cone_0)$, belonging to a small neighbourhood of $p^\ast \varphi$, such that $\ker(\phi)$ intersects non-trivially with $\operatorname{int}(\CFr (M', \psiA))$.  Let $\psiF$ be a positive scalar multiple of $\phi$ so that $\psiF$ is   a primitive integral class.  Then, $\psiF\in \operatorname{int}(\Cone_0) \cap H^1(M';\Z)$ is   a fibered class according to \autoref{thm: fibered}, and $\ker(\psiF)=\ker(\phi)$. We will take $\psiF$ as the facial fibered class for $M'$.

\begin{sloppypar}
Since $\ker(\psiF)$ is a rationally defined codimension~1 hyperplane in $H_1(M';\R)$ that intersects non-trivially with the open subset $\operatorname{int}(\CFr (M'; \psiA))\subseteq H_1(M';\R)$, we can find a rational homology class $y\in \ker(\psiF)\cap \operatorname{int}(\CFr (M'; \psiA))$. According to \autoref{prop: Fried cone}~\ref{Fc2}, there exists a periodic trajectory $\gamma \in \mathfrak{T}( M' ,\psiA)$ whose homology class $[\gamma]\in H_1(M';\R)$ is a scalar multiple of $y$. In particular, $\langle \psiF, [\gamma]\rangle=0$.  
Consequently, $\gamma$ can be freely homotoped onto the fiber surface $S_{M',\psiF}$ dual to $\psiF$, and $(M',\psiA,\psiF,\gamma)$ yields a crossfibering datum for $M'$. \qedhere
\end{sloppypar}
\end{proof}

In order that we can take further finite covers of a crossfibered manifold, we state the following proposition. 
\begin{proposition}\label{prop: virc2}
Any finite cover of a crossfibered manifold is also a crossfibered manifold. 
\end{proposition}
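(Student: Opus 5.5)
Let $(M,\psiA,\psiF,\gamma)$ be a crossfibering datum and $p\colon M'\to M$ a finite cover. Then $M'$ is again a closed orientable hyperbolic 3-manifold. The plan is to pull the datum back to $M'$: the new fibered classes are the primitive rescalings of $p^\ast\psiA$ and $p^\ast\psiF$, and the new trajectory is an elevation of $\gamma$.

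First, $p^\ast\psiA$ and $p^\ast\psiF$ are fibered classes by \autoref{prop: finite cover}~\ref{fcov1}. They are nonzero because $p^\ast$ is injective on real cohomology. Let $\psiA'$ and $\psiF'$ be the primitive integral classes that are positive rational multiples of $p^\ast\psiA$ and $p^\ast\psiF$. These are still fibered, since the fibered classes are the integral points in the interior of a cone (\autoref{thm: fibered}). Both have the same fibered cone, hence the same Fried cone, as the classes they rescale. By \autoref{rmk: solely CTh}, $\Traj(M',\psiA')=\Traj(M',p^\ast\psiA)$.

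Next, choose an elevation $\gamma'$ of $\gamma$ to $M'$; it is a closed loop covering $\gamma$ some $n$ times. By \autoref{prop: finite cover}~\ref{fcov3}, $\gamma'\in\Traj(M',p^\ast\psiA)=\Traj(M',\psiA')$. For the face condition, recall from \autoref{sec: Thurston norm} that a loop can be freely homotoped onto the fiber dual to a fibered class exactly when the class pairs to zero with its homology class. Since $\gamma$ is homotopic onto $S_{M,\psiF}$, we have $\langle\psiF,[\gamma]\rangle=0$. Therefore
$$\langle p^\ast\psiF,[\gamma']\rangle=\langle\psiF,p_\ast[\gamma']\rangle=\langle\psiF,n[\gamma]\rangle=0,$$
so $\langle\psiF',[\gamma']\rangle=0$, and $\gamma'$ can be freely homotoped onto $S_{M',\psiF'}$.

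It remains to check that $\psiA'\neq\psiF'$, so that $M'$ genuinely has two fibered classes. If they were equal, then $p^\ast\psiA$ and $p^\ast\psiF$ would be positively proportional, and by injectivity of $p^\ast$ so would $\psiA$ and $\psiF$. Being primitive, $\psiA$ and $\psiF$ would then coincide, contradicting the crossfibering datum on $M$. In fact the original definition already forces distinctness: $\gamma$ is a periodic trajectory of $\psiA$, so $\langle\psiA,[\gamma]\rangle>0$, whereas $\langle\psiF,[\gamma]\rangle=0$. Thus $(M',\psiA',\psiF',\gamma')$ is a crossfibering datum for $M'$.

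The argument is essentially formal. The only steps needing care are that elevations of periodic trajectories remain periodic trajectories, which is \autoref{prop: finite cover}~\ref{fcov3}, and the independence of $\Traj$ under positive rescaling, which is \autoref{rmk: solely CTh}.
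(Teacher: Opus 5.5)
Your proof is correct and follows the paper's argument: you pull back and primitively rescale the two fibered classes, elevate $\gamma$, and invoke \autoref{prop: finite cover}~\ref{fcov3}. The only deviations are that you check the face condition through the pairing identity $\langle p^\ast\psiF,[\gamma']\rangle=n\langle\psiF,[\gamma]\rangle=0$, where the paper uses homotopy lifting of $\gamma$ onto a component of $p^{-1}(S_{M,\psiF})$, and that you explicitly verify $\psiA'\neq\psiF'$, which the paper leaves implicit.
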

\begin{proof}
\begin{sloppypar}
Suppose $M$ is a crossfibered manifold with a crossfibering datum $(M,\psiA,\psiF,\gamma)$, and suppose $p: M'\to M$ is a finite cover. Then, $p^\ast \psiA$ and $p^\ast \psiF$ are fibered classes in $H^1(M';\Z)$ according to \autoref{prop: finite cover}~\ref{fcov1}. Let $ {\psiA'}$ and $ {\psiF'}$ be positive rational scalings of $p^\ast\psiA$ and $p^\ast \psiF$ such that ${\psiA'}$ and $ {\psiF'}$ represent primitive integral classes. Then, $ {\psiA'}$ and $ {\psiF'}$ are also fibered classes. Let $\widetilde{\gamma}$ be an arbitrary elevation of $\gamma$ in $M'$. Then, \autoref{prop: finite cover}~\ref{fcov3} implies that $\widetilde{\gamma}\in \Traj(M',p^\ast\psiA)=\Traj(M', {\psiA'})$. By the homotopy lifting theorem, $\widetilde{\gamma}$ can be freely homotoped onto a component of $p^{-1}(S_{M,\psiF})$, which can be isotoped onto $S_{M', {\psiF'}}$. Hence, $(M',\psiA',\psiF',\widetilde{\gamma})$ is a crossfibering datum for $M'$.  \qedhere
\end{sloppypar}
\end{proof}

\section{Profinite groups}\label{sec: profinite groups}
In addition to the materials covered in this section,   the readers are referred to \cite{RZ10} for a standard reference on profinite groups. 
\subsection{Profinite spaces and profinite groups}
\begin{definition}
\begin{enumerate}[leftmargin=*,label=(\arabic*)]
\item A {\em profinite space} is an inverse limit of finite discrete spaces indexed over a directed partially ordered set. The inverse limit is equipped with the subspace topology inherited from the product topology. 
\item A {\em profinite group} is an inverse limit of finite discrete groups indexed over a directed partially ordered set, which is also equipped with the subspace topology inherited  from the product topology. 
\end{enumerate}
\end{definition}

The category of profinite groups forms a full subcategory of the category of topological groups. By a {\em homomorphism} or an {\em isomorphism} between profinite groups, we always mean a   homomorphism  or an   isomorphism  between them as topological groups, unless otherwise stated. 

A direct characterisation of profinite groups is shown by the following proposition.
\begin{proposition}[{\cite[Theorem 2.1.3]{RZ10}}]\label{PROP: Top}
A topological group $G$ is a profinite group if and only if $G$ is compact, Hausdorff, and totally disconnected. 
\end{proposition}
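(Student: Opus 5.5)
The plan is to prove the two implications separately. Throughout, ``totally disconnected'' means that the only connected subsets are points. Compactness, Hausdorffness and total disconnectedness are each preserved under arbitrary products and under passage to closed subspaces.

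For the forward direction, let $G=\limi G_i$ be an inverse limit of finite discrete groups. Realize $G$ as the subset of $\prod_i G_i$ cut out by the compatibility conditions $\varphi_{ij}(g_i)=g_j$ for $i\ge j$. Each such condition involves only two coordinates and continuous maps into a discrete (hence Hausdorff) space, so it defines a closed set, and $G$ is an intersection of closed sets, hence closed. The product $\prod_i G_i$ is compact by Tychonoff, Hausdorff, and totally disconnected, since any connected subset projects to a connected subset of each discrete $G_i$, i.e.\ to a point. All three properties therefore pass to the closed subgroup $G$. Multiplication and inversion are continuous because they are continuous coordinatewise, so $G$ is a topological group.

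For the converse, let $G$ be a compact, Hausdorff, totally disconnected group. The key step is to show that the open normal subgroups of $G$ form a fundamental system of neighbourhoods of $1$. I would do this in two stages.
\begin{enumerate}[label=(\roman*),leftmargin=*]
\item In a compact Hausdorff space, the connected component of a point equals its quasi-component, i.e.\ the intersection of all clopen sets containing it. Total disconnectedness then says that clopen neighbourhoods of $1$ form a basis: given an open $U\ni 1$, compactness of $G\setminus U$ together with the fact that the clopen sets containing $1$ have intersection $\{1\}$ yields a single clopen $V$ with $1\in V\subseteq U$.
\item Given a clopen $V\ni 1$, I would use compactness of $V$ and a tube-lemma argument to find a symmetric open $W\ni1$ with $VW\subseteq V$. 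Then $H=\bigcup_n W^n\subseteq V$ is an open subgroup. It has finite index by compactness, so it has only finitely many conjugates, and their intersection is an open normal subgroup $N\subseteq V$.
\end{enumerate}

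Finally, consider the natural map $G\to\limi_{N} G/N$, where $N$ ranges over the open normal subgroups ordered by reverse inclusion. This map is continuous and injective, the latter because $\bigcap N=\{1\}$ by the basis property and Hausdorffness. It has dense image, since its image surjects onto each finite quotient $G/N$. Its image is compact, hence closed, so the map is surjective. A continuous bijection from a compact space to a Hausdorff space is a homeomorphism, so $G$ is profinite.

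The main obstacle is stage (i), the equality of component and quasi-component in compact Hausdorff spaces. It requires the standard argument: if the quasi-component $Q$ of a point were disconnected, split it into disjoint closed pieces $A$ and $B$. By normality separate $A$ and $B$ by disjoint open sets $U_A$ and $U_B$, and use compactness to find a clopen set $C\supseteq Q$ contained in $U_A\cup U_B$. Then $C\cap U_A$ is clopen and contains one piece but not the other, which contradicts the definition of $Q$. Since this is a standard point-set result, I would treat it as the heart of the argument and cite \cite{RZ10} for the remaining routine verifications.
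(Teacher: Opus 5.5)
Your proof is correct; it is the standard argument behind \cite[Theorem 2.1.3]{RZ10}, which the paper cites without reproving. The steps are: a clopen neighbourhood basis at $1$ from the equality of component and quasi-component in compact Hausdorff spaces; an open normal subgroup inside each clopen $V\ni 1$, obtained from a symmetric $W$ with $VW\subseteq V$; and the continuous bijection $G\to\limi_N G/N$ from a compact space onto a Hausdorff space.
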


\subsection{Profinite completion of a group}
In \autoref{DEF}, we have defined the {\em profinite completion} of an abstract group $\Gamma$, namely
$$
\widehat{\Gamma}=\limi_{N\in \mathcal{I}}\Gamma/N,
$$
where $\mathcal{I}$ denotes the directed poset of finite-index normal subgroups in $\Gamma$ ordered by reverse inclusion.

There is a {\em canonical homomorphism} $\iota:\Gamma\to \widehat{\Gamma}$ defined by $$\iota(\gamma)=(\gamma N)_{N\in \mathcal{I}},$$ whose image is dense in $\widehat{\Gamma}$.  Note that $\iota$ is injective if and only if $\Gamma$ is {\em residually finite}, i.e.\ $\bigcap_{N\in \mathcal{I}} N=\{1\}$. 
\begin{convention}\label{conv}
For brevity of notations, when $\Gamma$ is residually finite, we usually identify $\Gamma$ with its image in $\widehat{\Gamma}$ via $\iota$. 
\end{convention}

For any subset $H\subseteq \Gamma$, we denote by $\overline{H}$ the {\em topological closure} of $\iota(H)$ in $\widehat{\Gamma}$. Note that when $H$ is a subgroup of $\Gamma$, $\overline{H}$ is   a closed  subgroup of $\widehat{\Gamma}$.

\begin{lemma}\label{lem: finite index closure intersection}
Let $\Gamma$ be a group, and let $\Gamma_0\le \Gamma$ be a finite-index subgroup. For any subset $H\subseteq \Gamma$, $\overline{H}\cap \overline{\Gamma_0}=\overline{H\cap \Gamma_0}$. 
\end{lemma}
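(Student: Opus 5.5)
The plan is to prove the two inclusions separately. Throughout, closures are taken in $\widehat{\Gamma}$, and $\iota$ need not be injective.

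The inclusion $\overline{H\cap \Gamma_0}\subseteq \overline{H}\cap\overline{\Gamma_0}$ is immediate. Indeed, $\iota(H\cap\Gamma_0)$ lies in both $\iota(H)$ and $\iota(\Gamma_0)$, so its closure lies in the intersection of the two closed sets $\overline{H}$ and $\overline{\Gamma_0}$.

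For the reverse inclusion, the key fact is that $\overline{\Gamma_0}$ is open in $\widehat{\Gamma}$ and $\iota^{-1}(\overline{\Gamma_0})=\Gamma_0$. To see this, let $N\le\Gamma_0$ be the normal core of $\Gamma_0$; it is a finite-index normal subgroup of $\Gamma$. Consider the continuous projection $\pi_N:\widehat{\Gamma}\to\Gamma/N$ and set $U=\pi_N^{-1}(\Gamma_0/N)$. Then $U$ is open and closed, it contains $\iota(\Gamma_0)$, and $\iota^{-1}(U)=\Gamma_0$ because $\pi_N\circ\iota$ is the quotient map $\Gamma\to\Gamma/N$.

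Next I will check that $U=\overline{\Gamma_0}$. Since $U$ is closed, $\overline{\Gamma_0}\subseteq U$. Conversely, for any $u\in U$ and any basic open neighbourhood $uV$ with $V=\ker(\pi_{N'})$ and $N'\le N$, density of $\iota(\Gamma)$ gives some $\gamma\in\Gamma$ with $\iota(\gamma)\in uV$. Then $\pi_N(\iota(\gamma))=\pi_N(u)\in\Gamma_0/N$, so $\gamma\in\Gamma_0$. Hence $u\in\overline{\Gamma_0}$.

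Now take $g\in\overline{H}\cap\overline{\Gamma_0}$ and an arbitrary open neighbourhood $W$ of $g$. Since $\overline{\Gamma_0}$ is open, $W\cap\overline{\Gamma_0}$ is again an open neighbourhood of $g$. Because $g\in\overline{H}$, there is some $h\in H$ with $\iota(h)\in W\cap\overline{\Gamma_0}$. Then $h\in\iota^{-1}(\overline{\Gamma_0})=\Gamma_0$, so $h\in H\cap\Gamma_0$ and $\iota(h)\in W$. As $W$ was arbitrary, $g\in\overline{H\cap\Gamma_0}$.

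The only point requiring care is the identification $\iota^{-1}(\overline{\Gamma_0})=\Gamma_0$ when $\iota$ is not injective. This is handled by the argument above via the finite quotient $\Gamma/N$.
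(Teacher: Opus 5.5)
Your proof is correct and follows essentially the paper's route: both use the normal core $N$ and the projection $\widehat{\Gamma}\to\Gamma/N$ to see that $\overline{\Gamma_0}$ is clopen and meets $\iota(\Gamma)$ exactly in $\iota(\Gamma_0)$, i.e.\ $\widehat{\Gamma}=\overline{\Gamma_0}\sqcup\overline{\Gamma\setminus\Gamma_0}$. The paper then finishes with the set identity $\overline{H}=\overline{H\cap\Gamma_0}\cup\overline{H\cap(\Gamma\setminus\Gamma_0)}$ and intersects with $\overline{\Gamma_0}$, whereas you reach the same conclusion with an equivalent neighbourhood argument.
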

\begin{proof}
One can find a finite-index normal subgroup $N\unlhd \Gamma$ such that $N\subseteq \Gamma_0$. By projecting $\widehat{\Gamma}$ onto $\Gamma/N$, it is easy to verify that  $\widehat{\Gamma}$ is the disjoint union of $\overline{\Gamma_0}$ and $\overline{\Gamma\setminus \Gamma_0}$. Thus, 
$\overline{H}\cap \overline{\Gamma_0}=(\overline{H\cap \Gamma_0}\cup\overline{H\cap(\Gamma\setminus \Gamma_0)})\cap \overline{\Gamma_0}= \overline{H\cap \Gamma_0}$. 
\end{proof}

The profinite completion satisfies the following universal property.  
\begin{proposition}[{\cite[Lemma 3.2.1]{RZ10}}]\label{prop: universal property}
Let $\Gamma$ be an abstract group and let $G$ be a profinite group. Suppose that $\phi:\Gamma\to G$ is a homomorphism. Then, there exists a unique continuous homomorphism $\Phi:\widehat{\Gamma}\to G$ such that the following diagram commutes. 
\begin{equation*}
\begin{tikzcd}
\Gamma \arrow[dr,"\phi"] \arrow[d,"\iota"'] & \\
\widehat{\Gamma}  \arrow[r,"\Phi"'] & G 
\end{tikzcd}
\end{equation*}
\end{proposition}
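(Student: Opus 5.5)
The statement is the universal property of the profinite completion (\autoref{prop: universal property}). I will construct $\Phi$ explicitly through the inverse system and then show it is unique by density.

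\emph{Construction.} Write the profinite group as $G=\varprojlim_{j\in J} G_j$, or, using \autoref{PROP: Top}, as the inverse limit of its finite quotients $G/U$. Here $U$ ranges over the open normal subgroups of $G$; this family forms a neighbourhood basis of $1$, and $G\cong\varprojlim_U G/U$ as topological groups. For each such $U$, the composite $\Gamma\xrightarrow{\phi}G\to G/U$ has finite image. Its kernel $N_U=\phi^{-1}(U)$ is therefore a finite-index normal subgroup of $\Gamma$, that is, $N_U\in\mathcal I$. Consequently we get a homomorphism $\phi_U\colon\Gamma/N_U\to G/U$. Precomposing with the continuous projection $\widehat\Gamma\to\Gamma/N_U$ gives a continuous homomorphism $\Phi_U\colon\widehat\Gamma\to G/U$.

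\emph{Compatibility.} If $V\subseteq U$, then $N_V\subseteq N_U$. The maps $\Phi_V$ followed by $G/V\to G/U$, and $\Phi_U$, agree, because both are induced by the same map on $\Gamma$ and the projections $\widehat\Gamma\to\Gamma/N$ are compatible. By the universal property of inverse limits, the $\Phi_U$ assemble into a continuous homomorphism $\Phi\colon\widehat\Gamma\to\varprojlim_U G/U= G$. On $\iota(\gamma)$, each component of $\Phi$ equals $\phi(\gamma)U$, so $\Phi\circ\iota=\phi$.

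\emph{Uniqueness.} Suppose $\Phi'$ is another continuous homomorphism with $\Phi'\circ\iota=\phi$. Then $\Phi$ and $\Phi'$ agree on the dense subset $\iota(\Gamma)$. Since $G$ is Hausdorff, the set where two continuous maps into $G$ agree is closed, so $\Phi=\Phi'$.

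The only step needing care is the identification $G\cong\varprojlim G/U$ with $U$ ranging over open normal subgroups. If one starts from an arbitrary presentation $G=\varprojlim G_j$ instead, the same argument runs with $U_j=\ker(G\to G_j)$, and no identification is needed. Everything else is routine.
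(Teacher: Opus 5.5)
Your proof is correct: assembling the compatible maps $\widehat\Gamma\to\Gamma/\phi^{-1}(U)\to G/U$ over the open normal subgroups $U\le G$, and deducing uniqueness from the density of $\iota(\Gamma)$ together with the Hausdorff property of $G$, is exactly the standard argument. The paper gives no proof of its own; it simply cites \cite[Lemma 3.2.1]{RZ10}, whose proof proceeds in essentially the same way.
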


\subsection{Powers in a profinite group}
 
\begin{example}
By the Chinese remainder theorem, $\widehat{\Z}\cong \prod_p \Z_p$, where $\Z_p$ refers to the $p$-adic integers, and $p$ runs through all prime numbers. 
\end{example}
Let $G$ be a profinite group and let $g\in G$. Applying \autoref{prop: universal property} to the homomorphism $\phi_g:\Z \to G$ that sends $1$  to $g$, we obtain   a unique continuous homomorphism $\Phi_g: \widehat{\Z} \to G$ such that $\Phi_g(1)=g$. The definition of $\Phi_g$ can actually be illustrated as follows. Suppose $G=\limi_{i\in I} G_i$, where each $G_i$ is a finite group, and $g=(g_i)_{i\in I}$. Then, for any $\lambda\in \widehat{\Z}$, $$\Phi_g(\lambda)=\left(g_i^{\lambda (\mathrm{mod}\,{|G_i|})}\right )_{i\in I}\in \limi_{i\in I} G_i.$$ 
$\Phi_g(\lambda)$ is referred to as the {\em $\lambda$-power} of $g$ in $G$, and is usually abbreviated into $g^\lambda$. 

\subsection{The completion functor}
\begin{sloppypar}
The profinite completion is functorial \cite[Lemma 3.2.3]{RZ10}. A homomorphism  $\phi:\Gamma \to \Gamma' $   of abstract groups canonically induces a continuous homomorphism $\widehat{\phi}:\widehat{\Gamma} \to \widehat{\Gamma'}$, which we illustrate as follows.  Let $\mathcal{I}$ and $\mathcal{I}'$ be the directed posets of finite-index normal subgroups of $\Gamma$ and $\Gamma'$. Then,  $\phi^{-1}(\mathcal{I}')$ is a directed subposet  of $\mathcal{I}$, and $\widehat{\phi}$ is defined by composing the following homomorphisms:
\begin{equation*}
\begin{tikzcd}
{\displaystyle \widehat{\phi}: \;\widehat{\Gamma}=\limi_{N\in \mathcal{I}} \Gamma/N} \arrow[r] & {\displaystyle \limi_{N\in \phi^{-1}(\mathcal{I}')} \Gamma/N } \arrow[r,"\phi"] & {\displaystyle \limi_{N'\in \mathcal{I}'} \Gamma'/N'= \widehat{\Gamma'}.}
\end{tikzcd}
\end{equation*}

By construction, the following diagram commutes. 
\begin{equation*}
\begin{tikzcd}
 \Gamma \arrow[r,"\phi"] \arrow[d,"\iota"'] & \Gamma' \arrow[d,"\iota'"] \\ 
\widehat{\Gamma} \arrow[r, "\widehat{\phi}"] & \widehat{\Gamma'}
\end{tikzcd}
\end{equation*}
In particular, $\widehat{\phi}(\widehat{\Gamma})=\overline{\phi(\Gamma)}$ since $\iota(\Gamma)$ is dense in the compact space $\widehat{\Gamma}$, $\widehat{\Gamma'}$ is Hausdorff, and $\widehat{\phi}$ is continuous. 
\end{sloppypar}

We record here some exactness properties of the completion functor. 

\begin{proposition}[{\cite[Proposition 3.2.5]{RZ10}}]\label{prop: right exact}
Given a short exact sequence of abstract groups
\begin{equation*}
\centering
\begin{tikzcd}
1 \arrow[r] & K \arrow[r, "\varphi"] & \Gamma \arrow[r, "\psi"] & Q \arrow[r] & 1,
\end{tikzcd}
\end{equation*}
there is an exact sequence of profinite groups
\begin{equation*}
\centering
\begin{tikzcd}
\widehat K \arrow[r, "\widehat \varphi"] & \widehat \Gamma \arrow[r, "\widehat \psi"] & \widehat Q \arrow[r] & 1.
\end{tikzcd}
\end{equation*}
\end{proposition}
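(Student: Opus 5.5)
We must show that $\widehat\psi$ is surjective, that $\widehat\psi\circ\widehat\varphi$ is trivial, and that $\ker\widehat\psi=\widehat\varphi(\widehat K)$. Throughout we use that $\widehat\varphi(\widehat K)=\overline{\varphi(K)}$, as noted in the discussion of the completion functor.

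\emph{Surjectivity.} By the commutative square for $\psi$, the image $\widehat\psi(\widehat\Gamma)$ contains $\iota_Q(\psi(\Gamma))=\iota_Q(Q)$. This set is dense in $\widehat Q$. Since $\widehat\Gamma$ is compact and $\widehat Q$ is Hausdorff, $\widehat\psi(\widehat\Gamma)$ is closed, so it is all of $\widehat Q$.

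\emph{Composition is trivial.} We have $\widehat\psi\circ\widehat\varphi=\widehat{\psi\circ\varphi}$ by functoriality, and $\psi\circ\varphi$ is the trivial homomorphism. The completion of the trivial homomorphism is trivial by the uniqueness in \autoref{prop: universal property}. Hence $\overline{\varphi(K)}\subseteq\ker\widehat\psi$.

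\emph{Kernel is contained in the image.} This is the main step. Identify $K$ with the normal subgroup $\varphi(K)\unlhd\Gamma$, so $\overline{K}$ is a closed normal subgroup of $\widehat\Gamma$, normal because $\Gamma$ is dense. Let $\pi_N:\widehat\Gamma\to\Gamma/N$ be the projections, for $N\unlhd_{f.i.}\Gamma$.

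First, $\overline{K}=\bigcap_N \pi_N^{-1}(KN/N)$. The inclusion $\subseteq$ holds because each set on the right is closed and contains $K$. For $\supseteq$, take $g$ in the intersection. Every basic open neighbourhood $g\ker\pi_N$ of $g$ then meets $K$, so $g\in\overline K$.

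Next, the finite-index normal subgroups of $\Gamma$ containing $K$ are exactly the preimages $\psi^{-1}(N')$ with $N'\unlhd_{f.i.}Q$. For such $N'$, write $N=\psi^{-1}(N')$; then $\Gamma/N\cong Q/N'$ compatibly with $\psi$. Now let $g\in\ker\widehat\psi$ and let $N\unlhd_{f.i.}\Gamma$ be arbitrary. The subgroup $KN$ is again finite-index normal and contains $K$, so $KN=\psi^{-1}(N')$ for some $N'$. The image of $g$ in $Q/N'$ is trivial because $\widehat\psi(g)=1$. Hence $\pi_{KN}(g)=1$, which means $\pi_N(g)\in KN/N$. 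Since $N$ was arbitrary, $g$ lies in the intersection above, so $g\in\overline K=\widehat\varphi(\widehat K)$.

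The only delicate point is the bookkeeping in this last step: we must check that the map $\Gamma/KN\to Q/N'$ induced by $\psi$ is compatible with the definition of $\widehat\psi$. This compatibility follows directly from the explicit construction of $\widehat\psi$ via the subposet $\psi^{-1}(\mathcal I')$.
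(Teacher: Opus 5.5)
Your proof is correct. The paper gives no argument of its own here; it simply quotes \cite[Proposition 3.2.5]{RZ10}. So your write-up is a self-contained proof rather than a variant of something in the text.

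Your argument works level by level in the finite quotients. You describe $\overline{K}$ explicitly as $\bigcap_N \pi_N^{-1}(KN/N)$. You then use $KN=\psi^{-1}(\psi(N))$ to pull the triviality of $\widehat\psi(g)$ in $Q/\psi(N)$ back to $\pi_{KN}(g)=1$.

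The usual shorter route goes through the universal property instead. Since $\overline{K}$ is a closed normal subgroup, $\widehat\Gamma/\overline{K}$ is profinite. The composite $\Gamma\to\widehat\Gamma\to\widehat\Gamma/\overline{K}$ kills $K$, so it factors through $Q$. By \autoref{prop: universal property} it then extends to a continuous map $\widehat Q\to\widehat\Gamma/\overline{K}$. This map is inverse to the map $\widehat\Gamma/\overline{K}\to\widehat Q$ induced by $\widehat\psi$, because both composites are the identity on dense images. That route avoids the intersection description of $\overline{K}$ and the correspondence bookkeeping. Yours is more elementary and has the side benefit of exhibiting $\overline{K}$ concretely in terms of the finite quotients.

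The compatibility you flag as the delicate point follows immediately from density. Let $\pi'_{\psi(N)}:\widehat Q\to Q/\psi(N)$ be the projection and $\bar\psi:\Gamma/KN\to Q/\psi(N)$ the map induced by $\psi$. Then $\pi'_{\psi(N)}\circ\widehat\psi$ and $\bar\psi\circ\pi_{KN}$ are continuous homomorphisms into a finite discrete group, and they agree on $\iota(\Gamma)$. Hence they coincide, and no inspection of the explicit construction of $\widehat\psi$ is needed.
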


The {\em profinite topology} on an abstract group $\Gamma$ is generated by $\{ \gamma N\mid \gamma\in \Gamma,\,N\in \mathcal{I}\}$ as a basis of open subsets. 

\begin{proposition}[{\cite[Lemma 3.2.6]{RZ10}}]\label{prop: left exact}
Suppose $\varphi:H\to \Gamma$ is an injective homomorphism of abstract groups. Then $\widehat{\varphi}:\widehat{H}\to \widehat{\Gamma}$ is injective if and only if  the profinite topology on $\Gamma$ induces the full profinite topology on $H$ via $\varphi$. 
In this case, we have an isomorphism $\widehat{H}\cong \overline{\varphi(H) }$. 
\end{proposition}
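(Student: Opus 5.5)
The plan is to work directly with the inverse-limit descriptions of $\widehat{H}$ and $\widehat{\Gamma}$ and to translate injectivity of $\widehat{\varphi}$ into a cofinality statement about finite-index normal subgroups. Write $\mathcal{I}_H$ and $\mathcal{I}_\Gamma$ for the directed posets of finite-index normal subgroups of $H$ and $\Gamma$. Identify $H$ with $\varphi(H)$. The family $\mathcal{J}=\{H\cap N\mid N\in\mathcal{I}_\Gamma\}$ is then a directed subposet of $\mathcal{I}_H$.

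By the description of the completion functor, $\widehat{\varphi}$ factors as
\[
\widehat{H}=\varprojlim_{\mathcal{I}_H} H/K\longrightarrow \varprojlim_{\mathcal{J}} H/(H\cap N)\longrightarrow \varprojlim_{\mathcal{I}_\Gamma}\Gamma/N .
\]
The second map is injective, since each $H/(H\cap N)\to \Gamma/N$ is injective and inverse limits preserve injectivity. The first map is always surjective, because it is a map of compact spaces with dense image. So $\widehat{\varphi}$ is injective if and only if the first map is injective.

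For the forward direction, suppose the profinite topology of $\Gamma$ induces the full profinite topology on $H$. Then every $K\in\mathcal{I}_H$ is open in the induced topology, so it contains some $H\cap N$. Hence $\mathcal{J}$ is cofinal in $\mathcal{I}_H$, and the first map is an isomorphism.

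For the converse, suppose $\widehat{\varphi}$ is injective, and fix $K\in\mathcal{I}_H$. The closure $\overline{K}$ of $K$ in $\widehat{H}$ is an open subgroup. Its image under $\widehat{\varphi}$ is then open in the compact group $\widehat{\varphi}(\widehat{H})$, because $\widehat{\varphi}$ is a homeomorphism onto its image. The topology on $\widehat{\varphi}(\widehat{H})$ is induced from $\widehat{\Gamma}$, whose open normal subgroups are the kernels $\ker(\widehat{\Gamma}\to\Gamma/N)$. So some such kernel $U_N$ satisfies $U_N\cap\widehat{\varphi}(\widehat{H})\subseteq \widehat{\varphi}(\overline{K})$. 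Pull this back to $H$ and use $\overline{K}\cap H=K$, which holds because $K$ is open and closed in the profinite topology of $H$. This gives $H\cap N\subseteq K$. Thus every finite-index subgroup of $H$ is open in the induced topology, as required.

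Finally, if $\widehat{\varphi}$ is injective, it is a continuous bijection from the compact space $\widehat{H}$ onto its image, and the target is Hausdorff, so it is a homeomorphism onto its image. That image is $\widehat{\varphi}(\widehat{H})=\overline{\varphi(H)}$, as noted after the completion functor was defined. This gives $\widehat{H}\cong\overline{\varphi(H)}$.

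The main subtle step is the converse direction: passing from abstract injectivity to the statement that open subgroups of $\widehat{H}$ are traced by open subgroups of $\widehat{\Gamma}$. I expect to handle it, as above, by the compactness/homeomorphism argument together with the identity $\overline{K}\cap H=K$.
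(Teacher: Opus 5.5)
Your proposal is correct and is the standard argument. The paper does not prove this itself but cites \cite[Lemma 3.2.6]{RZ10}. Its own description of $\widehat{\varphi}$ as the composite through $\varprojlim_{\varphi^{-1}(\mathcal{I}_\Gamma)}$ (in the subsection on the completion functor) is exactly your factorization, so that injectivity of $\widehat{\varphi}$ amounts to cofinality of $\{\varphi^{-1}(N)\}$ in $\mathcal{I}_H$.

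Two cosmetic points. What you call the ``forward'' direction is really the ``if'' direction. Also, since $H$ need not be residually finite, $\overline{K}\cap H=K$ should be read as $\iota_H^{-1}(\overline{K})=K$, which your clopen argument does give.
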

 
A group $\Gamma$ is {\em LERF} if any finitely generated subgroup of $\Gamma$ is closed in the profinite topology of $\Gamma$. This is equivalent to say that $\Gamma$ is residually finite, and for any finitely generated subgroup $H\le \Gamma$, $\overline{H}\cap \Gamma=H$.

\begin{corollary}\label{cor: lerf injective}
Suppose $\Gamma$ is LERF. For any finitely generated subgroup $H\le \Gamma$, the profinite topology of $\Gamma$  induces the full profinite topology on $H$. In particular, the map $\widehat{H}\to \widehat{\Gamma}$ is injective. 
\end{corollary}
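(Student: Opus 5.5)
By \autoref{prop: left exact}, the whole statement reduces to one claim: for a finitely generated subgroup $H\le\Gamma$, every finite-index subgroup $K\le H$ is open in the topology that the profinite topology of $\Gamma$ induces on $H$. Concretely, I will produce a finite-index subgroup $\Gamma_1\le\Gamma$ with $\Gamma_1\cap H\subseteq K$. Once this holds, the induced topology on $H$ is the full profinite topology, so $\widehat{H}\to\widehat{\Gamma}$ is injective, with image $\overline{H}$.

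The steps are as follows.

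First, I use that $K$ has finite index in the finitely generated group $H$. Hence $K$ is itself finitely generated, and $H=\bigcup_{j=1}^{m}h_jK$ for finitely many coset representatives $h_1=1,h_2,\dots,h_m$.

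Next, since $\Gamma$ is LERF and $K$ is finitely generated, $K$ is closed in the profinite topology of $\Gamma$. For each $j\ge 2$ we have $h_j\notin K$, so there is a basic open set $h_jN_j$, with $N_j\unlhd\Gamma$ of finite index, that is disjoint from $K$. Equivalently, $h_j\notin KN_j$.

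Then I set $N=\bigcap_{j\ge2}N_j$ and $\Gamma_1=KN$. This is a subgroup because $N$ is normal, and it has finite index in $\Gamma$ because it contains $N$.

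Finally, I check that $\Gamma_1\cap H\subseteq K$. Take $h\in\Gamma_1\cap H$ and write $h=h_jk$ with $k\in K$. Then $h_j=hk^{-1}\in KN\subseteq KN_j$. If $j\ge 2$ this contradicts the choice of $N_j$, so $j=1$ and $h\in K$. This shows $K$ is open in the induced topology, as required.

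The only step needing care is the passage from closedness of $K$ to the separation $h_j\notin KN_j$. Since $K$ is closed and $h_j\notin K$, some basic neighbourhood $h_jN_j$ of $h_j$ misses $K$, that is, $h_jN_j\cap K=\varnothing$. Because $N_j$ is normal, this is equivalent to $h_j\notin KN_j$, which is exactly what was used above. The rest is routine.
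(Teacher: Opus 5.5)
Your proof is correct and matches the paper's reasoning. The paper states this corollary without proof, as an immediate consequence of \autoref{prop: left exact} and the definition of LERF. Your construction of $\Gamma_1=KN$ with $\Gamma_1\cap H\subseteq K$ spells out the implicit step: a finite-index subgroup $K$ of the finitely generated group $H$ is finitely generated, hence closed in $\Gamma$, and therefore open in the topology induced on $H$. That step can also be seen directly from $K=H\setminus\bigcup_{j\ge 2}h_jK$, since the right-hand union is a finite union of closed sets.
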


For later use, we state another sufficient condition for the injectivity of the profinite completion.
\begin{proposition}[{\cite[Proposition 3]{And74}}]\label{prop: trivial center injective}
Suppose $\Gamma$ is an abstract group, and $H$ is a finitely generated normal subgroup of $\Gamma$. If $\widehat{H}$ has trivial center, then the map $\widehat{H}\to \widehat{\Gamma}$  is injective. 
\end{proposition}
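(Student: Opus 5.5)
The plan is to verify the criterion of \autoref{prop: left exact}: the profinite topology of $\Gamma$ induces the full profinite topology on $H$. Concretely, for every finite-index subgroup $N\le H$ I will produce a finite-index subgroup $\Gamma_0\le \Gamma$ with $\Gamma_0\cap H\subseteq N$. The two ingredients are characteristic subgroups of $H$, which exist because $H$ is finitely generated, and the triviality of the center of $\widehat{H}$, which is used to control centralizers.

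\emph{Step 1 (characteristic cofinal system).} Since $H$ is finitely generated, it has only finitely many subgroups of any given index. Hence the intersection of all subgroups of index $[H:N]$ is a finite-index characteristic subgroup of $H$ contained in $N$. So the finite-index characteristic subgroups $K$ of $H$ are cofinal among finite-index subgroups, and $\widehat{H}=\varprojlim_K H/K$ over them. Each such $K$ is normal in $\Gamma$, because $H\unlhd \Gamma$ and $K$ is characteristic in $H$. Thus conjugation gives a homomorphism $\rho_K:\Gamma\to \Aut(H/K)$. Its kernel $C_K$ has finite index in $\Gamma$, and $C_K\cap H$ is the full preimage in $H$ of the center $Z(H/K)$.

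\emph{Step 2 (using trivial center).} A surjection of groups maps center into center. Hence for characteristic $K'\subseteq K$ the projection $H/K'\to H/K$ sends $Z(H/K')$ into $Z(H/K)$, so the centers form an inverse system of finite sets. By compactness, $\varprojlim_K Z(H/K)$ equals $Z(\widehat H)$: an element of $\widehat H$ is central if and only if its image in every $H/K$ is central, since those images determine commutators in $\widehat H$. This limit is trivial by hypothesis.

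Now fix $K$. The images of $Z(H/K')$ in $Z(H/K)$, for $K'\subseteq K$, form a decreasing family of finite nonempty sets. Their intersection is the image of the inverse limit, which is trivial. Hence there is some $K'\subseteq K$ such that $Z(H/K')$ maps trivially to $H/K$, i.e.\ the preimage of $Z(H/K')$ in $H$ lies in $K$.

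\emph{Step 3 (conclusion).} Given $N$, choose $K\subseteq N$ as in Step 1, then $K'$ as in Step 2, and set $\Gamma_0=C_{K'}=\ker\rho_{K'}$. Then $\Gamma_0$ has finite index in $\Gamma$, and
\[
\Gamma_0\cap H=\text{preimage of }Z(H/K')\subseteq K\subseteq N.
\]
By \autoref{prop: left exact}, $\widehat H\to\widehat\Gamma$ is injective.

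The only delicate point is Step 2: the compactness argument passing from ``$Z(\widehat H)=1$'' to a single finite level $K'$. There I will take care that the inverse system of centers really is well defined, using surjectivity of the transition maps $H/K'\to H/K$, and that its limit is exactly $Z(\widehat H)$.
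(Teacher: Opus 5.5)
Your proof is correct: the subgroup $\Gamma_0=C_{K'}$ does verify the criterion of \autoref{prop: left exact}.

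The paper gives no argument of its own; it quotes Anderson. The usual proof takes a more global route. Since $H$ is finitely generated, $\Aut(\widehat{H})$ is profinite. So the conjugation action $\Gamma\to\Aut(H)\to\Aut(\widehat{H})$ extends to a continuous homomorphism $c\colon\widehat{\Gamma}\to\Aut(\widehat{H})$. If $j\colon\widehat{H}\to\widehat{\Gamma}$ denotes the completion map, then $c\circ j$ and $x\mapsto\Inn_x$ agree on the dense subgroup $H$, hence on all of $\widehat{H}$. Therefore $\ker j\subseteq Z(\widehat{H})=1$.

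Your $C_{K'}$ is precisely the preimage in $\Gamma$ of the basic open subgroup $\ker\bigl(\Aut(\widehat{H})\to\Aut(H/K')\bigr)$. So your argument is a finite-level unwinding of this one. The global version checks injectivity pointwise and is shorter. It also shows for free that $\ker j$ is central in $\widehat{H}$ for every finitely generated normal $H$, with no hypothesis on the center. You instead verify the topological criterion, which needs a single finite level to work uniformly. That is exactly why your Step~2 needs a compactness argument; in exchange you never use that $\Aut(\widehat{H})$ is profinite.

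Step~2 can be stated more cleanly. The preimages $U_{K'}\subseteq\widehat{H}$ of $Z(H/K')$ form a directed decreasing family of open subgroups with $\bigcap_{K'}U_{K'}=Z(\widehat{H})=1$. By compactness some $U_{K'}$ lies in the open subgroup $\overline{K}$, and then $U_{K'}\cap H\subseteq\overline{K}\cap H=K$. This avoids the bookkeeping with images of the inverse limit. It also avoids having to note that a directed decreasing family of finite sets attains its intersection at a single index.
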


\subsection{Lattice of finite-index subgroups}

In addition to \autoref{FACT} which states that the profinite completion $\widehat{\Gamma}$ encodes the set of finite quotient groups of a finitely generated group $\Gamma$, $\widehat{\Gamma}$ actually encodes the lattice of finite-index subgroups of $\Gamma$ as shown by the following proposition. 

\begin{proposition}[{\cite[Proposition 3.2.2]{RZ10}}]\label{prop: lattice of open subgroup}
Let $\Gamma$ be an abstract group. There is an isomorphism of lattices: 
\begin{equation*}
\begin{tikzcd}[row sep=0cm]
{\{\text{finite-index subgroups of }\Gamma\}} \arrow[r, leftrightarrow] & {\{\text{open subgroups of }\widehat{\Gamma}\}}\\
H \arrow[r,maps to] & \overline{H}\cong \widehat{H}\\ 
\iota^{-1}(U) & U \arrow[l, maps to]
\end{tikzcd}
\end{equation*}
which also matches up the normal subgroups. 
\end{proposition}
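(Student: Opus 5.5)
The plan is to prove that $H\mapsto \overline{H}$ and $U\mapsto \iota^{-1}(U)$ are mutually inverse, order-preserving bijections between finite-index subgroups of $\Gamma$ and open subgroups of $\widehat{\Gamma}$. The basic tool throughout is the surjection $\pi_N:\widehat{\Gamma}\to\Gamma/N$ for $N\in\mathcal I$. Its kernel $\overline{N}$ is open, and these kernels form a basis of neighbourhoods of the identity. Moreover $\pi_N\circ\iota$ is the quotient map $\Gamma\to\Gamma/N$.

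\emph{Step 1: closures of finite-index subgroups.} Let $H\le\Gamma$ have finite index, and choose $N\in\mathcal I$ with $N\subseteq H$ (for instance the normal core of $H$). We claim $\overline{H}=\pi_N^{-1}(H/N)$. The set $\pi_N^{-1}(H/N)$ is closed and contains $\iota(H)$, which gives one inclusion. For the reverse, $\pi_N^{-1}(H/N)$ is the finite union of the cosets $\iota(h)\ker\pi_N$ with $hN\in H/N$. Each such coset equals $\iota(h)\overline{N}$, which lies in $\overline{H}$ because $\iota(N)$ is dense in $\ker\pi_N$. That density is the argument of \autoref{lem: finite index closure intersection}: since $\widehat{\Gamma}$ is the disjoint union of the closures of the cosets of $N$, we get $\ker\pi_N=\overline{N}$. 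It follows that $\overline{H}$ is an open subgroup, and that $[\widehat{\Gamma}:\overline{H}]=[\Gamma/N:H/N]=[\Gamma:H]$.

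\emph{Step 2: the two composites are the identity.} First, $\iota^{-1}(\overline{H})=\iota^{-1}\pi_N^{-1}(H/N)=H$, because $N\subseteq H$. Second, let $U$ be an open subgroup. Compactness gives $U$ finite index, and openness gives some $\ker\pi_N\subseteq U$, so $U=\pi_N^{-1}(\pi_N(U))$. Then $\iota^{-1}(U)$ is the preimage in $\Gamma$ of $\pi_N(U)\le\Gamma/N$, hence a finite-index subgroup containing $N$. Step 1 gives $\overline{\iota^{-1}(U)}=\pi_N^{-1}(\pi_N(U))=U$. Both maps clearly preserve inclusions, so they define an isomorphism of lattices. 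Normality also transfers in both directions. If $H$ is normal, then $H/N$ is normal in $\Gamma/N$, so its preimage $\overline{H}$ is normal. Conversely, $\iota^{-1}$ of a normal subgroup is normal.

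\emph{Step 3: the identification $\overline{H}\cong\widehat{H}$.} By \autoref{prop: left exact}, it suffices to show that the profinite topology on $\Gamma$ induces the full profinite topology on $H$. Take any finite-index subgroup $K\le H$. Then $K$ has finite index in $\Gamma$, and its normal core in $\Gamma$ is a subgroup of $K$ that is open in the profinite topology of $\Gamma$. Hence every open set of $H$'s own profinite topology is open in the induced topology. The reverse containment is automatic, since intersecting a finite-index subgroup of $\Gamma$ with $H$ gives a finite-index subgroup of $H$.

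I expect the only mildly delicate point to be the equality $\ker\pi_N=\overline{N}$ in Step 1, which is where density of $\iota(\Gamma)$ is used. It is handled by the coset-decomposition argument already recorded in \autoref{lem: finite index closure intersection}.
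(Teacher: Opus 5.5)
Your proof is correct and is the standard argument. The paper gives no proof of its own and simply cites \cite[Proposition 3.2.2]{RZ10}, where one likewise uses density of $\iota(\Gamma)$ to identify $\overline{N}$ with $\ker(\widehat{\Gamma}\to\Gamma/N)$, transports the correspondence theorem from the finite quotient $\Gamma/N$, and gets $\overline{H}\cong\widehat{H}$ from \autoref{prop: left exact} via normal cores, without ever needing residual finiteness of $\Gamma$.
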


\begin{definition}\label{DEF: corresponding}
Let $\Gamma_1$ and $\Gamma_2$ be abstract groups, and let   $\Phi:\widehat{\Gamma_1}\to \widehat{\Gamma_2}$ be an isomorphism. We say that a pair of finite-index subgroups $\Gamma_1'\le \Gamma_1$ and $\Gamma_2'\le \Gamma_2$ form  a {\em $\Phi$-corresponding pair} if $\Phi(\overline{\Gamma_1'})=\overline{\Gamma_2'}$.  In this case, the {\em restriction}  of $\Phi$ yields an isomorphism $\Phi':\widehat{\Gamma_1'}\to \widehat{\Gamma_2'}$. 
\end{definition}

By \autoref{prop: lattice of open subgroup}, the $\Phi$-correspondence yields an isomorphism between the lattices of finite-index subgroups of $\Gamma_1$ and $\Gamma_2$. 

\begin{definition}
Let $\Gamma_1$ and $\Gamma_2$ be abstract groups. We say that an isomorphism $\Phi:\widehat{\Gamma_1}\to \widehat{\Gamma_2}$ {\em virtually} satisfies a property (X) if there exists a $\Phi$-corresponding pair of finite-index subgroups $\Gamma_1'\le \Gamma_1$ and $\Gamma_2'\le \Gamma_2$ such that the restriction $\Phi':\widehat{\Gamma_1'}\to \widehat{\Gamma_2'}$ satisfies property (X). 
\end{definition}

In practice, we often consider a specific cofinal family of finite-index subgroups. 
\begin{definition}
Let $\Gamma$ be a finitely generated group. The {\em $m$-th standard characteristic subgroup} of $\Gamma$ is a finite-index subgroup   defined by
$$
\Gamma^{( m)}=\bigcap_{H\le \Gamma,\,[\Gamma:H]\le m} H.
$$
\end{definition}

According to \autoref{prop: lattice of open subgroup} (and \autoref{lem: finite index closure intersection}), $$\overline{\Gamma^{(m)}}=\bigcap_{U\le_o\widehat{\Gamma},\,[\widehat{\Gamma}:U]\le m} U, $$
where ``$\le_o$'' means open subgroup in $\widehat{\Gamma}$. In particular, $\overline{\Gamma^{(m)}}$ is also a characteristic subgroup of $\widehat{\Gamma}$.  Furthermore, if $\Gamma_1$ and $\Gamma_2$ are finitely generated groups  and $\Phi:\widehat{\Gamma_1}\to \widehat{\Gamma_2}$ is an isomorphism,  then for any $m\in \N$,  $\Gamma_1^{(m)}$ and $\Gamma_2^{(m)}$ form  a $\Phi$-corresponding pair of finite-index subgroups.

When $\Gamma$ is the fundamental group of a manifold $M$ with respect to a   basepoint $x\in M$, finite-index subgroups of $\Gamma$ correspond to pointed finite covers of $M$, i.e.\ finite covers of $M$ marked with a specified preimage of the basepoint $x$. We sometimes omit the basepoint when it is not important, for example when considering finite regular covers of $M$, which correspond to finite-index normal subgroups of $\Gamma$. In particular, the finite regular cover of $M$ corresponding to the standard characteristic subgroup $\Gamma^{(m)}$ is denoted by $M^{(m)}$ and is referred to as the {\em $m$-th standard characteristic cover} of $M$.

\section{Centralizer and outer automorphism}\label{sec: inj}
In this section, we prove the injectivity of the map  $\Out(\Gamma)\to \Out(\widehat{\Gamma})$ appearing in \autoref{mainthm2}. We also include some extra facts that will be used in later sections.  
\subsection{Conjugacy separability}
\begin{definition}
A group $\Gamma$ is {\em conjugacy separable} if for any pair of non-conjugate elements $x,y\in \Gamma$, there exists a finite quotient $q:\Gamma\to Q$ such that $q(x)$ and $q(y)$ are non-conjugate in $Q$. 
\end{definition}
Note that a conjugacy separable group is necessarily residually finite. 
An equivalent definition for conjugacy separability is that two elements $x,y\in \Gamma$ are conjugate in $\Gamma$ if and only if their images are conjugate in $\widehat{\Gamma}$.

\begin{proposition}[{\cite[Theorem 1.3]{HWZ13}}]\label{prop: conjugacy separable}
The fundamental group  of any compact orientable 3-manifold is  conjugacy separable.
\end{proposition}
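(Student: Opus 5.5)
Since this is \cite[Theorem 1.3]{HWZ13}, I would not reprove it in full here. I would instead reconstruct its proof along the lines of the geometric decomposition.

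\emph{Reduction to prime pieces.} Let $M$ be a compact orientable 3-manifold. The fundamental group of a compact orientable 3-manifold is a free product of fundamental groups of prime compact orientable 3-manifolds. The free factors are either $\Z$ (from $\mathbb{S}^2\times\mathbb{S}^1$ summands) or fundamental groups of irreducible manifolds. Stebe's theorem says that free products of conjugacy separable groups are conjugacy separable, so it suffices to treat irreducible $M$. The next step is to double $M$ along any compressible or non-toral boundary, or to pass to a suitable subgroup. This reduces the problem to irreducible $M$ with empty or toral boundary, provided conjugacy separability passes to the relevant retracts. A retract of a conjugacy separable group is conjugacy separable, and this is the property I would use. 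After this reduction, geometrization gives a JSJ decomposition of $M$ into Seifert fibered and hyperbolic pieces.

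\emph{Geometric pieces.} For a Seifert fibered piece I would quote Martino's result, which says these groups are conjugacy separable. In fact I would need the stronger \emph{hereditary} version, that every finite-index subgroup is conjugacy separable. This can be checked from the central extension by a Fuchsian group, using that Fuchsian groups are hereditarily conjugacy separable.

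For a hyperbolic piece, I would invoke \autoref{thm: virtually special}: the group is virtually compact special. Minasyan showed that finite-index subgroups of right-angled Artin groups are hereditarily conjugacy separable. Virtually compact special groups embed as virtual retracts of such groups, with quasiconvex and separable peripheral structure. Combining these gives hereditary conjugacy separability of the hyperbolic piece, together with separability of the peripheral $\Z^2$ subgroups and of their double cosets.

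\emph{Gluing.} The remaining case is the JSJ graph of groups with $\Z^2$ edge groups. Here I would use profinite Bass--Serre theory. The first input is that the JSJ decomposition is efficient, meaning that the profinite topology induces the full profinite topology on vertex and edge groups. This holds because the vertex groups are separable and the edge groups are closed (Wilton--Zalesskii). As a result, $\widehat{\pi_1M}$ acts on the profinite Bass--Serre tree.

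Now suppose $x,y\in\pi_1M$ are conjugate in $\widehat{\pi_1M}$.
\begin{enumerate}[label=(\roman*),leftmargin=*]
\item If $x$ is hyperbolic in the abstract tree, I would compare axes and cyclically reduced forms. Edge groups are separable and double cosets of edge groups are separable, so the conjugacy can be pulled back to $\pi_1M$.
\item If $x$ is elliptic, it fixes a vertex. I would use acylindricity of the profinite action to show that the conjugating element can be arranged to move one vertex stabiliser into another in a controlled way. This reduces the problem to conjugacy in a single vertex group, twisted by the peripheral structure. Hereditary conjugacy separability of the vertex groups then finishes the argument.
\end{enumerate}

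The main obstacle is this elliptic case. Acylindricity fails near Seifert fibered pieces, since fibres are fixed by long paths in the tree. These pieces need separate treatment: one controls the centralisers of fibres in the profinite completion (closure of centraliser equals centraliser of closure). For that I would first try passing to a finite cover in which every Seifert piece is a product $\Sigma\times\mathbb{S}^1$, and then argue directly there, following Wilton--Zalesskii's treatment of graph manifolds.
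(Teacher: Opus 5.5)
The paper gives no argument for this statement beyond citing \cite[Theorem 1.3]{HWZ13}, so deferring to that reference matches the paper. Your architecture is also the right one: prime decomposition with free products, retracts, JSJ, efficiency, profinite Bass--Serre theory, and hereditary conjugacy separability of the pieces.

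As a reconstruction, however, the hyperbolic step rests on a false claim and skips the one idea that is actually needed. Agol--Wise only make a finite-index subgroup $G'$ of the piece group $G$ compact special. The group $G$ itself need not embed in any right-angled Artin group: those are bi-orderable, whereas e.g.\ the Weeks manifold group is not even left-orderable. So Minasyan's theorem gives hereditary conjugacy separability of $G'$ only, and conjugacy separability does not pass to finite-index overgroups in general.

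The missing descent step goes as follows.
\begin{itemize}
\item Let $x,y\in G$ be conjugate in $\widehat{G}=G\cdot\overline{G'}$. After conjugating $y$ inside $G$, you may assume $y=hxh^{-1}$ with $h\in\overline{G'}$.
\item Pick $n\ge 1$ with $x^n\in G'$. Then $y^n\in\overline{G'}\cap G=G'$, and conjugacy separability of $G'$ gives $k\in G'$ with $y^n=kx^nk^{-1}$.
\item Now $k^{-1}h\in C_{\overline{G'}}(x^n)=\overline{C_{G'}(x^n)}$ by \autoref{Minasyan}.
\item Since torsion-free Kleinian groups are commutative-transitive, $C_G(x^n)=C_G(x)$. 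Hence $k^{-1}h$ centralises $x$, and $y=kxk^{-1}$.
\end{itemize}
Applying this to every finite-index subgroup gives the hereditary statement you need.

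The same descent problem affects your plan for Seifert pieces in the gluing step. Proving conjugacy separability, or the centraliser identity for fibres, in a finite cover whose Seifert pieces are products says nothing about $\pi_1M$ by itself. The argument above is also unavailable there, because fibres have very large centralisers. You would have to do the gluing in $\pi_1M$ itself.

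Your diagnosis of the difficulty is also off. Only $1$-acylindricity fails near Seifert pieces: a fibre fixes the star of its vertex, i.e.\ paths of length two, not long paths. For non-Sol $M$, Wilton--Zalesskii show the profinite action on the JSJ tree is still $k$-acylindrical for a small $k$, and that is what the elliptic case can use.

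The genuinely non-acylindrical case is Sol, which your outline never separates out. There the torus subgroup is normal and acts trivially on the JSJ tree, so these manifolds must be treated directly as polycyclic groups (Remeslennikov, Formanek).

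Finally, the detour through Fuchsian groups is unnecessary, and the fibre is only virtually central. Finite covers of Seifert fibred manifolds are Seifert fibred, so Martino's theorem already gives hereditary conjugacy separability of the Seifert pieces.
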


For a group $G$ and an element $g\in G$, we denote $C_{G}(g) = \{x\in G\mid xg=g x \}$ as the {\em centralizer} of $g$ in $G$. Note that when $G$ is a Hausdorff topological group,
$C_{G}(g)$ is a closed subgroup of $G$.  
The following proposition is due to  Minasyan \cite{Min12}. 
\begin{proposition}[{\cite[Corollary 12.3]{Min12}}]\label{Minasyan}
Suppose that $\Gamma$ is a residually finite group and that every finite-index subgroup of $\Gamma$ is conjugacy separable. Then, for any $\gamma \in \Gamma$, $C_{\widehat{\Gamma}}(\gamma)=\overline{C_{\Gamma}(\gamma)}$. 
\end{proposition}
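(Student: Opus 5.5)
The inclusion $\overline{C_{\Gamma}(\gamma)}\subseteq C_{\widehat{\Gamma}}(\gamma)$ is immediate: $C_{\widehat{\Gamma}}(\gamma)$ is a closed subgroup of the Hausdorff group $\widehat{\Gamma}$ and contains $\iota(C_\Gamma(\gamma))$. For the reverse inclusion, the plan is to fix $g\in C_{\widehat{\Gamma}}(\gamma)$ and show that for every finite-index normal subgroup $N\unlhd\Gamma$ we have $g\in C_\Gamma(\gamma)\,\overline{N}$. Since the open sets $x\overline{N}$ form a neighbourhood basis of $x$ in $\widehat{\Gamma}$, this gives $g\in\bigcap_N C_\Gamma(\gamma)\overline{N}=\overline{C_\Gamma(\gamma)}$.

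First step. Fix such an $N$. By density of $\Gamma$ we can write $g=\delta n$ with $\delta\in\Gamma$ and $n\in\overline{N}$. The relation $g\gamma g^{-1}=\gamma$ rewrites as
\[
n\gamma n^{-1}=\delta^{-1}\gamma\delta .
\]
So $\gamma$ and $\gamma':=\delta^{-1}\gamma\delta$ are two elements of $\Gamma$ that are conjugate by an element of $\overline N$. Moreover, $\gamma^{-1}\gamma'=\gamma^{-1}n\gamma n^{-1}\in\overline N\cap\Gamma=N$, using that $\overline N$ is normal and \autoref{lem: finite index closure intersection}. Hence both $\gamma$ and $\gamma'$ lie in the finite-index subgroup $H=N\langle\gamma\rangle$.

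Second step. We apply conjugacy separability of $H$, which holds by hypothesis since $H$ has finite index. By \autoref{prop: lattice of open subgroup} and \autoref{lem: finite index closure intersection}, $\overline H\cong\widehat H$ and $\overline N\subseteq\overline H$. So $\gamma$ and $\gamma'$ are conjugate in $\widehat H$, and therefore they are conjugate in $H$: there is $h\in H$ with $h\gamma h^{-1}=\gamma'$. Write $h=n'\gamma^k$ with $n'\in N$. Since $\gamma^k$ commutes with $\gamma$, we get $n'\gamma n'^{-1}=\gamma'=\delta^{-1}\gamma\delta$.

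Conclusion. The element $\delta n'$ then centralizes $\gamma$, so it lies in $C_\Gamma(\gamma)$. Also $\delta n'\in\delta N\subseteq\delta\overline N=g\overline N$, so $g\in C_\Gamma(\gamma)\overline N$, as required.

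The only delicate point is the second step. We must pass from conjugacy in $\overline N$ to conjugacy inside a subgroup of $\Gamma$ that is genuinely conjugacy separable and whose completion embeds as $\overline H$. The trick that makes this work is to enlarge $N$ to $N\langle\gamma\rangle$, and then to absorb the $\gamma^k$ factor, which centralizes $\gamma$. This is exactly where the hypothesis on all finite-index subgroups, not just $\Gamma$ itself, is used.
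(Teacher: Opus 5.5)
Your proof is correct. The paper itself gives no argument here and simply quotes Minasyan's Corollary~12.3.

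As far as I recall the cited source (I have not checked it line by line), Minasyan obtains the statement as a corollary of a finite-level ``centralizer condition''. Roughly, this asks that for every finite-index normal $K\unlhd\Gamma$ there is a finite-index normal $L\le K$ such that $C_{\Gamma/L}(\gamma L)$ is contained in the image of $C_\Gamma(\gamma)K$. He relates this condition to hereditary conjugacy separability and then passes to the inverse limit to get the closure statement.

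Your route skips the intermediate condition. You work directly in $\widehat{\Gamma}$ and write the given centralizing element as $g=\delta n$. This produces a single pair $(\gamma,\delta^{-1}\gamma\delta)$ lying in $N\langle\gamma\rangle$ and conjugate in $\overline{N\langle\gamma\rangle}\cong\widehat{N\langle\gamma\rangle}$. So one application of conjugacy separability per $N$ is enough. The profinite element $n$ already carries the information that, at the finite level, would require choosing the deeper subgroup $L$.

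What your approach buys is a short, self-contained argument with a slightly weaker hypothesis. You only need conjugacy separability of the subgroups $N\langle\gamma\rangle$ for a cofinal family of finite-index normal $N$. This is because $\bigcap_N C_\Gamma(\gamma)\overline{N}$ over such a family already equals $\overline{C_\Gamma(\gamma)}$.

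What the finite-level formulation buys is a criterion stated purely in terms of finite quotients. It can also be run in the other direction, to deduce conjugacy separability of finite-index subgroups, which is how Minasyan uses it.
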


By a {\em Kleinian group}, we will mean a discrete subgroup of $\PSL_2(\C)$. 

\begin{corollary}\label{cor: centralizer}
Let $\Gamma$ be a finitely generated  torsion-free   Kleinian group. 
Then, $C_{\widehat{\Gamma}}(\gamma)=\overline{C_{\Gamma}(\gamma)}$ for any $\gamma\in \Gamma$.
\end{corollary}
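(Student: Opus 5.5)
The plan is to verify the hypotheses of \autoref{Minasyan} for $\Gamma$ and then apply it directly. We need two inputs: that $\Gamma$ is residually finite, and that every finite-index subgroup of $\Gamma$ is conjugacy separable. Both will be obtained by realising each finite-index subgroup as the fundamental group of a compact orientable 3-manifold and invoking \autoref{prop: conjugacy separable}.

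The key step is the topological realisation. Since $\Gamma$ is a torsion-free discrete subgroup of $\PSL_2(\C)$, it acts freely and properly discontinuously on $\mathbb{H}^3$. Hence $N=\mathbb{H}^3/\Gamma$ is an orientable hyperbolic 3-manifold with $\pi_1N\cong\Gamma$; orientability holds because $\PSL_2(\C)$ acts by orientation-preserving isometries. Because $\Gamma$ is finitely generated, Scott's compact core theorem provides a compact 3-dimensional submanifold $C\subseteq N$ whose inclusion is a homotopy equivalence. Then $C$ is compact and orientable with $\pi_1C\cong\Gamma$, so \autoref{prop: conjugacy separable} shows that $\Gamma$ is conjugacy separable, and in particular residually finite.

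Now let $\Gamma'\le\Gamma$ be a finite-index subgroup. It is again finitely generated, torsion-free and Kleinian, so the same argument applies to it verbatim. Alternatively, one can pass to the corresponding finite cover of $C$, which is again a compact orientable 3-manifold with fundamental group $\Gamma'$. Either way, every finite-index subgroup of $\Gamma$ is conjugacy separable. \autoref{Minasyan} then gives $C_{\widehat{\Gamma}}(\gamma)=\overline{C_\Gamma(\gamma)}$ for all $\gamma\in\Gamma$.

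The only non-formal ingredient is the compact core step. If one wants to avoid Scott's theorem, geometrically finite groups would instead be handled by truncating cusps of the convex core neighbourhood, but the general finitely generated case really needs Scott's theorem. With the core in hand, everything else is a direct citation.
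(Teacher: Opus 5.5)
Your proposal is correct and matches the paper's proof. Each finite-index subgroup is again a finitely generated torsion-free Kleinian group, hence by Scott's core theorem the fundamental group of a compact orientable 3-manifold, so it is conjugacy separable by Hamilton--Wilton--Zalesskii (with residual finiteness following from this); Minasyan's criterion then gives $C_{\widehat{\Gamma}}(\gamma)=\overline{C_{\Gamma}(\gamma)}$.
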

\begin{proof}
Every finite-index subgroup $\Gamma_0$ of $\Gamma$ is a finitely generated  torsion-free   Kleinian group, and is hence the fundamental group of a compact orientable 3-manifold according to Scott's theorem \cite{Sco73}. Thus, $\Gamma_0$ is conjugacy separable according to \autoref{prop: conjugacy separable}, and the corollary follows from \autoref{Minasyan}.
\end{proof}

\subsection{Center of profinite completion} This subsection is devoted to the proof of the following theorem.
\begin{theorem}\label{thm: lattice center free}
Let $\Gamma$ be a finitely generated  non-elementary    Kleinian group.  Then, $\widehat{\Gamma}$ has trivial center. 
\end{theorem}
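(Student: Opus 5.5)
The plan is to reduce to a torsion-free finite-index subgroup, and then show that a central element of $\widehat{\Gamma}$ would have to centralize a non-abelian free subgroup. By Selberg's lemma, $\Gamma$ has a torsion-free finite-index normal subgroup $\Gamma_0$, which is again finitely generated and non-elementary. Let $z\in Z(\widehat{\Gamma})$. I will first show that $z$ commutes with every element of $\overline{\Gamma_0}$, and then that such an element is trivial.

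Since $\Gamma_0$ is non-elementary, it contains two loxodromic elements $a,b$ with disjoint fixed point sets on $\partial\mathbb{H}^3$. In the torsion-free Kleinian group $\Gamma_0$, the centralizer of a loxodromic element is a cyclic group or a $\Z^2$; in either case it is contained in the stabilizer of its axis. Hence $C_{\Gamma_0}(a)\cap C_{\Gamma_0}(b)=\{1\}$.

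By \autoref{cor: centralizer}, applied to $\Gamma_0$, we have $C_{\widehat{\Gamma_0}}(a)=\overline{C_{\Gamma_0}(a)}$. Here $\widehat{\Gamma_0}\cong\overline{\Gamma_0}$ is open in $\widehat{\Gamma}$ by \autoref{prop: lattice of open subgroup}. There is a subtlety: $z$ need not lie in $\overline{\Gamma_0}$. To handle this, I will replace $z$ by a power $z^k$ with $k=[\widehat{\Gamma}:\overline{\Gamma_0}]!$, which lies in $\overline{\Gamma_0}$ and is still central. It then suffices to show that $Z(\widehat{\Gamma})$ is torsion, and finally that it is trivial. Alternatively, and more cleanly, I can argue that $Z(\widehat{\Gamma})\cap\overline{\Gamma_0}\subseteq Z(\overline{\Gamma_0})$, so it suffices to prove $Z(\widehat{\Gamma_0})=1$ and then deal with the finite group $Z(\widehat{\Gamma})$ separately.

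For $\widehat{\Gamma_0}$, take a central $w$. Then
\[
w\in \overline{C_{\Gamma_0}(a)}\cap\overline{C_{\Gamma_0}(b)}.
\]
The main obstacle is to show that this intersection of closures is trivial, since closures need not intersect as the subgroups themselves do. I would handle this using LERF and separability of double cosets. Kleinian groups that are finitely generated and torsion-free are fundamental groups of compact 3-manifolds, hence LERF (Agol, Wise), and moreover products of abelian subgroups are separable in virtually special groups. For the abelian groups $A=C_{\Gamma_0}(a)$ and $B=C_{\Gamma_0}(b)$, the closures are $\widehat{A}$ and $\widehat{B}$ by \autoref{cor: lerf injective}. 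I would argue that $\overline{A}\cap\overline{B}=\overline{A\cap B}$ using the fact that, in a virtually compact special group (\autoref{thm: virtually special}), the intersection of closures of quasiconvex or convex-cocompact subgroups equals the closure of their intersection. This property is known for hyperbolic virtually special groups and relatively quasiconvex subgroups. It gives $w=1$.

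Finally, $Z(\widehat{\Gamma})$ injects into $\widehat{\Gamma}/\overline{\Gamma_0}$, so it is finite. Now let $z$ be a central element of finite order. For each loxodromic $a\in\Gamma$, $z$ lies in $C_{\widehat{\Gamma}}(a)$. I would pass to the subgroup $\langle\Gamma_0,z\rangle$ and repeat the closure-intersection argument with the virtually abelian centralizers $C_\Gamma(a)$ and $C_\Gamma(b)$ in $\Gamma$ itself, using \autoref{Minasyan} for $\Gamma$. This is justified because every finite-index subgroup of $\Gamma$ is conjugacy separable, as the fundamental group of a compact orbifold that is virtually special; the same holds in the orbifold case by Minasyan's results for virtually compact special groups. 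Since $C_\Gamma(a)\cap C_\Gamma(b)$ is finite and consists of elliptic elements fixing both axes, it is trivial for a generic choice of $a,b$. This forces $z=1$.
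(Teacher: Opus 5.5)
\textbf{There is a genuine gap: the finite-order central elements are not handled.} Your route differs from the paper's: you intersect centralizer closures, while the paper works with normalizers. Your torsion-free step can be repaired, but the torsion case cannot as written.

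Once $Z(\widehat{\Gamma_0})=1$, you correctly get $Z(\widehat{\Gamma})\cap\overline{\Gamma_0}=1$, so $Z(\widehat{\Gamma})$ is finite. To kill a finite-order central $z$, you then apply \autoref{Minasyan} to $\Gamma$ itself. That requires every finite-index subgroup of $\Gamma$, including those with torsion, to be conjugacy separable. Nothing available gives this:
\begin{itemize}
\item \autoref{prop: conjugacy separable} covers only 3-manifold groups, hence only torsion-free ones.
\item Conjugacy separability does not in general pass from a finite-index subgroup to an overgroup.
\item The results you allude to (e.g.\ for hyperbolic virtually compact special groups) do not cover arbitrary finitely generated Kleinian groups with torsion and parabolic elements.
\end{itemize}
Passing to $\langle\Gamma_0,z\rangle$ does not help either. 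It is a subgroup of $\widehat{\Gamma}$, not a discrete group to which \autoref{Minasyan} applies. So the torsion case, which the theorem covers and which is needed later for lattices with torsion, is not proved.

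\textbf{The missing idea is to work with normalizers rather than centralizers.} Since $\widehat{\Gamma}=\Gamma\cdot\overline{\Gamma_0}$, write $z=\gamma h$ with $\gamma\in\Gamma$ and $h\in\overline{\Gamma_0}$. Then $h=\gamma^{-1}z$ normalizes both $\Gamma$ and $\overline{\Gamma_0}$, hence normalizes $\Gamma_0=\overline{\Gamma_0}\cap\Gamma$. But $h$ need not centralize $\Gamma_0$, so your centralizer argument says nothing about it.

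What you need is $N_{\widehat{\Gamma_0}}(\Gamma_0)=\Gamma_0$. The paper proves this in \autoref{lem: normalize} using only torsion-free inputs:
\begin{itemize}
\item Take a non-power loxodromic $x\in\Gamma_0$. Then $hxh^{-1}\in\Gamma_0$ is conjugate to $x$ in $\widehat{\Gamma_0}$.
\item Conjugacy separability gives $\gamma'\in\Gamma_0$ with $\gamma'h\in C_{\widehat{\Gamma_0}}(x)=\overline{\langle x\rangle}$, so $\gamma'h=x^\lambda$ for some $\lambda\in\widehat{\Z}$.
\item Take a Schottky pair $X=x^n$, $Y=y^n$ generating a free group $F$. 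Since $X^\lambda$ normalizes $\Gamma_0$, you get $X^\lambda YX^{-\lambda}\in\overline{F}\cap\Gamma_0=F$.
\item The Heisenberg representation then forces $\lambda\in\Z$.
\end{itemize}
Hence $h\in\Gamma_0$, so $z\in Z(\Gamma)=1$. Equivalently, $Z(\widehat{\Gamma})\subseteq N_{\widehat{\Gamma}}(\Gamma)=\Gamma$ by \autoref{cor: normalizer}.

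\textbf{Two smaller points.} First, your torsion-free step does not need the property $\overline{A}\cap\overline{B}=\overline{A\cap B}$, which you cite only loosely. That property follows from separability of products $A'B'$ of finite-index subgroups $A'\le A$, $B'\le B$, which is not available off the shelf for arbitrary finitely generated Kleinian groups. A direct argument works instead. Choose $a,b$ non-power loxodromics. A central $w$ then satisfies $w=a^\lambda=b^\mu$, so $X^\lambda=w^n=Y^\mu$ in $\overline{F}\cong\widehat{F}$. Abelianizing to $\widehat{\Z}^2$ gives $\lambda=\mu=0$, hence $w=1$. Second, the centralizer of a loxodromic element in a torsion-free Kleinian group is infinite cyclic; $\Z^2$ occurs only for parabolic elements.
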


Note that finitely generated linear groups are residually finite \cite[Теорема VII]{Mal40}, so we may always identify $\Gamma$ with its image in $\widehat{\Gamma}$ as in \autoref{conv}. 
Before the proof, we need some preprations. 

\begin{proposition}[{\cite[Corollary 9.4]{Ago13}}]\label{FK: LERF}
Any finitely generated   Kleinian group is LERF. 
\end{proposition}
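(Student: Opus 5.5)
The plan is to reduce to the torsion-free case, split finitely generated subgroups into geometrically finite and geometrically infinite ones, and handle them respectively by virtual specialness and by the virtual fibre structure.

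\emph{Reduction to torsion-free, cusp-free ambient groups.} Let $\Gamma$ be a finitely generated Kleinian group.
\begin{enumerate}[label=(\roman*), leftmargin=*]
\item By Selberg's lemma, $\Gamma$ has a torsion-free finite-index subgroup $\Gamma_0$. LERF passes to finite-index overgroups: if $H\le\Gamma$ is finitely generated, then $H\cap\Gamma_0$ is finitely generated and of finite index in $H$. Moreover, a subgroup of $\Gamma_0$ that is separable in $\Gamma_0$ is separable in $\Gamma$, and a finite union of cosets of a separable subgroup is closed. So it suffices to treat torsion-free $\Gamma$.
\item By Scott's core theorem \cite{Sco73} and the tameness theorem (Agol, Calegari--Gabai), $\Gamma=\pi_1 M$ with $\mathbb{H}^3/\Gamma$ tame.
\item A standard doubling argument embeds a geometrically finite $\Gamma$ into a lattice, or at least into a group that is virtually compact special relative to its parabolics. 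When $\Gamma$ is geometrically infinite, one first passes, via Canary's covering theorem, to the description of degenerate ends.
\end{enumerate}
The key input is \autoref{thm: virtually special}: a lattice $\Lambda$ in $\PSL_2(\C)$ is virtually compact special. Hence, by Haglund--Wise, its (relatively) quasiconvex subgroups are separable. Geometrically finite subgroups of $\Lambda$ are exactly the relatively quasiconvex ones, so every geometrically finite subgroup of a lattice is separable.

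\emph{Geometrically finite subgroups.} Take $H\le\Gamma$ finitely generated with $\Gamma$ geometrically finite, and first suppose $H$ is geometrically finite. Embed $\Gamma$ into a lattice $\Lambda$, or into a relatively hyperbolic virtually special group obtained from the convex core by doubling. Then $H$ is separable in $\Lambda$, hence in $\Gamma$. This case is essentially formal once \autoref{thm: virtually special} and relative quasiconvexity are available.

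\emph{Geometrically infinite subgroups.} If $H$ is geometrically infinite, then by tameness and Canary's covering theorem $\mathbb{H}^3/H$ has a degenerate end. Canary's theorem then forces $\Gamma$ to be virtually a lattice fibering over the circle, with $H$ commensurable with the fibre subgroup; in the infinite-covolume case $H$ must in fact be geometrically finite. So $H$ is virtually a normal subgroup $K\unlhd\Gamma'$ with $\Gamma'/K\cong\Z$. Such a $K$ is separable, since $\Gamma'/K\cong\Z$ is residually finite. Then $H$, which contains $K\cap H$ with finite index, is a finite union of cosets of a separable subgroup inside the finite-index subgroup $\Gamma'$, and is therefore separable in $\Gamma$.

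\emph{Main obstacle.} The hardest step is the geometrically finite case when $\Gamma$ has cusps: one needs relative quasiconvexity and separability in the cusped (relatively hyperbolic) setting. Here I would invoke Wise's theorem \cite[Theorem 17.14]{Wis21} for cusped manifolds, together with the relative version of Haglund--Wise separability, rather than attempting a direct argument.
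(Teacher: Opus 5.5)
Your outline is the standard argument behind the cited result. The paper gives no proof of its own; it only cites Agol's Corollary 9.4, and Scott for Fuchsian groups. However, your argument has a genuine hole: it never handles the case where the torsion-free ambient group $\Gamma$ is itself geometrically infinite.

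\begin{itemize}
\item Your geometrically finite paragraph explicitly assumes $\Gamma$ is geometrically finite, since it embeds $\Gamma$ in a lattice.
\item The sentence in (iii) about passing ``via Canary's covering theorem, to the description of degenerate ends'' is not a reduction. The covering theorem only describes how a degenerate end of $\mathbb{H}^3/H$ maps into $\mathbb{H}^3/\Gamma$, and it produces no finite quotients of $\Gamma$.
\item Your claim that in infinite covolume $H$ must be geometrically finite is false in this case. Take $\Gamma$ a doubly degenerate surface group and $H=\Gamma$. The first alternative of the covering theorem holds, $H$ is geometrically infinite, and $\Gamma$ has infinite covolume, so neither of your two cases applies.
\end{itemize}

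The missing idea is that LERF is a property of the abstract group, so you may change the geometric model. A geometrically infinite $\Gamma$ has infinite covolume. By tameness, $\mathbb{H}^3/\Gamma$ is the interior of a compact, orientable, irreducible 3-manifold. Its torus boundary components correspond exactly to the rank-two cusps, and infinite covolume forces a boundary component of genus at least $2$. This manifold is Haken and atoroidal, so Thurston's hyperbolization theorem gives its interior a geometrically finite hyperbolic structure. Hence $\Gamma\cong\Gamma'$ with $\Gamma'$ geometrically finite of infinite covolume. Now run your argument in $\Gamma'$, re-sorting subgroups into geometrically finite or infinite with respect to $\Gamma'$. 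The covering theorem then makes every finitely generated subgroup of $\Gamma'$ geometrically finite, and your relatively quasiconvex case applies.

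Separately, the naive doubling argument does not embed an arbitrary geometrically finite group in a lattice. The double of the compact core is hyperbolic only when the boundary is incompressible and the core is acylindrical; otherwise the double is reducible or toroidal. You should instead cite directly that geometrically finite Kleinian groups are virtually compact special (Wise), and that relatively quasiconvex subgroups of such groups are separable.
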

We also involve the LERFness of finitely generated Fuchsian groups  in this proposition, which was formerly proven by Scott \cite[Theorem 3.2]{Sco78}.

\begin{lemma}\label{lem: free group lambda}
Let $F$ be a free group over two generators $x$ and $y$, and identify $F$ with its image in $\widehat{F}$. For $\lambda\in \widehat{\Z}$, $x^\lambda y x^{-\lambda}\in F$ if and only if $\lambda \in \Z$. 
\end{lemma}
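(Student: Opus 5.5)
The "if" direction is immediate, since $x^\lambda y x^{-\lambda}$ is an honest element of $F$ when $\lambda\in\Z$. For the converse, I would show that if $g:=x^\lambda y x^{-\lambda}\in F$, then $g$ must equal $x^n y x^{-n}$ for some integer $n$, and then use residual finiteness of cyclic quotients to conclude $\lambda = n$.

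\textbf{Step 1.} Suppose $g\in F$. Then $g$ is conjugate to $y$ in $\widehat{F}$. Free groups are conjugacy separable (e.g.\ by \autoref{prop: conjugacy separable}, since $F$ is the fundamental group of a handlebody), so $g = w y w^{-1}$ for some $w\in F$. Setting $u = x^{-\lambda} w\in\widehat{F}$, we get $u y u^{-1} = y$, so $u\in C_{\widehat{F}}(y)$.

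\textbf{Step 2.} Identify this centralizer. The free group $F$ satisfies the hypotheses of \autoref{Minasyan}: all finite-index subgroups are free, hence conjugacy separable. Therefore
$$C_{\widehat{F}}(y)=\overline{C_F(y)}=\overline{\langle y\rangle}=\{y^\mu\mid \mu\in\widehat{\Z}\}.$$
It follows that $x^{\lambda} = w y^{-\mu}$ for some $\mu\in\widehat{\Z}$.

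\textbf{Step 3.} Extract integrality. Abelianize via $F\to\Z^2$, which completes to $\widehat{F}\to\widehat{\Z}^2$. Writing $w\mapsto(a,b)\in\Z^2$, we obtain $(\lambda,0)=(a,b-\mu)$, so $\lambda=a\in\Z$. This already finishes the proof: no further normal-form argument is needed, because the abelianization detects $\lambda$ directly.

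\textbf{Main obstacle.} The crux is Step 2, namely computing the profinite centralizer of $y$. This is exactly where \autoref{Minasyan} is needed, and one must check that its hypothesis (hereditary conjugacy separability of free groups) is satisfied. That check reduces to \autoref{prop: conjugacy separable}, or alternatively to the classical theorem of Baumslag that free groups are conjugacy separable.

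If one prefers to avoid \autoref{Minasyan}, an alternative is to work in finite $p$-group quotients, or in semidirect products $\Z/n\ltimes$ (finite group), to show directly that the profinite centralizer of $y$ is the closure of $\langle y\rangle$. I expect that route to be messier.
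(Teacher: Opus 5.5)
Your proof is correct, but it takes a genuinely different route from the paper's.

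\textbf{The paper's route.} The paper maps $F$ into the integral Heisenberg group, sending $x$ to the elementary unipotent matrix with a $1$ in position $(1,2)$ and $y$ to the one with a $1$ in position $(2,3)$. By the universal property this extends to $\widehat{F}\to \mathrm{H}(\widehat{\Z})$. A direct computation shows that $x^\lambda y x^{-\lambda}$ maps to a matrix whose $(1,3)$ entry is exactly $\lambda$. If $x^\lambda y x^{-\lambda}\in F$, its image lies in $\mathrm{H}(\Z)$, which forces $\lambda\in\Z$. Nothing beyond the universal property of profinite completion is used.

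\textbf{Your route.} You use two substantial external inputs: hereditary conjugacy separability of free groups (\autoref{prop: conjugacy separable}) and Minasyan's identification $C_{\widehat{F}}(y)=\overline{\langle y\rangle}$ (\autoref{Minasyan}). These give $x^\lambda = w y^{-\mu}$ with $w\in F$, and the abelianization then reads off $\lambda$. Every step checks out, including the identity $uyu^{-1}=y$ and the computation of the images of $x^\lambda$ and $y^{-\mu}$ in $\widehat{\Z}^2$. There is no circularity, since those cited results do not depend on this lemma.

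Your opening plan says you will show $g=x^nyx^{-n}$ and then compare. Step~3 actually bypasses this, although it does recover $\mu=b\in\Z$ and hence $g=x^a y x^{-a}$ with $a=\lambda$.

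\textbf{Comparison.}
\begin{itemize}
\item Your argument is conceptually uniform with the proof of \autoref{lem: normalize}, which uses the same mechanism of conjugacy separability plus profinite centralizers.
\item The paper's argument makes the lemma an elementary computation, independent of deep separability results.
\item In the paper's route, the class-$2$ nilpotent quotient detects $\lambda$ directly. Abelianization alone cannot do this, since $x^\lambda y x^{-\lambda}$ and $y$ have the same abelian image. Your route gets around this only because the centralizer computation first turns the conjugate into the product $x^\lambda y^{\mu}$, whose abelian image does see $\lambda$.
\end{itemize}
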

\begin{proof}
This lemma is well-known, but we would like to include an interesting and self-contained proof. The ``if'' part is clear, and we shall prove the ``only if'' part. 

For any commutative unital ring $R$, let $$\mathrm{H}(R)=\left\{\left.\begin{pmatrix} 1 & a & b \\ 0 & 1 & c \\ 0 & 0 & 1\end{pmatrix}\right| a,b,c\in R\right\}\subseteq \GL_3(R)$$ be the Heisenberg group over $R$. Any ring homomorphism $R\to R'$ induces a group homomorphism $\mathrm{H}(R)\to \mathrm{H}(R')$. In particular, $\mathrm{H}(\widehat{\Z})=\limi \mathrm{H}(\Z/n\Z)$ is a profinite group, and $\mathrm{H}(\Z)$ injects into $\mathrm{H}(\widehat{\Z})$. 

Consider a homomorphism
$\phi: F\to \mathrm{H}(\Z)$ defined by
$$
\phi(x)=\begin{pmatrix} 1 & 1 & 0 \\ 0 & 1 & 0 \\ 0 & 0 & 1\end{pmatrix}\quad \text{and}\quad \phi(y)=\begin{pmatrix} 1 & 0 & 0 \\ 0 & 1 & 1 \\ 0 & 0 & 1\end{pmatrix}.
$$
By \autoref{prop: universal property}, there exists a unique continuous homomorphism $\Phi:\widehat{F}\to \mathrm H(\widehat{\Z})$ such that the following diagram commutes.
\begin{equation*}
\begin{tikzcd}
F \arrow[r,"\phi"] \arrow[d,"\iota"',hook] & \mathrm{H}(\Z) \arrow[d, hook]\\
\widehat{F} \arrow[r,"\Phi"] & \mathrm{H}(\widehat{\Z})
\end{tikzcd}
\end{equation*}
By a direct calculation, 
$$\Phi(x^\lambda y x^{-\lambda})=\begin{pmatrix} 1 & \lambda  & 0 \\ 0 & 1 & 0 \\ 0 & 0 & 1\end{pmatrix}\begin{pmatrix} 1 & 0 & 0 \\ 0 & 1 & 1 \\ 0 & 0 & 1\end{pmatrix}\begin{pmatrix} 1 &-\lambda  & 0 \\ 0 & 1 & 0 \\ 0 & 0 & 1\end{pmatrix}=\begin{pmatrix} 1 & 0 & \lambda \\ 0 & 1 & 1 \\ 0 & 0 & 1\end{pmatrix}.$$
In particular, $\Phi(x^\lambda y x^{-\lambda})\notin \mathrm{H}(\Z)$ when $\lambda\notin\Z$. Therefore, $x^\lambda yx^{-\lambda}\notin F$ when $\lambda\notin \Z$. 
\end{proof}

For a group $G$ and a subgroup $H\le G$, denote $N_{G}(H)=\{g\in G\mid gHg^{-1}=H\}$ as the {\em normalizer} of $H$ in $G$. 

\begin{lemma}\label{lem: normalize}
Let $\Gamma$ be a finitely generated non-elementary torsion-free Kleinian group. Then, $N_{\widehat{\Gamma}}(\Gamma)=\Gamma$. 
\end{lemma}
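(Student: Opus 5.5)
Let $g\in N_{\widehat{\Gamma}}(\Gamma)$. The plan is to show that $g$ differs from an element of $\Gamma$ by something that centralizes all of $\Gamma$, and that such an element must be trivial. The tools are LERF (\autoref{FK: LERF}), the centralizer description \autoref{cor: centralizer}, and the free-group computation \autoref{lem: free group lambda}.

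First I pick a free subgroup. Since $\Gamma$ is non-elementary and torsion-free, it contains two loxodromic elements $x,y$ with disjoint fixed point sets. After passing to powers (ping-pong), they generate a free subgroup $F=\langle x,y\rangle$ of rank two. By \autoref{FK: LERF} and \autoref{cor: lerf injective}, $\overline{F}\cong\widehat{F}$, and $\overline{F}\cap\Gamma=F$.

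Next I analyse conjugation by $g$ on a single element. Write $c_g(\gamma)=g\gamma g^{-1}$; by hypothesis $c_g$ restricts to an automorphism of $\Gamma$. In a torsion-free Kleinian group the centralizer of a loxodromic element $x$ is cyclic, and by \autoref{cor: centralizer} its closure is $\overline{C_\Gamma(x)}\cong\widehat{\Z}$. The key step is to show that $c_g$ agrees with some inner automorphism of $\Gamma$. The cleanest route I see uses conjugacy separability (\autoref{prop: conjugacy separable}). For each $\gamma\in\Gamma$, the elements $g\gamma g^{-1}$ and $\gamma$ are both in $\Gamma$ and are conjugate in $\widehat\Gamma$, hence conjugate in $\Gamma$. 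So $c_g|_\Gamma$ is a pointwise-inner automorphism. Taking $\delta\in\Gamma$ with $\delta x\delta^{-1}=gxg^{-1}$ and replacing $g$ by $\delta^{-1}g$, I may assume $g$ centralizes $x$. Then $g\in C_{\widehat\Gamma}(x)=\overline{C_\Gamma(x)}$, and since $C_\Gamma(x)=\langle x_0\rangle$ is cyclic with $x_0$ a root of $x$, we get $g=x_0^\lambda$ for some $\lambda\in\widehat{\Z}$.

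Finally I show $\lambda\in\Z$. Replacing $x$ by $x_0$ (and re-choosing $y$ so that $\langle x_0,y\rangle$ is still free of rank two), the hypothesis gives $x_0^\lambda y x_0^{-\lambda}=gyg^{-1}\in\Gamma$. This element also lies in $\overline{F}$, so it lies in $\overline F\cap\Gamma=F$. Under $\overline F\cong\widehat F$, \autoref{lem: free group lambda} then forces $\lambda\in\Z$. Hence $g\in\Gamma$, and undoing the reduction gives $N_{\widehat\Gamma}(\Gamma)=\Gamma$.

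The main obstacle is the reduction step. Making $g$ centralize $x$ needs only a single conjugacy, which is fine, but one must make sure that $x$ can be taken primitive (equal to $x_0$) while $\langle x_0,y\rangle$ remains free. This is handled by choosing $x$ primitive from the start and then taking $y$ to be a suitable high power of a loxodromic with different axis, so that ping-pong still applies.
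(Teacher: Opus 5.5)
Your proposal is correct and follows the paper's proof essentially step for step: conjugacy separability makes a $\Gamma$-translate of $g$ centralize a primitive loxodromic $x_0$; \autoref{cor: centralizer} then writes it as $x_0^\lambda$; and LERF together with \autoref{lem: free group lambda} forces $\lambda\in\Z$. The only difference is how you get the free subgroup: the paper sidesteps your ping-pong concern by working in $F=\langle x_0^n,y^n\rangle$ and noting that $(x_0^n)^\lambda=g^n$ still normalizes $\Gamma$, while your asymmetric choice (keep $x_0$, take only $y$ to be a high power) also works, because discreteness puts the fixed points of $y$ in distinct $\langle x_0\rangle$-orbits, so suitable ping-pong sets exist.
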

\begin{proof}
It is clear that $N_{\widehat{\Gamma}}(\Gamma)\supseteq \Gamma$, and we shall prove the inverse. Suppose $\hat{g}\in N_{\widehat{\Gamma}}(\Gamma)$, and let $x\in \Gamma$ be a non-power loxodromic element. Then $x'=\hat{g}x\hat{g}^{-1}\in \Gamma$ is  conjugate to $x$ in $\widehat{\Gamma}$.  Since $\Gamma$ is conjugacy separable by \autoref{prop: conjugacy separable}, we deduce   that $x$ and $x'$ are conjugate in $\Gamma$. Thus, we can pick $\gamma \in \Gamma$ such that $x=\gamma x'\gamma^{-1}$.  If we denote $\hat{h}=\gamma\hat{g}$, then $\hat{h}\in  C_{\widehat{\Gamma}}(x)$. 

According to \autoref{cor: centralizer}, $C_{\widehat{\Gamma}}(x)=\overline{C_{\Gamma}(x)}$. Note that $x$ is a non-power loxodromic element and that $\Gamma$ is torsion-free, so $C_{\Gamma}(x)=\langle x\rangle$. Consequently, $\hat{h}\in \overline{\langle x\rangle}$; in other words, there exists $\lambda\in \widehat{\Z}$ such that $\hat{h}=x^\lambda$. 

Since $\Gamma$ is a non-elementary   Kleinian group, we can find another loxodromic element $y\in \Gamma$   independent with $x$,  such that for some sufficiently large $n\in \N$, $X=x^n$ and $Y=y^n$ generate a free subgroup $F$ over $X$ and $Y$ by the ping-pong lemma. Note that $\Gamma$ is LERF (\autoref{FK: LERF}), so $\overline{F}\cap \Gamma=F$, and $\overline{F}\cong \widehat{F}$ according to \autoref{cor: lerf injective}. 

Recall that both $\hat{g}$ and $\gamma$ belong to $N_{\widehat{\Gamma}}(\Gamma)$, so $\hat{h}=\gamma\hat{g}\in N_{\widehat{\Gamma}}(\Gamma)$, and $X^\lambda=x^{n\lambda}=\hat{h}^n\in N_{\widehat{\Gamma}}(\Gamma)$. Consequently, $X^\lambda Y X^{-\lambda}\in \Gamma$. Note that $X^\lambda$ and $Y$ belong to $\overline{F}\cong \widehat{F}$, so this implies that $X^\lambda Y X^{-\lambda}\in \overline{F}\cap \Gamma=F$. From \autoref{lem: free group lambda}, we deduce that $\lambda\in \Z$. Hence, $\hat{h}=x^\lambda\in \Gamma$, and $\hat{g}= \gamma^{-1}\hat{h} \in \Gamma$. In other words, we have proven that $N_{\widehat{\Gamma}}(\Gamma)\subseteq \Gamma$. 
\end{proof}

We then remove the torsion-free assumption. 

\begin{corollary}\label{cor: normalizer}
Let $\Gamma$ be a finitely generated non-elementary Kleinian group. Then, $N_{\widehat{\Gamma}}(\Gamma)=\Gamma$. 
\end{corollary}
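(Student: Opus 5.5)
The plan is to reduce to \autoref{lem: normalize} by passing to a torsion-free finite-index normal subgroup. By Selberg's lemma, the finitely generated linear group $\Gamma$ has a torsion-free finite-index subgroup. Intersecting its conjugates gives a torsion-free finite-index normal subgroup $\Gamma_0\unlhd\Gamma$. Then $\Gamma_0$ is a finitely generated torsion-free Kleinian group. It is non-elementary, since a finite-index subgroup of a non-elementary Kleinian group has the same limit set. Hence \autoref{lem: normalize} gives $N_{\widehat{\Gamma_0}}(\Gamma_0)=\Gamma_0$. By \autoref{prop: lattice of open subgroup} we identify $\widehat{\Gamma_0}$ with the open normal subgroup $\overline{\Gamma_0}\le\widehat{\Gamma}$, and we have $\overline{\Gamma_0}\cap\Gamma=\Gamma_0$.

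Now let $\hat g\in N_{\widehat{\Gamma}}(\Gamma)$. Since $\Gamma$ is dense in $\widehat{\Gamma}$ and $\overline{\Gamma_0}$ is open, there is $\gamma\in\Gamma$ with $\hat h=\gamma^{-1}\hat g\in\overline{\Gamma_0}$. This $\hat h$ still normalizes $\Gamma$. The key point is that $\hat h$ also normalizes $\Gamma_0$. Indeed, $\hat h\Gamma_0\hat h^{-1}$ lies in $\Gamma$ because $\hat h$ normalizes $\Gamma$. It also lies in $\overline{\Gamma_0}$, because $\hat h\in\overline{\Gamma_0}$ and $\overline{\Gamma_0}$ is a subgroup. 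Therefore
\[
\hat h\Gamma_0\hat h^{-1}\subseteq\overline{\Gamma_0}\cap\Gamma=\Gamma_0,
\]
where the last equality is \autoref{lem: finite index closure intersection} applied to $H=\Gamma$. The same argument with $\hat h^{-1}$ gives the reverse inclusion.

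Hence $\hat h\in N_{\overline{\Gamma_0}}(\Gamma_0)=\Gamma_0$ by \autoref{lem: normalize}, so $\hat g=\gamma\hat h\in\Gamma$. The reverse inclusion $\Gamma\subseteq N_{\widehat{\Gamma}}(\Gamma)$ is trivial.

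The only point needing care is the identification $\widehat{\Gamma_0}\cong\overline{\Gamma_0}$ compatible with the embeddings of $\Gamma_0$; this follows from \autoref{prop: lattice of open subgroup}, since $\Gamma_0$ has finite index. Residual finiteness of $\Gamma$ (\cite{Mal40}) justifies identifying $\Gamma$ with its image. I do not expect a genuine obstacle beyond checking non-elementarity of $\Gamma_0$.
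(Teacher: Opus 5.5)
Your proof is correct and follows the paper's argument: you write $\hat g=\gamma\hat h$ with $\hat h\in\overline{\Gamma_0}$, show that $\hat h$ normalizes $\Gamma_0=\overline{\Gamma_0}\cap\Gamma$, and apply \autoref{lem: normalize}; your explicit check that $\Gamma_0$ is non-elementary supplies a hypothesis the paper leaves implicit. One small citation slip: $\overline{\Gamma_0}\cap\Gamma=\Gamma_0$ does not follow from \autoref{lem: finite index closure intersection} with $H=\Gamma$, which only yields $\overline{\Gamma_0}=\overline{\Gamma_0}$; it follows from the correspondence $U\mapsto\iota^{-1}(U)$ in \autoref{prop: lattice of open subgroup}, or from that lemma applied to singletons $H=\{\gamma\}$.
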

\begin{proof}
It is clear that $N_{\widehat{\Gamma}}(\Gamma)\supseteq \Gamma$, and we prove the inverse. Suppose $\hat{g}\in N_{\widehat{\Gamma}}(\Gamma)$. 

According to  Selberg's lemma \cite[Lemma 8]{Sel62}, we can find a finite-index torsion-free   subgroup $\Gamma_0\le \Gamma$. Then, $\overline{\Gamma_0}\cong \widehat{\Gamma_0}$ is an open subgroup of $\Gamma$ by \autoref{prop: lattice of open subgroup}. Since $\Gamma\le \widehat{\Gamma}$ is a dense subgroup, we have $\widehat{\Gamma}=\Gamma\cdot \overline{\Gamma_0}$. Hence, there exist $\gamma \in \Gamma$ and $\hat{h}\in \overline{\Gamma_0}$ such that $\hat{g}=\gamma \hat{h}$. Consequently, $\hat{h}$ belongs to $N_{\widehat{\Gamma}}(\Gamma)$ since  $\gamma$ does. 

Note that $\hat{h}\overline{\Gamma_0}\hat{h}^{-1}=\overline{\Gamma_0}$ and that $\Gamma_0=\overline{\Gamma_0}\cap \Gamma$. Hence, $$\hat{h}\Gamma_0\hat{h}^{-1}= \hat{h}(\overline{\Gamma_0}\cap \Gamma)\hat{h}^{-1}= \hat{h}\overline{\Gamma_0}\hat{h}^{-1} \cap \hat{h}\Gamma \hat{h}^{-1}= \overline{\Gamma_0}\cap \Gamma = \Gamma_0.$$ In other words, $\hat{h}\in N_{\overline{\Gamma_0}}(\Gamma_0)$. Recall that $\overline{\Gamma_0}\cong \widehat{\Gamma_0}$, so \autoref{lem: normalize} implies that $\hat{h}\in \Gamma_0$. Therefore, $\hat{g}= \gamma\hat{h}\in \Gamma$, and we have proven that $N_{\widehat{\Gamma}}(\Gamma)\subseteq \Gamma$. 
\end{proof}

We can now prove \autoref{thm: lattice center free}. 
\begin{proof}[Proof of \autoref{thm: lattice center free}]
Denote by $Z(G)$ the center of a group $G$. It is clear  from definition that $Z(\widehat{\Gamma})\subseteq N_{\widehat{\Gamma}}(\Gamma)$. Thus, \autoref{cor: normalizer} implies that $Z(\widehat{\Gamma})\subseteq \Gamma$. Hence, $Z(\widehat{\Gamma})\subseteq Z(\Gamma)$. However, any non-elementary Kleinian group has trivial center. Therefore, $Z(\widehat{\Gamma})$ is also trivial. 
\end{proof}

\subsection{Injectivity}

We state a general equivalent condition for the injectivity of the completion map $\Out(\Gamma)\to \Out(\widehat{\Gamma})$. For an arbitary group $G$ and an element $g\in G$, denote by $\Inn_{g}:G\to G$ the {\em inner automorphism}  $\Inn_g(x)=gxg^{-1}$. 
\begin{proposition}\label{prop: injective criterion}
Suppose $\Gamma$ is a residually finite group. The profinite completion map $\Out(\Gamma)\to \Out(\widehat{\Gamma})$  is injective if and only if $N_{\widehat{\Gamma}}(\Gamma)=\Gamma\cdot Z(\widehat{\Gamma})$, where $Z(\widehat{\Gamma})$ denotes the center of $\widehat{\Gamma}$. 
\end{proposition}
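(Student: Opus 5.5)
Fix the residually finite group $\Gamma$ and identify it with its image in $\widehat{\Gamma}$ as in \autoref{conv}. The plan is to make the completion map explicit and compute its kernel. An automorphism $\alpha\in\Aut(\Gamma)$ induces $\widehat{\alpha}\in\Aut(\widehat{\Gamma})$ by functoriality. If $\alpha=\Inn_\gamma$, then $\widehat{\alpha}$ and $\Inn_\gamma$ are continuous endomorphisms of $\widehat{\Gamma}$ that agree on the dense subgroup $\Gamma$. By the uniqueness in \autoref{prop: universal property}, $\widehat{\Inn_\gamma}=\Inn_\gamma$, so the map descends to $\Out(\Gamma)\to\Out(\widehat{\Gamma})$. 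Injectivity is therefore the statement: whenever $\widehat{\alpha}=\Inn_{\hat g}$ for some $\hat g\in\widehat{\Gamma}$, the automorphism $\alpha$ is inner in $\Gamma$.

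For the ``if'' direction, assume $N_{\widehat{\Gamma}}(\Gamma)=\Gamma\cdot Z(\widehat{\Gamma})$ and suppose $\widehat{\alpha}=\Inn_{\hat g}$. Then $\hat g\Gamma\hat g^{-1}=\widehat{\alpha}(\Gamma)=\alpha(\Gamma)=\Gamma$, so $\hat g\in N_{\widehat{\Gamma}}(\Gamma)$. Write $\hat g=\gamma z$ with $\gamma\in\Gamma$ and $z$ central. Then $\Inn_{\hat g}=\Inn_\gamma$ on $\widehat{\Gamma}$, and restricting to $\Gamma$ gives $\alpha=\Inn_\gamma$, which is inner.

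For the ``only if'' direction, assume injectivity and take $\hat g\in N_{\widehat{\Gamma}}(\Gamma)$. Conjugation by $\hat g$ restricts to an automorphism $\alpha$ of $\Gamma$. Indeed, it maps $\Gamma$ onto $\Gamma$ because $\hat g^{-1}$ also normalizes $\Gamma$. The continuous maps $\widehat{\alpha}$ and $\Inn_{\hat g}$ agree on the dense subgroup $\Gamma$, so $\widehat{\alpha}=\Inn_{\hat g}$, and hence the class of $\alpha$ lies in the kernel. By injectivity, $\alpha=\Inn_\gamma$ for some $\gamma\in\Gamma$. Then $\Inn_{\gamma^{-1}\hat g}$ is the identity on $\Gamma$, and by density and continuity it is the identity on $\widehat{\Gamma}$. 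So $\gamma^{-1}\hat g\in Z(\widehat{\Gamma})$, which shows $N_{\widehat{\Gamma}}(\Gamma)\subseteq\Gamma\cdot Z(\widehat{\Gamma})$. The reverse inclusion is immediate, since both $\Gamma$ and $Z(\widehat{\Gamma})$ normalize $\Gamma$.

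There is no serious obstacle. The one point to handle carefully is that every equality of maps on $\widehat{\Gamma}$ is justified by agreement on the dense subgroup $\Gamma$ together with continuity and the Hausdorff property. The step $\widehat{\alpha}(\Gamma)=\alpha(\Gamma)$ uses the commutative square for the completion functor.
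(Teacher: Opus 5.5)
Your proof is correct and follows the same route as the paper's: in both directions you pass between $\hat g\in N_{\widehat{\Gamma}}(\Gamma)$ and the automorphism $\Inn_{\hat g}|_{\Gamma}$, and you use density of $\Gamma$ to identify $\widehat{\alpha}=\Inn_{\hat g}$ and to conclude that $\gamma^{-1}\hat g$ is central. Your extra remarks, namely that the map descends to $\Out$ and that $\Gamma\cdot Z(\widehat{\Gamma})\subseteq N_{\widehat{\Gamma}}(\Gamma)$, are the routine points the paper leaves implicit.
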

\begin{proof}
We first show that this condition is sufficient. It suffices to show that for a given $\varphi\in \Aut(\Gamma)$, if $\widehat{\varphi}\in \Aut(\widehat{\Gamma})$ is an inner-automorphism, then $\varphi$ is itself an inner-automorphism. Suppose $\widehat{\varphi}=\Inn_{\hat{g}}$ for some $\hat{g}\in \widehat{\Gamma}$. By construction of the profinite completion, $\widehat{\varphi}(\Gamma)=\Gamma$,  so  $\hat{g}\in N_{\widehat{\Gamma}}(\Gamma)$. By assumption, there exists $\gamma\in \Gamma$ and $\hat{h}\in Z(\widehat{\Gamma})$ such that $\hat{g}=\gamma\hat{h}$. Then, $\Inn_{\hat{g}}=\Inn_{\gamma}\circ \Inn_{\hat{h}}=\Inn_{\gamma}$, and $\varphi=\widehat{\varphi}|_{\Gamma}$ is an inner automorphism induced by $\gamma$. 

Conversely, we show that this condition is necessary. Suppose that $\Out(\Gamma)\to \Out(\widehat{\Gamma})$ is injective, and $\hat{g}\in N_{\widehat{\Gamma}}(\Gamma)$. Then, $\Inn_{\hat{g}}(\Gamma)=\Gamma$, and $\varphi=\Inn_{\hat{g}}|_{\Gamma}$ defines an automorphism on $\Gamma$. Note that $\Inn_{\hat{g}}=\widehat{\varphi}$ since they coincide on a dense subgroup $\Gamma$ of $\widehat{\Gamma}$. Thus, the injectivity assumption implies that $\varphi$ is an inner automorphism of $\Gamma$. Suppose that $\varphi=\Inn_{\gamma}$ for some $\gamma\in \Gamma$. Then, $\Inn_{\gamma^{-1}\hat{g}}$ restricts to the identity map on $\Gamma$. In other words, $\gamma^{-1}\hat{g}$ belongs to the centralizer of $\Gamma$ in $\widehat{\Gamma}$.  Since $\Gamma$ is dense in $\widehat{\Gamma}$, $\gamma^{-1}\hat{g}\in Z(\widehat{\Gamma})$  by continuity. Hence, $\hat{g}\in \Gamma\cdot Z(\widehat{\Gamma})$. 
\end{proof}

\begin{corollary}\label{cor: completion map injective}
Let $\Gamma$ be a finitely generated non-elementary Kleinian group. Then, the profinite completion map $\Out(\Gamma)\to \Out(\widehat{\Gamma})$  is injective. 
\end{corollary}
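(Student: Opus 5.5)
This follows at once from \autoref{prop: injective criterion}, once we know the two relevant subgroups of $\widehat{\Gamma}$. First, $\Gamma$ is a finitely generated linear group, so it is residually finite by Mal'cev's theorem \cite{Mal40}. Hence the hypothesis of \autoref{prop: injective criterion} holds, and we may identify $\Gamma$ with its image in $\widehat{\Gamma}$ as in \autoref{conv}. By \autoref{prop: injective criterion}, it then suffices to verify the equality
$$N_{\widehat{\Gamma}}(\Gamma)=\Gamma\cdot Z(\widehat{\Gamma}).$$

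Next we identify both sides. \autoref{cor: normalizer} states that $N_{\widehat{\Gamma}}(\Gamma)=\Gamma$ for any finitely generated non-elementary Kleinian group $\Gamma$; this result already handles groups with torsion by passing to a torsion-free finite-index subgroup via Selberg's lemma. \autoref{thm: lattice center free} gives $Z(\widehat{\Gamma})=\{1\}$. Therefore $\Gamma\cdot Z(\widehat{\Gamma})=\Gamma=N_{\widehat{\Gamma}}(\Gamma)$, and injectivity follows.

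There is no real obstacle left at this stage. All the substantive work was done in \autoref{lem: normalize}, which combines conjugacy separability, Minasyan's centralizer theorem, LERF, and the Heisenberg-group computation of \autoref{lem: free group lambda}. The only point to check is that the hypotheses match exactly: \autoref{cor: normalizer} and \autoref{thm: lattice center free} are both stated for finitely generated non-elementary Kleinian groups, and residual finiteness is what \autoref{prop: injective criterion} requires.
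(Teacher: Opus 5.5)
Your proposal is correct and matches the paper's proof: it combines \autoref{prop: injective criterion} with \autoref{cor: normalizer} and \autoref{thm: lattice center free} to get $N_{\widehat{\Gamma}}(\Gamma)=\Gamma=\Gamma\cdot Z(\widehat{\Gamma})$. You also check residual finiteness via Mal'cev's theorem, which the paper records just before \autoref{thm: lattice center free}.
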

\begin{proof}
This follows from combining the criterion \autoref{prop: injective criterion} with \autoref{thm: lattice center free} and \autoref{cor: normalizer}. 
\end{proof}

\section{Genuine isomorphisms}\label{sec: surj}

\begin{definition}
Let $\Gamma_1$ and $\Gamma_2$ be abstract groups. An isomorphism $\Phi:\widehat{\Gamma_1}\to \widehat{\Gamma_2}$ is called {\em genuine} if there exists an isomorphism $f: \Gamma_1 \to \Gamma_2$ and an element $ \hat g\in \widehat{\Gamma_2}$ such that $\Phi=\Inn_{  \hat g}\circ \widehat{f}$. 
In particular, $\Gamma_1$ and $\Gamma_2$ are always meant to be isomorphic when $\Phi$ is genuine. 
\end{definition}

In this section, we provide two criteria that determine whether  such an isomorphism is genuine, both of which will be crucial in proving the surjectivity of the map $\Out(\Gamma)\to \Out(\widehat{\Gamma})$ appearing in \autoref{mainthm2}. 

\subsection{Standard characteristic quotients}\label{sec: Standard characteristic quotients}
The first criterion reduces the problem of genuineness to the standard characteristic quotients. 
Let us introduce some notations to simplify our exposition. 
\begin{definition}\label{def: conjugacy equivalent}
Let $G_1$ and $G_2$ be two arbitrary (abstract or profinite) groups. Two homomorphisms $\phi,\psi: G_1\to G_2$ are {\em conjugacy equivalent}, denoted by $\phi \simeq \psi$, if there exists $g\in G_2$ such that $\phi=\Inn_g\circ \psi$. 
\end{definition}

Suppose  $\Gamma_1$ and $\Gamma_2$ are finitely generated groups, and let $\phi:\widehat{\Gamma_1}\to \widehat{\Gamma_2}$ be a  homomorphism, which might  not necessarily be an isomorphism. Then, for each $m\in \N$, $\phi(\overline{\nss{\Gamma}{1}{(m)}})\subseteq \overline{\nss{\Gamma}{2}{(m)}}$, where $\Gamma_i^{(m)}$ denotes the $m$-th standard characteristic subgroup. 
This is because for any open subgroup $U\le \widehat{\Gamma_2}$, $\phi^{-1}(U)$ is an open subgroup of $\widehat{\Gamma_1}$ with $[\widehat{\Gamma_1}:\phi^{-1}(U)]\le [\widehat{\Gamma_2}: U]$. 
Consequently, $\phi$ descends to a homomorphism 
\begin{equation*}\begin{tikzcd}
\phi^{\flat}_{(m)}: \Gamma_1/\Gamma_1^{(m)}=\widehat{\Gamma_1}/\overline{\nss{\Gamma}{1}{(m)}}\arrow[r] & \widehat{\Gamma_2}/\overline{\nss{\Gamma}{2}{(m)}}=\Gamma_2/\Gamma_2^{(m)}.
\end{tikzcd}\end{equation*}
\begin{lemma}\label{lem: conjugacy equivalence criteria}
Let $\Gamma_1$ and $\Gamma_2$ be finitely generated groups. Suppose $\phi,\psi:\widehat{\Gamma_1}\to \widehat{\Gamma_2}$ are two homomorphisms. Then, $\phi\simeq \psi$ if and only if $\phi^\flat_{(m)}\simeq \psi^\flat_{(m)}$ for every $m\in \N$. 
\end{lemma}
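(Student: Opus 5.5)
The ``only if'' direction is immediate. If $\phi=\Inn_{\hat g}\circ\psi$ with $\hat g\in\widehat{\Gamma_2}$, then reducing modulo the characteristic subgroup $\overline{\Gamma_2^{(m)}}$ gives $\phi^\flat_{(m)}=\Inn_{g_m}\circ\psi^\flat_{(m)}$. Here $g_m$ is the image of $\hat g$ in $\Gamma_2/\Gamma_2^{(m)}$.

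For the ``if'' direction I will use a compactness (inverse limit of nonempty finite sets) argument. For each $m$, set
$$
X_m=\{\,g\in \Gamma_2/\Gamma_2^{(m)} \mid \phi^\flat_{(m)}=\Inn_g\circ\psi^\flat_{(m)}\,\}.
$$
By hypothesis each $X_m$ is a nonempty finite set. The standard characteristic subgroups are nested in the sense that $\Gamma^{(m')}\subseteq\Gamma^{(m)}$ whenever $m\le m'$, since the intersection for $m'$ runs over more subgroups. Hence there are natural projections $\pi_{m',m}:\Gamma_2/\Gamma_2^{(m')}\to\Gamma_2/\Gamma_2^{(m)}$. The quotient maps $\phi^\flat_{(m)}$ are compatible with these projections, because both are induced from $\phi$; the same holds for $\psi^\flat_{(m)}$. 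Consequently $\pi_{m',m}(X_{m'})\subseteq X_m$, so $(X_m)_m$ is an inverse system of nonempty finite sets. Its inverse limit is therefore nonempty, and I pick a compatible family $(g_m)_m$ in it.

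Since the $\Gamma_2^{(m)}$ are normal of finite index and cofinal among finite-index normal subgroups, $\widehat{\Gamma_2}=\varprojlim_m \Gamma_2/\Gamma_2^{(m)}$. Cofinality holds because any finite-index normal $N$ of index $k$ contains $\Gamma_2^{(k)}$, and the identification then follows from \autoref{prop: lattice of open subgroup}. So the family $(g_m)_m$ defines an element $\hat g\in\widehat{\Gamma_2}$.

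It remains to check that $\phi=\Inn_{\hat g}\circ\psi$. Both sides are continuous homomorphisms $\widehat{\Gamma_1}\to\widehat{\Gamma_2}$. Fix $x\in\widehat{\Gamma_1}$ and $m$. Reduce both $\phi(x)$ and $\hat g\psi(x)\hat g^{-1}$ modulo $\overline{\Gamma_2^{(m)}}$: both give $\phi^\flat_{(m)}(\bar x)=g_m\psi^\flat_{(m)}(\bar x)g_m^{-1}$, where $\bar x$ is the image of $x$ in $\Gamma_1/\Gamma_1^{(m)}$. The intersection $\bigcap_m\overline{\Gamma_2^{(m)}}$ is trivial, by cofinality and the fact that $\widehat{\Gamma_2}$ is Hausdorff. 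Hence the two elements are equal.

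I do not expect a genuine obstacle. The only points needing care are the compatibility of the $\phi^\flat_{(m)}$ under the projections and the cofinality of the standard characteristic subgroups; finite generation of the groups guarantees that each $\Gamma^{(m)}$ has finite index.
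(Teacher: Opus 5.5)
Your proposal is correct and follows essentially the same route as the paper. The easy direction reduces the conjugator modulo $\overline{\Gamma_2^{(m)}}$. The converse picks an element of the nonempty inverse limit of the finite sets $X_m$ of admissible conjugators and views it as an element of $\widehat{\Gamma_2}=\varprojlim_m \Gamma_2/\Gamma_2^{(m)}$. The extra details you supply are points the paper leaves implicit: compatibility of the $X_m$ under the projections, cofinality of the standard characteristic subgroups, and the final check via $\bigcap_m\overline{\Gamma_2^{(m)}}=\{1\}$.
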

\begin{proof}
First, suppose $\phi\simeq \psi$. Then there exists $\hat g\in \widehat{\Gamma_2}$ such that $\phi=\Inn_{\hat g}\circ \psi$. For each $m\in \N$, let $g_m$ be the image of $\hat g$ in $\Gamma_2/\Gamma_2^{(m)}$. Then by construction, $\phi^\flat_{(m)}=\Inn_{g_m}\circ \psi^\flat_{(m)}$. In other words, $\phi^\flat_{(m)}\simeq \psi^\flat_{(m)}$ for each  $m\in \N$. 

Conversely, suppose that $\phi^\flat_{(m)}\simeq \psi^\flat_{(m)}$ for every $m\in \N$.  For each $m\in \N$, let
$$
X_m=\left\{h\in \Gamma_2/\Gamma_2^{(m)}\mid \phi^\flat_{(m)}=\Inn_{h}\circ \psi^\flat_{(m)} \right\},
$$
which is a non-empty finite set. If we denote by $q_m: \Gamma_2/\Gamma_2^{(m+1)}\to \Gamma_2/\Gamma_2^{(m)}$ the quotient homomorphism, then it is easy to verify that $q_m(X_{m+1})\subseteq X_{m}$. In other words, $(X_m,q_m)_{m\in\N}$ forms an inverse system of non-empty finite sets. Consequently, $\limi_m X_m$ is non-empty by \cite[Proposition 1.1.4]{RZ10}. Pick $\hat g\in \limi_m X_m$. Then, $\hat g\in \limi_m \Gamma_2/\Gamma_2^{(m)}=\widehat{\Gamma_2}$. Note that $\phi=\limi_m \phi^\flat_{(m)}$ and $\psi=\limi_m \psi^\flat_{(m)}$, so by construction, $\phi=\Inn_{\hat g}\circ \psi$. In other words, $\phi \simeq \psi$. 
\end{proof}

Reformulated by \autoref{def: conjugacy equivalent}, an isomorphism $\Phi:\widehat{\Gamma_1}\to \widehat{\Gamma_2}$ is genuine if and only if there exists an isomorphism $f:\Gamma_1\to \Gamma_2$ such that $\Phi \simeq \widehat{f}$.  
Note that when $\Gamma_1$ and $\Gamma_2$ are finitely generated, the isomorphism $f$ sends $\Gamma_1^{(m)}$ to $\Gamma_2^{(m)}$, 
and  $f$ descends to an isomorphism $f^\flat_{(m)}:\Gamma_1/\Gamma_1^{(m)}\to \Gamma_2/\Gamma_2^{(m)}$. It follows from construction that $f^\flat_{(m)}={\widehat{f}}{}^{\,\,\flat}_{(m)}$. Hence, we have  the following corollary of \autoref{lem: conjugacy equivalence criteria}. 

\begin{corollary}\label{cor: genuine criteria 1}
Let $\Gamma_1$ and $\Gamma_2$ be finitely generated groups, and let $\Phi:\widehat{\Gamma_1}\to \widehat{\Gamma_2}$ be an isomorphism. Then, $\Phi$ is genuine if and only if there exists an isomorphism $f:\Gamma_1\to \Gamma_2$ such that $\Phi^\flat_{(m)}\simeq f^\flat_{(m)}$ for every $m\in \N$. 
\end{corollary}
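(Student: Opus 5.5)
The plan is to reduce the statement directly to \autoref{lem: conjugacy equivalence criteria}, applied with $\phi=\Phi$ and $\psi=\widehat{f}$. Both are homomorphisms $\widehat{\Gamma_1}\to\widehat{\Gamma_2}$ between profinite completions of finitely generated groups, so the lemma applies. By the reformulation just before the corollary, $\Phi$ is genuine if and only if $\Phi\simeq\widehat{f}$ for some isomorphism $f:\Gamma_1\to\Gamma_2$. Indeed, $\Phi=\Inn_{\hat g}\circ\widehat f$ is exactly the definition of $\Phi\simeq \widehat f$ in \autoref{def: conjugacy equivalent}.

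The one point needing care is the identification $\widehat{f}{}^{\,\flat}_{(m)}=f^\flat_{(m)}$. I will check it as follows.
\begin{enumerate}[label=(\roman*),leftmargin=*]
\item An isomorphism $f$ permutes the subgroups of $\Gamma_1$ of index at most $m$ onto those of $\Gamma_2$. Hence $f(\Gamma_1^{(m)})=\Gamma_2^{(m)}$, and $f$ descends to $f^\flat_{(m)}$.
\item The map $\widehat f$ restricts to $f$ on the dense image of $\Gamma_1$.
\item The quotient maps $\widehat{\Gamma_i}\to\Gamma_i/\Gamma_i^{(m)}$ restrict on $\Gamma_i$ to the usual quotient maps.
\end{enumerate}
Together these say that the descended map of $\widehat f$ agrees with $f^\flat_{(m)}$ on $\Gamma_1/\Gamma_1^{(m)}$, which is the image of $\Gamma_1$. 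So the two maps are equal.

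With this, the proof runs in both directions. For ``only if'': if $\Phi$ is genuine, choose $f$ with $\Phi\simeq\widehat f$. The easy direction of \autoref{lem: conjugacy equivalence criteria} then gives $\Phi^\flat_{(m)}\simeq\widehat f{}^{\,\flat}_{(m)}=f^\flat_{(m)}$ for all $m$. For ``if'': given $f$ with $\Phi^\flat_{(m)}\simeq f^\flat_{(m)}=\widehat f{}^{\,\flat}_{(m)}$ for every $m$, the converse direction of the lemma, which is its compactness argument via inverse limits of nonempty finite sets, yields $\Phi\simeq\widehat f$. Hence $\Phi$ is genuine.

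There is no real obstacle here. The only substantive input is the compactness argument already contained in \autoref{lem: conjugacy equivalence criteria}, together with the bookkeeping identity $\widehat f{}^{\,\flat}_{(m)}=f^\flat_{(m)}$.
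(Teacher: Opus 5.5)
Your proposal is correct and matches the paper's argument: the paper also reformulates genuineness as $\Phi\simeq\widehat{f}$, notes that $\widehat{f}{}^{\,\flat}_{(m)}=f^\flat_{(m)}$ by construction, and applies \autoref{lem: conjugacy equivalence criteria} with $\psi=\widehat{f}$. Your density check of that identity simply spells out what the paper leaves as ``by construction''.
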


\subsection{Extension of groups}
The next criterion is concerned with group extensions. 
We first point out a standard result. 
\begin{proposition}\label{prop: standard abstract algebra}
Consider two extensions of groups $1\to H  \to G_1\to Q \to 1$ and $1\to H \to G_2\to Q \to 1$.  Let $\theta_i : Q \to \Out(H)$ be the homomorphism  induced by $G_i$ acting via conjugation on $H $. 
Suppose there exist isomorphisms $\psi : H \to H $ and $\phi :Q \to Q $ such that the following diagram commutes. 
\begin{equation*}
\begin{tikzcd}
Q  \arrow[d,"\phi"', "\cong"]  \arrow[r,"\theta_1"] & \Out(H ) \arrow[d,"\cong"',"\psi_\ast"] \\
Q  \arrow[r,"\theta_2"] & \Out(H )
\end{tikzcd}
\end{equation*}
If $H$ has trivial center, then there exists a unique isomorphism $f: G_1\to G_2$ such that the following diagram commutes. 
\begin{equation}\label{equ: group extension commutative diagram}
\begin{tikzcd}
1 \arrow[r] & H  \arrow[r] \arrow[d,"\psi"]  & G_1 \arrow[r] \arrow[d,"f"]  & Q  \arrow[r] \arrow[d,"\phi"] & 1 \\
1 \arrow[r] & H \arrow[r] & G_2 \arrow[r] & Q  \arrow[r] & 1 
\end{tikzcd}
\end{equation}
\end{proposition}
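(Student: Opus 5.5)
The plan is the classical centerless-extension argument: since $Z(H)=1$, each $G_i$ is recovered as a fibre product inside $\Aut(H)$, and we build $f$ by matching these fibre products. First, because $H$ has trivial center, $\Inn: H\to\Aut(H)$ is injective with image $\Inn(H)$, and $1\to \Inn(H)\to\Aut(H)\to\Out(H)\to 1$ is exact. Write $\iota_i: H\to G_i$ for the inclusion and $\pi_i: G_i\to Q$ for the quotient. For each $i$, consider the conjugation homomorphism $c_i: G_i\to \Aut(H)$ given by $c_i(g)(h)=\iota_i^{-1}(g\,\iota_i(h)\,g^{-1})$. Then $(c_i,\pi_i): G_i\to \Aut(H)\times_{\Out(H)} Q$ is a homomorphism into the fibre product over $\Out(H)\leftarrow Q$ via $\theta_i$. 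This map is an isomorphism. For injectivity: if $\pi_i(g)=1$ then $g=\iota_i(h)$, and $c_i(g)=\Inn_h=1$ forces $h=1$ by centerlessness. For surjectivity: given $(\alpha,q)$ with $[\alpha]=\theta_i(q)$, lift $q$ to some $g$; then $c_i(g)$ and $\alpha$ differ by an inner automorphism $\Inn_h$, and adjusting $g$ by $\iota_i(h)$ gives a preimage.

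Next, $\psi$ induces the automorphism $\psi_\#:\alpha\mapsto\psi\alpha\psi^{-1}$ of $\Aut(H)$, which covers $\psi_\ast$ on $\Out(H)$. By the commutative square $\psi_\ast\theta_1=\theta_2\phi$, the map $(\alpha,q)\mapsto(\psi_\#\alpha,\phi(q))$ is an isomorphism from $\Aut(H)\times_{\theta_1}Q$ to $\Aut(H)\times_{\theta_2}Q$. Conjugating by the isomorphisms of the first step yields $f:G_1\to G_2$. Compatibility with $\phi$ holds by construction. Compatibility with $\psi$ holds because $h\in H$ corresponds to $(\Inn_h,1)$, which is sent to $(\Inn_{\psi(h)},1)$, corresponding to $\iota_2\psi(h)$. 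Hence diagram \eqref{equ: group extension commutative diagram} commutes.

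Uniqueness: if $f,f'$ both make the diagram commute, consider $d(g)=f'(g)f(g)^{-1}$. It lies in $\iota_2(H)$ since both maps cover $\phi$. For $h\in H$, applying $f$ and $f'$ to $g\iota_1(h)g^{-1}$ gives $f(g)\iota_2\psi(h)f(g)^{-1}=f'(g)\iota_2\psi(h)f'(g)^{-1}$, so $d(g)$ centralizes $\iota_2(\psi(H))=\iota_2(H)$. As $Z(H)=1$, $d(g)=1$.

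The only real subtlety is the fibre-product identification in the first step, specifically the surjectivity check, where one must verify that $\theta_i$ is precisely the class of $c_i$. This is the definition of $\theta_i$, so I expect no genuine obstacle.
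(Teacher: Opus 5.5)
Your proposal is correct and follows essentially the same route as the paper: existence via the identification $G_i\cong Q\times_{(\theta_i,p)}\Aut(H)$ (which you justify more explicitly than the paper does), twisted by $\phi\times\psi_\ast$, and uniqueness by showing the discrepancy element lies in $H$ and centralizes $H$. One small precision in the uniqueness step: $d(g)=f'(g)f(g)^{-1}$ literally centralizes $f(g)\,\iota_2\psi(H)\,f(g)^{-1}$, which equals $\iota_2(H)$ because $\iota_2(H)$ is normal in $G_2$.
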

\begin{proof}
We first prove the existence of $f$. 
  Since $H $ has trivial center, we have a short exact sequence $1\to H \to \Aut(H )\to \Out(H )\to 1$. In addition, the action of $G_i$ by conjugation on $H $ induces a commutative diagram between short exact sequences. 
\begin{equation*}
\begin{tikzcd}
1 \arrow[r] & H  \arrow[r] \arrow[d, equal ]  & G_i \arrow[r] \arrow[d ]  & Q  \arrow[r] \arrow[d, "\theta_i" ] & 1 \\
1 \arrow[r] & H  \arrow[r] & \Aut(H ) \arrow[r,"p "] & \Out(H ) \arrow[r] & 1 
\end{tikzcd}
\end{equation*}
Consequently, the homomorphism
\begin{equation*}
\rho_i:\,G_i \tto Q  {\times}_{ {(\theta_i,p)} } \Aut(H )=\{(q,\varphi)\in Q \times \Aut(H ) \mid \theta_i(q)=p (\varphi)\}
\end{equation*}
is an isomorphism. 

Combining them for $i=1,2$, we obtain an isomorphism $f: G_1\to G_2$ that fits into the following commutative diagram. 
\begin{equation*}
\begin{tikzcd}[column sep= large]
 G_1 \arrow[r,"\rho_1","\cong"' ] \arrow[d, dashed,"f"']  & Q \times_{(\theta_1,p)} \Aut(H ) \arrow[d,"\cong"',"\phi\times \psi_\ast"] \\
G_2 \arrow[r,"\rho_2", "\cong"' ] & Q \times_{(\theta_2,p)} \Aut(H ) 
\end{tikzcd}
\end{equation*}
By construction,   the following    diagram commutes. 
\begin{equation*}
\begin{tikzcd}
H \arrow[r,"\cong"] \arrow[d,"\psi"] & \{1\} \times  \Inn(H ) \arrow[r, hook] \arrow[d, "\psi_\ast"', "\cong"]& Q \times_{(\theta_1,p)} \Aut(H )  \arrow[d,"\phi\times \psi_\ast"] \arrow[r, "\text{proj}"] & Q \arrow[d,"\phi"]\\
H  \arrow[r,"\cong"] & \{1\} \times  \Inn(H ) \arrow[r, hook] & Q \times_{(\theta_2,p)} \Aut(H )   \arrow[r, "\text{proj}"] & Q 
\end{tikzcd}
\end{equation*}
Hence, $f$ fits into diagram (\ref{equ: group extension commutative diagram}). 

Next, we prove the uniqueness of $f$. Suppose there exists another isomorphism $f':G_1\to G_2$ that also satisfies the commutative diagram (\ref{equ: group extension commutative diagram}). Let $\xi=f^{-1}\circ f': G_1\to G_1$. Then, the following diagram commutes. 
\begin{equation*}
\begin{tikzcd}
1 \arrow[r] & H \arrow[r] \arrow[d,"\text{id}"]  & G_1 \arrow[r] \arrow[d,"\xi","\cong"']  & Q \arrow[r] \arrow[d,"\text{id}"] & 1 \\
1 \arrow[r] & H  \arrow[r] & G_1  \arrow[r] & Q \arrow[r] & 1 
\end{tikzcd}
\end{equation*}
For any $g\in G_1$, $g$ and $\xi(g)$ project  to the same element in $Q $. Thus, $g^{-1}\xi(g)\in H $. Moreover, for any $h\in H $,
\begin{equation*}
\begin{aligned}
\left(g^{-1}\xi(g)\right) h= g^{-1} \xi(gh)= g^{-1} \xi(ghg^{-1}) \xi(g) \stackrel{\star}{=} g^{-1} (ghg^{-1}) \xi(g)= h\left (g^{-1}\xi(g)\right ),
\end{aligned}
\end{equation*}
where the marked equality follows from the facts that $ghg^{-1}\in H $ and that $\xi$ restricts to the identity map on $H $. Consequently, $g^{-1}\xi(g)$ belongs to the center of $H $, which is trivial. In other words, $\xi:G_1\to G_1$ is the identity map, and hence  $f'=f$. 
\end{proof}

\begin{remark}
In general, the obstruction for the existence of the isomorphism $f$ belongs to the group cohomology $H^2(Q ;Z(H ))$, and all isomorphisms satisfying (\ref{equ: group extension commutative diagram}) are then classified by the derivations $\mathrm{Der}(Q ,Z(H ))$. Both of them are trivial in our case since $H $ has trivial center.  
\end{remark}

We then introduce an extension lemma which is essential to our proof. 
\begin{lemma}\label{lem: extension lemma}
Let $H$ and $Q$ be two abstract groups, and consider two extensions 
\begin{equation*}
\begin{tikzcd}[column sep=0.65cm]
1 \arrow[r] & H \arrow[r,"u_1"] & G_1 \arrow[r,"v_1"] & Q \arrow[r] & 1 
\end{tikzcd}\quad\text{and}\quad
\begin{tikzcd}[column sep=0.65cm]
1 \arrow[r] & H \arrow[r,"u_2"] & G_2 \arrow[r,"v_2"] & Q \arrow[r] & 1 .
\end{tikzcd}
\end{equation*}
Assume that the following three statements hold. 
\begin{enumerate}[leftmargin=*, label=(\alph*)]
\item\label{extasmpA} $H$ is finitely  generated. 
\item\label{extasmpB} Both $H$ and $\widehat{H}$ have trivial center. 
\item\label{extasmpC} The completion map $\Out(H)\to \Out(\widehat{H})$ is injective. 
\end{enumerate}
Suppose there exist isomorphisms $\Psi: \widehat{H}\to \widehat{H}$, $F: \widehat{G_1}\to \widehat{G_2}$, and $\Phi: \widehat{Q}\to \widehat{Q}$ that fit into the following commutative diagram. 
\begin{equation*}
\begin{tikzcd}
  \widehat H \arrow[r,"\widehat{u_1}"] \arrow[d,"\Psi"] & \widehat{G_1} \arrow[r,"\widehat{v_1}"] \arrow[d,"F"]  & \widehat Q   \arrow[d,"\Phi"]  \\
  \widehat H \arrow[r,"\widehat{u_2}"] & \widehat{G_2} \arrow[r,"\widehat{v_2}"] & \widehat Q  
\end{tikzcd}
\end{equation*}
Assume that $\Phi$ is the profinite completion of an isomorphism $\phi: Q\to Q$, and $\Psi$ is genuine.  Then, $F$ is also genuine; in particular, $G_1\cong G_2$. 
\end{lemma}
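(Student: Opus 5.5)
We first normalize $\Psi$, then use the profinite data to verify the hypothesis of \autoref{prop: standard abstract algebra}, and finally compare $F$ with the completion of the resulting abstract isomorphism. Two preliminary facts are needed. First, since $H$ is finitely generated and normal in $G_i$ and $\widehat H$ has trivial center, \autoref{prop: trivial center injective} shows that $\widehat{u_i}$ is injective. Second, by \autoref{prop: right exact} we have $\ker(\widehat{v_i})=\widehat{u_i}(\widehat H)$. Together these say that $1\to\widehat H\to\widehat{G_i}\to\widehat Q\to 1$ is exact, and we identify $\widehat H$ with its image in $\widehat{G_i}$.

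\emph{Normalization.} Since $\Psi$ is genuine, write $\Psi=\Inn_{\hat h}\circ\widehat\psi$ with $\psi\in\Aut(H)$ and $\hat h\in\widehat H$. Replace $F$ by $\Inn_{\widehat{u_2}(\hat h)^{-1}}\circ F$. The diagram still commutes with $\Psi$ replaced by $\widehat\psi$, because $\widehat{v_2}(\widehat{u_2}(\hat h))=1$. Genuineness of the new $F$ implies that of the old one, so from now on we assume $\Psi=\widehat\psi$.

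\emph{Compatibility of the outer actions.} Let $\theta_i:Q\to\Out(H)$ be the conjugation actions. We check that $\theta_2\circ\phi=\psi_\ast\circ\theta_1$.
\begin{itemize}
\item Fix $q\in Q$ and choose $g\in G_1$ with $v_1(g)=q$ and $g'\in G_2$ with $v_2(g')=\phi(q)$.
\item Since $\widehat{v_2}(F(g))=\Phi(q)=\phi(q)$, exactness gives $F(g)=g'k$ for some $k\in\widehat H$.
\item Conjugation by $g$ restricted to $\widehat H$ is the completion $\widehat{\alpha_g}$ of the automorphism $\alpha_g=\Inn_g|_H$. Transporting it by $F$, which restricts to $\widehat\psi$ on $\widehat H$, gives $\widehat\psi\,\widehat{\alpha_g}\,\widehat\psi^{-1}=\widehat{\alpha'_{g'}}\circ\Inn_k$ on $\widehat H$, where $\alpha'_{g'}=\Inn_{g'}|_H$.
\item Hence the completions of $\psi\alpha_g\psi^{-1}$ and $\alpha'_{g'}$ have the same image in $\Out(\widehat H)$.
\item By hypothesis \ref{extasmpC}, $\psi\alpha_g\psi^{-1}$ and $\alpha'_{g'}$ are equal in $\Out(H)$, which is the required identity.
\end{itemize}
Since $H$ is centerless by \ref{extasmpB}, \autoref{prop: standard abstract algebra} now provides an isomorphism $f:G_1\to G_2$ with $f\circ u_1=u_2\circ\psi$ and $v_2\circ f=\phi\circ v_1$.

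\emph{Comparing $F$ with $\widehat f$.} Put $\Xi=\widehat f^{-1}\circ F\in\Aut(\widehat{G_1})$. By functoriality and the normalization, $\Xi$ restricts to the identity on $\widehat H$, since $\widehat f\circ\widehat{u_1}=\widehat{u_2}\circ\widehat\psi=F\circ\widehat{u_1}$. It also induces the identity on $\widehat Q$, since $\widehat{v_2}\circ\widehat f=\widehat\phi\circ\widehat{v_1}=\widehat{v_2}\circ F$. Now copy the uniqueness computation in the proof of \autoref{prop: standard abstract algebra}. For $x\in\widehat{G_1}$ we have $x^{-1}\Xi(x)\in\ker\widehat{v_1}=\widehat H$, and the same chain of equalities shows that $x^{-1}\Xi(x)$ centralizes $\widehat H$. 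Because $Z(\widehat H)=1$, we get $\Xi=\mathrm{id}$. Thus $F=\widehat f$ after the normalization, and originally $F=\Inn_{\widehat{u_2}(\hat h)}\circ\widehat f$, so $F$ is genuine.

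The only delicate points are that the profinite sequence is genuinely short exact, and that the conjugating element $F(g)$ differs from an element of $G_2$ by an element of $\widehat H$. Both are handled by \autoref{prop: trivial center injective} and \autoref{prop: right exact}. The rest of the argument is bookkeeping.
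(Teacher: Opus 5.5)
Your proof is correct and follows the paper's argument essentially step for step. You use the same normalization $F'=\Inn_{\widehat{u_2}(\hat h)^{-1}}\circ F$, and the same appeal to hypothesis (c) to transfer the compatibility $\psi_\ast\circ\theta_1=\theta_2\circ\phi$ from $\Out(\widehat H)$ down to $\Out(H)$. You do this element by element, where the paper chases a diagram of the outer actions $\Theta_i$. Your final step, the centralizer argument using $Z(\widehat H)=1$ to conclude $F'=\widehat f$, is likewise the uniqueness argument the paper uses.
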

\begin{proof}
According to \autoref{prop: right exact} and \autoref{prop: trivial center injective},    assumptions \ref{extasmpA} and \ref{extasmpB} imply that  the profinite completion  induces  short exact sequences  
\begin{equation*}
\begin{tikzcd}[column sep=0.65cm]
1 \arrow[r] & \widehat H \arrow[r,"\widehat{u_1}"] & \widehat{G_1} \arrow[r,"\widehat{v_1}"] & \widehat Q \arrow[r] & 1 
\end{tikzcd}\quad\text{and}\quad
\begin{tikzcd}[column sep=0.65cm]
1 \arrow[r] & \widehat H \arrow[r,"\widehat{u_2}"] & \widehat{G_2} \arrow[r,"\widehat{v_2}"] & \widehat Q \arrow[r] & 1 .
\end{tikzcd}
\end{equation*}

By assumption, we can find $\hat h \in \widehat{H}$  and an isomorphism $\psi: H\to H$ such that $\Psi=\Inn_{\hat h}\circ \widehat{\psi}$. Let $F'=\Inn_{\widehat{u_2}(\hat h)^{-1}} \circ F: \widehat{G_1} \to \widehat{G_2}$ be an isomorphism, which  is conjugacy equivalent with $F$. Since $\widehat{v_2}(\widehat{u_2}(\hat h))=1$, the following diagram commutes. 
\begin{equation}\label{equ: completion ses commutative F}
\begin{tikzcd}
1 \arrow[r] & \widehat H \arrow[r,"\widehat{u_1}"] \arrow[d,"\widehat{\psi}"',"\cong"] & \widehat{G_1} \arrow[r,"\widehat{v_1}"] \arrow[d,"F'"',"\cong"]  & \widehat Q \arrow[r]  \arrow[d,"\widehat{\phi}"',"\cong"] & 1 \\
1 \arrow[r] & \widehat H \arrow[r,"\widehat{u_2}"] & \widehat{G_2} \arrow[r,"\widehat{v_2}"] & \widehat Q \arrow[r] & 1 
\end{tikzcd}
\end{equation}

Let $\iota: Q\to \widehat{Q}$ be the canonical homomorphism. Let $\theta_i: Q\to \Out(H)$ be the homomorphism induced by $G_i$ acting via conjugation on $H$, and let $\Theta_i: \widehat{Q}\to \Out(\widehat{H})$ be the homomorphism induced by  $\widehat{G_i}$ acting via conjugation on $\widehat{H}$. Then, the five blocks \circledtext{1}$\,$---$\,$\circledtext{5} in the following diagram commute. 
\begin{equation}\label{equ: compd diag}
\begin{tikzcd}[row sep=tiny, column sep=tiny]
Q \arrow[dddddd, "\phi"'] \arrow[rrrrdd, "\iota"] \arrow[rrrrrrrr, "\theta_1"] & &   &  &                                                                 &     &                                                     &     & \hspace{-1mm}\Out(H) \arrow[dddddd, "\psi_\ast"] \arrow[lldd, hook,"\widehat{\cdot}"'] \\ & &
                                                                           &     &                                                                 & \hspace{2.75mm}{\scalebox{0.9} {\circledtext{1}}}  \hspace{-2.75mm} &                                                     &     &                                                        \\ & &
                                                                           &     & \widehat{Q} \arrow[rr, "\Theta_1"] \arrow[dd, "\widehat{\phi}"'] &     & \Out(\widehat{H}) \hspace{-2mm}\arrow[dd, "\widehat{\psi}_\ast"] &     &                                                        \\ &
                                                                           & \hspace{-1mm}{\scalebox{0.9}{\circledtext{2}}} \hspace{2mm} &                                                                 & & \hspace{2.75mm}{\scalebox{0.9}{\circledtext{5}}}  \hspace{-2.75mm}  &                                                     & \hspace{2mm}{\scalebox{0.9}{\circledtext{4}}}\hspace{-2mm}  &                                                        \\ & & 
                                                                           &     & \widehat{Q} \arrow[rr, "\Theta_2"]                              &     & \Out(\widehat{H}) \hspace{-2mm}                                  &     &                                                        \\ & &
                                                                           &     &                                                                 & \hspace{2.75mm}{\scalebox{0.9}{\circledtext{3}}}  \hspace{-2.75mm}  &                                                     &     &                                                        \\
Q \arrow[rrrruu, "\iota"] \arrow[rrrrrrrr, "\theta_2"]               &      &     &     &                                                                 &     &                                                     &     & \hspace{-1mm}\Out(H) \arrow[lluu, hook,"\widehat{\cdot}"']                            
\end{tikzcd}
\end{equation}
In fact, blocks \circledtext{1}$\,$---$\,$\circledtext{4} follow from the construction of the profinite completion of a homomorphism, and block \circledtext{5} follows from diagram (\ref{equ: completion ses commutative F}). 

From this, one can verify that the outermost square of (\ref{equ: compd diag}) commmutes. Indeed, for any $q\in Q$,
$$
\widehat{\psi_\ast(\theta_1(q))}=\widehat{\psi}_\ast (\Theta_1(\iota(q)))= \Theta_2(\widehat{\phi}(\iota(q)))=\widehat{\theta_2(\phi(q))},
$$
from which one deduce $\psi_\ast(\theta_1(q))=\theta_2(\phi(q))$ using assumption \ref{extasmpC}. 

Recall that $H$ has trivial center (assumption \ref{extasmpB}), so we can  apply  \autoref{prop: standard abstract algebra} to the two extensions $1\to H \to G_1\to Q\to 1$ and $1\to H \to G_2\to Q \to 1$, and we obtain an isomorphism $f: G_1\to G_2$ that fits into  the following commutative diagram.
\begin{equation}\label{equ: abstract ses}
\begin{tikzcd}
1 \arrow[r]  & H \arrow[r,"u_1"] \arrow[d,"\psi"',"\cong"] & G_1 \arrow[r,"v_1"] \arrow[d,"f"',"\cong" ] & Q \arrow[r] \arrow[d,"\phi"',"\cong"]  & 1 \\
1 \arrow[r] & H \arrow[r,"u_2"]  & G_2 \arrow[r,"v_2"] & Q \arrow[r] & 1 
\end{tikzcd}
\end{equation}
In particular, $G_1\cong G_2$. 

Now, we apply the profinite completion functor to diagram (\ref{equ: abstract ses}), and obtain a commutative diagram with rows exact. 
\begin{equation}\label{equ: completion ses commutative hatf}
\begin{tikzcd}
1 \arrow[r] & \widehat H \arrow[r,"\widehat{u_1}"] \arrow[d,"\widehat{\psi}"',"\cong"] & \widehat{G_1} \arrow[r,"\widehat{v_1}"] \arrow[d,"\widehat{f}"',"\cong"]  & \widehat Q \arrow[r]  \arrow[d,"\widehat{\phi}"',"\cong"] & 1 \\
1 \arrow[r] & \widehat H \arrow[r,"\widehat{u_2}"] & \widehat{G_2} \arrow[r,"\widehat{v_2}"] & \widehat Q \arrow[r] & 1 
\end{tikzcd}
\end{equation}

We now view $\widehat{G_1}$ and $\widehat{G_2}$ as extensions of $\widehat{H}$ by $\widehat{Q}$ as abstract groups, and view isomorphisms between them as isomorphisms of abstract groups. Since $\widehat{H}$ has trivial center (assumption \ref{extasmpB}), we can apply \autoref{prop: standard abstract algebra} to  deduce that the isomorphism $\widehat{G_1}\to \widehat{G_2}$ (as abstract groups) satisfying diagram  (\ref{equ: completion ses commutative F}) is unique. Comparing (\ref{equ: completion ses commutative F}) with (\ref{equ: completion ses commutative hatf}), we finally conclude that $F'=\widehat{f}$, i.e.\ $F'$ is genuine. Recall that $F$ is conjugacy equivalent with $F'$, so $F$ is also genuine.  
\end{proof}

We record a straightforward lemma used for the corollary immediately below. 
\begin{lemma}\label{lem: subgroup also genuine}
Let $\Gamma_1$ and $\Gamma_2$ be finitely generated groups, and let $\Phi:\widehat{\Gamma_1}\to \widehat{\Gamma_2}$ be a genuine isomorphism. Then, for any $\Phi$-corresponding pair of finite-index subgroups $\Gamma_1'\le \Gamma_1$ and $\Gamma_2'\le \Gamma_2$, the restriction   $\Phi':\widehat{\Gamma_1'}\to \widehat{\Gamma_2'}$ is also genuine. 
\end{lemma}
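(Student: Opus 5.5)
Write $\Phi=\Inn_{\hat g}\circ\widehat{f}$ with $f:\Gamma_1\to\Gamma_2$ an isomorphism of abstract groups and $\hat g\in\widehat{\Gamma_2}$. The natural candidate for the restriction is the restriction of $f$ to $\Gamma_1'$. It need not land in $\Gamma_2'$, since $\hat g$ moves subgroups around. The plan is therefore to absorb $\hat g$ into an honest element of $\Gamma_2$, using density of $\Gamma_2$ in $\widehat{\Gamma_2}$.

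First I identify where $f$ sends $\Gamma_1'$. Because $\widehat f$ extends $f$ and is a homeomorphism, $\widehat f(\overline{\Gamma_1'})=\overline{f(\Gamma_1')}$, and $f(\Gamma_1')$ is a finite-index subgroup of $\Gamma_2$. The hypothesis $\Phi(\overline{\Gamma_1'})=\overline{\Gamma_2'}$ then gives
\[
\hat g\,\overline{f(\Gamma_1')}\,\hat g^{-1}=\overline{\Gamma_2'}.
\]
Next I use that $\overline{\Gamma_2'}$ is open in $\widehat{\Gamma_2}$ (\autoref{prop: lattice of open subgroup}) and that $\Gamma_2$ is dense. Together these give $\widehat{\Gamma_2}=\overline{\Gamma_2'}\cdot\Gamma_2$, so I can write $\hat g=\hat k\gamma$ with $\hat k\in\overline{\Gamma_2'}$ and $\gamma\in\Gamma_2$. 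Then
\[
\gamma\,\overline{f(\Gamma_1')}\,\gamma^{-1}=\hat k^{-1}\,\overline{\Gamma_2'}\,\hat k=\overline{\Gamma_2'}.
\]
The left-hand side equals $\overline{\gamma f(\Gamma_1')\gamma^{-1}}$. Since the correspondence between finite-index subgroups and open subgroups is a bijection (\autoref{prop: lattice of open subgroup}), it follows that $\gamma f(\Gamma_1')\gamma^{-1}=\Gamma_2'$.

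Now set $f'=\Inn_\gamma\circ f|_{\Gamma_1'}:\Gamma_1'\to\Gamma_2'$, which is an isomorphism of abstract groups. By \autoref{prop: lattice of open subgroup}, $\overline{\Gamma_i'}\cong\widehat{\Gamma_i'}$ canonically, so the completion $\widehat{f'}$ is the restriction of $\Inn_\gamma\circ\widehat f$ to $\overline{\Gamma_1'}$. Both maps are continuous and agree on the dense subgroup $\Gamma_1'$, which justifies this identification. On $\overline{\Gamma_1'}$ we then have
\[
\Phi=\Inn_{\hat k}\circ\Inn_\gamma\circ\widehat f=\Inn_{\hat k}\circ\widehat{f'},
\]
with $\hat k\in\overline{\Gamma_2'}\cong\widehat{\Gamma_2'}$. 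Hence $\Phi'$ is genuine.

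The only step needing care is the factorization $\hat g=\hat k\gamma$ with $\hat k$ inside $\overline{\Gamma_2'}$ and $\gamma\in\Gamma_2$. It is exactly what makes the conjugating element lie in the smaller completion $\widehat{\Gamma_2'}$. The rest is routine bookkeeping with closures of finite-index subgroups.
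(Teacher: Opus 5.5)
Your proposal is correct and follows essentially the same route as the paper: factor $\hat g=\hat k\gamma$ with $\hat k\in\overline{\Gamma_2'}$ and $\gamma\in\Gamma_2$, replace $f$ by $\Inn_\gamma\circ f$, and use the lattice correspondence of \autoref{prop: lattice of open subgroup} to see that this map carries $\Gamma_1'$ onto $\Gamma_2'$. The only cosmetic difference is that you compute $\hat g\,\overline{f(\Gamma_1')}\,\hat g^{-1}=\overline{\Gamma_2'}$ before factoring $\hat g$, whereas the paper factors first. When $\Gamma_2$ is not residually finite, you should also write $\iota_2(\gamma)$ rather than $\gamma$, as the paper does.
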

\begin{proof}
By assumption, there exists $\hat g\in \widehat{\Gamma_2}$ and an isomorphism $f:\Gamma_1\to \Gamma_2$ such that $\Phi=\Inn_{\hat g} \circ \widehat f$. Recall that $\overline{\Gamma_2'}$ is an open subgroup in $\widehat{\Gamma_2}$ and $\iota_2(\Gamma_2)$ is a dense subgroup in $\widehat{\Gamma_2}$, where $\iota_2:\Gamma_2\to \widehat{\Gamma_2}$ denotes the canonical homomorphism. Thus, $\widehat{\Gamma_2}=  \overline{\Gamma_2'} \cdot \iota_2(\Gamma_2)$, and we can find $\hat{h}\in \overline{\Gamma_2'}   $ and $\gamma \in \Gamma_2$ such that $\hat{g}=\hat{h} \cdot \iota_2(\gamma)$. We denote $f_0= \Inn_{\gamma}\circ f: \Gamma_1\to \Gamma_2$. By construction, $\Phi=\Inn_{\hat h} \circ \widehat{f_0}$. 

Since $\Phi(\overline{\Gamma_1'})=\overline{\Gamma_2'}$ and $\hat{h}\in \overline{\Gamma_2'}$, we deduce that $\widehat{f_0}(\overline{\Gamma_1'})=\overline{\Gamma_2'}$. According to \autoref{prop: lattice of open subgroup}, $f_0(\Gamma_1')=\Gamma_2'$. We denote by $f_0':\Gamma_1'\to \Gamma_2'$ the isomorphism which is the restricition of $f_0$. By construction, $\Phi'=\Inn_{\hat h} \circ \widehat{f_0'}$, where $\hat{h}\in \widehat{\Gamma_2'}\cong \overline{\Gamma_2'}$. In other words, $\Phi'$ is also genuine. 
\end{proof}

We now state and prove a useful corollary  of  \autoref{lem: extension lemma}.

\begin{sloppypar}
\begin{corollary}\label{cor: virtually genuine}
Let $\Gamma_1$ and $\Gamma_2$ be finitely generated non-elementary Klei\-nian groups, and let $\Phi:\widehat{\Gamma_1}\to \widehat{\Gamma_2}$ be an isomorphism. If $\Phi$ is virtually genuine, then $\Phi$ is genuine. In particular, $\Gamma_1\cong \Gamma_2$. 
\end{corollary}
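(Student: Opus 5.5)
The plan is to reduce to \autoref{lem: extension lemma}, applied with $H$ a characteristic-type finite-index normal subgroup and $Q$ the finite quotient. Suppose $\Phi$ is virtually genuine, witnessed by a $\Phi$-corresponding pair $\Gamma_1'\le\Gamma_1$, $\Gamma_2'\le\Gamma_2$ whose restriction $\Phi'$ is genuine. First I would pass to normal subgroups. Choose $m$ large enough that $\Gamma_1^{(m)}\le \Gamma_1'$; this is possible since $\Gamma_1'$ has finite index, for instance $m=[\Gamma_1:\Gamma_1']$ works. Then $\Gamma_1^{(m)}$ and $\Gamma_2^{(m)}$ form a $\Phi$-corresponding pair, as noted after the definition of standard characteristic subgroups. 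Both lie inside $\Gamma_1'$ and $\Gamma_2'$, since $\Phi(\overline{\Gamma_1^{(m)}})=\overline{\Gamma_2^{(m)}}\le \Phi(\overline{\Gamma_1'})=\overline{\Gamma_2'}$ and \autoref{prop: lattice of open subgroup} applies. By \autoref{lem: subgroup also genuine}, the restriction $\Psi_0:\widehat{\Gamma_1^{(m)}}\to\widehat{\Gamma_2^{(m)}}$ is genuine. In particular there is an abstract isomorphism $\psi_0:\Gamma_1^{(m)}\to\Gamma_2^{(m)}$.

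Next I would set up the two extensions over a common kernel. Let $H=\Gamma_1^{(m)}$ and identify $\Gamma_2^{(m)}$ with $H$ via $\psi_0$, so that $\Gamma_2$ becomes an extension $1\to H\xrightarrow{u_2}\Gamma_2\to Q_2\to 1$ with $u_2=\psi_0$. Here $Q_i=\Gamma_i/\Gamma_i^{(m)}$ is finite, so $\widehat{Q_i}=Q_i$. The map $\Phi$ descends to $\Phi^\flat_{(m)}:Q_1\to Q_2$, which is an isomorphism because $\Phi$ maps $\overline{\Gamma_1^{(m)}}$ onto $\overline{\Gamma_2^{(m)}}$. Fixing any group isomorphism $Q_2\cong Q_1$ (for example via $\Phi^\flat_{(m)}$ itself), both groups become extensions of $H$ by $Q:=Q_1$. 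The right-hand vertical map is then an automorphism of a finite group, which is trivially the completion of an abstract automorphism $\phi$. The left-hand vertical map becomes $\widehat{\psi_0}^{-1}\circ\Psi_0$, which is genuine because $\Psi_0$ is conjugacy equivalent to $\widehat{\psi_0}$ composed with an inner automorphism. The diagram commutes by construction, with $F=\Phi$.

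It remains to check hypotheses \ref{extasmpA}--\ref{extasmpC} for $H$.
\begin{itemize}
\item[(a)] $H$ is a finite-index subgroup of a finitely generated group, hence finitely generated.
\item[(b)] $H$ is itself a finitely generated non-elementary Kleinian group, because finite-index subgroups of non-elementary groups have the same limit set. It therefore has trivial center, and $\widehat H$ has trivial center by \autoref{thm: lattice center free}.
\item[(c)] The map $\Out(H)\to\Out(\widehat H)$ is injective by \autoref{cor: completion map injective}.
\end{itemize}
\autoref{lem: extension lemma} now shows that $\Phi$ is genuine, and in particular $\Gamma_1\cong\Gamma_2$.

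The main obstacle is the bookkeeping in this identification. The lemma needs literally the same $H$ and $Q$ with completed maps $\widehat{u_i}$, so I must check that transporting the second extension along $\psi_0$ keeps the top-left square commutative with $\Psi$ genuine. I also need to confirm that the lemma's conclusion ``$F$ genuine'' is unaffected by this relabelling. It is unaffected because relabelling the kernel only changes the embedding $u_2$, and $\Gamma_1,\Gamma_2$ and $F=\Phi$ themselves are untouched.
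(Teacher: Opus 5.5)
Your proposal is correct and follows essentially the same route as the paper. You first use \autoref{lem: subgroup also genuine} to pass to a $\Phi$-corresponding pair of finite-index normal subgroups on which the restriction is genuine. You then apply \autoref{lem: extension lemma}, checking its hypotheses with \autoref{thm: lattice center free} and \autoref{cor: completion map injective}. The paper leaves implicit the identification of the kernels and quotients; your choice of standard characteristic subgroups and your transport of the second extension along $\psi_0$ and $\Phi^\flat_{(m)}$ simply spell that identification out.
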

\end{sloppypar}
\begin{proof}
According to \autoref{lem: subgroup also genuine}, by taking possibly smaller subgroups, we can find a $\Phi$-corresponding pair of finite-index normal subgroups $\Gamma_1'\unlhd \Gamma_1$ and $\Gamma_2'\unlhd \Gamma_2$ such that the restriction $\Phi': \widehat{ \Gamma_1'} \to \widehat{\Gamma_2'}$ is also genuine. In particular, $\Gamma_1'\cong \Gamma_2'$.  We then obtain a commutative diagram between short exact sequences. 
\begin{equation*}
 \begin{tikzcd}
 1 \arrow[r] & \widehat{\Gamma_1'} \arrow[r, hook] \arrow[d,"\Phi'"',"\cong"] & \widehat{\Gamma_1} \arrow[r, two heads] \arrow[d,"\Phi"', "\cong"] & \Gamma_1/\Gamma_1'=\widehat{ \Gamma_1/\Gamma_1'} \arrow[r] \arrow[d,"\phi"',"\cong"] & 1 \\
 1 \arrow[r] & \widehat{\Gamma_2'} \arrow[r, hook] & \widehat{\Gamma_2} \arrow[r, two heads] & \Gamma_2/\Gamma_2' =\widehat{ \Gamma_2/\Gamma_2'} \arrow[r] & 1
\end{tikzcd}
\end{equation*}
We now apply \autoref{lem: extension lemma} to the two extensions $1 \to \Gamma_1' \to \Gamma_1 \to \Gamma_1/ \Gamma_1'\to 1 $ and $1 \to \Gamma_2' \to \Gamma_2 \to \Gamma_2/ \Gamma_2'\to 2 $. 
Recall that $\Gamma_1'$ and $\Gamma_2'$ are also finitely generated non-elementary Kleinian groups. Thus, $\Gamma_1',\Gamma_2',\widehat{\Gamma_1'},\widehat{\Gamma_2'}$ have trivial center according to \autoref{thm: lattice center free}, and the completion  maps $\Out(\Gamma_1')\to \Out(\widehat{\Gamma_1'})$ and $\Out(\Gamma_2')\to \Out(\widehat{\Gamma_2'})$  are injective according to \autoref{cor: completion map injective}. Also note that $\phi$ is trivially the profinite completion of a group  isomorphism since $\Gamma_1/\Gamma_1'$ and $\Gamma_2/\Gamma_2'$ are already finite groups. Therefore, all assumptions for \autoref{lem: extension lemma} are satisfied, and  \autoref{lem: extension lemma} implies that $\Phi$ is genuine. 
In particular, $\Gamma_1\cong \Gamma_2$. 
\end{proof}

\section{Cohomology theory of profinite groups}\label{sec: cohomology}
\subsection{Definitions}
A standard reference for the cohomology theory of profinite groups can be found at  \cite[Chapter 6]{RZ10}. In this subsection, we omit most of the technical details, and present a brief introduction. 

Let $G$ be a profinite group. Define $\CC(G)$ to be the category of profinite $\widehat{\Z}[\![G]\!]$-modules, and define $\FF(G)$ to be the category of finite discrete  $\widehat{\Z}[\![G]\!]$-modules. 


For  $N \in \CC(G)$, the {\em $n$-th profinite homology} of $G$ with coefficients in $N$ is a profinite abelian group denoted by $\H_n(G;N)$. For  $M\in \FF(G)$, the {\em $n$-th profinite cohomology} of $G$ with coefficients in $M$ is a discrete torsion abelian group denoted by $\H^n(G;M)$. Note that the profinite cohomology is usually defined for coefficients in discrete $G$-modules; however,  only finite   $G$-modules are needed in this paper. The readers are referred to \cite[Chapter 6]{RZ10} for  concrete definitions in various equivalent forms. 

Profinite homology and cohomology satisfy the following functoriality. 
\begin{enumerate}[leftmargin=*,label=(\arabic*)]
\item Let $\phi:G_1\to G_2$ be a homomorphism of profinite groups. For $N\in \CC(G_2)$, there is a natural homomorphism $\phi_\ast:\H_\ast(G_1;N)\to \H_\ast(G_2;N)$. For $M\in \FF(G_2)$, there is a natural homomorphism $\phi^\ast : \H^\ast (G_2;M)\to \H^\ast (G_1;M)$. In addition, when $\phi$ is an isomorphism,   both $\phi_\ast$ and $\phi^\ast$ are isomorphisms of $\widehat{\Z}$-modules. 
\item For any homomorphism $\varphi:N_1\to N_2$   in $\CC(G)$ , there is a natural homomorphism $\varphi_\ast : \H_\ast(G;N_1)\to \H_\ast(G;N_2)$. For any homomorphism $\psi:M_1\to M_2$ in $\FF(G)$, there is a natural homomorphism $\psi^\ast: \H^\ast(G;M_1)\to \H^\ast (G;M_2)$. 
\item Suppose $\Gamma$ is an abstract group, and $G$ is a profinite group. Let $f:\Gamma\to G$ be a homomorphism, and any profinite/discrete $\widehat{\Z}[\![G]\!]$-module is viewed as an abstract $\widehat{\Z} \Gamma $-module via $f$. For $N\in \CC(G)$ and $M\in \FF(G)$, there are natural homomorphisms of abstract $\widehat{\Z}$-modules  $f_\ast:H_\ast(\Gamma;N)\to \H_\ast(G;N)$ and $f^\ast: \H^\ast(G;M)\to H^\ast(\Gamma;M)$, where $H^\ast$ and $H_\ast$ denote the usual group (co)homology.   
\end{enumerate}

\begin{lemma}[{\cite[Lemma 6.8.6]{RZ10}}]\label{lem: abelianization}
Let $\widehat{\Z}\in \CC(G)$ be equipped with trivial $G$-action. There is a canonical isomorphism $\H_1(G;\widehat{\Z})\cong \Ab{G}$, where $\Ab{G}=G/\overline{[G,G]}$ is the profinite abelianization. 
\end{lemma}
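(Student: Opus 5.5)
The plan is to compute $\H_1(G;\widehat{\Z})$ in low degrees from the long exact sequence of a suitable extension of profinite groups, and then identify the resulting group with $\Ab{G}$. Since $\widehat{\Z}$ is an inverse limit of the finite trivial modules $\Z/n$, and profinite homology commutes with inverse limits of profinite coefficient modules, it suffices to produce compatible isomorphisms $\H_1(G;\Z/n)\cong \Ab{G}/n\Ab{G}$ and pass to the limit. Alternatively, we can work directly with $\widehat{\Z}$-coefficients using a free profinite presentation.

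\textbf{Step 1 (presentation).} Choose a free profinite group $F$ on a profinite space, and a continuous epimorphism $F\to G$ with kernel $R$. This gives a short exact sequence $1\to R\to F\to G\to 1$.

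\textbf{Step 2 (five-term sequence).} The Lyndon--Hochschild--Serre spectral sequence in profinite homology yields the exact sequence
\[
\H_2(F;\widehat{\Z})\to \H_2(G;\widehat{\Z})\to \H_0(G;\H_1(R;\widehat{\Z}))\to \H_1(F;\widehat{\Z})\to \H_1(G;\widehat{\Z})\to 0 .
\]
Since $F$ is free profinite, it has cohomological dimension $1$, so $\H_2(F;\widehat{\Z})=0$. For free $F$ the group $\H_1(F;\widehat{\Z})$ is directly computable as the free profinite abelian group on the basis, which equals $\Ab{F}$. For $R$ we need $\H_1(R;\widehat{\Z})\cong \Ab{R}$. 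Here $R$ is itself projective, but not necessarily free on a nice basis, so I would avoid this detour. Instead, I would identify $\H_1$ for arbitrary $G$ via the augmentation ideal.

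\textbf{Step 3 (augmentation ideal route).} Apply $\widehat{\Z}\hat\otimes_{\widehat{\Z}[\![G]\!]}-$ to the exact sequence $0\to I_G\to\widehat{\Z}[\![G]\!]\to\widehat{\Z}\to0$. The middle term is projective, and the induced map $\H_0(G;\widehat{\Z}[\![G]\!])\to\H_0(G;\widehat{\Z})$ is an isomorphism. Therefore $\H_1(G;\widehat{\Z})\cong \H_0(G;I_G)=I_G/\overline{I_G^2}$. Then define $G\to I_G/\overline{I_G^2}$ by $g\mapsto (g-1)$. This map is a continuous homomorphism, because $gh-1=(g-1)+(h-1)+(g-1)(h-1)$. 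Its target is abelian, so the map factors through $\Ab{G}$.

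\textbf{Step 4 (inverse map).} This is the main obstacle: constructing the inverse and checking continuity in the completed setting. Define a continuous $\widehat{\Z}$-linear map $\widehat{\Z}[\![G]\!]\to\Ab{G}$ by $g\mapsto \bar g$, extended continuously. This requires $\Ab{G}$ to be regarded additively as a profinite $\widehat{\Z}$-module, and it follows from the universal property of completed group rings for finite quotients $G/U$. Restrict this map to $I_G$ and check that it kills $(g-1)(h-1)=gh-g-h+1$, which goes to $\bar g\bar h-\bar g-\bar h+0$ in additive notation. This is zero once the constant is handled by the restriction to $I_G$, and zero on a dense subset implies zero on the closure $\overline{I_G^2}$. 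The two maps are mutually inverse on the dense generating elements, hence everywhere by continuity. Finally, compatibility with the finite quotients $G/U$, where this is the classical discrete statement, yields canonicity.
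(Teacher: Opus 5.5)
The paper gives no argument of its own here beyond citing \cite[Lemma 6.8.6]{RZ10}; your Steps 3--4 are the standard augmentation-ideal proof of this fact and are correct. Steps 1--2 are an abandoned detour and can be deleted.

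The only things to tidy are these. First, $\lambda(1)=0$ simply because $1$ maps to the identity of $\Ab{G}$, so $\lambda\bigl((g-1)(h-1)\bigr)=(\bar g+\bar h)-\bar g-\bar h=0$ with no appeal to restricting to $I_G$. Second, the $\widehat{\Z}$-span of the products $(g-1)(h-1)$ is dense in $\overline{I_G^2}$, because $\{g-1\}$ topologically spans $I_G$ and multiplication in $\widehat{\Z}[\![G]\!]$ is continuous; you should say this explicitly.
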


\subsection{Cohomological goodness}

The concept of cohomological goodness was first introduced by Serre in \cite{Ser01}. For any abstract group $\Gamma$, it is clear that the category $\FF(\widehat{\Gamma})$ can be identified with $\FF(\Gamma)$, the category of finite discrete $\Gamma$-modules.  

\begin{definition}
An abstract group $\Gamma$ is {\em cohomologically good} if for any $M\in \FF(\Gamma)$, the map $\iota^\ast:\H^\ast(\widehat{\Gamma};M)\to H^\ast (\Gamma;M)$ induced by the canonical homomorphism $\iota:\Gamma\to \widehat{\Gamma}$ is an isomorphism. 
\end{definition}

With extra assumptions on $\Gamma$, cohomological goodness has a dual homological version. 

\begin{proposition}\label{prop: homologically good}
Let $\Gamma$ be an abstract group of type $\mathrm{FP}_\infty$. Suppose  that $\Gamma$ is  cohomologically good. Then, for any $M\in \FF(\Gamma)$, the map $\iota_\ast: H_\ast(\Gamma;M)\to \H_\ast(\widehat{\Gamma};M)$ induced by the canonical homomorphism  is also an isomorphism. 
\end{proposition}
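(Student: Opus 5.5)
The plan is to dualize the cohomological statement using Pontryagin duality for finite modules, and to use the $\mathrm{FP}_\infty$ hypothesis to compare abstract homology with cohomology.

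Fix $M\in\FF(\Gamma)$ and let $M^\ast=\Hom(M,\Q/\Z)$ be its Pontryagin dual. This is again a finite discrete $\widehat{\Z}[\![\widehat{\Gamma}]\!]$-module, with the contragredient action.

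\textbf{Profinite side.} Profinite homology with finite coefficients is Pontryagin dual to profinite cohomology with dual coefficients, by \cite[Chapter 6]{RZ10}:
$$\H_n(\widehat{\Gamma};M)\cong \Hom\bigl(\H^n(\widehat{\Gamma};M^\ast),\Q/\Z\bigr).$$

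\textbf{Abstract side.} Since $\Gamma$ is of type $\mathrm{FP}_\infty$, there is a projective resolution $P_\bullet\to\Z$ by finitely generated free $\Z\Gamma$-modules $P_n\cong(\Z\Gamma)^{r_n}$. Then $H_n(\Gamma;M)$ is computed by the complex $P_\bullet\otimes_{\Gamma}M\cong M^{r_n}$, and $H^n(\Gamma;M^\ast)$ by $\Hom_\Gamma(P_\bullet,M^\ast)\cong (M^\ast)^{r_n}$. The natural identification
$$\Hom_\Gamma(P_n,M^\ast)\cong \Hom(P_n\otimes_\Gamma M,\Q/\Z)$$
is compatible with the differentials. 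Because $\Q/\Z$ is injective, $\Hom(-,\Q/\Z)$ is exact, so it commutes with taking homology. This gives
$$H^n(\Gamma;M^\ast)\cong \Hom\bigl(H_n(\Gamma;M),\Q/\Z\bigr).$$
Each $H_n(\Gamma;M)$ is a finite group, being a subquotient of the finite group $M^{r_n}$. Hence double duality holds and nothing is lost in dualizing. This finiteness is exactly where $\mathrm{FP}_\infty$ is used.

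\textbf{Naturality and conclusion.} Both dualities are natural with respect to the canonical map $\iota:\Gamma\to\widehat{\Gamma}$, so $\iota_\ast$ on $H_n(-;M)$ is the Pontryagin dual of $\iota^\ast$ on $H^n(-;M^\ast)$. Cohomological goodness applied to $M^\ast\in\FF(\Gamma)$ says that $\iota^\ast$ is an isomorphism. Its dual $\iota_\ast$ is therefore an isomorphism of finite groups, as claimed.

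\textbf{Main obstacle.} The delicate step is verifying that these two dualities are compatible, i.e.\ that the square relating $\iota_\ast$ and $(\iota^\ast)^\vee$ commutes. To handle this, I would compute profinite homology through the completed resolution $\widehat{\Z}[\![\widehat{\Gamma}]\!]\widehat{\otimes}_{\Z\Gamma}P_\bullet$, or through a profinite projective resolution receiving a chain map from $P_\bullet$. I would then check the commutativity at the chain level, where both sides are given by explicit evaluation pairings.
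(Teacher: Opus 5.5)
Your proposal is correct and follows essentially the paper's argument. The paper also builds the commutative square from the evaluation pairings, uses \cite[Proposition 6.3.6]{RZ10} for the profinite duality, invokes the $\mathrm{FP}_\infty$ hypothesis (via \cite[Proposition VI.7.1]{Brown}) for the abstract-side duality, and applies cohomological goodness to $M^\ast$ to conclude. The only difference is that you derive the abstract-side duality directly from a finite-type resolution instead of citing Brown, which makes explicit that the finiteness of $H_n(\Gamma;M)$ is exactly where $\mathrm{FP}_\infty$ enters.
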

\begin{proof}
This involves  Pontryagin duality. For any abstract abelian group $A$, denote  $A^\ast= \mathrm{Hom}(A,\mathbb{Q}/\Z)$. When $A$ is finite, $A^\ast$ is abstractly isomorphic with $A$. In particular, $M^\ast$ equipped with the inverted $\Gamma$-action can also be viewed as a finite $\Gamma$-module. 
From the pairings $ H_\ast(\Gamma;M)\times   H^\ast(\Gamma;M^\ast)\to \mathbb{Q}/\Z$ and $ \H_\ast(\widehat \Gamma;M)\times   \H^\ast(\widehat \Gamma;M^\ast)\to \mathbb{Q}/\Z$, we obtain homomorphisms $\varphi: H_\ast(\Gamma;M)\to H^\ast(\Gamma;M^\ast)^\ast$ and $\Phi: \H_\ast(\widehat \Gamma;M)\to \H^\ast(\widehat \Gamma;M^\ast)^\ast$ that fit into the following commutative diagram 
$$\begin{tikzcd}
H_\ast(\Gamma;M) \arrow[r,"\varphi"] \arrow[d,"\iota_\ast"'] & H^\ast(\Gamma;M^\ast)^\ast \arrow[d,"(\iota^\ast)^\ast"] \\ 
\H_\ast(\widehat \Gamma;M) \arrow[r,"\Phi"] &\H^\ast(\widehat \Gamma;M^\ast)^\ast
\end{tikzcd}$$

It follows from \cite[Proposition 6.3.6]{RZ10} that $\Phi$ is an isomorphism, and it follows from \cite[Proposition VI.7.1]{Brown} that $\varphi$ is  an isomorphism when $\Gamma$ has type $\mathrm{FP}_\infty$. In addition, $(\iota^\ast)^\ast$ is an isomorphism since $\Gamma$ is cohomologically good. Therefore, $\iota_\ast: H_\ast(\Gamma;M)\to \H_\ast(\widehat{\Gamma};M)$ is also an isomorphism. 
\end{proof}

\begin{proposition}[{\cite[Proposition 5.19]{Xu25}}]\label{prop: Zhat homology}
Let $\Gamma$ be an abstract group of type $\mathrm{FP}_\infty$ which is  cohomologically good. Let $\widehat{\Z}\in \CC(\widehat{\Gamma})$ be equipped with trivial $\widehat{\Gamma}$-action. Then, there is an isomorphism of $\widehat{\Z}$-modules 
$$
\tensor H_\ast(\Gamma;\Z)\xrightarrow{\;\cong\;} H_\ast(\Gamma;\widehat{\Z}) \xrightarrow{\;\cong\;} \H_\ast(\widehat{\Gamma};\widehat{\Z}). 
$$
\end{proposition}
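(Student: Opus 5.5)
The plan is to prove the two isomorphisms separately and then compose them. Throughout, $\Gamma$ is of type $\mathrm{FP}_\infty$ and cohomologically good.

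\emph{First map.} The map $\tensor H_\ast(\Gamma;\Z)\to H_\ast(\Gamma;\widehat{\Z})$ comes from the universal coefficient theorem. Since $\Gamma$ has type $\mathrm{FP}_\infty$, choose a projective resolution $P_\bullet\to\Z$ by finitely generated free $\Z\Gamma$-modules. Then $C_\bullet=P_\bullet\otimes_{\Gamma}\Z$ is a chain complex of finitely generated free abelian groups, and $H_\ast(\Gamma;A)=H_\ast(C_\bullet\otimes_\Z A)$ for any trivial module $A$. The ring $\widehat{\Z}$ is torsion-free, hence flat over $\Z$, so the $\mathrm{Tor}$ term vanishes. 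This gives
\[
H_\ast(\Gamma;\widehat{\Z})\cong \tensor H_\ast(\Gamma;\Z),
\]
and the isomorphism is natural and $\widehat{\Z}$-linear. Since each $H_n(\Gamma;\Z)$ is finitely generated, $\tensor H_n(\Gamma;\Z)\cong \limi_k H_n(\Gamma;\Z)\otimes\Z/k$.

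\emph{Second map.} The map $H_\ast(\Gamma;\widehat{\Z})\to\H_\ast(\widehat{\Gamma};\widehat{\Z})$ is induced by $\iota$, and I would reduce it to finite coefficients.
\begin{enumerate}
\item Write $\widehat{\Z}=\limi_k \Z/k$.
\item On the profinite side, profinite homology commutes with inverse limits of profinite coefficient modules (\cite[Chapter 6]{RZ10}). Hence $\H_n(\widehat{\Gamma};\widehat{\Z})=\limi_k \H_n(\widehat{\Gamma};\Z/k)$.
\item On the abstract side, $C_\bullet$ is degreewise finitely generated free, so $C_\bullet\otimes\widehat{\Z}=\limi_k C_\bullet\otimes\Z/k$. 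The system $\{C_n\otimes \Z/k\}_k$ consists of finite groups, so it is Mittag-Leffler and $\lim^1$ vanishes. Therefore $H_n(\Gamma;\widehat{\Z})=\limi_k H_n(\Gamma;\Z/k)$.
\item The homology groups $H_n(\Gamma;\Z/k)$ are finite, so the $\lim^1$ term in the Milnor sequence for homology also vanishes, which justifies passing the limit through homology in step 3.
\item By \autoref{prop: homologically good}, each finite-level map $H_n(\Gamma;\Z/k)\to\H_n(\widehat{\Gamma};\Z/k)$ is an isomorphism.
\item These finite-level maps are compatible with the reduction maps $\Z/kl\to\Z/k$. Passing to the limit therefore gives the second isomorphism.
\end{enumerate}

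\emph{Main obstacle.} The delicate step is justifying the commutation of both homology theories with the inverse limit: the finiteness argument on the abstract side, and the continuity of profinite homology on the profinite side. Alternatively, one could cite \cite[Proposition 5.19]{Xu25} directly, since that is the stated source.
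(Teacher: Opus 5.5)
Your proposal is correct. The paper gives no proof of its own here and only cites \cite[Proposition 5.19]{Xu25}, so there is nothing to compare against; what you wrote is the natural argument. The first map is handled by universal coefficients together with flatness of $\widehat{\Z}$. The second map is handled by reducing to the finite-coefficient isomorphisms of \autoref{prop: homologically good} and then passing to the limit, with naturality in the coefficient module giving both the compatibility of the levels and the $\widehat{\Z}$-linearity.

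Two small points of hygiene:
\begin{enumerate}
\item The Milnor $\lim^1$ sequence is a statement about towers indexed by $\N$, whereas $\widehat{\Z}=\limi_k \Z/k$ is indexed by divisibility. You can either restrict to the cofinal sequence $k=n!$, or, more cleanly, use that $\limi$ is exact on inverse systems of finite abelian groups. The latter gives $H_n(\limi_k C_\bullet\otimes\Z/k)=\limi_k H_n(C_\bullet\otimes\Z/k)$ in one step, so your steps 3 and 4 collapse into one.
\item The continuity of profinite homology in the profinite coefficient module holds for the same reason. Completed tensor products commute with inverse limits, and $\limi$ is exact on profinite modules.
\end{enumerate}
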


Most abstract groups we encounter in this paper are cohomologically good. 
\begin{proposition}[{\cite{Cav12}}]\label{prop: good}
Finitely generated Kleinian groups are cohomologically good. 
\end{proposition}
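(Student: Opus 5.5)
Since this proposition is quoted from Cavendish, the plan is to reconstruct the standard argument.

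\emph{Reductions.} Goodness passes between a group and its finite-index subgroups (Serre). So by Selberg's lemma it suffices to treat a torsion-free finitely generated Kleinian group $\Gamma$. By Scott's theorem, as used in \autoref{cor: centralizer}, such a $\Gamma$ is the fundamental group of a compact orientable 3-manifold $M$, which we may take aspherical after discarding trivial free factors. Goodness is also preserved by free products, so we may assume $M$ is irreducible.

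\emph{Virtually special case.} If $\Gamma$ is virtually compact special, then a finite-index subgroup embeds as a virtual retract of a right-angled Artin group. Right-angled Artin groups are good; this is proved inductively via amalgamations along subgroups that are themselves RAAGs, using the Mayer--Vietoris criterion for graphs of groups with good vertex and edge groups that are separable and induce full profinite topologies. Goodness passes to virtual retracts, since a retraction induces split injections on both discrete and profinite cohomology compatibly with $\iota^\ast$. By \autoref{thm: virtually special}, this covers all closed and cusped hyperbolic pieces.

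\emph{Geometrically finite and degenerate cases.} For geometrically finite groups with infinite covolume, I would use that they are also virtually compact special, via quasiconvexity and Wise's results, so the previous step applies. For geometrically infinite (degenerate) ends, the tameness and covering theorem show that a finite cover is a virtual fiber in a fibered finite-volume manifold. Thus $\Gamma$ is a finitely generated subgroup of a good group whose profinite topology it inherits, by LERF (\autoref{FK: LERF}). More cleanly, in that case $\Gamma$ is virtually free or virtually a surface group, both of which are good, and this avoids the subgroup issue entirely.

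\emph{Main obstacle.} The key step is goodness of graphs of groups: one needs the profinite completion of the graph of groups to be the profinite graph of completions, which requires the edge groups to be separable and efficient. This is where LERF and the RAAG structure are used. I expect this efficiency verification to be the technical heart of the argument.
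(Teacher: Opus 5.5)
There is a genuine gap in your treatment of geometrically infinite groups. Your reductions (finite index, Selberg, Scott core, free products) are fine, and so is the virtually special case. The degenerate case is not.

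Canary's covering theorem, combined with tameness, makes a geometrically infinite finitely generated $\Gamma$ a virtual fiber only when $\mathbb{H}^3/\Gamma$ covers a finite-volume hyperbolic 3-manifold with infinite degree, i.e.\ only when $\Gamma$ sits inside a lattice. An arbitrary finitely generated Kleinian group need not.

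Without that hypothesis, your fallback claim that $\Gamma$ is ``virtually free or virtually a surface group'' is false. A Klein--Maskit combination of a singly degenerate surface group with a loxodromic cyclic group is geometrically infinite. It has irreducible compact core and is isomorphic to $\pi_1 S * \mathbb{Z}$. Freely indecomposable counterexamples also exist: geometrically infinite hyperbolic structures on $\operatorname{int}(M)$ for compact acylindrical $M$ with incompressible boundary.

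Your other alternative fails for the reason you flagged yourself. Goodness is not inherited by subgroups, even subgroups carrying the induced full profinite topology.

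The missing observation is that goodness depends only on the abstract group, so the geometry of the given discrete embedding is irrelevant. The paper uses exactly this. A finitely generated torsion-free Kleinian group is a finitely generated 3-manifold group. Cavendish's theorem says every such group is good; the paper notes its proof rests on virtual fibering rather than on a case split by geometric finiteness. Goodness then passes up from a finite-index subgroup.

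You could repair your route in the same spirit. If $\Gamma$ has infinite covolume, its relative compact core has a non-torus boundary component, so by Thurston's hyperbolization for Haken (pared) manifolds its interior also carries a geometrically finite structure. Then only your finite-volume and geometrically finite cases are ever needed; note the latter is itself only sketched in your proposal. As written, though, the geometrically infinite case is not proved.
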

\begin{proof}
In fact, all finitely  generated 3-manifold groups are cohomologically good according to \cite[Corollary 3.5.1]{Cav12}, whose proof is based on the virtual fibering theorem (\autoref{thm: fibered closure}). This implies that   torsion-free Kleinian groups are cohomologically good. The proposition then follows from the fact that a group $\Gamma$ is cohomologically good if some finite-index  subgroup of $\Gamma$ is cohomologically good; see \cite[Lemma 3.4.4]{Cav12}.
\end{proof}

Cohomological goodness implies some basic profinite properties of lattices in $\PSL_2(\C)$. 

\begin{proposition}\label{prop: torsion free}
Let $\Gamma_1$ and $\Gamma_2$ be lattices in $\PSL_2(\C)$. Suppose that $\widehat{\Gamma_1}\cong \widehat{\Gamma_2}$. Then, $\Gamma_1$ is torsion-free if and only if $\Gamma_2$ is torsion-free; and $\Gamma_1$ is a uniform lattice if and only if $\Gamma_2$ is a uniform lattice. 
\end{proposition}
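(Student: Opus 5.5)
The plan is to read off both properties from the profinite cohomology of $\widehat{\Gamma_i}$, using cohomological goodness (\autoref{prop: good}) to transport statements about $\widehat{\Gamma_i}$ back to $\Gamma_i$.

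\emph{Torsion.} A torsion-free lattice $\Gamma$ in $\PSL_2(\C)$ is the fundamental group of an aspherical 3-manifold, so $\mathrm{cd}(\Gamma)\le 3$. Since $\Gamma$ is of type $\mathrm{FP}_\infty$ and cohomologically good, $\H^n(\widehat{\Gamma};M)\cong H^n(\Gamma;M)=0$ for all finite $M$ and all $n\ge 4$. The same holds for every open subgroup $\overline{\Gamma'}\cong\widehat{\Gamma'}$, because $\Gamma'$ is again a torsion-free lattice. Hence $\mathrm{cd}_p(\widehat{\Gamma})\le 3$ for every prime $p$, and so $\widehat{\Gamma}$ is torsion-free: a profinite group of finite cohomological dimension contains no element of finite order, since a finite cyclic subgroup $\Z/p$ would have $\mathrm{cd}_p=\infty$.

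Conversely, if $\Gamma$ has an element of finite order, that element survives in $\widehat{\Gamma}$ by residual finiteness (\cite{Mal40}), so $\widehat{\Gamma}$ has torsion. Therefore $\Gamma$ is torsion-free if and only if $\widehat{\Gamma}$ is, and this is an invariant of the isomorphism type of $\widehat{\Gamma}$. An alternative for the converse direction is to use $\H^\ast(\widehat{\Gamma};\Fp)\cong H^\ast(\Gamma;\Fp)$ together with the nonvanishing of $H^n(\Gamma;\Fp)$ for infinitely many $n$ when $\Gamma$ has $p$-torsion, but the direct argument suffices.

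\emph{Uniformity.} I will reduce to the torsion-free case. By Selberg's lemma, pick a torsion-free finite-index $\Gamma_1'\le\Gamma_1$, and let $\Gamma_2'$ be its $\Phi$-corresponding subgroup (\autoref{DEF: corresponding}). Then $\widehat{\Gamma_1'}\cong\widehat{\Gamma_2'}$, so $\Gamma_2'$ is torsion-free by the first part. Since uniformity passes to and from finite-index subgroups, it suffices to treat torsion-free lattices. In that case $\Gamma$ is the fundamental group of $M$, which is either a closed orientable aspherical 3-manifold or the interior of a compact manifold with nonempty toral boundary.

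The distinguishing invariant is $\H^3(\widehat{\Gamma};\Fp)\cong H^3(\Gamma;\Fp)=H^3(M;\Fp)$, which equals $\Fp$ in the closed case and vanishes in the cusped case, since then $M$ deformation retracts onto a 2-complex. Equivalently, one can use $\mathrm{cd}_p(\widehat{\Gamma})=3$ versus $\le 2$. Because this group is an invariant of the profinite completion, $\Gamma_1'$ is uniform if and only if $\Gamma_2'$ is, which completes the argument.

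The main technical point is the claim that $\mathrm{cd}(\Gamma)<\infty$ together with goodness of all finite-index subgroups yields $\mathrm{cd}_p(\widehat{\Gamma})<\infty$. I will justify this through the standard characterization of $\mathrm{cd}_p$ via $\H^{n}(U;\Fp)$ for open subgroups $U$ (equivalently, via Shapiro's lemma and the vanishing of cohomology with finite $p$-primary coefficients), citing \cite[Chapter 7]{RZ10}.
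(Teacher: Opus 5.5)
Your proposal is correct and follows the paper's route. The torsion statement is exactly \cite[Corollary 2.16]{BCR16}: goodness plus finite cohomological dimension forces $\widehat{\Gamma}$ to be torsion-free, which you reprove directly via $\mathrm{cd}_p(\widehat{\Gamma})\le 3$. Uniformity is then detected by $\H^3(\widehat{\Gamma'};\Fp)\cong H^3(\Gamma';\Fp)$ on $\Phi$-corresponding torsion-free finite-index subgroups, as in the paper (the type $\mathrm{FP}_\infty$ hypothesis you invoke is unnecessary there, since goodness alone gives the cohomology comparison).
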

\begin{proof}
First, it follows from \cite[Corollary 2.16]{BCR16} that $\Gamma_i$ is torsion-free if and only if $\widehat{\Gamma_i}$ is torsion-free, since $\Gamma_i$ is residually finite, cohomologically good, and has finite cohomological dimension over $\Z$ when it is torsion-free. Thus,  $\Gamma_1$ is torsion-free if and only if $\Gamma_2$ is torsion-free. 

 Second, $\Gamma_i$ is a uniform lattice if and only if  for any torsion-free finite-index subgroup $\Gamma_i'\le \Gamma$, $\H^3(\widehat{\Gamma_i'};\Fp)\cong H^3(\Gamma_i';\Fp)\cong \Fp$. Once we fix an isomorphism $\phi:\widehat{\Gamma_1}\to \widehat{\Gamma_2}$, the $\phi$-correspondence between the finite-index subgroups of $\Gamma_1$ and $\Gamma_2$   matches up the torsion-free ones by the previous paragraph.  For any $\phi$-corresponding pair  of  torsion-free finite-index subgroups $\Gamma_1'\le \Gamma_1$ and $\Gamma_2'\le \Gamma_2$, $\H^3(\widehat{\Gamma_1'};\Fp)\cong \H^3(\widehat{\Gamma_2'};\Fp)$. Hence, $\Gamma_1$ is a uniform lattice if and only if $\Gamma_2$ is a uniform lattice. 
\end{proof}

\subsection{Cap product}
The cap product in profinite cohomology theory was established by A. Pletch \cite{Ple77,PleI}. 

\begin{proposition}[{\cite{Ple77,PleI}}]
Let $G$ be a profinite group.  For any $A,B\in \FF(G)$, there is a family of well-defined $\widehat{\Z}$-bilinear maps called {\em cap products}:
$$
-\cap - :\,\H_k(G,A)   \times\H^l(G,B)\longrightarrow  \H_{k-l}(G,A\otimes_{\Z} B), 
$$
where $A\otimes_{\Z}B \in \mathfrak F (G)$ is equipped with diagonal $G$-action. 
\end{proposition}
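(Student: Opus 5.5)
The plan is to build the cap product at the chain level from a profinite projective resolution, imitating the classical construction for abstract groups in \cite[Chapter V.3]{Brown}, and then to check that the result does not depend on the choices made.

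\textbf{Step 1: the resolution and a diagonal approximation.} Fix a projective resolution $P_\bullet\to\widehat{\Z}$ in $\CC(G)$, for instance the standard bar resolution with $P_n=\widehat{\Z}[\![G^{n+1}]\!]$. Then $\H_k(G;A)=H_k\bigl(P_\bullet\widehat{\otimes}_{G}A\bigr)$ and $\H^l(G;B)=H^l\bigl(\Hom_G(P_\bullet,B)\bigr)$, where $\Hom_G$ means continuous homomorphisms, as in \cite[Chapter 6]{RZ10}. The completed tensor product $P_\bullet\widehat{\otimes}P_\bullet$ with diagonal $G$-action is again a complex of projectives in $\CC(G)$. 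On free generators we have $\widehat{\Z}[\![G^{p+1}]\!]\widehat{\otimes}\widehat{\Z}[\![G^{q+1}]\!]\cong\widehat{\Z}[\![G^{p+q+2}]\!]$, which is free over the diagonal copy of $G$. Its augmentation to $\widehat{\Z}$ is a weak equivalence, by a Künneth argument for profinite modules. The comparison theorem then gives a continuous chain map $\Delta:P_\bullet\to P_\bullet\widehat{\otimes}P_\bullet$ over $\widehat{\Z}$, unique up to $G$-equivariant homotopy. For the bar resolution I would write it down directly by the Alexander--Whitney formula
\[
(g_0,\dots,g_n)\mapsto\sum_{p+q=n}(g_0,\dots,g_p)\otimes(g_p,\dots,g_n).
\]
This formula is visibly continuous, being defined on the dense subset of group elements and extended by continuity.

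\textbf{Step 2: the chain-level cap product.} For $x\otimes a\in P_k\widehat{\otimes}_G A$ and a cocycle $u\in\Hom_G(P_l,B)$, write $\Delta(x)=\sum x'_{k-l}\otimes x''_l+(\text{other bidegrees})$. Define
\[
(x\otimes a)\cap u=\sum x'_{k-l}\otimes\bigl(a\otimes u(x''_l)\bigr)\in P_{k-l}\widehat{\otimes}_G(A\otimes_{\Z}B).
\]
One then checks three things:
\begin{enumerate}[leftmargin=*,label=(\roman*)]
\item the formula is well defined on $G$-coinvariants, because $\Delta$ and $u$ are $G$-equivariant and $G$ acts diagonally on $A\otimes_{\Z}B$;
\item it is continuous, since $A$ and $B$ are finite, so $u$ factors through a finite quotient $P_l/U$-level and everything reduces to finite sums;
\item it satisfies the usual Leibniz formula $\partial(c\cap u)=\pm(\partial c)\cap u\pm c\cap\delta u$.
\end{enumerate}
Together these show that the pairing descends to a $\widehat{\Z}$-bilinear map on homology $\times$ cohomology.

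\textbf{Step 3: independence of choices.} Two choices of $\Delta$ are $G$-homotopic, and a homotopy changes the cap product by a boundary. Two choices of resolution are related by comparison maps compatible with diagonals up to homotopy. So the cap product is well defined and natural.

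\textbf{Alternative route.} As a cross-check, one can use the limit descriptions $\H_k(G;A)=\limi_U H_k(G/U;A_U)$ and $\H^l(G;B)=\varinjlim_U H^l(G/U;B^U)$. One then defines the cap product from the classical finite-group cap products and uses the projection formula to verify compatibility with the transition maps (corestriction on homology, inflation on cohomology).

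\textbf{Expected obstacle.} The main difficulty is Step 1: justifying that $P_\bullet\widehat{\otimes}P_\bullet$ is a projective resolution in $\CC(G)$ with the diagonal action, and that the comparison maps can be chosen continuously. I would avoid the general statement by working only with the explicit bar resolution, where freeness and the Alexander--Whitney map are concrete.
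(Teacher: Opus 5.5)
Your construction is correct, but it is organized differently from the paper's proof.

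\textbf{What the paper does.} The paper never builds a chain-level pairing over $\widehat{\Z}$. It quotes Pletch's cap product, which is stated for $\Z_p[\![G]\!]$-modules and given on the standard resolution by an Alexander--Whitney-type formula. It then extends this to arbitrary finite modules by primary decomposition: it writes $A=\oplus_p A_p$, $B=\oplus_p B_p$ and $A\otimes_{\Z}B=\oplus_p(A_p\otimes_{\Z_p}B_p)$, where the cross terms with $p\neq q$ vanish. The cap product is the direct sum of the $p$-primary pairings, and $\widehat{\Z}$-bilinearity is inherited from $\Z_p$-bilinearity on each summand.

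\textbf{What you do differently.} You run the same kind of formula directly on the standard $\widehat{\Z}[\![G]\!]$-resolution, so no primary decomposition is needed. The price is that equivariance on coinvariants, continuity and the Leibniz rule must be checked by hand rather than cited. The gain is a self-contained construction. It also addresses independence of the resolution and the diagonal, which the paper does not discuss. And it makes naturality in $G$ transparent, since Alexander--Whitney commutes with the maps of bar resolutions induced by homomorphisms.

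\textbf{The two agree.} Your pairing is additive in $A$ and $B$. On $\H_k(G;A_p)\times\H^l(G;B_q)$ it lands in $\H_{k-l}(G;A_p\otimes_{\Z}B_q)$, which is $0$ for $p\neq q$. For $p=q$ it is Pletch's pairing, up to the sign that comes from evaluating $u$ on the back face rather than the front face.

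\textbf{A correction to your alternative route.} The transition maps in $\H_k(G;A)=\limi_U H_k(G/U;A_U)$ are coinflations induced by $G/V\to G/U$, not corestrictions. It is also simplest to use only open normal $U$ acting trivially on both $A$ and $B$, which is possible because they are finite. Then the finite-level cap products take values in $H_{k-l}(G/U;A\otimes_{\Z}B)$. Compatibility with the transition maps is the projection formula $\mathrm{coinf}(c\cap\mathrm{inf}(u))=\mathrm{coinf}(c)\cap u$.
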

\begin{proof}
 \cite[Definition and Theorem 8.1]{Ple77} defined, for any compact $\Z_p[\![G]\!]$-module $A$ and any finite $\Z_p[\![G]\!]$-module $B$, the cap products
$$
-\cap - :\,\H_k(G,A)   \times\H^l(G,B)\longrightarrow  \H_{k-l}(G,A\otimes_{\Z_p} B). 
$$ 
These are defined on the level of chain complexes of the standard resolutions by 
\begin{equation*}
\begin{tikzcd}[row sep=0.01cm]
\bfC_k(G,A)\times \bfC^l(G,B) \arrow[r,"-\cap -"] & \bfC_{k-l}(G,A\otimes_{\Z_p} B)\\
(  a\otimes (g_0,\cdots, g_k), \sigma ) \arrow[r, maps to]& (a \otimes \sigma(g_0,\cdots,g_l) ) \otimes (g_{l},\cdots,g_k).
\end{tikzcd}
\end{equation*}

For general finite $G$-modules $A$ and $B$, cap products can be defined by decomposing them into their $p$-primary components. Let $A_p$ and $B_p$ be the abelian subgroups of $A$ and $B$ consisting of all elements with order a power of $p$. Then, $A_p$ and $B_p$ are preserved by the $G$-action, and are hence finite $\Z_p[\![G]\!]$-modules. The structure of finite abelian groups implies that $A=\oplus _ p A_p$ and $B= \oplus _ p B_p$. In addition, $A\otimes_{\Z}B= \oplus _p (A\otimes_{\Z}B)_p= \oplus_p (A_p\otimes_{\Z_p} B_p)$, where only finitely many of these terms are non-trivial. 

Then, the cap product $$
-\cap - :\,\left (\oplus_p \H_k(G,A_p)  \right )\times \left (\oplus_p \H^l(G,B_p)\right )\longrightarrow   \oplus_p \H_{k-l}(G,A_p\otimes_{\Z_p}B_p)
$$
is defined by a direct sum of the cap products on the $p$-primary components. The map is $\widehat{\Z}$-bilinear since it is $\Z_p$ bilinear on each $p$-primary component.  
\end{proof}

\begin{remark}
In fact, $A$ in the above definition is allowed to be taken from $\mathfrak{C}(G)$, but finite modules are sufficient for this paper.  
\end{remark}

The cap product enjoys the following naturality.
\begin{proposition}[{\cite[Proposition 8.5]{Ple77}}]\label{nat1}
Suppose $\phi:G_1\to G_2$ is a homomorphism of profinite groups, and $A,B\in \FF(G_2)$. Then, the following diagram commutes.
\begin{equation*}
\begin{tikzcd}[column sep=tiny]
\H_k(G_1,A)  \arrow[d,"\phi_\ast"] \arrow[r, symbol=\times] & \H^l(G_1,B) \arrow[rrrr,"-\cap-"]  &  & & & \H_{k-l}(G_1,A\otimes_{\Z}B) \arrow[d,"\phi_\ast"]\\
\H_k(G_2,A)   \arrow[r, symbol=\times]  & \H^l(G_2,B) \arrow[rrrr,"-\cap-"] \arrow[u,"\phi^\ast"'] & & &  & \H_{k-l}(G_2,A\otimes_{\Z}B) 
\end{tikzcd}
\end{equation*} 
In other words, for  $c\in \H_k(G_1,B)$ and $\alpha \in \H^l(G_2,A)$, $\phi_\ast( c\cap\phi^\ast(\alpha))=\phi_\ast(c)\cap \alpha $.  
\end{proposition}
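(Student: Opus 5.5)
The plan is to verify the identity at the level of cochains and chains, using the explicit chain-level formula for the cap product from Pletch's construction, and then pass to (co)homology. First I would reduce to $p$-primary components. The cap product for general finite modules was defined componentwise: $A=\oplus_p A_p$, $B=\oplus_p B_p$. A homomorphism $\phi:G_1\to G_2$ makes $A,B$ into $G_1$-modules by restriction, and this restriction preserves the $p$-primary decompositions. Moreover $\phi_\ast$ and $\phi^\ast$ are additive in the coefficient module, so they respect these direct sums. It therefore suffices to prove commutativity for finite $\Z_p[\![G_2]\!]$-modules $A_p,B_p$, with the tensor product taken over $\Z_p$.

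Next, in the $p$-primary case, I would work with the standard (homogeneous) resolutions $\bfC_\ast(G_i,-)$ and $\bfC^\ast(G_i,-)$. On these, $\phi_\ast$ on chains is induced by
\[
a\otimes(g_0,\dots,g_k)\mapsto a\otimes(\phi(g_0),\dots,\phi(g_k)),
\]
and $\phi^\ast$ on cochains is $\sigma\mapsto\sigma\circ(\phi\times\cdots\times\phi)$. For $c=a\otimes(g_0,\dots,g_k)$ and a cochain $\alpha$ on $G_2$, the left side $\phi_\ast(c\cap\phi^\ast\alpha)$ equals
\[
(a\otimes\alpha(\phi g_0,\dots,\phi g_l))\otimes(\phi g_l,\dots,\phi g_k),
\]
and this is exactly $\phi_\ast(c)\cap\alpha$. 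Since both sides are continuous and bilinear, and such elementary tensors topologically generate the completed chain modules, the identity holds on all chains. Because the cap product and the maps $\phi_\ast,\phi^\ast$ are chain maps, the identity descends to homology and cohomology.

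The main obstacle is the technical point that profinite homology is computed via completed tensor products $A\widehat\otimes\widehat{\Z}[\![G^{k+1}]\!]$ and an inverse limit over finite quotients. Care is needed to ensure that $\phi_\ast$ on chains is well defined and continuous, and that the chain-level cap formula is compatible with that limit. Rather than redo this, I would invoke Pletch's construction (\cite[Definition and Theorem 8.1, Proposition 8.5]{Ple77}) directly for each $p$-primary part, and note that the componentwise definition given above makes the general case follow formally.
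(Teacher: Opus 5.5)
Your proposal is correct and takes essentially the paper's route: the paper gives no argument beyond citing Pletch's Proposition 8.5. Your reduction to $p$-primary components is the implicit step needed to pass from Pletch's $\Z_p[\![G]\!]$-module setting to the componentwise definition of the cap product used here, and your chain-level check is the same reasoning the paper uses for the next proposition.
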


\begin{proposition}\label{nat2}
Suppose $\Gamma$ is an abstract group, $G$ is a profinite group, and $\varphi:\Gamma \to G$ is a homomorphism. For $A,B\in \FF(G)$, the following diagram commutes. 
\begin{equation*}
\begin{tikzcd}[column sep=tiny]
H_k(\Gamma,A)  \arrow[d,"\varphi_\ast"]  \arrow[r, symbol=\times] & H^l(\Gamma,B) \arrow[rrrrr,"-\cap-"] &  & & & & H_{k-l}(\Gamma,A\otimes_{\Z}B) \arrow[d,"\varphi_\ast"]\\
\H_k(G,A)  \arrow[r, symbol=\times]   & \H^l(G,B)\arrow[u,"\varphi^\ast"']  \arrow[rrrrr,"-\cap-"] & & &  & & \H_{k-l}(G,A\otimes_{\Z}B) 
\end{tikzcd}
\end{equation*} 
\end{proposition}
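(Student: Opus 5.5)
The plan is to compare the two cap products at the level of chains, using the map of standard (inhomogeneous or homogeneous) resolutions induced by $\varphi$. Because only finite coefficients occur, both $A$ and $B$ split into finitely many $p$-primary components. Since the profinite cap product was defined componentwise on these components, and $\varphi_\ast$, $\varphi^\ast$ respect the decomposition, I will first reduce to the case where $A$ and $B$ are finite $\Z_p[\![G]\!]$-modules. In that case $A\otimes_\Z B=A\otimes_{\Z_p}B$.

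\textbf{Step 1: a common chain-level model.} I would compute the abstract (co)homology of $\Gamma$ by the bar resolution, with chains $A\otimes \Z[\Gamma^{k+1}]$ and cochains given by functions $\Gamma^{l+1}\to B$. The abstract cap product is given by the same Alexander--Whitney type formula as Pletch's:
\[
(a\otimes(g_0,\dots,g_k))\cap\sigma=(a\otimes\sigma(g_0,\dots,g_l))\otimes(g_l,\dots,g_k).
\]
This is a standard model for the cap product in ordinary group (co)homology, e.g.\ as in Brown's book.

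On the profinite side, $\bfC_k(G,A)$ is the completed tensor product $A\hat\otimes\widehat{\Z}[\![G^{k+1}]\!]$. The cochains $\bfC^l(G,B)$ are the continuous maps $G^{l+1}\to B$.

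\textbf{Step 2: comparison maps.} The map $\varphi$ induces a chain map $\bfC_\ast(\Gamma,A)\to\bfC_\ast(G,A)$, given by $a\otimes(g_i)\mapsto a\otimes(\varphi(g_i))$. It also induces a cochain map $\bfC^\ast(G,B)\to\bfC^\ast(\Gamma,B)$, given by $\sigma\mapsto\sigma\circ\varphi^{l+1}$. These realize $\varphi_\ast$ and $\varphi^\ast$ on (co)homology, and I would take the description of these maps from the cited framework.

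\textbf{Step 3: verification.} Take $c=a\otimes(g_0,\dots,g_k)$ and $\sigma\in\bfC^l(G,B)$. Applying the chain map to $c\cap\varphi^\ast\sigma$ gives
\[
(a\otimes\sigma(\varphi g_0,\dots,\varphi g_l))\otimes(\varphi g_l,\dots,\varphi g_k).
\]
This is exactly $\varphi_\ast c\cap\sigma$. The identity extends to all chains by linearity. Since cap products are well defined on (co)homology classes, we can pass to classes to get $\varphi_\ast(c\cap\varphi^\ast\alpha)=\varphi_\ast(c)\cap\alpha$. The diagram is therefore commutative.

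\textbf{Main obstacle.} The main difficulty is Step 2. We must check that the concrete chain maps really induce the natural maps $\varphi_\ast,\varphi^\ast$ from abstract to profinite (co)homology. This is needed because those maps were defined through a comparison of projective resolutions; the profinite standard resolution, restricted to $\Gamma$, is not a projective $\Z\Gamma$-resolution. I would handle it as follows. The standard complex of $G$, viewed as a complex of $\Gamma$-modules via $\varphi$, is still acyclic. By the comparison theorem, the chain map from the bar resolution of $\Gamma$ lifting the identity of $\Z$ is unique up to homotopy. So any such lift, in particular the explicit one above, computes the natural map. For cohomology with finite $B$, continuous cochains map into all cochains compatibly, and the same uniqueness argument applies.
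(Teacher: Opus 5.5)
Your proposal is correct and follows the same route as the paper. The paper simply observes that Pletch's cap product is given on the standard resolutions by the same formula as the abstract one, so the diagram already commutes at the chain level; your reduction to $p$-primary components and your comparison-theorem argument that the explicit maps $a\otimes(g_i)\mapsto a\otimes(\varphi g_i)$ and $\sigma\mapsto\sigma\circ\varphi^{l+1}$ realize the natural maps $\varphi_\ast,\varphi^\ast$ supply details the paper leaves implicit.
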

 \begin{proof}
The profinite version of cap product is defined on the level of chain complexes of standard resolutions by analogy with the usual cap product for (co)homo\-logy of abstract groups. Hence, the diagram actually commutes on the level of chain complexes. 
\end{proof}

\section{Profinite Bass--Serre theory}\label{sec: bstheory}
\subsection{Graph of groups}
In this paper, we focus on   finite graphs of groups, and we usually omit the notation ``finite'' when finiteness is clear. 

\begin{definition}\label{def: finite oriented graph}
A {\em finite oriented  graph} $X$ is a finite set $X=V(X)\sqcup E(X)$, with $V(X)$ and $E(X)$  denoting the set of vertices and edges respectively, which is equipped with two {\em relation maps} $d^+,d^-: E(X)\to V(X)$ assigning the positive and negative endpoints of the edges. We always require that $X$ is connected as a 1-complex, though we omit the word connected from the notation. 
\end{definition}

\begin{definition}
A {\em graph of groups} over the finite oriented graph $X$, denoted by $(G,X)$, consists of the following data. 
\begin{enumerate}[leftmargin=*, label=(\arabic*)]
\item Each $x\in X$ is associated with a group $G_x$. 
\item For each $e\in E(X)$, there are injective homomorphisms $\varphi^+_e:G_e\to G_{d^+(e)}$ and $\varphi^-_e:G_e\to G_{d^-_e}$. 
\end{enumerate}
\end{definition}

\begin{definition}\label{DEF: Fundamental group of graph of group}
Let $(G,X)$ be a finite graph of groups, and let $T$ be a maximal subtree of $X$. Denote by $F_{E}$ the free group over $E(X)$ with generators $t_e, \, e\in E(X)$. The {\em fundamental group} of $(G,X)$ with respect to $T$ is defined as
$$
\pi_1(G,X,T)=\left ((\ast_{v\in V(\Gamma)}G_v)\ast F_{E}\right )/N,
$$ 
where $N$ is the normal subgroup generated by $$\{t_e\mid e\in E(T)\}\cup \{\varphi_e^{+}(g)^{-1}t_e\varphi_e^{-}(g) t_e^{-1}\mid e\in E(X),\,g\in G_e\}.$$
\end{definition}

It follows from \cite[Proposition 20]{Ser80} that up to isomorphism, $\pi_1(G,X,T)$ is independent with the choice of the maximal subtree $T$. Thus, we may omit the notation $T$ and simply denote the fundamental group as $\pi_1(G,X)$ when there is no need for a precise maximal subtree. 

For instance, when $X$ consists of one vertex $v$ and one edge $e$, $\pi_1(G,X)$ is an HNN-extension of $G_v$ along $G_e$. When $X$ consists of two vertices $v$ and $w$ and an edge $e$ connecting them, $\pi_1(G,X)$ is an amalgamation of $G_v$ and $G_w$ along $G_e$. In general, $\pi_1(G,X)$ is constructed via a combination of amalgamations and HNN-extensions. 

Geometrically, the fundamental group of a graph of group encodes the fundamental group of a graph of spaces. Indeed, suppose that for each $x\in X$, $Y_x$ is a CW-complex such that $\pi_1Y_x=G_x$. For $e\in E$, let $\phi_e^{\pm}:Y_e\to Y_{d^{\pm}(e)}$ be cellular maps  that induce,  up to homotopy, the group homomorphisms $\varphi_e^{\pm}: \pi_1Y_e\to \pi_1Y_{d^{\pm}(e)}$. Then, we can construct a CW-complex $$Y=\left(\bigsqcup_{e\in E(X)} Y_e\times[-1,+1]\right) \bigcup_{\phi_e^{\pm}} \left(\bigsqcup _{v\in V(X)} Y_v\right).$$ It follows  that $\pi_1(Y)\cong \pi_1(G,X)$.
 
\subsection{Graph of profinite groups}

This theory also extends to profinite groups. In addition to the contents of this subsection, we refer the readers to \cite{Rib17} for a full account of profinite Bass--Serre theory. 

\begin{definition}
A {\em graph of profinite groups} over the finite oriented graph $X$, denoted by $(\G,X)$, consists of the following data. 
\begin{enumerate}[leftmargin=*, label=(\arabic*)]
\item Each $x\in X$ is associated with a profinite group $\G_x$. 
\item For each $e\in E(X)$, there are injective continuous homomorphisms $\varphi^{+}_e:\G_e\to \G_{d^{+}(e)}$ and  $\varphi^{-}_e:\G_e\to \G_{d^{-}(e)}$. 
\end{enumerate}
\end{definition}

For profinite groups $\G_1$ and $\G_2$, let $\G_1\amalg G_2$ denote their free profinite product.

\begin{definition}\label{DEF: Profinite fundamental group of graph of profinite group}
Let $(\G,X)$ be a finite graph of profinite groups, and let $T$ be a maximal subtree of $X$. Denote by $\mathcal F_{E}$ the free profinite group over $E(X)$ with generators $t_e, \, e\in E(X)$. The {\em profinite fundamental group} of $(\G,X)$ with respect to $T$ is defined as
$$
\Pi_1(\G,X,T)=\left ((\amalg_{v\in V(\Gamma)}\G_v)\amalg \mathcal{F}_{E}\right )/\mathcal N,
$$ 
where $\mathcal N$ is the closed normal subgroup generated by $$\{t_e\mid e\in E(T)\}\cup \{\varphi_e^{-}(g)^{-1}t_e\varphi_e^{+}(g) t_e^{-1}\mid e\in E(X),\,g\in \G_e\}.$$
\end{definition}

$\Pi_1(\G,X,T)$ is a profinite group, and is irrelevant, up to isomorphism, with the choice of the maximal subtree $T$ as shown by \cite[Theorem 6.2.4]{Rib17}. Similarly, we denote the profinite fundamental group by $\Pi_1(\G,X)$ if the clarification of the maximal subtree is not essential.

\begin{definition}
A finite graph of profinite groups $(\G,X)$ is called \textit{injective} if for any $v\in V(\Gamma)$, the apparent homomorphism $ \G_v\to  \Pi_1(\G,X)$ is injective.
\end{definition}

Note that there exist non-injective examples in the profinite settings as constructed by \cite{Rib73}, though injectivity always holds 
in the classical settings, see \cite[Proposition 6.2.1]{Geo07}.

One can relate a graph of abstract groups with a graph of profinite groups as shown by the following construction. 
Let $(G,X)$ be a finite graph of groups, and suppose that for any $e\in E(\Gamma)$, the profinite topology of $G_{d^{\pm}(e)}$ induces the full profinite topology on $G_e$ through $\varphi^{\pm}_e$. Then $(\widehat{G},X)$ is a finite graph of profinite groups, where the group associated to $x\in X$ is replaced by the profinite completion $\widehat{G_x}$, and the homomorphisms are also replaced by the profinite completions $\widehat{\varphi^{\pm}_e}:\widehat{ G_e}\to \widehat{  G_{d^{\pm}(e)}}$, which  are  injective according to \autoref{prop: left exact}. 

From \autoref{DEF: Fundamental group of graph of group} and \autoref{DEF: Profinite fundamental group of graph of profinite group}, one establishes a group homomorphism $\xi: \pi_1(G,X)\to \Pi_1(\widehat{G},X)$ through the canonical homomorphisms $G_x\to \widehat{G_x}$, which has dense image in $\Pi_1(\widehat{G},X)$. 
According to \autoref{prop: universal property}, there is a unique continuous homomorphism $\Xi: \widehat{\pi_1(G,X)}\to \Pi_1(\widehat{G},X)$ such that $\xi=\Xi\circ \iota$, where $\iota: \pi_1(G,X)\to  \widehat{\pi_1(G,X)}$ denotes  the canonical homomorphism.

\begin{definition}
A graph of groups $(G,X)$ is called \textit{adequate} if the following two properties hold.
\begin{enumerate}[leftmargin=*,label=(\arabic*)]
\item $\pi_1(G,X)$ is residually finite.
\item For each $x\in X$, the profinite topology of $\pi_1(G,X)$ induces the full profinite topology on   $G_x$.
\end{enumerate}
\end{definition}

\begin{proposition}[{\cite[Theorem 6.5.3]{Rib17}}]\label{THM: Efficient completion}
Let $(G,X)$ be an adequate finite graph of groups. Then, $\Xi:\widehat{\pi_1(G,X)}\to \Pi_1(\widehat{G},X)$ is an isomorphism, and $(\widehat{G},X)$ is injective. 
\end{proposition}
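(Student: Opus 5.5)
The plan is to realize $\Pi_1(\widehat{G},X)$ as a completion of $\pi_1(G,X)$ with respect to a suitable family of finite-index normal subgroups. Then adequacy lets us identify this completion with the full profinite completion.

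First, by the universal property of free profinite products and of free profinite groups, a continuous homomorphism $\Pi_1(\widehat{G},X,T)\to K$ to a finite group $K$ is the same thing as the following data: compatible continuous homomorphisms $\widehat{G_v}\to K$ and elements $k_e\in K$ satisfying the defining relations, with $k_e=1$ for $e\in E(T)$. Restricting along the canonical maps $G_v\to\widehat{G_v}$, such data is exactly a homomorphism $\pi_1(G,X,T)\to K$ whose restriction to each $G_v$ is continuous for the profinite topology of $G_v$. Every homomorphism from $G_v$ to a finite group has finite-index kernel, so this continuity is automatic. Thus every homomorphism $f:\pi_1(G,X)\to K$ to a finite group factors uniquely through $\xi$, as $f=\bar f\circ\xi$ with $\bar f$ continuous on $\Pi_1(\widehat{G},X)$.

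Next, combining this with \autoref{prop: universal property}, we construct an inverse to $\Xi$. The maps $\bar f$, taken over all finite quotients $\pi_1(G,X)\to\pi_1(G,X)/N$, are compatible, so they assemble into a continuous homomorphism $\Lambda:\Pi_1(\widehat{G},X)\to\widehat{\pi_1(G,X)}$ with $\Lambda\circ\xi=\iota$. Then $\Lambda\circ\Xi$ restricts to the identity on the dense subgroup $\iota(\pi_1(G,X))$, so it is the identity. Likewise, $\Xi\circ\Lambda$ is the identity on $\xi(\pi_1(G,X))$. This image is dense, being generated by the images of the $G_v$ and the $t_e$, and these topologically generate $\Pi_1$. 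Hence $\Xi$ is an isomorphism. Notice that this step does not even use adequacy; only the finiteness of $X$ matters.

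Finally, for injectivity we use adequacy. The composite $\widehat{G_v}\to\Pi_1(\widehat{G},X)\xrightarrow{\Lambda}\widehat{\pi_1(G,X)}$ equals the completion of the inclusion $G_v\hookrightarrow\pi_1(G,X)$. Here we use the injectivity of $G_v\to\pi_1(G,X)$ from classical Bass--Serre theory. By condition (2) of adequacy together with \autoref{prop: left exact}, this completed map is injective. Hence $\widehat{G_v}\to\Pi_1(\widehat{G},X)$ is injective, and $(\widehat{G},X)$ is injective. Condition (1) of adequacy is needed only to identify $\pi_1(G,X)$ with its image via \autoref{conv}.

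The main obstacle is the first step: checking carefully that the universal properties of the free profinite product $\amalg$ and of the quotient by the closed normal subgroup $\mathcal N$ really give the bijection with homomorphisms of the abstract fundamental group. One subtlety is that the relations in \autoref{DEF: Profinite fundamental group of graph of profinite group} use the opposite sign convention for $\varphi_e^{\pm}$ compared with \autoref{DEF: Fundamental group of graph of group}. This is handled by replacing $t_e$ with $t_e^{-1}$ when defining $\xi$.
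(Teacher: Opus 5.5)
Your proposal is correct. The paper gives no proof of its own and only cites \cite[Theorem 6.5.3]{Rib17}; your argument is the standard one behind that citation: first identify $\Pi_1(\widehat{G},X)$ with $\widehat{\pi_1(G,X)}$ formally through universal properties, then obtain injectivity of $\widehat{G_v}\to\Pi_1(\widehat{G},X)$ from condition (2) of adequacy via \autoref{prop: left exact}. The one point you leave implicit in the first step is routine: the profinite relations indexed by $g\in\widehat{G_e}$ follow from those indexed by $g\in G_e$, because $G_e$ is dense in $\widehat{G_e}$ and $K$ is discrete.
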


\begin{remark}
A more familiar concept for the graph of groups $(G,X)$ is {\em efficiency}, which, in addition to adequacy, requires that each $G_x$ is separable in $\pi_1(G,X)$. For  application to \autoref{THM: Efficient completion}, we do not need this  extra condition, although all adequate examples appearing in this paper are indeed efficient. 
\end{remark}

\subsection{Profinite Bass--Serre tree}

\begin{definition}
A {\em profinite graph} $\X$ consists of a profinite space $\X$, a closed subspace $V(\X)\subseteq \X$ called the set of {\em vertices}, and two continuous maps $\partial^{+},\partial^{-}:\X\to V(\X)$ called {\em relation maps} such that $\partial^{\pm}$ restrict to the identity map on $V(\X)$. We denote $E(\X)=\X\setminus V(\X)$ as the set of {\em edges}. 
\end{definition}
 
A profinite graph in this paper is always oriented by $\partial^{\pm}$. It can always be viewed as an oriented combinatorial graph $(V(\X),E(\X))$ by forgetting the topology; or as an unoriented combinatorial graph by forgetting the orientation simultaneously. 

We say that a profinite group $G$ {\em acts} on a profinite graph $\X$ if   $G$, viewed as an abstract group, admits a combinatorial left action   on the oriented combinatorial graph $\X$, and  the action map $G\times \X\to \X$ is continuous.  The quotient space $\X/G$ is again a profinite graph. 

\begin{definition}
A profinite graph $\X$ is a {\em profinite tree} if the following chain complex is exact:  
\begin{equation*}
\centering
\begin{tikzcd}
0 \arrow[r] & {\widehat{\Z}[\![{(\X/V(\X),\ast)}]\!]} \arrow[r,"\partial"] & \widehat{\Z}[\![{V(\X)} ]\!]\arrow[r,"\epsilon"] & \widehat{\Z} \arrow[r] & 0 .
\end{tikzcd}
\end{equation*}
\end{definition}

Indeed, a profinite tree, if viewed as an unoriented combinatorial graph, does not contain circuts, see \cite[Exercise 2.4.4]{Rib17}. 

Given a finite graph of profinite groups $(\G,X)$, there is a standard way to construct a profinite tree on which $\Pi_1(\G,X)$ acts. For simplicity, we focus on the case that $(\G,X)$ is injective. In this case, for each $v\in V(X)$, we can identify $\G_v$ with its image in $\Pi_1(\G,X)$. For each $e\in E(X)$, we identify $\G_e$ with its image in $\Pi_1(\G,X)$ through the injective map $\G_e\xrightarrow{\varphi^{-}_e} \G_{d^{-}(e)} \to \Pi_1(\G,X)$. Then, for any $x\in X$, $\G_x$ is a closed subgroup in $\Pi_1(\G,X)$, and the left coset space $\Pi_1(\G,X)/\G_x$ is a profinite space.   
We construct a profinite graph $\T$ by setting
\begin{equation*}
V(\T)=\bigsqcup_{v\in V(X)} \Pi_1(\G,X)/\G_v,\;E(\T)=\bigsqcup_{e\in E(X)} \Pi_1(\G,X)/\G_e,\; \T=V(\T)\sqcup E(\T),
\end{equation*}
to which we equip the disjoint union topology. It suffices to define the relation maps on $E(\T)$ by
\begin{equation*}
\partial^-(h\G_e)=h\G_{d^{-}(e)},\;\partial^+(h\G_e)=ht_e\G_{d^{+}(e)}, \quad e\in E(X),\,h\in \Pi_1(\G,X).
\end{equation*}
One can check from \autoref{DEF: Profinite fundamental group of graph of profinite group} that the relation maps are well-defined, and it is clear that the relation maps are continuous. Thus, $\T$ is a profinite graph. 
In fact, it is proven in \cite[Corollary 6.3.6]{Rib17}  that $\T$ is  a profinite tree. 

\begin{definition}
Given an injective finite graph of profinite groups $(\G,X)$, the profinite tree $\T$ as constructed above is called the {\em profinite Bass--Serre tree} associated to $(\G,X)$. 
\end{definition}

The profinite version of Bass--Serre theory can be concluded by the following proposition. 
\begin{proposition}\label{PROP: Bass-Serre}
Let $(\G,X)$ be an injective finite graph of profinite groups. Then, $\Pi_1(\G,X)$ acts on the profinite Bass--Serre tree $\T$ by left multiplication (which is obviously continuous). The quotient graph $  \T/\Pi_1(\G,X)$ equals $X$. The vertex or edge stabilizers of the action are conjugates of $\G_x$ in $\Pi_1(\G,X)$. 
\end{proposition}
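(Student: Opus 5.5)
The plan is to verify the three assertions of \autoref{PROP: Bass-Serre} directly from the construction of $\T$, since each reduces to a property of coset spaces. Write $\Pi=\Pi_1(\G,X)$, and identify each $\G_x$ with a closed subgroup of $\Pi$, which injectivity allows.

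First, the action. Let $\Pi$ act on $\T=\bigsqcup_{v}\Pi/\G_v\sqcup\bigsqcup_e\Pi/\G_e$ by left multiplication on cosets. This map is continuous because multiplication $\Pi\times\Pi\to\Pi$ is continuous and each projection $\Pi\to\Pi/\G_x$ is open. The action preserves $V(\T)$. It commutes with the relation maps because $\partial^-(gh\G_e)=gh\G_{d^-(e)}=g\,\partial^-(h\G_e)$, and likewise $\partial^+(gh\G_e)=ght_e\G_{d^+(e)}=g\,\partial^+(h\G_e)$. So the action is combinatorial and continuous.

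Second, the quotient. Each coset space $\Pi/\G_x$ is a single $\Pi$-orbit, so $\T/\Pi$ has exactly one point for each $x\in X$ and is therefore $X$ as a set. The relation maps descend correctly: the orbit of $h\G_e$ maps under $\partial^\pm$ to the orbit of $\G_{d^\pm(e)}$. This holds for $\partial^+$ because $ht_e\G_{d^+(e)}$ lies in the $\Pi$-orbit of the base coset $\G_{d^+(e)}$. Hence $\T/\Pi\cong X$ as oriented graphs, and the quotient topology is discrete because $X$ is finite.

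Third, the stabilizers. The stabilizer of the coset $h\G_x$ is exactly $h\G_xh^{-1}$, which is a conjugate of $\G_x$; this is the elementary coset computation.

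The only step needing real care is the well-definedness of $\partial^+$, since the rest is formal. We must check that if $h\G_e=h'\G_e$, then $ht_e\G_{d^+(e)}=h't_e\G_{d^+(e)}$. Writing $h'=hg$ with $g\in\G_e$, this amounts to $t_e^{-1}gt_e\in\G_{d^+(e)}$. That follows from the defining relation $\varphi_e^-(g)=t_e\varphi_e^+(g)t_e^{-1}$ of \autoref{DEF: Profinite fundamental group of graph of profinite group}, under our identification of $\G_e$ with $\varphi_e^-(\G_e)$. The fact that $\T$ is a profinite tree is not part of this statement; it is quoted from \cite[Corollary 6.3.6]{Rib17}.
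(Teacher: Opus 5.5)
Your proposal is correct and follows the same approach as the paper. The paper states \autoref{PROP: Bass-Serre} without a written proof, treating it as a direct consequence of the construction of $\T$ and quoting \cite[Corollary 6.3.6]{Rib17} only for the tree property; your checks are precisely the verifications it leaves implicit: well-definedness of $\partial^+$ via $\varphi_e^-(g)=t_e\varphi_e^+(g)t_e^{-1}$, one orbit for each $x\in X$, and $\Stab(h\G_x)=h\G_xh^{-1}$. One cosmetic point: the quotient topology is discrete because each coset space $\Pi/\G_x$ is clopen in $\T$, not merely because $X$ is finite.
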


\subsection{Profinite groups acting on profinite trees}

In this subsection, we collect some useful results regarding vertex stabilizers for a profinite group acting on a profinite tree.

\begin{proposition}[{\cite[Proposition 2.5]{ZM89}}]\label{prop: quotient tree}
Suppose a profinite group $G$ acts on a profinite tree $\T$. If $H$ is a closed subgroup of $G$ topologically generated by vertex stabilizers, i.e.\ $H=\overline{\left\langle \Stab_H(v):\,v\in V(\T)\right\rangle }$, then $ \T/H$ is a profinite tree. 
\end{proposition}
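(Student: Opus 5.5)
The plan is to show that the quotient graph $\T/H$ is a profinite graph and that its chain complex is exact. The definition of profinite tree only requires exactness of
$$0\to \widehat{\Z}[\![(\X/V(\X),\ast)]\!]\xrightarrow{\partial}\widehat{\Z}[\![V(\X)]\!]\xrightarrow{\epsilon}\widehat{\Z}\to 0$$
for $\X=\T/H$. Two properties are immediate from the action. Since $H$ is a closed subgroup of a profinite group acting continuously, $\T/H$ is a profinite space. Its relation maps are induced from $\partial^{\pm}$ because the action is combinatorial. The real content is homological, and I would phrase it through profinite group homology of $H$.

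\textbf{Step 1 (coinvariants).} Take the exact sequence for $\T$ as a sequence of profinite $\widehat{\Z}[\![H]\!]$-modules and apply $\widehat{\Z}\widehat{\otimes}_{\widehat{\Z}[\![H]\!]}-$. For a profinite $H$-space $Y$ one has $\widehat{\Z}\widehat{\otimes}_{H}\widehat{\Z}[\![Y]\!]\cong \widehat{\Z}[\![Y/H]\!]$, and similarly for pointed spaces. Hence the coinvariants of the three terms are exactly the three terms of the complex for $\T/H$.

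\textbf{Step 2 (long exact sequence).} Split the sequence into short exact sequences; it is already short exact with three terms. The long exact sequence in $\H_\ast(H;-)$ then ends with
$$\H_1(H;\widehat{\Z}[\![V(\T)]\!])\to \H_1(H;\widehat{\Z})\to \H_0(H;\widehat{\Z}[\![E]\!])\to \H_0(H;\widehat{\Z}[\![V]\!])\to\H_0(H;\widehat{\Z})\to 0.$$
Right exactness of coinvariants gives surjectivity of $\epsilon$ and exactness in the middle for $\T/H$. So $\T/H$ is a tree if and only if the map $\H_1(H;\widehat{\Z}[\![V(\T)]\!])\to\H_1(H;\widehat{\Z})\cong \Ab{H}$ is surjective; the isomorphism is \autoref{lem: abelianization}.

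\textbf{Step 3 (Shapiro and generation).} Decompose $\widehat{\Z}[\![V(\T)]\!]$ over $H$-orbits. Since orbits may be infinitely many, I would do this via an inverse limit over finite quotient graphs, or use profinite Shapiro's lemma orbit by orbit. This identifies $\H_1(H;\widehat{\Z}[\![V(\T)]\!])$ with a profinite direct sum of $\H_1(\Stab_H(v);\widehat{\Z})=\Ab{\Stab_H(v)}$ over representatives $v$. The map to $\Ab{H}$ is induced by the inclusions $\Stab_H(v)\hookrightarrow H$. Its image is closed, since it is the continuous image of a compact space, and it contains the image of every $\Stab_H(v)$; it also contains their $H$-conjugates, which map to the same elements in $\Ab{H}$. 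Since $H$ is topologically generated by vertex stabilizers, the image is all of $\Ab{H}$. This gives exactness at the edge term, and hence $\T/H$ is a profinite tree.

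\textbf{Main obstacle.} I expect the hardest step to be the profinite Shapiro decomposition over a possibly infinite, merely profinite, space of orbits $V(\T)/H$. It needs the continuous choice of representatives or a limit argument over finite $H$-quotients. I would first reduce to the case where $\T$ is an inverse limit of finite graphs, so that everything reduces to finite orbit decompositions, and then pass to the limit using exactness of inverse limits of profinite modules.
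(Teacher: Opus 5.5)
The paper does not prove this proposition; it only quotes \cite{ZM89}. Your outline is the standard homological argument, and Steps 1 and 2 are correct. Taking $H$-coinvariants of the exact sequence for $\T$ produces the chain complex of $\T/H$. The long exact sequence in $\H_\ast(H;-)$ then shows that $\T/H$ is a profinite tree if and only if
\[
\epsilon_\ast\colon \H_1(H;\widehat{\Z}[\![V(\T)]\!])\to \H_1(H;\widehat{\Z})\cong \Ab{H}
\]
is surjective.

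The step I would not accept as written is Step 3. There you identify $\H_1(H;\widehat{\Z}[\![V(\T)]\!])$ with a ``profinite direct sum'' of the groups $\Ab{\Stab_H(v)}$ over $V(\T)/H$. This is not justified. It would need a continuous choice of orbit representatives and some continuity of $v\mapsto \Stab_H(v)$, which in general is only upper semicontinuous. Your fallback of passing to finite quotient graphs has its own unproven step. At finite level the stabilizers are open subgroups of $H$ containing $\Stab_H(v)$, and you would still have to show that the limit of their abelianizations recovers $\Ab{\Stab_H(v)}$.

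None of this is needed, because you only use one inclusion, and that inclusion comes from naturality. For each $v\in V(\T)$, the map $\widehat{\Z}\to\widehat{\Z}[\![V(\T)]\!]$ given by $1\mapsto v$ is $\Stab_H(v)$-equivariant. Together with the inclusion $\Stab_H(v)\hookrightarrow H$, it induces a map $\H_1(\Stab_H(v);\widehat{\Z})\to \H_1(H;\widehat{\Z}[\![V(\T)]\!])$. Since $\epsilon(v)=1$, functoriality in the pair (group, module) shows that its composite with $\epsilon_\ast$ is the inclusion-induced map $\Ab{\Stab_H(v)}\to\Ab{H}$.

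Hence the image of $\epsilon_\ast$ contains the image of every $\Stab_H(v)$. This image is a closed subgroup, since $\H_1(H;\widehat{\Z}[\![V(\T)]\!])$ is compact and $\epsilon_\ast$ is continuous. So it contains the image of the abstract subgroup generated by all vertex stabilizers. That subgroup is dense in $H$, so its image is dense in $\Ab{H}$, and therefore $\epsilon_\ast$ is onto. With this replacement your self-identified main obstacle disappears and the proof is complete.
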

\begin{remark}\label{rmk: quotient action}
Under the assumption of \autoref{prop: quotient tree}, suppose further that $H$ is a closed normal subgroup of $G$. Then, the quotient group $G/H$ naturally acts on $\T/H$, and it is clear by definition of the quotient topologies that the map $G/H\times \T/H \to \T/H$ is continuous. 
\end{remark}

The next proposition was first proven as a step in \cite[Lemma 2.3]{Zal91}, see also \cite[Lemma 1.3]{WZ19}. We include a quick proof for convenience. 

\begin{proposition}\label{prop: subgroup not stabilizer}
Suppose that a profinite group $G$ acts on a profinite tree $\T$ and
does not fix any vertex. Then, there exists an open normal subgroup $H$ of $G$ that is not topologically  generated by vertex stabilizers.
\end{proposition}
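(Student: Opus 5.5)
We argue by contradiction, using \autoref{prop: quotient tree} together with a compactness argument. Suppose that every open normal subgroup $H\unlhd_o G$ is topologically generated by vertex stabilizers. Then, by \autoref{prop: quotient tree}, each quotient $\T/H$ is a profinite tree. By \autoref{rmk: quotient action}, the finite group $G/H$ acts continuously on $\T/H$.

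The first step is to show that each $\T/H$ contains a $G/H$-fixed vertex. The open normal subgroups form a cofinal system, and $\T=\limi_H \T/H$ as profinite graphs. This is the standard fact that a profinite $G$-graph is the inverse limit of its quotients by open normal subgroups; it holds because the action is continuous, so the orbits of the $H$ shrink. We then invoke the fixed point theorem for finite groups acting on profinite trees: a finite group acting on a profinite tree fixes a vertex (\cite[Theorem 4.1.8]{Rib17}, or the analogous statement there). This result should be cited rather than reproved.

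The second step is a compactness argument. Let $F_H\subseteq V(\T/H)$ be the set of $G/H$-fixed vertices. It is closed, being an intersection of fixed-point sets of finitely many continuous maps, and it is nonempty by the first step. For $H'\le H$, the projection $\T/H'\to \T/H$ is $G$-equivariant, so it maps $F_{H'}$ into $F_H$. Hence $(F_H)$ forms an inverse system of nonempty compact spaces, and its inverse limit is nonempty (\cite[Proposition 1.1.4]{RZ10}, in its compact Hausdorff form). An element of the inverse limit is a point $v\in \T=\limi \T/H$. Vertices form a closed subset compatible with the projections, so $v$ is a vertex. For every $g\in G$ and every $H$, the images of $gv$ and $v$ in $\T/H$ coincide. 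Since $\T$ is the inverse limit, it follows that $gv=v$. Thus $G$ fixes $v$, contradicting the hypothesis.

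I expect the main obstacle to be justifying $\T=\limi_H \T/H$, so that points are determined by their images in all quotients. My plan is as follows. If $x\neq y$ in $\T$, choose a clopen partition separating them. By continuity of the action and compactness of $G$, there is an open normal subgroup $H$ under which this partition can be made $H$-invariant: replace the clopen set $U$ by $\bigcap_{h\in H}hU$, which is still clopen for $H$ small enough. Then $x$ and $y$ have distinct images in $\T/H$. The remaining ingredient, that finite groups acting on profinite trees have fixed vertices, is the genuinely nontrivial input, and it should simply be quoted from \cite{Rib17}.
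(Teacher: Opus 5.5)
Your proposal is correct and follows the paper's proof: argue by contradiction via \autoref{prop: quotient tree}, apply the fixed-point theorem for finite groups to each $\T/H$ (the paper cites \cite[Theorem 2.10]{ZM89}), and finish with a compactness argument. The only difference is cosmetic: the paper pulls the fixed sets back to the closed sets $Y_H=\{v\in V(\T)\mid G\cdot v\subseteq H\cdot v\}$ and uses $\bigcap_H H\cdot v=\{v\}$ (because $\{g\mid gv=w\}$ is closed and misses $1$). That is exactly your injectivity of $\T\to\varprojlim_H \T/H$, and it spares you the surjectivity half of that identification.
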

\begin{sloppypar}
\begin{proof}
Suppose by contrary that every open normal subgroup $H$ of $G$ is topologically generated by vertex stabilizers, i.e.\  $H= \overline{\left\langle \Stab_H(v):\,v\in V(\T)\right\rangle }$. Then 
by \autoref{prop: quotient tree}, $\T/H$ is a profinite tree for each open normal subgroup  $H$, and the finite group $G/H$   acts on $\T/H$ by \autoref{rmk: quotient action}. According to \cite[Theorem 2.10]{ZM89}, any finite group acting on a profinite tree fixes a vertex. So if we let $X_H$ be the set of vertices in $\T/H$ fixed by $G/H$, then $X_H$ is a non-empty closed subset of $V(\T/H)$. Let $Y_H$ be the preimage of $X_H$ in $V(\T)$. Then, $Y_H$ is  a non-empty  closed subset of  $V(\T)$. Let $\mathcal{H}$ be the directed poset of open normal subgroups of $G$, ordered by reverse inclusion. Note that $Y_H=\left\{ v\in V(\T)\mid G\cdot v \subseteq H\cdot v\right\}$, so $Y_H\subseteq Y_{H'}$ if $H\subseteq H'$. Therefore, $\bigcap_{H\in \mathcal{H}} Y_H=\limi_{  \mathcal{H}} Y_H$ is non-empty. 

On the other hand, $\bigcap_{H\in \mathcal{H}} (H\cdot v)= v$. This is  because for any $w\neq v\in V(\T)$, $\{g\in G\mid g\cdot v= w\}$ is a closed subset in $G$ not containing $1$, while $\mathcal{H}$ is an open neighbourhood basis for $1$. Thus, all vertices in $\bigcap_{H\in \mathcal{H}} Y_H=\left\{ v\in V(\T)\mid G\cdot v \subseteq \bigcap_{H\in \mathcal{H}}(H\cdot v)\right\}$ are fixed by $G$, yielding a contradiction. 
\end{proof}
\end{sloppypar}

\begin{proposition}\label{prop: surface group fix vertex}
Let $\Sigma$ be a closed orientable surface. Suppose $\widehat{\pi_1\Sigma}$ acts on a profinite tree $\T$ with trivial edge stabilizers. Then, it fixes a vertex in $\T$. 
\end{proposition}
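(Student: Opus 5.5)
We argue by contradiction using \autoref{prop: subgroup not stabilizer}. Write $G=\widehat{\pi_1\Sigma}$ and suppose $G$ fixes no vertex of $\T$. Then \autoref{prop: subgroup not stabilizer} gives an open normal subgroup $H\le G$ that is not topologically generated by vertex stabilizers. By \autoref{prop: lattice of open subgroup}, $H=\overline{\pi_1\Sigma'}\cong\widehat{\pi_1\Sigma'}$ for a finite regular cover $\Sigma'\to\Sigma$, and $\Sigma'$ is again a closed orientable surface. Let $K=\overline{\langle \Stab_H(v): v\in V(\T)\rangle}$. This is a closed normal subgroup of $H$, and $K\neq H$. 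By \autoref{prop: quotient tree}, $\T/K$ is a profinite tree, and by \autoref{rmk: quotient action} the group $H/K$ acts on it continuously.

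The key claim is that $H/K$ acts freely on $\T/K$. Here we use that edge stabilizers are trivial. If $hK$ fixes the vertex $Kv$, then $hv=kv$ for some $k\in K$, so $k^{-1}h\in\Stab_H(v)\subseteq K$, hence $h\in K$. An element fixing an edge $Ke$ likewise lies in $K$, since edge stabilizers in $H$ are trivial. Thus $H/K$ is a nontrivial profinite group acting freely on a profinite tree, so it is a projective (free pro-$\mathcal C$-like) profinite group; this follows from the standard fact that profinite groups acting freely on profinite trees are projective, by the exactness of the augmented chain complex (cf.\ \cite{Rib17}). In particular $H/K$ has cohomological dimension at most $1$, and it is a nontrivial quotient of $\widehat{\pi_1\Sigma'}$.

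We must now show that this is impossible. The idea is to pass to $p$-parts and compare cohomology. Choose a prime $p$ such that $H/K$ has a nontrivial finite $p$-quotient; this is possible because $H/K$ is projective and nontrivial, so its pro-$p$ Sylow subgroups are free pro-$p$ and some of them are nonzero. After replacing $H$ by a further open subgroup containing $K$, we may assume $H/K$ surjects onto $\Z/p$. Pulling back this quotient gives a class $\alpha\in\H^1(H;\Fp)$ that is inflated from $H/K$. The natural route is to show that $\alpha$ cups nondegenerately with some $\beta$ inflated from $H/K$ as well. Since $H/K$ is projective and free pro-$p$ of rank at least $2$ on its $p$-part (after a further finite cover, using Schreier growth), we can choose two independent classes $\alpha,\beta$ in $\H^1(H/K;\Fp)$. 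Then $\alpha\cup\beta$ is inflated from $\H^2(H/K;\Fp)=0$. On the other hand, by cohomological goodness of surface groups (\autoref{prop: good}), $\H^\ast(H;\Fp)\cong H^\ast(\Sigma';\Fp)$, and Poincaré duality there forces $\alpha\cup\beta'\neq 0$ for suitable $\beta'$. We therefore need the stronger statement that the whole subspace $V=\mathrm{Inf}\,\H^1(H/K;\Fp)\subseteq H^1(\Sigma';\Fp)$ is isotropic for the cup product.

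The main obstacle is then a dimension count. An isotropic subspace of the symplectic space $H^1(\Sigma';\Fp)$ has dimension at most $g(\Sigma')$. In contrast, $\dim V$ is the rank of the abelianization mod $p$ of the free pro-$p$ image of $H/K$. Passing to deeper finite covers $H_n\supseteq K$ of index $p^n$, Schreier's formula makes this rank grow like $p^n(d-1)+1$. The genus of the corresponding cover grows like $p^n(g-1)+1$. This alone does not give a contradiction directly. I would instead take the cover corresponding to the kernel of $H\to H/K\to(\Z/p)$ and apply the half-dimension bound iteratively; alternatively I would use that the isotropic $V$ is a quotient of $\pi_1\Sigma'$ to a free group, which by Stallings/Jaco forces $\Sigma'$ to map onto a free group of rank at most $g$ in a way compatible with all finite covers. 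Making this growth comparison contradict the $\chi$-multiplicativity of surface covers is the step I expect to be hardest. I would try first to show that a free quotient $H/K$ of rank $d$ forces $d\le g$ at every level, and that Schreier growth is then violated as soon as $d\ge2$. The remaining case $d=1$, where $H/K$ is procyclic, would be handled by an Euler characteristic and Poincaré duality computation of $\H^2$ of the kernel.
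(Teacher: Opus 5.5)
\textbf{There is a genuine gap, in the step you flag as hardest.}

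Your argument is correct up to the point where $H/K$ acts freely on $\T/K$ and is therefore projective. From then on, however, you only use that $H/K$ is a nontrivial projective quotient of $\widehat{\pi_1\Sigma'}$. No contradiction can come from that, because such quotients exist.

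Let $g'=g(\Sigma')$. Since $\Sigma'$ bounds a handlebody, there is a surjection $\pi_1\Sigma'\twoheadrightarrow F_{g'}$, and its completion $\widehat{\pi_1\Sigma'}\twoheadrightarrow\widehat{F_{g'}}$ has free, hence projective, target. This example passes every test you propose:
\begin{itemize}
\item The inflated subspace $V\subseteq H^1(\Sigma';\Fp)$ is a Lagrangian, so it is isotropic of dimension exactly $g'$.
\item In the index-$p^n$ covers containing the kernel, the free rank is $p^n(g'-1)+1$ and the genus is also $p^n(g'-1)+1$. So the bound $d\le g'$ coming from Jaco/Stallings holds at every level, and Schreier growth is never violated.
\item For $d=1$, the surjection $\pi_1\Sigma'\twoheadrightarrow\Z$ gives a procyclic projective quotient, so there is nothing for an Euler-characteristic computation to contradict.
\end{itemize}
Your argument does go through when $K=1$. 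Then $H$ itself acts freely, so it is projective, which contradicts $\H^2(H;\Fp)\cong H^2(\Sigma';\Fp)\neq 0$ (goodness, positive genus). It does not work in general.

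\textbf{What is missing} is any use of the fact that $K$ is generated by vertex stabilizers of an action with trivial edge stabilizers. In the discrete setting this is what yields a splitting $H=(\ast_v H_v)\ast F$. The contradiction there comes from the free indecomposability (one-endedness) of a surface group, not from the free quotient $F$. Passing to $H/K$ discards exactly that information.

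\textbf{The paper's route.} The paper does not use \autoref{prop: subgroup not stabilizer} here at all. For positive genus, $\pi_1\Sigma$ is a $\mathrm{PD}^2$ group of type $\mathrm{FP}_\infty$ and is cohomologically good. Hence $\widehat{\pi_1\Sigma}$ is a profinite $\mathrm{PD}^2$ group at every prime by \cite[Theorem 4.1]{KZ08}. Then \cite[Theorem 1.7]{WZ19} gives a fixed vertex for such a group acting on a profinite tree whose edge stabilizers have small cohomological dimension; here they are trivial, of dimension $0$. That duality-theoretic input is the profinite substitute for one-endedness, and your argument needs something of this kind to be completed.
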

\begin{proof}
The conclusion is trivial when $\Sigma$ has genus $0$, since in this case $\widehat{\pi_1\Sigma}$ is the trivial group. Now we assume $\Sigma$ has positive genus. Then, $\pi_1\Sigma$ is a $\mathrm{PD}^2$ group (of type $\mathrm{FP}_\infty$) since it is the fundamental group of a closed, orientable, aspherical, smooth manifold. Moreover, $\pi_1\Sigma$ is cohomologically good by \autoref{prop: good}. Thus, according to \cite[Theorem 4.1]{KZ08} (see also \cite[Theorem 1.10]{WZ19}), $\widehat{\pi_1\Sigma}$ is a profinite $\mathrm{PD}^2$ group at every prime $p$. Meanwhile, the edge stabilizers of $\widehat{\pi_1\Sigma}$ acting on $\T$ are trivial, and have  cohomological dimension $0$. Thus, by \cite[Theorem 1.7]{WZ19}, $\widehat{\pi_1\Sigma}$ fixes a vertex in $\T$.
\end{proof}

\section{Automorphism of profinite surface group I: cut and glue}\label{sec: cut and glue}
In Sections \ref{sec: cut and glue}, \ref{sec: drilling}, and \ref{sec: realisation}, we prove three theorems concerning isomorphisms between the profinite completions of surface groups. These arise from topological or geometric constructions, with the latter ones depending more or less on the former ones.  

We point out that if the fundamental groups of two closed surfaces have isomorphic profinite completions, then the two surfaces are homeomorphic, since they can be distinguished by their first homologies. Thus, while we are essentially studying an automorphism of a profinite surface group, we often formulate it in the sense of an isomorphism between two profinite surface groups, simply for the sake of clarity in the proof as well as in the applications. 

\subsection{Cutting along curves}
In this section, we focus on the following setting. For $i=1,2$, 
let $\Sigma_i$ be a closed orientable surface, and let $\alpha_i\subseteq \Sigma_i$ be an oriented non-separating simple closed curve. Let $F_i=\Sigma_i \setminus n(\alpha_i)$, where $n(\alpha_i)$ denotes an open regular neighbourhood of $\alpha_i$. Then, $F_i$ is  an essential subsurface of $\Sigma$, and we fix a basepoint $x_i\in F_i$. We identify $\pi_1(F_i,x_i)$ as a subgroup of $\pi_1(\Sigma_i, x_i)$. Choose an oriented  simple loop $\gamma_i\subseteq \Sigma_i$ based at $x_i$ such that $\gamma_i$ intersects with $\alpha_i$ transversely at one point, and  $\gamma_i$ is also viewed as an element in $\pi_1(\Sigma_i,x_i)$. Then, $\pi_1(\Sigma_i,x_i)$ is generated by $\pi_1(F_i,x_i)$ and $\gamma_i$. In fact, there exist conjugacy representatives $a_{i,+},a_{i,-}\in \pi_1(F_i,x_i)$ for the two peripheral loops of $F_i$, oriented along $\alpha_i$, such that 
\begin{equation}\label{equ: HNN1c}
\pi_1(\Sigma_i, x_i )=\langle \pi_1(F_i,x_i),\gamma_i\mid \gamma_i a_{i,+}\gamma_i^{-1}=a_{i,-}\rangle,
\end{equation}
 see \autoref{fig: HNN1}. This yields a splitting of $\pi_1(\Sigma_i, x_i)$ as an HNN-extension of $\pi_1(F_i,x_i)$. 

\begin{figure}[ht!]
\centering
\includegraphics[width=11cm]{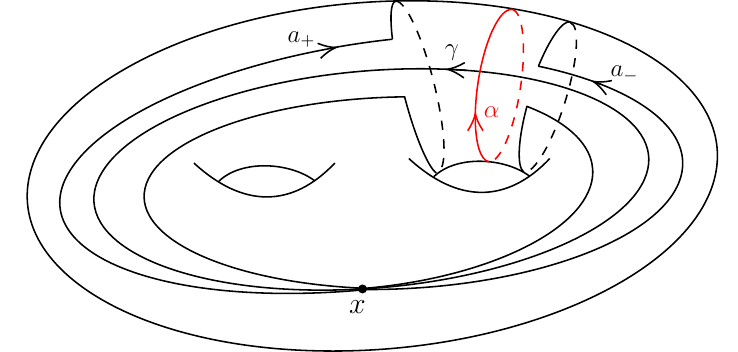}
\caption{The HNN-structure of $\pi_1(\Sigma,x)$ (subscript $i$ omitted)}
\label{fig: HNN1}
\end{figure}

To simplify notations, we will omit the basepoints  in the fundamental groups. However, the readers should always keep in mind that our fundamental groups are taken with respect to the fixed basepoints $x_i$. We set down some more notations. Let $a_i\in \pi_1\Sigma_i$ be a conjugacy representative for the free homotopy class of the oriented curve $\alpha\subseteq \Sigma_i$. In fact, $a_i$  is conjugate to  $a_{i,\pm}$ in $\pi_1\Sigma_i$. In addition, since $\pi_1 \Sigma_i $ is LERF (\autoref{FK: LERF}), we can identify $\widehat{\pi_1 F_i }$ with the subgroup $\overline{\pi_1 F_i }\le \widehat{\pi_1 \Sigma_i }$ according to \autoref{cor: lerf injective}.

We can now state the main theorem of this section.  
\begin{theorem}\label{thm: cut and glue}
Let $\Sigma_1$ and $\Sigma_2$ be closed orientable surfaces defined above, and let $\phi:\widehat{\pi_1 \Sigma_1 }\to \widehat{\pi_1 \Sigma_2 }$ be an isomorphism. Assume that the genus of $\Sigma_1$ and $\Sigma_2$ is at least $2$. Suppose there exists $\mu\in\Zx$ such that $\phi(a_1)$  is conjugate to  $a_2^\mu$ in $\widehat{\pi_1 \Sigma_2 }$. Then, there exists an isomorphism $\psi: \widehat{\pi_1F_1}\to \widehat{\pi_1F_2}$ satisfying the following three properties. 
\begin{enumerate}[leftmargin=*,label=(\arabic*)]
\item\label{cag1} There exists $\hat{g} \in \widehat{\pi_1\Sigma_2}$ such that the following diagram commutes.   
\begin{equation*}
\begin{tikzcd}[row sep=large]
{\widehat{\pi_1 F_1 }} \arrow[d, "\psi"',"\cong"] \arrow[rr, hook] &                                                             & {\widehat{\pi_1 \Sigma_1 }} \arrow[d, "\phi"',"\cong"] \\
{\widehat{\pi_1 F_2 }} \arrow[r, hook]                     & {\widehat{\pi_1 \Sigma_2 }} \arrow[r, "\Inn_{\hat{g}}","\cong"'] & {\widehat{\pi_1 \Sigma_2 }}                  
\end{tikzcd}
\end{equation*}
\item\label{cag2} Adopting $\hat{g}$ from \ref{cag1}, there exists $\hat\delta\in \widehat{\pi_1 F_2 }$ such that $\phi(\gamma_1)= \hat{g} (\gamma_2^{\pm1}\hat\delta)\hat{g}^{-1}$.
\item\label{cag3} 
$\psi(a_{1,\pm})$  is conjugate to   $a_{2,\pm}^\mu$   or $a_{2,\mp}^\mu$  in $\widehat{\pi_1 F_2 }$. 
\end{enumerate}
\end{theorem}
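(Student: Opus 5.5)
The plan is to use profinite Bass--Serre theory for the HNN splitting (\ref{equ: HNN1c}). This graph of groups (one vertex $\pi_1F_i$, one cyclic edge group $\langle a_{i,+}\rangle$) is adequate, because $\pi_1\Sigma_i$ is LERF (\autoref{FK: LERF}, \autoref{cor: lerf injective}). So by \autoref{THM: Efficient completion}, $\widehat{\pi_1\Sigma_i}$ is the profinite HNN-extension of $\widehat{\pi_1F_i}$ along $\overline{\langle a_{i,+}\rangle}\cong\widehat{\Z}$. It therefore acts on the profinite Bass--Serre tree $\T_i$ (\autoref{PROP: Bass-Serre}). Vertex stabilizers are the conjugates of $\overline{\pi_1F_i}$ and edge stabilizers are the conjugates of $\overline{\langle a_{i,+}\rangle}$. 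Pulling back along $\phi$, we let $\widehat{\pi_1\Sigma_1}$ act on $\T_2$. The aim is to show that $\phi(\overline{\pi_1F_1})$ fixes a vertex of $\T_2$, i.e.\ lies in a conjugate $\hat g\,\overline{\pi_1F_2}\,\hat g^{-1}$, and then that the inclusion is an equality.

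\textbf{Step 1 (fixing a vertex).} The key input is that $\phi(a_1)$ is conjugate to $a_2^\mu$ with $\mu\in\Zx$, so $\phi(\overline{\langle a_1\rangle})$ is conjugate to $\overline{\langle a_2\rangle}$, an edge stabilizer. I would glue: $\Sigma_1$ is obtained from $F_1$ by gluing its two boundary curves, and $\widehat{\pi_1F_1}$ is a free profinite group acting on $\T_2$. The cleaner route is to double along $\alpha_1$. Take the closed surface $D$ obtained from two copies of $F_1$ glued along both boundary curves (or a suitable finite cover of $\Sigma_1$ in which an elevation of $F_1$ sits inside a closed subsurface disjoint from the cut curves). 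One then needs $\widehat{\pi_1F_1}$ to fix a vertex. Here I would use the peripheral condition. The elements $\phi(a_{1,\pm})$ are elliptic, being conjugates of $a_2^{\mu}$, so they fix edges. Consider the action of $\widehat{\pi_1\Sigma_1}$ on the quotient of $\T_2$ obtained by collapsing the orbit of edges fixed by these conjugates. This gives a profinite tree whose edge stabilizers (the stabilizers of the remaining edges) are trivial. That structure is exactly what is needed to apply \autoref{prop: surface group fix vertex} to closed-surface groups built from $F_1$. If instead $\phi(\overline{\pi_1F_1})$ fixed no vertex, \autoref{prop: subgroup not stabilizer} would produce an open normal subgroup not generated by vertex stabilizers, giving a finite cover on which a closed-surface group acts with trivial edge stabilizers without a fixed vertex, contradicting \autoref{prop: surface group fix vertex}. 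So $\phi(\overline{\pi_1F_1})\le\hat g\,\overline{\pi_1F_2}\,\hat g^{-1}$.

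\textbf{Step 2 (equality and the three properties).} By symmetry, applying Step 1 to $\phi^{-1}$ gives $\phi^{-1}(\overline{\pi_1F_2})\le \hat k\,\overline{\pi_1F_1}\,\hat k^{-1}$. Combining the two inclusions, some conjugate of $\overline{\pi_1F_1}$ is contained in itself. A vertex stabilizer containing a conjugate of itself must equal it: otherwise it would fix two distinct vertices and hence the geodesic between them, forcing it to lie in an edge group $\cong\widehat{\Z}$, which is absurd for a free profinite group of rank $\ge 3$. This yields equality, and setting $\psi=\Inn_{\hat g}^{-1}\circ\phi|$ gives \ref{cag1}. For \ref{cag3}, the peripheral subgroups $\overline{\langle a_{i,\pm}\rangle}$ are exactly the intersections of the vertex stabilizer with the stabilizers of adjacent edges. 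The isomorphism preserves this structure, and $\phi(a_{1,\pm})$ is conjugate to $a_2^\mu$, so $\psi(a_{1,\pm})$ is conjugate in $\widehat{\pi_1F_2}$ to $a_{2,\pm}^{\mu}$ or $a_{2,\mp}^{\mu}$. For \ref{cag2}, observe that $\phi$ induces an isomorphism of the quotient graphs of groups. The induced map $\widehat{\pi_1\Sigma_1}/\langle\!\langle\overline{\pi_1F_1}\rangle\!\rangle\cong\widehat{\Z}\to\widehat{\Z}$ sends the stable letter to a generator of the quotient. Combined with the tree structure (the stable letter moves the base vertex to an adjacent one), this forces $\hat g^{-1}\phi(\gamma_1)\hat g\in\gamma_2^{\pm1}\widehat{\pi_1F_2}$.

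The main obstacle is Step 1: making rigorous that $\phi(\overline{\pi_1F_1})$ is elliptic. The subtlety is that \autoref{prop: surface group fix vertex} requires a closed surface and trivial edge stabilizers, whereas $F_1$ has boundary and the edge stabilizers are $\widehat{\Z}$. I would first try the doubling trick. Form the closed surface $DF_1$ and map $\widehat{\pi_1 DF_1}$ to $\widehat{\pi_1\Sigma_2}$ by $\phi$ on each copy. Then act on the tree obtained from $\T_2$ by collapsing the edges, which kills the edge stabilizers since the boundary elements are sent to conjugates of the $\widehat{\Z}$ edge groups.
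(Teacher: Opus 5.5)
Your skeleton matches the paper: the profinite Bass--Serre tree of the HNN splitting, a Zalesskii-type fixed-point argument for $\widehat{\pi_1F_1}$, then symmetry plus non-abelianness to upgrade inclusion to equality. Your Step~2 equality argument is fine.

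\textbf{Step~1 does not work as proposed.} Neither of your devices produces a closed-surface group acting with trivial edge stabilizers. Contracting edges leaves the stabilizers of the surviving edges unchanged, and in any case every edge of $\T_2$ is fixed by a conjugate of $\phi(a_{1,-})$. The map $\widehat{\pi_1DF_1}\to\widehat{\pi_1\Sigma_2}$, as you describe it, factors through $\widehat{\pi_1F_1}$, so its kernel lies in every stabilizer.

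What works is the following. Take the open normal subgroup $U=\overline{\pi_1F_1'}\le\widehat{\pi_1F_1}$ supplied by \autoref{prop: subgroup not stabilizer}. Let $N\unlhd\pi_1F_1'$ be the normal closure of the peripheral elements $b_j$. Let $U/\overline{N}\cong\widehat{\pi_1\Sigma}$, where $\Sigma$ is the capped-off surface, act on the quotient $\T_2/\overline{N}$; this is a profinite tree because the $b_j$ are elliptic.

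The point you never address is that this action has trivial edge stabilizers, i.e.\ $\Stab_U(e)\subseteq\overline{N}$ for every edge $e$ of $\T_2$. This comes from $\T_1$. By the hypothesis on $\mu$, $\phi^{-1}(\Stab(e))$ is a conjugate of $\overline{\langle a_{1,-}\rangle}$, that is, $\Stab(\tilde e)$ for some edge $\tilde e$ of $\T_1$. Let $o_1$ be the vertex $\widehat{\pi_1F_1}$. Then $\Stab_{\widehat{\pi_1F_1}}(e)=\Stab(o_1)\cap\Stab(\tilde e)$ fixes the chain from $o_1$ to $\tilde e$, hence an edge at $o_1$. So it lies in a $\widehat{\pi_1F_1}$-conjugate of $\overline{\langle a_{1,\pm}\rangle}$, and its intersection with $U$ lies in a conjugate of some $\overline{\langle b_j\rangle}$. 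Consequently $U/\overline{N}$ fixes a vertex $\bar v$, so $U=\overline{N}\cdot\Stab_U(v)$ is generated by vertex stabilizers, which is the contradiction.

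\textbf{Property~(2) is the more serious gap.} Write $o_2=\widehat{\pi_1F_2}$. The quotient $\T_2/\overline{\langle\!\langle\pi_1F_2\rangle\!\rangle}$ is the profinite line on $\widehat{\Z}$, and $\phi(\gamma_1)$ displaces the image of $\hat g o_2$ there by some $\lambda\in\Zx$. Adjacency of $\phi(\gamma_1)\hat g o_2$ and $\hat g o_2$ forces $\lambda=\pm1$. So ``sends the stable letter to a generator'' cannot give adjacency; this is exactly what fails for tori.

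The missing input is that $\overline{\langle a_i\rangle}$ is malnormal in $\widehat{\pi_1\Sigma_i}$. This is a theorem of Wilton--Zalesskii, applied to the primitive quasi-convex element $a_i$ of the virtually special hyperbolic group $\pi_1\Sigma_i$. It makes the action $1$-acylindrical, so two distinct vertices are adjacent if and only if their stabilizers meet nontrivially. Since $\Stab(o_1)\cap\Stab(\gamma_1o_1)\supseteq\overline{\langle a_{1,-}\rangle}\neq1$ and $\gamma_1\notin\Stab(o_1)$, applying $\phi$ shows that $\phi(\gamma_1)\hat go_2$ is a neighbour of $\hat go_2$. That is, $\hat g^{-1}\phi(\gamma_1)\hat g\in h\gamma_2^{\pm1}\widehat{\pi_1F_2}$ for some $h\in\widehat{\pi_1F_2}$. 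Replacing $\hat g$ by $\hat g h$, and $\psi$ accordingly, gives~(2).

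Your treatment of~(3) (``the isomorphism preserves this structure'') also tacitly uses this adjacency. Without malnormality you only get $\psi(a_{1,-})$ into some $\widehat{\pi_1F_2}$-conjugate of $\overline{\langle a_{2,\pm}\rangle}$. You would then still have to pin the exponent to $\mu$, e.g.\ via primitivity of $[a_2]$ in $H_1(\Sigma_2;\Z)$.
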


In short, such an isomorphism $\phi$ always arises from an isomorphism $\psi:\widehat{\pi_1 F_1 }\to \widehat{\pi_1 F_2 }$ via an HNN-extension. 

\begin{remark}
\autoref{thm: cut and glue} is not true when $\Sigma_1$ and $\Sigma_2$ are tori. In that case, one may identify $\widehat{\pi_1 \Sigma_1 }$ and $\widehat{\pi_1 \Sigma_2 }$ with $\widehat{\Z}^2$, such that  $a_1$ and $a_2$ are identified with $(1,0)$ while $\gamma_1$ and $\gamma_2$ are identified with $(0,1)$. For any $\lambda \in\Zx$, there is an isomorphism $\phi: \widehat{\Z}^2 \to \widehat{\Z}^2$ defined by $\phi(x,y)=(\mu x,\lambda y)$. In case $\lambda \neq \pm 1$, property \ref{cag2} in \autoref{thm: cut and glue} does not hold. 
\end{remark}



\subsection{The Bass--Serre structure}
We will start with some preparations before the  proof of \autoref{thm: cut and glue}. 

Our group presentation (\ref{equ: HNN1c}) for $\pi_1\Sigma_i$ identifies it as the fundamental group of a graph of groups, which we clarify as follows. Let $X$ be the finite oriented graph with one vertex $v$ and one edge $e$, where $d^+(e)=d^- (e)=v$, see \autoref{fig: graph1}. Let $(G_i,X)$ be a graph of groups over $X$ such that $G_{i,v}= \pi_1F_i$ and  $G_{i,e}=\Z$ with generator $s_i$, and that the homomorphisms $\varphi_{i,e}^\pm: G_{i,e} \to G_{i,v}$ send $s_i$ to $a_{i,\pm}$. Then, we have a clear identification $\pi_1\Sigma_i= \pi_1(G_i,X)$ by \autoref{DEF: Fundamental group of graph of group}. 

\begin{figure}[ht!]
\centering
\includegraphics[width=3cm]{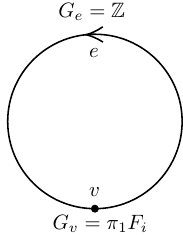}
\caption{The graph $X$ for the splitting of $\pi_1\Sigma_i$}
\label{fig: graph1}
\end{figure}

\begin{lemma}
$(G_i,X)$ is adequate. 
\end{lemma}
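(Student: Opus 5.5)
To prove that $(G_i,X)$ is adequate, I will verify the two defining conditions in turn, and both reduce to LERFness of surface groups.

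\textbf{Residual finiteness.} The group $\pi_1(G_i,X)=\pi_1\Sigma_i$ is the fundamental group of a closed orientable surface. Since the genus is at least $2$ in our application, $\pi_1\Sigma_i$ is a finitely generated Fuchsian (hence Kleinian) group. It is therefore LERF by \autoref{FK: LERF}, and in particular residually finite. For genus $\le 1$ the group is abelian, so residual finiteness is immediate in that case as well.

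\textbf{Induced profinite topology on vertex and edge groups.} I will check each $x\in X$ separately.

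For the vertex $v$, the vertex group $G_{i,v}=\pi_1F_i$ is the fundamental group of a compact surface with boundary. It is therefore free of finite rank, and in particular finitely generated.

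For the edge $e$, the edge group $G_{i,e}\cong\Z$ embeds into $\pi_1\Sigma_i$ as $\langle a_{i,-}\rangle$, through the identification made via $\varphi^-_e$. This is a cyclic, hence finitely generated, subgroup.

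In both cases \autoref{cor: lerf injective} applies, since $\pi_1\Sigma_i$ is LERF. It shows that the profinite topology of $\pi_1\Sigma_i$ induces the full profinite topology on each of these finitely generated subgroups.

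\textbf{Where care is needed.} The only delicate point is bookkeeping about the edge group: its image in the fundamental group depends on the choice of $\varphi^{\pm}_e$. However, the two possible images $\langle a_{i,+}\rangle$ and $\langle a_{i,-}\rangle$ are conjugate via $\gamma_i$. Moreover, the induced topology condition is invariant under conjugation within $\pi_1\Sigma_i$. So either choice gives the same conclusion, and adequacy follows.
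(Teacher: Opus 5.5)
Your proof is correct and is essentially the paper's argument: LERFness of $\pi_1\Sigma_i$ (\autoref{FK: LERF}) gives residual finiteness, and \autoref{cor: lerf injective} applied to the finitely generated subgroups $\pi_1F_i$ and $\langle a_{i,\pm}\rangle$ gives the full induced profinite topology. Your remark about the two conjugate images of the edge group is harmless but unnecessary, since LERF handles any finitely generated subgroup directly.
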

\begin{proof}
This follows as a combination of the   facts that $\pi_1(G_i,X)=\pi_1\Sigma_i$ is LERF (\autoref{FK: LERF}), and that the groups $G_{i,v}$ and $G_{i,e}$ are finitely generated. 
\end{proof}

\begin{sloppypar}
Therefore, according to \autoref{THM: Efficient completion}, we can identify $\widehat{\pi_1\Sigma_i}$ with $\Pi_1(\widehat{G_i},X)$, which can be viewed as   a profinite HNN-extension of $\widehat{\pi_1F_i}$. Let $\T_i$ be the profinite Bass--Serre tree associated to $(\widehat{G_i},X)$, whose vertices consist of cosets of $\widehat{\pi_1F_i}$ and whose edges consist of cosets of $\overline{\langle a_{i,-}\rangle}$. Then, $\widehat{\pi_1\Sigma_i}$ acts on $\T_i$. Furthermore, $\widehat{\pi_1\Sigma_1}$ acts on $\T_2$ via $\phi$, and  $\widehat{\pi_1\Sigma_2}$ acts on $\T_1$ via  $\phi^{-1}$. 
\end{sloppypar} 

\subsection{The action on the profinite Bass--Serre tree}

We say that a subgroup $H$ in a group $G$ is {\em malnormal} if for any $g\in G\setminus H$, $gHg^{-1}\cap H$ is trivial. 

\begin{lemma}\label{lem: malnormal}
The subgroup $\overline{\langle a_i \rangle}$ in $\widehat{\pi_1\Sigma_i}$ is malnormal. 
\end{lemma}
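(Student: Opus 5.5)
We need to show: if $\hat g\in \widehat{\pi_1\Sigma_i}\setminus\overline{\langle a_i\rangle}$, then $\hat g\,\overline{\langle a_i\rangle}\,\hat g^{-1}\cap \overline{\langle a_i\rangle}=\{1\}$. We treat $i$ as fixed, write $\Gamma=\pi_1\Sigma_i$, $a=a_i$, and use the profinite centralizer result \autoref{cor: centralizer} together with conjugacy separability, LERF, and a reduction to finite covers.

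\textbf{Step 1: the centralizer of a nontrivial element of $\overline{\langle a\rangle}$.} Since $\alpha_i$ is a simple nonseparating closed curve, $a$ is primitive in $\Gamma$, so $C_\Gamma(a)=\langle a\rangle$. By \autoref{cor: centralizer}, $C_{\widehat\Gamma}(a)=\overline{\langle a\rangle}$. We want the same for $a^\lambda$ with $\lambda\in\widehat{\Z}\setminus\{0\}$, and we pass to a finite-index subgroup to do so. Choose $n\in\N$ with $\lambda\notin n\widehat{\Z}$ failing only if $n\mid\lambda$. More precisely, pick $m$ such that the image of $a^\lambda$ in $\overline{\langle a\rangle}\cong\widehat{\Z}$ is $a^{k u}$ with $k\in\N$ and $u$ having trivial component on the primes dividing... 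This is messy. Instead, work with $\overline{\langle a\rangle}\cong\widehat\Z\cong\prod_p\Z_p$ and use the standard fact that for a LERF, conjugacy-separable hyperbolic surface group, $C_{\widehat\Gamma}(\hat x)=\overline{\langle a\rangle}$ for every nontrivial $\hat x\in\overline{\langle a\rangle}$. This follows by applying Minasyan's result (\autoref{Minasyan}) in finite-index subgroups containing $a^{N}$, letting $N$ vary, and taking intersections.

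\textbf{Step 2: deduce malnormality.} Suppose $\hat g\,a^{\lambda}\hat g^{-1}=a^{\mu}\neq 1$. First reduce to the case $\lambda=\mu$. Compare images in the profinite abelianization $\widehat{\Z}\otimes H_1(\Sigma_i)$, which is torsion-free; $a$ maps to a primitive nonzero class, so $\lambda=\mu$. Then $\hat g\in C_{\widehat\Gamma}(a^\lambda)=\overline{\langle a\rangle}$ by Step 1, a contradiction.

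\textbf{Main obstacle.} The main obstacle is Step 1 for non-integral $\lambda$, i.e.\ controlling the centralizers of profinite powers. The first thing to try is the established result for profinite surface groups, or more generally hyperbolic virtually special groups: centralizers of nontrivial procyclic elements are procyclic, obtained for instance via acylindrical actions on profinite trees (\autoref{prop: subgroup not stabilizer}). Alternatively, view $\overline{\langle a\rangle}$ as an edge stabilizer of the profinite Bass--Serre tree $\T_i$ and use the known $2$-acylindricity of that action.
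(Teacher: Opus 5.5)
Your Step 2 is fine. Because $\alpha_i$ is non-separating, $[a_i]$ is part of a basis of the free $\widehat{\Z}$-module $\widehat{\Z}\otimes H_1(\Sigma_i;\Z)$. So $\hat g a^\lambda \hat g^{-1}=a^\mu\neq 1$ forces $\lambda=\mu$, and hence $\hat g\in C_{\widehat{\Gamma}}(a^\lambda)$.

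The gap is Step 1. It carries the entire content of the lemma and is not proved; in fact, given Step 2, it is equivalent to the lemma. If $\overline{\langle a\rangle}$ is malnormal and $\hat g$ commutes with $a^\lambda\neq 1$, then $a^\lambda\in \hat g\,\overline{\langle a\rangle}\,\hat g^{-1}\cap\overline{\langle a\rangle}$, so $\hat g\in\overline{\langle a\rangle}$.

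Your suggested route through \autoref{Minasyan} does not work. That result, and \autoref{cor: centralizer}, only compute centralizers of elements of $\Gamma$ or of its finite-index subgroups, and $a^\lambda$ with $\lambda\notin\Z$ is not of this kind. For instance, take $\lambda_2=0$ and $\lambda_p=1$ for all odd $p$. Then $\overline{\langle a^\lambda\rangle}=\bigcap_k\overline{\langle a^{2^k}\rangle}$ has infinite index in $\overline{\langle a\rangle}$. Taking intersections goes the wrong way, since $C(\bigcap_k H_k)\supseteq\bigcup_k C(H_k)$. So knowing $C_{\widehat{\Gamma}}(a^{2^k})=\overline{\langle a\rangle}$ for every $k$ gives no upper bound on $C_{\widehat{\Gamma}}(a^\lambda)$.

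The profinite-tree alternative is circular here. The paper derives the $1$-acylindricity of the action on $\T_i$ (\autoref{lem: acylindrical}) from this very lemma, and you give no independent source for the acylindricity you invoke.

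What is missing is a genuinely profinite input about $\widehat{\pi_1\Sigma_i}$. The paper obtains it from Wilton--Zalesskii \cite[Theorem 3.3]{WZ17}: in a hyperbolic group that is the fundamental group of a compact virtually special cube complex, the closure of a quasiconvex malnormal subgroup is malnormal in the profinite completion. Since $a_i$ is represented by a simple closed curve, it is primitive, so $\langle a_i\rangle$ is quasiconvex and malnormal in $\pi_1\Sigma_i$. The lemma then follows directly, with no need for your Step 2.

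Alternatively, a precisely cited theorem that centralizers of nontrivial elements of $\widehat{\pi_1\Sigma_i}$ are procyclic would close Step 1. Then $C_{\widehat{\Gamma}}(a^\lambda)=\overline{\langle c\rangle}\cong\widehat{\Z}$ with $a=c^\nu$, and your abelianization argument shows $\nu\in\Zx$, so the centralizer is $\overline{\langle a\rangle}$. As written, though, you only gesture at such a result without stating or citing it.
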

\begin{proof}
Indeed, $\pi_1\Sigma_i$ is word-hyperbolic and is the fundamental group of a compact virtually special cube
complex, $\langle a_i \rangle$ is a quasi-convex subgroup of $\pi_1\Sigma_i$ and is malnormal in $\pi_1\Sigma_i$ since $a_i$, represented by a simple closed curve, is a primitve hyperbolic element. Then, it follows from \cite[Theorem 3.3]{WZ17} that $\overline{\langle a_i \rangle}$ is malnormal in $\widehat{\pi_1\Sigma_i}$.
\end{proof}
 Note that \autoref{lem: malnormal} does not hold for tori.

\begin{definition}
Suppose a profinite group $G$ acts on a profinite tree $\T$. We say that the action is {\em 1-acylindrical} if the following property holds. Whenever a non-trivial element $g\in G$ fixes two distinct vertices $u,v\in V(\T)$, then $u$ and $v$ are connected by an edge. 
\end{definition}

\begin{lemma}\label{lem: acylindrical}
The action of $\widehat{\pi_1\Sigma_i}$ on $\T_i$ is 1-acylindrical. 
\end{lemma}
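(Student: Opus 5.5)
Assume a non-trivial element $g\in\widehat{\pi_1\Sigma_i}$ fixes two distinct vertices $u,v\in V(\T_i)$. The plan is to show that $u$ and $v$ must be adjacent, using the structure of the stabilizers together with malnormality (\autoref{lem: malnormal}).

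First, the profinite tree $\T_i$ is uniquely geodesic. For distinct $u,v$ there is a smallest profinite subtree $[u,v]$ containing them, and it is fixed pointwise by any element fixing both $u$ and $v$ (this is standard in \cite{Rib17}, since fixed-point sets of elements are subtrees). So if $d(u,v)\ge 2$ in a suitable sense, then $g$ fixes two distinct edges $e,e'$ sharing a vertex $w$, or more generally two distinct edges of $[u,v]$. Consequently it suffices to prove the following: no non-trivial element fixes two distinct edges of $\T_i$ having a common endpoint. Up to translating, we may take $w=\widehat{\pi_1F_i}$.

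By \autoref{PROP: Bass-Serre}, edge stabilizers are conjugates of $\overline{\langle a_{i,-}\rangle}$, and every such conjugate is conjugate in $\widehat{\pi_1\Sigma_i}$ to $\overline{\langle a_i\rangle}$. Write $\Stab(e)=h\overline{\langle a_i\rangle}h^{-1}$ and $\Stab(e')=h'\overline{\langle a_i\rangle}h'^{-1}$. The element $g$ lies in both. By malnormality, $h^{-1}h'\in\overline{\langle a_i\rangle}$, hence $\Stab(e)=\Stab(e')=:C$.

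The next step is to rule out two distinct edges at $w$ with the same stabilizer $C$. The edges at $w$ are of two kinds: the cosets $x\overline{\langle a_{i,-}\rangle}$ with $x\in\widehat{\pi_1F_i}$ (incoming), and the cosets $x t^{-1}\overline{\langle a_{i,-}\rangle}$ (outgoing, with $t=\gamma_i$), whose stabilizers are $x\overline{\langle a_{i,+}\rangle}x^{-1}$. Two incoming edges with equal stabilizers correspond to $x,x'$ with $x^{-1}x'$ normalizing $\overline{\langle a_{i,-}\rangle}$. This $x^{-1}x'$ lies in $\widehat{\pi_1 F_i}$, and malnormality forces it into $\overline{\langle a_{i,-}\rangle}$, so the edges coincide; the same argument handles two outgoing edges.

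The main obstacle is the mixed case: an incoming and an outgoing edge. Here we would need $x\overline{\langle a_{i,-}\rangle}x^{-1}=x'\overline{\langle a_{i,+}\rangle}x'^{-1}$ with $x,x'\in\widehat{\pi_1F_i}$. Since $a_{i,+}=\gamma_i^{-1}a_{i,-}\gamma_i$, malnormality of $\overline{\langle a_i\rangle}$ in $\widehat{\pi_1\Sigma_i}$ gives $x^{-1}x'\gamma_i^{-1}\in\overline{\langle a_{i,-}\rangle}$, so $\gamma_i\in\widehat{\pi_1F_i}\,\overline{\langle a_{i,-}\rangle}=\widehat{\pi_1F_i}$. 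This is impossible, because $\gamma_i$ maps to a generator under the homomorphism $\widehat{\pi_1\Sigma_i}\to\widehat{\Z}$ killing $\widehat{\pi_1F_i}$, namely the completion of $\pi_1\Sigma_i\to\Z$ dual to $\alpha_i$.

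Finally, the reduction to adjacent edges must be made rigorous in the profinite setting, where $[u,v]$ need not be a finite path. If $u,v$ are non-adjacent, $[u,v]$ contains a vertex $w\ne u,v$ together with two distinct edges of $[u,v]$ incident to $w$. Both of these edges are fixed by $g$, so the argument above yields $g=1$, a contradiction. If instead this local structure is awkward to extract, the fallback is to pass to finite quotient graphs $\T_i/U$ and argue there.
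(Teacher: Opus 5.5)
Your strategy is the paper's: $g$ fixes the chain $[u,v]$ pointwise by \cite[Corollary 4.1.6]{Rib17}, you extract two distinct fixed edges, and malnormality (\autoref{lem: malnormal}) forces $g=1$. The gap is in the reduction. You require the two fixed edges to \emph{share a vertex} $w\neq u,v$, and the claim that $[u,v]$ contains such a $w$ with two distinct incident edges of $[u,v]$ is asserted without proof. In the profinite setting $[u,v]$ need not be a finite path. For instance, in the profinite Bass--Serre tree of $\Z$ viewed as an HNN-extension of the trivial group, the chain from $0$ to any $\lambda\in\widehat{\Z}\setminus\Z$ is infinite. Local degree statements about interior vertices of chains are therefore not standard, and would need a genuine argument. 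Your fallback of passing to the finite quotients $\T_i/U$ does not work as stated: those quotients are not trees, and the malnormality input does not descend to them.

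The missing observation is that adjacency plays no role. Every edge of $\T_i$ is a coset $h\overline{\langle a_{i,-}\rangle}$, with stabilizer $h\overline{\langle a_{i,-}\rangle}h^{-1}$. If $h\overline{\langle a_{i,-}\rangle}\neq h'\overline{\langle a_{i,-}\rangle}$, then $h'^{-1}h\notin\overline{\langle a_{i,-}\rangle}$. Malnormality of $\overline{\langle a_{i,-}\rangle}$ then gives $h\overline{\langle a_{i,-}\rangle}h^{-1}\cap h'\overline{\langle a_{i,-}\rangle}h'^{-1}=1$ directly. This one line replaces both of your steps, including the incoming/outgoing/mixed case analysis. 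That analysis is correct but unnecessary; your mixed case only rules out an edge with both endpoints at $w$. After this, you just need two distinct edges of $[u,v]$, which is exactly what the paper produces. Since $E(\T_i)$ is closed, so is $E([u,v])$, and \cite[Proposition 2.1.6(c)]{Rib17} then yields edges $e_u,e_v$ of $[u,v]$ incident to $u$ and $v$ respectively. These are distinct because $u$ and $v$ are not adjacent.
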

\begin{proof}
Suppose by contrary that a non-trivial element $g\in \widehat{\pi_1\Sigma_i}$ fixes two vertices $u,v\in V(\T_i)$ that are not connected by an edge. Let $\X=[u,v]$ be the chain connecting $u$ and $v$, which is the minimal subtree of $ \T_i $ containing $u$ and $v$, see \cite[Proposition 2.4.9]{Rib17}. According to \cite[Corollary 4.1.6]{Rib17}, $g$ fixes every element in $E(\X)$. By construction of the profinite Bass--Serre tree, $E(\T_i)$ is a closed subset of $\T_i$, so $E(\X)=E(\T_i)\cap \X$ is also a closed subset of $\X$. Hence, according to \cite[Proposition 2.1.6(c)]{Rib17}, there exist $e_u,e_v\in E(\X)$ such that $u$ is an endpoint of $e_u$ and $v$ is an endpoint of $e_v$. Since $u$ and $v$ are not connected by an edge, $e_u\neq e_v$. Therefore, $g$ fixes two distinct edges in $\T_i$. 

Suppose that the two edges $e_u$ and $e_v$ correspond to the cosets $h_u \overline{\langle a_{i,-}\rangle}$ and $h_v\overline{\langle a_{i,-}\rangle}$, where $h_u,h_v\in\widehat{\pi_1\Sigma_i}$ and $h_v^{-1}h_u\notin \overline{\langle a_{i,-}\rangle}$. Then, $g\in \Stab(e_u)\cap \Stab(e_v)= h_u\overline{\langle a_{i,-}\rangle}h_u^{-1}\cap h_v\overline{\langle a_{i,-} \rangle} h_v^{-1}$. Thus, we reach a contradiction with \autoref{lem: malnormal}, since $\overline{\langle a_{i,-} \rangle}$, being a conjugate of $\overline{\langle a_i \rangle}$,  is malnormal in $\widehat{\pi_1\Sigma_i}$.  
\end{proof}

\begin{lemma}\label{lem: FA edge stab}
Let the subgroup  $ \widehat{\pi_1F_1} \leq \widehat{\pi_1\Sigma_1}$ act  on $\T_2$ via $\phi$. For any $e\in E(\T_2)$,  the  stabilizer $\stab_{\widehat{\pi_1F_1}}(e)$ is either trivial or a conjugate of $\overline{\langle a_{1,+}\rangle}$ or $\overline{\langle a_{1,-} \rangle }$ within $\widehat{\pi_1F_1}$. Conversely, any conjugate of  $\overline{\langle a_{1,+}\rangle}$ or $\overline{\langle a_{1,-} \rangle }$ in $\widehat{\pi_1F_1}$ fixes an edge in $\T_2$. 
\end{lemma}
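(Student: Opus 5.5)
The plan is to compute edge stabilizers directly and then use the hypothesis that $\phi(a_1)$ is conjugate to $a_2^\mu$. By \autoref{PROP: Bass-Serre}, each edge $e\in E(\T_2)$ has stabilizer in $\widehat{\pi_1\Sigma_2}$ of the form $h\overline{\langle a_{2,-}\rangle}h^{-1}$, which is a conjugate of $\overline{\langle a_2\rangle}$. Hence $\Stab_{\widehat{\pi_1F_1}}(e)=\widehat{\pi_1F_1}\cap \phi^{-1}\bigl(h\overline{\langle a_{2}\rangle}h^{-1}\bigr)$ after reconjugating. Since $\mu\in\Zx$, the hypothesis gives $\overline{\langle \phi(a_1)\rangle}=\overline{\langle a_2^\mu\rangle}$ up to conjugacy, and $\overline{\langle a_2^\mu\rangle}=\overline{\langle a_2\rangle}$. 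Therefore $\phi^{-1}(h\overline{\langle a_2\rangle}h^{-1})=k\overline{\langle a_1\rangle}k^{-1}$ for some $k\in\widehat{\pi_1\Sigma_1}$. Since $a_1$ is conjugate to $a_{1,-}$, this is a conjugate of the edge stabilizer $\overline{\langle a_{1,-}\rangle}$ in $\T_1$. The problem is thus reduced to showing that, for every $k\in\widehat{\pi_1\Sigma_1}$, the intersection $\widehat{\pi_1F_1}\cap k\overline{\langle a_{1,-}\rangle}k^{-1}$ is either trivial or a $\widehat{\pi_1F_1}$-conjugate of $\overline{\langle a_{1,\pm}\rangle}$. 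This statement no longer involves $\phi$.

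For this reduced statement I will work in $\T_1$. Let $v_0$ be the vertex $\widehat{\pi_1F_1}$, with stabilizer $\widehat{\pi_1F_1}$, and let $e'=k\overline{\langle a_{1,-}\rangle}$ be the edge with stabilizer $K=k\overline{\langle a_{1,-}\rangle}k^{-1}$. Suppose the intersection $\widehat{\pi_1F_1}\cap K$ contains a nontrivial element $g$. Then $g$ fixes $v_0$ and both endpoints of $e'$. By \autoref{lem: acylindrical}, any two vertices fixed by $g$ are adjacent, so $v_0$ is an endpoint of $e'$. In a profinite tree the fixed set of $g$ is a subtree (\cite[Corollary 4.1.6]{Rib17}), so it cannot contain a triangle; hence $v_0$ must coincide with one of the endpoints. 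The edges incident to $v_0$ have the form $f\overline{\langle a_{1,-}\rangle}$ (negative endpoint $f\widehat{\pi_1F_1}$) or $f\gamma_1^{-1}\overline{\langle a_{1,-}\rangle}$ (positive endpoint $f\gamma_1^{-1}\gamma_1\widehat{\pi_1F_1}$), where $f\in\widehat{\pi_1F_1}$. Their stabilizers are $f\overline{\langle a_{1,-}\rangle}f^{-1}$ and $f\gamma_1^{-1}\overline{\langle a_{1,-}\rangle}\gamma_1 f^{-1}=f\overline{\langle a_{1,+}\rangle}f^{-1}$, using the relation $\gamma_1 a_{1,+}\gamma_1^{-1}=a_{1,-}$ and the orientation convention of the profinite presentation. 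Both of these lie inside $\widehat{\pi_1F_1}$, so the intersection is the whole stabilizer, which is a $\widehat{\pi_1F_1}$-conjugate of $\overline{\langle a_{1,\pm}\rangle}$. If no such nontrivial $g$ exists, the intersection is trivial.

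For the converse, I pull a conjugate $f\overline{\langle a_{1,\pm}\rangle}f^{-1}$ back to $\widehat{\pi_1\Sigma_1}$. It equals $k\overline{\langle a_1\rangle}k^{-1}$ for some $k$, and its $\phi$-image is a conjugate of $\overline{\langle a_2\rangle}$ by the hypothesis on $\mu$. That image is the stabilizer of an edge of $\T_2$, so the conjugate fixes that edge. The main obstacle is the incidence step: I need care with the orientation conventions in \autoref{DEF: Profinite fundamental group of graph of profinite group} to identify exactly which cosets are adjacent to $v_0$, and I must rule out $v_0$ being merely adjacent to, rather than an endpoint of, $e'$. The subtree property of fixed sets should handle the latter, since a vertex fixed by $g$ and adjacent to both endpoints of $e'$ would create a circuit.
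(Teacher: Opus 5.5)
Your proposal is correct and follows the paper's proof essentially step for step. You pull the $\T_2$-edge stabilizer back via $\phi$ to a conjugate of $\overline{\langle a_{1,-}\rangle}$ and realize it as the stabilizer of an edge of $\T_1$. You then use 1-acylindricity and the absence of circuits in $\T_1$ to force the vertex $\widehat{\pi_1F_1}$ to be an endpoint of that edge whenever the intersection is nontrivial, and the two incidence cases give $\widehat{\pi_1F_1}$-conjugates of $\overline{\langle a_{1,-}\rangle}$ and $\overline{\langle a_{1,+}\rangle}$.

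Your contrapositive framing (assume a nontrivial $g$) is equivalent to the paper's case split on whether $v_1$ is an endpoint of $\tilde e$. For the triangle step, the bare fact that $\T_1$ has no circuits already suffices, so you do not need the subtree property of fixed sets.
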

 
\begin{proof}
Note that $\Stab_{\widehat{\pi_1\Sigma_2}}(e)$ is a conjugate of $\overline{\langle a_{2,-}\rangle }$ in $\widehat{\pi_1\Sigma_2}$. Since $\phi$ sends $\overline{\langle a_{1,-}\rangle }$ to a conjugate of $\overline{\langle a_{2,-}\rangle }$,  $\Stab_{\widehat{\pi_1\Sigma_1}}(e)$ is a conjugate of $\overline{\langle a_{1,-}\rangle }$. Now, let us also consider the action of $\widehat{\pi_1\Sigma_1}$ on $\T_1$. By \autoref{PROP: Bass-Serre}, there exists an edge $\widetilde{e}\in E(\T_1)$   such that $\Stab_{\widehat{\pi_1\Sigma_1}}(\widetilde{e})=\Stab_{\widehat{\pi_1\Sigma_1}}(e)$.  
Let $v_1\in V(\T_1)$ be the vertex corresponding to the coset $\widehat{\pi_1F_1}$ itself. Then, $\Stab_{\widehat{\pi_1\Sigma_1}}(v_1)=\widehat{\pi_1F_1}$. 
Consequently, $$\stab_{\widehat{\pi_1F_1}}(e) =\widehat{\pi_1F_1} \cap  \Stab_{\widehat{\pi_1\Sigma_1}}(\widetilde{e})=\Stab_{\widehat{\pi_1\Sigma_1}}(v_1) \cap \Stab_{\widehat{\pi_1\Sigma_1}}(\widetilde{e}). $$

There are two possiblities to be considered. First, if $v_1$ is not an endpoint of $\widetilde{e}$. Then, at least one endpoint $u$ of $\widetilde{e}$ is not connected with $v_1$ by  an edge, since $\T_1$ has no circuits. Note that $\Stab_{\widehat{\pi_1\Sigma_1}}(\widetilde{e}) \subseteq \Stab_{\widehat{\pi_1\Sigma_1}}(u)$. Thus, $\stab_{\widehat{\pi_1F_1}}(e)\subseteq \Stab_{\widehat{\pi_1\Sigma_1}}(u) \cap \Stab_{\widehat{\pi_1\Sigma_1}}(v_1) $ which is trivial according to \autoref{lem: acylindrical}. 

\begin{sloppypar}
Second, if $v_1$ is an endpoint of $\widetilde{e}$, then $\Stab_{\widehat{\pi_1\Sigma_1}}(v_1) \supseteq  \Stab_{\widehat{\pi_1\Sigma_1}}(\widetilde{e})$, so  $\stab_{\widehat{\pi_1F_1}}(e)= \Stab_{\widehat{\pi_1\Sigma_1}}(\widetilde{e})$. There are two subcases. On one hand, if $\partial^-(e)=v_1$, then $e$ corresponds to a coset $h\overline{\langle a_{1,-}\rangle}$ such that $h\in \widehat{\pi_1F_1}$. In this case, $\stab_{\widehat{\pi_1F_1}}(e)= h\overline{\langle a_{1,-}\rangle}h^{-1}$ is a conjugate of $\overline{\langle a_{1,-}\rangle}$ in $\widehat{\pi_1F_1}$. On the other hand, if $\partial^+(e)=v_1$, then $e$ corresponds to a coset $h\overline{\langle a_{1,-}\rangle}$ such that $h\gamma_{1}\in \widehat{\pi_1F_1}$. In this case, $\stab_{\widehat{\pi_1F_1}}(e)= h \overline{\langle a_{1,-}\rangle}h^{-1}=(h\gamma_1) \overline{\langle a_{1,+} \rangle } (h\gamma_1)^{-1}$ is a conjugate of $\overline{\langle a_{1,+}\rangle}$ in $\widehat{\pi_1F_1}$. Thus, we have proven the first statement in this lemma. 
\end{sloppypar}

Conversely, any conjugate of $\overline{\langle a_{1,+}\rangle}$ or $\overline{\langle a_{1,-} \rangle }$  in $\widehat{\pi_1 F_1 }$ is a conjugate of $\overline{\langle a_{1,-} \rangle }$ in $\widehat{\pi_1 \Sigma_1 }$, which is mapped by $\phi$ to a conjugate of $\overline{\langle a_{2,-} \rangle }$ in $\widehat{\pi_1 \Sigma_2 }$. Hence, it fixes an edge in  $\T_2$ by \autoref{PROP: Bass-Serre}.  
\end{proof}

Next, we consider the setting of a finite cover. Let $F_1'$ be a finite regular cover of $F_1$, and let $x_1'\in F_1'$ be a preimage of $x_1$. Then, we can identify $\pi_1(F_1',x_1')$ as a normal subgroup of $\pi_1(F_1,x_1)$ via the  covering map. For brevity, we also omit the basepoint $x_1'$ in the following, while the readers may  keep in mind that the basepoint $x_1'$ is fixed. We also identify $\widehat{\pi_1 F_1' } $ with the open normal subgroup $\overline {\pi_1 F_1' }\subseteq \widehat{\pi_1 F_1 }$, see \autoref{prop: lattice of open subgroup}. Note that $F_1'$ is an orientable surface with boundary. Let $b_1,\cdots, b_m\in \pi_1 F_1' $ be conjugacy representatives of the free homotopy classes of the peripheral loops of $F_1'$,  equipped with orientations lifted from $\partial F_1$. 

\begin{lemma}\label{lem: finite cover setting surface}
Let the subgroup  $ \widehat{\pi_1 F_1 '} \leq \widehat{\pi_1 F_1 } \leq  \widehat{\pi_1 \Sigma_1 }$ act  on $\T_2$ via $\phi$. For any $e\in E(\T_2)$,  the  stabilizer $\stab_{\widehat{\pi_1F_1'}}(e)$ is either trivial or a conjugate of a certain $\overline{\langle b_j\rangle}$ within $\widehat{\pi_1 F_1' }$. Conversely, any conjugate of  $\overline{\langle b_j\rangle}$ in $\widehat{\pi_1 F_1' }$ fixes an edge in $\T_2$. 
\end{lemma}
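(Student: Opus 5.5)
The plan is to reduce to \autoref{lem: FA edge stab} by intersecting stabilizers with the open normal subgroup $\widehat{\pi_1F_1'}$.

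Fix $e\in E(\T_2)$. Since $\widehat{\pi_1F_1'}\le \widehat{\pi_1F_1}$, we have
$$\Stab_{\widehat{\pi_1F_1'}}(e)=\widehat{\pi_1F_1'}\cap \Stab_{\widehat{\pi_1F_1}}(e).$$
By \autoref{lem: FA edge stab}, $\Stab_{\widehat{\pi_1F_1}}(e)$ is either trivial, in which case we are done, or of the form $h\overline{\langle a_{1,\pm}\rangle}h^{-1}$ with $h\in\widehat{\pi_1F_1}$. By density of $\pi_1F_1$ and openness of $\widehat{\pi_1F_1'}$, we can write $h=h'\delta$ with $h'\in \widehat{\pi_1F_1'}$ and $\delta\in \pi_1F_1$. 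It therefore suffices to identify $\widehat{\pi_1F_1'}\cap \delta\overline{\langle a_{1,\pm}\rangle}\delta^{-1}$ for an abstract element $\delta$. Put $c=\delta a_{1,\pm}\delta^{-1}\in\pi_1F_1$, and let $k\ge1$ be the least positive integer with $c^k\in\pi_1F_1'$; this is the degree of the covering of the boundary component of $F_1'$ that covers the peripheral loop $c$. The elevation of that peripheral loop gives $c^k$ conjugate in $\pi_1F_1'$ to some $b_j$, with orientations lifted from $\partial F_1$. Since $\widehat{\pi_1F_1'}$ is open and normal, \autoref{lem: finite index closure intersection} gives
$$\overline{\langle c\rangle}\cap\overline{\pi_1F_1'}=\overline{\langle c\rangle\cap\pi_1F_1'}=\overline{\langle c^k\rangle},$$
and this is a $\widehat{\pi_1F_1'}$-conjugate of $\overline{\langle b_j\rangle}$. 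Conjugating back by $h'\in\widehat{\pi_1F_1'}$ keeps it a conjugate within $\widehat{\pi_1F_1'}$, which proves the first statement.

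For the converse, take $\overline{\langle b_j\rangle}$ up to conjugation in $\widehat{\pi_1F_1'}$. Then $b_j$ is a conjugate, in $\pi_1F_1$, of $a_{1,\pm}^{k}$ for some $k\ge1$. Hence $\overline{\langle b_j\rangle}$ lies in a $\widehat{\pi_1F_1}$-conjugate of $\overline{\langle a_{1,\pm}\rangle}$. By the converse part of \autoref{lem: FA edge stab}, that conjugate fixes an edge of $\T_2$, so its subgroup $\overline{\langle b_j\rangle}$ (and any conjugate of it) fixes that same edge, or a translate of it.

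The main obstacle is the bookkeeping of peripheral structure in a possibly disconnected boundary preimage. Two facts need to be checked for every conjugate $c$ of $a_{1,\pm}$: that $c^k$ really matches one of the chosen $b_j$ with the correct orientation, and that the passage from $\widehat{\pi_1F_1}$-conjugates to $\widehat{\pi_1F_1'}$-conjugates is legitimate. Both follow from standard covering theory of the boundary circles, together with the coset decomposition $\widehat{\pi_1F_1}=\widehat{\pi_1F_1'}\cdot\pi_1F_1$.
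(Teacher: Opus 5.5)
Your proposal is correct and follows the paper's proof essentially step for step. Both arguments intersect with the $\widehat{\pi_1F_1}$-stabilizer from \autoref{lem: FA edge stab}, split the conjugator via $\widehat{\pi_1F_1}=\widehat{\pi_1F_1'}\cdot\pi_1F_1$, apply \autoref{lem: finite index closure intersection} to identify the intersection with a closed peripheral subgroup $\overline{\langle b_j\rangle}$, and obtain the converse by containment in a conjugate of $\overline{\langle a_{1,\pm}\rangle}$.
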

\begin{proof}
Note that $\stab_{\widehat{\pi_1F_1'}}(e)=\widehat{\pi_1 F_1' }\cap \stab_{\widehat{\pi_1F_1}}(e)$. According to \autoref{lem: FA edge stab}, $\stab_{\widehat{\pi_1F_1}}(e)$ is either trivial or a conjugate of $\overline{\langle a_{1,\pm}\rangle}$. In the former case, $\stab_{\widehat{\pi_1F_1'}}(e)$ is also trivial.  In the latter case, assume that $\stab_{\widehat{\pi_1F_1}}(e)= \hat{h} \overline{\langle a_{1,\epsilon}\rangle} \hat{h}^{-1}$, where $\hat{h} \in \widehat{\pi_1 F_1 }$ and $\epsilon\in \{+,-\}$. Since $\widehat{\pi_1 F_1' }$ is an open subgroup of $\widehat{\pi_1 F_1 }$ and $\pi_1 F_1 $ is a dense subgroup of $\widehat{\pi_1 F_1 }$,  we have $\widehat{\pi_1 F_1 }=\widehat{\pi_1 F_1' }\cdot \pi_1 F_1 $. Thus, we can find $\hat{h}'\in \widehat{\pi_1 F_1' }$ and $\beta \in \pi_1 F_1 $ such that $\hat{h}=\hat{h}'\beta$. Then, 
\begin{equation*}
\begin{aligned}
\stab_{\widehat{\pi_1F_1'}}(e)=&\widehat{\pi_1 F_1' } \cap \hat{h} \overline{\langle a_{1,\epsilon}\rangle} \hat{h}^{-1}= \hat{h}'(\widehat{\pi_1 F_1' } \cap  \beta\overline{\langle a_{1,\epsilon}\rangle }\beta^{-1})(\hat{h}')^{-1} \\
 =& \hat{h}'(\widehat{\pi_1 F_1' } \cap  \overline{\beta\langle a_{1,\epsilon}\rangle \beta^{-1}})(\hat{h}')^{-1}=\hat{h}'(\overline{\pi_1 F_1'  \cap  \beta\langle a_{1,\epsilon}\rangle \beta^{-1}})(\hat{h}')^{-1} ,
\end{aligned}
\end{equation*} 
where the last equality follows from  \autoref{lem: finite index closure intersection}. Note that $\pi_1 F_1'  \cap  \beta\langle a_{1,\epsilon}\rangle \beta^{-1}$ is  a peripheral subgroup of $\pi_1 F_1' $, i.e.\ there exists $1\le j \le m$ such that $\pi_1 F_1'  \cap  \beta\langle a_{1,\epsilon}\rangle \beta^{-1}$  is conjugate to  $\langle b_j \rangle$ in $\pi_1 F_1' $. Consequently, $\stab_{\widehat{\pi_1F_1'}}(e)$  is conjugate to  $\overline{\langle b_j\rangle}$ in $\widehat{\pi_1 F_1' }$. 

Conversely, any conjugate of $\overline{\langle b_j\rangle}$ in $\widehat{\pi_1 F_1' }$ is contained in a conjugate of $\overline{\langle a_{1,\pm}   \rangle}$ in $ \widehat{\pi_1 F_1 }$, which fixes an edge in $\T_2$  according to \autoref{lem: FA edge stab}. 
\end{proof}

\subsection{Relating two profinite Bass--Serre trees}
\begin{lemma}\label{lem: F fix vertex}
The subgroup $\widehat{\pi_1 F_1 }\le \widehat{\pi_1 \Sigma_1 }$, when acting on $\T_2$ via $\phi$, fixes a vertex $v_2\in V(\T_2)$. 
\end{lemma}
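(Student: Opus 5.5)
We argue by contradiction, using \autoref{prop: subgroup not stabilizer} to reduce to a closed surface group acting on a profinite tree with trivial edge stabilizers, where \autoref{prop: surface group fix vertex} applies. So suppose $\widehat{\pi_1F_1}$, acting on $\T_2$ via $\phi$, fixes no vertex. By \autoref{prop: subgroup not stabilizer}, there is an open normal subgroup $H\le\widehat{\pi_1F_1}$ that is not topologically generated by its vertex stabilizers. By \autoref{prop: lattice of open subgroup}, $H=\widehat{\pi_1F_1'}$ for a finite regular cover $F_1'\to F_1$. Let $b_1,\dots,b_m$ be its peripheral elements as in \autoref{lem: finite cover setting surface}.

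The main step is to glue discs onto the boundary of $F_1'$. Let $\Sigma'$ be the closed surface obtained from $F_1'$ by capping each boundary component with a disc, so $\pi_1\Sigma'=\pi_1F_1'/\langle\!\langle b_1,\dots,b_m\rangle\!\rangle$. Since profinite completion is right exact (\autoref{prop: right exact}), $\widehat{\pi_1\Sigma'}=H/K$, where $K$ is the closed normal subgroup of $H$ generated by the $b_j$. By \autoref{lem: finite cover setting surface}, every conjugate of $\overline{\langle b_j\rangle}$ in $H$ fixes an edge of $\T_2$, hence fixes its endpoints. Therefore $K$ is topologically generated by vertex stabilizers. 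By \autoref{prop: quotient tree}, $\T_2/K$ is a profinite tree, and by \autoref{rmk: quotient action}, $H/K=\widehat{\pi_1\Sigma'}$ acts on it.

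Next we check that edge stabilizers for this quotient action are trivial. Take an edge $\bar e=Ke$ with stabilizer $\bar S$ in $H/K$. Its preimage in $H$ is $K\cdot\stab_H(e)$, where one must check that an element sending $e$ to $ke$ can be adjusted by $K$. By \autoref{lem: finite cover setting surface}, $\stab_H(e)$ is trivial or conjugate to some $\overline{\langle b_j\rangle}\subseteq K$, so in either case it lies in $K$, and hence $\bar S=1$. This is the step I expect to be the main obstacle. The concern is orientation: an element of $H$ might map $e$ to an edge $ke$ only after accounting for edge reversal. Since the profinite graph is oriented and $H$ acts preserving orientation, the quotient map identifies only the $K$-orbits, and the stabilizer of $Ke$ in $H/K$ is $\{h: he\in Ke\}/K=K\stab_H(e)/K$. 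So the argument should go through.

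Now \autoref{prop: surface group fix vertex} says $H/K$ fixes a vertex $Kv$ of $\T_2/K$. Then $H=K\cdot\stab_H(v)$, and $K$ is generated by vertex stabilizers, so $H$ is topologically generated by vertex stabilizers. This contradicts the choice of $H$. The same argument covers the case where $\Sigma'$ is a sphere, because then $H/K$ is trivial.
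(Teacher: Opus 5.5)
Your proposal is correct and follows the paper's proof essentially step for step. You pass to an open normal subgroup $H$ not generated by vertex stabilizers, quotient by the closed normal subgroup $K$ generated by the peripheral elements (which is generated by edge, hence vertex, stabilizers), note that the capped-off closed surface group $H/K$ acts on $\T_2/K$ with trivial edge stabilizers, and use the resulting fixed vertex to write $H=K\cdot\mathrm{Stab}_H(v)$, contradicting the choice of $H$. The orientation worry you flag is not an issue: actions on profinite graphs preserve $\partial^{\pm}$ by definition, so the stabilizer of $Ke$ in $H/K$ is exactly $K\,\mathrm{Stab}_H(e)/K$, as you computed.
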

\begin{sloppypar}
\begin{proof}
We assume by contradiction that $\widehat{\pi_1 F_1 }$ does not fix a vertex in $\T_2$. Then, \autoref{prop: subgroup not stabilizer} implies that there is an open normal subgroup $U$ of $\widehat{\pi_1 F_1 }$ which  is not topologically  generated by vertex stabilizers. According to \autoref{prop: lattice of open subgroup}, $U$ is the closure of a finite-index normal subgroup $H$ in $\pi_1 F_1 $. Then, $H$ corresponds to a finite regular cover $(F_1',x_1')$ of $(F_1,x_1)$, where we identify $\pi_1(F_1',x_1')$ with $H$. We omit the basepoints, and  follow the notations in    \autoref{lem: finite cover setting surface} for $\pi_1 F_1'  $. 

Let $N=\langle \!\langle b_1,\cdots, b_m\rangle \! \rangle$  be the normal subgroup of $\pi_1 F_1' $  generated by the peripheral elements. Then, $\overline{N} \unlhd \widehat{\pi_1 F_1' }$ is a closed normal subgroup topologically generated by the conjugates of $b_j$, which, according to \autoref{lem: finite cover setting surface},  are edge stabilizers  (and hence vertex stabilizers) of $\widehat{\pi_1 F_1' }$ acting on $\T_2$. By \autoref{prop: quotient tree}, $  \T_2/\overline{N}$ is a profinite tree, and the   quotient   group $\widehat{\pi_1 F_1' } / \overline{N}$ acts on $  \T_2/\overline{N}$  according to \autoref{rmk: quotient action}. 

We claim that the action of $\widehat{\pi_1 F_1' } / \overline{N}$ on $  \T_2/ \overline{N} $ has trivial edge stabilizers.  Suppose $\bar{e}\in E(  \T_2/\overline {N})$, and let $e\in E(\T_2)$ be a pre-image of $\bar{e}$. Then, $$\Stab_{\widehat{\pi_1F_1'}/\overline{N}}(\bar{e})= (\overline{N}\cdot \Stab_{\widehat{\pi_1F_1'}}(e))/\overline{N}.$$ According to \autoref{lem: finite cover setting surface}, $\Stab_{\widehat{\pi_1F_1'}}(e)\subseteq \overline{N}$. Hence, $ \Stab_{\widehat{\pi_1F_1'}/\overline{N}}(\bar{e}) $ is trivial. 

On the other hand, we claim that $\widehat{\pi_1 F_1' }/\overline{N}$ is isomorphic to $\widehat{\pi_1\Sigma}$ for some closed orientable surface $\Sigma$. Indeed, let $\Sigma$ be the closed surface obtained through capping off each boundary component of $F_1'$ with a disk. Then, van-Kampen's theorem implies that $\pi_1\Sigma \cong \pi_1F_1'/ N$, where we choose $x_1'$ to be the basepoint of $\Sigma$. Thus, according to \autoref{prop: right exact}, $\widehat{\pi_1 F_1' }/\overline{N} \cong \widehat{\pi_1 F_1' /N} \cong \widehat{\pi_1\Sigma}$.

Then, according to  \autoref{prop: surface group fix vertex}, $\widehat{\pi_1 F_1' }/\overline{N}$ fixes a vertex $\bar{v} \in V(  \T_2/\overline{N})$. Let $v\in V(\T_2)$ be a pre-image of $\bar v$. Then, $\widehat{\pi_1 F_1' }\cdot v= \overline{N}\cdot v$, so $\widehat{\pi_1 F_1' }= \overline{N}\cdot \Stab_{\widehat{\pi_1F_1'}}(v)$. Recall that $\overline{N}$ is topologically generated by vertex stabilizers of $\widehat{\pi_1 F_1' }$ acting on $\T_2$, so  $\widehat{\pi_1 F_1' }=U$ is also topologically generated by vertex stabilizers. This yields a contradiction with our assumption. 
\end{proof}
\end{sloppypar}

The proof of \autoref{lem: F fix vertex} is actually inspired by the proof of \cite[Lemma 3.4]{WZ19}. 
By a symmetric proof, we also have:
\begin{lemma}\label{lem: converse}
The subgroup $\widehat{\pi_1 F_2 }\leq \widehat{\pi_1 \Sigma_2 }$, when acting on $\T_1$ via $\phi^{-1}$, fixes a vertex in $V(\T_2)$. 
\end{lemma}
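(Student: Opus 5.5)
The plan is to swap the roles of the indices and run the proof of \autoref{lem: F fix vertex} verbatim, applied to the inverse isomorphism $\phi^{-1}:\widehat{\pi_1\Sigma_2}\to\widehat{\pi_1\Sigma_1}$. (The vertex in question lies in $V(\T_1)$; the ``$V(\T_2)$'' in the statement is evidently a typo.) The only input from the hypotheses of \autoref{thm: cut and glue} is that $\phi(a_1)$ is conjugate to $a_2^\mu$ with $\mu\in\Zx$. Hence $\phi(\overline{\langle a_1\rangle})$ is a conjugate of $\overline{\langle a_2\rangle}$, since $\overline{\langle a_2^\mu\rangle}=\overline{\langle a_2\rangle}$ for a unit $\mu$. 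Equivalently, $\phi^{-1}(\overline{\langle a_2\rangle})$ is a conjugate of $\overline{\langle a_1\rangle}$, so $\phi^{-1}(a_2)$ is conjugate to $a_1^{\mu^{-1}}$. The hypotheses are therefore symmetric in the indices $1,2$.

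With this symmetry I would check, in order, the analogues of the earlier lemmas with indices exchanged.

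First, \autoref{lem: malnormal} and \autoref{lem: acylindrical} are statements about each $\Sigma_i$ separately, so they hold as they stand for $i=1,2$.

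Second, the analogue of \autoref{lem: FA edge stab}: for $\widehat{\pi_1F_2}$ acting on $\T_1$ via $\phi^{-1}$, every edge stabilizer is either trivial or a $\widehat{\pi_1F_2}$-conjugate of $\overline{\langle a_{2,\pm}\rangle}$, and every such conjugate fixes an edge. The proof is the same. An edge stabilizer in $\widehat{\pi_1\Sigma_1}$ of an edge of $\T_1$ is a conjugate of $\overline{\langle a_{1,-}\rangle}$, and its $\phi$-image is a conjugate of $\overline{\langle a_{2,-}\rangle}$, which equals the stabilizer of some edge $\widetilde e\in E(\T_2)$. Intersecting with $\Stab(v)=\widehat{\pi_1F_2}$, where $v$ is the base vertex of $\T_2$, and using $1$-acylindricity of $\T_2$ together with the coset description of edges incident to that vertex, gives the dichotomy.

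Third, the analogue of \autoref{lem: finite cover setting surface}, for finite regular covers $F_2'$ of $F_2$, follows exactly as before from \autoref{lem: finite index closure intersection}.

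Finally, the contradiction argument goes through unchanged. Suppose $\widehat{\pi_1F_2}$ fixes no vertex of $\T_1$. Then \autoref{prop: subgroup not stabilizer} gives an open normal subgroup $U=\overline{\pi_1F_2'}$ that is not topologically generated by vertex stabilizers. The closure $\overline N$ of the normal subgroup generated by the peripheral elements is generated by edge stabilizers, so by \autoref{prop: quotient tree} the quotient $\T_1/\overline N$ is a profinite tree. On this tree the capped-off surface group $\widehat{\pi_1F_2'}/\overline N\cong\widehat{\pi_1\Sigma}$ acts with trivial edge stabilizers, so by \autoref{prop: surface group fix vertex} it fixes a vertex. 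Lifting that vertex shows $U=\overline N\cdot\Stab_U(v)$ is generated by vertex stabilizers, a contradiction.

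I expect no step here to be an actual obstacle. The only point needing care is the symmetry check in the first paragraph: that the conjugacy hypothesis inverts correctly, and that the edge-stabilizer classification uses nothing asymmetric about $\phi$ beyond sending the closed cyclic subgroup of $a_1$ to a conjugate of that of $a_2$.
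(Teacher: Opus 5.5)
Your proposal is correct and follows the paper's approach. The paper proves this lemma only by the remark ``by a symmetric proof'' of \autoref{lem: F fix vertex}, and you have spelled out that symmetry: the inverted conjugacy hypothesis ($\phi^{-1}(a_2)$ conjugate to $a_1^{\mu^{-1}}$), the index-swapped analogues of \autoref{lem: FA edge stab} and \autoref{lem: finite cover setting surface}, and the same contradiction argument. You are also right that the fixed vertex lies in $V(\T_1)$, not $V(\T_2)$, which matches how the lemma is used in \autoref{lem: stab dt}.
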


\begin{lemma}\label{lem: stab dt}
Let $v_2\in V(\T_2)$ be given by \autoref{lem: F fix vertex}. Then, $\phi(\widehat{\pi_1 F_1 })= \Stab_{\widehat{\pi_1\Sigma_2}}(v_2)$. 
\end{lemma}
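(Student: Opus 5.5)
The inclusion $\phi(\widehat{\pi_1 F_1})\subseteq \Stab_{\widehat{\pi_1\Sigma_2}}(v_2)$ is immediate from the choice of $v_2$ in \autoref{lem: F fix vertex}. Since $v_2$ is a vertex of $\T_2$, \autoref{PROP: Bass-Serre} gives $\Stab_{\widehat{\pi_1\Sigma_2}}(v_2)=\hat{k}\,\widehat{\pi_1F_2}\,\hat{k}^{-1}$ for some $\hat{k}\in\widehat{\pi_1\Sigma_2}$. So the task is to show that the inclusion $\phi(\widehat{\pi_1F_1})\subseteq \hat{k}\,\widehat{\pi_1F_2}\,\hat{k}^{-1}$ is an equality.

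For the reverse inclusion, apply the symmetric statement \autoref{lem: converse}. The group $\widehat{\pi_1F_2}$, acting on $\T_1$ via $\phi^{-1}$, fixes a vertex of $\T_1$. Its stabilizer in $\widehat{\pi_1\Sigma_1}$ is a conjugate $\hat{l}\,\widehat{\pi_1F_1}\,\hat{l}^{-1}$, so $\phi^{-1}(\widehat{\pi_1F_2})\subseteq \hat{l}\,\widehat{\pi_1F_1}\,\hat{l}^{-1}$. Combining the two inclusions yields
$$
\widehat{\pi_1F_1}\subseteq \hat{m}\,\widehat{\pi_1F_1}\,\hat{m}^{-1}, \qquad \hat{m}=\phi^{-1}(\hat{k})\,\hat{l}\in\widehat{\pi_1\Sigma_1}.
$$
It then suffices to show that this inclusion is an equality. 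Once it is, both intermediate inclusions are equalities, and in particular $\phi(\widehat{\pi_1F_1})=\hat{k}\,\widehat{\pi_1F_2}\,\hat{k}^{-1}=\Stab(v_2)$.

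The key step, and the main obstacle, is ruling out that a conjugate of $\widehat{\pi_1F_1}$ properly contains $\widehat{\pi_1F_1}$. Set $u=\hat m\cdot v_1$, where $v_1$ is the vertex of $\T_1$ with stabilizer $\widehat{\pi_1F_1}$. The displayed inclusion says that $\widehat{\pi_1F_1}$ fixes both $v_1$ and $u$. If $u\neq v_1$, there are two cases.
\begin{itemize}
\item If $u$ and $v_1$ are not adjacent, \autoref{lem: acylindrical} forces the nontrivial group $\widehat{\pi_1F_1}$ to be trivial, a contradiction.
\item If $u$ and $v_1$ are joined by an edge, then $\widehat{\pi_1F_1}$ lies in an edge stabilizer, which is a conjugate of $\overline{\langle a_{1,-}\rangle}$ and hence procyclic. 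But $\widehat{\pi_1F_1}$ is the profinite completion of a free group of rank at least $3$ (genus at least $2$), so it is not procyclic. For instance, its abelianization is $\widehat{\Z}^{r}$ with $r\ge 3$.
\end{itemize}
Hence $u=v_1$, so $\hat m\in\widehat{\pi_1F_1}$, and the displayed inclusion is an equality.

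The only point needing care is that the vertex stabilizers are exactly the conjugates of the vertex groups, with no proper containment. This comes from the injectivity of $(\widehat{G_i},X)$ (\autoref{THM: Efficient completion}) together with \autoref{PROP: Bass-Serre}, and this is what the argument above uses.
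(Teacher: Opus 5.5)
Your proof is correct and follows essentially the same route as the paper. Your vertex $u=\hat m\cdot v_1$ is exactly the paper's $v_1'$ fixed by $\phi^{-1}(\Stab_{\widehat{\pi_1\Sigma_2}}(v_2))$, and both arguments rule out $u\neq v_1$ because $\widehat{\pi_1F_1}$ would then lie in a procyclic edge stabilizer. The paper obtains the fixed edge in one step from \cite[Corollary 4.1.6]{Rib17}. You instead split into the non-adjacent case, handled by \autoref{lem: acylindrical}, and the adjacent case, where fixing both endpoints fixes the unique joining edge because $\T_1$ has no multiple edges; that justification is left implicit in your write-up.
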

\begin{proof}
\autoref{lem: F fix vertex} implies that $\phi(\widehat{\pi_1 F_1 })\subseteq  \Stab_{\widehat{\pi_1\Sigma_2}}(v_2)$, so it suffices to show that $\phi^{-1} (\Stab_{\widehat{\pi_1\Sigma_2}}(v_2)) \subseteq  \widehat{\pi_1 F_1 }$. By \autoref{PROP: Bass-Serre}, $\Stab_{\widehat{\pi_1\Sigma_2}}(v_2)$ is a conjugate of $\widehat{\pi_1 F_2 }$ in $\widehat{\pi_1 \Sigma_2 }$, so \autoref{lem: converse} implies that $\Stab_{\widehat{\pi_1\Sigma_2}}(v_2)$, when acting on $\T_1$ via $\phi^{-1}$, fixes a vertex $v_1'$ in $\T_1$. Let $v_1\in V(\T_1)$ be the vertex corresponding to the coset $\widehat{\pi_1 F_1 }$. We claim that $v_1'=v_1$. 

Suppose by contradiction that $v_1'\neq v_1$. Recall that $\phi(\widehat{\pi_1 F_1 })\subseteq  \Stab_{\widehat{\pi_1\Sigma_2}}(v_2)$, so the subgroup $\widehat{\pi_1 F_1 }=\phi^{-1}(\phi(\widehat{\pi_1 F_1 }))\subseteq \phi^{-1}(\Stab_{\widehat{\pi_1\Sigma_2}}(v_2))$, when acting on $\T_1$,  fixes $v_1'$. Note that $\widehat{\pi_1 F_1 }$ also fixes $v_1$.  When $v_1'\neq v_1$,  \cite[Corollary 4.1.6]{Rib17} implies that $\widehat{\pi_1 F_1 }$ fixes an edge in $\T_1$. However, the edge stabilizers of $\widehat{\pi_1 \Sigma_1 }$ acting on $\T_1$ are abelian since they are conjugate to $\overline{\langle a_{1,-}\rangle }\cong \widehat{\Z}$, while $\widehat{\pi_1 F_1 }$ is a non-abelian free profinite group  since the genus of $\Sigma_1$ is at least 2. 
This yields a contradiction. Hence, we have proven that $v_1'=v_1$. 

Therefore, $\phi^{-1}(\Stab_{\widehat{\pi_1\Sigma_2}}(v_2)) \subseteq \Stab_{\widehat{\pi_1\Sigma_1}}(v_1)= \widehat{\pi_1 F_1}$. 
\end{proof}

\begin{lemma}\label{lem: unoriented isomorphism}
There exists an isomorphism $\Phi: \T_1\to \T_2$ as unoriented combinatorial graphs that is  equivariant with respect to  $\phi: \widehat{\pi_1 \Sigma_1 }\to \widehat{\pi_1 \Sigma_2 }$. 
\end{lemma}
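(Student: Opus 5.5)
We will construct $\Phi$ directly from the stabilizer data. By \autoref{PROP: Bass-Serre}, the vertices of $\T_i$ are the cosets $h\widehat{\pi_1F_i}$, and the vertex stabilizers are exactly the conjugates of $\widehat{\pi_1F_i}$. The edges of $\T_i$ are the cosets $h\overline{\langle a_{i,-}\rangle}$, with stabilizers the conjugates of $\overline{\langle a_{i,-}\rangle}$. Let $v_1\in V(\T_1)$ be the vertex $\widehat{\pi_1F_1}$, and let $v_2$ be given by \autoref{lem: F fix vertex}. By \autoref{lem: stab dt}, $\phi(\Stab(v_1))=\Stab(v_2)$.

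\textbf{Step 1 (vertices).} Define $\Phi(h\cdot v_1)=\phi(h)\cdot v_2$ for $h\in\widehat{\pi_1\Sigma_1}$. The quotient graph has a single vertex, so $V(\T_i)$ is a single orbit. Hence this map is a well-defined $\phi$-equivariant bijection $V(\T_1)\to V(\T_2)$, since $\phi$ carries $\Stab(v_1)$ onto $\Stab(v_2)$.

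\textbf{Step 2 (key claim).} Two distinct vertices $u,w\in V(\T_i)$ are joined by an edge if and only if $\Stab(u)\cap\Stab(w)\neq 1$. One direction is immediate: if $u,w$ are joined by an edge $e$, then $\Stab(u)\cap\Stab(w)\supseteq\Stab(e)\cong\widehat{\Z}$. The converse is exactly \autoref{lem: acylindrical}.

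\textbf{Step 3 (multiplicity).} We also need to know how many edges join two adjacent vertices. Suppose two distinct edges $e,e'$ join $u$ and $w$. Then $\Stab(e)$ and $\Stab(e')$ are distinct conjugates of $\overline{\langle a_{i,-}\rangle}$, so by \autoref{lem: malnormal} they intersect trivially. On the other hand, $\Stab(u)\cap\Stab(w)$ fixes the chain $[u,w]$ by \cite[Corollary 4.1.6]{Rib17}. Since $\T_i$ has no circuits, I expect this chain to be a single edge, so there is exactly one edge between adjacent vertices. (Loops cannot occur in a tree.) In that case $\Stab(u)\cap\Stab(w)=\Stab(e)$ for the unique edge $e$.

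\textbf{Step 4 (edges).} Since $\phi$ is a group isomorphism, Step 1 gives $\phi(\Stab(u)\cap\Stab(w))=\Stab(\Phi u)\cap\Stab(\Phi w)$. So by Step 2, adjacency is preserved in both directions. By Step 3, $\Phi$ extends uniquely to edges: send the unique edge $[u,w]$ to the unique edge $[\Phi u,\Phi w]$. The extension is $\phi$-equivariant because it is determined by its endpoints, and it is a bijection because the same construction applied to $\phi^{-1}$ gives an inverse. Orientation is not tracked, which is why the statement only asserts an isomorphism of unoriented graphs.

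The main obstacle is Step 3: ruling out multiple edges between the same pair of vertices in a profinite tree. I would first try the acyclicity of profinite trees, \cite[Exercise 2.4.4]{Rib17}, which says two distinct edges with the same endpoints would form a circuit. If that is insufficient, the fallback is malnormality: any element of $\Stab(u)\cap\Stab(w)$ fixes the whole chain, so it fixes both edges, and by \autoref{lem: malnormal} it must then be trivial. This contradicts the nontriviality established in Step 2.
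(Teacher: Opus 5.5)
Your proposal is correct and follows the paper's proof. You define the vertex bijection $g\cdot v_1\mapsto\phi(g)\cdot v_2$ using \autoref{lem: stab dt}, characterize adjacency by nontrivial intersection of vertex stabilizers via \autoref{lem: acylindrical}, and transport this through $\phi$. The paper handles your Step 3 with the single remark that $\T_1,\T_2$ have no loops or multiple edges, which is your first justification (profinite trees contain no circuits, \cite[Exercise 2.4.4]{Rib17}); the malnormality fallback is unnecessary.
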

\begin{proof}
Let $v_1\in V(\T_1)$ be the vertex corresponding to the coset $\widehat{\pi_1 F_1 }$, and let $v_2\in V(\T_2)$ be given by \autoref{lem: F fix vertex}. Define $\Phi_V: V(\T_1)\to V(\T_2)$ by $\Phi(g\cdot v_1)=\phi(g) \cdot v_2$ for any $g\in \widehat{\pi_1 \Sigma_1 }$. Indeed, $\Phi_V $ is a well-defined bijection, since $\widehat{\pi_1 \Sigma_i }$ acts transitively on $V(\T_i)$ and $\phi(\stab_{\widehat{\pi_1\Sigma_1}}(v_1))= \stab_{\widehat{\pi_1\Sigma_2}}(v_2)$ by \autoref{lem: stab dt}. Clearly, $\Phi_V$ is $\phi$-equivariant. 

To show that $\Phi_V$ yields a $\phi$-equivariant  isomorphism $\Phi: \T_1\to \T_2$ as unoriented combinatorial graphs, it suffices to show that for any $w,w'\in V(\T_1)$, $w$ and $w'$ are joined by an edge if and only if $\Phi_V(w)$ and $\Phi_V(w')$ are joined by an edge, since $\T_1$ and $\T_2$ have no loops or multiple edges. We claim that $w$ and $w'$ are joined by an edge if and only if $\Stab_{\widehat{\pi_1\Sigma_1}}(w)\cap \Stab_{\widehat{\pi_1\Sigma_1}}(w')$ is non-trivial. On one hand, if they are joined by an edge $e\in E(\T_1)$, then the non-trivial subgroup $\Stab_{\widehat{\pi_1\Sigma_1}}(e)$ stabilizes both $w$ and $w'$. On the other hand, if they are not joined by an edge, then \autoref{lem: acylindrical} implies that $\Stab_{\widehat{\pi_1\Sigma_1}}(w)\cap \Stab_{\widehat{\pi_1\Sigma_1}}(w')$ is trivial. Similarly, $\Phi_V(w)$ and $\Phi_V(w')$ are joined by an edge if and only if $\Stab_{\widehat{\pi_1\Sigma_2}}(\Phi_V(w))\cap \Stab_{\widehat{\pi_1\Sigma_2}}(\Phi_V(w'))$ is non-trivial. The conclusion then follows from the $\phi$-equivariance of $\Phi_V$, which implies that $\phi(\Stab_{\widehat{\pi_1\Sigma_1}}(w))= \Stab_{\widehat{\pi_1\Sigma_2}}(\Phi_V(w))$ and $\phi(\Stab_{\widehat{\pi_1\Sigma_1}}(w'))= \Stab_{\widehat{\pi_1\Sigma_2}}(\Phi_V(w'))$. 
\end{proof}

We now present the proof for \autoref{thm: cut and glue}.

\begin{sloppypar}
\begin{proof}[Proof of \autoref{thm: cut and glue}]
Let $\Phi:\T_1\to \T_2$ be the $\phi$-equivariant isomorphism of unoriented combinatorial graphs given by \autoref{lem: unoriented isomorphism}. Since $a_{i,-}$  is conjugate to  $a_i$, we can assume that $\phi(a_{1,-})=\hat{h}a_{2,-}^{\mu} \hat{h}^{-1}$ for some $\hat{h} \in \widehat{\pi_1 \Sigma_2 }$.  Consider the edges $e=\overline{\langle a_{1,-}\rangle }\in E(\T_1)$ and $e'=\overline{\langle a_{2,-}\rangle }\in E(\T_2)$. 
Then, 
\begin{equation*}
\Stab_{\widehat{\pi_1\Sigma_2}}(\hat h   e')=\hat{h} \overline{\langle a_{2,-}\rangle } \hat{h}^{-1}=\phi(\overline{\langle a_{1,-}\rangle })=\phi(\Stab_{\widehat{\pi_1\Sigma_1}}(e))=\Stab_{\widehat{\pi_1\Sigma_2}}(\Phi(e)),
\end{equation*}
where the last equality follows from the $\phi$-equivariance of $\Phi$. 
According to \autoref{lem: malnormal}, this implies $\Phi(e)=\hat h  e'$. 

Since $\Phi$ is an isomorphism of unoriented combinatorial graphs, there are two possible cases. 
\begin{itemize}
\item Case (i): $\Phi(\partial^{-}(e))=\partial^{-}(\hat h  e')$ and $\Phi(\partial^{+}(e))=\partial^{+}(\hat h  e')$. 
\item Case (ii): $\Phi(\partial^{-}(e))=\partial^{+}(\hat h  e')$ and $\Phi(\partial^{+}(e))=\partial^{-}(\hat h  e')$. 
\end{itemize}

To simplify notations, we denote
\begin{equation*}
\begin{gathered}
v^-=\partial^{-}(e)=\widehat{\pi_1 F_1 }\in V(\T_1),\;v^+=\partial^+(e)= \gamma_1\widehat{\pi_1 F_1 }\in V(\T_1) \\
w^-=\partial^{-}(\hat h e')= \hat{h}\widehat{\pi_1 F_2 }\in V(\T_2),\;w^+=\partial^{+}(\hat h  e')=\hat{h}\gamma_{2}\widehat{\pi_1 F_2 }\in V(\T_2).
\end{gathered}
\end{equation*}

In case (i), $\phi$ sends $\Stab_{\widehat{\pi_1\Sigma_1}}(v^-)=\widehat{\pi_1 F_1 }$ to $\Stab_{\widehat{\pi_1\Sigma_2}}(w^-)=\hat{h}\widehat{\pi_1 F_2 }\hat{h}^{-1}$. 
Thus, we obtain an isomorphism $\psi: \widehat{\pi_1 F_1 } \to \widehat{\pi_1 F_2 }$ by  setting 
$$ \psi= {\left.\left(\Inn_{\hat{h}}^{-1}\circ \phi\right)\right|}_{\widehat{\pi_1F_1}}\,.$$
Then, property \ref{cag1} stated in  the theorem holds if we take $\hat{g}=\hat{h}$.
In addition, $$\phi(\gamma_1)w^-=\Phi(\gamma_1v^-)=\Phi(v^+)=w^+=\hat{h}\gamma_2\hat{h}^{-1}w^-.$$ Hence, $\hat{h}\gamma_2^{-1}\hat{h}^{-1}\phi(\gamma_1) \in \Stab_{\widehat{\pi_1\Sigma_2}}(w^-)= \hat{h} \widehat{\pi_1 F_2 }\hat{h}^{-1}$. In other words, there exists $\hat\delta\in \widehat{\pi_1 F_2 }$ such that $$\phi(\gamma_1)= \hat{h} \gamma_2\hat{\delta} \hat{h}^{-1}=\hat{g} \gamma_2 \hat{\delta} \hat{g}^{-1},$$ and property \ref{cag2} in the statement holds. Finally, $\psi(a_{1,-})=a_{2,-}^\mu$ by construction;  and 
\begin{equation*}
\begin{aligned}
\psi(a_{1,+})=&\hat{h}^{-1} \phi(a_{1,+}) \hat{h}=\hat{h}^{-1} \phi(\gamma_1^{-1}a_{1,-} \gamma_1)\hat{h}\\=&\hat{\delta}^{-1} \gamma_2^{-1} \hat{h}^{-1} \phi(a_{1,-}) \hat{h} \gamma_2 \hat{\delta}
=\hat{\delta}^{-1} \gamma_2^{-1} a_{2,-}^\mu \gamma_2 \hat{\delta} = \hat{\delta}^{-1} a_{2,+}^{\mu} \hat{\delta},
\end{aligned}
\end{equation*}
 which is a conjugate of $a_{2,+}^{\mu} $ within $\widehat{\pi_1 F_2 }$. Thus, property \ref{cag3} also holds. 

In case (ii), $\phi$ sends $\Stab_{\widehat{\pi_1\Sigma_1}}(v^-)= \widehat{\pi_1 F_1 }$ to $\Stab_{\widehat{\pi_1\Sigma_2}}(w^+)= \hat{h} \gamma_2 \widehat{\pi_1 F_2 } \gamma_2^{-1} \hat{h}^{-1}$. 
We then obtain an isomorphism $\psi:\widehat{\pi_1 F_1 }\to \widehat{\pi_1 F_2 }$ by setting 
$$\psi=\left.\left( \Inn^{-1}_{\hat{h}\gamma_2}\circ \phi \right) \right|_{\widehat{\pi_1F_1}}\,. $$
Then, property \ref{cag1} holds if we take $\hat{g}=\hat{h}\gamma_2\in \widehat{\pi_1 \Sigma_2 }$. 
In addition, $$\phi(\gamma_1)w^+=\Phi(\gamma_1v^-)=\Phi(v^+)= w^-= \hat{h} \gamma_2^{-1} \hat{h}^{-1} w^+.$$ 
Hence, $\hat{h}\gamma_2 \hat{h}^{-1} \phi(\gamma_1) \in \Stab_{\widehat{\pi_1\Sigma_2}}(w^+)= \hat{h} \gamma_2 \widehat{\pi_1 F_2 } \gamma_2^{-1} \hat{h}^{-1}$. In other words, there exists $\hat{\delta}\in \widehat{\pi_1 F_2 }$ such that $$\phi(\gamma_1)= \hat{h} \hat \delta \gamma_2^{-1} \hat{h}^{-1}= \hat{g}\gamma_2^{-1} \hat \delta \hat{g}^{-1}.$$ Thus, property \ref{cag2} holds. Finally, $$\psi(a_{1,-})= \gamma_2^{-1} \hat{h}^{-1} \phi(a_{1,-}) \hat{h} \gamma_2= \gamma_2^{-1} a_{2,-}^{\mu} \gamma_2= a_{2,+}^{\mu},$$ and 
$$
\psi(a_{1,+})= \gamma_2^{-1} \hat{h}^{-1} \phi(\gamma_1^{-1} a_{1,-} \gamma_1) \hat{h} \gamma_2= \hat{\delta}^{-1} \hat{h}^{-1} \phi(a_{1,-}) \hat{h} \hat{\delta}=   \hat{\delta}^{-1} a_{2,-}^{\mu} \hat{\delta}
$$
is a conjugate of $a_{2,-}^{\mu}$ within $\widehat{\pi_1 F_2 }$. Thus, property \ref{cag3} also holds. 
\end{proof}
\end{sloppypar}

\begin{remark}\label{rmk: orientation reversal}
\begin{enumerate}[leftmargin=*, label=(\arabic*)]
\item\label{rmkor1} Since $\widehat{\pi_1 \Sigma_i }$ acts transitively on $E(\T_i)$, the isomorphism $\Phi$ either preserves the orientations of all the edges or reverses the orientations of all the edges. This orientation reversal could possibly happen, even when $\phi$ is the profinite completion of an isomorphism between $\pi_1 \Sigma_1 $ and $\pi_1 \Sigma_2 $. One may consider for example an orientation reversing homeomorphism of the surface $\Sigma$ that preserves the oriented simple closed curve $\alpha$. This homeomorphism switches the two sides of $\alpha$, yielding an orientation reversal on the HNN-structure. 
\item According to \cite[Lemma 3.12]{Xu25A}, $\Phi$ descends  to a congruent isomorphism between the two graphs of profinite groups $(\widehat{G_1},X)$ and $(\widehat{G_2},X)$, up to possibly reorienting the edges. This would imply items \ref{cag1} and \ref{cag3} of \autoref{thm: cut and glue}. However, we  provide a concrete proof here for \autoref{thm: cut and glue} to obtain the precise conjugator in item \ref{cag2} of \autoref{thm: cut and glue}. 
\item The proof of \autoref{lem: unoriented isomorphism} (and the extension to a congruent isomorphism) also works for non-separating simple closed curves, and even for multicurves on $\Sigma_i$, where the HNN-extension is replaced by amalgamations or general splittings. We only prove the case of non-separating simple closed curves here for simplicity, as this is the only case that will be used later. 
\end{enumerate}
\end{remark}

\subsection{The second homology}
In this subsection, we introduce an application of \autoref{thm: cut and glue}. 

\begin{sloppypar}
Let $\Sigma$ be a closed orientable surface with positive genus. Then, $\Sigma$ is aspherical, and we can identify $H_\ast(\Sigma;\Z)$ with the group homology $H_\ast(\pi_1\Sigma;\Z)$.  
According to \autoref{prop: Zhat homology} and \autoref{prop: good},   we have canonical isomorphisms $\tensor H_\ast(\Sigma;\Z)\cong H_\ast(\pi_1\Sigma;\widehat{\Z})\cong \H_\ast(\widehat{\pi_1\Sigma};\widehat{\Z})$.
\end{sloppypar}  
\begin{proposition}\label{prop: surface homology coefficient}
Follow the same assumption of \autoref{thm: cut and glue}. Choose  arbitrary orientations on $\Sigma_1$ and $\Sigma_2$, and let $[\Sigma_i]\in H_2(\Sigma_i;\Z)$ be the corresponding fundamental classes.  Then, the isomorphism
$$
\phi_\ast: \tensor H_2(\Sigma_1;\Z) \cong \H_2(\widehat{\pi_1 \Sigma_1 };\widehat\Z)\longrightarrow \H_2(\widehat{\pi_1 \Sigma_2 };\widehat\Z)\cong\tensor H_2(\Sigma_2;\Z)
$$
sends $1\otimes[\Sigma_1]$ to $\pm\mu \otimes [\Sigma_2]$. 
\end{proposition}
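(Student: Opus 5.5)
I plan to compute $\phi_\ast$ on $H_2$ through cap products with degree-one classes dual to $\gamma_i$, reducing it to the behaviour of $\phi$ on the curve $a_i$, which is controlled by $\mu$. The basic relation is this. Let $\theta_i\in H^1(\Sigma_i;\Z/n)$ be the class taking value $1$ on $\gamma_i$ and vanishing on $\pi_1F_i$, i.e.\ the Poincar\'e dual of $\alpha_i$ mod $n$. Then $[\Sigma_i]\cap\theta_i=\pm[\alpha_i]$ in $H_1(\Sigma_i;\Z/n)$, with a fixed sign convention.

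By \autoref{prop: good} and \autoref{prop: homologically good}, all these classes transfer to $\widehat{\pi_1\Sigma_i}$, and the relation continues to hold in profinite (co)homology by \autoref{nat2}. Since $H_2(\widehat{\pi_1\Sigma_i};\widehat\Z)\cong\widehat\Z$ is detected by its reductions mod $n$ for all $n$, it suffices to show that $\phi_\ast[\Sigma_1]=\pm\mu[\Sigma_2]$ mod $n$ for every $n$, with the sign independent of $n$.

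The steps are as follows.
\begin{enumerate}[leftmargin=*,label=(\arabic*)]
\item Apply \autoref{thm: cut and glue}. After replacing $\phi$ by $\Inn_{\hat g}^{-1}\circ\phi$, which does not change $\phi_\ast$ on homology, we get $\phi(\widehat{\pi_1F_1})=\widehat{\pi_1F_2}$ and $\phi(\gamma_1)=\gamma_2^{\pm1}\hat\delta$ with $\hat\delta\in\widehat{\pi_1F_2}$.
\item Deduce $\phi^\ast\theta_2=\pm\theta_1$. The class $\theta_i$ is the homomorphism $\widehat{\pi_1\Sigma_i}\to\Z/n$ that kills the closure of $\pi_1F_i$ and sends $\gamma_i\mapsto1$; these two facts, combined with (1), force $\theta_2\circ\phi=\pm\theta_1$. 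Call this sign $\epsilon$.
\item Use \autoref{nat1} to get
\[
\phi_\ast\bigl([\Sigma_1]\cap\phi^\ast\theta_2\bigr)=\phi_\ast[\Sigma_1]\cap\theta_2 .
\]
The left-hand side equals $\epsilon\,\phi_\ast[\alpha_1]$ up to the fixed convention sign. The image of $a_1$ under $\phi$ is conjugate to $a_2^\mu$, so $\phi_\ast[\alpha_1]=\mu[\alpha_2]$ in $H_1(\widehat{\pi_1\Sigma_2};\Z/n)$, which is the abelianization by \autoref{lem: abelianization}.
\item Write $\phi_\ast[\Sigma_1]=c\,[\Sigma_2]$ with $c\in\widehat\Z$; this is possible because $\phi_\ast$ is an isomorphism of $H_2\cong\widehat\Z$, and then $c\in\Zx$. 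The right-hand side of (3) becomes $c\,[\Sigma_2]\cap\theta_2=\pm c[\alpha_2]$, while the left-hand side is $\pm\epsilon\mu[\alpha_2]$. Since $[\alpha_2]$ has order exactly $n$ in $H_1(\Sigma_2;\Z/n)$ (because $\alpha_2$ is non-separating, hence primitive in homology), we get $c\equiv\pm\mu \pmod n$.
\end{enumerate}

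The sign $\epsilon$ comes from the $\gamma_2^{\pm1}$ of \autoref{thm: cut and glue}, so it is independent of $n$. Letting $n$ vary then gives $c=\pm\mu$ in $\widehat\Z$.

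I expect the main obstacle to be step (2) together with keeping the bookkeeping consistent. We must check that the homomorphism killing $\overline{\pi_1F_i}$ is well defined on $\widehat{\pi_1\Sigma_i}$; it is, because it is the completion of the abstract homomorphism $\pi_1\Sigma_i\to\Z$ killing $\pi_1F_i$, which exists since $a_{i,+}$ and $a_{i,-}$ lie in $\pi_1F_i$. We must also check that the sign conventions of the profinite cap product match the abstract ones, which is what \autoref{nat2} provides. Finally, the identification of $\mu[\alpha_2]$ is insensitive to conjugation, which is why the conjugacy statement on $a_1$ suffices.
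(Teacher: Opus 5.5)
Your proposal is correct and follows essentially the same route as the paper: mod-$n$ Poincar\'e duality, naturality of the cap product (\autoref{nat1}, \autoref{nat2}), $\phi_\ast[\alpha_1]_n=\mu[\alpha_2]_n$ read off from the abelianization, and the $\gamma_2^{\pm1}\hat\delta$ clause of \autoref{thm: cut and glue}, which supplies a sign independent of $n$. The only difference is the order of steps: you first compute $\phi^\ast\theta_2=\pm\theta_1$ and then compare coefficients using the order of $[\alpha_2]_n$, whereas the paper first derives $\phi^\ast(\kappa^{-1}\mu\,\PD[\alpha_2]_n)=\PD[\alpha_1]_n$ and then pairs both sides with $[\gamma_1]_n$.
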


\begin{proof}
The proof invokes Poincar\'e duality, which we explain as follows. Let $\Sigma$ be one of $\Sigma_1$ and $\Sigma_2$. Poincar\'e duality gives an isomorphism
\begin{equation*}
\begin{tikzcd}[column sep=large]
{\PD:\; H^1(\Sigma;\Z) } \arrow[r,"{  [\Sigma] \cap -}"] & {H_1(\Sigma;\Z).}
\end{tikzcd}
\end{equation*}
By a slight abuse of notation, its inverse map  is also denoted by $\PD: H_1(\Sigma;\Z)\to H^1(\Sigma;\Z)$. For any oriented loops $\alpha,\beta$ on $\Sigma$, let $[\alpha],[\beta]\in H_1(\Sigma;\Z)$ denote their homology classes. Then $\langle \PD[\alpha],[\beta]\rangle=I( \beta , \alpha )$, where $I(-,-)$ denotes the algebraic intersection number. 

There is also a  mod $ n$ version of Poincar\'e duality. For any class $[\omega]$ in $H_\ast(\Sigma;\Z)$ or $H^\ast(\Sigma;\Z)$, let $[\omega]_n$ denote its image in  $H_\ast(\Sigma;\Z/n)$ or $H^\ast(\Sigma;\Z/n)$. Then, by a further abuse of notation, we have an isomorphism
\begin{equation*}
\begin{tikzcd}[column sep=large]
\PD:\; {H^1(\Sigma;\Z/n)} \arrow[r,"{ [\Sigma]_n  \cap -}"] & {H_1(\Sigma;\Z/n).}
\end{tikzcd}
\end{equation*}
The inverse of this map is also denoted by $\PD: H_1(\Sigma;\Z/n)\to H^1(\Sigma;\Z/n)$. It is clear that $\PD([\omega]_n)=(\PD[\omega])_n$ for any $[\omega]\in H_1(\Sigma;\Z)$ or $H^1(\Sigma;\Z)$, so we simply denote it by $\PD[\omega]_n$.  For the oriented loops $\alpha,\beta$ as above, $\langle \PD[\alpha]_n,[\beta]_n\rangle =I(\beta,\alpha)\pmod n$. 

Now let us return to the proof. 
Recall that $\phi_\ast: \tens H_2(\Sigma_1;\Z)\to \tens H_2(\Sigma_2;\Z)$ is an isomorphism of $\widehat{\Z}$-modules, so there exists $\kappa\in \Zx$ such that $\phi_\ast(1\otimes [\Sigma_1])=\kappa\otimes [\Sigma_2]$, and it suffices to prove $\kappa=\pm \mu$. 

For any $n\in \N$, $\phi$ induces isomorphisms 
$$
\phi_\ast: H_\ast(\Sigma_1;\Z/n) \cong \H_\ast(\widehat{\pi_1\Sigma_1};\Z/n) \tto \H_\ast(\widehat{\pi_1\Sigma_2};\Z/n) \cong  H_\ast(\Sigma_2;\Z/n)
$$
and 
$$
\phi^\ast:H^\ast(\Sigma_2;\Z/n)\cong \H^\ast(\widehat{\pi_1\Sigma_2};\Z/n) \tto \H^\ast(\widehat{\pi_1\Sigma_1};\Z/n) \cong H^\ast(\Sigma_1;\Z/n) . 
$$
In particular, $\phi_\ast([\Sigma_1]_n)=\kappa\cdot [\Sigma_2]_n$ since the following diagram commutes. 
\begin{equation}\label{diagmoq}
\begin{tikzcd}[column sep=0.61cm]
H_\ast(\Sigma_1;\Z) \arrow[r] \arrow[d] & \H_\ast(\widehat{\pi_1\Sigma_1};\widehat{\Z}) \arrow[r,"\phi_\ast"] \arrow[d] & \H_\ast(\widehat{\pi_1\Sigma_2};\widehat{\Z}) \arrow[d] & H_\ast(\Sigma_2;\Z) \arrow[l] \arrow[d]\\
H_\ast(\Sigma_1;\Z/n) \arrow[r, "\cong" ]   & \H_\ast(\widehat{\pi_1\Sigma_1};\Z/n) \arrow[r,"\phi_\ast"]   & \H_\ast(\widehat{\pi_1\Sigma_2};\Z/n)  & H_\ast(\Sigma_2;\Z/n) \arrow[l,"\cong"']  
\end{tikzcd}
\end{equation}

We  claim that $\phi^\ast(\kappa^{-1}\mu \cdot\PD[\alpha_2]_n)= \PD[\alpha_1]_n$.  
 Combining \autoref{nat1} with \autoref{nat2}, we have the following  commutative diagram. 
\begin{equation*}
\begin{tikzcd}[column sep=0.24cm]
{H_2(\Sigma_1;\Z/n)} \arrow[r, symbol=\times]                                                               \arrow[d, "\phi_\ast"',"\cong"]   & {H^1(\Sigma_1;\Z/n)} \arrow[rrrr, "\cap"]              &  &  &  & {H_1(\Sigma_1;\Z/n)} \arrow[d, "\phi_\ast","\cong"']               \\
{H_2(\Sigma_2;\Z/n)} \arrow[r, symbol=\times]                                                                   & {H^1(\Sigma_2;\Z/n)}\arrow[rrrr, "\cap"]      \arrow[u,"\phi^\ast","\cong"']          &  &  &  & {H_1(\Sigma_2;\Z/n)}  
\end{tikzcd}
\end{equation*}
 As a consequence, for any $[\omega]\in H^1(\Sigma_2;\Z/n)$, 
$$
\phi_\ast(\PD(\phi^\ast[\omega]))= \phi_\ast( [\Sigma_1]_n\cap\phi^\ast[\omega] )=  \phi_\ast([\Sigma_1]_n)\cap[\omega]=\kappa [\Sigma_2]_n\cap[\omega] = \kappa \cdot \PD[\omega]. 
$$
If we take $[\omega]=\kappa^{-1}\mu\cdot \PD[\alpha_2]_n$, then $\phi_\ast(\PD(\phi^\ast(\kappa^{-1}\mu\cdot \PD[\alpha_2]_n))) = \mu [\alpha_2]_n$.  
Note that  \autoref{lem: abelianization} together with diagram (\ref{diagmoq}) implies that  $\phi_\ast([\alpha_1]_n)=\mu [\alpha_2]_n$. 
 Hence,
$$
\phi^\ast(\kappa^{-1}\mu\cdot \PD[\alpha_2]_n)=\PD(\phi_\ast^{-1}(\mu[\alpha_2]_n))=\PD[\alpha_1]_n.
$$ 


In addition,   \autoref{thm: cut and glue}~\ref{cag2} implies that there exist  $\epsilon\in \{\pm 1\}$ and $\hat{\delta}\in \widehat{\pi_1F_2}$ such that $\phi(\gamma_1)$  is conjugate to  $\gamma_2^\epsilon \hat{\delta}$ in $\widehat{\pi_1\Sigma_2}$. By 
 \autoref{lem: abelianization},   $\phi_\ast:\H_1(\widehat{\pi_1\Sigma_1};\widehat{\Z})\to \H_1(\widehat{\pi_1\Sigma_2};\widehat{\Z})$ sends $[\gamma_1]$ to $\epsilon[\gamma_2]+[\hat{\delta}]$, where $[\hat{\delta}]$ belongs to the image of $  \H_1(\widehat{\pi_1F_2};\widehat{\Z})$ in  $\H_1(\widehat{\pi_1\Sigma_2};\widehat{\Z})$. Let $[x_n]$ be the image of $[\hat{\delta}]$ in $\H_1(\widehat{\pi_1\Sigma_2};\Z/n)\cong H_1(\Sigma_2;\Z/n)$. Then, $\phi_\ast([\gamma_1]_n)=\epsilon[\gamma_2]_n+[x_n]$, and $[x_n]$ belongs to the image of $$\mathrm{incl}_\ast: H_1(F_2;\Z/n) \cong \H_1(\widehat{\pi_1F_2};\Z/n)\tto \H_1(\widehat{\pi_1\Sigma_2};\Z/n) \cong H_1(\Sigma_2;\Z/n). $$
Since the loop $\alpha_2$ is disjoint with the subsurface $F_2$, the restriction of $\PD[\alpha_2]$ on $H^1(F_2;\Z)$ is zero. 
In particular,  $\langle \PD[\alpha_2]_n, [x_n]\rangle =0$. 

We  consider the pairings between these (co)homology classes. For any $n\in \N$, 
\begin{equation*}
\begin{aligned}
I(\gamma_1,\alpha_1)\,(\mathrm{mod}\,n)=&\langle \PD[\alpha_1]_n,[\gamma_1]_n\rangle =\langle \phi^\ast (\kappa^{-1}\mu \cdot \PD[\alpha_2]_n),[\gamma_1]_n\rangle\\ 
= &  \langle\kappa^{-1}\mu \cdot \PD[\alpha_2]_n, \phi_\ast([\gamma_1]_n)\rangle =\kappa^{-1}\mu \langle \PD[\alpha_2]_n, \epsilon [\gamma_2]_n+[x_n]\rangle\\
=&\kappa^{-1}\mu \langle \PD[\alpha_2]_n, \epsilon [\gamma_2]_n \rangle = \kappa^{-1}\mu \epsilon \cdot I(\gamma_2,\alpha_2)\,(\mathrm{mod}\,n).
\end{aligned}
\end{equation*}
Recall that $\gamma_i$ intersects with $\alpha_i$ transversely at one point, so  $I(\gamma_i,\alpha_i) \in \{\pm1\}$. Therefore,  
$$\kappa= \left (\epsilon \cdot I(\gamma_1,\alpha_1) \cdot  I(\gamma_2,\alpha_2)\right )\mu \pmod n, $$ 
%
where $\epsilon \cdot I(\gamma_1,\alpha_1) \cdot  I(\gamma_2,\alpha_2)\in \{\pm1\}$ is independent with $n$. 
In other words,  $\kappa=\pm \mu \in \widehat{\Z}$, which finishes the proof. 
\end{proof}

\begin{remark}
The $\pm$-sign appears in \autoref{prop: surface homology coefficient} not only because of the arbitrary  choices of orientations on $\Sigma_1$ and $\Sigma_2$, but also because of the possible orientation reversal in the HNN-extension described in \autoref{rmk: orientation reversal}~\ref{rmkor1}, which yields the possibilities of  $\pm$-signs in \autoref{thm: cut and glue}~\ref{cag2}. 
\end{remark}

Using techniques of finite covers, we deduce the following corollary from  \autoref{prop: surface homology coefficient}. 
\begin{corollary}\label{cor: surface coefficient}
Let $S_1$ and $S_2$ be closed oriented surfaces with genera at least $2$, and let $\phi: \widehat{\pi_1S_1}\to \widehat{\pi_1S_2}$ be an isomorphism. 
Suppose $b_1\in \pi_1S_1,\, b_2\in \pi_1S_2$ are non-trivial elements, and there exists $\mu\in \Zx$ such that $\phi(b_1)$  is conjugate to  $b_2^\mu$ in $\widehat{\pi_1S_2}$. 
Then, 
$$
\phi_\ast: \tensor H_2(S_1;\Z) \cong \H_2(\widehat{\pi_1S_1};\widehat \Z)\tto \H_2(\widehat{\pi_1S_2};\widehat \Z)\cong\tensor H_2(S_2;\Z)
$$
sends $1\otimes[S_1]$ to $\pm\mu \otimes [S_2]$, where $[S_1]$ and $[S_2]$ denote the fundamental classes representing their orientations. 
\end{corollary}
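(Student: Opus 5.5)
The plan is to reduce to \autoref{prop: surface homology coefficient} by passing to a finite cover in which (a power of) $b_1$ is represented by a non-separating simple closed curve, and then to transport the conclusion back down using degree multiplicativity of fundamental classes.

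First I normalize. Write $b_1=c_1^{k_1}$ and $b_2=c_2^{k_2}$ with $c_1,c_2$ primitive. Since centralizers of nontrivial elements in $\widehat{\pi_1S_i}$ are procyclic closures of maximal cyclic subgroups (\autoref{cor: centralizer}), $\phi(\overline{\langle c_1\rangle})$ is conjugate to $\overline{\langle c_2\rangle}$. Hence $\phi(c_1)$ is conjugate to $c_2^{\nu}$ for some $\nu\in\Zx$, and comparing $k_1\nu=k_2\mu$ in the torsion-free group $\widehat{\Z}$ gives $\nu$ in terms of $\mu$. I expect $k_1=k_2$ and $\nu=\mu$, but this does not matter much, since the final statement only needs the relation up to a unit coming from integers. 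I would therefore assume $b_i$ primitive, reducing to $\mu$ up to an integer-ratio adjustment; I would check this bookkeeping carefully.

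Next I choose covers. By Scott's LERF theorem (\autoref{FK: LERF}), there is a finite-index subgroup $\Gamma_1'\le\pi_1S_1$ containing $b_1$ such that $b_1$ is represented by a simple closed curve in the cover $S_1'$. Passing to a further cover, I can make it non-separating and keep $b_1\in\Gamma_1'$ (take a double cover in which the curve lifts homeomorphically). Let $\Gamma_2'$ be the $\phi$-corresponding subgroup, after conjugating $\phi$ so that $\phi(b_1)=b_2^\mu$. Then $\overline{\langle b_2\rangle}\le\overline{\Gamma_2'}$, and so $b_2\in\Gamma_2'$ by \autoref{lem: finite index closure intersection}. The obstacle is that $b_2$ need not be simple in $S_2'$. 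To handle this, I would intersect with a subgroup obtained symmetrically from $b_2$, namely $\Gamma''=\Gamma_1'\cap\phi^{-1}(\Gamma_2^{*})$, where $\Gamma_2^*$ makes $b_2$ simple and non-separating. Simplicity persists under covers in which the curve lifts with degree one. I must still ensure the lift remains non-separating and has degree one, possibly by taking further abelian covers. I expect this to be the main obstacle: simultaneously arranging that both elevations are non-separating simple curves of degree one.

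With a $\phi$-corresponding pair of covers $S_1'',S_2''$ satisfying the hypotheses of \autoref{thm: cut and glue}, \autoref{prop: surface homology coefficient} gives $\phi''_\ast(1\otimes[S_1''])=\pm\mu\otimes[S_2'']$. The transfer finishes the argument. Corresponding covers have equal degree $d$, and $p_{i\ast}[S_i'']=d[S_i]$. Naturality gives the commutative square $\phi_\ast\circ p_{1\ast}=p_{2\ast}\circ\phi''_\ast$ on $\H_2$, so $d\,\phi_\ast(1\otimes[S_1])=\pm\mu d\otimes[S_2]$. Since $H_2$ is $\widehat{\Z}\cong\prod\Z_p$ and torsion-free, I can cancel $d$ to obtain the claim.
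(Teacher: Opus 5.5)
Your strategy is the paper's. You pass to a $\phi$-corresponding pair of finite covers in which the given elements become non-separating simple closed curves, apply \autoref{prop: surface homology coefficient} there, and push down using $p_{i\ast}[S_i'']=d[S_i]$ and the torsion-freeness of $\widehat{\Z}$; your last paragraph is fine.

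The gap is that the proposal stops at the step you yourself call the main obstacle. The remedy you suggest, further abelian covers, goes the wrong way: a further cover in which $b_i$ no longer lies in the subgroup destroys the degree-one property you arranged. In fact nothing further is needed. Your intersection $\Gamma''=\Gamma_1'\cap\phi^{-1}(\overline{\Gamma_2^\ast})\cap\pi_1S_1$, whose $\phi$-corresponding subgroup is $\Gamma_2''=\Gamma_2'\cap\Gamma_2^\ast$, already works:
\begin{itemize}
\item Degree one is automatic, because $b_i\in\Gamma_i''$.
\item The lift is a component of the preimage of an embedded curve under a covering map, hence simple.
\item The lift is non-separating. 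If $\tilde\beta$ is an elevation of degree $d$ of a non-separating simple closed curve $\beta$, then $p_\ast[\tilde\beta]=d[\beta]\neq0$ in the torsion-free group $H_1(S;\Z)$. A simple closed curve on a closed orientable surface separates if and only if it is null-homologous.
\end{itemize}
This is exactly the remark the paper records right after \autoref{lem: Scott simple cover}.

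Two smaller points.
\begin{itemize}
\item Replacing $\phi$ by $\Inn_{\hat g}^{-1}\circ\phi$ changes $\phi_\ast$ by $(\Inn_{\hat g})_\ast$ on $\H_2(\widehat{\pi_1S_2};\widehat{\Z})$. You must therefore invoke the fact that inner automorphisms act trivially on profinite homology.
\item The reduction to primitive elements is unnecessary, since Scott's theorem needs no primitivity. Its justification is also wrong: if $\nu$ and $\mu$ differed by any factor other than $\pm1$, the conclusion $\pm\mu$ would fail, so the bookkeeping does matter. It is harmless only because $k_1\nu=k_2\mu$ with $\nu,\mu\in\Zx$ forces $k_1\widehat{\Z}=k_2\widehat{\Z}$, hence $k_1=k_2$ and $\nu=\mu$.
\end{itemize}

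For comparison, the paper never needs degree-one lifts or intersections. It takes the standard characteristic covers $S_i^{(m)}$ with a single $m$ that works for both surfaces by \autoref{lem: Scott simple cover}. These covers are automatically $\phi$-corresponding and normal, so your simultaneity problem never arises. The price is working with powers $a_i=b_i^{n_i}$; normality and $\mu\in\Zx$ give $n_1=n_2$, because $b_1^n\in\overline{\pi_1S_1^{(m)}}$ iff $b_2^{n\mu}\in\overline{\pi_1S_2^{(m)}}$ iff $b_2^{n}\in\overline{\pi_1S_2^{(m)}}$. The conjugator is absorbed by writing $\hat g=\hat h\gamma$ with $\hat h\in\overline{\pi_1S_2^{(m)}}$ and $\gamma\in\pi_1S_2$, then replacing $b_2^{n_2}$ by $\gamma b_2^{n_2}\gamma^{-1}$. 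No statement about inner automorphisms acting on homology is needed. Your degree-one route avoids the power bookkeeping, but it costs the intersection argument, the persistence observation above, and the inner-automorphism fact.
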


We start the proof with a well-known lemma. 
\begin{lemma}\label{lem: Scott simple cover}
Let $S$ be a closed orientable surface, and let $\beta$ be a homotopically non-trivial loop on $S$. Then, there exists a standard characteristic cover $S^{(m)}$ of $S$ such that every elevation of $\beta$ in $S^{(m)}$ can be freely homotoped to  a non-separating simple closed curve. 
\end{lemma}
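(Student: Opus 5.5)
The plan is to combine Scott's theorem that surface groups are LERF (\autoref{FK: LERF}) with standard facts about lifting loops to finite covers. First reduce to primitive loops. Write the free homotopy class of $\beta$ as $c^k$ with $c$ primitive; this is possible because $\pi_1 S$ is torsion-free and, when $S$ is hyperbolic, cyclic centralizers are maximal cyclic. If $S$ is a torus, the primitive class $c$ is already represented by a non-separating simple closed curve. Each elevation of $\beta$ to a cover $S'$ is a power of an elevation of $c$, and it is homotopic to a simple closed curve only if the corresponding power is $\pm1$. So the elevations must become primitive. This can be arranged by taking a cover to which $\beta$ itself lifts, using residual finiteness and LERF applied to $\langle c\rangle$ versus $\langle c^k\rangle$.

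The cleaner route is Scott's original argument. By \cite{Sco78}, for a closed curve $\beta$ there is a finite cover $\tilde S$ in which some lift of $\beta$ is an embedded simple closed curve. This comes from applying LERF to the cyclic subgroup $\langle\beta\rangle$: the annular cover $\widetilde{S}_\beta$ contains an embedded core, and a finite cover separating a compact core neighbourhood embeds it. To get a simple closed curve in the free homotopy class of the elevation, one first replaces $\beta$ by a primitive representative; the elevation is then a generator of the annular cover's fundamental group, so $\beta$ lifts homeomorphically.

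The next step is to make the simple curve non-separating. Take the double cover corresponding to a class in $H^1(\tilde S;\Z/2)$ that is nonzero on some curve crossing $\tilde\beta$ once, when $\tilde\beta$ is separating but essential. More simply, a separating essential curve becomes non-separating in a suitable finite cover: pick a loop $\delta$ meeting both sides so that its two halves have nontrivial $\Z/2$-homology, and take a degree-two cyclic cover in which $\tilde\beta$ lifts to two curves, each non-separating.

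Finally, pass to a characteristic subgroup. The set of covers satisfying the property is closed under passing to further finite covers, since an elevation of a non-separating simple closed curve is again a non-separating simple closed curve. Because characteristic subgroups $\pi_1S^{(m)}$ are cofinal, choose $m$ so that $\pi_1 S^{(m)}\le\pi_1\tilde S$. Since $S^{(m)}\to S$ is regular, every elevation of $\beta$ is a translate of one elevation under the deck group, so it suffices to treat one elevation, which covers the simple curve in $\tilde S$.

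The main obstacle is the non-primitive case: if $\beta=c^k$ with $|k|\ge2$, an elevation is freely homotopic to a simple curve only after it is unwrapped. This requires the elevation in $S^{(m)}$ to cover $\beta$ with degree equal to $\operatorname{ord}$ of $c$ modulo $\pi_1S^{(m)}$ divided by $k$, which equals $1$ times a primitive curve. I would handle this by choosing $m$ large enough that the order of $c$ in $\pi_1S/\pi_1S^{(m)}$ is divisible by $k$. Then elevations of $\beta$ are homotopic to elevations of $c$ up to orientation. This has to be read as "freely homotoped to a non-separating simple closed curve" counted with the appropriate multiplicity, and I expect the intended statement is for primitive $\beta$, or else it needs $\ker$ conditions checked carefully there.
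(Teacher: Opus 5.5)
Your skeleton matches the paper's: use Scott's theorem to get a simple lift, pass to a double cover to make it non-separating, then take a standard characteristic cover factoring through that double cover. Regularity of that cover, together with the fact that elevations of non-separating simple closed curves remain non-separating simple, finishes the argument. Two steps need repair, however.

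\textbf{The non-primitive case.} It is not a real obstacle, and your closing hedge about multiplicities and a primitive-only statement should be dropped. Scott's argument applies to $\langle\beta\rangle$ directly. If $\beta=c^k$, the annular cover of $S$ corresponding to $\langle c^k\rangle$ still has a simple core curve (the image of the axis of $c$), and that core represents $c^k$ itself. Embedding a compact core in a finite cover $S_H$ therefore produces a simple closed curve representing $\beta$. It also forces $\langle c\rangle\cap H=\langle c^k\rangle$, since essential simple curves are primitive. This is exactly how the paper starts, and no replacement by a primitive representative is needed.

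Your workaround is also valid, and you should carry it through rather than doubt it. Separability of $\langle c^k\rangle$ gives $H$ with $\langle c\rangle\cap H=\langle c^k\rangle$. Once $\pi_1S^{(m)}\le H$, the least power $n_c$ of $c$ lying in $\pi_1S^{(m)}$ is divisible by $k$. An elevation of $\beta$ is represented by $c^{\mathrm{lcm}(n_c,k)}=c^{n_c}$, so it literally coincides with an elevation of $c$. Thus the lemma holds exactly as stated. The paper genuinely needs the non-primitive case: in \autoref{cor: surface coefficient} and \autoref{lem: technical VHBS} the $b_i$ are arbitrary non-trivial elements, for instance possibly multiply traversed periodic trajectories.

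\textbf{The double cover.} Your first recipe is impossible. A separating curve has even intersection with every closed curve, so no curve crosses $\tilde\beta$ once. Your second recipe is too vague as stated. The correct condition is as follows.
\begin{enumerate}
\item Write $\tilde S\setminus\tilde\beta=A\sqcup B$; both pieces have positive genus because $\tilde\beta$ is essential.
\item Choose $\omega\in H^1(\tilde S;\Z/2)$ that is nonzero on $\pi_1A$ and on $\pi_1B$. For example, take mod-$2$ intersection with a non-separating curve in $A$ plus a non-separating curve in $B$.
\item Since $\tilde\beta$ is null-homologous, $\omega(\tilde\beta)=0$, so $\tilde\beta$ has two lifts in the double cover defined by $\omega$.
\item The preimages of $A$ and of $B$ are each connected, because $\omega$ restricts nontrivially to each piece.
\item These two connected preimages are glued along both lifts of $\tilde\beta$. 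Removing one lift therefore leaves the double cover connected, so each lift is non-separating.
\end{enumerate}
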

We note that the property also holds for any $S^{(m')}$ where $m'\ge m$, since the standard characteristic cover $S^{(m')}\to S$ factors through $S^{(m)}$, and any elevation of a non-separating simple closed curve on $S^{(m)}$ is a non-separating simple closed curve on $S^{(m')}$. 
\begin{proof}
According to Scott's LERFness theorem \cite{Sco78}, there exists a finite cover $S'$ of $S$ such that a certain elevation of $\beta$ can be freely homotoped to a simple closed curve $\beta'$ in $S'$. If $\beta'$ is separating, then we can take a further two-fold cover $S''$ of $S'$ in which $\beta'$ lifts to a non-separating simple closed curve $\beta''$. If $\beta'$ is already non-separating, then we denote $S''=S'$ and $\beta''=\beta'$. We then take a standard characteristic cover $S^{(m)}$ that factors through $S''$; for instance, $m=[S'':S']$.  Then, any elevation of $\beta''$ in $S^{(m)}$ is a non-separating simple closed curve. Moreover, $S^{(m)}$ is a regular cover of $S$, so the free homotopy class of any elevation of $\beta$ differs from an elevation of $\beta''$ by a deck transformation. In other words, every elevation of $\beta$ in $S^{(m)}$ is  freely homotopic to  a non-separating simple closed curve.
\end{proof}

\begin{proof}[Proof of \autoref{cor: surface coefficient}]
For $i=1,2$, let $\beta_i$ be an oriented loop on $S_i$ whose free homotopy class is represented by the conjugacy class of $b_i\in \pi_1S_i$. According to \autoref{lem: Scott simple cover}, there exists a sufficiently large $m\in \N$, consistent for $i=1,2$, such that every elevation of $\beta_i$ in $S_i ^{(m)}$ can be freely homotoped to a non-separating simple closed curve. We denote $\phi':\widehat{\pi_1\nss{S}{1}{(m)}}\to \widehat{\pi_1\nss{S}{2}{(m)}}$ as the restriction of $\phi$ to the $\phi$-corresponding pair of finite-index subgroups $\pi_1S_1^{(m)}$ and $\pi_1S_2^{(m)}$. 

 Note that $S_i ^{(m)}$ is a finite regular cover of $S_i$, so all elevations of $\beta_i$ in $S_i ^{(m)}$ are $n_i$ fold covers of $\beta_i$. Group theoretically, $n_i$ is the smallest positive integer such that $b_i^{n_i}\in \pi_1  \nss{S}{i}{(m)} $, and the conjugacy class of $b_i^{n_i}$ in $\pi_1  \nss{S}{i}{(m)} $ represents the free homotopy class of a certain elevation of $\beta_i$.  Now observe that $b_1^n \in \pi_1 S_1 ^{(m)}$ is equivalent to $b_1^n \in \overline{\pi_1 \nss{S}{1}{(m)}}$. Since $ \overline{\pi_1 \nss{S}{2}{(m)}  }   $ is a normal subgroup  in $\widehat{\pi_1 S_2 }$, this is in turn equivalent to $b_2^{n\mu} \in\overline{\pi_1 \nss{S}{2}{(m)} }$. Note that $\mu\in \Zx$, so this is further equivalent to $(b_2^{n\mu})^{\mu^{-1}}=b_2^n\in\overline{\pi_1 \nss{S}{2}{(m)} }$, and finally  equivalent  to $b_2^n \in \pi_1S_2^{(m)}$. We therefore conclude that $n_1=n_2$.

Denote $a_1=b_1^{n_1}\in \pi_1S_1^{(m)}$, whose conjugacy class represents a non-separating simple closed curve on $S_1^{(m)}$. 
Suppose that $\phi(b_1)= \hat{g} b_2^\mu \hat{g}^{-1}$ for some $\hat{g}\in \widehat{\pi_1S_2}$. Since $\overline{\pi_1\nss{S}{2}{(m)}}$ is an open subgroup  in $\widehat{\pi_1S_2}$  and $\pi_1S_2$ is a dense subgroup in $\widehat{\pi_1S_2}$, we can find $\hat{h}\in \overline{\pi_1\nss{S}{2}{(m)}}$ and $\gamma\in \pi_1S_2$ such that $\hat{g}=\hat{h}\gamma$. 
We denote $a_2= \gamma b_2^{n_2} \gamma^{-1}$, which belongs  to $\pi_1\nss{S}{2}{(m)}$ by our assumption. 
The conjugacy class of $a_2$ in $\pi_1\nss{S}{2}{(m)}$   represents the free homotopy class of a certain elevation of $\beta_2$, which is also a non-separating simple closed curve in $\nss{S}{2}{(m)}$.  

By construction, $\phi'(a_1)=\hat{h} a_2^\mu \hat{h}^{-1}$, where the conjugator $\hat{h}$ belongs to $\widehat{\pi_1\nss{S}{2}{(m)}}$. Then, \autoref{prop: surface homology coefficient} implies that 
$$
\phi_\ast': \tensor H_2(S_1 ^{(m)};\Z) \cong \H_2(\widehat{\pi_1 \nss{S}{1}{(m)}};\widehat\Z)\longrightarrow \H_2(\widehat{\pi_1 \nss{S}{2}{(m)}};\widehat\Z)\cong\tensor H_2(S_2 ^{(m)};\Z)
$$
sends $1\otimes[S_1 ^{(m)}]$ to $\pm\mu \otimes [S_2 ^{(m)}]$, where $[S_i ^{(m)}]$ are fundamental classes representing the orientations lifted from $S_i$. 

By construction, the following diagram commutes
\begin{equation*}
\begin{tikzcd}[row sep=huge,column sep=small]
\tensor H_2(S_1 ^{(m)};\Z) \arrow[r, "\cong"] \arrow[d, "{p_1}_\ast"'] & \H_2(\widehat{\pi_1 \nss{S}{1}{(m)} };\widehat\Z) \arrow[r, "\phi'_\ast"] \arrow[d, "\mathrm{incl}_\ast"'] & \H_2(\widehat{\pi_1 \nss{S}{2}{(m)} };\widehat\Z) \arrow[d, "\mathrm{incl}_\ast"] & \tensor H_2(S_2 ^{(m)};\Z) \arrow[l, "\cong"'] \arrow[d, "{p_2}_\ast"] \\
\tensor H_2(S_1;\Z) \arrow[r, "\cong"]                               & \H_2(\widehat{\pi_1 S_1 };\widehat\Z) \arrow[r, "\phi_\ast"]                                        & \H_2(\widehat{\pi_1 S_2 };\widehat\Z)                                      & \tensor H_2(S_2;\Z) \arrow[l, "\cong"']                             
\end{tikzcd}
\end{equation*}
where $p_i:S_i^{(m)}\to S_i$ denotes the covering map. 

Let $n=[S_1^{(m)}:S_1]=[ S_2^{(m)}:S]$ be the covering degree of $p_i$. Then, the vertical maps  induced by the coverings send $1\otimes[S_i^{(m)}]$ to $n\otimes [S_i]$. As a consequence, $\phi_\ast$ sends $n\otimes [S_1]$ to $\pm n\mu \otimes [S_2]$. Therefore, $\phi_\ast$ sends $1\otimes [S_1]$ to $\pm  \mu \otimes [S_2]$ since $\phi_\ast$ is an isomorphism of $\widehat{\Z}$-modules, and $\widehat{\Z}$ is torsion free. 
\end{proof}

\section{Regularity}\label{sec: regular}
For an abstract group $\Gamma$, let $\ab{\Gamma}=\Gamma/[\Gamma,\Gamma]$ denote its abelianization; and for a profinite group $G$, let $\Ab{G}=G/\overline{[G,G]}$ denote its profinite abelianization. 
For a finitely generated group $\Gamma$, there are canonical isomorphisms  
$$
\tensor \ab{\Gamma}\cong \widehat{\ab{\Gamma}}\cong \Ab{\widehat{\Gamma}};  
$$
see \cite[Lemmas 2.8 and 2.9]{Xu25A}. 

The notion of regularity was first introduced by Boileau and Friedl in \cite{BF20}. 
\begin{definition}
Let $\Gamma_1$ and $\Gamma_2$ be finitely generated groups. An isomorphism $\Phi: \widehat{\Gamma_1}\to \widehat{\Gamma_2}$ is {\em regular} if the induced isomorphism $\Ab{\Phi}:   \widehat{\Gamma_1}{}^{\mathrm{Ab}}\to  \widehat{\Gamma_2}{}^{\mathrm{Ab}}$ is the profinite completion of an isomorphism $f :  \Gamma_1 ^{\mathrm{ab}}\to   \Gamma_2 ^{\mathrm{ab}}$. 
\end{definition}

Note that the isomorphism $\Phi$ is always regular when $b_1(\Gamma_1)=b_1(\Gamma_2)=0$. Hence we shall mainly focus on the case that  $b_1(\Gamma_1)=b_1(\Gamma_2)>0$.

The purpose of this section is to prove the following theorem. 

\begin{theorem}\label{thm: regular}
Suppose $\Gamma_1$ and $\Gamma_2$ are lattices in $\PSL_2(\C)$, and $\Phi: \widehat{\Gamma_1}\to \widehat{\Gamma_2}$ is an isomorphism. Then, $\Phi$ is regular. 
\end{theorem}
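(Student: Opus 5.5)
The plan is to reduce to the closed crossfibered case, where \autoref{cor: surface coefficient} can be applied to the facial fiber, and to compare the resulting scalar with the one coming from Liu's theorem. Regularity is unaffected by passing to corresponding finite-index subgroups in the relevant sense: if $\Phi$ restricts to a regular isomorphism on a $\Phi$-corresponding pair, the scalar ambiguity propagates back through transfer/corestriction on $\tensor$-homology. Using this together with Selberg's lemma, \autoref{prop: torsion free} and \autoref{thm: virtual crossfibering}, I would assume that $\Gamma_i=\pi_1M_i$ with $M_1$ a closed crossfibered manifold. The cusped case is handled by the same argument with fibered surfaces with boundary, or postponed to \autoref{sec: non-uniform}. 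The corresponding $M_2$ is then also crossfibered, by Liu's profinite invariance of fibered classes and periodic trajectories \cite{Liu23,Liu25}.

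\textbf{Step 1: the input from Liu.} By \cite[Theorem 1.2]{Liu23} there is $\mu\in\Zx$ and an isomorphism $f:H_1(M_1;\Z)_{\mathrm{free}}\to H_1(M_2;\Z)_{\mathrm{free}}$ such that $\Ab{\Phi}$ equals $\mu\cdot\widehat f$ on the free part. So it suffices to prove $\mu=\pm1$. On the torsion part nothing is needed, since the torsion of the abelianization is finite. Dually, $\Phi^\ast$ carries the facial class $\psiF'$ of $M_2$ to $\mu\,\psiF$. Liu's results also give the behaviour on trajectories: the periodic trajectory $\gamma$ of the crossfibering datum is sent to a conjugate of $(\gamma')^{\mu}$, where $\gamma'$ is a periodic trajectory for the corresponding axial class in $M_2$. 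Since $\langle\psiF,[\gamma]\rangle=0$, both loops lie in the facial fiber subgroups.

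\textbf{Step 2: restricting to the facial fiber.} Next I restrict $\Phi$ to the closures of the facial fiber subgroups. These closures are $\overline{\ker\psiF}$ and $\overline{\ker\psiF'}$, and $\Phi$ matches them. Up to an inner automorphism this gives an isomorphism $\phi:\widehat{\pi_1S_1}\to\widehat{\pi_1S_2}$ with $\phi(b_1)$ conjugate to $b_2^{\mu}$. By \autoref{cor: surface coefficient}, $\phi_\ast[S_1]=\pm\mu[S_2]$.

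\textbf{Step 3: computing $\phi_\ast[S_1]$ from the ambient $3$-manifold.} Let $\kappa\in\Zx$ be defined by $\Phi_\ast[M_1]=\kappa[M_2]$ in $\H_3(-;\widehat\Z)$; this uses \autoref{prop: Zhat homology} and \autoref{prop: good}. The image of $[S_i]$ in $H_2(M_i)$ is $[M_i]\cap\psiF^{(i)}$. By the naturality of cap products (\autoref{nat1}, \autoref{nat2}), applied mod $n$ for every $n$,
$$\mu\,\Phi_\ast[S_1]=\Phi_\ast\bigl([M_1]\cap\Phi^\ast\psiF'\bigr)=\kappa[S_2].$$
Because $[S_2]$ generates a direct summand of the torsion-free module, we may compare coefficients. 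This gives $\kappa=\pm\mu^2$.

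\textbf{Step 4: a second relation.} I need an independent relation $\kappa=\pm\mu$. To get it I would run the same cap-product argument one dimension lower, pairing $\psiF$ with the axial class. Poincar\'e duality on $M_i$ identifies $H^1$ with $H_2$. Since $\Ab\Phi=\mu\widehat f$ and $\Phi^\ast=\mu\widehat{f}^\ast$ on $H^1$, cupping two such classes and capping with $[M]$ yields a comparison in $H_1$. Concretely, $\Phi_\ast([M_1]\cap(\psiA\cup\psiF))=\kappa\mu^{-2}\,\bigl([M_2]\cap(\psiA'\cup\psiF')\bigr)$. On the other hand this class lies in $H_1$, where $\Phi_\ast$ acts by $\mu\widehat f$. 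Provided the class is nonzero, which I expect by choosing the two classes suitably (possibly after a finite cover using Agol's large $b_1$), integrality of $\widehat f$ gives $\kappa\mu^{-2}\in\pm\mu\cdot\Z$-compatible scalars, i.e.\ $\kappa=\pm\mu^3$. Combined with $\kappa=\pm\mu^2$ this forces $\mu=\pm1$. Equivalently $\kappa=\pm1$, and then $\Ab\Phi=\pm\widehat f$ is regular.

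The main obstacle is Step 4. I need to produce a cup product $\psiA\cup\psiF$ that is rationally nonzero, or failing that some other second equation for $\kappa$. If this route fails, I would first try to replace it by the relation obtained from the axial fiber, where a power of the trajectory is not on the fiber, and compare the two surface coefficients.
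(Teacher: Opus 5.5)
\textbf{Where the gap is.} Your Steps 1--3 are exactly the paper's argument in \autoref{lem: crossfibered regular}. The fibre computation via \autoref{cor: surface coefficient} gives $\lambda_2=\pm\mu$, and the mod-$n$ cap product gives $\kappa=\lambda_2\mu=\pm\mu^2$. The gap is Step 4, which is the decisive step. The paper does not derive $\kappa=\pm\mu^3$ itself; it imports it from \cite[Lemma 5.3]{Liu25} (see also \cite[Proposition 6.5]{Xu25}).

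Your substitute is algebraically sound, and you do not need a profinite cup product. Take integral $\alpha',\beta'\in H^1(M_2;\Z)$ with $\alpha=f^\ast\alpha'$ and $\beta=f^\ast\beta'$. Capping twice with \autoref{nat1} and \autoref{nat2} gives $\mu\,\widehat f(c_1)=\kappa\mu^{-2}c_2$, where $c_1=([M_1]\cap\alpha)\cap\beta$ and $c_2=([M_2]\cap\alpha')\cap\beta'$ are integral. Hence $\kappa\mu^{-3}=\pm1$ as soon as $c_2\neq0$ in $H_1(M_2;\Q)$.

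That nonvanishing is exactly what you leave open, and your suggested justification does not supply it. Large $b_1$ is not the relevant property. For instance, every $\alpha\cup\beta$ vanishes in $H^2(M;\Q)$ when $b_1(M)=2$, because the triple product form is alternating. Nothing makes $\psiA\cup\psiF$ in particular nonzero. Your fallback via the axial fibre gives nothing new either. A fibre-surface computation can at best reproduce $\kappa=\pm\mu^2$, and $\gamma$ does not even lie in the axial fibre subgroup, since $\langle\psiA,[\gamma]\rangle>0$.

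\textbf{How to close it.} Either cite Liu's lemma as the paper does, or produce a suitable pair on a finite cover. Note that $\alpha,\beta$ need not be the axial and facial classes; any pair with nonzero rational cup product works. A closed quasi-Fuchsian surface subgroup \cite{KM12} is quasi-convex, hence a virtual retract by virtual specialness (\autoref{thm: virtually special} together with Haglund--Wise). Pulling back two classes with nonzero cup product on the surface along the retraction gives $\alpha\cup\beta\neq0$ rationally. This survives further finite covers, because transfer makes $p^\ast$ injective on rational cohomology and $p^\ast$ is multiplicative. So you can pass to a common cover with a crossfibered one (\autoref{prop: virc2}) and finish with \autoref{virtually regular}.

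\textbf{The cusped case.} It is not handled by ``the same argument with fibered surfaces with boundary.'' For cusped $M$ one has $H_3(\pi_1M;\Z)=0$, and a fibre with boundary has no fundamental class. Moreover, \autoref{prop: match up trajectory} and \autoref{thm: cut and glue} concern closed manifolds and surfaces. The paper instead reduces the torsion-free non-uniform case to the uniform one by Dehn filling \cite[Corollary 8.8]{Xu25}. Your other option, deducing regularity from genuineness in \autoref{sec: non-uniform}, is legitimate and non-circular. That section uses only the uniform case and \autoref{thm: peripheral regular}.
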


\autoref{thm: regular} can be viewed as a first step in proving the surjectivity in \autoref{mainthm2}, since any genuine isomorphism $\Phi: \widehat{\Gamma_1}\to \widehat{\Gamma_2}$ is always regular. 

\begin{remark}
\autoref{thm: regular} for non-uniform lattices was previously proven by the author in \cite[Theorem 1.4]{Xu25}. The proof we present here is independent of that one, and in fact, provides an alternate proof for the earlier result. However, for convenience, we mainly focus on the uniform case here, and some intermediate techniques in \cite{Xu25} will be applied  to reduce the non-uniform case to the uniform case.
\end{remark}

Upon submitting this paper, the author learnt that \autoref{thm: regular} has also been proven by Hanany \cite{Hanany} recently, using number-theoretical methods. Our proof, however, uses geometric methods, where some ingredients also play a role in the subsequent proofs. 

\subsection{Liu's alignment theorems} 

For simplicity, let us focus on torsion-free lattices in $\PSL_2(\C)$, which are fundamental groups of orientable complete finite-volume hyperbolic 3-manifolds. In a series of works \cite{Liu23,Liu25}, Liu established several key properties for isomorphisms between the profinite completions of these groups. 

\begin{proposition}[{\cite[Theorems 1.2 and 1.3]{Liu23}}]\label{prop: Liu Zx-regular}
Suppose $M_1$ and $M_2$ are orientable complete finite-volume hyperbolic 3-manifolds. Let   $\Phi: \widehat{\pi_1M_1}\to \widehat{\pi_1M_2}$ be an isomorphism. Then the following properties hold. 
\begin{enumerate}[leftmargin=*, label=(\arabic*)]
\item\label{LiuReg1} The induced isomorphism 
$$
\hspace{3mm}\Ab{\Phi}: \tensor H_1(M_1;\Z) \cong \widehat{\pi_1M_1}{}^{\mathrm{Ab}} \tto  \widehat{\pi_1M_2}{}^{\mathrm{Ab}} \cong \tensor H_1(M_2;\Z) \hspace{-3mm}
$$
can be decomposed as $\mu \otimes f $, where $\mu\in \Zx$ denotes the scalar multiplication in $\widehat{\Z}$, and $f: H_1(M_1;\Z)\to H_1(M_2;\Z)$ is an isomorphism. 
\item\label{LiuReg2} The dual isomorphism of $f$, namely
\begin{equation*}
\hspace{5mm} f ^\ast: H^1(M_2;\Z)=\Hom(H_1(M_2;\Z),\Z) \to \Hom(H_1(M_1;\Z),\Z) =  H^1(M_1;\Z), \hspace{-4mm}
\end{equation*}
 preserves the Thurston norm and the fibered classes. 
\end{enumerate}
\end{proposition}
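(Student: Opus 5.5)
Since this statement is quoted from \cite[Theorems 1.2 and 1.3]{Liu23}, the plan below reconstructs Liu's route in outline rather than verifying every step. The approach is to detect the integral lattice $H_1(M_i;\Z)_{\mathrm{free}}\subseteq \tensor H_1(M_i;\Z)$ through profinitely invariant twisted Alexander data. Set $H_i=H_1(M_i;\Z)_{\mathrm{free}}$ and consider the completed group rings $\widehat{\Z}[\![\widehat{H_i}]\!]$. For every finite quotient $\gamma:\pi_1M_1\to G$, the $\Phi$-correspondence of finite-index subgroups (\autoref{prop: lattice of open subgroup}) produces a matching quotient of $\pi_1M_2$. Cohomological goodness (\autoref{prop: good}, \autoref{prop: homologically good}) lets us compute the twisted Alexander modules of $M_i$ from the profinite completions. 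Consequently $\Ab{\Phi}$, extended to the completed group rings, carries the $\gamma$-twisted Alexander polynomial $\Delta^{\gamma}_{M_1}\in \Z[H_1]$ to $\Delta^{\gamma\circ\Phi^{-1}}_{M_2}\in\Z[H_2]$, up to multiplication by units of $\widehat{\Z}[\![\widehat{H_2}]\!]$ and up to twisting the coefficients by a profinite unit.

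For \ref{LiuReg1}, I would first extract the scalar $\mu$. Write $\Ab{\Phi}$ on the free parts as an element $A\in\GL_n(\widehat{\Z})$. Because the torus-type unit group of $\widehat{\Z}[\![\widehat{\Z}^n]\!]$ is $\pm\widehat{\Z}^n$, any equality of polynomials up to units forces $A$ to map the finite support of $\Delta^\gamma_{M_1}$, after a translation, into $H_2$. By Agol's virtual fibering (\autoref{thm: fibered closure}) together with Friedl--Vidussi, suitable twisted polynomials have Newton polytopes of full dimension and nontrivial shape. Comparing differences of support points then shows that $A$ maps a full-rank sublattice of $H_1$ into $\mu H_2$ for a single $\mu\in\Zx$, where $\mu$ absorbs the diagonal ambiguity coming from the Galois action on coefficients. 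This gives $A=\mu\otimes f_{\mathrm{free}}$ with $f_{\mathrm{free}}$ rational; because $A$ is invertible over $\widehat{\Z}$, $f_{\mathrm{free}}$ is in fact integral and invertible. The torsion part is a finite group, so it is automatically the completion of an isomorphism, and the splitting $H_1=H_{1,\mathrm{tors}}\oplus H_1/H_{1,\mathrm{tors}}$ can be adjusted to produce $f$.

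For \ref{LiuReg2}, I would use two classical facts: the Thurston norm equals the supremum over finite quotients of the degree-normalized twisted Alexander norms (Friedl--Vidussi, which is where virtual specialness enters), and fibered classes are exactly those whose twisted polynomials are all monic with the correct degree. Both quantities are read off from the support and leading coefficients of $\Delta^\gamma$. Since $\Ab{\Phi}=\mu\otimes f$ transports these data, $f^\ast$ preserves both the norm and the fibered cones; the scalar $\mu$ does not affect this, because leading coefficients change only by units.

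I expect the main obstacle to be the step from ``equality up to units in $\widehat{\Z}[\![\widehat{H}]\!]$'' to a rational statement about $A$. A priori, a profinite matrix could map a lattice polytope to one with profinite vertices, and an ill-chosen unit could hide this. To handle it, I would first use enough quotients $\gamma$ to guarantee polytopes of full rank, and then reduce modulo each prime $p$, using $\Z/p$-coefficients and $p$-group quotients, to pin down $A$ entrywise up to a common scalar.
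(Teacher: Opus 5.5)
The paper gives no proof of this statement; it is quoted from Liu. Your outline therefore has to stand on its own, and it fails at its central step.

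\textbf{The unit group is not $\pm\widehat{\Z}^n$.} The unit group of $\widehat{\Z}[\![\widehat{\Z}^n]\!]$ is much larger. Indeed $\widehat{\Z}[\![\widehat{H}]\!]\cong\prod_\ell\Z_\ell[\![\widehat{H}]\!]$. For every $0\neq h\in H$ and $\mu\in\Zx$ one has $h^{\mu}-1=(h-1)\,u_\mu$, where $u_\mu$ is a unit of augmentation $\mu$. To check this, work in $\Z_\ell[\![\overline{\langle h\rangle}]\!]$ component by component over the prime-to-$\ell$ characters:
\begin{itemize}
\item where $h\mapsto\zeta\neq1$, both $\zeta^{\mu}-1$ and $\zeta-1$ are units;
\item on the component where $h\mapsto 1+T$, the quotient $((1+T)^{\mu_\ell}-1)/T$ has constant term $\mu_\ell\in\Z_\ell^\times$, the $\ell$-component of $\mu$.
\end{itemize}
So an automorphism $A$ with $A(h)=\mu h$ sends $h-1$ to an associate of $h-1$ while moving its support off the lattice. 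Equality up to units therefore does not place $A(\operatorname{supp}\Delta^\gamma_{M_1})$ in a translate of $H_2$. Supports, and hence Newton polytopes, are not invariants of principal ideals of the completed ring.

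\textbf{Two further warning signs.}
\begin{itemize}
\item Functoriality of the completed twisted Alexander modules gives $\Ab{\Phi}(\Delta_1)\doteq\Delta_2$ with no coefficient twist. Your single scalar $\mu$ is therefore inserted by hand, not derived.
\item If support matching did hold, your full-dimensionality claim would put differences of support points into $H_2$ itself. That forces $A\in\GL_n(\Z)$ and $\mu=\pm1$ outright. This is the strictly stronger regularity statement \autoref{thm: regular}, which the paper reaches only via crossfibering and the cut-and-glue analysis, and only by taking the present proposition as input.
\end{itemize}

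\textbf{Reduction mod $p$ does not help.} Working with $p$-group quotients and $\Z/p$-coefficients loses even more information. The ring $\mathbb{F}_p[\![\Z_p^n]\!]\cong\mathbb{F}_p[\![T_1,\dots,T_n]\!]$ is local, so every element of nonzero augmentation is a unit.

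\textbf{What is missing.} You need an invariant of $\Delta^\gamma$ that survives multiplication by arbitrary units of the completed ring, plus an argument that this invariant forces $A\in\Zx\cdot\GL_n(\Z)$. Two candidates:
\begin{itemize}
\item the ideals of $\Z[\zeta_m]$ generated by evaluations at finite-order characters (a unit evaluates to a unit of $\widehat{\Z}\otimes\Z[\zeta_m]$, and ideals descend by faithful flatness);
\item the zero loci of the $p$-adic analytic functions these elements define on continuous characters $\widehat{H}\to\mathcal{O}_{\mathbb{C}_p}^\times$, on which $A$ acts by $p$-adic power maps.
\end{itemize}

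\textbf{Part (2) has the same problem.} ``Support and leading coefficients'' have no meaning up to units in $\widehat{\Z}[\![\widehat{\Z}]\!]$. Moreover, $\Ab{\Phi}=\mu\otimes f$ gives $\widehat{\phi_2}\circ\Phi=\mu\,\widehat{\phi_1}$. So the one-variable polynomials are matched only up to units and the substitution $t\mapsto t^{\mu}$. Degree and monicity must be read off from data invariant under both. Examples are the number of roots in $\mathcal{O}_{\mathbb{C}_p}^\times$ for every prime $p$, or finite-dimensionality of twisted $\mathbb{F}_p$-homology of the kernel, in the spirit of Jaikin-Zapirain. Only then can the Friedl--Vidussi criteria be transported by $f^\ast$.
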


We will refer to $(\mu,f)$ appearing in \autoref{prop: Liu Zx-regular}~\ref{LiuReg1} as a  {\em homological datum} of $\Phi$. When $b_1(M_1)=b_1(M_2)>0$, there are exactly two possibilities of $(\mu,f)$, which differ from each other by a sign change. Reformulated by these notations, $\Phi$ is regular if and only if $\mu=\pm1$.  


We make some additional clarifications for \autoref{prop: Liu Zx-regular}~\ref{LiuReg2}. When $M_1$ and $M_2$ are non-compact, they are so called {\em cusped hyperbolic manifolds}. Any cusped hyperbolic manifold is homeomorphic to the interior of a compact manifold with toral boundary, which is  obtained by truncating its cusps. Since we are only concerned with the fundamental group, we do not distinguish between this compact manifold and its interior. As such, our definitions for Thurston norm and fibered classes from \autoref{sec: Thurston norm} apply for $M_1$ and $M_2$. 

In particular, \autoref{prop: Liu Zx-regular}~\ref{LiuReg2} implies that fiberedness is a profinite invariant for finite-volume hyperbolic 3-manifolds, see also \cite{JZ20}. 

\begin{remark}
In fact, Liu \cite[Lemma 7.1]{Liu25} proved that $\mu^2=1$, based on his positive solution to McMullen's conjecture \cite{Liu20}. This implies that each $p$-adic factor of $\mu$ belongs to $\{\pm1\}$, but it does not directly imply that $\mu=\pm1$ in $\widehat{\Z}$. 
Our proof for regularity does not rely on this result, though some other techniques in \cite{Liu20,Liu25} are applied. 
\end{remark}

Let us now focus on closed hyperbolic manifolds. Using the profinite correspondence of twisted Alexander polynomials, Liu \cite{Liu25} proved the following proposition.

\begin{proposition}[{\cite[Lemma 5.2]{Liu25}}]\label{prop: match up trajectory}
Let $M_1$ and $M_2$ be closed  orientable   hyperbolic 3-manifolds. Suppose $\Phi: \widehat{\pi_1M_1}\to \widehat{\pi_1M_2}$ is an isomorphism with a homological datum $(\mu,f)$.  Let $\psi_1\in H^1(M_1;\Z)$ be a fibered class. Then $\psi_2=(f^\ast)^{-1} (\psi_1) \in H^1(M_2;\Z)$  is also a fibered class; and 
 there exists a bijection
$$
\tau: \Traj(M_1,\psi_1) \to \Traj(M_2,\psi_2)
$$
such that for every $\gamma_1 \in \Traj(M_1,\psi_1)$, if $g_1 \in \pi_1M_1$ and $g_2 \in \pi_1M_2$ are representatives of the conjugacy classes $\gamma_1$ and $\tau(\gamma_1)$ respectively, then $\Phi(g_1)$ is conjugate to $g_2^\mu$ in $\widehat{\pi_1M_2}$.
\end{proposition}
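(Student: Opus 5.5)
The proof is essentially Liu's, and I would reconstruct it through the dynamical interpretation of twisted Alexander polynomials (twisted Reidemeister torsions) of fibered classes. The first point is already available: by \autoref{prop: Liu Zx-regular}~\ref{LiuReg2}, $f^\ast$ preserves fibered classes, so $\psi_2=(f^\ast)^{-1}(\psi_1)$ is fibered. The rest of the plan is to match periodic trajectories by comparing twisted zeta functions of the two suspension flows over all finite quotients simultaneously, using $\Phi$ to identify the finite quotients.

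\textbf{Step 1: zeta functions.} Fix a finite quotient $q_2:\pi_1M_2\to Q$ and set $q_1=q_2\circ\Phi|_{\pi_1M_1}$, which makes sense after passing through $\widehat{\pi_1M_i}$. For each complex representation $\rho$ of $Q$, the pseudo-Anosov suspension flow $\Theta_{\psi_i}^t$ has a twisted Lefschetz zeta function
$$\zeta_{\rho,i}(t)=\exp\Big(\sum_{\gamma\in\Traj(M_i,\psi_i)}\sum_{k\ge1}\frac{\tr\rho(q_i(g_\gamma^k))}{k}\,t^{k\langle\psi_i,[\gamma]\rangle}\Big),$$
summed over primitive trajectories, with the sign corrections coming from the orientations of the invariant foliations. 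By Fried's theorem, $\zeta_{\rho,i}$ equals, up to these sign corrections and finitely many boundary (prong/singular-orbit) correction factors, the twisted Reidemeister torsion of $(M_i,\psi_i,\rho\circ q_i)$. Equivalently, via Lefschetz fixed point theory for the monodromy, it equals a twisted Alexander polynomial.

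\textbf{Step 2: matching the torsions.} Twisted Alexander polynomials with finite-image coefficients are computed from the profinite completion and from cohomologically good data (\autoref{prop: good}), so $\Phi$ matches them. The matching is twisted by $\mu$: the variable $t$ on the $M_1$ side corresponds to $t^{\mu}$, read through the action of $\mu$ on roots of unity of order $|Q|$. Concretely, $\tr\rho(q_1(g))$ corresponds to $\tr\rho(q_2(h^{\mu}))$ for appropriate $h$. Comparing coefficients degree by degree in $t$ and varying $\rho$ over all irreducible characters of $Q$, character orthogonality gives the following. For each degree $d$ and each conjugacy class $c\subset Q$, the number of period-$d$ trajectories of $\psi_1$ landing in $c$ equals the number of period-$d$ trajectories of $\psi_2$ landing in the $\mu$-power class of $c$. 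Degrees are preserved because $\langle\psi_1,[\gamma]\rangle=\langle\psi_2,f[\gamma]\rangle$.

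\textbf{Step 3: from counts to a bijection.} For fixed period $d$ only finitely many trajectories occur, so I would separate them by a single finite quotient $Q$. This uses conjugacy separability (\autoref{prop: conjugacy separable}) together with its strengthening that distinct primitive loxodromic classes, and their powers, are not conjugate in the completion. For that $Q$ the equal counts yield a unique matching in period $d$. Let $X_Q$ be the set of bijections compatible with $Q$; these sets are nonempty and finite and form an inverse system under refinement of $Q$. Compactness, as in the proof of \autoref{lem: conjugacy equivalence criteria}, then gives a bijection $\tau$ with $\Phi(g_1)$ conjugate to $g_2^\mu$ in every finite quotient. A further inverse-limit argument over the nonempty finite sets of conjugators upgrades this to conjugacy in $\widehat{\pi_1M_2}$.

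\textbf{Main obstacle.} I expect Step 2 to be the hardest part: the dictionary between twisted torsion and the dynamical zeta function, and especially the sign and singular-orbit corrections. One has to check that the correction factors are themselves determined profinitely and transform compatibly under $\mu$, since otherwise the coefficient comparison in Step 2 does not isolate trajectory counts.
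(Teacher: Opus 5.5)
Your architecture is the route of Liu's argument, which the paper cites rather than reproves: twisted Lefschetz zeta functions, a $\mu$-twisted profinite correspondence of twisted Alexander polynomials, character orthogonality, separation in one finite quotient, and a compactness upgrade. The gap is that Step~2, which carries all the content, does not follow from what you invoke.

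Choose $t_i\in\pi_1M_i$ with $\psi_i(t_i)=1$. For $q_1=q_2\circ\Phi$ and a representation $\rho$ of $Q$ over $\mathcal O=\Z[\zeta_{|Q|}]$, cohomological goodness gives an $\widehat{\mathcal O}$-isomorphism $\widehat{\mathcal O}\otimes H_k(S_1;\mathcal O^n_{\rho q_1})\cong\widehat{\mathcal O}\otimes H_k(S_2;\mathcal O^n_{\rho})$. Since $\Phi(t_1)\in t_2^{\mu}\,\widehat{\pi_1S_2}$, this isomorphism intertwines the monodromy $T_1$ with the profinite power $T_2^{\mu}$. Write $\Lambda_i(m,\rho)=\sum_{x\in\mathrm{Fix}(f_i^m)}\mathrm{ind}(f_i^m,x)\,\chi_\rho(q_i(g_x))$ for the twisted Lefschetz number. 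All you obtain is $\Lambda_1(m,\rho)=\sum_k(-1)^k\tr(T_{2,k}^{m\mu})\in\widehat{\mathcal O}$.

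What orthogonality actually needs is $\Lambda_1(m,\rho)=\sigma_\mu(\Lambda_2(m,\rho))$, where $\sigma_\mu$ is the Galois automorphism $\zeta\mapsto\zeta^{\mu}$ of $\Q(\zeta_{|Q|})$. This is what your ``$\tr\rho(q_1(g))\leftrightarrow\tr\rho(q_2(h^{\mu}))$'' amounts to, since $\chi_\rho(h^{\mu})=\sigma_\mu(\chi_\rho(h))$. The variable $t$ must not be replaced by $t^{\mu}$: periods are preserved, and the twist acts on coefficients only.

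Passing from $\tr(T_2^{m\mu})$ to $\sigma_\mu(\tr T_2^{m})$ is automatic for root-of-unity eigenvalues, as on $H_0$ and $H_2$, but not for the algebraic eigenvalues on $H_1$. Moreover, values of twisted Alexander polynomials at roots of unity are torsions, not homology invariants, so goodness does not hand them to you. Supplying this identity is exactly the profinite correspondence of twisted Alexander polynomials that the paper takes from Liu. You cannot sidestep it by assuming regularity (for $\mu=1$ it would be immediate from $T_1\sim T_2$), because the paper uses this proposition with general $\mu$ to prove $\mu=\pm1$ in \autoref{lem: crossfibered regular}.

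A secondary point: orthogonality yields Lefschetz-index-weighted counts over the fixed points of $f^d$ whose loops land in a given class, not numbers of trajectories. Your separation in Step~3 rescues this, but only with two facts you should state. First, a pseudo-Anosov map of a closed surface has no fixed point of index $0$ for any iterate: indices are $\pm1$ or $1-p$ with $p\ge 3$, since there are no $1$-prongs. Second, each Nielsen class of $f^d$ is a single point, so distinct periodic orbits are never freely homotopic and no cancellation occurs within a conjugacy class. With indices built into the zeta function, the twisted Lefschetz formula and Fried's identity are exact on a closed manifold. The ``boundary/prong correction factors'' you single out as the main obstacle are therefore not where the difficulty lies; the $\mu$-twist is. Once these points are supplied, your stabilization and the compactness upgrade to conjugacy in $\widehat{\pi_1M_2}$ go through as you describe.
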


In addition to fibering, \autoref{prop: match up trajectory} implies that crossfibering is also a profinite invariant for closed hyperbolic 3-manifolds, which we explain as follows.

\begin{sloppypar}
\begin{proposition}\label{prop: detect crossfibering}
Suppose that $M_1$ and $M_2$ are closed orientable hyperbolic 3-manifolds, and $\Phi: \widehat{\pi_1M_1}\to \widehat{\pi_1M_2}$ is an isomorphism. Then, $M_1$ is a crossfibered manifold if and only if $M_2$ is a crossfibered manifold. 

Moreover, suppose $(\mu,f)$ is a homological datum of $\Phi$, and   $(M_1,\psi_{\mathrm{A},1},\psi_{\mathrm{F},1},\gamma_1)$ is a crossfibering datum of $M_1$. Then, there exists  a crossfibering datum $(M_2,\psi_{\mathrm{A},2},\psi_{\mathrm{F},2},\gamma_2)$ of $M_2$ such that $\psi_{\mathrm{A},1}=f^\ast(\psi_{\mathrm{A},2})$, $\psi_{\mathrm{F},1}=f^\ast(\psi_{\mathrm{F},2})$, and for conjugacy representatives $g_i\in \pi_1M_i$ of $\gamma_i$, $\Phi(g_1)$ is conjugate to $g_2^\mu$ in $\widehat{\pi_1M_2}$. 
\end{proposition}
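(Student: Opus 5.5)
The plan is to prove the ``moreover'' statement directly; the first assertion then follows from it together with symmetry, by applying it to $\Phi^{-1}$, whose homological datum is $(\mu^{-1},f^{-1})$.

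Fix a homological datum $(\mu,f)$ of $\Phi$ and a crossfibering datum $(M_1,\psiAyi,\psiFyi,\gamma_1)$. We set $\psiAer=(f^\ast)^{-1}(\psiAyi)$ and $\psiFer=(f^\ast)^{-1}(\psiFyi)$. By \autoref{prop: Liu Zx-regular}~\ref{LiuReg2}, both are fibered classes in $H^1(M_2;\Z)$. Since $f^\ast$ is an isomorphism of lattices $H^1(M_2;\Z)\to H^1(M_1;\Z)$, these classes are primitive, and they are distinct because $\psiAyi\neq\psiFyi$. Next, we apply \autoref{prop: match up trajectory} to the fibered class $\psiAyi$. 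This gives a bijection $\tau:\Traj(M_1,\psiAyi)\to\Traj(M_2,\psiAer)$, and we set $\gamma_2=\tau(\gamma_1)$. For conjugacy representatives $g_1,g_2$, we then have that $\Phi(g_1)$ is conjugate to $g_2^\mu$ in $\widehat{\pi_1M_2}$, which is the required matching.

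It remains to check that $\gamma_2$ can be freely homotoped onto $S_{M_2,\psiFer}$, that is, $\langle\psiFer,[\gamma_2]\rangle=0$. We pass to profinite abelianizations. The image of $g_1$ in $\tensor H_1(M_1;\Z)$ is $1\otimes[\gamma_1]$, and the image of $\Phi(g_1)$ equals that of $g_2^\mu$, namely $\mu\otimes[\gamma_2]$. The identity $\Ab{\Phi}=\mu\otimes f$ then gives $\mu\otimes f([\gamma_1])=\mu\otimes[\gamma_2]$. Since $\mu$ is a unit and $H_1(M_2;\Z)\to\tensor H_1(M_2;\Z)$ is injective, this yields $f([\gamma_1])=[\gamma_2]$ in $H_1(M_2;\Z)$, at least modulo torsion; torsion does not affect pairings with integral cohomology classes. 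Hence
$$\langle\psiFer,[\gamma_2]\rangle=\langle\psiFer,f[\gamma_1]\rangle=\langle f^\ast\psiFer,[\gamma_1]\rangle=\langle\psiFyi,[\gamma_1]\rangle=0.$$
By the discussion in \autoref{sec: Thurston norm}, $\gamma_2$ is therefore freely homotopic onto the fiber dual to $\psiFer$.

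The only delicate point is the bookkeeping of the identifications. One must make sure that $\Ab{\Phi}$ sends the class of a conjugacy class to the correct abelian class, so that conjugation and $\mu$-powers pass correctly to $\widehat{\Z}$-homology. One must also confirm that primitivity and distinctness transfer under $(f^\ast)^{-1}$. Both are routine, so the main content is supplied by \autoref{prop: match up trajectory}.
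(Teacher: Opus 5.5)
Your proposal is correct and follows essentially the same route as the paper. Both arguments set $\psiAer=(f^\ast)^{-1}(\psi_{\mathrm{A},1})$ and $\psiFer=(f^\ast)^{-1}(\psiFyi)$, take $\gamma_2=\tau(\gamma_1)$ from \autoref{prop: match up trajectory}, recover $f([\gamma_1])=[\gamma_2]$ from $\Ab{\Phi}=\mu\otimes f$ with $\mu\in\Zx$, and conclude $\langle\psiFer,[\gamma_2]\rangle=\langle\psiFyi,[\gamma_1]\rangle=0$. Your hedge ``at least modulo torsion'' is unnecessary: the injectivity of $H_1(M_2;\Z)\to\tensor H_1(M_2;\Z)$, which you already invoke, gives the equality on the nose.
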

\end{sloppypar}

\begin{proof}
By symmetry, it suffices to assume  that $M_1$ is crossfibered and prove that $M_2$ is also crossfibered. Let $(\mu,f)$ denote a homological datum of $\Phi$.  We fix a crossfibering datum $(M_1,\psi_{\mathrm{A},1},\psi_{\mathrm{F},1},\gamma_1)$ of $M_1$. Then, \autoref{prop: Liu Zx-regular} implies that $(f^{\ast})^{-1}(\psi_{\mathrm{A},1})$ and $(f^{\ast})^{-1}(\psi_{\mathrm{F},1})$ are also  primitive  fibered classes, which we define as $\psi_{\mathrm{A},2}$ and $\psi_{\mathrm{F},2}$ respectively. Let $\tau:\Traj(M_1,\psi_{\mathrm{A},1}) \to \Traj(M_2,\psi_{\mathrm{A},2})$ be the bijection given by \autoref{prop: match up trajectory}, and let $\gamma_2=\tau(\gamma_1)\in  \Traj(M_2,\psi_{\mathrm{A},2})$. Then, for conjugacy representatives $g_i\in \pi_1M_i$ of $\gamma_i$, $\Phi(g_1)$ is conjugate to $g_2^\mu$ in $\widehat{\pi_1M_2}$. 

Let $[\gamma_i]\in H_1(M_i;\Z)$ denote their homology classes. We first prove that $f([\gamma_1])=[\gamma_2]$. Indeed, $\Ab{\Phi}:\widehat{\Z}\otimes H_1(M_1;\Z) \to \widehat{\Z} \otimes H_1(M_2;\Z)$ sends $1\otimes [\gamma_1] $ to $\mu \otimes [\gamma_2]$, since the conjugation does not affect the abelianization. Since $\mu \in \Zx$, we can recover $f$ by $\mu^{-1}\cdot \Ab{\Phi}$. Then, $f$ sends $[\gamma_1]$ to $[\gamma_2]$. 

To show that $(M_2,\psi_{\mathrm{A},2},\psi_{\mathrm{F},2},\gamma_2)$ is a crossfibering datum of $M_2$, it suffices to show that $\gamma_2$ freely homotopes onto $S_{M_2,\psi_{\mathrm{F},2}}$, i.e.\ $\langle \psi_{\mathrm{F},2}, [\gamma_2]\rangle =0$. This is because
\begin{equation*}
\langle \psi_{\mathrm{F},2}, [\gamma_2]\rangle= \langle \psi_{\mathrm{F},2}, f([\gamma_1])\rangle = \langle  f^\ast (\psi_{\mathrm{F},2}),  [\gamma_1] \rangle = \langle \psi_{\mathrm{F},1} , [\gamma_1]\rangle =0. \qedhere
\end{equation*}
\end{proof}

\begin{convention}\label{conv: crossfibering 1}
 We will refer to $(M_1,\psi_{\mathrm{A},1},\psi_{\mathrm{F},1},\gamma_1)$ and $(M_2,\psi_{\mathrm{A},2},\psi_{\mathrm{F},2},\gamma_2)$ appearing in \autoref{prop: detect crossfibering} as a {\em $(\Phi,\mu,f)$-corresponding pair} of crossfibering data. 
\end{convention}

\subsection{Proof of regularity}

We shall first prove \autoref{thm: regular} for the special case that $\Gamma_1$ and $\Gamma_2$ are fundamental groups of crossfibered manifolds. The proof is based on \autoref{cor: surface coefficient}. 
\begin{lemma}\label{lem: crossfibered regular}
Suppose $M_1$ and $M_2$ are crossfibered manifolds, and $\Phi:\widehat{\pi_1M_1}\to \widehat{\pi_1M_2}$ is an isomorphism. Then, $\Phi$ is regular. 
\end{lemma}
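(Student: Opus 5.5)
We argue that the homological datum $(\mu,f)$ of $\Phi$ from \autoref{prop: Liu Zx-regular} has $\mu=\pm1$, by computing the image of the fundamental class of the facial fiber in two ways. Fix a crossfibering datum $(M_1,\psiAyi,\psiFyi,\gamma_1)$. By \autoref{prop: detect crossfibering}, choose a $(\Phi,\mu,f)$-corresponding datum $(M_2,\psiAer,\psiFer,\gamma_2)$, with $\psiFyi=f^\ast\psiFer$ and with $\Phi(g_1)$ conjugate to $g_2^\mu$ for representatives $g_i$ of $\gamma_i$. Let $S_i=S_{M_i,\psi_{\mathrm{F},i}}$, which is connected since $\psi_{\mathrm{F},i}$ is primitive, and let $K_i=\pi_1S_i=\ker(\psi_{\mathrm{F},i})$. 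Because $\Ab{\Phi}=\mu\otimes f$ and $f^\ast\psiFer=\psiFyi$, the composite $\widehat{\psiFer}\circ\Phi$ equals $\mu\cdot\widehat{\psiFyi}$ as maps to $\widehat{\Z}$. Since $\mu$ is a unit, $\Phi(\ker\widehat{\psiFyi})=\ker\widehat{\psiFer}$. Using \autoref{prop: right exact} together with \autoref{prop: trivial center injective} (or LERF), we have $\ker\widehat{\psi_{\mathrm{F},i}}=\overline{K_i}\cong\widehat{K_i}$. Hence $\Phi$ restricts to an isomorphism $\phi\colon\widehat{\pi_1S_1}\to\widehat{\pi_1S_2}$, and $\Phi$ induces $\mu$ on the quotient $\widehat{\Z}$.

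Next we place $\gamma_i$ in the fiber. Since $\langle\psi_{\mathrm{F},i},[\gamma_i]\rangle=0$, we may take $g_i\in K_i$. The element $\Phi(g_1)$ is conjugate to $g_2^\mu$ in $\widehat{\pi_1M_2}$. We want the conjugator to lie in $\overline{K_2}$, up to an element of $\pi_1M_2$ that we absorb into $g_2$. Write the conjugator as $\hat k\, t^{\lambda}$ with $\hat k\in\overline{K_2}$, $t$ a lift of the generator of $\Z$, and $\lambda\in\widehat{\Z}$. We must then show that $t^\lambda g_2^\mu t^{-\lambda}$ is conjugate in $\overline{K_2}$ to $h^\mu$ for some $h\in K_2$. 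If $\lambda\in\Z$ this is immediate, replacing $g_2$ by $t^\lambda g_2t^{-\lambda}$. This is the delicate point. To handle general $\lambda$, I would pass to a cyclic cover $M_i'$ dual to $\psi_{\mathrm{F},i}$ of large degree $n$, chosen $\Phi$-correspondingly; this is possible since $\Phi$ matches the kernels of $\psi\bmod n$. Alternatively, I would apply \autoref{cor: surface coefficient} with $b_2=t^{\lambda}g_2t^{-\lambda}$ interpreted via the profinite monodromy. If neither works, the fallback is to note that \autoref{cor: surface coefficient} only uses that $\phi(b_1)$ is conjugate to $b_2^\mu$ with $b_2$ genuine, and that its proof passes to characteristic covers. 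I would prove a small variant in which $b_2$ is allowed to be conjugate in $\widehat{\pi_1M_2}$ into $\overline{K_2}$ with a genuine representative; the needed conjugacy can come from conjugacy separability (\autoref{prop: conjugacy separable}) applied to $g_2$.

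Granting this, \autoref{cor: surface coefficient} gives $\phi_\ast(1\otimes[S_1])=\pm\mu\otimes[S_2]$. Now compute the same class through $M_i$. By Poincaré duality, $[S_i]\in H_2(M_i;\Z)$ is dual to $\psi_{\mathrm{F},i}$. The profinite Lyndon–Hochschild–Serre (Gysin/Wang) sequence identifies the image of $\H_2(\widehat{K_i};\widehat{\Z})$ in the relevant quotient with cap product by the fundamental class. Cohomological goodness (\autoref{prop: good} and \autoref{prop: Zhat homology}) transfers this to $\widehat{\Z}\otimes$ ordinary homology. We have $\Phi_\ast[M_1]=\kappa[M_2]$ for some $\kappa\in\Zx$, and by naturality of cap products (\autoref{nat1}) applied to $\Phi^\ast\widehat{\psiFer}=\mu\widehat{\psiFyi}$ we get
\[\Phi_\ast\bigl([M_1]\cap\psiFyi\bigr)=\kappa\mu^{-1}\,[M_2]\cap\psiFer .\]
Thus $\phi_\ast[S_1]=\kappa\mu^{-1}[S_2]$. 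The same argument in degree $3\to2$ on $H_1$ classes, pairing $\Ab{\Phi}=\mu\otimes f$ with $\PD$ as in \autoref{prop: surface homology coefficient}, gives $\kappa=\pm\mu^{2}$ or another relation. More simply, the Wang sequence gives that $\Phi_\ast[M_1]$ maps to $\phi_\ast[S_1]$ times the effect on the circle factor, which is $\mu$, so $\kappa=\mu\cdot(\pm\mu)$.

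Combining this with the duality relation $\kappa\cdot\mu^{-1}=\pm\mu$ only reproduces the same relation. To get a genuine constraint I would instead use that $\Phi$ also preserves the axial fibered class. The fiber $S_{\mathrm A}$ carries no trajectory information, but $\Phi_\ast[M_1]=\kappa[M_2]$ and the $H_2$-class of $S_{\mathrm F}$ lies in the integral lattice $H_2(M_2;\Z)\cong H^1$, on which $\Phi_\ast$ acts by $\kappa\mu^{-1}(f^\ast)^{-1}$. Comparing with $\Phi_\ast$ on $H_2=\mathrm{PD}(H^1)$, which is forced to be $\kappa\mu^{-1}$ times the integral isomorphism dual to $f$, the surface computation yields $\kappa\mu^{-1}=\pm\mu$ while the Wang computation yields $\kappa=\mu\cdot(\pm\mu)$. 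The decisive extra input should be that $\phi_\ast$ on $\H_2$ of the fiber equals the restriction of $\Phi_\ast$ followed by the transfer identification with scalar $1$ rather than $\mu$. That gives $\kappa\mu^{-1}\cdot\mu^{-1}$ versus $\pm\mu$, hence $\mu^{2}=\pm\mu\kappa^{-1}\mu^{2}$, from which $\mu=\pm1$ should follow. I expect this bookkeeping of the scalar on the circle direction, together with the conjugator issue in the second paragraph, to be the main obstacle.
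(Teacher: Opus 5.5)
Your proposal has a genuine gap at the decisive step. You never determine the scalar $\kappa\in\Zx$ with $\Phi_\ast(1\otimes[M_1])=\kappa\otimes[M_2]$ by any means independent of the facial fiber. The two relations you actually establish are:
\begin{itemize}
\item $\phi_\ast[S_1]=\pm\mu[S_2]$, from \autoref{cor: surface coefficient}, which is where the crossfibering trajectory enters;
\item $\Phi_\ast[S_1]=\kappa\mu^{-1}[S_2]$, from naturality of cap products together with $\Phi^\ast[\psiFer]_n=\mu[\psiFyi]_n$.
\end{itemize}
Your Wang-sequence ``circle factor'' count is just the second relation again. Together they yield only $\kappa=\pm\mu^2$, which every $\mu\in\Zx$ satisfies, as you yourself noticed.

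The ``decisive extra input'' in your last paragraph does not exist. There is no identification on the fiber with scalar $1$ instead of $\mu$: $\Phi$ really induces multiplication by $\mu$ on the quotient $\widehat{\Z}$, so $\Phi_\ast[S_1]=\kappa\mu^{-1}[S_2]$ is forced. The axial class plays no role either.

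What is missing is Liu's degree computation \cite[Lemma 5.3]{Liu25} (see also \cite[Proposition 6.5]{Xu25}). It says that for every isomorphism between profinite completions of closed hyperbolic $3$-manifold groups with homological datum $(\mu,f)$, crossfibered or not, $\kappa=\pm\mu^3$. This is exactly the input the paper's proof imports. With it, $\pm\mu^3=\kappa=(\pm\mu)\mu$, hence $\mu=\pm1$ because $\mu$ is a unit. The cancellation uses that $[S_2]$ is primitive, so $[S_2]_n$ has order $n$ in $H_2(M_2;\Z/n)$.

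The conjugator issue in your second paragraph, by contrast, is not an obstacle. Replace $\Phi$ by $\Inn_c^{-1}\circ\Phi$ for a suitable $c\in\widehat{\pi_1M_2}$. This does not change $\Ab{\Phi}$, hence changes neither $(\mu,f)$ nor the conclusion. Since $\overline{\pi_1S_2}=\ker\widehat{\psiFer}$ is normal, the new map still restricts to an isomorphism $\widehat{\pi_1S_1}\to\widehat{\pi_1S_2}$. So you may assume $\Phi(g_1)=g_2^\mu$ on the nose and run every computation, including the one for $\kappa$, for this modified $\Phi$; this is what the paper does. There is no need to split the conjugator as $\hat k\,t^\lambda$, pass to cyclic covers, or invoke conjugacy separability. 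The last would not apply anyway, since $g_2^\mu\notin\pi_1M_2$ in general.
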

\begin{proof}
Let $(\mu,f)$ be a homological datum of $\Phi$. It suffices to prove that $\mu=\pm1$.  Let $(M_1,\psiAyi,\psiFyi, \gamma_1)$ and $(M_2,\psiAer,\psiFer,\gamma_2)$ be a $(\Phi,\mu,f)$-corresponding pair of crossfibering data given by \autoref{prop: detect crossfibering}. Let $g_i\in \pi_1M_i$ be a conjugacy representative of $\gamma_i\in \Traj(M_i,\psi_{\mathrm{A},i})$. We may possibly compose $\Phi$ with an inner automorphism of $\widehat{\pi_1M_2}$, which does not affect the abelianization $\Ab{\Phi}$ (and hence the homological datum),  so that we may assume without loss of generality that $\Phi(g_1)=g_2^\mu$. Denote by $S_{i}=S_{M_i,\psi_{\mathrm{F},i}}$ the fiber surface dual to $\psi_{\mathrm{F},i}$. 

Let us fix orientations on $M_1$ and $M_2$, and denote the corresponding fundamental classes by $[M_i]\in H_3(M_i;\Z)$. Fix appropriate orientations on $S_i$, whose fundamental classes are denoted by $[S_i]\in   H_2(M_i;\Z)$, such that $ [M_i]\cap\psi_{\mathrm{F},i}=[S_i]$, where ``$\cap$'' denotes the cap product.  We now study how the isomorphism $\Phi$ matches up $[M_i]$, $[S_i]$ and $\psi_{\mathrm{F},i}$. Following the notation in \autoref{prop: surface homology coefficient}, for any $n\in \N$ and any $[\omega]$ in $H^\ast(M_i;\Z)$ or $H_\ast(M_i;\Z)$, let $[\omega]_n$ denote its image in $H^\ast(M_i;\Z/n)$ or $H_\ast(M_i;\Z/n)$.

Firstly, $\Phi$ induces an isomorphism $$\Phi_\ast: \tensor H_3(M_1;\Z)\cong \H_3(\widehat{\pi_1M_1};\widehat{\Z}) \tto \H_3(\widehat{\pi_1M_2};\widehat{\Z}) \cong \tensor H_3(M_2;\Z).$$
In particular, there exists $\lambda_3\in \Zx$ such that $\Phi_\ast(1\otimes [M_1])=\lambda_3\otimes [M_2]$. According to \cite[Lemma 5.3]{Liu25} (see also \cite[Proposition 6.5]{Xu25}), $\lambda_3=\pm \mu^3\in \Zx$. 

Secondly, since $\Ab{\Phi}$ decomposes as $\mu\otimes f$, we have the following commutative diagram. 
\begin{equation*}
\begin{tikzcd}[column sep=0.4cm]
\widehat{\pi_1M_1} \arrow[d, "\Phi"'] \arrow[rrrr, two heads] & &  &  & \widehat{\pi_1M_1}{}^{\mathrm{Ab}} \arrow[d, "\Ab{\Phi}"'] \arrow[r, symbol=\cong] & \tensor H_1(M_1;\Z) \arrow[rrrr, "{1\otimes \psi_{\mathrm{F},1}}", two heads] \arrow[d, "\mu\otimes f"] &  & & & \widehat{\Z} \arrow[d, "\mu"] \\
\widehat{\pi_1M_2} \arrow[rrrr, two heads]                    & & &  & \widehat{\pi_1M_2}{}^{\mathrm{Ab}} \arrow[r,symbol=\cong]                         & \tensor H_1(M_2;\Z) \arrow[rrrr, "{1\otimes \psi_{\mathrm{F},2}}", two heads]                           &  & & & \widehat{\Z}                 
\end{tikzcd}
\end{equation*}
Composing with the quotient map $\widehat{\Z}\to \Z/n\Z$, we have the following commutative diagram for each $n\in \N$:  
\begin{equation}\label{diag: regular in mu}
\begin{tikzcd}
\widehat{\pi_1M_1} \arrow[d, "\Phi"',"\cong"] \arrow[r, "{\widehat{\psi_{\mathrm{F},1}}}", two heads] & \widehat{\Z} \arrow[d, "\mu"',"\cong" ] \arrow[r, two heads] & \Z/n \arrow[d, "\mu"',"\cong"] \\
\widehat{\pi_1M_2} \arrow[r, "{\widehat{\psi_{\mathrm{F},2}}}", two heads]                    & \widehat{\Z} \arrow[r, two heads]                  & \Z/n                 
\end{tikzcd}
\end{equation}
where we view $\psi_{\mathrm{F},i}$ as elements in $\Hom(\pi_1M_i,\Z)$. By \cite[Lemma 6.8.1]{RZ10}, we can identify $\H^1(\widehat{\pi_1M_i};\Z/n)$ with $\Hom(\widehat{\pi_1M_i},\Z/n)$. Then, the above commutative diagram implies that for each $n\in \N$, 
$$
\Phi^\ast: H^1(M_2;\Z/n)\cong \H^1(\widehat{\pi_1M_2};\Z/n) \tto \H^1(\widehat{\pi_1M_1};\Z/n)  \cong H^1(M_1;\Z/n)
$$
sends $[\psi_{\mathrm{F},2}]_n$ to $\mu[\psi_{\mathrm{F},1}]_n$. 

Thirdly, the fibrations induce  short exact sequences
\begin{equation*}
\begin{tikzcd}
1 \arrow[r] & \pi_1S_i \arrow[r,"\mathrm{incl}"] & \pi_1M_i \arrow[r,"\psi_{\mathrm{F},i}"] & \Z \arrow[r] & 1,
\end{tikzcd}
\end{equation*}
where one may choose basepoints belonging to $S_i$ for justification. According to \cite[p.16, Exercise 2]{Ser01} (or alternatively, \autoref{prop: trivial center injective} and \autoref{thm: lattice center free}), we have the following short exact sequence: 
\begin{equation*}
\begin{tikzcd}
1 \arrow[r] & \widehat{\pi_1S_i} \arrow[r] & \widehat{\pi_1M_i} \arrow[r,"\widehat{\psi_{\mathrm{F},i}}"] & \widehat{\Z} \arrow[r] & 1. 
\end{tikzcd}
\end{equation*}
In other words, $\ker(\widehat{\psi_{\mathrm{F},i}})=\overline {\pi_1S_i}$ is exactly $\widehat{\pi_1S_i}$. The left block of diagram (\ref{diag: regular in mu}) implies that $\Phi(\ker(\widehat{\psi_{\mathrm{F},1}}))=\ker(\widehat{\psi_{\mathrm{F},2}})$, so we obtain a commutative diagram of short exact sequences
\begin{equation}\label{equ: ses commutative crossfibering}
\begin{tikzcd}
1 \arrow[r] & \widehat{\pi_1S_1} \arrow[r] \arrow[d, "\phi"',"\cong"] & \widehat{\pi_1M_1} \arrow[r] \arrow[d, "\Phi"',"\cong"] & \widehat{\Z} \arrow[d, "\mu"',"\cong"] \arrow[r] & 1 \\
1 \arrow[r] & \widehat{\pi_1S_2} \arrow[r]                    & \widehat{\pi_1M_2} \arrow[r]                    & \widehat{\Z} \arrow[r]                   & 1
\end{tikzcd}
\end{equation}
where $\phi:\widehat{\pi_1S_1}\to \widehat{\pi_1S_2}$ is the restriction of $\Phi$. 

The isomorphism $\phi$ induces an isomorphism
$$
\phi_\ast: \tensor H_2(S_1;\Z)\cong \H_2(\widehat{\pi_1S_1};\widehat{\Z}) \tto \H_2(\widehat{\pi_1S_2};\widehat{\Z}) \cong \tensor H_2(S_2;\Z), 
$$
so there exists $\lambda_2\in \Zx$  such that $\phi_\ast(1\otimes [S_1])=\lambda_2\otimes [S_2]$. 
By construction, $g_1\in \pi_1S_1$ and $g_2\in \pi_1S_2$ are non-trivial elements  and $\Phi(g_1)=g_2^\mu$, so $\phi(g_1)=g_2^\mu$. Therefore,   \autoref{cor: surface coefficient} implies that  $\lambda_2=\pm \mu$. In addition, the following diagram commutes. 
\begin{equation*}
\begin{tikzcd}[column sep=0.3cm]
\tensor H_2(S_1;\Z) \arrow[r,symbol=\cong] & \H_2(\widehat{\pi_1S_1};\widehat{\Z}) \arrow[d, "\phi_\ast"'] \arrow[rrrr] &  &  &  & \H_2(\widehat{\pi_1M_1};\widehat{\Z}) \arrow[r,symbol=\cong] \arrow[d, "\Phi_\ast"] & \tensor H_2(M_1;\Z) \\
\tensor H_2(S_2;\Z) \arrow[r,symbol=\cong] & \H_2(\widehat{\pi_1S_2};\widehat{\Z}) \arrow[rrrr]                         &  &  &  & \H_2(\widehat{\pi_1M_2};\widehat{\Z}) \arrow[r,symbol=\cong]                        & \tensor H_2(M_2;\Z)
\end{tikzcd}
\end{equation*}
Hence, $\Phi_\ast$ also sends $1\otimes [S_1]$ to $\lambda_2\otimes[S_2]$. 

Now, we consider the cap products between them. 
For any $n\in \N$, combining \autoref{nat1} and \autoref{nat2}, we have the following commutative diagram. 
\begin{equation*}
\begin{tikzcd}[column sep=tiny]
{H_3( M_1;\Z/n)} \arrow[r, symbol=\times]                                                                \arrow[d, "\cong"']& {H^1( M_1;\Z/n)} \arrow[rrrr, "\cap"]                &  &  &  & {H_2( M_1;\Z/n)} \arrow[d, "\cong"]               \\
{\H_3(\widehat{\pi_1 M_1};\Z/n)} \arrow[r, symbol=\times]         \arrow[d, "\Phi_\ast"']                & {\H^1(\widehat{\pi_1 M_1};\Z/n)} \arrow[rrrr, "\cap"] \arrow[u, "\cong"]  &  &  &  & {\H_2(\widehat{\pi_1 M_1};\Z/n)} \arrow[d, "\Phi_\ast"] \\
{\H_3(\widehat{\pi_1 M_2};\Z/n)}\arrow[r, symbol=\times] & {\H^1(\widehat{\pi_1 M_2};\Z/n)} \arrow[u, "\Phi^\ast"]  \arrow[d, "\cong"'] \arrow[rrrr, "\cap"]                         &  &  &  & {\H_2(\widehat{\pi_1 M_2};\Z/n)}                        \\
{H_3( M_2;\Z/n)} \arrow[r, symbol=\times]                                                               \arrow[u, "\cong"]  & {H^1( M_2;\Z/n)} \arrow[rrrr, "\cap"]                &  &  &  & {H_2( M_2;\Z/n)} \arrow[u, "\cong"']             
\end{tikzcd}
\end{equation*}
Recall that $[M_i]_n\cap [\psi_{\mathrm{F},i}]_n=[S_i]_n$. Hence, 
\begin{equation*}
\begin{aligned}
\lambda_3[S_2]_n=&\lambda_3[M_2]_n\cap [\psi_{\mathrm{F},2}]_n =\Phi_\ast([M_1]_n)\cap  [\psi_{\mathrm{F},2}]_n = \Phi_\ast([M_1]_n\cap\Phi^\ast([\psi_{\mathrm{F},2}]_n ) )\\  =& \Phi_\ast([M_1]_n\cap \mu [\psi_{\mathrm{F},1}]_n )= \Phi_\ast (\mu[S_1]_n)= \lambda_2\mu[S_2]_n. 
\end{aligned}
\end{equation*}
Note that $[S_2]\in H_2(M_2;\Z)$ is a primitive class in the free abelian group $ H_2(M_2;\Z)$, so the universal coefficient theorem implies that $[S_2]_n\in H_2(M_2;\Z/n)$ has order $n$. As a consequence, $\lambda_3\equiv \lambda_2\mu\,(\mathrm{mod}\,{n})$ for every $n\in \N$. In other words, $\lambda_3=\lambda_2\mu $ in $\widehat{\Z}$. 


Recall that $\lambda_3=\pm \mu^3$ and $\lambda_2=\pm \mu$ in $\widehat{\Z}$, so $\mu^3=\pm \mu^2$ in $\widehat{\Z}$. Since $\mu\in \Zx$, we deduce that $\mu=\pm1$. Therefore, $\Phi$ is regular. 
\end{proof}

\begin{convention}\label{conv: crossfibering 2}
According to \autoref{lem: crossfibered regular}, if $M_1$ and $M_2$ are crossfibered manifolds and $\Phi: \widehat{\pi_1M_1}\to \widehat{\pi_1M_2}$ is an isomorphism, there is a preferred choice of its homological datum $(\mu,f)$, in which $\mu=1$ and $\Ab{\Phi}=\widehat{f}$. By this convention, we shall abbreviate a $(\Phi,1,f)$-corresponding pair of crossfibering data into a {\em $\Phi$-corresponding pair} of crossfibering data.   
\end{convention}

In order to address the problem of finite covers, we point out the following lemma. 
\begin{lemma}[{\cite[Sub-lemma 1]{Xu25}}]\label{virtually regular}
Suppose $\Gamma_1$ and $\Gamma_2$ are finitely generated groups. An isomorphism $\Phi:\widehat{\Gamma_1}\to \widehat{\Gamma_2}$ is regular if it is virtually regular. 
\end{lemma}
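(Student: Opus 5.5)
The plan is to compare the abelianizations of $\Gamma_1$ and $\Gamma_2$ with those of a $\Phi$-corresponding pair of finite-index subgroups via the transfer map. The main obstacle is that abelianization does not commute with restriction: the natural maps $\Gamma_i'{}^{\mathrm{ab}}\to\Gamma_i^{\mathrm{ab}}$ need not be surjective or injective. So regularity of $\Phi'$ does not immediately descend to regularity of $\Phi$.

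First I would set up notation. Suppose $\Gamma_1'\le\Gamma_1$ and $\Gamma_2'\le\Gamma_2$ form a $\Phi$-corresponding pair of finite-index subgroups, and the restriction $\Phi':\widehat{\Gamma_1'}\to\widehat{\Gamma_2'}$ is regular, say $\Ab{\Phi'}=\widehat{f'}$ with $f':\ab{\Gamma_1'}\to\ab{\Gamma_2'}$ an isomorphism. Since regularity concerns only the free parts up to torsion issues, I would first reduce to torsion-free quotients: $\ab{\Gamma_i}=\Z^{b}\oplus T_i$, and $\tensor\ab{\Gamma_i}=\widehat{\Z}^b\oplus T_i$. Any isomorphism of $\widehat{\Z}^b\oplus T_1\to\widehat{\Z}^b\oplus T_2$ is the completion of an abstract isomorphism exactly when the induced map on $\widehat{\Z}^b$ modulo torsion lies in the image of $\GL_b(\Z)$. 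The off-diagonal pieces $\widehat{\Z}^b\to T_2$ factor through finite quotients and are automatically completions, while $T_1\to\widehat{\Z}^b$ is zero. So it suffices to show that the induced map $\bar\Phi:\widehat{\Z}\otimes \ab{\Gamma_1}_{\mathrm{free}}\to \widehat{\Z}\otimes\ab{\Gamma_2}_{\mathrm{free}}$ carries the lattice $\ab{\Gamma_1}_{\mathrm{free}}$ onto $\ab{\Gamma_2}_{\mathrm{free}}$.

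Next I would use the two natural maps, inclusion and transfer, and check that both are compatible with $\Phi$. Inclusion induces $\iota_i:\ab{\Gamma_i'}\to\ab{\Gamma_i}$, and its completion commutes with $\Ab{\Phi'}$ and $\Ab{\Phi}$ because $\Phi$ restricts to $\Phi'$. Transfer gives $V_i:\ab{\Gamma_i}\to\ab{\Gamma_i'}$. There is a profinite transfer $\widehat{\Gamma_i}{}^{\mathrm{Ab}}\to\widehat{\Gamma_i'}{}^{\mathrm{Ab}}$, defined via coset representatives of the open subgroup (equivalently, corestriction/restriction on $\H_1(-;\widehat\Z)$). This profinite transfer is intrinsic to the pair $(\widehat{\Gamma_i},\overline{\Gamma_i'})$, so it also commutes with $\Phi$, and it restricts to the abstract transfer.

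The key rational step is to consider $\iota_i\circ V_i$, which equals multiplication by $n=[\Gamma_i:\Gamma_i']$ on $\ab{\Gamma_i}$. Hence on free parts, $\ab{\Gamma_i}_{\mathrm{free}}\otimes\Q$ embeds via $V_i$ as a direct summand of $\ab{\Gamma_i'}\otimes\Q$. Given $x\in\ab{\Gamma_1}_{\mathrm{free}}$, compute $\bar\Phi(nx)=\widehat{\iota_2}\,\widehat{f'}\,(V_1x)$, which lies in $\iota_2(\ab{\Gamma_2'})$ and hence in the integral lattice $\ab{\Gamma_2}_{\mathrm{free}}$ after killing torsion. Thus $n\bar\Phi$ maps $\ab{\Gamma_1}_{\mathrm{free}}$ into $\ab{\Gamma_2}_{\mathrm{free}}$, so $\bar\Phi$ maps it into $\frac1n\ab{\Gamma_2}_{\mathrm{free}}$. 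But $\bar\Phi$ is $\widehat\Z$-linear with image in $\widehat\Z\otimes\ab{\Gamma_2}_{\mathrm{free}}$, and $\frac1n\Z^b\cap\widehat\Z^b=\Z^b$ inside $\widehat{\Q}^b$. So $\bar\Phi(\ab{\Gamma_1}_{\mathrm{free}})\subseteq\ab{\Gamma_2}_{\mathrm{free}}$. By symmetry with $\Phi^{-1}$, which is also virtually regular, equality holds. Therefore $\bar\Phi$ is the completion of an abstract isomorphism $\Z^b\to\Z^b$.

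Finally I would assemble the torsion part as in the reduction above to produce $f:\ab{\Gamma_1}\to\ab{\Gamma_2}$ with $\widehat f=\Ab{\Phi}$. The hardest step is confirming that the profinite transfer commutes with $\Phi$ and restricts to the abstract one. I would verify this through the identification $\H_1(\widehat{\Gamma};\widehat\Z)\cong\tensor H_1(\Gamma;\Z)$ and the naturality of corestriction, using \autoref{prop: lattice of open subgroup} to match coset representatives.
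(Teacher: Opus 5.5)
Your proof is correct. The paper gives no argument for this lemma; it is imported from \cite[Sub-lemma 1]{Xu25}, so there is no in-paper proof to compare against. Your route is a complete and natural proof: produce an integral preimage of $nx$, use that an element of $\widehat{\Z}\otimes\Gamma_2^{\mathrm{ab}}$ with an integral $n$-th multiple is itself integral, then apply the same reasoning to $\Phi^{-1}$.

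Two simplifications are worth recording.

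First, the step you single out as the hardest is never used. That step is that a profinite transfer exists, commutes with $\Phi$, and restricts to the abstract transfer. Your key identity
\[
\Phi^{\mathrm{Ab}}(nx)=\Phi^{\mathrm{Ab}}\bigl(\widehat{\iota_1}(V_1x)\bigr)=\widehat{\iota_2}\bigl((\Phi')^{\mathrm{Ab}}(V_1x)\bigr)=\iota_2\bigl(f'(V_1x)\bigr)\in\Gamma_2^{\mathrm{ab}}
\]
needs only three things: the abstract relation $\iota_1\circ V_1=n$ on $\Gamma_1^{\mathrm{ab}}$, naturality of the inclusions, and $(\Phi')^{\mathrm{Ab}}=\widehat{f'}$. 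In fact the transfer can be dropped entirely. Since $\iota_1\bigl((\Gamma_1')^{\mathrm{ab}}\bigr)$ has finite index in $\Gamma_1^{\mathrm{ab}}$, some fixed $N$ satisfies $Nx\in\iota_1\bigl((\Gamma_1')^{\mathrm{ab}}\bigr)$ for all $x$.

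Second, you can avoid splitting into free and torsion parts and reassembling them afterwards. For a finitely generated abelian group $A$ of rank $b$, the quotient $(\widehat{\Z}\otimes A)/A\cong(\widehat{\Z}/\Z)^{b}$ is torsion-free. Hence $N\cdot\Phi^{\mathrm{Ab}}(x)\in\Gamma_2^{\mathrm{ab}}$ already gives $\Phi^{\mathrm{Ab}}(x)\in\Gamma_2^{\mathrm{ab}}$. The same holds for $\Phi^{-1}$, which is virtually regular via $(\Phi')^{-1}$. So $\Phi^{\mathrm{Ab}}$ restricts to an isomorphism $f\colon\Gamma_1^{\mathrm{ab}}\to\Gamma_2^{\mathrm{ab}}$, and $\widehat{f}=\Phi^{\mathrm{Ab}}$ by continuity and density.
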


We can now prove the complete version of \autoref{thm: regular}. 

\begin{proof}[Proof of  \autoref{thm: regular}]
First, consider the case that $\Gamma_1$ is a torsion-free uniform lattice. Then $\Gamma_2$ is also a torsion-free uniform lattice by \autoref{prop: torsion free}. Combining  the virtual crossfibering theorem (\autoref{thm: virtual crossfibering} and \autoref{prop: virc2}) with  \autoref{lem: crossfibered regular}, we deduce that $\Phi$ is virtually regular. Hence, $\Phi$ is regular according to \autoref{virtually regular}.

Next, according to \cite[Corollary 8.8]{Xu25}, the case that $\Gamma_1$ (and hence $\Gamma_2$) is   a torsion-free non-uniform lattice follows from the torsion-free uniform case, using the technique  of  hyperbolic Dehn fillings. 
Finally, the case with torsions follows from the torsion-free case by  Selberg's lemma  and \autoref{virtually regular}. 
\end{proof}

\section{Automorphism of profinite surface group II: drilling}\label{sec: drilling}
\subsection{Drilling $\Sigma\times \mathbb{S}^1$}\label{subsection: drill}
In this section, we consider the following setting. For $i=1,2$, let $\Sigma_i$ be a  closed orientable surface, and let $\alpha_i,\alpha_i'$ be two non-separating simple closed curves on $\Sigma_i$.  We identify $\mathbb{S}^1$ with $[0,1]/0\sim 1$ as before. The manifold $\Sigma_i\times \mathbb{S}^1$ contains a  link $L_i=\alpha_i\times\{0\}\cup \alpha_i'\times\{\frac{1}{2}\}$,  and we denote by  $$M_i=\left(\Sigma_i\times \mathbb{S}^1\right)\setminus n(L_i)$$  the exterior of $L_i$. 

Denote $F_i=\Sigma_i\setminus n(\alpha_i)$ and $F_i'=\Sigma_i \setminus n(\alpha_i')$. Let $M_{i,u}=M_i\cap \Sigma_i\times [0,\frac{1}{2}]$ and $M_{i,v}=M_{i}\cap \Sigma_i\times [\frac{1}{2},1]$. Note that both $M_{i,u}$ and $M_{i,v}$ are homeomorphic to $\Sigma_i\times I$. 
The manifold $M_i$ is obtained by gluing $M_{i,u}$ and $M_{i,v}$ along $F_i\times\{0\}$ and $F_{i}'\times \{\frac{1}{2}\}$, which yields a graph-of-space structure on $M_i$, see \autoref{fig: HNN2}. From this, we can derive a group  presentation of $\pi_1 M_i $, which we clarify as follows. 

Fix a point $x_i\in \Sigma_i\setminus (n(\alpha_i)\cup n(\alpha_i'))$, and let $\overline{x}_i=(x_i,0)$ be the basepoint of $M_i$.  
Let $t_i=x_i\times \mathbb{S}^1$ be a loop based at $\overline{x}_i$, oriented along $\mathbb{S}^1$ that goes from $0$ to $1$, and $t_i$ is viewed as an element in $\pi_1(M_i,\overline{x}_i)$. 
Then, $\pi_1(M_i,\overline{x}_i)$ is generated by $\pi_1(M_{i,u},\overline{x}_i)$, $\pi_1(M_{i,v},\overline{x}_i)$,  and $t_i$. 
There are apparent isomorphisms $\pi_1(M_{i,u},\overline{x}_i)\cong \pi_1(\Sigma_i,x_i)$ and  $\pi_1(M_{i,v},\overline{x}_i)\cong \pi_1(\Sigma_i,x_i)$ induced by projecting onto the $\Sigma_i$ factor. We  identify $\pi_1(\Sigma_i,x_i)$ with $\pi_1(M_{i,u},\overline{x}_i) $, and we denote by $\pi_1(\Sigma_i,x_i)^\star$ a copy of $\pi_1(\Sigma_i,x_i)$ which we identify with  $\pi_1(M_{i,v},\overline{x}_i) $. 
The copy of an element $h\in \pi_1(\Sigma_i,x_i)$ in $\pi_1(\Sigma_i,x_i)^\star$ is denoted by $h^\star$. 
Similar to \autoref{sec: cut and glue}, we identify $\pi_1(F_i,x_i)$ and $\pi_1(F_i',x_i)$ as subgroups in $\pi_1(\Sigma_i,x_i)$. 
By van-Kampen's theorem, we derive the following presentation for $\pi_1(M_i,\overline{x}_i)$. 
\begin{equation}\label{presentation}
\pi_1(M_i,\overline{x}_i)=\Big \langle \pi_1(\Sigma_i,x_i),\pi_1(\Sigma_i,x_i)^\star , t_i \,\Big |\, \, {\scalebox{0.9}{$ \begin{gathered} g^\star=g \;\left(g\in \pi_1(F_i,x_i)\right),\\ t_ih^\star t^{-1}_i=h\;\left(h\in \pi_1(F_i',x_i)\right)\end{gathered}$}} \Big\rangle.
\end{equation}

\begin{figure}[ht!]
\centering
\includegraphics[width=12cm]{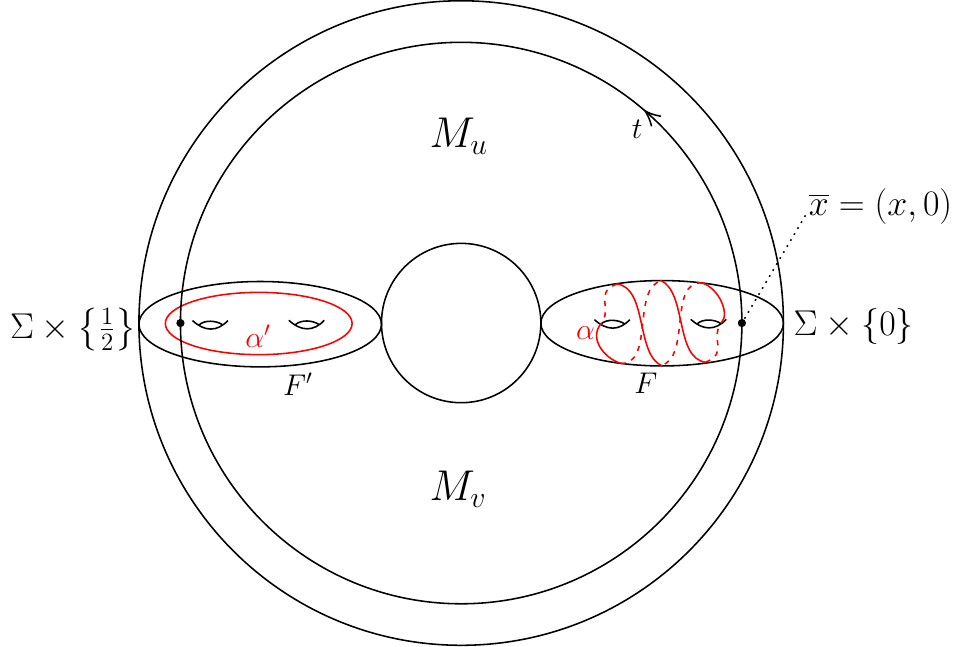}
\caption{The splitting of $\pi_1(M_i,\overline{x}_i)$ (subscript $i$ omitted)}
\label{fig: HNN2}
\end{figure}

The {\em meridians} of the link $L_i$ refer to the (free homotopy classes of) simple loops on $\partial M_i$ that bound a disk  in $\overline{n}(L_i) $. We will equip the meridians with  arbitrary orientations, and the orientations do not affect subsequent arguments.  
Now, we find conjugacy representatives in the presentation (\ref{presentation}) for the meridians of  $L_i$. Fix an oriented simple loop $\gamma_i$ (resp.\  $\gamma_i'$) on $\Sigma_i$ based at $x_i$ which intersects with $\alpha_i$ (resp.\  $\alpha_i'$) transversely at one point. We also view  $\gamma_i$  and  $\gamma_i'$  as  elements in $\pi_1(\Sigma_i,x_i)$. Then  from \autoref{fig: HNN2}, we can see that 
$$
m_i=\gamma_i^\star \gamma_i^{-1}
$$
is a conjugacy representative for 
the meridian of the $\alpha_i\times\{0\}$ component; and
$$
m_i'=t_i (\gamma_i')^\star t_i^{-1} (\gamma_i')^{-1}
$$
is a conjugacy representative for the meridian of the $\alpha_i'\times\{\frac{1}{2}\}$ component. 

The inclusion map $M_i\hookrightarrow \Sigma_i\times \mathbb{S}^1$ induces a surjective homomorphism $$q_i:  \pi_1(M_i,\overline{x}_i)\tto \pi_1(\Sigma_i \times \mathbb{S}^1,\overline{x}_i)=\pi_1(\Sigma_i,x_i)\times \langle t_i \rangle.$$ 
To be specific, $q_i$ sends the subgroups $\pi_1(\Sigma_i,x_i)$ and $\pi_1(\Sigma_i,x_i)^\star$ in $\pi_1(M_i,\overline{x_i})$ identically to the subgroup $\pi_1(\Sigma_i,x_i) $ in $\pi_1(\Sigma_i\times \mathbb{S}^1,\overline{x_i})$, and it sends $t_i$ to $t_i$.  According to van-Kampen's theorem, $\ker(q_i)= \langle \!\langle m_i,m_i'\rangle\!\rangle $. 

This can also be explained by the group presentation (\ref{presentation}). 
For brevity, we will omit the basepoints in the fundamental groups, but we remind the readers that our basepoints $x_i$ for $\Sigma_i$ and $F_i$, and  $\overline{x_i}$ for $M_i$ are fixed throughout this section. 
Indeed, $\pi_1 \Sigma_i $ is generated  by $\pi_1 F_i $ and $\gamma_i$,  and  $\pi_1 \Sigma_i $ is also  generated  by $\pi_1 F_i' $ and $\gamma_i'$. Thus, 
\begin{equation*}
\begin{aligned}
&\pi_1 M_i \big/\langle\!\langle m_i,m_i'\rangle \!\rangle \\
=&
\left \langle \pi_1 \Sigma_i ,{\pi_1 \Sigma_i} ^\star , t_i \left |\, {\scalebox{0.9}{$\begin{gathered} g^\star=g \;(g\in \pi_1 F_i) ,\;  \gamma_i^\star=\gamma_i, \; t_ih^\star t^{-1}_i=h\;(h\in \pi_1 F_i') ,\;t_i(\gamma_{i}')^\star t_i^{-1}=\gamma_i'\end{gathered}$}}  \right.  \right \rangle \\
= &  \left \langle \pi_1 \Sigma_i ,{\pi_1 \Sigma_i }^\star , t_i \left |\, {\scalebox{0.9}{$ \begin{gathered} g^\star=g \; (g\in \pi_1 \Sigma_i) ,\; t_ih^\star t^{-1}_i=h \;(h\in \pi_1 \Sigma_i )\end{gathered}$}}\right. \right\rangle  \\
 =  & \pi_1 \Sigma_i \times \langle t_i\rangle. 
\end{aligned}
\end{equation*}

The profinite completion of $q_i$  is denoted by  $$\widehat{q_i}: \widehat{\pi_1 M_i }\tto \widehat{\pi_1 \Sigma_i }\times \widehat{\langle t_i\rangle } \cong \widehat{\pi_1 \Sigma_i }\times \widehat{\Z}. $$
According to \autoref{prop: right exact}, $\ker(\widehat{q_i})=\overline{\langle \! \langle m_i,m_i'\rangle \! \rangle}$. 

Throughout this section, let $a_i,a_i'\in \pi_1 \Sigma_i $   be conjugacy representatives for the free homotopy classes of $\alpha_i$ and $\alpha_i'$, equipped with arbitrary orientations.

\subsection{The drilling theorem}
Under the  setting of \autoref{subsection: drill}, we now state the main theorem of this section. 
\begin{theorem}\label{thm: drilling}
Suppose $\phi:\widehat{\pi_1 \Sigma_1 }\to \widehat{\pi_1 \Sigma_2 }$ is  an isomorphism. 
Suppose there exist $\lambda,\mu\in \Zx$ such that $\phi(a_1)$  conjugates with  $a_2^\lambda$, and $\phi(a_1')$ conjugates with  $(a_2')^\mu$ in $\widehat{\pi_1 \Sigma_2 }$. Assume that the genus of $\Sigma_1$ and $\Sigma_2$ is  at least $2$. Then the following statements hold. 
\begin{enumerate}[label=(\arabic*),leftmargin=*]
\item\label{drl1} There exists an isomorphism $\Phi: \widehat{\pi_1 M_1 }\to \widehat{\pi_1  M_2 }$.  
\item\label{drl3} The isomorphism $\Phi$ fits into the following commutative diagram:   
\begin{equation*}
\begin{tikzcd}[column sep=large]
{\widehat{\pi_1 M_1 }} \arrow[d, "\widehat{q_1}"', two heads] \arrow[r, "\Phi","\cong"'] & {\widehat{\pi_1 M_2 }} \arrow[d, "\widehat{q_2}", two heads] \\
{\widehat{\pi_1 \Sigma_1 }\times\widehat{\Z}} \arrow[r, "\phi\times I","\cong"']          & {\widehat{\pi_1 \Sigma_2 }\times\widehat{\Z}}                          
\end{tikzcd}
\end{equation*}
where $I:\widehat{\Z}\to \widehat{\Z}$ denotes  the isomorphism that sends  $t_1$ to $t_2$. 
\item\label{drl2}   
$\Phi(m_1)$  conjugates with  $m_2 ^{\pm1 } $, and $\Phi(m_1')$  conjugates with   $(m_2')^{\pm 1}$ in $\widehat{\pi_1 M_2 }$.
\end{enumerate}
\end{theorem}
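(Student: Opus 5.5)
We build $\Phi$ through profinite Bass--Serre theory, using \autoref{thm: cut and glue} twice: once for $(\alpha_i)$ and once for $(\alpha_i')$.

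\textbf{Step 1: the graph of groups.} Presentation (\ref{presentation}) writes $\pi_1 M_i$ as the fundamental group of a graph of groups $(G_i,Y)$. The underlying graph $Y$ has two vertices $u,v$, with $G_{i,u}=\pi_1\Sigma_i$ and $G_{i,v}=\pi_1\Sigma_i^\star$, and two edges $e,e'$ between them. The edge groups are $G_{i,e}=\pi_1F_i$, embedded identically in both vertex groups, and $G_{i,e'}=\pi_1F_i'$, glued through the stable letter $t_i$. The vertex and edge groups are finitely generated surface (free) groups. We would check that $(G_i,Y)$ is adequate. Residual finiteness of $\pi_1M_i$ is clear since it is a 3-manifold group. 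For the induced topologies, the retraction $q_i$ restricts to an isomorphism on each vertex group onto $\pi_1\Sigma_i$, and $\pi_1\Sigma_i$ is LERF. By \autoref{THM: Efficient completion} this gives
\[
\widehat{\pi_1M_i}\cong\Pi_1(\widehat{G_i},Y).
\]

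\textbf{Step 2: isomorphisms on the pieces.} Apply \autoref{thm: cut and glue} to $\phi$ and $\alpha_i$, with $\mu$ replaced by $\lambda$. This produces $\psi:\widehat{\pi_1F_1}\to\widehat{\pi_1F_2}$ and $\hat g$ with $\Inn_{\hat g}^{-1}\circ\phi$ restricting to $\psi$. Likewise, $\alpha_i'$ produces $\psi'$ and $\hat g'$.

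Define $\Phi$ on the generators of $\Pi_1(\widehat{G_1},Y)$ as follows:
\[
\Phi|_{\widehat{\pi_1\Sigma_1}}=\Inn_{\hat g}^{-1}\circ\phi,\qquad \Phi|_{\widehat{\pi_1\Sigma_1}^\star}=(\Inn_{\hat g}^{-1}\circ\phi)^\star,\qquad \Phi(t_1)=\hat c\,t_2\,\hat c'^{\star-1}.
\]
Here $\hat c,\hat c'$ are chosen so that the $e'$-relation holds. Concretely, $\Inn_{\hat g}^{-1}\phi$ sends $\widehat{\pi_1F_1'}$ to a conjugate $\hat k\,\widehat{\pi_1F_2'}\,\hat k^{-1}$ with $\hat k=\hat g^{-1}\hat g'$. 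We then set $\Phi(t_1)=\hat k\,t_2\,(\hat k^\star)^{-1}$. The relations $g^\star=g$ for $g\in\widehat{\pi_1F_1}$ are respected because both vertex maps agree with $\psi$ on the edge group. The relations $t_1h^\star t_1^{-1}=h$ for $h\in\widehat{\pi_1F_1'}$ are respected by the choice of $\Phi(t_1)$.

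The same construction run for $\phi^{-1}$ yields an inverse, so $\Phi$ is an isomorphism of profinite fundamental groups; this proves \ref{drl1}. Composing with $\widehat{q_i}$ sends $\hat g,\hat k$ and their starred copies to the same elements of $\widehat{\pi_1\Sigma_2}$, and $t_2$ to $t_2$. Hence $\widehat{q_2}\Phi$ equals $(\Inn_{\hat g}^{-1}\phi)\times I$ on the generators. A single global inner correction (conjugating by $\hat g$ inside the $u$-vertex group) then yields the diagram in \ref{drl3} exactly.

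\textbf{Step 3: meridians.} We have $m_1=\gamma_1^\star\gamma_1^{-1}$. By \autoref{thm: cut and glue}~\ref{cag2}, $\Inn_{\hat g}^{-1}\phi(\gamma_1)=\gamma_2^{\pm1}\hat\delta$ with $\hat\delta\in\widehat{\pi_1F_2}$. Therefore
\[
\Phi(m_1)=(\gamma_2^{\pm1}\hat\delta)^\star(\gamma_2^{\pm1}\hat\delta)^{-1}.
\]
Since $\hat\delta^\star=\hat\delta$, this simplifies to $\gamma_2^{\pm\star}\gamma_2^{\mp1}$. In the $+$ case this is $m_2$. In the $-$ case it equals $\gamma_2^{-1}(\gamma_2^\star\gamma_2^{-1})^{-1}\gamma_2$ after rewriting $\gamma_2^{-\star}\gamma_2=\gamma_2^{-1}\cdot\gamma_2\gamma_2^{-\star}\cdot\gamma_2$, which is a conjugate of $m_2^{-1}$.

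For $m_1'$ we repeat the computation using the $\alpha'$-version of \ref{cag2} and the formula for $\Phi(t_1)$. Inner factors from $\widehat{\pi_1F_2'}$ cancel by the relation $t_2h^\star t_2^{-1}=h$, and this gives \ref{drl2}.

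\textbf{Main obstacle.} The hard step is Step 2: making the two conjugators $\hat g$ (from $\alpha$) and $\hat g'$ (from $\alpha'$) compatible within one consistent assignment on $\Pi_1(\widehat{G_1},Y)$. We also need the profinite relations to hold in the profinite amalgam, not merely on dense subgroups. I expect the freedom in choosing $\Phi(t_1)$ to absorb the discrepancy $\hat g^{-1}\hat g'$. If the starred and unstarred conjugators clash, we would try conjugating the $v$-vertex map by a different element of $\widehat{\pi_1 F_2}$, which is allowed because the $e$-edge group is identified identically.
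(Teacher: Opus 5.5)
Your proposal is correct and follows the paper's proof essentially step for step. The paper likewise sets $\phi_0=\Inn_{\hat g}^{-1}\circ\phi$ and defines the map on the free profinite product by $\phi_0$, $\phi_0^\star$ and $t_1\mapsto \tilde g\,t_2(\tilde g^\star)^{-1}$ with $\tilde g=\hat g^{-1}\hat g'$ (your $\hat k$). It then checks that $\mathcal N_1$ is carried onto $\mathcal N_2$, post-composes with $\Inn_{\hat g}$ to get (2), and computes both meridians exactly as you do.

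The compatibility issue you flag as the main obstacle is already fully resolved by your choice of $\hat k$. For bijectivity, it is cleaner to write down the inverse on the free profinite product directly, sending $h\mapsto\phi_0^{-1}(h)$, $h^\star\mapsto\phi_0^{-1}(h)^\star$ and $t_2\mapsto \phi_0^{-1}(\hat k)^{-1}t_1\,\phi_0^{-1}(\hat k)^\star$. Rerunning the whole construction for $\phi^{-1}$ with independent choices only gives a surjection back, which then needs the Hopfian property to conclude.
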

\begin{remark}
In fact,   \autoref{prop: surface homology coefficient} implies that  $\lambda=\pm \mu$, but our proof for \autoref{thm: drilling} does not rely on this fact. 
\end{remark}


We start with some preparations for  \autoref{thm: drilling}. 
The group presentation (\ref{presentation}) identifies $\pi_1 M_i $ as the fundamental group of a graph of groups, which we explain as follows. Let $X$ be the finite oriented graph with two vertices $u,v$ and two edges $e,e'$, where $d^+(e)=d^-(e')=u$ and $d^-(e)=d^+(e')=v$, see \autoref{fig: graphX}. For $i=1,2$, let $(G_i,X)$ be a graph of groups over $X$ such that 
\begin{equation*}
 G_{i,u}=\pi_1 \Sigma_i  , \; G_{i,v}={\pi_1 \Sigma_i} ^\star ,  \;  G_{i,e}=\pi_1 F_i  , \; G_{i,e'}=\pi_1 F_i' ,
\end{equation*} 
and that 
\begin{equation*}
\begin{gathered}
\varphi_e^{+} :\pi_1 F_i \to \pi_1 \Sigma_i , \; \varphi_e^{-} :\pi_1 F_i \to {\pi_1 \Sigma_i} ^\star,\;
\varphi_{e'}^{+} :\pi_1 F_i' \to {\pi_1 \Sigma_i }^\star ,\; \varphi_{e'}^{-} :\pi_1 F_i' \to \pi_1 \Sigma_i  
\end{gathered}
\end{equation*}
are the inclusion maps. If we take $T=\{u,v,e\}$ as the maximal subtree of $X$, then \autoref{DEF: Fundamental group of graph of group} yields an  identification $\pi_1 M_i =\pi_1(G_i,X,T)$.

\begin{figure}[ht!]
\centering
\includegraphics[width=4cm]{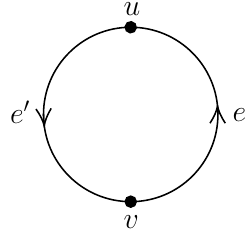}
\caption{The graph $X$ for the splitting of $\pi_1(M_i,\overline{x}_i)$}
\label{fig: graphX}
\end{figure}

\begin{lemma}\label{lem: adequate2}
The graph of group $(G_i,X)$ described above is adequate. 
\end{lemma}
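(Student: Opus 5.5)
To prove that $(G_i,X)$ is adequate, I will check the two defining conditions in turn: residual finiteness of $\pi_1 M_i=\pi_1(G_i,X)$, and that the profinite topology of $\pi_1 M_i$ induces the full profinite topology on each vertex and edge group. Both should follow from LERFness of $\pi_1 M_i$, together with the fact that all vertex and edge groups are finitely generated.

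The key input is that $M_i$ is a compact orientable 3-manifold whose fundamental group is LERF. Here are the cases I would use.
\begin{itemize}
\item In the intended application the link $L_i$ is hyperbolic, since $\alpha_i$ and $\alpha_i'$ fill $\Sigma_i$. Then $\pi_1 M_i$ is a finitely generated Kleinian group (a non-uniform lattice), and \autoref{FK: LERF} applies directly.
\item In general, $M_i$ is the exterior of a link in a Seifert-fibered (hence geometric) manifold. I would invoke LERFness of fundamental groups of compact 3-manifolds whose JSJ pieces are hyperbolic or Seifert fibered with suitable gluing, by Agol/Wise together with Przytycki--Wise for graph and mixed manifolds.
\end{itemize}
To avoid relying on that general machinery, I would phrase the lemma for the setting actually needed, where $\alpha_i$ and $\alpha_i'$ fill $\Sigma_i$. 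If the statement is meant without that hypothesis, I would cite Przytycki--Wise, since mixed 3-manifold groups are virtually compact special and hence LERF. This case analysis is the only real obstacle.

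Granting LERFness, the rest is formal. LERF groups are residually finite, so condition (1) holds. For condition (2), each vertex group $\pi_1\Sigma_i$ or $\pi_1\Sigma_i^\star$ is finitely generated, and so is each edge group $\pi_1F_i$ or $\pi_1F_i'$, viewed inside $\pi_1 M_i$ via the injective maps into the fundamental group of the graph of groups. By \autoref{cor: lerf injective}, the profinite topology of $\pi_1 M_i$ induces the full profinite topology on each of these subgroups. Hence $(G_i,X)$ is adequate, and \autoref{THM: Efficient completion} will then identify $\widehat{\pi_1 M_i}$ with $\Pi_1(\widehat{G_i},X)$.
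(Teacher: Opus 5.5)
Your argument is correct only in the filling case. The justification you offer for the general case is false.

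The lemma sits in the general setting of \autoref{subsection: drill}, where $\alpha_i,\alpha_i'$ are arbitrary non-separating simple closed curves. \autoref{thm: drilling}, which relies on the lemma, is stated in that generality, and the remark after \autoref{thm: drilled hyperbolic} even applies it to disjoint curves. Suppose $\alpha_i\cup\alpha_i'$ does not fill. Then an essential curve $\beta$ with $i(\beta,\alpha_i)=i(\beta,\alpha_i')=0$ gives an essential torus $\beta\times\mathbb{S}^1$ in $M_i$. Also $M_i$ is not Seifert fibred, since it contains $\Sigma_i\times\{\frac14\}$ and has boundary. So $M_i$ is a non-geometric graph or mixed manifold, and for such manifolds LERF fails:
\begin{itemize}
\item non-LERF graph manifold groups go back to Burns--Karrass--Solitar and Niblo--Wise;
\item H.~Sun proved that a compact orientable irreducible 3-manifold with empty or toral boundary has LERF fundamental group if and only if it is geometric;
\item Przytycki--Wise does not help, because virtual (compact) specialness gives separability only of quasiconvex-type subgroups, not LERF; $F_2\times F_2$ is compact special and not LERF.
\end{itemize}
So the step ``$\pi_1M_i$ is LERF'' fails exactly in the cases you meant to cover by citation. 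Your restricted version is sound: assume filling, get hyperbolicity from \autoref{thm: drilled hyperbolic} (whose proof does not use this lemma), then apply \autoref{FK: LERF} and \autoref{cor: lerf injective}. That would suffice for \autoref{prop: drilling lattice surjection}, but it does not prove the lemma as stated.

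The missing idea is that you never need separability inside all of $\pi_1M_i$.
\begin{itemize}
\item Residual finiteness holds for every finitely generated 3-manifold group (Hempel, together with geometrization).
\item The vertex groups are retracts up to isomorphism. The composite $r\colon \pi_1M_i\xrightarrow{q_i}\pi_1\Sigma_i\times\langle t_i\rangle\to\pi_1\Sigma_i$ restricts to an isomorphism on each vertex group ($h\mapsto h$ on $\pi_1\Sigma_i$, and $h^\star\mapsto h$ on $\pi_1\Sigma_i^\star$). Hence every finite-index $U$ in a vertex group $H$ satisfies $U=r^{-1}(r(U))\cap H$, with $r^{-1}(r(U))$ of finite index in $\pi_1M_i$. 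So $H$ receives the full profinite topology.
\item For the edge groups $\pi_1F_i$ and $\pi_1F_i'$, it then suffices that they are finitely generated subgroups of the LERF surface group. This is because inducing the full profinite topology is transitive along $\pi_1F_i\le\pi_1\Sigma_i\le\pi_1M_i$.
\end{itemize}
This is the paper's proof, and it needs no hypothesis on $\alpha_i,\alpha_i'$.
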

\begin{proof}
The fact that $\pi_1(G_i,X)\cong \pi_1 M_i $ is residually finite is proven by \cite{Hem16}. In fact, all finitely generated 3-manifold groups are residually finite. In addition,  the profinite topology of $\pi_1(G_i,X)$ induces the full profinite topology on $G_{i,u}$ and $G_{i,v}$, since these subgroups are retracts of $ \pi_1 M_i  $ through  projecting onto the $\Sigma_i$ factor. Finally, the profinite topology of $G_{i,u}$ (resp.\  $G_{i,v}$) induces the full profinite topology on $G_{i,e'}$ (resp.\  $G_{i,e}$) via $\varphi^{-}_{e'}$ (resp.\  $\varphi^{-}_{e}$) because $G_{i,u}$ (resp.\  $G_{i,v}$) is LERF according to  \autoref{FK: LERF}. Thus, $(G_i,X)$ is adequate.  
\end{proof}

\begin{sloppypar}
Therefore, according to \autoref{THM: Efficient completion}, we can identify $\widehat{\pi_1 M_i}$ with $\Pi_1(\widehat{G_i},X,T)$. 
Following the notations in (\ref{presentation}), we use ${\widehat{\pi_1 \Sigma_i }}{}^\star $ to denote a copy of $ {\widehat{\pi_1 \Sigma_i }}$; and for any element   $h\in \widehat{\pi_1 \Sigma_i }$, $h^\star$ denotes its copy in  ${\widehat{\pi_1 \Sigma_i }}{}^\star $. 
Then, by \autoref{DEF: Profinite fundamental group of graph of profinite group}, 
\begin{equation}\label{profinite presentation}
\widehat{\pi_1 M_i }= \left(\widehat{\pi_1 \Sigma_i }\amalg {\widehat{\pi_1 \Sigma_i }}{}^\star \amalg \widehat{\Z}\right)/\mathcal{N}_i\,,
\end{equation}
where  $\widehat{\Z}$ is generated by $t_i$, and $\mathcal{N}_i$ is the closed normal subgroup generated by $\{g^{-1}g^\star \mid  g\in \widehat{\pi_1 F_i } =\overline{\pi_1F_i}\}\cup \{h^{-1} t_ih^{\star} t_i^{-1} \mid h\in \widehat{\pi_1 F_i' }=\overline{\pi_1F_i'}\}$. 
\end{sloppypar}

\def\gesi{\gamma_2^{\prime\, \sigma}}
\begin{proof}[Proof of \autoref{thm: drilling}]
We emphasize that all fundamental groups we refer to in this proof are based on the presentations given in (\ref{presentation}) and (\ref{profinite presentation}).  
According to \autoref{thm: cut and glue}, there exist $\hat{g},\hat{g}'\in \widehat{\pi_1\Sigma_2}$, $\hat{\delta}\in \widehat{\pi_1F_2}$,  $\hat{\delta}' \in \widehat{\pi_1F_2'}$, and $\epsilon,\sigma \in \{\pm 1\}$ such that 
\begin{equation*}
\phi(\widehat{\pi_1F_1})= \hat{g} \cdot  \widehat{\pi_1F_2} \cdot \hat{g}^{-1},\;\phi(\widehat{\pi_1F_1'})= \hat{g}'\cdot \widehat{\pi_1F_2'} \cdot (\hat{g}')^{-1}, 
\end{equation*}
and that
\begin{equation*}
\phi(\gamma_1)=\hat{g} \gamma_2^{\epsilon} \hat{\delta} \hat{g}^{-1}, \; \phi(\gamma_1')=\hat{g}' \gesi \hat{\delta}' (\hat{g}')^{-1}.
\end{equation*}
Define $\phi_0=\Inn_{\hat{g}}^{-1}\circ \phi: \widehat{\pi_1\Sigma_1}\to  {\widehat{\pi_1\Sigma_2}}$, and let $\tilde{g}=\hat{g}^{-1}\hat{g}'$. Then, 
$$
\phi_0(\widehat{\pi_1F_1})=  \widehat{\pi_1F_2}  ,\;\phi_0(\widehat{\pi_1F_1'})= \tilde{g} \cdot \widehat{\pi_1F_2'} \cdot \tilde{g}^{-1},
$$
and
$$
\phi_0(\gamma_1)=  \gamma_2^{\epsilon} \hat{\delta}  , \; \phi_0(\gamma_1')=\tilde{g} \gesi \hat{\delta}' \tilde{g}^{-1} .
$$
Let $\phi_0^\star: \widehat{\pi_1\Sigma_1}{}^\star\to \widehat{\pi_1\Sigma_2}{}^\star$ be the copy of $\phi_0$.  

Using free profinite products, we can construct a homomorphism $$\widetilde{\Phi_0}: \widehat{\pi_1\Sigma_1} \amalg \widehat{\pi_1\Sigma_1}{}^\star \amalg \widehat{\Z} \to \widehat{\pi_1\Sigma_2} \amalg \widehat{\pi_1\Sigma_2}{}^\star \amalg \widehat{\Z} $$ by setting 
$ 
\widetilde{\Phi_0}(h)= \phi_0(h)  
$ and $\widetilde{\Phi_0}(h^\star)=  \phi_0^\star(h^\star)$ for $h\in \widehat{\pi_1\Sigma_1}$, and $\widetilde{\Phi_0}(t_1)=\tilde{g} t_2 (\tilde{g}^\star)^{-1}$. It can be easily verified,  through constructing its inverse,  that $\widetilde{\Phi_0}$ is an isomorphism. In addition, $\widetilde{\Phi_0}(\mathcal{N}_1)=\mathcal{N}_2$ since 
$$
\widetilde{\Phi_0} (\{g^{-1}g^\star \mid  g\in \widehat{\pi_1 F_1 }\})= \{\mathpzc{g}^{-1}\mathpzc{g}^\star \mid  \mathpzc{g}\in \widehat{\pi_1 F_2 }\}
$$
and
$$
\widetilde{\Phi_0} (\{h^{-1} t_1h^{\star} t_1^{-1} \mid h\in \widehat{\pi_1 F_1' } \} ) = \tilde{g} \cdot  \{\mathpzc{h}^{-1} t_2\mathpzc{h}^{\star} t_2^{-1} \mid \mathpzc{h}\in \widehat{\pi_1 F_2' } \} \cdot  \tilde{g}^{-1}.
$$
Hence, $\widetilde{\Phi_0}$ descends to an isomorphism
$$
\Phi_0: \widehat{\pi_1M_1} = (\widehat{\pi_1\Sigma_1} \amalg \widehat{\pi_1\Sigma_1}{}^\star \amalg \widehat{\Z})/\mathcal{N}_1\to (\widehat{\pi_1\Sigma_2} \amalg \widehat{\pi_1\Sigma_2}{}^\star \amalg \widehat{\Z})/\mathcal{N}_2 = \widehat{\pi_1M_2}.
$$

By construction of $\Phi_0$ and $q_i$,  the following diagram commutes
\begin{equation*}
\begin{tikzcd}[column sep=large]
{\widehat{\pi_1M_1}} \arrow[d, "\widehat{q_1}"', two heads] \arrow[r, "\Phi_0" ] & {\widehat{\pi_1M_2}} \arrow[d, "\widehat{q_2}", two heads] \\
{\widehat{\pi_1\Sigma_1}\times\widehat{\Z}} \arrow[r, "\phi_0\times I" ]          & {\widehat{\pi_1\Sigma_2}\times\widehat{\Z}}                          
\end{tikzcd}
\end{equation*}
where $I:\widehat{\Z}\to \widehat{\Z}$   sends  $t_1$ to $t_2=\tilde{g}t_2\tilde{g}^{-1}$. 
We alternatively consider the isomorphism $$\Phi= \Inn_{\hat{g}} \circ \Phi_0: \widehat{\pi_1M_1}\tto \widehat{\pi_1M_2} .$$
 Note that $\widehat{q_2}$ maps $\hat{g}\in   \widehat{\pi_1M_2}$ to $\hat{g} \in   \widehat{\pi_1\Sigma_2} \times \widehat{\Z}$, and  that $\phi=\Inn_{\hat{g}}\circ \phi_0$. Hence, $\Phi$ fits into the following commutative diagram.  
\begin{equation*}
\begin{tikzcd}[column sep=large]
{\widehat{\pi_1M_1}} \arrow[d, "\widehat{q_1}"', two heads] \arrow[r, "\Phi " ] & {\widehat{\pi_1M_2}} \arrow[d, "\widehat{q_2}", two heads] \\
{\widehat{\pi_1\Sigma_1}\times\widehat{\Z}} \arrow[r, "\phi \times I" ]          & {\widehat{\pi_1\Sigma_2}\times\widehat{\Z}}                          
\end{tikzcd}
\end{equation*}
In other words,   property \ref{drl3} is  satisfied.

To prove property \ref{drl2}, it suffices to  compute $\Phi_0(m_1)$ and $\Phi_0(m_1')$. 
First, note that $\hat{\delta}^\star = \hat{\delta}$ in $\widehat{\pi_1M_2}$  since $\hat{\delta}\in \widehat{\pi_1F_2}$. Thus, 
$$
\Phi_0(m_1)=\phi_0(\gamma_1)^\star \phi_0(\gamma_1)^{-1}=(\gamma_2^{ \epsilon})^{\star}  \hat{\delta}^\star  \hat{\delta}^{-1} \gamma_2^{-\epsilon}= (\gamma_2^\epsilon)^\star \gamma_2^{-\epsilon}. 
$$
In particular, $\Phi_0(m_1)$  equals $m_2=\gamma_2^\star \gamma_2^{-1}$ when $\epsilon=1$, or   conjugates with  $m_2^{-1} = \gamma_2 (\gamma_2^{-1})^\star$ when $\epsilon=-1$. Consequently, $\Phi(m_1)= \hat{g} \Phi_0(m_1) \hat{g}^{-1}$ is also conjugate to $m_2^{\pm1}$. 

Next, note that $t_2  \hat{\delta}^{\prime \star} t_2^{-1} = \hat{\delta}'$ in $\widehat{\pi_1M_2}$ since $\hat{\delta}'\in \widehat{\pi_1F_2'}$. Hence, 
\begin{equation*}
\begin{aligned}
\Phi_0(m_1')  
= & \Phi_0(t_1) \,\phi_0(\gamma_1')^\star \, \Phi_0(t_1)^{-1} \,\phi_0(\gamma_1')^{-1}\\
= & \tilde{g} \, t_2 (\tilde{g}^\star)^{-1}\, \tilde{g}^\star \, (\gesi)^\star \, \hat{\delta}^{\prime \star} \, (\tilde{g}^\star)^{-1}\,  \tilde{g}^\star \,  t_2^{-1} \, \tilde{g}^{-1} \, \tilde{g}\,  (\hat{\delta}')^{-1} \,(\gamma_2')^{-\sigma} \,\tilde{g}^{-1} \\ 
= & \tilde{g}\, t_2 \,(\gesi)^\star \,\hat{\delta}^{\prime \star}\, t_2^{-1}\, (\hat{\delta}')^{-1}\,(\gamma_2')^{-\sigma}\, \tilde{g}^{-1}  \\
= & \tilde{g} \cdot t_2  (\gesi)^\star t_2^{-1} (\gamma_2')^{-\sigma} \cdot \tilde{g}^{-1}.
\end{aligned}
\end{equation*}
Therefore, $\Phi_0(m_1')$  is conjugate to  $m_2'=t_2 (\gamma_2')^\star t_2^{-1} (\gamma_2')^{-1}$ when $\sigma=1$, or  is  conjugate to  $(m_2')^{-1}= \gamma_2'  t_2 (\gamma_2^{\prime \, -1})^{\star} t_2^{-1}  $ when $\sigma=-1$. 
As a consequence, $\Phi(m_1')= \hat{g} \Phi_0(m_1') \hat{g}^{-1}$ is also conjugate to $(m_2')^{\pm1}$, and property \ref{drl2} also holds. 
\end{proof}

\begin{sloppypar}
\begin{remark}
\begin{enumerate}[leftmargin=*, label=(\arabic*)]
\item In fact, $(\phi,\phi^\star,\psi,\psi')$ yields a congruent isomorphism between $(\widehat{G_1},X)$ and $(\widehat{G_2},X)$, from which there is a standard procedure that produces the isomorphism $\Phi_0:\Pi_1(\widehat{G_1},X)\to \Pi_1(\widehat{G_2},X)$, see \cite[Proposition 3.11]{Xu25A}. We display the concrete construction for $\Phi_0$ here for the  preciseness of  property \ref{drl2} in the theorem. 
\item The proof of \autoref{thm: drilling} also works for drilling separating simple closed curves, as well as for drilling any number of curves lying on distinct levels of surfaces in $\Sigma_i\times \mathbb{S}^1$. 
\end{enumerate}
\end{remark}
\end{sloppypar}

\subsection{Hyperbolicity of the drilled manifold}
We follow the same setting of \autoref{subsection: drill}. 
\begin{theorem}\label{thm: drilled hyperbolic}
Suppose that the genus of $\Sigma_i$ is at least $2$, and that $\alpha_i\cup \alpha_i'$ fills the surface $\Sigma_i$. Then, the interior of $M_i$ admits a complete finite-volume hyperbolic structure. 
\end{theorem}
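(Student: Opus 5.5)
We will verify the hypotheses of Thurston's hyperbolization theorem for Haken manifolds. The manifold $M_i$ is compact, orientable and has nonempty toral boundary. By Thurston's theorem, its interior admits a complete finite-volume hyperbolic structure as soon as $M_i$ is irreducible, boundary-irreducible, atoroidal and anannular; equivalently, $M_i$ is irreducible and $\pi_1M_i$ contains no $\Z^2$ other than peripheral ones. Since $\partial M_i\neq\varnothing$ and $H_1$ is infinite, Haken-ness is automatic once irreducibility is established. The plan is to check these conditions using the graph-of-spaces decomposition $M_i=M_{i,u}\cup M_{i,v}$, whose pieces are copies of $\Sigma_i\times I$ glued along $F_i\times\{0\}$ and $F_i'\times\{\frac12\}$.

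First, irreducibility and incompressibility. The two cut surfaces $F_i$ and $F_i'$ are $\pi_1$-injective in the pieces, because $\alpha_i,\alpha_i'$ are essential and non-separating. The pieces themselves are irreducible. Gluing irreducible manifolds along incompressible surfaces yields an irreducible manifold, and the amalgam presentation (\ref{presentation}) shows $\pi_1M_i$ is the fundamental group of a graph of groups with injective edge maps. In the same way, the boundary tori are incompressible: a meridian $m_i=\gamma_i^\star\gamma_i^{-1}$ is nontrivial by the normal form for HNN/amalgams, and the longitude $a_i$ survives in $\pi_1\Sigma_i$.

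Second, and this is the main step, we rule out essential tori and annuli. The cleanest route is to identify $M_i$ as a mapping torus. Cutting $\Sigma_i\times\mathbb{S}^1$ along a level $\Sigma_i\times\{\frac14\}$ and regluing exhibits $\Sigma_i\times\mathbb S^1\setminus n(L_i)$ as homeomorphic to a Dehn-filling-type construction. More directly, $L_i$ is the standard ``two-level'' link, and by the well-known observation (Thurston; cf.\ Penner/Fathi), $M_i$ is homeomorphic to the complement of the link $\alpha\times\{0\}\cup\alpha'\times\{\frac12\}$. Its $(1/n,-1/n)$-Dehn fillings are mapping tori of $T_{\alpha}^{n}T_{\alpha'}^{-n}$, which are pseudo-Anosov by Thurston's construction because $\alpha_i\cup\alpha_i'$ fills. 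If $M_i$ contained an essential torus or annulus, or were Seifert fibered, we would derive a contradiction. The argument goes as follows.

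An essential torus $T\subset M_i$ can be isotoped to meet the level surfaces $F_i\times\{0\}$ and $F_i'\times\{\frac12\}$ in essential curves, or to be disjoint from them. In $M_{i,u}\cong\Sigma_i\times I$, incompressible annuli are vertical, $c\times I$. Hence $T$ is a union of vertical annuli over curves $c$ with $c$ disjoint from $\alpha_i$ (to pass through $F_i$) and from $\alpha_i'$ (to pass through $F_i'$), up to isotopy. Since $\alpha_i\cup\alpha_i'$ fills, such $c$ must be peripheral or trivial, which forces $T$ to be boundary-parallel. Annuli are treated in the same way, by intersecting them with the pieces. A Seifert fibration is excluded since $\pi_1M_i$ has trivial center, by the graph-of-groups structure with non-cyclic vertex groups.

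The main obstacle is making the torus/annulus normalization rigorous, in particular controlling the curves of $T\cap F_i$ and the isotopy classes across both gluings. I would handle this by a standard innermost/outermost-curve argument together with the classification of incompressible surfaces in $\Sigma\times I$. Alternatively, one can cite the known result that such a two-level filling link complement is hyperbolic.
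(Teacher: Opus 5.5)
There is a genuine gap in your torus step, and that step carries all the content of the theorem. You claim that once $T$ is in minimal position with respect to $F_i\times\{0\}$ and $F_i'\times\{\frac{1}{2}\}$, each piece of $T$ in $M_{i,u}$ or $M_{i,v}\cong\Sigma_i\times I$ is a vertical annulus $c\times I$. The classification you invoke applies only to annuli that are incompressible \emph{and} boundary-incompressible in $\Sigma_i\times I$. An incompressible annulus with boundary on $\Sigma_i\times\partial I$ may instead be parallel into $\Sigma_i\times\partial I$. Minimality removes such a piece only when its parallelism region lies inside $F_i$ or $F_i'$. It does not remove a piece that is parallel across the half-tube $\partial n(\alpha_i\times\{0\})\cap M_{i,u}$ (or the analogous half-tubes around $\alpha_i'$), because that parallelism passes through $\partial M_i$. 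A torus parallel to $\partial n(\alpha_i\times\{0\})$ is cut by $F_i\times\{0\}$ into exactly such tube-parallel annuli, with boundary curves parallel to $\alpha_i$. So your description of $T$ excludes precisely the tori that actually exist. The filling hypothesis, as you use it, only kills vertical pieces: a vertical piece over $c$ gives $i(c,\alpha_i)=i(c,\alpha_i')=0$, so $c$ is trivial, and there is no ``peripheral'' alternative. The sentence ``which forces $T$ to be boundary-parallel'' is therefore not derived from anything.

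The missing ingredient is the second half of the paper's argument.
\begin{itemize}
\item First use filling globally. The curves of $T\cap(F_i\cup F_i')$ are parallel on $T$, so they all project to a single class $\beta$ on $\Sigma_i$. Since $\beta$ meets $\alpha_i$ or $\alpha_i'$ essentially, $T$ must miss one level, say $F_i\times\{0\}$.
\item Then $T$ lies in $N=\Sigma_i\times I\setminus n(\alpha_i'\times\{\frac{1}{2}\})$, and a real argument is needed there. The paper cuts $N$ along the vertical annuli $P$ surrounding $\alpha_i'$ into $F''\times I$ and $Y\cong T^2\times I$. It removes $T\cap P$ using that essential annuli in $(F''\times I,\partial F''\times I)$ are parallel into $P$. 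It concludes that $T\subset Y$ is parallel to the drilled torus.
\item In your framework, you would instead have to show that the tube-parallel pieces nest into a torus parallel to a component of $\partial M_i$.
\end{itemize}
Your annulus case (``treated in the same way'') inherits the same gap, though it becomes unnecessary once atoroidality and non-Seifert-fiberedness are established.

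Some side claims also need repair:
\begin{itemize}
\item The Dehn-filling paragraph proves nothing. Hyperbolicity of the fillings does not rule out essential tori in $M_i$ that compress after filling.
\item Trivial center does not exclude Seifert fibrations over non-orientable base orbifolds, where the fiber is normal but not central. What does work is that $\Sigma_i\times\{\frac{1}{4}\}$ is a closed incompressible surface of genus $\ge 2$ while $\partial M_i\neq\varnothing$.
\item Boundary-incompressibility should come from irreducibility plus the fact that $M_i$ is not a solid torus. Nontriviality of one meridian and one longitude does not suffice.
\end{itemize}
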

\begin{proof}
For brevity, we omit the subscript $i$ in the proof. Note that the manifold $M$ is irreducible, since the ambient manifold $\Sigma\times\mathbb{S}^1$ is irreducible, and both components of the link $L$ are homotopically   non-trivial. This also implies that   $M$ is  $\partial$-irreducible, since $\partial M$ consists of two tori. Thus, according to Thurston's theorem \cite{Thu82,ThuI} (see also \cite{Mor84}), $M$ admits a geometric decomposition. Note that $M$ is not a Seifert fibered space, since it contains a closed incompressible surface $\Sigma\times \{\frac{1}{4}\}$ of genus at least $2$ while it has non-empty boundary. Therefore, $\operatorname{int}(M)$ admits a complete finite-volume hyperbolic structure if and only if $M$ is geometrically atoroidal, i.e.\ any embedded incompressible torus in $M$ is isotopic to $\partial M$. 

To show that $M$ is geometrically atoroidal, suppose $T\subseteq M$ is an embedded incompressible torus. For brevity of notation, we abbreviate the original notation $F\times\{0\}$ into $F$, and also $F'\times\{\frac{1}{2}\}$ into $F'$, which should not cause a confusion. By a minor isotopy, we may assume that $T$ is transversal to $F\cup F'$.  Then, $T\cap (F\cup F')$ is a finite collection of simple closed curves. Varying $T$ within its isotopy class, we may assume that $T\cap (F\cup F')$ has the minimal number of components. 

A standard argument shows that each component of $T\cap (F\cup F')$ is homotopically non-trivial in $M$. In fact, if such a component $c$ is null-homotopic, then it is null-homotopic when projected onto the $\Sigma$ factor, and hence it bounds a disk in $F\cup F'$. We may choose $c$ to be an inner-most component so that the disk $D$ bounded by $c$ in $F\cup F'$ contains no other components in $T\cap (F\cup F')$. Note that $T$ is incompressible, so $c$ is also null-homotopic in $T$, and it also bounds a disk $D'$ in $T$. $D\cup_c D'$ yields an embedded $2$-sphere in $M$, so it bounds a $3$-ball $B$ in $M$ since $M$ is irreducible. By construction, $T$ does not intersect $\operatorname{int}(B)$. 
Then, one can push $D'$ along $\operatorname{int}(B)$ towards $D$, and then slightly push it away from $F\cup F'$. This yields an isotopy of $T$ in $M$ such that $T\cap (F\cup F')$ has strictly fewer components, namely the component $c$ is removed. This contradicts with the minimality of $\#(T\cap (F\cup F'))$. 

In particular, each component of $T\cap (F\cup F')$ is  homotopically non-trivial in $T$. Consequently, $T\cap (F\cup F')$ is a collection of parallel essential simple closed curves on $T$, which are pairwise homotopic. Projecting these components onto the $\Sigma$ factor of $ \Sigma\times \mathbb{S}^1$ yields a collection of pairwise homotopic essential simple closed curves on $\Sigma$, whose homotopy class is denoted by $\beta$. Since $\alpha\cup \alpha'$ fills $\Sigma$, $\beta$ has non-zero geometric intersection number with at least one of $\alpha$ and $\alpha'$. Without loss of generality, assume that $\beta$ has non-zero geometric intersection number  with $\alpha$. By construction, every component of $T\cap F$ is disjoint with $\alpha$, yet belongs to the homotopy class of $\beta$ in $\Sigma$. This is impossible unless $T\cap F=\varnothing$. 

Consequently, $T\cap F=\varnothing$, and there exists $0<\epsilon<\!<\frac{1}{2}$ such that $T\subseteq M\cap (\Sigma \times[\epsilon,1-\epsilon])$. Denote $N=M\cap (\Sigma \times[\epsilon,1-\epsilon])$, 
and we can identify $N$ with $\Sigma \times I\setminus n(\alpha'\times\{\frac{1}{2}\})$ by an apparent homeomorphism. Then, $T$ is an incompressible torus in $N$. 

$N$ has a clear JSJ-decomposition, which we illustrate as follows. 
Pick a tubular neighbourhood $\tilde{n}(\alpha')$ of $\alpha'$ on $\Sigma$ which is slightly larger than the drilled one, and denote the two boundary curves of  $ \overline{\tilde{n}}(\alpha')$ as $c_1$ and $c_2$. Let $F''= \Sigma \setminus  \tilde{n} (\alpha')$, which is homeomorphic to $F'$, and let $P=(c_1\cup c_2)\times I$. Then, $N$ can be cut along $P$ to obtain two pieces: $X=F''\times I$ and $Y=\overline{\tilde{n}}(\alpha')\times I \setminus n(\alpha'\times\{\frac{1}{2}\})$, see \autoref{fig: JSJ-N}. In fact,  $Y$ is  homeomorphic to $\mathbb{T}^2\times I$. 

\begin{figure}[ht!]
\centering
\includegraphics[width=9.5cm]{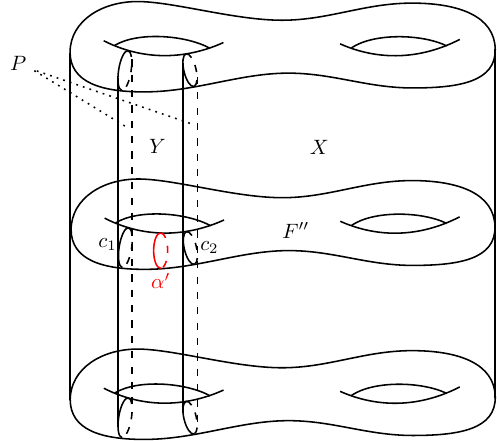}
\caption{The JSJ-decomposition of $N$}
\label{fig: JSJ-N}
\end{figure}

We may assume that $T$ is transversal to $P$.  By varying $T$ within its isotopy class, we assume that $T\cap P$ has the minimal number of components. Since both $T$ and $P$ are $\pi_1$-injective in $N$, using the irreducibility of $N$ and the same argument as above, we can also deduce that each component of $T\cap P$ is homotopically non-trivial.

We claim that $T\cap P=\varnothing$. 
Once $T\cap P\neq \varnothing$,  $T\cap P$ is a collection of parallel essential simple closed curves on $T$, which decomposes $T$ into essential annuli $A_1,\cdots,A_{2n}$ properly embedded in $(X,P)$ and $(Y,P)$ alternately. However,   every $\pi_1$-injective properly embedded annulus
$$
(A,\partial A)=(\mathbb{S}^1\times I ,\mathbb{S}^1\times \partial I ) \lhook\joinrel\longrightarrow (X,P) = (F''\times I,(\partial F'') \times I)
$$
is parallel (relative $\partial A$) to $P$. This is because   $F''$ has positive genus, which implies that $\partial A$  consists of two parallel curves on one component of $(\partial F'') \times I$. 
Thus, we can pick an innermost annulus $A_i$ in $X$ (i.e.\ $T$ does not intersect the interior of the annulus bounded by $\partial A_i$ in $P$), push $A_i$ towards $P$ and then slightly off $P$ via an isotopy, thereby reducing $\#(T\cap P)$ by $2$. 
Hence, $T\cap P$ is in fact empty. 

Therefore, either $T\subseteq X$ or $T\subseteq Y$. However, $X=F''\times I$ does not contain   incompressible tori since $\pi_1(X)$ is a free group. Hence, $T\subseteq Y$. Recall that $Y\cong \mathbb{T}^2\times I$, where one component of $\partial Y$ is given by $\partial \overline{n}(\alpha'\times\{\frac{1}{2}\})$. Any embedded incompressible torus in $ \mathbb{T}^2\times I$ is parallel to both boundary components. Hence, $T$ is isotopic within $Y$ to $\partial \overline{n}(\alpha'\times\{\frac{1}{2}\})$, which is the component of $\partial N$ that belongs to $\partial M$.  To sum up, we have proven that $T$ is isotopic to $\partial M$  in $M$, which completes the proof. 
\end{proof}

\begin{remark}
Under the setting of \autoref{thm: drilling}, if $\alpha_i\cup \alpha_i'$ fills $\Sigma_i$, then \autoref{thm: drilled hyperbolic} combined with \autoref{thm: regular} actually implies that $\mu,\lambda \in \{\pm1\}$. More generally, if $\alpha_i$ and $\alpha_i'$ have non-zero geometric intersection number, then one may follow the  proof of  \autoref{thm: drilled hyperbolic} to show that $\partial M_i$ belongs to a hyperbolic JSJ-piece of $M_i$. In this case,  \cite[Theorem 9.2]{Xu25} also implies that $\mu,\lambda\in \{\pm 1\}$. However, when $\alpha_i$ and $\alpha_i'$ are disjoint, $\mu$ (and accordingly, $\lambda=\pm \mu$) may range through any element in $\Zx$. Examples can be constructed from  the exotic automorphisms of $\widehat{F_2}$ introduced in \cite[Proposition 1.6]{Iha94}. 
\end{remark}

\subsection{Hyperbolic Dehn surgery theory}\label{subsec: hyperbolic Dehn surgery}

An application of \autoref{thm: drilled hyperbolic} is  to invoke Thurston's hyperbolic Dehn surgery theory to $M_i$. We will be utilizing the orbifold version of this theory, of which we give a brief introduction.

Let $M$ be a compact orientable 3-manifold with toral boundary, whose boundary components are denoted by $\partial _1 M ,\cdots, \partial _k M$. For any non-zero homology classes $d_i \in H_1(\partial_i M;\Z)$, we can construct a closed 3-orbifold $M_{d_1,\cdots, d_k}$, called the {\em Dehn filling} of $M$ along $d_1,\cdots, d_k$, via the following procedure. 

Let $n_i\in \N$ such that $d_i=n_ic_i$, where $c_i\in H_1(\partial_iM;\Z)$ is a primitive class. The underlying space of $M_{d_1,\cdots, d_k}$ is a closed 3-manifold obtained from $M$ by gluing $k$ solid tori $V_1,\cdots, V_k$ to $\partial M$, such that the meridian  of $V_i$ is glued to the slope corresponding to $c_i$ on $\partial_iM$. 
The singluar locus of $M_{d_1,\cdots, d_k}$ is the union of the core curves of $V_i$ where $n_i>1$, and  the orbifold structure of $M_{d_1,\cdots, d_k}$  is defined by assigning cone angle $\frac{2\pi}{n_i}$ to the core curve of each $V_i$. 

In general, Dehn fillings could provide compact orbifolds with toral boundary if we leave some of the boundary components unfilled. For the purpose of this paper, we only focus on closed orbifolds.  We remark that $M_{d_1,\cdots, d_k}$ is a manifold if and only if each $d_i$ is a primitive homology class, in which case $n_i=1$. 

The orbifold fundamental group $\piorb M_{d_1,\cdots, d_k} $ can be computed using the Seifert--van-Kampen theorem for orbifolds.
\begin{lemma}[{\cite[Theorem 4.7.1]{Choi}}]\label{orbifold fundamental group}
For each $1\le i \le k$,  let $\gamma_i\in \pi_1 M $ be a conjugacy representative for the closed curve on $\partial_iM$ representing the homology class of $d_i\in H_1(\partial_iM;\Z)$. Then,
\begin{equation*}
\piorb M_{d_1,\cdots, d_k} \cong \pi_1 M /\langle \! \langle \gamma_1 ,\cdots, \gamma_m \rangle \! \rangle. 
\end{equation*}
\end{lemma}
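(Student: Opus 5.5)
This is a Seifert--van Kampen statement for orbifolds, so I would prove it by building $M_{d_1,\cdots,d_k}$ from $M$ in stages and computing the orbifold fundamental group at each stage. I treat $M_{d_1,\cdots,d_k}$ as the union of $M$ with $k$ orbifold solid tori $V_1,\cdots,V_k$. Each $V_i$ is $D^2\times \mathbb{S}^1$ with cone angle $2\pi/n_i$ along the core. Equivalently, $V_i$ is the quotient of an honest solid torus $D^2\times\mathbb{S}^1$ by the $\Z/n_i$-rotation of the $D^2$ factor. This gives $\piorb V_i\cong \Z\times \Z/n_i$, or more naturally the quotient of $\pi_1\partial V_i\cong\Z^2$ by $\langle n_i c_i\rangle = \langle d_i\rangle$. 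Here $c_i$ is the meridian slope of the underlying solid torus, glued to the slope $c_i$ on $\partial_i M$.

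First, I would verify this local computation. The orbifold universal cover of $V_i$ is $D^2\times\R$, with $\Z/n_i$ acting by rotation on $D^2$ and $\Z$ acting by translation on $\R$. Restricting to the boundary torus, the inclusion $\pi_1\partial V_i\to\piorb V_i$ is the quotient $\Z^2\to\Z^2/\langle n_i c_i\rangle$. This holds because the meridian loop $c_i$ goes once around the cone axis and so has order exactly $n_i$. Consequently, $\piorb V_i$ is generated by the image of the boundary, with kernel normally generated by $d_i=n_ic_i$.

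Next, I apply the orbifold Seifert--van Kampen theorem, which is the cited \cite[Theorem 4.7.1]{Choi}, inductively over $i$. I glue along the boundary torus $\partial_i M$, which is a 2-suborbifold with no singular points and connected $\pi_1$. I would thicken it to a collar so that the open cover hypotheses hold. Each step produces an amalgamated product $\pi \ast_{\pi_1\partial_iM}\piorb V_i$. Since $\pi_1\partial_iM\to\piorb V_i$ is surjective with kernel normally generated by $d_i$, this pushout equals $\pi/\langle\!\langle \gamma_i\rangle\!\rangle$. Here $\gamma_i$ is the image of $d_i$ in $\pi_1M$, well defined up to conjugacy, which corresponds to the choice of path to the boundary. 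Doing this for all $k$ components gives the stated presentation.

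The main obstacle is being careful about basepoints and the conjugacy ambiguity of the peripheral subgroups, since each boundary torus needs its own path to the basepoint. This only changes $\gamma_i$ by conjugation, and normal closures are insensitive to that. The other point needing care is confirming that the local group of $V_i$ is exactly $\Z^2/\langle d_i\rangle$. In particular, when $n_i=1$ this must recover the usual Dehn filling result, where we kill the primitive slope.
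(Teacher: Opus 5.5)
Your proposal is correct and takes essentially the same approach as the paper, which gives no separate argument and simply invokes the orbifold Seifert--van Kampen theorem \cite[Theorem 4.7.1]{Choi}; your local computation $\piorb V_i\cong \Z^2/\langle n_ic_i\rangle$ and your identification of the pushout $\pi\ast_{\pi_1\partial_iM}\piorb V_i$ with $\pi/\langle\!\langle\gamma_i\rangle\!\rangle$ are exactly the details that citation packages. You also correctly take the normal closure over $\gamma_1,\dots,\gamma_k$ (the index $m$ in the displayed formula is a typo for $k$).
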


The hyperbolic Dehn surgery theorem that will be needed in this paper can be summarized as follows.
\begin{theorem}[{\cite[Theorem 5.8.2]{Thurston}}]\label{thm: hyperbolic dehn surgery}
Suppose $\operatorname{int}(M)$ admits a complete  hyperbolic structure.
 Then, there exist finite subsets $Z_i \in H_1(\partial_iM;\Z)$ for $1\le i\le k$,  such that for any non-zero homology classes $d_i\in H_1(\partial_iM;\Z)\setminus Z_i$, the orbifold $M_{d_1,\cdots, d_k}$ admits a complete hyperbolic structure. 
\end{theorem}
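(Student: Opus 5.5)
This statement is Thurston's hyperbolic Dehn surgery theorem in its orbifold form, which the paper quotes from \cite[Theorem 5.8.2]{Thurston}. I would not reprove it from scratch, but I would sketch the standard deformation-theoretic argument.

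\emph{Step 1: deform the holonomy.} Start from the complete structure on $\operatorname{int}(M)$ with holonomy $\rho_0:\pi_1M\to\PSL_2(\C)$. By Thurston's dimension count for the character variety near $\rho_0$ (or Calabi--Weil local rigidity together with an obstruction computation), the component of the $\PSL_2(\C)$-character variety through $[\rho_0]$ has complex dimension $k$. It is locally parametrized by the complex lengths $u_i$ of a fixed primitive peripheral element on each cusp $\partial_iM$, and $u=0$ corresponds to $\rho_0$.

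\emph{Step 2: build incomplete structures.} For $u$ near $0$, the developing-map construction gives incomplete hyperbolic structures on $\operatorname{int}(M)$ whose holonomy is $\rho_u$. Near each cusp, the metric completion is governed by the generalized Dehn surgery coefficients $(p_i,q_i)\in\R^2\cup\{\infty\}$, defined by the relation $p_iu_i+q_iv_i=2\pi\sqrt{-1}$, where $v_i$ is the complex length of the other peripheral generator. The key analytic point, which I expect to be the main obstacle, is to show that the map $u\mapsto(p_i,q_i)_i$ is a homeomorphism from a neighbourhood of $0$ onto a neighbourhood of $(\infty,\dots,\infty)$. This requires relating $v_i$ to $u_i$ via $v_i=\tau_iu_i+O(|u|^2)$, where $\tau_i$ is the cusp shape with $\operatorname{Im}\tau_i\ne0$, and then applying the invariance of domain argument.

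\emph{Step 3: identify the orbifold filling.} Suppose $(p_i,q_i)=n_ic_i$ with $c_i$ a primitive integral class. Then the completion near $\partial_iM$ adds a geodesic core around which the cone angle is $2\pi/n_i$. The completed space is therefore exactly the hyperbolic cone-manifold structure on $M_{d_1,\dots,d_k}$, that is, the orbifold structure with the cone angles specified in the definition. Only finitely many nonzero integral classes $d_i$ lie outside any given neighbourhood of $\infty$ in $H_1(\partial_iM;\R)$, so these finitely many classes form the sets $Z_i$. Finally, the resulting orbifold is complete because it is compact.
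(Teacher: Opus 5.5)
The paper does not prove this statement; it quotes it from Thurston's notes. Your outline is the standard argument behind that citation, in the form later written up by Neumann--Zagier: deform the holonomy, read off generalized Dehn surgery coefficients, and complete to cone-manifolds with angles $2\pi/n_i$. Steps 1 and 3 are fine at the level of a sketch, apart from a sign issue noted at the end.

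\textbf{The problem is in Step 2, which you rightly call the crux.} Let $x_i,y_i$ be the peripheral generators of $\pi_1\partial_iM$ with complex lengths $u_i,v_i$. The relation $v_i=\tau_iu_i+O(|u|^2)$, with $|u|$ the norm of the whole vector $(u_1,\dots,u_k)$, is too weak. It gives no control of $v_i/u_i$ when $|u_i|$ is small compared with $|u|^2$, that is, when the $i$-th cusp is deformed much less than the others. In that regime:
\begin{itemize}
\item $u_i$ and $v_i$ could be $\R$-linearly dependent, so $(p_i,q_i)$ need not be defined;
\item nothing forces $v_i=0$ when $u_i=0$;
\item you cannot show $(p_i,q_i)\to\infty$ as $u_i\to0$.
\end{itemize}
So you have not shown that $u\mapsto(p_i,q_i)_i$ is well defined and continuous on a full neighbourhood of $0$, including points where some but not all $u_i$ vanish. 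That is exactly what invariance of domain, or any degree argument around $(\infty,\dots,\infty)$, needs. You also never address injectivity, which invariance of domain requires.

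\textbf{The missing idea is that $v_i$ is divisible by $u_i$:} $v_i=u_i\,\tau_i(u)$ with $\tau_i$ analytic and $\tau_i(0)$ the non-real cusp shape. This follows from commutativity of $\rho_u(x_i)$ and $\rho_u(y_i)$.
\begin{itemize}
\item When $u_i=0$, $\rho_u(x_i)$ is a non-trivial parabolic. Hence $\rho_u(y_i)$ is parabolic too, so $v_i$ vanishes on $\{u_i=0\}$.
\item When $u_i\neq0$, both elements preserve a common axis. After normalizing, take commuting maps $z\mapsto e^{u}z+c$ and $z\mapsto e^{v}z+c'$ with common fixed point $c/(1-e^{u})=c'/(1-e^{v})$, converging to $z\mapsto z+1$ and $z\mapsto z+\tau$. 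A direct computation gives $v/u\to\tau$.
\end{itemize}
With this, the defining equations become $u_i\bigl(p_i+q_i\tau_i(u)\bigr)=2\pi\sqrt{-1}$. Since $\tau_i(u)$ stays in a compact set of non-real numbers, $|p_i+q_i\tau_i(u)|$ is bounded below by a constant times $|(p_i,q_i)|$. Hence for $(p,q)$ near $(\infty,\dots,\infty)$, the map
\[
u\mapsto\bigl(2\pi\sqrt{-1}/(p_i+q_i\tau_i(u))\bigr)_i,
\]
with $i$-th entry $0$ when $(p_i,q_i)=\infty$, is a contraction of a small ball. Its unique fixed point gives injectivity, surjectivity onto a neighbourhood of $(\infty,\dots,\infty)$, and continuity of the inverse all at once.

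\textbf{A smaller point.} $u_i$ is only defined up to sign on the character variety. So the coordinates $u$ live on a $(\Z/2)^k$-branched cover of it, namely Thurston's deformation space, not on the character variety itself. This is harmless because $\pm(p_i,q_i)$ give the same filling, but you should say so when claiming a local parametrization by the $u_i$.
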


The hyperbolic structure on $ M_{d_1,\cdots, d_k} $ carries the cone angles on its singular locus. In fact, this hyperbolic structure is obtained by deforming the hyperbolic structure on  $\operatorname{int}(M)$.
Using the construction of developing maps, Thurston proved that hyperbolic orbifolds are {\em good}. We state the following proposition for complete hyperbolic orbifolds.
\begin{proposition}[{\cite[Proposition 13.3.2]{Thurston}}]\label{prop: orbifold good}
Let $O$ be an orientable 3-orbifold without boundary that admits a complete hyperbolic structure. Then, there exists a discrete group of orientation-preserving isometries $\Gamma \le \mathrm{Isom}^+(\mathbb{H}^3)$ such that $O=\mathbb{H}^3/\Gamma$. In particular, $\Gamma$ is the orbifold fundamental group $\piorb (O)$. 
When $O$ has finite hyperbolic volume, $\Gamma$ is a lattice in $\PSL_2(\C)$. Furthermore,  when $O$ is closed, $\Gamma$ is a uniform lattice in $\PSL_2(\C)$. 
\end{proposition}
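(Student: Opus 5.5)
The plan is to construct a developing map from the universal orbifold cover and to use completeness to show that it is an isometry onto $\mathbb{H}^3$.

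First, recall that a hyperbolic structure on $O$ is an atlas of orbifold charts $\widetilde U/G_U \to U$. Here $\widetilde U\subseteq\mathbb{H}^3$ is open, $G_U$ is a finite subgroup of $\mathrm{Isom}^+(\mathbb{H}^3)$ (orientation-preserving because $O$ is orientable), and the transition maps are restrictions of elements of $\mathrm{Isom}^+(\mathbb{H}^3)$. Thurston's general theory gives every orbifold a universal orbifold cover $p:\widetilde O\to O$, whose deck group is $\piorb(O)$. We pull the hyperbolic structure back along $p$, so that $\widetilde O$ is a hyperbolic orbifold and $\piorb(O)$ acts on it by isometries. Since $\widetilde O$ is orbifold-simply-connected, analytic continuation of a fixed chart along orbifold paths is well defined. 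This gives a developing map $D:\widetilde O\to\mathbb{H}^3$ which is a local isometry in every orbifold chart. It also gives a holonomy homomorphism $\rho:\piorb(O)\to \mathrm{Isom}^+(\mathbb{H}^3)$ with $D(\gamma x)=\rho(\gamma)D(x)$, by uniqueness of continuation.

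The second step is the heart of the argument and the step I expect to be the main obstacle: showing that $D$ is an orbifold covering map. I would first check that completeness of $O$ passes to $\widetilde O$, since a Cauchy sequence in $\widetilde O$ projects to a Cauchy sequence in $O$ and the covering has locally isometric lifts. Then I would use path lifting along geodesics: for $x\in\widetilde O$ and any geodesic segment in $\mathbb{H}^3$ starting at $D(x)$, completeness lets us continue the lift for all time. Near singular points the lift is handled inside finite-group charts, where $D$ is modelled on $\widetilde U\to\widetilde U/G_U$. From this, the standard argument shows that each ball $B(y,r)\subseteq\mathbb{H}^3$ pulls back to a disjoint union of pieces, each mapped onto $B(y,r)$ as a quotient by a finite group. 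Hence $D$ is an orbifold covering.

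Now $\mathbb{H}^3$ is a simply connected manifold, so it is orbifold-simply-connected and has no nontrivial orbifold covers. Therefore $D$ is an isomorphism of orbifolds, and in particular $\widetilde O$ has no singular points. Consequently $\rho$ is injective and its image $\Gamma=\rho(\piorb(O))$ acts on $\mathbb{H}^3$ as a deck group, hence properly discontinuously, so $\Gamma$ is discrete. This yields $O\cong \mathbb{H}^3/\Gamma$ with $\Gamma\cong\piorb(O)$ and $\Gamma\le\mathrm{Isom}^+(\mathbb{H}^3)=\PSL_2(\C)$. If $O$ has finite volume, then the covolume of $\Gamma$ equals $\mathrm{vol}(O)<\infty$, so $\Gamma$ is a lattice. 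If $O$ is closed, then $\mathbb{H}^3/\Gamma$ is compact, so $\Gamma$ is cocompact, i.e.\ a uniform lattice.
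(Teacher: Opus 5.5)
Your outline is correct and is essentially the developing-map argument that the paper attributes to Thurston and cites without proof: develop the universal orbifold cover, use completeness to make $D$ a covering of $\mathbb{H}^3$, and read off $\Gamma=\rho(\piorb(O))$ as a discrete group with $O=\mathbb{H}^3/\Gamma$. One correction: you say that near singular points $D$ is ``modelled on $\widetilde U\to\widetilde U/G_U$'' and that pieces map onto $B(y,r)$ ``as a quotient by a finite group'', but this contradicts your own claim that $D$ is a local isometry in every chart of $\widetilde O$ --- that claim, equivalently the injectivity of the holonomy on each local group of $O$, already forces every local group of $\widetilde O$ to be trivial, which is exactly the goodness statement, so over $B(y,r)$ the pieces are isometric copies and the covering step is the ordinary Riemannian argument.
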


In this paper, we will be focusing on closed hyperbolic Dehn fillings of $M$. In these cases, $M_{d_1,\cdots,d_k}$ is a closed hyperbolic 3-orbifold, and $\piorb M_{d_1,\cdots,d_k} $ is a uniform lattice in $\PSL_2(\C)$. 

Now, let us return to the setting of \autoref{subsection: drill}. In the following proposition, we also omit the basepoints in the fundamental groups for brevity.

\begin{proposition}\label{prop: drilling lattice surjection}
Following the   setting of \autoref{subsection: drill}, 
suppose that for $i=1,2$, the genus of $\Sigma_i$ is   at least $2$ and  $\alpha_i\cup \alpha_i'$ fills $\Sigma_i$. Suppose $\phi:\widehat{\pi_1\Sigma_1}\to \widehat{\pi_1\Sigma_2}$ is an isomorphism such that $\phi(a_1)$ conjugates with $a_2$ and $\phi(a_1')$ conjugates with $a_2'$ in $\widehat{\pi_1\Sigma_2}$. 
Then, there exist uniform lattices $\Gamma_1$ and $\Gamma_2$ in $\PSL_2(\C)$, with surjective homomorphisms $p_1: \Gamma_1\twoheadrightarrow \pi_1\Sigma_1$ and $p_2: \Gamma_2\twoheadrightarrow \pi_1\Sigma_2$, and an isomorphism $\widetilde{\phi}: \widehat{\Gamma_1}\to \widehat{\Gamma_2}$, such that the following diagram commutes. 
\begin{equation*}
\begin{tikzcd}
\widehat{\Gamma_1} \arrow[r,"\widetilde{\phi}","\cong"'] \arrow[d,"\widehat{p_1}"', two heads] & \widehat{\Gamma_2} \arrow[d,"\widehat{p_2}", two heads]\\
\widehat{\pi_1\Sigma_1} \arrow[r,"\phi","\cong"'] & \widehat{\pi_1\Sigma_2}
\end{tikzcd}
\end{equation*} 
\end{proposition}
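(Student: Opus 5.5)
The plan is to take $\Gamma_i$ to be the orbifold fundamental group of a suitable closed hyperbolic Dehn filling of the drilled manifold $M_i$ from \autoref{subsection: drill}, and to build $\widetilde{\phi}$ by descending the isomorphism $\Phi$ of \autoref{thm: drilling}. By \autoref{thm: drilled hyperbolic}, $\operatorname{int}(M_i)$ is complete hyperbolic of finite volume. Apply \autoref{thm: hyperbolic dehn surgery} to $M_1$ and $M_2$ and pick one $n\in\N$ large enough that the fillings along $n$ times the meridian classes are hyperbolic for both $i=1,2$. This is possible because the exceptional sets $Z_i$ are finite and the classes $n[m_i],n[m_i']$ are distinct for distinct $n$. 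Set $O_i=(M_i)_{n[m_i],n[m_i']}$, which is $\Sigma_i\times\mathbb{S}^1$ with cone angle $2\pi/n$ along $L_i$. By \autoref{orbifold fundamental group}, $\Gamma_i=\piorb O_i=\pi_1M_i/\langle\!\langle m_i^n,(m_i')^n\rangle\!\rangle$. By \autoref{prop: orbifold good}, $\Gamma_i$ is a uniform lattice in $\PSL_2(\C)$, since $O_i$ is closed.

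Next I define the surjection $p_i$. Because $\ker(q_i)=\langle\!\langle m_i,m_i'\rangle\!\rangle\supseteq\langle\!\langle m_i^n,(m_i')^n\rangle\!\rangle$, the map $q_i$ factors through $\Gamma_i$. Composing it with the projection $\pi_1\Sigma_i\times\langle t_i\rangle\to\pi_1\Sigma_i$ gives a surjection $p_i:\Gamma_i\twoheadrightarrow\pi_1\Sigma_i$.

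Now let $\Phi:\widehat{\pi_1M_1}\to\widehat{\pi_1M_2}$ be given by \autoref{thm: drilling}, with $\lambda=\mu=1$. By \autoref{prop: right exact}, $\widehat{\Gamma_i}=\widehat{\pi_1M_i}/\overline{\langle\!\langle m_i^n,(m_i')^n\rangle\!\rangle}$. By \autoref{thm: drilling}~\ref{drl2}, $\Phi(m_1^n)$ is conjugate to $m_2^{\pm n}$ and $\Phi((m_1')^n)$ is conjugate to $(m_2')^{\pm n}$. The closed normal subgroup topologically generated by $m_i^n,(m_i')^n$ is unchanged when we replace these elements by conjugates or inverses. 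Hence $\Phi$ maps $\overline{\langle\!\langle m_1^n,(m_1')^n\rangle\!\rangle}$ onto $\overline{\langle\!\langle m_2^n,(m_2')^n\rangle\!\rangle}$, and so it descends to an isomorphism $\widetilde{\phi}:\widehat{\Gamma_1}\to\widehat{\Gamma_2}$.

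It remains to check that the square commutes. The completion $\widehat{p_i}$ is the composite of the induced map $\widehat{\Gamma_i}\to\widehat{\pi_1\Sigma_i}\times\widehat{\Z}$ (the descent of $\widehat{q_i}$) with the projection to $\widehat{\pi_1\Sigma_i}$. \autoref{thm: drilling}~\ref{drl3} gives $\widehat{q_2}\circ\Phi=(\phi\times I)\circ\widehat{q_1}$. Passing to the quotients $\widehat{\Gamma_i}$ and projecting to the first factor yields $\widehat{p_2}\circ\widetilde{\phi}=\phi\circ\widehat{p_1}$.

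I expect the main obstacle to be the careful choice of a common filling parameter $n$, since it must avoid the exceptional sets for both manifolds. If \autoref{thm: hyperbolic dehn surgery} is read as allowing independent classes on each cusp, I would take $n$ larger than every multiple appearing in $Z_1$ and $Z_2$. Everything else reduces to bookkeeping with earlier results.
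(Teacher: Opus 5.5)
Your proposal is correct and follows the paper's proof essentially step for step. You Dehn fill both drilled manifolds along $n$ times the meridians for a common large $n$, factor $q_i$ through $\Gamma_i=\piorb O_i$ to obtain $p_i$, descend the drilling-theorem isomorphism $\Phi$ using its behaviour on meridians, and read off commutativity from $\widehat{q_2}\circ\Phi=(\phi\times I)\circ\widehat{q_1}$. Your explicit care in choosing $n$ outside the finite exceptional sets $Z_1,Z_2$ is exactly what the paper's ``sufficiently large $n$'' implicitly relies on.
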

\begin{proof}
We denote by $[m_i]$ and $[m_i']$ the homology classes of the meridians of $L_i$ in $H_1(\partial M_i;\Z)$. According to \autoref{thm: drilled hyperbolic}, $\operatorname{int}(M_1)$ and $\operatorname{int}(M_2)$ admit complete hyperbolic structures. Then, according to \autoref{thm: hyperbolic dehn surgery}, there exists a sufficiently large $n\in \N$ such that the orbifolds
\begin{equation*}
O_1=(M_1)_{n[m_1],n[m_1']}\quad \text{and}\quad O_2=(M_2)_{n[m_2],n[m_2']}
\end{equation*}
admit complete hyperbolic structures. By \autoref{orbifold fundamental group} and \autoref{prop: orbifold good}, 
\begin{equation*}
\Gamma_1:=\piorb O_1 = \pi_1M_1/\langle\!\langle m_1^n,m_1^{\prime n}\rangle\!\rangle \;\; \text{and}\;\; \Gamma_2:=\piorb O_2 = \pi_1M_2/\langle\!\langle m_2^n,m_2^{\prime n}\rangle\!\rangle 
\end{equation*}
are uniform lattices in $\PSL_2(\C)$. 
We denote $r_i: \pi_1M_i\to \Gamma_i$ as the quotient maps. 

Recall that we have defined surjective homomorphisms $q_i:\pi_1M_i\to \pi_1(\Sigma_i\times \mathbb{S}^1) =\pi_1\Sigma_i\times \Z$, where $\ker(q_i)=\langle \! \langle m_i,m_i\rangle\!\rangle$. Since $\langle \! \langle m_i,m_i\rangle\!\rangle\supseteq \langle\!\langle m_i^n,m_i^{\prime n}\rangle\!\rangle$,  $q_i$   factors through $\Gamma_i$. In other words, there are  unique surjective  homomorphisms $f_i:  \Gamma_i \to \pi_1\Sigma_i\times \Z$ such that $q_i=f_i\circ r_i$. Then, we define $p_i: \Gamma_i \to \pi_1\Sigma_i$ to be the projection of $f_i$ onto the first factor, which is also surjective.  

We will now construct the isomorphism $\widetilde{\phi}: \widehat{\Gamma_1}\to \widehat{\Gamma_2}$. 

With the given isomorphism $\phi:\widehat{\pi_1\Sigma_1}\to \widehat{\pi_1\Sigma_2}$, let $\Phi: \widehat{\pi_1M_1}\to \widehat{\pi_1M_2}$ be the isomorphism constructed by \autoref{thm: drilling}~\ref{drl1}. Then,   \autoref{thm: drilling}~\ref{drl2} guarantees that  $\Phi(\overline{\langle\!\langle m_1^n,m_1^{\prime n}\rangle\!\rangle })= \overline{\langle\!\langle m_2^n,m_2^{\prime n}\rangle\!\rangle }$. 
By  \autoref{prop: right exact}, 
$\widehat{r_i}$ is surjective with $\ker(\widehat{r_i})=\overline{\langle\!\langle m_i^n,m_i^{\prime n}\rangle\!\rangle }$. 
Hence, $\Phi$ descends to an isomorphism $\widetilde{\phi}: \widehat{\Gamma_1}\to \widehat{\Gamma_2}$  such that the following diagram commutes. 
\begin{equation*}
\begin{tikzcd}
\widehat{\pi_1M_1} \arrow[r,"\Phi","\cong"'] \arrow[d,two heads, "\widehat{r_1}"'] & \widehat{\pi_1M_2} \arrow[d,two heads, "\widehat{r_2}"]\\
\widehat{\Gamma_1} \arrow[r,"\widetilde{\phi}","\cong"'] & \widehat{\Gamma_2}
\end{tikzcd}
\end{equation*}

Combining with \autoref{thm: drilling}~\ref{drl3}, we deduce the following commutative diagram. 
\begin{equation*}
\begin{tikzcd}[row sep=small,column sep=small]
\widehat{\pi_1M_1} \arrow[dd, "\Phi"'] \arrow[rr, "\widehat{r_1}", two heads] &     & \widehat{\Gamma_1} \arrow[dd, "\widetilde{\phi}"'] \arrow[rr, "\widehat{f_i}", two heads] &     & \widehat{\pi_1\Sigma_1}\times \widehat{\Z} \arrow[dd, "\phi\times I"'] \arrow[rr, two heads] &     & \widehat{\pi_1\Sigma_1} \arrow[dd, "\phi"'] \\
                                                                              & \circledtext{1} &                                                                                           & \circledtext{2} &                                                                                              & \circledtext{3} &                                            \\
\widehat{\pi_1M_2} \arrow[rr, "\widehat{r_2}", two heads]                     &     & \widehat{\Gamma_2} \arrow[rr, "\widehat{f_i}", two heads]                                 &     & \widehat{\pi_1\Sigma_2}\times \widehat{\Z} \arrow[rr, two heads]                             &     & \widehat{\pi_1\Sigma_2}                   
\end{tikzcd}
\end{equation*}
Indeed, block \circledtext{2}  is deduced  from \autoref{thm: drilling}~\ref{drl3}, 
 and block \circledtext{3} is obtained by projecting  to the first factor. 
Combining blocks \circledtext{2} and \circledtext{3} then yields the desired commutative diagram stated in the proposition. 
\end{proof}

\section{Semiauthentic homomorphisms}\label{sec: semiauthentic}

In this section, we introduce a notion called semiauthenticity, which is a rather special property for homomorphisms between  the profinite completions of finitely generated residually finite groups. We also provide some examples of semiauthentic homomorphisms that will be used in subsequent sections. 


\begin{definition}\label{def: semiauthentic homomorphism}
Let $\Gamma_1$ and $\Gamma_2$ be two finitely generated residually finite groups, and we adopt \autoref{conv}. A homomorphism $\Phi: \widehat{\Gamma_1}\to \widehat{\Gamma_2}$ is {\em semiauthentic} if there exists a semigroup $S\subseteq \Gamma_1$, which generates $\Gamma_1$ as a group, such that $\Phi(s)$ conjugates into $\Gamma_2$ for each $s\in S$. 
\end{definition}

\begin{remark}
\begin{enumerate}[leftmargin=*, label=(\arabic*)]
\item In \autoref{def: semiauthentic homomorphism}, we only require the existence of such a semigroup $S$, and the definition does not specify a particular semigroup. 
\item We do not claim a correspondence between $S$ and a certain semigroup in $\Gamma_2$. Indeed, the asserted conjugation from $\Phi(S)$ to $\Gamma_2$ is merely pointwise. 
\item The semigroup $S$ is not required to be finitely generated. 
\end{enumerate}
\end{remark}

We introduce two additional terminologies elaborated from semiauthenticity.  
\begin{definition}
Let $\Gamma_1$ and $\Gamma_2$ be finitely generated residually finite groups, and let $\Phi: \widehat{\Gamma_1}\to \widehat{\Gamma_2}$ be an isomorphism. 
\begin{enumerate}[leftmargin=*, label=(\arabic*)]
\item $\Phi$ is {\em bi-semiauthentic} if both $\Phi$ and $\Phi^{-1}$ are semiauthentic. 
\item $\Phi$ is {\em hereditarily bi-semiauthentic} if for any $\Phi$-corresponding pair of finite-index subgroups $\Gamma_1'\le \Gamma_1$ and $\Gamma_2'\le \Gamma_2$, the restriction $\Phi':\widehat{\Gamma_1'}\to \widehat{\Gamma_2'}$ is bi-semiauthentic. 
\end{enumerate}
\end{definition}

We remark that $\Phi$ being (hereditarily) bi-semiauthentic does not necessarily imply $\Gamma_1\cong \Gamma_2$, see \autoref{rmk: sol counterexample} for an example.

\subsection{Fibered hyperbolic 3-manifolds}
The notion of semiauthenticity originates from the following example, and most of its ideas have appeared in \cite{Liu25}. 
\begin{proposition}\label{prop: fibered semiauthentic}
Let $M_1$ and $M_2$ be two closed orientable hyperbolic 3-manifolds that fiber over $\mathbb{S}^1$. Then, any isomorphism $\widehat{\pi_1M_1}\to \widehat{\pi_1M_2}$ is semiauthentic. 
\end{proposition}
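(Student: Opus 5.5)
The idea is to take as $S$ the semigroup generated by all periodic trajectories of a single pseudo-Anosov suspension flow on $M_1$, and then use \autoref{prop: match up trajectory} to send each trajectory to a conjugate of a power of a trajectory in $M_2$. Concretely, let $\Phi:\widehat{\pi_1M_1}\to\widehat{\pi_1M_2}$ be an isomorphism with homological datum $(\mu,f)$ (\autoref{prop: Liu Zx-regular}), and fix a fibered class $\psi_1\in H^1(M_1;\Z)$. Let $\psi_2=(f^\ast)^{-1}\psi_1$, and let $\tau$ be the bijection of \autoref{prop: match up trajectory}. Let $S\subseteq\pi_1M_1$ be the subsemigroup generated by all elements $g\in\pi_1M_1$ whose conjugacy class lies in $\Traj(M_1,\psi_1)$; every conjugate of such an element is included. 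There are then two things to verify: that $S$ generates $\pi_1M_1$ as a group, and that $\Phi(s)$ conjugates into $\pi_1M_2$ for every $s\in S$.

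The generation claim is purely about the flow and is what I expect to be the main obstacle. The plan is to show that the subgroup $H=\langle S\rangle$ is all of $\pi_1M_1$. First, $H$ is normal, since $S$ is closed under conjugation. Second, $H$ surjects onto $\Z$ under $\psi_1$, because trajectories have positive $\psi_1$-value and \autoref{prop: Fried cone} gives trajectories in many homology directions. It then remains to show $H\supseteq\pi_1S_{M_1,\psi_1}$. For this I would use the Markov partition of the pseudo-Anosov monodromy $f$: every closed loop in the fibre, composed with a suitable number of returns, is homotopic to a product of periodic orbits. A cleaner route is the standard fact that in the mapping torus $\pi_1S\rtimes_f\Z$, for each $h\in\pi_1S$ the element $h\,t^{N}$ is freely homotopic to a closed orbit for all large $N$, by shadowing/specification of the pseudo-Anosov flow. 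Granting this, $t^N\in S$ up to conjugacy and $ht^N\in S$, so $h=(ht^N)(t^N)^{-1}\in H$. I would try the specification argument first (pseudo-Anosov flows are transitive with dense periodic orbits, and closing lemmas give orbits homotopic to $\gamma\cdot(\text{long flow segment})$). Failing that, I would cite the corresponding step in \cite{Liu25}.

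The conjugation claim has an immediate part and a harder part. For a generator $g$ in a trajectory class, $\Phi(g)$ is conjugate to $g_2^\mu$ for some $g_2$ representing $\tau(g)$. This lies in $\pi_1M_2$ only up to the power $\mu$, so I would first upgrade using regularity: by \autoref{thm: regular} we may take $\mu=1$ (choosing the homological datum with $\mu=\pm1$ and, if $\mu=-1$, replacing $\psi_1$ by $-\psi_1$ together with the sign convention in \autoref{prop: match up trajectory}). Then each generator maps into a conjugate of $\pi_1M_2$. For products $s=g_1\cdots g_k$ the pointwise conjugators differ, so closure under multiplication is not automatic. To handle this, I would use Liu's stronger matching, namely that $\tau$ is induced on the level of the profinite flow (twisted Alexander/periodic-orbit correspondence for the fibered cone). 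The aim would be to show that products of trajectories, being themselves homotopic to closed orbits of the flow after adding long flow segments, again correspond under $\Phi$ to trajectories up to conjugacy. Failing that, I would shrink $S$ to the semigroup of all elements whose conjugacy class is a periodic trajectory; the Step 2 argument shows this set is already a semigroup by specification, and it still generates $\pi_1M_1$.
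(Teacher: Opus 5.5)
You have correctly located the crux: semiauthenticity needs a generating semigroup $S$ in which \emph{every} element, not only every generator, has $\Phi$-image conjugate into $\pi_1M_2$. You also correctly note that the pointwise conjugators from \autoref{prop: match up trajectory} do not multiply. Neither of your fallbacks closes this gap.

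Fallback (b) is false. The set $P$ of elements of $\pi_1M_1$ conjugate to periodic trajectories of $\Theta^t_{\psi_1}$ is not closed under products. Fix $g\in P$ with $\psi_1(g)=k>0$.
\begin{itemize}
\item Every product $g\cdot hgh^{-1}$ with $h\in\pi_1M_1$ has $\psi_1$-value $2k$.
\item Only finitely many classes in $\Traj(M_1,\psi_1)$ have $\psi_1$-value $2k$. They come from periodic points of bounded period of the pseudo-Anosov monodromy, and there are finitely many of these.
\item Choosing $h$ that moves the axis of $g$ far away in $\mathbb{H}^3$ makes $|\tr(g\cdot hgh^{-1})|$, hence the translation length, arbitrarily large. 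So these products fill infinitely many conjugacy classes.
\end{itemize}
Specification and shadowing do not rescue this. A shadowing orbit represents $g\,c\,h\,c'$ with extra connecting flow segments, not the product itself.

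Fallback (a) fails for the same reason. Proving directly that $\Phi$ sends the whole semigroup generated by $P$ into conjugates of $\pi_1M_2$ is essentially the genuineness the paper is building toward, so it would be circular here. Your generation step also rests on an unproved claim, namely that $ht^N$ is a periodic-orbit class for all large $N$.

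The missing idea is to take a structured sub-semigroup of $P$, rather than all of $P$ or the semigroup it generates.
\begin{itemize}
\item Liu's transition graph $T=T_{f,\mathcal R}$ of a Markov partition is an irreducible directed graph with a $\pi_1$-surjective map $q\colon T\to M_1$. It sends every dynamical cycle to a class in $\Traj(M_1,\psi_1)$ (\autoref{prop: dynamical graph}).
\item Dynamical cycles based at a vertex $v$ are closed under concatenation. By irreducibility they generate $\pi_1(T,v)$ (\autoref{lem: irreducible semigroup}).
\item Hence $S=q_\ast(\mathcal{DC}_v)$ generates $\pi_1M_1$, and every element of $S$ lies in $P$. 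This settles generation and closure at once.
\end{itemize}
Finally, \autoref{thm: regular} gives $\mu=\pm1$. So \autoref{prop: match up trajectory} shows that $\Phi(s)$ is conjugate to $g_2^{\pm1}\in\pi_1M_2$ for every $s\in S$. No replacement of $\psi_1$ by $-\psi_1$ is needed.
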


Some preparations are needed before proving \autoref{prop: fibered semiauthentic}. First, we recall a key construction due to Liu \cite{Liu20}. Let us introduce some terminologies for it. 

\begin{sloppypar}
\begin{definition}
Let $T$ be a finite oriented graph (\autoref{def: finite oriented graph}). For $u, v \in V(T)$, a {\em dynamical path} from $u$ to $v$ is a concatenation of vertices $u = w_0,\allowbreak w_1, \cdots,\allowbreak  w_n = v$ by oriented edges $e_1, \cdots, e_n$ such that $d^-(e_i) = w_{i-1}$ and $d^+(e_i) = w_i$. We allow $n=0$ in the definition of a dynamical path. The graph $T$ is called an {\em irreducible directed graph} if for every $u, v \in V(T)$, there exists a dynamical path from $u$ to $v$. A dynamical path from $v$ to itself is called a {\em dynamical cycle} based at $v$. When discussing the topology of $T$, we view $T$ as a $1$-complex.
\end{definition}
\end{sloppypar}

\begin{proposition}[{\cite{Liu20}}]\label{prop: dynamical graph}
Suppose $M$ is a closed orientable hyperbolic 3-manifold, and $\psi\in H^1(M;\Z)$ is a fibered class. Then, there exists an irreducible directed graph $T$ and a $\pi_1$-surjective map $q:T\to M$   that maps every dynamical cycle in $T$ to the free homotopy class of some periodic trajectory associated to $\psi$.  
\end{proposition}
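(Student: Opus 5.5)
The plan is to realize $T$ as the transition graph of a Markov partition for the pseudo-Anosov monodromy, and to map it into $M$ using the suspension flow. First I identify $M$ with the mapping torus $M_f$ of a pseudo-Anosov $f\colon S\to S$ as in \autoref{subsec:pertraj}, with $S=S_{M,\psi}$ and flow $\theta^t_f$. I choose a Markov partition $\{R_1,\dots,R_k\}$ of $S$ for $f$, refined until each $R_j$ is an embedded closed disk of small diameter. Let $T_0$ have one vertex $j$ for each $R_j$, and an oriented edge $i\to j$ whenever $f(\operatorname{int}R_i)\cap \operatorname{int}R_j\neq\varnothing$. The transition matrix of a pseudo-Anosov Markov partition is irreducible (indeed primitive, by Perron--Frobenius and mixing of $f$), so $T_0$ is an irreducible directed graph.

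To define $q$, pick a base point $p_j\in\operatorname{int}R_j\times\{0\}$ for each vertex. For an edge $i\to j$, pick $y\in\operatorname{int}R_i$ with $f(y)\in\operatorname{int}R_j$. The image of the edge is the concatenation of three arcs: an arc in $R_i$ from $p_i$ to $y$, the flow line $\theta^{[0,1]}_f(y)$, and an arc in $R_j$ from $f(y)$ to $p_j$. Because each $R_j$ is contractible, the homotopy class of this path rel endpoints does not depend on the choices.

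For a dynamical cycle $j_0\to j_1\to\cdots\to j_n=j_0$, the Markov property gives a point $z\in\bigcap_m f^{-m}(R_{j_m})$ in the nested intersection of the periodic itinerary, with $f^n(z)=z$. The image of the cycle is then homotopic, rectangle by rectangle, to the closed orbit $\theta_f^{[0,n]}(z)$; if $z$ has smaller period, the image is a multiple of that orbit. Either way it is the free homotopy class of a periodic trajectory of $\Theta^t_\psi$.

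The main obstacle is $\pi_1$-surjectivity. Since $\pi_1M=\pi_1S\rtimes\langle t\rangle$, and every cycle has positive $\psi$-value (so the image contains an element mapping onto $\Z$ after possibly passing to multiples), the real task is to show that $q_*\pi_1(T_0)$ contains $\pi_1S$. My approach uses mixing. Take two rectangles $R_a,R_b$ that share a boundary segment. For large $N$ there is a rectangle $R_c$ with dynamical paths of length $N$ from $c$ to $a$ and from $c$ to $b$, and I can choose the corresponding orbits to start at nearby points of $R_c$. Comparing the two paths, the loop (path to $a$)$\cdot$(path to $b$)$^{-1}$ then differs from the short arc $p_a\to p_b$ crossing the common side only by conjugation by the common flow segment of length $N$.

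The adjacency graph of rectangles is connected and its edges carry $\pi_1S$, so these crossing arcs, together with loops of $T_0$, generate $\pi_1S$ up to the conjugating flow element. Combining with a cycle of $\psi$-value $1$, or enlarging $T_0$ by iterating $f$ so that such cycles exist, absorbs these conjugations and gives $q_*\pi_1T_0=\pi_1M$. If a step fails here, the fallback is to replace $T_0$ by the transition graph of a sufficiently high iterate, refined so that every pair of adjacent rectangles has a common predecessor in one step. Taking $T=T_0$ then finishes the proof.
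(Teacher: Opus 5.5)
Your construction is the one the paper uses. $T$ is Liu's transition graph of a Markov partition, mapped into $M_f$ along flow lines; in the paper this goes via the flow-box complex and the zipping map. The paper proves none of the three properties itself: it cites Lemmas 5.3, 5.6 and 5.7 of \cite{Liu20} for $\pi_1$-surjectivity, irreducibility and the periodic-trajectory property. Where you verify these directly, two steps fail as written.

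First, with one edge per pair $(i,j)$, the edge image is not well defined when $\operatorname{int}R_i\cap f^{-1}(\operatorname{int}R_j)$ is disconnected. Two choices of $y$ differ by a loop formed by $f(\text{arc in }R_i)$ and an arc in $R_j$, and this loop can be essential in $S$. So contractibility of $R_j$ is not the right reason. Either use one edge per component, or make the rectangles small relative to the dilatation $\lambda$ so that $f(R_i)\cup R_j$ always lies in an embedded disk; this uses that $f$ is $\lambda$-Lipschitz for the flat metric. The same point is needed for your rectangle-by-rectangle homotopy from dynamical cycles to periodic orbits.

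Second, and more seriously, the common-predecessor trick only produces crossing arcs across \emph{stable} sides. Suppose $R_a$ and $R_b$ share an unstable side $\sigma$, and join points on the two sides by a short stable arc. Its pullback under $f^{-N}$ crosses $f^{-N}(\sigma)\subseteq\partial^u\mathcal R$, because the unstable boundary is $f^{-1}$-invariant. So the two backward orbits never start at nearby points of one $R_c$, whatever $N$ or refinement you take. You need the time-reversed argument. Forward images of that stable arc shrink, so for generic choices the two forward orbits end at nearby points of a common successor $R_e$, and the $T$-path $P_aP_b^{-1}$ maps to the crossing arc. For stable sides the corresponding path is $P_a^{-1}P_b$, with $P_a,P_b$ starting at $c$.

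There is also no leftover conjugation to absorb, so the cycle-of-$\psi$-value-$1$ discussion is unnecessary. Work in the fundamental groupoid of $M$ on the points $p_j$. Since $q$ is injective on objects, images of $T$-paths form a subgroupoid whose vertex group at $p_v$ is $q_*\pi_1(T,v)$. The $T$-paths above map rel endpoints exactly to the crossing arcs, because flowing the short arc along the shared orbit segment is a homotopy. Since $S$ minus the finitely many vertices of the rectangle decomposition retracts onto the dual graph, the crossing arcs generate the groupoid of $S\times\{0\}$. Any single edge of $T$ supplies $\psi$-value $1$, which gives $q_*\pi_1(T,v)=\pi_1M$.

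Your fallback of passing to the transition graph of an iterate $f^k$ would actually destroy surjectivity. Every edge would then have $\psi$-value $k$, so $q_*\pi_1T\subseteq\psi^{-1}(k\Z)$.
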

\begin{proof}
As in \autoref{subsec:pertraj},  we may identify $M$ as a pseudo-Anosov mapping torus $M_{f}$, so that $\psi$ corresponds to the specified fibered class. The graph $T$ is constructed as the transition graph $T_{f,\mathcal{R}}$ associated to a Markov partition $\mathcal{R}$ of $S_{M,\psi}$ with respect to the pseudo-Anosov automorphism $f$. The map $q$ is constructed from  a specified homotopy equivalence from $T$ to the associated flow-box complex $X_{f,\mathcal{R}}$ by composition with the zipping map $X_{f,\mathcal{R}}\to M_f$. The readers are referred to \cite[Section 5]{Liu20} for a detailed description. 

The irreducibility of $T$ is proven as \cite[Lemma 5.6]{Liu20}; the $\pi_1$-surjectivity  of $q$  is proven as \cite[Lemma 5.3]{Liu20}; and the fact that $q$ sends every dynamical cycle in $T$  to the free homotopy class of a periodic trajectory in $ \Traj(M,\psi)$ follows as a combination of \cite[Lemma 5.3 and Lemma 5.7]{Liu20}. 
\end{proof}

\begin{lemma}\label{lem: irreducible semigroup}
Let $T$ be an irreducible directed graph, and let $v$ be a vertex of $T$.
Then, 
$$
\mathcal{DC}_v=\left\{ [\alpha]\in \pi_1(T,v)\mid \alpha\text{ is a dynamical cycle based at }v\right \}
$$
forms a semigroup in $\pi_1(T,v)$ which generates $\pi_1(T,v)$ as a group.  
\end{lemma}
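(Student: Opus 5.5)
The plan has two parts: a closure check, which is immediate from concatenation, and a generation argument, which builds every loop out of dynamical cycles using irreducibility.

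\emph{Closure.} If $\alpha$ and $\beta$ are dynamical cycles based at $v$, then their concatenation $\alpha\cdot\beta$ is again a dynamical cycle based at $v$. Hence $[\alpha][\beta]\in\mathcal{DC}_v$. The constant path (the case $n=0$) represents the identity, so $\mathcal{DC}_v$ is a semigroup, in fact a monoid.

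\emph{Generation.} The fundamental group $\pi_1(T,v)$ of the $1$-complex $T$ is generated by classes of edge-loops based at $v$. I will choose these loops more carefully. Fix a maximal subtree $T_0\subseteq T$. Then $\pi_1(T,v)$ is freely generated by the elements $g_e=[c_u\cdot e\cdot \bar c_w]$, one for each edge $e\in E(T)\setminus E(T_0)$. Here $u=d^-(e)$, $w=d^+(e)$, and $c_x$ denotes the unique reduced path in $T_0$ from $v$ to $x$. It therefore suffices to show that every edge $e$, in any position, contributes a group element expressible through $\mathcal{DC}_v$. More precisely, for any edge-path $\omega$ from $v$ to $v$, I will show $[\omega]$ lies in the subgroup generated by $\mathcal{DC}_v$.

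For each vertex $x$, irreducibility gives a dynamical path $\sigma_x$ from $v$ to $x$ and a dynamical path $\tau_x$ from $x$ to $v$. Now take any edge $e$ traversed positively from $u=d^-(e)$ to $w=d^+(e)$, and set
\[
P_e=\sigma_u\cdot e\cdot\tau_w,\qquad Q_x=\sigma_x\cdot\tau_x .
\]
Both $P_e$ and $Q_x$ are dynamical cycles based at $v$. An edge-path $\omega=e_1^{\epsilon_1}\cdots e_n^{\epsilon_n}$ based at $v$ passes through vertices $v=x_0,\dots,x_n=v$. Inserting $\tau_{x_i}\cdot\tau_{x_i}^{-1}$ and $\sigma_{x_i}^{-1}\cdot\sigma_{x_i}$ at each intermediate vertex yields, up to homotopy,
\[
[\omega]=\prod_i [\sigma_{x_{i-1}}\, e_i^{\epsilon_i}\,\sigma_{x_i}^{-1}],
\]
using $\sigma_{x_0}=\sigma_{x_n}$. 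We may take $\sigma_v$ to be trivial; otherwise we conjugate by $[\sigma_v]\in\mathcal{DC}_v$. For a positively traversed edge, $[\sigma_u e\,\sigma_w^{-1}]=[P_e][Q_w]^{-1}$, because $\tau_w\tau_w^{-1}\sigma_w^{-1}$ reduces and $Q_w^{-1}=\tau_w^{-1}\sigma_w^{-1}$. For a negatively traversed edge, the corresponding factor is the inverse of such an element. Thus $[\omega]$ lies in the subgroup generated by $\mathcal{DC}_v$, and generation follows.

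The only delicate point is the bookkeeping in the telescoping product. In particular, the identity $[\sigma_u e\,\sigma_w^{-1}]=[P_e][Q_w]^{-1}$ holds as stated, since $P_eQ_w^{-1}=\sigma_u e\,\tau_w\tau_w^{-1}\sigma_w^{-1}$. I will also treat the case $\sigma_v\neq$ constant carefully, or simply choose $\sigma_v$ to be constant, which the definition permits because $n=0$ is allowed.
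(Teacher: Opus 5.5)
Your proof is correct and takes essentially the same route as the paper: you use irreducibility to insert back-and-forth dynamical connecting paths, so that an arbitrary loop at $v$ becomes a product of dynamical cycles and their inverses. The only difference is granularity. The paper splits the loop into maximal dynamical segments $\xi_i\eta_i^{-1}$, so each factor is itself a dynamical cycle or the inverse of one, whereas you work edge by edge and write each edge factor as $[P_e][Q_w]^{-1}$; the maximal-subtree digression is unnecessary but harmless.
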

\autoref{lem: irreducible semigroup} appears as a step in \cite[Lemma 4.1]{Liu25}. For completeness, we present a quick proof.

\begin{proof}
It is clear from definition that $\mathcal{DC}_v$ forms a semigroup, since a concatenation of dynamical cycles is again a dynamical cycle. We now prove that $\mathcal{DC}_v$ generates $\pi_1(T,v)$ as a group.  Let $\gamma$ be any cycle based at $v$. Then, $\gamma$ can be decomposed into a concatenation of paths $\gamma=\xi_1\eta_1^{-1}\xi_2\eta_2^{-1}\cdots\xi_n\eta_n^{-1}$,  where $\eta_i^{-1}$ denotes the reversal of $\eta_i$, such that   $\xi_i$ and $\eta_i$ are  dynamical paths (allowing single points).  Let $u_i$ be the initial of $\xi_i$, and let $w_i$ be the terminal of $\xi_i$. By the irreducibility of $T$, we can find dynamical paths $\alpha_i$ from $v$ to $u_i$, and dynamical paths $\beta_i$ from $w_i$ to $v$. Then, $\gamma$ is homotopic relative endpoints to 
\begin{equation*}
\begin{aligned}
&\xi_1(\beta_1\beta_1^{-1}) \eta_1 ^{-1} (\alpha_2^{-1} \alpha_2) \xi_2 (\beta_2 \beta_2^{-1}) \eta_2^{-1}\cdots \xi_n (\beta_n \beta_n^{-1}) \eta_n^{-1}\\=&(\xi_1\beta_1) (\alpha_2 \eta_1 \beta_1)^{-1} (\alpha_2 \xi_2 \beta_2) (\alpha_3\eta_2 \beta_2)^{-1} \cdots (\alpha_n \xi_n \beta_n) (\eta_n\beta_n)^{-1}. 
\end{aligned}
\end{equation*}
Note that  $\alpha_i\xi_i\beta_i$, $\alpha_{i+1}\eta_i\beta_i$,  $\xi_1\beta_1$, and $\eta_n\beta_n$  are  dynamical cycles based at $v$. Hence, $[\gamma]\in \pi_1(T,v)$ belongs to the group generated by $\mathcal{DC}_v$.  Since any element in $\pi_1(T,v)$ can be represented by a cycle based at $v$, we conclude that  $\mathcal{DC}_v$ generates $\pi_1(T,v)$ as a group. 
\end{proof}

\begin{corollary}\label{lem: semigroup trajectory}
Suppose $M$ is a closed orientable hyperbolic 3-manifold, and $\psi\in H^1(M;\Z)$ is a fibered class. Then, there exists a semigroup $S\subseteq \pi_1M$ that generates $\pi_1M$ as a group, with the property that every element $s \in S$ belongs to the conjugacy class of some periodic trajectory associated to $\psi$.
\end{corollary}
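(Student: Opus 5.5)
The plan is to combine \autoref{prop: dynamical graph} with \autoref{lem: irreducible semigroup}. First apply \autoref{prop: dynamical graph} to $(M,\psi)$. This gives an irreducible directed graph $T$ and a $\pi_1$-surjective map $q:T\to M$ that sends every dynamical cycle of $T$ to the free homotopy class of a periodic trajectory in $\Traj(M,\psi)$.

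Next fix a vertex $v\in V(T)$, and take the basepoint of $M$ to be $q(v)$; changing the basepoint only alters everything by an isomorphism induced by a path, which preserves conjugacy classes. Then $q_\ast:\pi_1(T,v)\to\pi_1(M,q(v))$ is surjective. Set $S=q_\ast(\mathcal{DC}_v)$. Since $\mathcal{DC}_v$ is a semigroup and $q_\ast$ is a homomorphism, $S$ is a semigroup. By \autoref{lem: irreducible semigroup}, $\mathcal{DC}_v$ generates $\pi_1(T,v)$, so $S$ generates $q_\ast(\pi_1(T,v))=\pi_1M$ as a group.

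Finally, take $s=q_\ast[\alpha]$ with $\alpha$ a dynamical cycle based at $v$. The based loop $q\circ\alpha$ represents $s$. Its free homotopy class is, by \autoref{prop: dynamical graph}, the free homotopy class of some periodic trajectory associated to $\psi$. Since free homotopy classes of loops correspond to conjugacy classes in $\pi_1M$, the element $s$ lies in the conjugacy class of that periodic trajectory.

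There is no real obstacle here. The only points needing care are the bookkeeping of basepoints, and the identification between the free homotopy class of the loop underlying a based loop and the conjugacy class of the element that based loop represents.
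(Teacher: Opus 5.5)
Your proposal is correct and matches the paper's proof: apply \autoref{prop: dynamical graph}, set $S=q_\ast(\mathcal{DC}_v)$, and use \autoref{lem: irreducible semigroup} together with the $\pi_1$-surjectivity of $q$. One harmless point, shared with the paper: dynamical paths of length $0$ are allowed, so $S$ contains the identity, which is not conjugate to any (homologically non-trivial) periodic trajectory; you can avoid this by taking only dynamical cycles of positive length, which still form a semigroup and still generate $\pi_1M$.
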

\begin{proof}
From \autoref{prop: dynamical graph}, we have an irreducible directed graph $T$ and a specified $\pi_1$-surjective map $q: T\to M$. Let $v$ be an arbitrary vertex of $T$, and let $\mathcal{DC}_v$ be the semigroup defined in \autoref{lem: irreducible semigroup}. Denote $x=q(v)\in M$, and let $q_\ast: \pi_1(T,v)\to \pi_1(M,x)$ be the surjective  homomorphism induced by $q$. Then, $q_\ast(\mathcal{DC}_v)$ is a semigroup in $\pi_1(M,x)$ which generates $\pi_1(M,x)$ as a group, and every element in $q_\ast(\mathcal{DC}_v)$ belongs to the conjugacy class of some $\gamma \in \Traj(M,\psi)$ according to \autoref{prop: dynamical graph}. 
\end{proof}

We remark that the semigroup $S$ in \autoref{lem: semigroup trajectory} admits many choices, and it  might not be uniquely determined by the fibered class $\psi$. 

\begin{proof}[Proof of \autoref{prop: fibered semiauthentic}] 
Let $\Phi: \widehat{\pi_1M_1}\to \widehat{\pi_1M_2}$ be an isomorphism. 
Pick a fibered class $\psi_1\in H^1(M_1;\Z)$, and let $S\subseteq \pi_1M_1$ be the semigroup given by \autoref{lem: semigroup trajectory}. In particular, $S$ generates $\pi_1M_1$ as a group. 
It suffices to show that $\Phi(s)$ conjugates into the subgroup $\pi_1M_2\le \widehat{\pi_1M_2}$ for any $s\in S$. 

Recall that  every $s\in S$ belongs to the conjugacy class of some periodic trajectory in $\Traj(M_1,\psi)$. Since  $\Phi$ is regular by \autoref{thm: regular},  \autoref{prop: match up trajectory} actually implies that for any element $s\in \pi_1M_1$ belonging to the conjugacy class of a periodic trajectory $\gamma\in \Traj(M_1,\psi)$, $\Phi(s)$  conjugates into   $\pi_1M_2$. Indeed, $\Phi(s)$ belongs to the conjugacy class of a corresponding  periodic trajectory in $\widehat{\pi_1M_2}$. This completes the proof. 
\end{proof}


\begin{corollary}\label{cor: hyperbolic bisemiauthentic}
Let $M_1$ and $M_2$ be two closed orientable hyperbolic 3-manifolds that fiber over $\mathbb{S}^1$. Then, any isomorphism $\Phi:\widehat{\pi_1M_1}\to \widehat{\pi_1M_2}$ is hereditarily bi-semiauthentic. 
\end{corollary}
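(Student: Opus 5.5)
The plan is to reduce the statement to \autoref{prop: fibered semiauthentic}, applied to every $\Phi$-corresponding pair of finite-index subgroups and in both directions. Fix a $\Phi$-corresponding pair $\Gamma_1'\le \pi_1M_1$ and $\Gamma_2'\le \pi_1M_2$. By covering space theory these are the fundamental groups of finite covers $M_1'\to M_1$ and $M_2'\to M_2$, after choosing basepoints. Each $M_i'$ is again a closed orientable hyperbolic 3-manifold. Moreover, $M_i'$ fibers over $\mathbb{S}^1$: pull back a fibered class along the covering map and apply \autoref{prop: finite cover}~\ref{fcov1}.

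Next, by \autoref{prop: lattice of open subgroup}, the restriction $\Phi'$ of $\Phi$ is an isomorphism $\widehat{\Gamma_1'}\cong\overline{\Gamma_1'}\to\overline{\Gamma_2'}\cong\widehat{\Gamma_2'}$, that is, an isomorphism $\widehat{\pi_1M_1'}\to\widehat{\pi_1M_2'}$. Applying \autoref{prop: fibered semiauthentic} to the pair $(M_1',M_2')$ shows that $\Phi'$ is semiauthentic. Here the conjugation required by the definition is taken inside $\widehat{\Gamma_2'}$ into $\Gamma_2'$, which is precisely what the definition asks for the restricted map.

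For $(\Phi')^{-1}$, note that it is an isomorphism $\widehat{\pi_1M_2'}\to\widehat{\pi_1M_1'}$ between completions of fibered closed hyperbolic 3-manifold groups. So \autoref{prop: fibered semiauthentic} applies with the roles of the two manifolds exchanged, and $(\Phi')^{-1}$ is semiauthentic as well. Hence $\Phi'$ is bi-semiauthentic for every corresponding pair, which is the definition of hereditary bi-semiauthenticity.

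The only point needing care is the identification of the restricted map with an isomorphism of the profinite completions of the subgroups, compatible with the canonical embeddings $\Gamma_i'\hookrightarrow\widehat{\Gamma_i'}$. This holds because $\Gamma_i'$ has finite index, so the profinite topology of $\pi_1M_i$ induces the full profinite topology on $\Gamma_i'$ (\autoref{prop: left exact}). I do not expect any real obstacle beyond this bookkeeping.
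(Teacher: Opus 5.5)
Your proposal is correct and is essentially the paper's proof. You apply \autoref{prop: fibered semiauthentic} to $\Phi$ and to $\Phi^{-1}$, and use \autoref{prop: finite cover} to see that every finite-index subgroup of $\pi_1M_i$ is again the fundamental group of a closed fibered hyperbolic 3-manifold; the same argument then applies to every restriction $\Phi'$ to a $\Phi$-corresponding pair.
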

\begin{proof}
Applying  \autoref{prop: fibered semiauthentic} to $\Phi$ and $\Phi^{-1}$, we deduce that $\Phi$ is bi-semi\-au\-then\-tic. In addition, any finite-index subgroup of $\pi_1M_i$ is also the fundamental group of a closed fibered hyperbolic 3-manifold according to \autoref{prop: finite cover}~\ref{fcov1}. Therefore, the restriction of $\Phi$ to any $\Phi$-corresponding pair of finite-index subgroups is also bi-semiauthentic. 
\end{proof}

\begin{remark}\label{rmk: sol counterexample}
There exist 3-manifolds admitting $Sol$-geometry whose fundamental groups are non-isomorphic but have isomorphic profinite completions, see examples constructed by Stebe \cite{Ste72} and Funar \cite{Fun13}. These examples arise from Anosov torus bundles over $\mathbb{S}^1$ with monodromies $A,B\in \SL_2(\Z)$ such that $A$ is  conjugate to $B$ in $\GL_2(\widehat{\Z})$, but $A$ is not conjugate to $B^{\pm 1}$ in $\GL_2(\Z)$. In fact, the isomorphism  between $\widehat{\pi_1M_A}$ and $\widehat{\pi_1M_B}$ (induced by the conjugator in $\GL_2(\widehat{\Z})$) is (hereditarily)  bi-semiauthentic, which can be proven using the same argument as   \autoref{prop: fibered semiauthentic}. Consequently, \autoref{cor: bi-semiauthentic induce isomorphism} in the next section implies that $M_A$ and $M_B$ have isomorphic $\SL_n(\C)$-character varieties, yet $\pi_1M_A\not \cong \pi_1M_B$. 
\end{remark}

\subsection{Surfaces with filling curves}
The specific example of semiauthentic homomorphisms that we consider in this paper is the following. 
\begin{proposition}\label{prop: filling curves virtually hereditarily bi-semiauthentic}
Let $\Sigma_1$ and $\Sigma_2$ be closed orientable surfaces with genera at least $2$. For $i=1,2$, suppose that $\alpha_i$ and $\alpha_i'$ are non-separating simple closed curves on $\Sigma_i$ such that $\alpha_i\cup \alpha_i'$ fills $\Sigma_i$. Let $a_i,a_i'\in \pi_1\Sigma_i$ be conjugacy representatives for the free homotopy classes of $\alpha_i$ and $\alpha_i'$, equipped with arbitrary orientations. Suppose $\phi: \widehat{\pi_1\Sigma_1}\to \widehat{\pi_1\Sigma_2}$ is an isomorphism, such that $\phi(a_1)$  conjugates with $a_2$, and $\phi(a_1')$  conjugates with $a_2'$ in $\widehat{\pi_1\Sigma_2}$. Then, $\phi$ is virtually hereditarily bi-semiauthentic. 
\end{proposition}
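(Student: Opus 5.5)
The plan is to realize $\phi$ as a quotient of an isomorphism between profinite completions of uniform lattices, and to transport bi-semiauthenticity down along retractions. First apply \autoref{prop: drilling lattice surjection} to $\phi$. This gives uniform lattices $\Gamma_1,\Gamma_2=\piorb O_i$ with surjections $p_i:\Gamma_i\twoheadrightarrow\pi_1\Sigma_i$ and an isomorphism $\widetilde\phi:\widehat{\Gamma_1}\to\widehat{\Gamma_2}$ satisfying $\widehat{p_2}\circ\widetilde\phi=\phi\circ\widehat{p_1}$.

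Next, the sections. The inclusion $\Sigma_i\times\{\tfrac14\}\hookrightarrow O_i$ misses the singular locus and gives $s_i:\pi_1\Sigma_i\to\Gamma_i$ with $p_i\circ s_i=\mathrm{id}$. Hence $p_i$ is a retraction and $\widehat{s_i}$ is injective, with $\widehat{p_i}\circ\widehat{s_i}=\mathrm{id}$.

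By Selberg's lemma, pick torsion-free finite-index normal subgroups and pass to a $\widetilde\phi$-corresponding pair $\Gamma_1'\le\Gamma_1$, $\Gamma_2'\le\Gamma_2$. These are fundamental groups of closed hyperbolic manifolds. By \autoref{thm: virtual crossfibering} and \autoref{prop: virc2} we may shrink them further so that they are crossfibered, hence fibered. Refining with standard characteristic subgroups keeps the pair $\widetilde\phi$-corresponding. Set $\Lambda_i=s_i^{-1}(\Gamma_i')=\pi_1\Sigma_i\cap\Gamma_i'$, viewing $\pi_1\Sigma_i$ inside $\Gamma_i$ via $s_i$. The hope is that $\Lambda_1,\Lambda_2$ is a $\phi$-corresponding pair.

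This correspondence is the main obstacle. The retraction shows $p_i(\Gamma_i')\supseteq\Lambda_i$, but $\widetilde\phi$ need not carry $\overline{s_1(\pi_1\Sigma_1)}$ to $\overline{s_2(\pi_1\Sigma_2)}$. What I would try first is to use the finite-index subgroups $p_i^{-1}(\Lambda)$ instead. For a finite-index $\Lambda_1\le\pi_1\Sigma_1$ with $\phi$-corresponding $\Lambda_2$, the diagram $\widehat{p_2}\circ\widetilde\phi=\phi\circ\widehat{p_1}$ makes $p_1^{-1}(\Lambda_1)$ and $p_2^{-1}(\Lambda_2)$ $\widetilde\phi$-corresponding. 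These are lattices retracting onto $\Lambda_i$ via $p_i$, with section $s_i$, and the restricted diagram still commutes. So I will choose the virtual subgroup $\Lambda_i^0=s_i^{-1}(\Gamma_i')$ and its $\phi$-partner, then intersect downward until the preimages lie in crossfibered torsion-free covers. Every further finite-index $\Lambda_1''\le\Lambda_1^0$ with $\phi$-partner $\Lambda_2''$ then yields $\widetilde\phi$-corresponding lattices $\Delta_i=p_i^{-1}(\Lambda_i'')\cap\Gamma_i'$. These are fundamental groups of closed fibered hyperbolic manifolds, and $p_i$ maps them onto $\Lambda_i''$ because $s_i(\Lambda_i'')\subseteq\Delta_i$.

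Now apply \autoref{cor: hyperbolic bisemiauthentic} to get a generating semigroup $S\subseteq\Delta_1$ with $\widetilde\phi(s)$ conjugate into $\Delta_2$ for each $s\in S$. Then $p_1(S)$ is a semigroup generating $\Lambda_1''$. For $s\in S$, write $\widetilde\phi(s)=\hat g\delta\hat g^{-1}$ with $\delta\in\Delta_2$ and $\hat g\in\widehat{\Delta_2}$, the conjugator being available by \autoref{thm: regular}-type arguments inside $\widehat{\Delta_2}$. Applying $\widehat{p_2}$ gives $\phi(p_1(s))=\widehat{p_2}(\hat g)\,p_2(\delta)\,\widehat{p_2}(\hat g)^{-1}$, which is conjugate into $\Lambda_2''$. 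By symmetry $\phi^{-1}$ is handled the same way. Hence the restriction to $\Lambda_1''$ is bi-semiauthentic, which proves that $\phi$ is virtually hereditarily bi-semiauthentic.
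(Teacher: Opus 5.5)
Your route is the paper's: \autoref{prop: drilling lattice surjection}, then a $\widetilde\phi$-corresponding pair of closed fibered hyperbolic subgroups $\Gamma_i'$, then \autoref{cor: hyperbolic bisemiauthentic}, then pushing the semigroup down through $\widehat{p_i}$ as in \autoref{lem: commutative semiauthentic} and \autoref{cor: comdig semiauthentic}. Your $\Delta_i=p_i^{-1}(\Lambda_i'')\cap\Gamma_i'$ is exactly the preimage used there. The sections $s_i$ are a detour, and your ``main obstacle'' is not a real one. The paper simply takes $\Lambda_i=p_i(\Gamma_i')$. Because $\widehat{p_i}$ is a closed map, $\overline{p_i(\Gamma_i')}=\widehat{p_i}(\overline{\Gamma_i'})$, and hence $\phi(\overline{\Lambda_1})=\widehat{p_2}(\widetilde\phi(\overline{\Gamma_1'}))=\overline{\Lambda_2}$. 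For any $\phi$-corresponding $\Lambda_i''\le\Lambda_i$ you then get $p_i(p_i^{-1}(\Lambda_i'')\cap\Gamma_i')=\Lambda_i''$ for free.

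Your write-up needs exactly this observation at one point. You justify $p_i(\Delta_i)=\Lambda_i''$ by $s_i(\Lambda_i'')\subseteq\Delta_i$. For $i=2$ this requires $\Lambda_2''\subseteq s_2^{-1}(\Gamma_2')$. Nothing forces the $\phi$-partner of a subgroup of $s_1^{-1}(\Gamma_1')$ to lie in $s_2^{-1}(\Gamma_2')$; this is precisely the non-correspondence you were worried about. Without $p_2(\Delta_2)=\Lambda_2''$, the ``by symmetry'' step for $\phi^{-1}$ fails, since there $p_2(S)$ must generate $\Lambda_2''$.

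The claim is nonetheless true. You already noted $s_1^{-1}(\Gamma_1')\subseteq p_1(\Gamma_1')$. Since $p_1(\Gamma_1')$ and $p_2(\Gamma_2')$ are $\phi$-corresponding, it follows that $\Lambda_2''\subseteq p_2(\Gamma_2')$. Hence $p_2(\Delta_2)=\Lambda_2''\cap p_2(\Gamma_2')=\Lambda_2''$. Alternatively, intersecting $\Lambda_1^0$ with the $\phi$-partner of $s_2^{-1}(\Gamma_2')$ would also repair it.

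Two smaller points. The conjugator $\hat g\in\widehat{\Delta_2}$ comes directly from the definition of semiauthenticity for the restricted isomorphism in \autoref{cor: hyperbolic bisemiauthentic}; no separate regularity argument is needed. Also, passing to crossfibered or standard characteristic subgroups is unnecessary, since fibered suffices.
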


To prove \autoref{prop: filling curves virtually hereditarily bi-semiauthentic}, we need the following lemma. 
 
\begin{lemma}\label{lem: commutative semiauthentic}
Let $\Gamma_1,\Gamma_2,\Delta_1,\Delta_2$ be finitely generated residually finite groups. Suppose $p_1: \Gamma_1\to \Delta_1$, $p_2: \Gamma_2\to \Delta_2$,  $\Phi: \widehat{\Gamma_1}\to\widehat{ \Gamma_2}$ and $\phi: \widehat{\Delta_1} \to \widehat{\Delta_2}$ are homomorphisms of abstract groups and profinite groups respectively, which fit into the following commutative diagram. 
\begin{equation*}
\begin{tikzcd}
\widehat{\Gamma_1} \arrow[r,"\Phi"] \arrow[d,"\widehat{p_1}"'] & \widehat{\Gamma_2} \arrow[d,"\widehat{p_2}"] \\ 
\widehat{\Delta_1} \arrow[r,"\phi"] & \widehat{\Delta_2}
\end{tikzcd}
\end{equation*}
If $p_1$ is surjective and $\Phi$ is semiauthentic, then $\phi$ is also  semiauthentic. 
\end{lemma}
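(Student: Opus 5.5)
The plan is to take the semigroup witnessing semiauthenticity of $\Phi$ and push it forward along $p_1$. By \autoref{def: semiauthentic homomorphism}, there is a semigroup $S\subseteq \Gamma_1$ that generates $\Gamma_1$ as a group, such that for each $s\in S$ there exists $\hat g_s\in\widehat{\Gamma_2}$ with $\Phi(s)=\hat g_s\,\gamma_s\,\hat g_s^{-1}$ for some $\gamma_s\in\Gamma_2$. I will take $T=p_1(S)\subseteq \Delta_1$ as the candidate semigroup for $\phi$.

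First I check that $T$ has the required closure and generation properties, which is formal. The image of a semigroup under a homomorphism is a semigroup. Because $S$ generates $\Gamma_1$ and $p_1$ is surjective, $T$ generates $p_1(\Gamma_1)=\Delta_1$ as a group. Here the surjectivity hypothesis on $p_1$ is used.

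Next I check the pointwise conjugacy condition, using naturality of completion. The canonical maps satisfy $\widehat{p_i}\circ\iota_{\Gamma_i}=\iota_{\Delta_i}\circ p_i$. Under \autoref{conv}, this means $\widehat{p_i}$ restricts to $p_i$ on $\Gamma_i\le\widehat{\Gamma_i}$. Take $t=p_1(s)$ with $s\in S$. The commutative square gives
\[
\phi(t)=\phi(\widehat{p_1}(s))=\widehat{p_2}(\Phi(s))=\widehat{p_2}(\hat g_s)\,p_2(\gamma_s)\,\widehat{p_2}(\hat g_s)^{-1}.
\]
Since $p_2(\gamma_s)\in\Delta_2$ and $\widehat{p_2}(\hat g_s)\in\widehat{\Delta_2}$, the element $\phi(t)$ conjugates into $\Delta_2$. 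So $T$ witnesses that $\phi$ is semiauthentic.

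I do not expect a serious obstacle. The only subtle point is the identifications in \autoref{conv}: residual finiteness of $\Delta_1$ and $\Delta_2$ is needed so that $\Delta_i$ can be regarded as a subgroup of $\widehat{\Delta_i}$. This is part of the hypotheses, and the naturality square of the completion functor established earlier handles compatibility with $p_i$. Note also that $\Phi$ and $\phi$ need not be isomorphisms for this argument.
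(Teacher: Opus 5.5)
Your proposal is correct and matches the paper's proof essentially step for step. You push the witnessing semigroup forward along $p_1$ and use surjectivity of $p_1$ to get generation of $\Delta_1$. You then use the commutative square, together with $\widehat{p_i}|_{\Gamma_i}=p_i$, to see that $\phi(p_1(s))=\widehat{p_2}(\hat g_s)\,p_2(\gamma_s)\,\widehat{p_2}(\hat g_s)^{-1}$ conjugates into $\Delta_2$.
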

\begin{proof}
We adopt \autoref{conv} throughout the proof. Let $R\subseteq \Gamma_1$ be a semigroup that generates $\Gamma_1$ as a group, such that $\Phi(r)$ conjugates into the subgroup $\Gamma_2\le \widehat{\Gamma_2}$ for each $r\in R$. We then let $S=p_1(R)\subseteq \Delta_1$, which is a semigroup in $\Delta_1$. Since $p_1$ is surjective, $S$ generates $\Delta_1$ as a group. In addition, for any element $s\in S$, we can  pick $r\in R$ such that $p_1(r)=s$. Assume that $\Phi(r)=g\widetilde{r}g^{-1}$ for some $\widetilde{r}\in \Gamma_2$ and $g\in \widehat{\Gamma_2}$.  Then, $$\phi(s)=\phi(\widehat{p_1}(r))=\widehat{p_2}(\Phi(r))=\widehat{p_2}(g) \cdot  p_2(\widetilde{r})  \cdot \widehat{p_2}(g)^{-1}  $$
is conjugate  into the subgroup $p_2(\Gamma_2)\le \Delta_2 \le \widehat{\Delta_2}$. In other words, $\phi$ is semiauthentic. 
\end{proof}

\begin{corollary}\label{cor: comdig semiauthentic}
Let $\Gamma_1,\Gamma_2,\Delta_1,\Delta_2$ be finitely generated residually finite groups. Suppose $p_1: \Gamma_1\to \Delta_1$, $p_2: \Gamma_2\to \Delta_2$ are surjective homomorphisms, and  $\Phi: \widehat{\Gamma_1}\to\widehat{ \Gamma_2}$, $\phi: \widehat{\Delta_1} \to \widehat{\Delta_2}$  are isomorphisms of profinite groups, which fit into the following commutative diagram. 
\begin{equation*}
\begin{tikzcd}
\widehat{\Gamma_1} \arrow[r,"\Phi","\cong"'] \arrow[d,"\widehat{p_1}"',two heads] & \widehat{\Gamma_2} \arrow[d,"\widehat{p_2}", two heads] \\ 
\widehat{\Delta_1} \arrow[r,"\phi","\cong"'] & \widehat{\Delta_2}
\end{tikzcd}
\end{equation*}
If $\Phi$ is hereditarily bi-semiauthentic, then $\phi$ is also hereditarily bi-semiauthentic.  
\end{corollary}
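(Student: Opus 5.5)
The plan is to reduce hereditary bi-semiauthenticity of $\phi$ to \autoref{lem: commutative semiauthentic}, applied once for each $\phi$-corresponding pair of finite-index subgroups and in both directions. The key point is that finite-index subgroups of $\Delta_i$ pull back to finite-index subgroups of $\Gamma_i$, and the commutative square restricts to these pulled-back subgroups.

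First, bi-semiauthenticity itself. Applying \autoref{lem: commutative semiauthentic} to the given square, with $p_1$ surjective and $\Phi$ semiauthentic, shows that $\phi$ is semiauthentic. For $\phi^{-1}$ we use the reversed square. Since $\widehat{p_2}\circ\Phi=\phi\circ\widehat{p_1}$, we also have $\widehat{p_1}\circ\Phi^{-1}=\phi^{-1}\circ\widehat{p_2}$. Now $p_2$ is surjective and $\Phi^{-1}$ is semiauthentic, so the same lemma gives that $\phi^{-1}$ is semiauthentic.

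Next, the hereditary part. Let $\Delta_1'\le\Delta_1$ and $\Delta_2'\le\Delta_2$ be a $\phi$-corresponding pair, and set $\Gamma_i'=p_i^{-1}(\Delta_i')$, which has finite index in $\Gamma_i$. The restriction $p_i|:\Gamma_i'\to\Delta_i'$ is surjective because $p_i$ is.

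1. We check that $\widehat{p_i}^{-1}(\overline{\Delta_i'})=\overline{\Gamma_i'}$. By \autoref{prop: lattice of open subgroup}, the left side is an open subgroup of $\widehat{\Gamma_i}$. Its intersection with $\Gamma_i$ is $p_i^{-1}(\overline{\Delta_i'}\cap\Delta_i)=p_i^{-1}(\Delta_i')=\Gamma_i'$, using that $\overline{\Delta_i'}\cap\Delta_i=\Delta_i'$ for finite-index subgroups. The lattice correspondence then identifies this open subgroup with $\overline{\Gamma_i'}$.
2. Commutativity of the square then gives $\Phi(\overline{\Gamma_1'})=\Phi(\widehat{p_1}^{-1}(\overline{\Delta_1'}))=\widehat{p_2}^{-1}(\phi(\overline{\Delta_1'}))=\overline{\Gamma_2'}$, since $\Phi$ and $\phi$ are isomorphisms. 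So $\Gamma_1',\Gamma_2'$ form a $\Phi$-corresponding pair.
3. Under the identifications $\widehat{\Gamma_i'}\cong\overline{\Gamma_i'}$ and $\widehat{\Delta_i'}\cong\overline{\Delta_i'}$, the map $\widehat{p_i|}$ agrees with the restriction of $\widehat{p_i}$. Both are continuous and coincide on the dense subgroup $\Gamma_i'$. We therefore get a commutative square of restrictions $\Phi'$ and $\phi'$ with surjective vertical maps $p_i|$.
4. By hypothesis $\Phi'$ is bi-semiauthentic. Applying the first step to this restricted square shows that $\phi'$ is bi-semiauthentic. This holds for every $\phi$-corresponding pair, so $\phi$ is hereditarily bi-semiauthentic.

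I expect no real obstacle. The only point needing care is the identification of closures and preimages in steps 1 and 3, which is routine bookkeeping with \autoref{prop: lattice of open subgroup}, \autoref{lem: finite index closure intersection}, and density.
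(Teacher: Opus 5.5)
Your proof is correct and follows the paper's argument essentially verbatim. You get bi-semiauthenticity by applying \autoref{lem: commutative semiauthentic} to the square and to its inverse, then pull a $\phi$-corresponding pair back to $\Gamma_i'=p_i^{-1}(\Delta_i')$, check $\overline{\Gamma_i'}=\widehat{p_i}^{-1}(\overline{\Delta_i'})$, and rerun the first step on the restricted square; your steps 1 and 3 simply spell out the closure and restriction bookkeeping that the paper states without proof.
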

\begin{proof}
First, it is easy to see that $\phi$ is bi-semiauthentic by applying \autoref{lem: commutative semiauthentic} to the following two commutative diagrams. 
\begin{equation*}
\begin{tikzcd}
\widehat{\Gamma_1} \arrow[r,"\Phi" ] \arrow[d,"\widehat{p_1}"',two heads] & \widehat{\Gamma_2} \arrow[d,"\widehat{p_2}" ] \\ 
\widehat{\Delta_1} \arrow[r,"\phi" ] & \widehat{\Delta_2}
\end{tikzcd} \quad \quad \quad
\begin{tikzcd}
\widehat{\Gamma_2} \arrow[r,"\Phi^{-1}" ] \arrow[d,"\widehat{p_2}"',two heads] & \widehat{\Gamma_1} \arrow[d,"\widehat{p_1}"] \\ 
\widehat{\Delta_2} \arrow[r,"\phi^{-1}" ] & \widehat{\Delta_1}
\end{tikzcd}
\end{equation*} 

Moreover, suppose $\Delta_1'\le \Delta_1$ and $\Delta_2'\le \Delta_2$ form a $\phi$-corresponding pair of finite-index subgroups, and let $\phi': \widehat{\Delta_1'}\to \widehat{\Delta_2'}$ be the restriction of $\phi$. 
For $i=1,2$, let $\Gamma_i'=p_i^{-1}(\Delta_i')\le \Gamma_i$, which is a finite-index subgroup in $\Gamma_i$. Note that $\overline{\Gamma_i'}=\widehat{p_i}^{-1}( \overline{\Delta_i'})$. Thus, $\Phi (\overline{\Gamma_1'})= \overline{\Gamma_2'}$. In other words, $\Gamma_1'$ and $\Gamma_2'$ form  a pair of $\Phi$-corresponding finite-index subgroups. Let $\Phi': \widehat{\Gamma_1'}\to \widehat{\Gamma_2'}$ be the restriction of $\Phi$. Then, $\Phi'$ is bi-semiauthentic and the following diagram commutes. 
\begin{equation*}
\begin{tikzcd}
\widehat{\Gamma_1'} \arrow[r,"\Phi'","\cong"'] \arrow[d,"\widehat{p_1}"',two heads] & \widehat{\Gamma_2'} \arrow[d,"\widehat{p_2}", two heads] \\ 
\widehat{\Delta_1'} \arrow[r,"\phi'","\cong"'] & \widehat{\Delta_2'}
\end{tikzcd}
\end{equation*}
By the reasoning of the previous paragraph, $\phi'$ is also bi-semiauthentic. In other words, we have proven that $\phi$ is hereditarily bi-semiauthentic. 
\end{proof}

We can now prove \autoref{prop: filling curves virtually hereditarily bi-semiauthentic} based on the drilling construction established in \autoref{sec: drilling}. 

\begin{proof}[Proof of \autoref{prop: filling curves virtually hereditarily bi-semiauthentic}]
According to \autoref{prop: drilling lattice surjection},  there exist two uniform lattices in $\PSL_2(\C)$, denoted by $\Gamma_1$ and $\Gamma_2$, with surjective homomorphisms $p_1: \Gamma_1\twoheadrightarrow \pi_1\Sigma_1$ and $p_2: \Gamma_2\twoheadrightarrow \pi_1\Sigma_2$ and an isomorphism $\Phi: \widehat{\Gamma_1}\to \widehat{\Gamma_2}$, such that the following diagram commutes. 
\begin{equation}\label{equ: comdiag12.9}
\begin{tikzcd}
\widehat{\Gamma_1} \arrow[r,"\Phi","\cong"'] \arrow[d,"\widehat{p_1}"', two heads] & \widehat{\Gamma_2} \arrow[d,"\widehat{p_2}", two heads]\\
\widehat{\pi_1\Sigma_1} \arrow[r,"\phi","\cong"'] & \widehat{\pi_1\Sigma_2}
\end{tikzcd}
\end{equation}

According to  Selberg's lemma and  the virtual fibering theorem  (\autoref{thm: virtually fiber}), there exists a $\Phi$-corresponding pair of finite-index subgroups $\Gamma_1'\le \Gamma_1$ and $\Gamma_2'\le \Gamma_2$ which are fundamental groups of closed fibered hyperbolic 3-manifolds. 
Let $\Phi':\widehat{\Gamma_1'}\to \widehat{\Gamma_2'}$ be the restriction of $\Phi$. Then, \autoref{cor: hyperbolic bisemiauthentic} implies that $\Phi'$ is hereditarily bi-semiauthentic. 

For $i=1,2$, let $\Delta_i=p_i(\Gamma_i')\le \pi_1\Sigma_i$. Then, $\Delta_i$ is a finite-index subgroup of $\pi_1\Sigma_i$ since $p_i$ is surjective. Moreover, $\widehat{p_i}$ is a closed map, so $\overline{\Delta_i}= \widehat{p_i}(\overline{\Gamma_i'}) $. Consequently, $\phi(\overline{\Delta_1})= \overline{\Delta_2}$ according to the commutative diagram (\ref{equ: comdiag12.9}). In other words, $\Delta_1\le \pi_1\Sigma_1$ and $\Delta_2\le \pi_1  \Sigma_2$ form  a $\phi$-corresponding pair of finite-index subgroups. Let $\phi':\widehat{\Delta_1}\to \widehat{\Delta_2}$ be the restriction of $\phi$. Then, the following diagram commutes. 
\begin{equation*}
\begin{tikzcd}
\widehat{\Gamma_1'} \arrow[r, "\Phi'","\cong"'] \arrow[d,"\widehat{p_1}"',two heads] & \widehat{\Gamma_2'} \arrow[d,"\widehat{p_2}",two heads] \\
\widehat{\Delta_1}  \arrow[r, "\phi'","\cong"']  & \widehat{\Delta_2}
\end{tikzcd}
\end{equation*}

According to \autoref{cor: comdig semiauthentic}, we deduce that $\phi'$ is hereditarily bi-semiauthentic. As a consequence, $\phi$ is virtually hereditarily bi-semiauthentic. 
\end{proof}

\subsection{Pseudo-Anosov mapping classes}
In this subsection, we introduce a technical application of  \autoref{prop: filling curves virtually hereditarily bi-semiauthentic} that better fits in our proof for the main theorems.

\begin{sloppypar}
Let us first recall some notations. For an abstract group $\Gamma$, we denote by~$\,\widehat{\cdot}: \Out(\Gamma)\to \Out(\widehat{\Gamma})$ the profinite completion map. Let $G_1$ and $G_2$ be arbitrary (abstract or profinite) groups. If  $\phi:G_1\to G_2$ is an isomorphism, we denote by $\phi_\ast :\Out(G_1)\to \Out(G_2)$ the isomorphism that sends $[f]\in \Out(G_1)$ to $[\phi f \phi^{-1}] \in \Out(G_2)$. 

In addition, if $\Sigma$ is a closed orientable surface, we denote by $\Mod^{\pm}(\Sigma)$ the extended mapping class group. When  $\Sigma$ has positive genus, the Dehn--Nielsen--Baer theorem \cite[Theorem 8.1]{FM11} implies an isomorphism
\begin{equation}\label{equ: DNB}
\Mod^{\pm}(\Sigma) \tto \Out(\pi_1\Sigma). 
\end{equation}
\end{sloppypar}

\begin{lemma}\label{lem: technical VHBS}
Suppose $\Sigma_1$ and $\Sigma_2$ are closed orientable surfaces with genera at least $2$. Let $\phi: \widehat{\pi_1\Sigma_1}\to \widehat{\pi_1\Sigma_2}$ be an isomorphism. 
 We make the following two assumptions. 
\begin{enumerate}[label=(\alph*), leftmargin=*]
\item There exist non-trivial elements $b_1\in \pi_1\Sigma_1$ and $b_2\in \pi_1\Sigma_2$ such that $\phi(b_1)=b_2$.
\item\label{12.16-b} There exist pseudo-Anosov mapping classes $[f_1]\in \Mod(\Sigma_1)$ and $[f_2]\in \Mod(\Sigma_2)$, which correspond to outer automorphism classes $[F_1]\in \Out(\pi_1\Sigma_1)$ and $[F_2]\in \Out(\pi_1\Sigma_2)$ via (\ref{equ: DNB}), such that $\phi_\ast[\widehat{F_1}]=[\widehat{F_2}]$. 
\end{enumerate}
Then, $\phi$ is virtually hereditarily bi-semiauthentic. 
\end{lemma}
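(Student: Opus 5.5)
The plan is to reduce to \autoref{prop: filling curves virtually hereditarily bi-semiauthentic}. That result needs non-separating simple closed curves $\alpha_i,\alpha_i'$ with $\alpha_i\cup\alpha_i'$ filling $\Sigma_i$, whose conjugacy classes $\phi$ matches exactly (no power $\mu$). We will manufacture these from $b_i$ and the pseudo-Anosov classes, after passing to a $\phi$-corresponding pair of finite-index subgroups. Passing to finite index is harmless, because ``virtually hereditarily bi-semiauthentic'' for a restriction of $\phi$ implies the same property for $\phi$: finite-index subgroups of finite-index subgroups are finite-index.

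\textbf{Step 1: make $b_i$ simple.} By \autoref{lem: Scott simple cover}, choose $m$ so large that in $\Sigma_i^{(m)}$, for $i=1,2$ simultaneously, every elevation of the loop $\beta_i$ representing $b_i$ is homotopic to a non-separating simple closed curve. As in the proof of \autoref{cor: surface coefficient}, $\phi(b_1)=b_2$ forces the minimal powers $n_1=n_2=n$ with $b_i^{n}\in\pi_1\Sigma_i^{(m)}$. Set $a_i=b_i^n$ and let $\phi'$ be the restriction of $\phi$ to $\widehat{\pi_1\Sigma_i^{(m)}}$. Then $\phi'(a_1)=a_2$ exactly, and each $a_i$ represents a non-separating simple closed curve $\alpha_i$ on $\Sigma_i^{(m)}$.

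\textbf{Step 2: transport by the pseudo-Anosov.} The characteristic subgroup $\pi_1\Sigma_i^{(m)}$ is preserved by every automorphism, so some power $[f_i^k]$ lifts to a mapping class $\tilde f_i$ of $\Sigma_i^{(m)}$. Choose $k$ uniformly for $i=1,2$; for instance, take $k$ a common multiple of the orders of the images of $F_i$ in $\Out(\pi_1\Sigma_i/\pi_1\Sigma_i^{(m)})$, so that a representative automorphism $F_i^k$ preserving $\pi_1\Sigma_i^{(m)}$ acts on it by an automorphism $\tilde F_i$. This lift is still pseudo-Anosov. The hypothesis $\phi_\ast[\widehat{F_1}]=[\widehat{F_2}]$ says $\phi\circ\widehat{F_1}=\Inn_{\hat g}\circ\widehat{F_2}\circ\phi$. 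Raising to the $k$-th power and restricting gives $\phi'\circ\widehat{\tilde F_1}=\Inn_{\hat h}\circ\widehat{\tilde F_2}\circ\phi'$ for some $\hat h\in\widehat{\pi_1\Sigma_2}$.

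The main obstacle is to arrange $\hat h\in\widehat{\pi_1\Sigma_2^{(m)}}$ by adjusting the representative $F_2^k$ by an inner automorphism of $\pi_1\Sigma_2$ (a deck transformation). Since $\widehat{\pi_1\Sigma_2}=\widehat{\pi_1\Sigma_2^{(m)}}\cdot\pi_1\Sigma_2$, we can write $\hat h=\hat h'\gamma$ and absorb $\gamma$ into $\tilde F_2$, at the cost of changing $\tilde F_2$ by a deck transformation. This change is harmless provided the new lift is still pseudo-Anosov; to ensure that, pass to a further power $N$ so that the composite is a genuine lift of $f_2^{kN}$ and hence pseudo-Anosov. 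Then, writing $c=\alpha_1$ and applying $\tilde F_i^{N}$ to $a_i$, we get $\phi'(\tilde F_1^{N}(a_1))$ conjugate to $\tilde F_2^{N}(a_2)$. Define $a_i'=\tilde F_i^{N}(a_i)$, representing $\alpha_i'=\tilde f_i^N(\alpha_i)$.

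\textbf{Step 3: filling and conclusion.} For $N$ large, $\tilde f_i^N(\alpha_i)\cup\alpha_i$ fills $\Sigma_i^{(m)}$, since iterates of a pseudo-Anosov eventually have positive intersection with every essential curve. Both curves are non-separating simple closed curves. \autoref{prop: filling curves virtually hereditarily bi-semiauthentic} then applies to $\phi'$, so $\phi'$ is virtually hereditarily bi-semiauthentic, and hence so is $\phi$.
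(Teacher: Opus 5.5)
Your proposal is correct and follows the paper's proof essentially step for step. It uses Scott's lemma to make $b_i$ a non-separating simple closed curve in $\Sigma_i^{(m)}$, restricts $F_i$ to the characteristic subgroup, and absorbs the profinite conjugator by writing $\hat g=\hat g' h$ with $h\in\pi_1\Sigma_2$, which twists the lift by a deck transformation. It then iterates to obtain filling pairs and invokes \autoref{prop: filling curves virtually hereditarily bi-semiauthentic}.

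There are two small points.
\begin{enumerate}
\item The power $k$ and the extra power taken to secure pseudo-Anosov-ness are unnecessary. Since $\pi_1\Sigma_i^{(m)}$ is characteristic, $F_i$ itself restricts to it. Moreover, any lift of a pseudo-Anosov class is pseudo-Anosov, including one composed with a deck transformation.
\item The filling claim in Step 3 needs a uniform statement. Knowing only that $i(\tilde f_i^N(\alpha_i),\gamma)>0$ eventually, for each fixed $\gamma$, does not suffice. The paper obtains the uniform version from the loxodromic action of pseudo-Anosov classes on the curve complex, which gives $d(\alpha_i,\tilde f_i^N(\alpha_i))\ge 3$ for large $N$ \cite[Proposition 4.6]{MM99}.
\end{enumerate}
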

\begin{proof}
For $i=1,2$, let $\beta_i \subset  \Sigma_i$ be an oriented  loop whose free homotopy class is represented by the conjugacy class of $b_i\in \pi_1\Sigma_i$. 
According to \autoref{lem: Scott simple cover}, we can find a sufficiently large $m\in \N$, consistent for $i=1,2$, such that all the elevations of $\beta_i$ in the standard characteristic cover $\Sigma_i^{(m)}$ are freely homotopic to non-separating simple closed curves. Since $\phi(b_1)=b_2$, the images of $b_1$ and $b_2$ in the quotient group $\pi_1\Sigma_1/\pi_1\Sigma_1^{(m)}\cong \pi_1\Sigma_2/ \pi_1\Sigma_2^{(m)}$ have the same order, which we denote as $k\in \N$. Then, every elevation of $\beta_i$ in $\Sigma_i^{(m)}$ is a  $k$-fold cover of $\beta_i$. 
We denote $a_i= b_i^k \in  \pi_1\Sigma_i^{(m)}$, and $a_i$ is a conjugacy representative of a particular elevation $\alpha_i\subset \Sigma_i^{(m)}$ of $\beta_i$. Now let $\phi': \widehat{\pi_1\nss{\Sigma}{1}{(m)}} \to \widehat{\pi_1\nss{\Sigma}{2}{(m)}}$  be the restriction of $\phi$ to the $\phi$-corresponding pair of standard characteristic subgroups. Then, $\phi'(a_1)=a_2$. 

We denote   $F_i\in \Aut(\pi_1\Sigma_i)$ as a representative of the outer automorphism class $[F_i]\in \Out(\pi_1\Sigma_i)$. Then, $F_i$ preserves the standard characteristic subgroup $\pi_1\nss{\Sigma}{i}{(m)}$, and we denote   $F_i'\in \Aut(\pi_1\nss{\Sigma}{i}{(m)})$ as the restriction of $F_i$ onto $\pi_1\nss{\Sigma}{i}{(m)}$. The outer automorphism class $[F_i']\in \Out(\pi_1\nss{\Sigma}{i}{(m)})$ corresponds via (\ref{equ: DNB}) to a lift of the mapping class $[f_i]$, which we denote as $[f_i']\in \Mod(\nss{\Sigma}{i}{(m)})$. In particular, $[f_i']$ is a pseudo-Anosov mapping class. 

By assuption \ref{12.16-b}, there exists an element $\hat{g}\in \widehat{\pi_1\Sigma_2}$ such that 
\begin{equation}\label{equ: Inn hat g S}
\Inn_{\hat{g}}\circ \widehat{F_2}=\phi \circ  \widehat{F_1} \circ \phi^{-1}  \in \Aut(\widehat{\pi_1\Sigma_2}).
\end{equation}
 Since $\overline{\pi_1\nss{\Sigma}{2}{(m)}}$ is an open subgroup of $\widehat{\pi_1\Sigma_2}$  and $\pi_1\Sigma_2$ is a dense subgroup of $\widehat{\pi_1\Sigma_2}$, we can find $\hat{g}'\in\overline{\pi_1\nss{\Sigma}{2}{(m)}}$ and  $h\in \pi_1\Sigma_2$ such that $\hat{g}=\hat{g}'h$. Let $T \in \Aut(\pi_1\Sigma_2^{(m)})$ be the restriction of $\Inn_h\in \Aut(\pi_1\Sigma_2)$ onto the standard characteristic subgroup. 
Indeed, the outer automorphism class   $[T]\in \Out(\pi_1 {\Sigma}_{2}^{(m)})$ corresponds via (\ref{equ: DNB}) to a deck transformation $[\tau]\in \Mod( {\Sigma}_{2}^{(m)}) $ of the standard characteristic cover. Hence, $[f_2'']=[\tau]\cdot[f_2']\in \Mod( {\Sigma}_{2}^{(m)}) $ is also a lift of the mapping class $[f_2]$, and $[f_2'']$  is also a pseudo-Anosov mapping class.  We denote $F_2''= T\circ F_2' \in \Aut(\pi_1 {\Sigma}_{2}^{(m)})$, and  the  outer automorphism class of $F_2''$ corresponds to $[f_2'']$ via (\ref{equ: DNB}). 

Restricting (\ref{equ: Inn hat g S}) to the standard characteristic subgroups, we have
$$
\Inn_{\hat{g}'} \circ \widehat{F_2''}= \Inn_{\hat{g}'} \circ \widehat{T} \circ \widehat{F_2'} = \phi'\circ \widehat{F_1'} \circ (\phi')^{-1} \in \Aut(\widehat{\pi_1\nss{\Sigma}{2}{(m)}}) .
$$
In other words, $\phi'_\ast [\widehat{F_1'}]=[\widehat{F_2''}]   \in \Out (\widehat{\pi_1\nss{\Sigma}{2}{(m)}}) $. Then, for any $n\in \N$,  $$\phi'_\ast [\widehat{F_1^{\prime \,  n}}]= [\widehat{ F_2 ^{\prime \prime \, n}}]\in \Out (\widehat{\pi_1\nss{\Sigma}{2}{(m)}}).$$ Recall that $\phi'(a_1)=a_2$, where $a_i\in \pi_1\Sigma_2 ^{(m)}$. Thus, $\phi'(F_1^{\prime\,  n}( a_1)) $ is conjugate to $F_2 ^{\prime \prime\,   n}(a_2)   $ in $\widehat{\pi_1\nss{\Sigma}{2}{(m)}}$. 

Note that $F_1^{\prime\, n}(a_1)$ is a conjugacy representative for the free homotopy class of $[f_1']^n(\alpha_1)$, and $F_2^{\prime \prime  \, n}(a_1)$ is a conjugacy representative for the free homotopy class of $[f_2'']^n(\alpha_2)$, both of which are non-separating simple closed curves since $\alpha_1$ and $\alpha_2$ are. 
According to \cite[Proposition 4.6]{MM99}, pseudo-Anosov mapping classes act as loxodromic elements on the curve complex. As such, one can find a sufficiently large $n\in \N$ such that both pairs $(\alpha_1,[f_1']^n(\alpha_1))$ and $(\alpha_2,[f_2'']^n(\alpha_2))$ have distance at least $3$ on their corresponding curve complexes. In other words, $\alpha_1\cup [f_1']^n(\alpha_1)$ fills $\Sigma_1^{(m)}$ and $\alpha_2\cup [f_2'']^n(\alpha_2)$ fills $\Sigma_2^{(m)}$. Then,  \autoref{prop: filling curves virtually hereditarily bi-semiauthentic} implies that $\phi'$ is virtually hereditarily bi-semiauthentic. Therefore, $\phi$ is also virtually hereditarily bi-semiauthentic. 
\end{proof}

\section{Character variety}\label{sec: 11}

In this section, we study the algebraic features of semiauthentic homomorphisms. These are reflected by the relation between the profinite completion and the character variety of a finitely generated group.

\subsection{Preliminaries}
Let $F$ be an algebraically closed field of characteristic zero, and let $\Gamma$ be a finitely generated group. For any $n\in \N$, the {\em $\SL_n(F)$-representation variety} of $\Gamma$ is an affine algebraic set over $F$ defined as $$R_n(\Gamma,F)=\Hom(\Gamma,\SL_n(F)).$$

In fact, if $\gamma_1,\cdots, \gamma_m$ are generators of $\Gamma$, then $R_n(\Gamma,F)$ can be identified as an algebraic set in $F^{mn^2}$, so that the coordinate ring $F[R_n(\Gamma,F)]$ is generated by the matrix entries of the images of $\gamma_i$ as an $F$-algebra. The reductive algebraic group $\SL_n(F)$ acts by conjugation on $R_n(\Gamma,F)$ algebraically. 
According to \cite[Theorem 1.1]{Mum94}, we can define a universal categorical quotient
$$
X_n(\Gamma,F)=R_n(\Gamma,F)/\!/\SL_n(F)= \mathrm{MaxSpec}\left(F[R_n(\Gamma,F)]^{\SL_n(F)}\right),
$$
where $F[R_n(\Gamma,F)]^{\SL_n(F)}$ denotes the subring of $F[R_n(\Gamma,F)]$ fixed by the $\SL_n(F)$-action.  \cite[Theorem 1.1]{Mum94} ensures that $X_n(\Gamma,F)$ is an affine algebraic set, i.e.\ $F[X_n(\Gamma,F)]=F[R_n(\Gamma,F)]^{\SL_n(F)}$ is  finitely generated as an $F$-algebra. We refer to $X_n(\Gamma,F)$   as the  {\em $\SL_n(F)$-character variety} of $\Gamma$, although it might be reducible. 

\begin{convention}\label{convention: character variety}
We omit the subscript $n$ when $n=2$, and omit the specification for the field $F$ when $F=\C$. 
\end{convention}

For each $\gamma\in \Gamma$, one can define a trace function $\tr_\gamma:R_n(\Gamma,F)\to F$ by $\tr_\gamma(\rho)=\tr(\rho(\gamma))$, which belongs to $F[R_n(\Gamma,F)]$. Clearly, $\tr_\gamma$ is fixed by the $\SL_n(F)$-action, and hence $\tr_\gamma\in F[X_n(\Gamma,F)]$. The trace functions satisfy certain trace relations. In fact, given a group $\Gamma$, there is a set $\mathcal{Z}_n(\Gamma)\subseteq \Q[\tr_\gamma:\gamma\in \Gamma]$, that formally collects all the $\SL_n$-trace relations in $\Gamma$, see \cite[Section 4]{DCP17} for its construction. 

\begin{theorem}[{\cite[Theorems 1.10 and 1.12]{DCP17}}]\label{thm: generator of character variety}
Let $\Gamma$ be a finitely generated group. For any algebraically closed field $F$ of characteristic zero,
\begin{equation}\label{equ: F generators}
F[X_n(\Gamma,F)]=F\left[\tr_\gamma:\gamma\in \Gamma\right]/\left(\mathcal{Z}_n(\Gamma)\right),
\end{equation}
where $( \mathcal{Z}_n(\Gamma) ) $ denotes the ideal generated by $\mathcal{Z}_n(\Gamma)$. 
\end{theorem}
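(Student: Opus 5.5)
This statement is cited from De Concini--Procesi, so rather than reprove it from scratch, I will reduce it to classical invariant theory (Procesi's first and second fundamental theorems for matrix invariants).

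First, fix generators $\gamma_1,\dots,\gamma_m$ of $\Gamma$. This gives a surjection $F_m\to\Gamma$ from the free group, and hence a closed embedding $R_n(\Gamma,F)\hookrightarrow R_n(F_m,F)=\SL_n(F)^m$, which is equivariant for the conjugation action. Since $\SL_n(F)$ is reductive and $\operatorname{char}F=0$, taking invariants is exact (Reynolds operator). Consequently $F[R_n(\Gamma,F)]^{\SL_n}$ is the quotient of $F[\SL_n(F)^m]^{\SL_n}$ by the invariant part of the ideal $I_\Gamma$ defining $R_n(\Gamma,F)$. By Procesi's first fundamental theorem, invariants of $m$-tuples of matrices are generated by traces of words. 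Restricted to $\SL_n$, these are the functions $\tr_w$ with $w\in F_m$: inverses are handled via Cayley--Hamilton, since $\det=1$ lets us express $A^{-1}$ as a polynomial in $A$. It follows that $F[X_n(\Gamma,F)]$ is generated by the $\tr_\gamma$, which gives a surjection from the polynomial ring $F[\tr_\gamma:\gamma\in\Gamma]$.

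Next I identify the kernel. The second fundamental theorem says that all relations among trace functions of generic matrices come from the $n$-th Cayley--Hamilton identity and its polarizations (trace identities). To these one adds the relations imposed by $\det=1$ and by the defining relators of $\Gamma$. Since $\tr_\gamma$ depends only on $\gamma\in\Gamma$, using indices $\gamma\in\Gamma$ rather than words already accounts for the relators, namely the identifications $\tr_w=\tr_{w'}$ whenever $w=w'$ in $\Gamma$. The set $\mathcal{Z}_n(\Gamma)$ is defined to collect exactly these formal relations, with rational coefficients. The claim is then that the ideal they generate equals $I_\Gamma^{\SL_n}$ transported to the trace ring.

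The main obstacle is this last equality. The relations coming from $\Gamma$ are imposed on the group elements themselves, not just on traces. So I must show that any invariant function vanishing on $R_n(\Gamma,F)$ lies in the ideal generated by the trace relations. My plan is to pass to the universal Cayley--Hamilton algebra: form $F\Gamma$ modulo the $n$-th Cayley--Hamilton relations. Its trace algebra is $F[\tr_\gamma]/(\mathcal{Z}_n(\Gamma))$, and Procesi's theorem that Cayley--Hamilton algebras embed into $n\times n$ matrices over their trace ring in an invariant way identifies this trace algebra with the coordinate ring of the quotient. Finally, because $\mathcal{Z}_n(\Gamma)$ has rational coefficients and the argument is uniform in $F$, the presentation \eqref{equ: F generators} holds for every algebraically closed $F$ of characteristic zero.
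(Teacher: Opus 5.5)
The paper gives no proof of this statement; it is quoted from De Concini--Procesi. Your invariant-theoretic route is the standard one behind that citation, and the generation half of your argument is fine. The gap is in the identification of the kernel, where you mix up the representation \emph{variety} with the representation \emph{scheme}.

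You take $I_\Gamma$ to be the ideal of the algebraic set $R_n(\Gamma,F)$. You then want every invariant function vanishing on $R_n(\Gamma,F)$ to lie in the ideal generated by your $\mathcal{Z}_n(\Gamma)$, which consists of the Cayley--Hamilton relations, $\det=1$, and $\tr_w=\tr_{w'}$ for $w=w'$ in $\Gamma$. Procesi's theorem does not give this. It embeds a Cayley--Hamilton algebra $A$ into $M_n(B)$ with $T(A)=B^{\GL_n}$, where $B$ is the coordinate ring of the scheme of trace-compatible representations of $A$. It does not embed $A$ into matrices over $T(A)$ itself.

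For your universal algebra, $B=F[\SL_n^m]/J_\Gamma$, where $J_\Gamma$ is generated by the entries of $r(X)-I$ for relators $r$. So what you actually obtain is $F[\tr_\gamma:\gamma\in\Gamma]/(\mathcal{Z}_n(\Gamma))\cong(F[\SL_n^m]/J_\Gamma)^{\SL_n}$. This scheme-level statement can also be reached more cheaply: by the Reynolds operator and the first fundamental theorem for matrix concomitants, $J_\Gamma^{\SL_n}$ is generated over the invariants by $\tr\bigl((r(X)-I)\,w(X)\bigr)=\tr_{rw}-\tr_w$, which are exactly your identifications.

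However, $I_\Gamma=\sqrt{J_\Gamma}$. By exactness of invariants, the surjection $(F[\SL_n^m]/J_\Gamma)^{\SL_n}\to F[X_n(\Gamma,F)]$ has kernel $(I_\Gamma/J_\Gamma)^{\SL_n}$, which is the nilradical of the character scheme. Nothing in your argument controls this kernel, and it is nonzero precisely when the character scheme is non-reduced, which can happen. So, with your definition of $\mathcal{Z}_n(\Gamma)$, the asserted equality with $I_\Gamma^{\SL_n}$ is unjustified and false in general.

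There are two ways to repair this. The first is to work scheme-theoretically throughout; then your argument, or the direct one just indicated, proves the statement for the invariants of $F[\SL_n^m]/J_\Gamma$. The second is to read $\mathcal{Z}_n(\Gamma)$ as the paper phrases it: the set of \emph{all} rational polynomial relations among the $\tr_\gamma$ satisfied by $\SL_n$-characters.

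In the second reading, the second fundamental theorem and Cayley--Hamilton algebras are not needed at all. Once generation is known, the kernel over $F$ is the ideal of polynomials vanishing on $X_n(\Gamma,F)$. The ideal of $R_n(\Gamma,F)$ is the base change of a radical ideal over $\Q$, because radicals commute with $F\otimes_{\Q}-$ in characteristic zero. Taking $\SL_n$-invariants commutes with this exact base change. Hence the kernel is generated by rational relations, and this is also what makes your closing claim about uniformity in $F$ precise.
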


We emphasize that $\mathcal{Z}_n(\Gamma)$ is independent with the field $F$ as long as $\mathrm{char}\,F=0$. Consequently, for any prescribed embedding $F\hookrightarrow F'$ of algebraically closed fields with zero characteristics, we have 
\begin{equation}\label{equ: F tensor}
F'[X_n(\Gamma,F')]=F'\otimes_F F[X_n(\Gamma,F)].
\end{equation}

According to \autoref{thm: generator of character variety}, a point in  $ X_n(\Gamma,F)=\mathrm{MaxSpec}(F[X_n(\Gamma,F)])$ can be equivalently viewed as a function $\chi:\Gamma\to F$, by evaluations at $\tr_\gamma$,   that satisfies all trace relations in $\mathcal{Z}_n(\Gamma)$.  Such a function $\chi$ is called an {\em $\SL_n(F)$-character} of $\Gamma$.  
The inclusion of $F$-algebras $F[R_n(\Gamma,F)]^{\SL_n(F)}\hookrightarrow F[R_n(\Gamma,F)]$ induces an algebraic map
$$
p: R_n(\Gamma,F)\to X_n(\Gamma,F).
$$
The map $p$ is surjective, see \cite[Theorem 1.28]{LM85}. The image of a representation $\rho\in R_n(\Gamma,F)$ under $p$ corresponds to the function $\chi_\rho: \Gamma\to F$ defined by $\chi_\rho(\gamma)=\tr(\rho(\gamma))$, which is called the {\em character} of $\rho$. Indeed, the preimage of each point $x\in X_n(\Gamma,F)$ consists of all $\SL_n(F)$-representations of $\Gamma$ with the same character.

According to Malcev's theorem \cite[Теорема VII]{Mal40}, any finitely generated subgroup of $\SL_n(F)$ is residually finite. In other words, any $\SL_n(F)$-representation of a finitely generated group $\Gamma$ factors through $\Gamma/K$, where $K$ is the intersection of all finite-index subgroups in $\Gamma$. As a consequence, the quotient map $\Gamma\to \Gamma/ K$ induces algebraic isomorphisms $R_n(\Gamma/K,F)\cong R_n(\Gamma,F)$ and $X_n(\Gamma/K,F)\cong X_n(\Gamma,F)$. The group $\Gamma/K$ can be viewed as the maximal residually finite quotient of $\Gamma$.  Hence, in our context, it suffices to study the $\SL_n$-representation theory of finitely generated residually finite groups.

\subsection{Semiauthentic homomorphism}

Fix $\overline{\Q}$ to be an algebraic closure of $\Q$. 
The purpose of this section is to prove the following theorem. 
\begin{theorem}\label{thm: semiauthentic induce Qbar}
Suppose that $\Gamma_1$ and $\Gamma_2$ are finitely  generated residually finite groups, and that $\Phi: \widehat{\Gamma_1}\to \widehat{\Gamma_2}$ is a semiauthentic homomorphism. Then, for each $n\in \N$, $\Phi$ canonically induces an algebraic map
$$
\Phi^\ast: X_n(\Gamma_2,\overline{\Q})\to X_n(\Gamma_1,\overline{\Q}), 
$$
and the induced homomorphism of $\overline{\Q}$-algebras
$$
\Phi^{\sharp}: \overline{\Q}[X_n(\Gamma_1,\overline{\Q})] \to \overline{\Q}[X_n(\Gamma_2,\overline{\Q})], 
$$
is a polynomial map with coefficients in $\Q$ in terms of the generators in (\ref{equ: F generators}). 
\end{theorem}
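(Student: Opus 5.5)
The plan is to construct $\Phi^\ast$ on $\overline{\Q}$-points and only then check that it is algebraic and defined over $\Q$. The basic mechanism is that every $\SL_n(\overline{\Q})$-representation of a finitely generated group has image inside $\SL_n(\mathcal{O})$, where $\mathcal{O}$ is the ring of $S$-integers of some number field $K$. Such a representation $\rho:\Gamma_2\to\SL_n(\mathcal{O})$ extends continuously to the profinite completion $\widehat{\mathcal{O}}=\prod_{\mathfrak p\notin S}\mathcal{O}_{\mathfrak p}$: one gets $\widehat\rho:\widehat{\Gamma_2}\to\SL_n(\widehat{\mathcal{O}})$ by \autoref{prop: universal property}, since $\SL_n(\widehat{\mathcal{O}})$ is profinite. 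Pulling back gives $\widehat\rho\circ\Phi:\widehat{\Gamma_1}\to\SL_n(\widehat{\mathcal{O}})$, and restricting to $\Gamma_1$ gives a representation $\sigma:\Gamma_1\to\SL_n(\widehat{\mathcal{O}})$. Its character $\chi_\sigma(\gamma)=\tr\sigma(\gamma)$ takes values in $\widehat{\mathcal{O}}$ and automatically satisfies all trace relations $\mathcal{Z}_n(\Gamma_1)$, because those relations are polynomial identities with $\Q$-coefficients valid in any commutative ring of characteristic zero. The substantive point is to show that $\chi_\sigma$ actually takes values in $\mathcal{O}\subset\widehat{\mathcal{O}}$, embedded diagonally.

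This is exactly where semiauthenticity is used. Let $S\subseteq\Gamma_1$ be the semigroup from \autoref{def: semiauthentic homomorphism}. For $s\in S$ we have $\Phi(s)=\hat g\,\gamma\,\hat g^{-1}$ with $\gamma\in\Gamma_2$, and traces are conjugation invariant, so $\chi_\sigma(s)=\tr\rho(\gamma)\in\mathcal{O}$. To pass from the semigroup to all of $\Gamma_1$, I would use the fact that the $\mathcal{O}$-span $A$ of $\sigma(S)$ inside $M_n(\widehat{\mathcal{O}})$ is an $\mathcal{O}$-algebra on which the trace takes values in $\mathcal{O}$, since $S$ is closed under products. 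By Cayley--Hamilton, $\sigma(s)^{-1}$ is a polynomial in $\sigma(s)$ whose coefficients are polynomials in $\tr\sigma(s^j)\in\mathcal{O}$ (up to $1/n!$, which is harmless after enlarging $S$ to contain $n!$). Hence the algebra generated by $\sigma(S)\cup\sigma(S)^{-1}$ also has $\mathcal{O}$-valued trace on all words, and this algebra contains $\sigma(\Gamma_1)$ because $S$ generates $\Gamma_1$. I expect this to be the main technical step. It needs care: the trace of a product involving inverses has to be written, via the Cayley--Hamilton expression above, as an $\mathcal{O}$-combination of traces of products of elements of $S$, and the point to check is that these expressions stay polynomial in the traces of elements of $S$.

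Once this is in place, $\chi_\sigma:\Gamma_1\to\mathcal{O}\subset\overline{\Q}$ is an $\SL_n(\overline{\Q})$-character by \autoref{thm: generator of character variety}, and we set $\Phi^\ast(\chi_\rho)=\chi_\sigma$. I would check that this is well defined in two respects. It is independent of the choice of $K$ and $S$, because enlarging them is compatible with the construction. It depends only on the character of $\rho$, because $\chi_\sigma(\gamma_1)$ is the limit of $\chi_\rho(\gamma_2^{(k)})$ along any sequence $\gamma_2^{(k)}\in\Gamma_2$ converging to $\Phi(\gamma_1)$. Indeed, $\tr\widehat\rho$ is the continuous extension of $\chi_\rho$, and the value in $\widehat{\mathcal{O}}$ is determined by its reductions modulo ideals, where only finitely many $\gamma_2^{(k)}$ matter.

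For algebraicity and the $\Q$-coefficients, I would fix $\gamma\in\Gamma_1$ and exhibit $\chi_\sigma(\gamma)$ as a fixed polynomial over $\Q$ in finitely many $\tr_{\gamma'}$ with $\gamma'\in\Gamma_2$. For $s\in S$ this polynomial is simply $\tr_{\gamma'}$, where $\gamma'$ is the conjugating element, chosen once and for all and independent of $\rho$. For general $\gamma$, write $\gamma$ as a word in $S^{\pm1}$ and apply the Cayley--Hamilton and trace-identity manipulations from the previous paragraph. These manipulations produce universal $\Q$-polynomials in traces of products of elements of $S$, and each such product lies in $S$ and so maps to a conjugate of a fixed element of $\Gamma_2$. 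This defines $\Phi^\sharp(\tr_\gamma)\in\Q[\tr_{\gamma'}]$. The map $\Phi^\sharp$ respects the ideals $(\mathcal{Z}_n)$, since its values agree with $\chi_\sigma$ pointwise on the Zariski-dense set of points, and the coordinate ring is reduced. Therefore $\Phi^\sharp$ descends to a $\overline{\Q}$-algebra homomorphism whose induced map on maximal spectra is $\Phi^\ast$, and canonicity follows from the pointwise description.
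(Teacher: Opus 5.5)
Your proposal is correct and follows essentially the paper's route. You extend an arithmetic representation of $\Gamma_2$ to $\widehat{\Gamma_2}$, pull it back along $\Phi$, and use semiauthenticity to get $\tr\sigma(s)=\tr\rho(\widetilde{s})$ for $s$ in the semigroup; you then propagate to all of $\Gamma_1$, with $\Q$-coefficients, via Cayley--Hamilton and Newton's identities, which is exactly the content of the paper's semigroup-generation lemma. The differences are cosmetic: the paper works with a single prime $\mathfrak p$ and the field $K_{\mathfrak p}$ and gets independence of $\rho$ from the formula $\chi_1(\gamma)=f_\gamma(\chi_2(\widetilde{s}))$, whereas you use all of $\prod_{\mathfrak p\notin S}\mathcal{O}_{\mathfrak p}$ and the continuity of $\tr\circ\widehat{\rho}$.
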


\begin{corollary}\label{cor: semiauthentic algebraic map}
Suppose that $\Gamma_1$ and $\Gamma_2$ are finitely  generated residually finite groups, and that $\Phi: \widehat{\Gamma_1}\to \widehat{\Gamma_2}$ is a semiauthentic homomorphism.  For any algebraically closed field $F$ of characteristic zero, $\Phi$ canonically induces an algebraic map
$
\Phi^\ast: X_n(\Gamma_2,F) \to X_n(\Gamma_1,F).
$
\end{corollary}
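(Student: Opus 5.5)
The plan is to derive the corollary directly from \autoref{thm: semiauthentic induce Qbar} by base change, using the field-independence of the trace relations. By \autoref{thm: semiauthentic induce Qbar}, $\Phi$ induces a $\overline{\Q}$-algebra homomorphism
$$
\Phi^{\sharp}: \overline{\Q}[X_n(\Gamma_1,\overline{\Q})]\to \overline{\Q}[X_n(\Gamma_2,\overline{\Q})].
$$
It is given on the generators $\tr_\gamma$ ($\gamma\in\Gamma_1$) of (\ref{equ: F generators}) by polynomials with rational coefficients in the generators $\tr_{\delta}$ ($\delta\in\Gamma_2$). The first step is to fix an embedding $\overline{\Q}\hookrightarrow F$. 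This exists since $\mathrm{char}\,F=0$ and $F$ is algebraically closed, so $F$ contains an algebraic closure of $\Q$, which is isomorphic to $\overline{\Q}$.

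The second step is to base change. By (\ref{equ: F tensor}), $F[X_n(\Gamma_i,F)]=F\otimes_{\overline{\Q}}\overline{\Q}[X_n(\Gamma_i,\overline{\Q})]$. Hence $\mathrm{id}_F\otimes\Phi^\sharp$ is an $F$-algebra homomorphism $F[X_n(\Gamma_1,F)]\to F[X_n(\Gamma_2,F)]$. Taking $\mathrm{MaxSpec}$ then gives an algebraic map $\Phi^\ast: X_n(\Gamma_2,F)\to X_n(\Gamma_1,F)$.

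The remaining point, and the only subtle one, is canonicity: the map should not depend on the chosen embedding $\overline{\Q}\hookrightarrow F$. The plan is to avoid $\overline{\Q}$ altogether. By \autoref{thm: generator of character variety}, $F[X_n(\Gamma_i,F)]=F[\tr_\gamma]/(\mathcal{Z}_n(\Gamma_i))$. The base-changed map sends each $\tr_\gamma$ to the same $\Q$-coefficient polynomial $P_\gamma$ in the $\tr_\delta$ that $\Phi^\sharp$ does. Since rational numbers are fixed by every field embedding, this description is intrinsic to $F$.

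Well-definedness also needs no extra work. The polynomials $P_\gamma$ send the ideal $(\mathcal{Z}_n(\Gamma_1))$ into $(\mathcal{Z}_n(\Gamma_2))$ over $\overline{\Q}$. Both ideals are generated by the field-independent rational sets $\mathcal{Z}_n(\Gamma_i)$, so this containment already holds over $\Q$ and therefore over $F$. The rational polynomials $P_\gamma$ are part of the data produced by \autoref{thm: semiauthentic induce Qbar} and depend only on $\Phi$, so $\Phi^\ast$ is canonical.
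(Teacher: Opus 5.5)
Your proposal is correct and follows the paper's proof essentially verbatim: fix an embedding $\overline{\Q}\hookrightarrow F$, form $\mathrm{id}_F\otimes\Phi^\sharp$ using (\ref{equ: F tensor}), and deduce independence of the embedding from the fact that $\Phi^\sharp$ is given by $\Q$-coefficient polynomials in the trace generators. One precision point: $\Phi$ determines only the classes of the $P_\gamma$ modulo $(\mathcal{Z}_n(\Gamma_2))$, not the polynomials themselves, and your well-definedness check does not need a descent to $\Q$, since pushing the ideal containment forward along $\overline{\Q}\hookrightarrow F$ already suffices.
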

\begin{proof}

Choose an arbitrary embedding $\overline{\Q}\hookrightarrow F$. By (\ref{equ: F tensor}), $$F[X_n(\Gamma_i,F)]=F\otimes_{\overline{\Q}} \overline{\Q}[X_n(\Gamma_i,\overline{\Q})].$$ Let $\Phi^{\sharp}: \overline{\Q}[X_n(\Gamma_1,\overline{\Q})] \to \overline{\Q}[X_n(\Gamma_2,\overline{\Q})]$ be the homomorphism given by \autoref{thm: semiauthentic induce Qbar}. Then, we can define a homomorphism of $F$-algebras by
\begin{equation*}
\begin{tikzcd}[column sep=large]
\Phi^\sharp_F: F[X_n(\Gamma_1,F)] \arrow[r,"\operatorname{id}_F\otimes \Phi^\sharp"] &    F[X_n(\Gamma_2,F)]. 
\end{tikzcd}
\end{equation*}
Since $\Phi^\sharp$ is a polynomial map with coefficients in $\Q$ in terms of the generators in (\ref{equ: F generators}), $\Phi^\sharp_F$ is independent with the choice of the embedding $\overline{\Q}\hookrightarrow F$. Consequently, $\Phi^\sharp_F$ induces a canonical algebraic map between the maximal ideal spectra
$$
\Phi^\ast: X_n(\Gamma_2,F)\to X_n(\Gamma_1,F), 
$$
which is independent with the embedding $\overline{\Q}\hookrightarrow F$. 
\end{proof}

\subsection{Semigroups}
Before the proof of \autoref{thm: semiauthentic induce Qbar}, we need the following lemma, which is essentially \cite[Theorem 3.15]{DCP17}. However, we provide a down-to-earth proof  here to avoid extra machinery. 

\begin{lemma}\label{lem: semigroup generator}
Let $\Gamma$ be a finitely generated group. Suppose $S\subseteq \Gamma$ is a semigroup that generates $\Gamma$ as a group. Then, 
$$
\Q[\tr_\gamma:\gamma\in \Gamma]/(\mathcal{Z}_n(\Gamma))
$$
is generated by $\{\tr_s\mid s\in S\}$ as a $\Q$-algebra.
\end{lemma}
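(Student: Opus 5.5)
We have to show that every $\tr_\gamma$ with $\gamma\in\Gamma$ is a $\Q$-polynomial in the traces $\tr_s$, $s\in S$, modulo $(\mathcal{Z}_n(\Gamma))$. Let $A\subseteq \Q[\tr_\gamma:\gamma\in\Gamma]/(\mathcal{Z}_n(\Gamma))$ be the $\Q$-subalgebra generated by $\{\tr_s\mid s\in S\}$. Since $S$ generates $\Gamma$ as a group, every $\gamma\in\Gamma$ is a word $s_1^{\epsilon_1}\cdots s_k^{\epsilon_k}$ with $s_j\in S$ and $\epsilon_j=\pm1$. We argue by induction on the number of negative exponents, and the only tool is the Cayley--Hamilton identity for $\SL_n$.

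\textbf{The identity.} For a matrix $g\in\SL_n$ with characteristic polynomial $x^n-c_1(g)x^{n-1}+\cdots+(-1)^nc_n(g)$, where $c_n(g)=\det g=1$, Cayley--Hamilton gives
\[
g^{-1}=(-1)^{n-1}\bigl(g^{n-1}-c_1(g)g^{n-2}+\cdots+(-1)^{n-1}c_{n-1}(g)\bigr).
\]
Each coefficient $c_j(g)$ is a universal $\Q$-polynomial in $\tr(g),\tr(g^2),\dots,\tr(g^n)$, by Newton's identities. Multiplying on the left by an arbitrary matrix $h$ and taking traces, we get for all $h,g\in\Gamma$:
\[
\tr_{hg^{-1}}=(-1)^{n-1}\sum_{j=0}^{n-1}(-1)^jP_j(\tr_g,\tr_{g^2},\dots,\tr_{g^n})\,\tr_{hg^{n-1-j}},
\]
where each $P_j$ is a universal $\Q$-polynomial and $P_0=1$. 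This identity holds for every representation, so it holds as an identity of functions on $R_n(\Gamma,\C)$. Hence it holds in $\C[X_n(\Gamma)]$. By \autoref{thm: generator of character variety} and (\ref{equ: F tensor}), the ring $\Q[\tr_\gamma]/(\mathcal{Z}_n(\Gamma))$ embeds into $\C[X_n(\Gamma)]$ after tensoring, so the identity holds in the quotient ring as well. This embedding step is the point to check carefully, because the identity is derived pointwise. Alternatively, one can cite that $\mathcal{Z}_n(\Gamma)$ is defined in \cite{DCP17} precisely as the ideal of all trace identities, and Cayley--Hamilton lies in it by construction.

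\textbf{The induction.} We prove $\tr_w\in A$ for all words $w$, by induction on the number $N$ of inverse letters.
\begin{itemize}
\item If $N=0$, then $w$ is a product of elements of $S$, so $w\in S$ because $S$ is a semigroup, and $\tr_w\in A$.
\item If $N>0$, cyclically permute $w$ so that it ends with an inverse letter. This is allowed because the trace is conjugation invariant. Write $w=hs^{-1}$. Apply the identity with $g=s$. The powers $s^k$ lie in $S$, so $\tr_{s^k}\in A$. Each word $hs^{n-1-j}$ has $N-1$ inverse letters, so $\tr_{hs^{n-1-j}}\in A$ by the inductive hypothesis.
\end{itemize}
This gives $\tr_w\in A$, so $A$ is the whole ring.

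The main obstacle is the first step: making sure the Cayley--Hamilton trace identity, with rational coefficients, genuinely holds in the formal ring $\Q[\tr_\gamma]/(\mathcal{Z}_n(\Gamma))$ rather than only after evaluation at characters. I would handle this by the embedding argument via (\ref{equ: F generators}) with $F=\C$: a polynomial vanishing on $X_n(\Gamma,\C)$ lies in $(\mathcal{Z}_n(\Gamma))\otimes\C$, and rationality then gives membership over $\Q$ by faithful flatness of $\C$ over $\Q$. This uses that $\C[X_n]$ is reduced, being a ring of invariants of a reduced ring.
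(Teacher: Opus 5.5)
Your argument is correct and is essentially the paper's proof. Both rest on the Cayley--Hamilton identity $g^{-1}=(-1)^{n-1}\sum_{j=0}^{n-1}(-1)^j c_j(g)\,g^{n-1-j}$, with the $c_j$ written in terms of $\tr(g),\dots,\tr(g^{j})$ by Newton's identities. Both justify it in $\Q[\tr_\gamma:\gamma\in\Gamma]/(\mathcal{Z}_n(\Gamma))$ through that ring's injection into $F\otimes_\Q(\cdot)=F[X_n(\Gamma,F)]$, which is exactly your flatness argument.

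The difference is only in the bookkeeping. You remove inverse letters one at a time by induction after a cyclic permutation. The paper instead writes $\gamma=s_1t_1^{-1}\cdots s_mt_m^{-1}$, substitutes $\rho(t_i)^{-1}=Q_{\rho(t_i)}(\rho(t_i))$ for every $i$ at once, and expands. One small fix: your base case should also allow the empty word, which arises for instance from $w=s^{-1}$; it causes no trouble because $\tr_1=n\in\Q$.
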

\begin{proof}
Let $F$ be any algebraically closed field with zero characteristic.   Since tensor product with a $\Q$-vector space is exact, we have an injective homomorphism of $\Q$-algebras: 
$$
\Q[\tr_\gamma:\gamma\in \Gamma]/(\mathcal{Z}_n(\Gamma))\tto F\otimes_\Q\left(\Q[\tr_\gamma:\gamma\in \Gamma]/(\mathcal{Z}_n(\Gamma))\right) = F[X_n(\Gamma,F)]. 
$$
To prove the lemma, it suffices to show that each $\tr_\gamma \in F[X_n(\Gamma,F)]$ belongs to the $\Q$-subalgebra of $F[X_n(\Gamma,F)]$ generated by $\{\tr_s\mid s\in S\}$. 

Since $S$ generates $\Gamma$ as a group, any element $\gamma\in \Gamma$ can be expressed as $$\gamma=s_1t_1^{-1}s_2t_2^{-1}\cdots s_mt_m^{-1},$$ where $s_i,t_i\in S$. Let $S'\subseteq S$ be the semigroup generated by $s_1,t_1,\cdots , s_m,t_m$. We actually prove that $\tr_\gamma$ belongs to the $\Q$-subalgebra generated by $\{\tr_s\mid s\in S'\}$.

For any matrix $A\in \SL_n(F)$, we record its characteristic polynomial by
$$
P_A(\lambda)=\det(\lambda I-A)=\lambda^n-\sigma_{1}(A) \lambda^{n-1}+\cdots + (-1)^{n-1}\sigma_{n-1}(A) \lambda +(-1)^n.
$$
Each $\sigma_{j}(A)$ is a polynomial in the matrix entries of $A$. Thus, for each $g\in \Gamma$, we can define $\sigma_{j,g}\in F[R_n(\Gamma,F)]$ by a polynomial map $\sigma_{j,g}(\rho)=\sigma_j(\rho(g))$ for $\rho \in R_n(\Gamma,F)$. 
Clearly, $\sigma_{j,g}$ is invariant under the $\SL_n(F)$-action, 
so $\sigma_{j,g}\in F[X_n(\Gamma,F)]$. Note that $\sigma_{1,g}=\tr_g$. 
Using Newton's identities, each $\sigma_j(A)$ can be expressed as a polynomial of $\tr(A),\tr(A^2),\cdots , \tr(A^j)$ with coefficients in $\Q$. Therefore, each $\sigma_{j,g}$ belongs to the $\Q$-subalgebra of $F[X_n(\Gamma,F)]$ generated by $\tr_{g},\cdots, \tr_{g^{j}}$. 

For $A\in \SL_n(F)$, we define another relavent polynomial
$$
Q_A(\lambda)=(-1)^{n-1} \lambda^{n-1}+(-1)^{n-2}\sigma_1(A)\lambda^{n-2}+\cdots +\sigma_{n-1}(A).
$$
In fact, $Q_A(\lambda)=(-1)^{n-1}\lambda^{-1}(P_A(\lambda)-(-1)^n)$. According to the  Cayley--Hamilton theorem, $A^{-1}=Q_A(A)$.

Let $\rho: \Gamma\to \SL_n(F)$ be an arbitrary representation. Then,
$$
\rho(\gamma)=\rho(s_1)\rho(t_1)^{-1}\cdots \rho(s_m)\rho(t_m)^{-1}= \rho(s_1)Q_{\rho(t_1)}(\rho(t_1))\cdots \rho(s_m)Q_{\rho(t_m)}(\rho(t_m)). 
$$
Thus, $\rho(\gamma)$ can be expressed as a polynomial of $$\rho(s_1),\rho(t_1),\cdots, \rho(s_m),\rho(t_m)$$ with coefficients in $$\Z\left [\sigma_{j}(\rho(t_i)):\, 1\le i \le m, \,1\le j \le n-1 \right].$$ 
Consequently, $\tr(\rho(\gamma))$ can be expressed as a polynomial of 
$$
\{\tr(\rho(s))\mid s\in S'\}\cup \{ \sigma_{j}(\rho(t_i))\mid  1\le i \le m, \,1\le j \le n-1 \}
$$
with fixed coefficients in $\Z$ irrelavent with $\rho$. 
In turn, $\tr_\gamma\in F[X_n(\Gamma,F)]$ belongs to the $\Z$-subalgebra of $F[X_n(\Gamma,F)]$  generated by $$\{\tr_s\mid s\in S'\}\cup \{\sigma_{j,t_i}\mid 1\le i\le m, 1\le j\le n-1\}.$$ Hence, $\tr_\gamma $ belongs to the $\Q$-subalgebra of $F[X_n(\Gamma,F)]$  generated by $\{\tr_s\mid s\in S'\}$ since every $\sigma_{j,t_i}$ does, and the proof is complete. 
\end{proof}

\begin{remark}
\autoref{lem: semigroup generator} does not hold if we switch from the $\SL_n$-character variety to the $\GL_n$-character variety, although the coordinate ring of the $\GL_n$-character variety of $\Gamma$ is also generated by the trace functions evaluated at all elements of $\Gamma$. Indeed, expressing $\tr(\rho(s)^{-1})$ in terms of the  matrix entries of $\rho(s)$ involves placing $\det(\rho(s))$ in the denominator, which is no longer a polynomial in the case of $\GL_n$-representations. 
\end{remark}

\begin{corollary}\label{cor: generate Falgebra}
Let $F$ be an algebraically closed field of characteristic zero. 
Suppose $\Gamma$ is a finitely generated group, and   $S\subseteq \Gamma$ is a semigroup that generates $\Gamma$. Then, $\{\tr_s\mid s\in S\}$ generates $F[X_n(\Gamma,F)]$ as an $F$-algebra. 
\end{corollary}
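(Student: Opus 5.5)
This is a formal consequence of \autoref{lem: semigroup generator} combined with the base-change formula (\ref{equ: F tensor}). The plan is to pass from the $\Q$-form of the coordinate ring to $F[X_n(\Gamma,F)]$ by extending scalars.

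Set $A=\Q[\tr_\gamma:\gamma\in\Gamma]/(\mathcal{Z}_n(\Gamma))$. By \autoref{thm: generator of character variety}, and since $\mathcal{Z}_n(\Gamma)$ does not depend on the field, we have
\[
F[X_n(\Gamma,F)]=F[\tr_\gamma:\gamma\in\Gamma]/(\mathcal{Z}_n(\Gamma))\cong F\otimes_\Q A.
\]
Under this identification, the image of $\tr_\gamma$ in $F[X_n(\Gamma,F)]$ is $1\otimes\tr_\gamma$. This is the identification already used in the first paragraph of the proof of \autoref{lem: semigroup generator}.

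By \autoref{lem: semigroup generator}, $A$ is generated as a $\Q$-algebra by $\{\tr_s\mid s\in S\}$. So every element of $A$ is a $\Q$-polynomial in finitely many of the $\tr_s$. Since $F\otimes_\Q A$ is spanned over $F$ by the elements $1\otimes a$ with $a\in A$, every element of $F[X_n(\Gamma,F)]$ is an $F$-linear combination of $\Q$-polynomials in the $1\otimes\tr_s$. Hence $\{\tr_s\mid s\in S\}$ generates $F[X_n(\Gamma,F)]$ as an $F$-algebra.

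The only step needing care is the identification $F[X_n(\Gamma,F)]\cong F\otimes_\Q A$. This is the presentation (\ref{equ: F generators}) rewritten: tensoring the $\Q$-presentation with $F$ gives the $F$-presentation, because tensor product is right exact and the ideal generated by $\mathcal{Z}_n(\Gamma)$ base-changes to the ideal it generates over $F$. I expect no real obstacle here. Alternatively, one can observe directly that the proof of \autoref{lem: semigroup generator} already shows each $\tr_\gamma\in F[X_n(\Gamma,F)]$ lies in the $\Q$-subalgebra generated by the $\tr_s$, hence in the $F$-subalgebra they generate. Since the $\tr_\gamma$ generate $F[X_n(\Gamma,F)]$ by (\ref{equ: F generators}), this gives the corollary immediately.
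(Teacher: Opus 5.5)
Your proposal is correct and follows the paper's approach. The paper's proof is the one-line remark that the corollary is a straightforward combination of \autoref{lem: semigroup generator} and \autoref{thm: generator of character variety}. You spell out that combination through the base change $F[X_n(\Gamma,F)]\cong F\otimes_\Q\bigl(\Q[\tr_\gamma:\gamma\in\Gamma]/(\mathcal{Z}_n(\Gamma))\bigr)$, which is the argument the paper leaves implicit.
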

\begin{proof}
This is a straightforward combination of \autoref{lem: semigroup generator} and \autoref{thm: generator of character variety}. 
\end{proof}

\subsection{The $\mathfrak p$-adic construction}
In this subsection, we prove \autoref{thm: semiauthentic induce Qbar} utilizing the technique of $\mathfrak p$-adic construction, which relates the $\SL_n$-representation theory with the profinite completion. Similar ideas have appeared in \cite{BMRS20} and \cite{Liu25}. 

\begin{proof}[Proof of \autoref{thm: semiauthentic induce Qbar}]
We first construct a set-theoretic  map $\Phi^\ast: X_n(\Gamma_2,\overline{\Q})\to X_n(\Gamma_1,\overline{\Q})$, and then we show that this map is algebraic and satisfies the asserted condition. 

Let $x_2\in X_n(\Gamma_2,\overline{\Q})$ be any point, 
and let $\chi_2:\Gamma_2\to \overline{\Q}$ be the $\SL_n(\overline{\Q})$-character corresponding to $x_2$. 
By the surjectivity of $R_n(\Gamma_2,\overline{\Q})\to X_n(\Gamma_2,\overline{\Q})$, we can pick a representation $\rho_2:\Gamma_2\to \SL_n(\overline{\Q})$ whose character equals $\chi_2$. Then, there exists a subfield $K\subseteq \overline{\Q}$, which is a finite extension over $\Q$, such that the image of $\rho_2$ is contained in $\SL_n(K)$. Indeed, if $\gamma_1,\cdots, \gamma_m$  are generators of $\Gamma_2$, then $K$ can be taken as the subfield generated by the matrix entries of $\rho_2(\gamma_1),\cdots, \rho_2(\gamma_m)$. Let $\mathcal{O}_K$ be the ring of integers of $K$. Then, there exists a finite prime $\mathfrak p$ in $\mathcal{O}_K$ such that 
the image of $\rho_2$ is contained in $\SL_n(\mathcal{O}_{K,\mathfrak {p}})$, where $\mathcal{O}_{K,\mathfrak {p}}$ denotes the $\mathfrak {p}$-localization of $\mathcal{O}_K$. 
Indeed, it suffices to choose $\mathfrak p$ outside finitely many primes that appear as denominators in the matrix entries of $\rho_2(\gamma_1),\cdots,\rho_2(\gamma_m)$. 

Let $\widehat{\mathcal{O}}_{K,\mathfrak p}$ be the $\mathfrak p$-adic completion of $\mathcal{O}_K$. 
The inclusion of rings $ \mathcal{O}_{K,\mathfrak p} \hookrightarrow \widehat{\mathcal{O}}_{K,\mathfrak p}$ induces an inclusion of groups $\SL_n(\mathcal{O}_{K,\mathfrak p})\hookrightarrow \SL_n(\widehat{\mathcal{O}}_{K,\mathfrak p})$. Note that $\SL_n(\widehat{\mathcal{O}}_{K,\mathfrak p})=\limi_i \SL_n(\mathcal{O}_K/ \mathfrak{p}^i)$ is a profinite group. Thus, \autoref{prop: universal property} implies the existence of a unique continuous homomorphism $P_2: \widehat{\Gamma_2} \to  \SL_n(\widehat{\mathcal{O}}_{K,\mathfrak p})$ such that the following diagram commutes. 
\begin{equation*}
\begin{tikzcd}
\Gamma_2 \arrow[r,"\rho_2"] \arrow[d, hook] & \SL_n( {\mathcal{O}}_{K,\mathfrak p}) \arrow[d,hook]\\
\widehat{\Gamma_2} \arrow[r,"P_2"] & \SL_n(\widehat{\mathcal{O}}_{K,\mathfrak p})
\end{tikzcd}
\end{equation*}

Let $K_{\mathfrak p}$ be the  field of fractions of $\widehat{\mathcal{O}}_{K,\mathfrak p}$. Then, $K$ embeds into $K_{\mathfrak p}$ in the apparent way. We now obtain an $\SL_n(K_{\mathfrak p})$-representation of $\Gamma_1$ defined by 
\begin{equation*}
\begin{tikzcd}
\rho_1:\Gamma_1 \arrow[r, hook] & \widehat{\Gamma_1} \arrow[r,"\Phi"] & \widehat{\Gamma_2} \arrow[r,"P_2"] &  \SL_n(\widehat{\mathcal{O}}_{K,\mathfrak p}) \arrow[r, hook] & \SL_n(K_{\mathfrak{p}}).
\end{tikzcd}
\end{equation*}
Let $\chi_1: \Gamma_1\to K_{\mathfrak{p}}$ be the character of $\rho_1$. 
Provided that $\Phi$ is semiauthentic, we prove the following  two assertions. 
\begin{enumerate}[leftmargin=*,label=(\roman*),align=left,widest=ii]
\item\label{assA} The function $\chi_1$ satisfies all trace relations in $\mathcal{Z}_n(\Gamma_1)$, and takes value in $K$.
\end{enumerate}

Recall that $K$ is a prescribed subfield of $\overline{\Q}$, so composing with the inclusion map $K\hookrightarrow \overline{\Q}$, $\chi_1$ can be viewed as a map from $\Gamma$ to $\overline{\Q}$, and hence an $\SL_n(\overline{\Q})$-character of $\Gamma_1$. 
\begin{enumerate}[leftmargin=*,label=(\roman*),align=left,widest=ii,resume]
\item\label{assB} The map $\chi_1:\Gamma_1\to \overline{\Q}$ is uniquely determined by $x_2\in X_n[\Gamma_2,\overline{\Q}]$, and is independent with the choice of $\rho_2$, $K$, and $\mathfrak p$. 
\end{enumerate}


Firstly, the character of any $\SL_n$-representation of $\Gamma_1$ over a field of characteristic zero satisfies all trace relations in $\mathcal{Z}_n(\Gamma)$. This can also be seen by taking an algebraic closure $\overline{K_{\mathfrak p}}$ of $K_{\mathfrak p}$, viewing $\rho_1$ as an $\SL_n(\overline{K_{\mathfrak p}})$-representation of $\Gamma_1$, and applying \autoref{thm: generator of character variety}. 
Now, let $S\subseteq \Gamma_1$ be a semigroup that generates $\Gamma_1$  such that for each $s\in S$, $\Phi(s)$ is conjugate within $\widehat{\Gamma_2}$ to some element $\widetilde{s}\in \Gamma_2$. 
Then, for each $s\in S$,
\begin{equation}\label{equ: chis}
\chi_1(s)=\tr(P_2(\Phi(s)))=\tr(P_2(\widetilde{s}))=\tr(\rho_2(\widetilde{s}))\in \mathcal{O}_{K,\mathfrak p}\subseteq K.
\end{equation}
To show that $\chi_1(\gamma)\in K$ for each $\gamma\in \Gamma_1$, we apply \autoref{lem: semigroup generator}. Indeed, \autoref{lem: semigroup generator} implies that for each $\gamma \in \Gamma_1$,   there exists a polynomial $f_{\gamma}\in \Q[S]$, where $\Q[S]$ refers to the polynomial ring over $S$ as formal variables, such that 
$$
\tr_\gamma=f_\gamma(\tr_s : s\in S)\in \Q[\tr_\gamma:\gamma\in \Gamma] /(\mathcal{Z}_n(\Gamma)).
$$ 
Since $\chi_1$ satisfies all trace relations in  $\mathcal{Z}_n(\Gamma)$, we have 
$$
\chi_1(\gamma)=f_\gamma(\chi_1(s):s\in S).
$$
Recall that each $\chi_1(s)$ belongs to $K$ and the coefficients of $f_\gamma$ belong  to $\Q$. Hence, $\chi_1(\gamma)\in K$ for every $\gamma\in \Gamma_1$, and assertion \ref{assA} is proven. 

Moreover, (\ref{equ: chis}) implies that 
\begin{equation}\label{equ: chisqbar}
\chi_1(s)=\chi_2(\widetilde{s})\in \overline{\Q}
\end{equation}
 for each $s\in S$, which is independent with the choice of $\rho_2$, $K$, and $\mathfrak{p}$. Note that the polynomial $f_\gamma$ only relies on $\Gamma_1$ and $S$, so $\chi_1(\gamma)=f_\gamma(\chi_2(\widetilde s):s\in S)\in \overline{\Q}$ is also independent with the choice of $\rho_2$, $K$, and $\mathfrak{p}$. Hence, assertion \ref{assB} is also proven. 

According to assertion  \ref{assA}, $\chi_1$ extends to a homomorphism of $\overline{\Q}$-algebras
$$
\psi: \overline{\Q}[X_n(\Gamma_1,\overline{\Q})] = \overline{\Q}[\tr_\gamma:\gamma\in \Gamma_1]/(\mathcal{Z}_n(\Gamma_1)) \tto \overline{\Q},
$$
defined by $\psi(\tr_\gamma)=\chi_1(\gamma)$. 
The kernel of $\psi$ is a maximal ideal in $ \overline{\Q}[X_n(\Gamma_1,\overline{\Q})]$, which is in turn a point $x_1\in X_n(\Gamma_1,\overline{\Q})$. By assertion \ref{assB}, the point $x_1$ is uniquely determined by $x_2\in X_n(\Gamma_2,\overline{\Q})$.  

\begin{sloppypar}
We thus obtain a canonical set-theoretic map. 
\begin{equation*}
\begin{tikzcd}[row sep=0cm, column sep=0.3cm]
\Phi^\ast: \hspace{-0.4cm} & {X_n(\Gamma_2,\overline{\Q})} \arrow[rrr] & & & {X_n(\Gamma_1,\overline{\Q})}\\
& x_2 \arrow[rrr,maps to] & & & x_1 
\end{tikzcd}
\end{equation*}
We now prove that $\Phi^\ast$ is an algebraic map. Indeed, $\Phi^\ast$ induces a homomorphism of $\overline{\Q}$-algebras 
$$
\Phi^\bullet: \mathrm{Map}(X_n(\Gamma_1,\overline{\Q}),\overline{\Q})\tto \mathrm{Map}(X_n(\Gamma_2,\overline{\Q}),\overline{\Q}),
$$
where $\mathrm{Map}(X_n(\Gamma_i,\overline{\Q}),\overline{\Q})$ refers to the $\overline{\Q}$-algebra consisting of all  $\overline{\Q}$-valued functions on $X_n(\Gamma_1,\overline{\Q})$. 
Since $\overline{\Q}$ is algebraically closed, $\overline{\Q}[X_n(\Gamma_i,\overline{\Q})]$ embeds into $ \mathrm{Map}(X_n(\Gamma_i,\overline{\Q}),\overline{\Q})$ as a $\overline{\Q}$-subalgebra. To show that $\Phi^\ast$ is algebraic, we need to  show that 
\begin{equation}\label{equ: phibullet}
\Phi^\bullet( \overline{\Q}[X_n(\Gamma_1,\overline{\Q})])\subseteq  \overline{\Q}[X_n(\Gamma_2,\overline{\Q})] .
\end{equation} 

Let $S\subseteq \Gamma_1$ be the semigroup as above. Recall by  \autoref{cor: generate Falgebra} that $\{\tr_s\mid s\in S\}$ generates $\overline{\Q}[X_n(\Gamma_1,\overline{\Q})]$ as a $\overline{\Q}$-algebra,  so (\ref{equ: phibullet}) holds if and only if $\Phi^\bullet(\tr_s)\in  \overline{\Q}[X_n(\Gamma_2,\overline{\Q})]$ for each $s\in S$. Indeed, according to (\ref{equ: chisqbar}), for any $x_2\in X_n(\Gamma_2,\overline{\Q})$, 
$$
\Phi^\bullet(\tr_s)(x_2)= \tr_s(\Phi^\ast(x_2))=\tr_s(x_1)=\chi_1(s)=\chi_2(\widetilde{s})=\tr_{\widetilde{s}}(x_2). 
$$
Therefore, $\Phi^\bullet(\tr_s)=\tr_{\widetilde{s}}\in \overline{\Q}[X_n(\Gamma_2,\overline{\Q})]$, and (\ref{equ: phibullet}) holds. Consequently, $\Phi^\bullet$ restricts to a homomorphism of $\overline{\Q}$-algebras
$$
\Phi^\sharp: \overline{\Q}[X_n(\Gamma_1,\overline{\Q})]\to \overline{\Q}[X_n(\Gamma_2,\overline{\Q})]. 
$$
By construction, the map between the maximal ideal spectra induced by $\Phi^\sharp$ is exactly $\Phi^\ast$. Consequently, $\Phi^\ast$ is an algebraic map. 
\end{sloppypar}

In addition, since $\Phi^\sharp(\tr_s)=\tr_{\widetilde{s}}$, \autoref{lem: semigroup generator} implies that for each $\gamma\in \Gamma_1$, $\Phi^\sharp(\tr_\gamma)$ belongs to the $\Q$-subalgebra of $ \overline{\Q}[X_n(\Gamma_2,\overline{\Q})]$ generated by $\{\tr_{\widetilde{s}}\mid s\in S\}$. In fact, $\Phi^\sharp(\tr_\gamma)= f_\gamma (\tr_{\widetilde{s}}:s\in S)$, with $f_\gamma \in \Q[S]$ defined above. Thus, $\Phi^\sharp$ is a polynomial map with coefficients in $\Q$ in terms of the standard generators in (\ref{equ: F generators}). 
\end{proof}

We remind the readers that the construction for $\Phi^\ast$ (and hence  $\Phi^\sharp$) only relies on the existence of a semigroup $S\subseteq \Gamma_1$ satisfying \autoref{def: semiauthentic homomorphism} that guarantees the   assertions \ref{assA} and \ref{assB}, and is independent with the specific choice of $S$, since the representation $\rho_1$ constructed in  the proof is independent with $S$. 

\begin{remark}\label{rmk: construction brief}
The algebraic map given by \autoref{thm: semiauthentic induce Qbar} and \autoref{cor: semiauthentic algebraic map} can be briefly summarized as follows. Suppose as before that $S\subseteq \Gamma_1$ is a semigroup that generates $\Gamma_1$ such that for each $s\in S$, $\Phi(s)$ conjugates to an element $\widetilde{s}\in \Gamma_2 $ within $\widehat{\Gamma_2}$. Then, the assignment
$$\begin{tikzcd}
\tr_s\in F[X_n(\Gamma_1,F)] \arrow[r,maps to] & \tr_{\widetilde{s}}\in F[X_n(\Gamma_2,F)], \quad s\in S 
\end{tikzcd}
$$
extends to a homomorphism of $F$-algebras $F[X_n(\Gamma_1,F)]\to F[X_n(\Gamma_2,F)]$, which is unique according to \autoref{cor: generate Falgebra}. This homomorphism is what we define to be $\Phi^\sharp$, and is in fact  independent with the choice of $S$. 
\end{remark}

We illustrate the algebraic maps given by \autoref{thm: semiauthentic induce Qbar} and \autoref{cor: semiauthentic algebraic map} in two simplest examples that will be used in subsequent sections. 

\begin{example}\label{example}
Let $\Gamma_1$ and $\Gamma_2$ be finitely generated residaully finite groups, and let $F$ be an algebraically closed field of characteristic zero. 
\begin{enumerate}[leftmargin=*, label=(\arabic*)]
\item\label{exchar1} 
Suppose $f: \Gamma_1\to \Gamma_2$ is a homomorphism. Then, $\widehat{f}: \widehat{\Gamma_1}\to \widehat{\Gamma_2}$ is semiauthentic, and it follows from construction that 
$\widehat{f}^\ast: X_n(\Gamma_2,F)\to X_n(\Gamma_1,F)$ is exactly the map $f^\ast$. 
\item\label{exchar2} 
Suppose   $\Phi:\widehat{\Gamma_1}\to \widehat{\Gamma_1}$ is an inner automorphism. Then, $\Phi$ is semiauthentic, and $\Phi^\ast: X_n(\Gamma_1 ,F)\to X_n(\Gamma_1 ,F)$ is the identity map. 
\end{enumerate}

In fact, both cases can be verified using \autoref{rmk: construction brief} by taking the semigroup $S$ to be the entire group $\Gamma_1$. 
\end{example}

\subsection{Composition of semiauthentic homomorphisms}
We point out that a composition of semiauthentic homomorphisms is not necessarily a semiauthentic homomorphism. However, this composition does induce an algebraic map between the character varieties by composing the algebraic maps induced by each semiauthentic homomorphism. 
In this subsection, we explain that this construction is canonical. 


We start with a technical lemma that describes the map constructed in \autoref{thm: semiauthentic induce Qbar} and \autoref{cor: semiauthentic algebraic map} in an abstract way which is independent with a specific $\mathfrak p$-adic construction.

\begin{lemma}\label{lem: technical composition}
 Suppose $\Gamma_1,\Gamma_2$ are finitely generated residually finite groups, and $\Phi: \widehat{\Gamma_{1}}\to \widehat{\Gamma_2}$ is a semiauthentic homomorphism. Let $F$ be an algebraically closed  field of characteristic zero, and let $x\in X(\Gamma_2,F)$. 
We make the following assumptions.
\begin{enumerate}[label=(\alph*), leftmargin=*]
\item The $\SL_n(F)$-character $\chi: \Gamma_2 \to F$ corresponding to $x$ takes value in a prescribed subfield $K\subseteq F$.
\item Let $L$ be an extension of $K$ (and we fix $K$ as the embedded subfield of $L$) such that  there exists a representation $\rho: \Gamma_2 \to \SL_n(L)$ whose character $\chi_\rho : \Gamma_2 \to L$ takes value in $K$ and equals $\chi$. 
\item There exists a representation $P: \widehat{\Gamma_2}\to \SL_n(L)$, where  $\originalwidehat{\Gamma_2}$ is viewed as an abstract group, such that  the following diagram commutes. 
\begin{equation*}
\begin{tikzcd}
\Gamma_2 \arrow[dr,"\rho"] \arrow[d, hook]& \\
\originalwidehat{\Gamma_2} \arrow[r,"P"] & \SL_n(L)
\end{tikzcd}
\end{equation*}
\end{enumerate}

Then, the representation
\begin{equation*}
\begin{tikzcd}[column sep=0.75cm]
\theta: \Gamma_1 \arrow[r, hook] & \widehat{\Gamma_1} \arrow[r, "\Phi"] & \widehat{\Gamma_2} \arrow[r, "P"] & \SL_n(L)
\end{tikzcd}
\end{equation*}
satisfies the following properties. 
\begin{enumerate}[leftmargin=*,label=(\arabic*)]
\item\label{property1} The character $\chi_\theta: \Gamma_1\to L$  takes value in the prescribed subfield  $K$. 
\end{enumerate}

Thus, $\chi_\theta$ can be viewed as an $\SL_n(F)$-character of $\Gamma_1$ via the inclusion map $K\hookrightarrow F$.
\begin{enumerate}[leftmargin=*,label=(\arabic*),resume]
\item\label{property2} 
The  $\SL_n(F)$-character $\chi_\theta$ corresponds exactly to the point $\Phi^\ast(x) \in X(\Gamma_1,F)$. 
\end{enumerate}
\end{lemma}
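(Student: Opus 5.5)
The plan is to imitate the proof of \autoref{thm: semiauthentic induce Qbar}, but with the abstract representation $P$ in place of the $\mathfrak p$-adic one. Fix a semigroup $S\subseteq \Gamma_1$ generating $\Gamma_1$ such that for each $s\in S$ there are $\hat g_s\in \widehat{\Gamma_2}$ and $\widetilde s\in\Gamma_2$ with $\Phi(s)=\hat g_s\widetilde s\hat g_s^{-1}$. Since $P$ is a homomorphism of abstract groups and trace is conjugation invariant, we get
$$\chi_\theta(s)=\tr\bigl(P(\hat g_s)P(\widetilde s)P(\hat g_s)^{-1}\bigr)=\tr(\rho(\widetilde s))=\chi(\widetilde s)\in K$$
for every $s\in S$. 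No continuity of $P$ is needed here, and this is what makes the lemma independent of any particular construction.

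For \ref{property1}, I would pass to an algebraic closure $\overline L$ and regard $\theta$ as an $\SL_n(\overline L)$-representation of $\Gamma_1$. By \autoref{thm: generator of character variety}, $\chi_\theta$ then satisfies every relation in $\mathcal Z_n(\Gamma_1)$. By \autoref{lem: semigroup generator}, each $\gamma\in\Gamma_1$ has a polynomial $f_\gamma\in\Q[S]$, depending only on $\Gamma_1$ and $S$, with $\tr_\gamma=f_\gamma(\tr_s:s\in S)$ modulo $(\mathcal Z_n(\Gamma_1))$. Evaluating at $\chi_\theta$ gives
$$\chi_\theta(\gamma)=f_\gamma\bigl(\chi(\widetilde s):s\in S\bigr)\in K.$$

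For \ref{property2}, recall from \autoref{rmk: construction brief} and \autoref{cor: semiauthentic algebraic map} that $\Phi^\ast(x)$ is the maximal ideal obtained by pulling back $x$ along $\Phi^\sharp_F$, where $\Phi^\sharp_F(\tr_s)=\tr_{\widetilde s}$. So the character of $\Phi^\ast(x)$ sends $s$ to $\tr_{\widetilde s}(x)=\chi(\widetilde s)$ and sends a general $\gamma$ to $f_\gamma(\chi(\widetilde s):s\in S)$. Both formulas agree with $\chi_\theta$, read in $F$ through $K\hookrightarrow F$. Since a point of $X_n(\Gamma_1,F)$ is determined by its values on the generators $\tr_s$ (\autoref{cor: generate Falgebra}), this identifies $\chi_\theta$ with $\Phi^\ast(x)$.

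The step needing most care is making the identification of $\Phi^\ast$ over a general $F$ precise. The map $\Phi^\sharp_F$ was defined as $\mathrm{id}_F\otimes\Phi^\sharp$ from the $\overline{\Q}$ construction, so I must check that it sends $\tr_s$ to $\tr_{\widetilde s}$. This follows from the proof of \autoref{thm: semiauthentic induce Qbar}, where $\Phi^\sharp(\tr_s)=\tr_{\widetilde s}$ was established. The statement is independent of the choice of $S$ (and of the conjugators $\hat g_s$), since both sides are determined intrinsically.
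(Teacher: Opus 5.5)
Your proposal is correct and follows essentially the same route as the paper: compute $\chi_\theta(s)=\chi(\widetilde s)\in K$ on the semigroup $S$ by conjugation-invariance of the trace (no continuity of $P$ needed), then use \autoref{lem: semigroup generator} and the trace relations to get property (1). For property (2), you identify $\chi_\theta$ with $\Phi^\ast(x)$ via the characterization in \autoref{rmk: construction brief}, $\Phi^\sharp(\tr_s)=\tr_{\widetilde s}$, together with the uniqueness given by \autoref{cor: generate Falgebra}.
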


\begin{proof}
This follows from the same  proof of assertion \ref{assA} in \autoref{thm: semiauthentic induce Qbar}. Suppose $S\subseteq \Gamma_1$ is a semigroup which generates $\Gamma_1$, such that for each $s\in S$, $\Phi (s)$   conjugates within $\widehat{\Gamma_2}$ to an element $\widetilde{s}\in \Gamma_2$. Then, for each $s\in S$, $\chi_\theta(s)=\chi_P(\Phi (s))=\chi_\rho(\widetilde s)\in K$. According to \autoref{lem: semigroup generator}, the trace relations for $\chi_\theta$ imply that $\chi_\theta(\gamma)\in K$ for any $\gamma \in \Gamma_1$, which proves property \ref{property1}. 

In addition, \autoref{rmk: construction brief} implies that the $\SL_n(F)$-character corresponding to the point $\Phi ^\ast(x)\in X_n(\Gamma_1,F)$ is the unique map $\chi_1: \Gamma_1\to F$ satisfying all trace relations in $\mathcal{Z}_n(\Gamma_1)$ such that $\chi_1(s)=\chi(\widetilde{s})$ for each $s\in S$. 
Note that $\chi_\theta: \Gamma_1\to F$ does satisfy these   conditions. Hence, property \ref{property2} also holds.
\end{proof}

\begin{proposition}\label{prop: composition}
Let $\Gamma_1$ and $\Gamma_2$ be finitely generated residaully finite groups, and let $F$ be a algebraically closed field of characteristic zero. 
Suppose there are two sequences of finitely generated residually finite groups
$$
\Gamma_1=A_0,\,A_1,\,\cdots,\, A_{k-1},\,A_k=\Gamma_2 \quad\text{and}\quad \Gamma_1=B_0,\,B_1,\,\cdots, B_{l-1},\,B_l=\Gamma_2  ,
$$
together with semiauthentic homomorphisms
$$
\Phi_i: \widehat{A_{i-1}}\tto \widehat{A_{i}} \quad\text{and}\quad \Psi_j: \widehat{B_{j-1}}\tto \widehat{B_{j}}.
$$
If the two homomorphisms $$\Phi_k\circ\cdots \circ \Phi_1:\widehat{\Gamma_1}\tto \widehat{\Gamma_2} \quad\text{and}\quad \Psi_l\circ\cdots \circ \Psi_1: \widehat{\Gamma_1}\tto \widehat{\Gamma_2} $$
are identical to each other, then the two algebraic maps 
$$
\Phi_1^\ast\circ\cdots\circ \Phi_k^\ast:X_n(\Gamma_2,F)\to X_n(\Gamma_1,F) \;\,\text{and}\;\, \Psi_1^\ast\circ\cdots\circ \Psi_l^\ast:X_n(\Gamma_2,F)\to X_n(\Gamma_1,F)
$$
are also identical to each other. 
\end{proposition}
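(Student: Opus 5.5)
The plan is to show that both composite algebraic maps send a point $x\in X_n(\Gamma_2,F)$ to the same point, by producing a single representation $\theta$ of $\Gamma_1$ whose character computes each composite. The tool is \autoref{lem: technical composition}. Its advantage is that it describes $\Phi^\ast(x)$ through \emph{any} representation of the abstract group $\widehat{\Gamma_2}$ extending a representative of $x$, with no reference to a particular $\mathfrak p$-adic construction.

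First, reduce to points defined over a convenient field. Fix $x\in X_n(\Gamma_2,F)$ with character $\chi$, and choose a representation $\rho_2:\Gamma_2\to \SL_n(F)$ with character $\chi$. Its image lies in $\SL_n(R)$ for a finitely generated subring $R\subseteq F$. I would not embed $R$ into some completion, since $F$ may be large, for instance $\C$. Instead, I would use that the composite maps are algebraic and, by \autoref{thm: semiauthentic induce Qbar}, defined over $\Q$ in the trace generators. Two such algebraic maps agree on $X_n(\Gamma_2,F)$ as soon as they agree on $\overline{\Q}$-points, because $X_n(\Gamma_2,\overline{\Q})$ is Zariski dense in $X_n(\Gamma_2,F)$ by (\ref{equ: F tensor}). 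It therefore suffices to take $F=\overline{\Q}$. In that case, as in the proof of \autoref{thm: semiauthentic induce Qbar}, $\rho_2$ takes values in $\SL_n(\mathcal O_{K,\mathfrak p})$ for a number field $K$ and a prime $\mathfrak p$. Set $L=K_{\mathfrak p}$ and let $P_k:\widehat{\Gamma_2}\to\SL_n(L)$ be the continuous extension.

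Next, apply \autoref{lem: technical composition} iteratively down each chain. Put $P^{(k)}=P_k$ on $\widehat{A_k}=\widehat{\Gamma_2}$, and define $P^{(i-1)}=P^{(i)}\circ\Phi_i:\widehat{A_{i-1}}\to\SL_n(L)$. Let $\rho^{(i-1)}$ be its restriction to $A_{i-1}$. Inductively, the lemma gives that $\rho^{(i-1)}$ has character in $K$ and that this character is the point $\Phi_i^\ast(x_i)$, where $x_i$ is the point given by $\rho^{(i)}$. The hypotheses of the lemma hold at each stage because $P^{(i)}$ is a representation of the abstract group $\widehat{A_i}$ extending $\rho^{(i)}$. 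Continuity is not needed, which is exactly why the lemma is phrased for abstract groups. At the end, $\Phi_1^\ast\circ\cdots\circ\Phi_k^\ast(x)$ is the character of $P_k\circ\Phi_k\circ\cdots\circ\Phi_1$ restricted to $\Gamma_1$.

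The same argument applied to the $B$-chain gives $P_k\circ\Psi_l\circ\cdots\circ\Psi_1$ restricted to $\Gamma_1$. Since the two profinite composites are equal, these representations coincide, so the characters and hence the points agree.

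The main obstacle is the careful check of the inductive hypothesis in \autoref{lem: technical composition}. The lemma assumes the character of $\rho$ equals the given point $x$ with values in $K$, and at intermediate stages this is supplied by property \ref{property1} and property \ref{property2} of the previous step. One must also confirm that the reduction to $\overline{\Q}$-points is legitimate, namely that each $\Phi_i^\ast$ over $F$ is the base change of the one over $\overline{\Q}$. This follows from \autoref{cor: semiauthentic algebraic map}.
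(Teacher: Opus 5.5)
Your proposal is correct and follows the paper's proof essentially step for step. You reduce to $F=\overline{\Q}$ via \autoref{cor: semiauthentic algebraic map}, realize $x$ by a $\mathfrak p$-adic extension $\widehat{\Gamma_2}\to\SL_n(K_{\mathfrak p})$, apply \autoref{lem: technical composition} once per factor along each chain, and conclude from the equality of the two profinite composites that the resulting representations of $\Gamma_1$, and hence their characters, coincide; your Zariski-density wording of the reduction to $\overline{\Q}$-points is only a rephrasing of the base-change argument the paper uses.
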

 
\begin{proof}
Let $\overline{\Q}\hookrightarrow F$ be any embedding. Then, \autoref{cor: semiauthentic algebraic map} implies that the map 
$$
\Phi_1^\ast\circ\cdots\circ \Phi_k^\ast: \;X_n(\Gamma_2,F)=X_n(\Gamma_2,\overline{\Q}) {\times}_{{\textstyle\mathstrut}{\scalebox{0.6}{$\overline{\Q}$}}} F \tto X_n(\Gamma_1,\overline{\Q}){\times}_{{\textstyle\mathstrut}{\scalebox{0.6}{$\overline{\Q}$}}}  F = X_n(\Gamma_1,F)
$$
is uniquely determined by
$ 
\Phi_1^\ast\circ\cdots\circ \Phi_k^\ast:X_n(\Gamma_2,\overline{\Q})\to X_n(\Gamma_1,\overline{\Q})
$ through an extension of scalars, and is independent with the embedding $\overline{\Q}\hookrightarrow F$. The same statement holds for $\Psi_1^\ast \circ \cdots \circ \Psi_l^\ast$. Thus, it suffices to prove the proposition assuming $F=\overline{\Q}$. 

Let $x\in X_n(\Gamma_2,\overline{\Q})$ be any point. Let $\chi: \Gamma_2\to \overline{\Q}$ be the corresponding character, and  let $\rho_2:\Gamma_2\to \SL_n(\overline{\Q})$ be a representation whose character equals $\chi$. Following the procedure in the proof of \autoref{thm: semiauthentic induce Qbar}, we can fix a subfield $K\subseteq \overline{\Q}$ with finite degree over $\Q$ and a finite prime  $\mathfrak p$ in $\mathcal{O}_K$ such that the image of $\rho_2$ belongs to $\SL_n(\mathcal{O}_{K,\mathfrak p})$. Then using  \autoref{prop: universal property}, we obtain a homomorphism $P_2: \widehat{\Gamma_2} \to \SL_n(\widehat{\mathcal{O}}_{K,\mathfrak p})$ that fits into the following commutative diagram. 
\begin{equation*}
\begin{tikzcd}
\Gamma_2 \arrow[d,hook] \arrow[r,"\rho_2"] & \SL_n(\mathcal{O}_{K,\mathfrak p}) \arrow[d, hook] \\ 
\widehat{\Gamma_2} \arrow[r, "P_2" ] & \SL_n( \widehat{\mathcal{O}}_{K,\mathfrak p})
\end{tikzcd}
\end{equation*}


The field $K_{\mathfrak{p}}$ is an extension of $K$, where $K$ is viewed as a fixed subfield of  $K_{\mathfrak p}$ via the apparent inclusion. We also embed $\mathcal{O}_{K,\mathfrak p}$ and $\widehat{\mathcal{O}}_{K,\mathfrak p}$ into $K_{\mathfrak p}$, so that $\rho_2$ and $P_2$ are viewed as $\SL_n(K_{\mathfrak p})$-representations of abstract groups. Then, we obtain an $\SL_n(K_{\mathfrak p})$-representation  of $\Gamma_1$ as follows. 
\begin{equation}\label{equ: composite construction}
\begin{tikzcd}[column sep=0.8cm]
                                  &                                                      & A_1 \arrow[d, hook]               &                          \cdots             & A_{k-1} \arrow[d, hook]               &  \Gamma_2 \arrow[d, hook] \arrow[rd, "\rho_2"] &                        \\
\theta:\;\Gamma_1 \arrow[r, hook] & \widehat{\Gamma_1}  \arrow[r, "\Phi_1"] & \widehat{A_1} \arrow[r, "\Phi_2"] & \, \cdots \, \arrow[r, "\Phi_{k-1}"] & \widehat{A_{k-1}} \arrow[r, "\Phi_k"] &  \widehat{\Gamma_2} \arrow[r, "P_2"] & \SL_n(K_{\mathfrak p})
\end{tikzcd}
\end{equation}
Applying \autoref{lem: technical composition} a total of $k$ times to the construction (\ref{equ: composite construction}), where $K_{\mathfrak p}$ plays the role of $L$ in \autoref{lem: technical composition}, we deduce that the character $\chi_\theta$ takes value in the subfield $K$; and $\chi_\theta$ viewed as an $\SL_n(\overline{\Q})$-character corresponds to the point $\Phi_1^\ast \circ \cdots \circ \Phi_k^\ast (x)\in X(\Gamma_1,\overline{\Q})$.

Similarly, the character of the representation
\begin{equation*}
\begin{tikzcd}[column sep=0.8cm]
\theta':\;\Gamma_1 \arrow[r, hook] & \widehat{\Gamma_1} \arrow[r, "\Psi_1"] & \widehat{B_1} \arrow[r, "\Psi_2"] & \cdots \arrow[r] & \widehat{B_{l-1}} \arrow[r, "\Psi_l"] & \widehat{\Gamma_2} \arrow[r, "P_2"] & \SL_n(K_{\mathfrak p})
\end{tikzcd}
\end{equation*}
takes value in $K$; and $\chi_{\theta'}$ viewed as an $\SL_n(\overline{\Q})$-character corresponds to the point $\Psi_1^\ast \circ \cdots \circ \Psi_l^\ast (x)\in X(\Gamma_1,\overline{\Q})$. 

However, $\Phi_k\circ \cdots \circ \Phi_1=\Psi_l\circ \cdots \circ \Psi_1$, so $\theta=\theta'$ as $\SL_n(K_{\mathfrak p})$-representations, and $\chi_\theta=\chi_{\theta'}$ as $\overline{\mathbb Q}$-valued functions. Consequently, $\Phi_1^\ast \circ \cdots \circ \Phi_k^\ast (x)= \Psi_1^\ast \circ \cdots \circ \Psi_l^\ast (x)\in X(\Gamma_1,\overline{\Q})$. 
\end{proof}

\begin{remark}
In fact, \autoref{prop: composition} establishes the functoriality of the construction given by \autoref{thm: semiauthentic induce Qbar} and \autoref{cor: semiauthentic algebraic map}. Let $\mathscr{C}$ be a category  whose objects consist of finitely  generated residually finite groups, and whose set of morphisms $\mathrm{Mor}_{\mathscr{C}}(\Gamma_1,\Gamma_2)$ consists of  homomorphisms $\widehat{\Gamma_1}\to \widehat{\Gamma_2}$ which are  compositions of semiauthentic homomorphisms. Then for any algebraically closed field $F$ of characteristic zero, we have defined a sequence of contravariant functors from $\mathscr{C}$ to the category of affine algebraic sets over $F$, which send  a finitely  generated residually finite group to its $\SL_n(F)$-character variety. 
\end{remark}

\begin{corollary}\label{cor: bi-semiauthentic induce isomorphism}
Let  $\Gamma_1$ and $\Gamma_2$ be finitely generated residually finite groups, and let $\Phi: \widehat{\Gamma_1}\to \widehat{\Gamma_2}$   be a bi-semiauthentic isomorphism. Then, for any algebraically closed  field $F$ of characteristic zero, $\Phi$ induces an algebraic isomorphism $\Phi^\ast: X_n(\Gamma_2,F)\to X_n(\Gamma_1,F)$. 
\end{corollary}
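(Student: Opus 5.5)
By the definition of bi-semiauthenticity, both $\Phi$ and $\Phi^{-1}$ are semiauthentic. \autoref{cor: semiauthentic algebraic map} therefore gives two algebraic maps
$$
\Phi^\ast: X_n(\Gamma_2,F)\to X_n(\Gamma_1,F)\quad\text{and}\quad (\Phi^{-1})^\ast: X_n(\Gamma_1,F)\to X_n(\Gamma_2,F).
$$
We claim these are mutually inverse. Once that is shown, $\Phi^\ast$ is an isomorphism of affine algebraic sets, with algebraic inverse $(\Phi^{-1})^\ast$, which is the assertion.

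To verify $\Phi^\ast\circ(\Phi^{-1})^\ast=\mathrm{id}_{X_n(\Gamma_1,F)}$, we apply \autoref{prop: composition} with $\Gamma_2$ there replaced by $\Gamma_1$. The first chain is $\Gamma_1=A_0,\;A_1=\Gamma_2,\;A_2=\Gamma_1$, with $\Phi_1=\Phi$ and $\Phi_2=\Phi^{-1}$. The second chain is the trivial one, $\Gamma_1=B_0,\;B_1=\Gamma_1$, with $\Psi_1=\mathrm{id}_{\widehat{\Gamma_1}}$.

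The identity is semiauthentic, since we may take the semigroup $S=\Gamma_1$. By \autoref{example}~\ref{exchar1} applied to $f=\mathrm{id}_{\Gamma_1}$, the induced map $\mathrm{id}^\ast$ is the identity of $X_n(\Gamma_1,F)$. Alternatively, this follows directly from \autoref{rmk: construction brief}, because $\mathrm{id}^\sharp$ sends each $\tr_s$ to itself.

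The two composites on $\widehat{\Gamma_1}$ agree, since $\Phi_2\circ\Phi_1=\Phi^{-1}\circ\Phi=\mathrm{id}=\Psi_1$. \autoref{prop: composition} then yields
$$
\Phi_1^\ast\circ\Phi_2^\ast=\Phi^\ast\circ(\Phi^{-1})^\ast=\mathrm{id}^\ast=\mathrm{id}.
$$
Exchanging the roles of $\Gamma_1$ and $\Gamma_2$ gives $(\Phi^{-1})^\ast\circ\Phi^\ast=\mathrm{id}_{X_n(\Gamma_2,F)}$ in the same way.

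The only point needing care is that the composite $\Phi^{-1}\circ\Phi$ may not itself be semiauthentic in a way that matches the individual maps, and a priori the induced map could depend on the chosen semigroups. \autoref{prop: composition} handles exactly this issue, since the map it produces depends only on the composite homomorphism of profinite groups. With that proposition available, no real obstacle remains beyond checking its hypotheses, as above.
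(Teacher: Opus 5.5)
Your proposal is correct and follows the paper's own proof. Both arguments apply \autoref{prop: composition} to the composites $\Phi^{-1}\circ\Phi$ and $\Phi\circ\Phi^{-1}$, use \autoref{example} to see that the identity of $\widehat{\Gamma_i}$ induces the identity of $X_n(\Gamma_i,F)$, and conclude that $\Phi^\ast$ and $(\Phi^{-1})^\ast$ are mutually inverse algebraic maps.
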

\begin{proof}
For $i=1,2$, the identity homomorphism $\widehat{\Gamma_i}\to \widehat{\Gamma_i}$ induces the identity map $X_n(\Gamma_i,F)\to X_n(\Gamma_i,F)$; see \autoref{example}. Hence, applying \autoref{prop: composition} to the compositions $\Phi\circ \Phi^{-1}$ and $\Phi^{-1}\circ \Phi$, we deduce that the algebraic maps $\Phi^\ast: X_n(\Gamma_2,F)\to X_n(\Gamma_1,F)$ and $(\Phi^{-1})^\ast: X_n(\Gamma_1,F)\to X_n(\Gamma_2,F)$ are inverses of each other. In particular, $\Phi^\ast $ is an algebraic isomorphism. 
%
\end{proof}

\section{Automorphism of profinite surface group III: realization}\label{sec: realisation}
In this section, we apply the construction in \autoref{sec: 11} to study automorphisms of profinite surface groups. This section is devoted to the following theorem.

\begin{theorem}\label{thm: surface genuine}
Let $\Sigma_1$ and $\Sigma_2$ be closed orientable surfaces, and let $\phi: \widehat{\pi_1\Sigma_1}\to \widehat{\pi_1\Sigma_2}$ be an isomorphism. Assume that the genus of $\Sigma_1$ and $\Sigma_2$ is at least $2$. If $\phi$ is  virtually  hereditarily bi-semiauthentic, then $\phi$ is genuine.
\end{theorem}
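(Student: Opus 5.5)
By \autoref{cor: virtually genuine} (which applies since surface groups of genus $\ge 2$ are finitely generated non-elementary Fuchsian, hence Kleinian, groups), it suffices to show that $\phi$ is virtually genuine. So first pass to a $\phi$-corresponding pair of finite-index subgroups $\pi_1\Sigma_1'$, $\pi_1\Sigma_2'$ on which the restriction $\phi'$ is hereditarily bi-semiauthentic, and rename these as $\Sigma_i$ and $\phi$. Now every further $\phi$-corresponding pair of finite covers carries a bi-semiauthentic restriction.

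\textbf{Step 1: an isomorphism of character varieties.} By \autoref{cor: bi-semiauthentic induce isomorphism}, $\phi$ induces an algebraic isomorphism $\phi^\ast: X(\pi_1\Sigma_2)\to X(\pi_1\Sigma_1)$ of $\SL_2(\C)$-character varieties. By \autoref{rmk: construction brief}, on a generating semigroup $S$ it sends $\tr_s$ to $\tr_{\widetilde s}$, where $\phi(s)$ is conjugate to $\widetilde s\in\pi_1\Sigma_2$. Hence the isomorphism carries trace functions of such elements to trace functions. The March\'e--Simon theorem \cite{MS21} states that any algebraic isomorphism between $\SL_2(\C)$-character varieties of closed surface groups is induced by a homeomorphism, i.e.\ by an isomorphism $f:\pi_1\Sigma_1\to\pi_1\Sigma_2$ with $f^\ast=\phi^\ast$, possibly up to the action of $H^1(\Sigma;\Z/2)$ by sign changes. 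I would verify exactly which normalization \cite{MS21} gives, and absorb any sign-twist ambiguity by passing to a further cover.

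\textbf{Step 2: comparing $\phi$ with $\widehat f$.} Set $\theta=\widehat{f}^{-1}\circ\phi\in\Aut(\widehat{\pi_1\Sigma_1})$. By \autoref{prop: composition} together with \autoref{example}\ref{exchar1}, $\theta$ induces the identity on $X(\pi_1\Sigma_1)$. The same holds on every finite-index characteristic subgroup: apply Step 1 to $\Sigma_i^{(m)}$ to get $f_m$, and check compatibility with $f$ up to deck transformations and inner automorphisms. Using the trace-function description of the identity, we get, for $s$ in a generating semigroup, that $\theta(s)$ is conjugate to some $\widetilde s\in\pi_1\Sigma_1$ with $\tr\rho(s)=\tr\rho(\widetilde s)$ for all $\rho$. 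Since trace functions on the $\SL_2$-character variety of a surface group separate conjugacy classes up to inversion (Horowitz-type results), it follows that $\theta(s)$ is conjugate to $s^{\pm1}$. A sign/orientation argument, as in the proof of \autoref{cor: surface coefficient}, should rule out a mixture of signs.

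\textbf{Step 3: concluding genuineness.} To conclude that $\theta$ is inner, I would use \autoref{cor: genuine criteria 1}: show that $\theta^\flat_{(m)}$ is conjugacy-equivalent to an automorphism induced by some $g\in\Out(\pi_1\Sigma_1)$ for every $m$, using that $\theta$ preserves the conjugacy class of each element of a large family, possibly via the finite covers $\Sigma^{(m)}$. The main obstacle is this final passage from ``$\theta$ fixes many conjugacy classes'' (a profinite analogue of being class-preserving) to ``$\theta$ is inner.'' The approach I would try first is to apply Step 2 to pairs of filling simple closed curves in covers, so that $\theta$ fixes the conjugacy classes of two filling curves. Then \autoref{thm: cut and glue} reduces the problem to the cut surfaces, and the semigroup generated by dynamical elements pins down the outer class via \autoref{lem: extension lemma}.
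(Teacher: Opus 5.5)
\textbf{There is a genuine gap, and it sits in Steps~2 and~3.}

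\emph{The separation fact in Step~2 is false.} $\SL_2(\C)$-trace functions do not separate conjugacy classes up to inversion in a closed surface group. Horowitz's work is in fact the source of counterexamples, not a separation theorem.

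In $F_2=\langle a,b\rangle$, a word and its reversal always have the same trace function: $\rho$ and $a\mapsto\rho(a)^{T}$, $b\mapsto\rho(b)^{T}$ have the same character. Yet $aba^2b^2$ and $b^2a^2ba$ are not conjugate to each other or to each other's inverses. Realizing $F_2$ as the fundamental group of an essential one-holed torus in $\Sigma$ gives such pairs in $\pi_1\Sigma$ (Randol's length-equivalent geodesics). Traces do separate \emph{simple} closed curves, and that is the only way the paper uses them. Elements of your semigroup $S$ are not simple, so $\theta(s)\sim s^{\pm1}$ does not follow.

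\emph{Your claim that $\theta=\widehat f^{-1}\circ\phi$ acts trivially on every $X(\Sigma^{(m)})$ is false in general.} Take $\phi=\Inn_{\hat g}$ with $\hat g\in\widehat{\pi_1\Sigma}\setminus\pi_1\Sigma$, which is a genuine isomorphism. For every inner $f$, the restriction of $\theta$ to some $\overline{\pi_1\Sigma^{(m)}}$ acts on $X(\Sigma^{(m)})$ as a nontrivial deck transformation. Controlling exactly this level-to-level discrepancy is the heart of the proof, and you leave it as ``check compatibility''.

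\emph{Step~3 is not yet an argument.} The suggested route via \autoref{thm: cut and glue} only reduces the problem to isomorphisms of free profinite groups $\widehat{\pi_1F_i}$. No analogue of \autoref{thm: MS theorem} is available for those, and \autoref{lem: extension lemma} concerns normal extensions, not HNN splittings.

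\textbf{What the paper does instead} (\autoref{lem: major step to realisation}) is discard conjugacy-class information and compare finite quotients directly.

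First, two corrections to your Step~1. You must pass to a \emph{proper} cover so the genus is at least $3$; in genus $2$ the hyperelliptic involution lies in the kernel of the March\'e--Simon map. The $H^1(\Sigma;\Z/2\Z)$ ambiguity is not removed by passing to covers. It disappears because $\phi^\ast$ fixes the trivial character (\autoref{lem: technical composition} with $K=L=\Q$), and the stabilizer of that character is $\Mod^{\pm}(\Sigma)$.

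Then the argument runs as follows.
\begin{enumerate}
\item At each level $m$, \autoref{thm: MS theorem} gives $\widetilde f\in\Aut(\pi_1\Sigma^{(m)})$ with $\widetilde f^\ast=\widetilde\phi^\ast$.
\item By \autoref{prop: composition} and the \emph{injectivity} of $\Out(\pi_1\Sigma^{(m)})\to\Aut_{\mathrm{alg}}(X(\Sigma^{(m)}))$, $\widetilde f$ intertwines the deck action of $Q_m=\pi_1\Sigma/\pi_1\Sigma^{(m)}$ on $\Out(\pi_1\Sigma^{(m)})$ with $\phi^\flat_{(m)}$.
\item Since $\pi_1\Sigma^{(m)}$ has trivial center, \autoref{prop: standard abstract algebra} descends $\widetilde f$ to $f_1\in\Aut(\pi_1\Sigma)$ with $(f_1)^\flat_{(m)}=\phi^\flat_{(m)}$.
\item Pulling back along $p\colon\pi_1\Sigma^{(m)}\hookrightarrow\pi_1\Sigma$ gives $p^\ast\circ f_1^\ast=p^\ast\circ f^\ast$. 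Evaluate this on an elevation of a simple closed curve whose images under homeomorphisms realizing $f_1$ and $f$ are non-homotopic. In a lifted Fuchsian representation, traces are $2\cosh$ of half-lengths \cite{Sun14}, so this forces $f_1\simeq f$ for the $f$ fixed at level $1$.
\end{enumerate}
Hence $\phi^\flat_{(m)}\simeq f^\flat_{(m)}$ for all $m$, with one $f$ independent of $m$. That is exactly what \autoref{cor: genuine criteria 1} requires; an $m$-dependent automorphism, as in your Step~3, would not suffice.
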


Although we have stated it in an abstract and  general form, \autoref{thm: surface genuine}  is indeed intended to deal with the concrete  examples provided by \autoref{prop: filling curves virtually hereditarily bi-semiauthentic} and \autoref{lem: technical VHBS}. 
Also note that \autoref{thm: surface genuine} is obviously true when $\Sigma_1$ and $\Sigma_2$ are tori, since  bi-semiauthenticity implies genuiness for the profinite completions of finitely generated abelian groups. 

\subsection{Automorphism of $\SL_2(\C)$-character variety}

\begin{sloppypar}
 The proof of \autoref{thm: surface genuine} builds upon deep results concerning the $\SL_2(\C)$-character varieties of surfaces. For a closed orientable surface $\Sigma$, we abbreviate the $\SL_2(\C)$-character variety of its fundamental group into  $X(\Sigma)$; see \autoref{convention: character variety}.  
When the genus $g(\Sigma)\ge 2$, $X(\Sigma)$ is an irreducible affine variety of dimension $6g(\Sigma)-6$; see  \cite{Rap96} for a full account. 

\end{sloppypar}

For any finitely generated group $\Gamma$, the group 
$$
H^1(\Gamma;\Z/2\Z)\rtimes \Out(\Gamma)
$$
acts on $X(\Gamma)$ by algebraic automorphisms. To be specific, $f\in \Aut(\Gamma)$ acts algebraically on $X(\Gamma)$ by 
$
(f^{-1})^\ast : X(\Gamma)\to X(\Gamma),
$ 
where inner automorphisms act trivially on $X(\Gamma)$. This descends to an action of $\Out(\Gamma)$ on $X(\Gamma)$. In addition, $  H^1(\Gamma;\Z/2\Z)\cong \Hom(\Gamma,\{\pm1\})$ acts algebraically on $R(\Gamma)$ by multiplication with a  central representation $\Gamma \to \{\pm I\}$. The action  is invariant under conjugation by $\SL_2(\C)$, and  descends to an algebraic automorphism on $X(\Gamma)$. In other words, $\epsilon\in \Hom(\Gamma,\{\pm1\})$ sends an $\SL_2(\C)$-character $\chi:\Gamma\to \C$ to $\epsilon\cdot \chi:\Gamma\to \C$. These two actions are compatible with the $\Out(\Gamma)$-action on $H^1(\Gamma;\Z/2\Z)$, so we obtain  a homomorphism $
H^1(\Gamma;\Z/2\Z)\rtimes \Out(\Gamma) \to \Aut_{\text{alg}}(X(\Gamma))$. 

When $\Gamma$ is the fundamental group of a closed orientable surface $\Sigma$ with positive genus, $\Out(\Gamma)$ can be identified with the extended mapping class group $\Mod^\pm (\Sigma)$ according to the Dehn--Nielsen--Baer theorem; see (\ref{equ: DNB}). Thus, we have a homomorphism
$$
H^1(\Sigma;\Z/2\Z)\rtimes \Mod^\pm (\Sigma) \tto \Aut_{\text{alg}}(X(\Sigma)).
$$

We will apply the following theorem of March\'e and Simon. 
\begin{sloppypar}
\begin{theorem}[{\cite{MS21}}]\label{thm: MS theorem}
When the genus $g(\Sigma)\ge 3$, the so defined homomorphism $H^1(\Sigma;\Z/2\Z)\rtimes \Mod^\pm (\Sigma) \to \Aut_{\mathrm{alg}}(X(\Sigma))$ is an isomorphism. 
\end{theorem}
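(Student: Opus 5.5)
The plan is to recover the mapping class and the sign character from how an algebraic automorphism $\Phi$ of $X(\Sigma)$ acts on the coordinate ring $\C[X(\Sigma)]$ through trace functions. Injectivity comes first and is the easier half. Suppose $(\epsilon,[f])$ acts trivially. Then for every $\gamma\in\pi_1\Sigma$ we have $\tr_{f(\gamma)}=\epsilon(\gamma)\tr_\gamma$ as functions on $X(\Sigma)$. Evaluating on discrete faithful (Fuchsian) characters, where trace functions separate conjugacy classes up to inversion, $f$ must preserve every conjugacy class up to inversion. A standard argument then forces $f$ to be inner, so $[f]=1$, and comparing signs on these characters gives $\epsilon=1$.

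For surjectivity, let $\Phi\in\Aut_{\mathrm{alg}}(X(\Sigma))$ with pullback $\Phi^\sharp$ on $\C[X(\Sigma)]$. The first key step is an intrinsic characterization, invariant under algebraic automorphisms, of the functions $\pm\tr_\alpha$ for $\alpha$ a simple closed curve. My first attempt uses the filtration of $\C[X(\Sigma)]$ by the valuations at infinity, namely the divisorial valuations of a compactification, which Morgan--Shalen identify with measured laminations. A simple closed curve function is then characterized by two properties: it is a regular function whose generic fibers are as degenerate as possible, since the level sets of $\tr_\alpha$ are preserved by the algebraic twist flow along $\alpha$; and its "tropicalization" is the intersection function $i(\alpha,\cdot)$. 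Concretely, I would show that $\tr_\alpha$ for simple $\alpha$ are exactly the nonconstant regular functions $h$ for which the fibration $h\colon X(\Sigma)\to\C$ admits a nontrivial algebraic $\C$- or $\C^\ast$-action along the fibres, namely the Goldman twist flow. This property is manifestly preserved by $\Phi$, so $\Phi^\sharp(\tr_\alpha)=\pm\tr_{\alpha'}$ for some simple closed curve $\alpha'$, possibly after an affine normalization that I would then need to rule out.

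This produces a bijection $\alpha\mapsto\alpha'$ on simple closed curves. Two curves are disjoint exactly when the corresponding trace functions Poisson-commute, or, phrased purely algebraically, when their twist actions commute. Hence the bijection preserves disjointness and defines an automorphism of the curve complex. By Ivanov's theorem, valid for genus $\ge3$, this automorphism is induced by some $[f]\in\Mod^\pm(\Sigma)$. Composing $\Phi$ with the action of $[f]^{-1}$ leaves an automorphism $\Phi_0$ with $\Phi_0^\sharp(\tr_\alpha)=\epsilon(\alpha)\tr_\alpha$ for all simple $\alpha$. Simple closed curve traces generate $\C[X(\Sigma)]$, and the sign function $\epsilon$ must respect the trace relations among simple curves, which are skein relations of the type $\tr_a\tr_b=\tr_{ab}+\tr_{ab^{-1}}$. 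From these relations I would show that $\epsilon$ is multiplicative on $H_1(\Sigma;\Z/2)$, so $\epsilon\in H^1(\Sigma;\Z/2\Z)$ and $\Phi_0$ is the action of $\epsilon$.

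The main obstacle is the intrinsic characterization of simple-curve trace functions among all regular functions. In particular, one must exclude the possibility that $\Phi^\sharp(\tr_\alpha)$ is a nonlinear polynomial in some $\tr_{\alpha'}$, or a function of non-simple curves, that happens to share the fibration property. For this I would try the valuation and tropical approach first: compare the degrees of $\Phi^\sharp(\tr_\alpha)$ along the boundary divisors and use the fact that an automorphism permutes these divisors. This degree comparison is also where the hypothesis $g(\Sigma)\ge 3$ enters, alongside its use in Ivanov's theorem.
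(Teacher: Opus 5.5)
The paper gives no proof of this statement. It is quoted from \cite{MS21} and used as a black box in \autoref{lem: major step to realisation}, so your sketch has to carry the whole argument. Its central step is missing. Everything rests on showing that an arbitrary algebraic automorphism $\Phi$ sends $\pm\tr_\alpha$ ($\alpha$ simple) to some $\pm\tr_{\alpha'}$, and your proposed characterization does not do this.

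Read literally (a nontrivial algebraic $\C$- or $\C^\ast$-action on $X(\Sigma)$ preserving the fibres of $h$), the property holds for no function at all. Goldman's twist flow of $\tr_\alpha$ is holomorphic, not algebraic: on the level set $\tr_\alpha=t$ its rate is governed by an eigenvalue $\lambda$ with $\lambda+\lambda^{-1}=t$, which is not a regular function. More decisively, a nontrivial algebraic $\C$- or $\C^\ast$-action has finite kernel. It would therefore embed an uncountable group into $\Aut_{\mathrm{alg}}(X(\Sigma))$, which the very theorem you are proving says is countable.

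Read fibrewise, the property does not single out $\tr_\alpha$. Every $P(\tr_\alpha)$ qualifies. Moreover, the $\alpha$-twist preserves the fibres of every function Poisson-commuting with $\tr_\alpha$, for instance $\tr_\beta$ for any (possibly non-simple) $\beta\subset\Sigma\setminus\alpha$. What distinguishes $\tr_\alpha$ is that the twist is its \emph{own} Hamiltonian flow. That uses the Goldman Poisson structure, which an arbitrary algebraic $\Phi$ is not known to preserve; that is a consequence of the theorem, not an input. For the same reason your disjointness criterion (Poisson-commutation, or commuting twist flows) is not $\Phi$-invariant as it stands.

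The valuation fallback is not right as stated either. An automorphism of an affine variety in general does not extend biregularly to a compactification, so it need not permute the boundary divisors of a fixed one. It only acts on the space of all valuations at infinity, and you would need an intrinsic, $\Phi$-stable description of the lamination-type valuations inside that space. That is where the real work lies, and it is left open.

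The injectivity half also contains an error: trace functions do not separate conjugacy classes up to inversion, even on all of $X(\Sigma)$. Take a one-holed torus subsurface with $\pi_1=\langle a,b\rangle$. The automorphism $a\mapsto a^{-1}$, $b\mapsto b^{-1}$ acts trivially on the $\SL_2(\C)$-character variety of $\langle a,b\rangle$. Hence $w=a^2bab^2$ and its reverse $b^2aba^2$ have identical trace functions, although they are not conjugate up to inversion (a classical observation of Horowitz).

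The repair is to use only simple closed curves, whose trace functions do distinguish isotopy classes (as in the proof of \autoref{lem: major step to realisation}, via \cite{Sun14}). You then need that a mapping class fixing every simple closed curve is trivial, which requires $g(\Sigma)\ge 3$: in genus $2$ the hyperelliptic involution fixes every simple closed curve and acts trivially on $X(\Sigma)$. So the genus hypothesis enters through injectivity. It does not enter through Ivanov's theorem, whose surjectivity statement also holds in genus $2$, or through a degree comparison.

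Your last step, extracting $\epsilon\in H^1(\Sigma;\Z/2\Z)$ from $\tr_a\tr_b=\tr_{ab}+\tr_{ab^{-1}}$ and linear independence of multicurve functions, is essentially fine once the key step is in place.
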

\end{sloppypar}
In particular, the subgroup $\Mod^\pm(\Sigma)$ is the stabilizer of the $\SL_2(\C)$-character of the trivial representation. 
\begin{remark}
When $g(\Sigma)=2$, \cite{MS21} proved that this homomorphism is surjective, with kernel being the hyperelliptic involution. We will avoid this case using \autoref{cor: virtually genuine}. 
\end{remark}

\subsection{Finite cover of surfaces}
The major step to \autoref{thm: surface genuine} is the following lemma.

\begin{lemma}\label{lem: major step to realisation}
Let $\Sigma$ be a closed orientable surface with genus $g(\Sigma)\ge 3$, and let $\phi: \widehat{\pi_1\Sigma}\to \widehat{\pi_1\Sigma}$ be an isomorphism. Suppose that the restriction of $\phi$ to the closure of each standard characteristic subgroup is bi-semiauthentic. Then, $\phi$ is genuine. 
\end{lemma}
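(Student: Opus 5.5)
The plan is to turn bi-semiauthenticity into automorphisms of character varieties, identify those with mapping classes via March\'e--Simon, and then assemble the mapping classes across the tower of standard characteristic covers into a genuine isomorphism using \autoref{cor: genuine criteria 1}.

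Write $\Gamma=\pi_1\Sigma$ and $\Gamma^{(m)}$ for the standard characteristic subgroups, and let $\phi_m:\widehat{\Gamma^{(m)}}\to\widehat{\Gamma^{(m)}}$ be the restriction of $\phi$. Each $\phi_m$ is bi-semiauthentic, so \autoref{cor: bi-semiauthentic induce isomorphism} gives an algebraic isomorphism $\phi_m^\ast$ of $X(\Gamma^{(m)})$. The cover $\Sigma^{(m)}$ has genus at least $g(\Sigma)\ge 3$, so \autoref{thm: MS theorem} applies. It writes $\phi_m^\ast$ as $(\epsilon_m,\theta_m)$ with $\epsilon_m\in H^1(\Sigma^{(m)};\Z/2)$ and $\theta_m\in\Mod^\pm(\Sigma^{(m)})\cong\Out(\Gamma^{(m)})$.

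\textbf{Step 1: $\epsilon_m=0$.} By the construction in \autoref{rmk: construction brief}, $\phi_m^\ast$ sends the character of the trivial representation to itself: it sends a character $\chi$ to the character determined by $s\mapsto\chi(\widetilde s)$, and this is identically $2$ when $\chi\equiv 2$. The $\Mod^\pm$-factor is exactly the stabilizer of this point, so $\phi_m^\ast$ lies in $\Mod^\pm(\Sigma^{(m)})$. Let $f_m\in\Aut(\Gamma^{(m)})$ represent $\theta_m$, normalized so that $\phi_m^\ast=f_m^\ast$ as maps on $X(\Gamma^{(m)})$.

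\textbf{Step 2: compare $\widehat{f_m}$ with $\phi_m$.} I want $\widehat{f_m}\simeq\phi_m$, or at least agreement modulo the characteristic subgroups. Both are compatible with the action on the character variety, since $\widehat{f_m}^\ast=f_m^\ast$ by \autoref{example}~\ref{exchar1}. So $\Psi=\widehat{f_m}^{-1}\circ\phi_m$ is a composition of semiauthentic maps inducing the identity on $X(\Gamma^{(m)})$, by \autoref{prop: composition}. The main obstacle is extracting group-theoretic information about $\Psi$ from this.

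My first attempt is to use all finite quotients at once. Take any representation $\rho$ of $\Gamma^{(m)}$ factoring through a finite quotient, realized in $\SL_2$ over a number field, and run the $\mathfrak p$-adic construction of \autoref{lem: technical composition}. This shows that $\rho\circ\Psi$ and $\rho$ (extended to the profinite completion) have the same $\SL_2$-character on $\Gamma^{(m)}$. Characters do not separate all elements, however, so I would instead pass to higher covers. Consider $m'\ge m$ with $\Gamma^{(m')}\le\Gamma^{(m)}$. Restrict $f_m$ to $\Gamma^{(m')}$; by the uniqueness in \autoref{thm: MS theorem} together with functoriality of restriction, this restriction agrees with $f_{m'}$ up to the deck-group ambiguity. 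Then compare the reductions $(\phi)^\flat_{(k)}$ and $(f)^\flat_{(k)}$ on the finite quotients $\Gamma/\Gamma^{(k)}$.

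\textbf{Step 3: passing from the covers to $\Gamma$ itself.} I need a single $f\in\Aut(\Gamma)$. For each $m$, the class $\theta_m$ must be the lift of a mapping class of $\Sigma$ modulo deck transformations. The reason is that $\phi_m$ is normalized by the finite group $\Gamma/\Gamma^{(m)}$ acting via $\phi$, so $\theta_m$ normalizes the deck group, and by Birman--Hilden-type uniqueness it descends to some $\bar\theta_m\in\Mod^\pm(\Sigma)$. The compatibility across $m$ then shows that $\bar\theta_m$ satisfies $\phi^\flat_{(k)}\simeq\bar f^\flat_{(k)}$ for $k$ with $\Gamma^{(k)}\supseteq\Gamma^{(m)}$. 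A compactness argument over the finitely many candidates at each level, as in \autoref{lem: conjugacy equivalence criteria}, produces one $f$ that works for every $k$. Alternatively, it would be enough to show $\bar\theta_m$ is eventually constant, since the candidates with prescribed action on a fixed finite quotient form a finite set. \autoref{cor: genuine criteria 1} then gives that $\phi$ is genuine.
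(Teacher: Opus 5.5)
Your Step 1 and the descent in Step 3 match the paper, with one sharpening. Let $\theta\colon Q_m=\Gamma/\Gamma^{(m)}\to\Out(\Gamma^{(m)})$ be the conjugation action. What you need is the twisted equivariance $\widetilde f_\ast\circ\theta=\theta\circ\phi^\flat_{(m)}$, not merely that $\widetilde f$ normalizes the deck group. To get it, take lifts $g_1,g_2\in\Gamma$ of $q$ and $\phi^\flat_{(m)}(q)$, and set $\Theta_i=\Inn_{g_i}|_{\Gamma^{(m)}}$. Then $\widehat{\Theta_2}{}^{-1}\circ\phi_m\circ\widehat{\Theta_1}\circ\phi_m^{-1}$ is inner, and applying \autoref{prop: composition} and the injectivity in \autoref{thm: MS theorem} to it gives the equivariance. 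Normalization alone would not guarantee that the extension $\bar f_m\in\Aut(\Gamma)$ induces $\phi^\flat_{(m)}$ on $Q_m$. The extension itself comes from \autoref{prop: standard abstract algebra} because $\Gamma^{(m)}$ is centerless; Birman--Hilden is not needed.

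The genuine gap is the final assembly. For fixed $k$, the set of classes $[f]\in\Out(\Gamma)$ with $f^\flat_{(k)}\simeq\phi^\flat_{(k)}$ is not finite. It is a coset of the finite-index subgroup $\ker(\Out(\Gamma)\to\Out(Q_k))$ of the infinite group $\Out(\Gamma)$, and nested nonempty sets of this kind can have empty intersection. The compactness argument of \autoref{lem: conjugacy equivalence criteria} only produces a compatible system of classes in the finite groups $\Out(Q_k)$. At best this shows that $[\phi]$ lies in the closure of the image of $\Out(\Gamma)$ in $\Out(\widehat{\Gamma})$, not in the image. Your parenthetical justification of the ``eventually constant'' alternative fails for the same reason, and you give no argument for eventual constancy. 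The ``compatibility across $m$'' from Step 2 is also unsupported: $\Gamma^{(m')}$ need not be characteristic in $\Gamma^{(m)}$, so $f_m$ need not restrict to it.

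The missing idea is to anchor every level to $m=1$. Since $\Gamma^{(1)}=\Gamma$, March\'e--Simon gives a single $f\in\Aut(\Gamma)$ with $\phi^\ast=f^\ast$. One then proves $\bar f_m\simeq f$ for \emph{every} $m$, so the sequence is in fact constant.

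Let $p\colon\Gamma^{(m)}\hookrightarrow\Gamma$. From $p\circ\widetilde f=\bar f_m\circ p$ and $\phi\circ\widehat{p}=\widehat{p}\circ\phi_m$, \autoref{prop: composition} and \autoref{example} give $p^\ast\circ\bar f_m^\ast=\widetilde f^\ast\circ p^\ast=\phi_m^\ast\circ\widehat{p}{}^\ast=\widehat{p}{}^\ast\circ\phi^\ast=p^\ast\circ f^\ast$ as maps $X(\Sigma)\to X(\Sigma^{(m)})$.

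Since $p^\ast$ is not injective, you still need that restricted characters separate mapping classes. Suppose $\bar f_m\not\simeq f$. Choose a simple closed curve $\alpha$ whose images under the corresponding homeomorphisms are not homotopic. Choose a hyperbolic structure on $\Sigma$ in which these two images have different lengths, and lift its holonomy to $\SL_2(\R)$. Evaluate $|\tr|=2\cosh(n\ell/2)$ on an elevation of $\alpha$ to $\Sigma^{(m)}$ that is an $n$-fold cover. This distinguishes the two pulled-back characters, contradicting the equality above.

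Hence $\phi^\flat_{(m)}=(\bar f_m)^\flat_{(m)}\simeq f^\flat_{(m)}$ for all $m$ with one fixed $f$, and \autoref{cor: genuine criteria 1} concludes.
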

\begin{proof}
First of all, the restriction of $\phi$ to the first standard characteristic subgroup, which is $\phi$ itself, is bi-semiauthentic. According to \autoref{cor: bi-semiauthentic induce isomorphism}, $\phi$ induces an algebraic automorphism  $\phi^\ast: X(\Sigma)\to X(\Sigma)$. By construction, $\phi^\ast$ fixes the character of the trivial representation; this can be verified using \autoref{lem: technical composition} in which one takes $K=L=\Q$ and takes $\rho$ and $P$ to be the trivial representations. Hence, according to \autoref{thm: MS theorem}, $\phi^\ast: X(\Sigma)\to X(\Sigma)$ is induced by the pull-back of a homeomorphism $h: \Sigma\to \Sigma$. To be concrete, we take an automorphism $f\in \Aut(\pi_1\Sigma)$ belonging to the outer automorphism class that corresponds to  $[h]\in \Mod^{\pm}(\Sigma)$ via (\ref{equ: DNB}), and $\phi^\ast$ is consistent with $f^\ast: X(\Sigma)\to X(\Sigma)$.  

Next, for each $m\in \N$,  consider the $m$-th standard characteristic subgroup of $\pi_1\Sigma$, which corresponds to the $m$-th standard characteristic cover $\Sigma^{(m)}$ of $\Sigma$. We denote by $p:  {\pi_1\Sigma^{(m)}} \to  \pi_1\Sigma $ the inclusion map, which is induced by the covering map $\Sigma^{(m)}\to \Sigma$. We denote the restriction of $\phi$ to $\overline{\pi_1\Sigma^{(m)}}$ as $\widetilde{\phi}: \widehat{\pi_1\Sigma^{(m)}}\to \widehat{\pi_1\Sigma^{(m)}}$, and we abbreviate the quotient group $\pi_1\Sigma/\pi_1\Sigma^{(m)}$ into $Q_m$. Then, we have the following commutative diagram
\begin{equation}\label{equ: standard characteristic surface}
\begin{tikzcd}
1 \arrow[r] & \widehat{\pi_1\Sigma^{(m)}} \arrow[r, hook,"\widehat{p}"] \arrow[d,"\widetilde{\phi}"',"\cong"] & \widehat{\pi_1\Sigma} \arrow[r, two heads] \arrow[d,"\phi"',"\cong"] & Q_m \arrow[r] \arrow[d,"\phi^\flat_{(m)}"',"\cong"] & 1 \\
1 \arrow[r] & \widehat{\pi_1\Sigma^{(m)}} \arrow[r, hook,"\widehat{p}"]   & \widehat{\pi_1\Sigma} \arrow[r, two heads]   & Q_m \arrow[r]  & 1
\end{tikzcd}
\end{equation}
where the rows are exact; see \autoref{sec: Standard characteristic quotients} for the notation of $\phi^\flat_{(m)}$. 

By assumption, $\widetilde{\phi}$ is bi-semiauthentic. Again by  \autoref{cor: bi-semiauthentic induce isomorphism}, $\widetilde{\phi}$ induces an algebraic automorphism  $\widetilde{\phi}^\ast: X(\Sigma^{(m)})\to X(\Sigma^{(m)})$, which fixes the character of the trivial representation. Note that $g(\Sigma^{(m)})\ge g(\Sigma)\ge 3$. Hence, \autoref{thm: MS theorem} again implies that $\widetilde{\phi}^\ast$ is induced by the pull-back of a homeomorphism $\widetilde{h}: \Sigma^{(m)} \to \Sigma^{(m)}$. As above, we take an automorphism $\widetilde{f}\in \Aut(\pi_1\Sigma^{(m)})$ belonging to the outer automorphism class corresponding to  $[\widetilde{h}]$, and $\widetilde{\phi}^\ast$ is identical with $\widetilde{f}^\ast: X(\Sigma^{(m)})\to X(\Sigma^{(m)})$. 

\begin{claim}\label{clm1}
Let $\theta:Q_m \to \Out(\pi_1\Sigma^{(m)})$ be the homomorphism induced by $\pi_1\Sigma$ acting via conjugation on the normal subgroup $\pi_1\Sigma^{(m)}$.  
Then, the following diagram commutes.
\begin{equation}\label{equ: Qm surface}
\begin{tikzcd}
Q_m \arrow[r,"\theta"]  \arrow[d,"\phi^\flat_{(m)}"'] & \Out(\pi_1\Sigma^{(m)}) \arrow[d,"\widetilde{f}_\ast"]\\
Q_m \arrow[r,"\theta"] & \Out(\pi_1\Sigma^{(m)})
\end{tikzcd}
\end{equation}
\end{claim}

\begin{proof}[Proof of \autoref{clm1}]
Let $q_1\in Q_m$ be an arbitrary element, and we denote $q_2=\phi^\flat_{(m)}(q_1)$ for brevity. Recall that $Q_m=\pi_1\Sigma/\pi_1\Sigma^{(m)}$, so we can choose preimages $g_1,g_2\in \pi_1\Sigma$ for $q_1,q_2$ respectively. For $i=1,2$, the automorphism
\begin{equation*}
\Theta_i={\Inn_{g_i}|}_{\pi_1\Sigma^{(m)}} \in \Aut(\pi_1\Sigma^{(m)})
\end{equation*}
is a representative for the outer automorphism class $\theta(q_i)$, and  it extends to an automorphism $\widehat{\Theta_i}$ of $\widehat{\pi_1\Sigma^{(m)}}$ via profinite completion. In addition, the commutative diagram (\ref{equ: standard characteristic surface}) implies that $g_2^{-1}\phi(g_1)\in \widehat{\pi_1\Sigma^{(m)}}$, and therefore,  $$\widehat{\Theta_2}{}^{-1}\circ \widetilde{\phi} \circ \widehat{\Theta_1} \circ \widetilde{\phi}^{-1}: \widehat{\pi_1\Sigma^{(m)}}\to   \widehat{\pi_1\Sigma^{(m)}}$$
is an inner automorphism of  $\widehat{\pi_1\Sigma^{(m)}}$ induced by the conjugator $g_2^{-1}\phi(g_1)$. In particular, 
$$(\widehat{\Theta_2}{}^{-1}\circ \widetilde{\phi} \circ \widehat{\Theta_1} \circ \widetilde{\phi}^{-1})^{\ast}:X(\Sigma^{(m)})\to   X(\Sigma^{(m)})$$
is the identity map according to \autoref{example}~\ref{exchar2}. Note that $\widehat{\Theta_1}$ and $\widehat{\Theta_2}$ are bi-semiauthentic by \autoref{example}~\ref{exchar1}, and $\widetilde{\phi}$ is bi-semiauthentic by our assumption. Hence, we can invoke \autoref{prop: composition} to decompose this composition of maps. Specifically, we deduce that
\begin{equation*}
\begin{aligned}
id_{X(\Sigma^{(m)})}=&(\widehat{\Theta_2}{}^{-1}\circ \widetilde{\phi} \circ \widehat{\Theta_1} \circ \widetilde{\phi}^{-1})^{\ast} & \\
= & (\widetilde{\phi}^{-1})^\ast\circ \widehat{\Theta_1}{}^\ast \circ \widetilde{\phi}^\ast \circ (\widehat{\Theta_2}{}^{-1})^\ast  & \text{(\autoref{prop: composition})}\\
= & (\widetilde{\phi}^\ast)^{-1}\circ \widehat{\Theta_1}{}^\ast \circ \widetilde{\phi}^\ast \circ (\widehat{\Theta_2^{-1}})^\ast  & \text{(\autoref{cor: bi-semiauthentic induce isomorphism})}\\
= & (\widetilde{f}^\ast)^{-1} \circ \Theta_1^\ast \circ \widetilde{f} ^ \ast \circ (\Theta_2^{-1})^\ast & \text{(\autoref{example}~\ref{exchar1})}\\
= & (\Theta_2^{-1} \circ \widetilde{f} \circ \Theta_1 \circ \widetilde{f}^{-1}) ^\ast \\
= & \big [\theta(q_2)^{-1} \cdot \widetilde{f}_\ast (\theta(q_1))\big ]^\ast.
\end{aligned}
\end{equation*}
In particular, $ \theta(q_2) ^{-1} \cdot  \widetilde{f}_\ast (\theta(q_1)) \in \Out(\pi_1\Sigma^{(m)})  $ belongs to the kernel of the homomorphism $\Out(\pi_1\Sigma^{(m)})\to \Aut_{\mathrm{alg}}(X(\Sigma^{(m)}))$. According to \autoref{thm: MS theorem}, $ \theta(q_2)^{-1} \cdot  \widetilde{f}_\ast (\theta(q_1)) $ is the trivial outer automorphism class. In other words, $\theta(q_2)= \widetilde{f}_\ast (\theta(q_1)) $ for every $q_1\in Q_m$, so  diagram (\ref{equ: Qm surface}) commutes. 
\end{proof}

The commutative diagram (\ref{equ: Qm surface}) implies a homotopy equivariance  of the homeomorphism $\widetilde{h} : \Sigma^{(m)} \to \Sigma^{(m)}$ under deck transformations. We would now like to descend $\widetilde{h}$ to a homeomorphism of $\Sigma$,  which we approach in a group theoretic way.  
Note that $\pi_1\Sigma^{(m)}$ has trivial center. Thus,  applying \autoref{prop: standard abstract algebra} to the group extension $1\to \pi_1\Sigma^{(m)} \to\pi_1\Sigma\to Q_m \to 1$, we can obtain a unique isomorphism $f_1: \pi_1\Sigma \to \pi_1\Sigma$ such that the following diagram commutes. 
\begin{equation}\label{equ: characteristic surface 2}
\begin{tikzcd}
1 \arrow[r] & \pi_1\Sigma^{(m)} \arrow[r, hook,"p"] \arrow[d,"\widetilde{f}"',"\cong"] & \pi_1\Sigma \arrow[r, two heads] \arrow[d,"f_1"',"\cong"] & Q_m \arrow[r] \arrow[d,"\phi^\flat_{(m)}"',"\cong"] & 1 \\
1 \arrow[r] & \pi_1\Sigma^{(m)} \arrow[r, hook,"p"] & \pi_1\Sigma \arrow[r, two heads] & Q_m \arrow[r] & 1
\end{tikzcd}
\end{equation}
By  Dehn--Nielsen--Baer theorem, the isomorphism $f_1$ can be induced by a homeomorphism  $h_1: \Sigma\to \Sigma$ up to conjugacy. 

\begin{claim}\label{clm2}
$f_1$ is conjugacy equivalent with $f$ (see \autoref{def: conjugacy equivalent}). 
\end{claim}
\begin{proof}[Proof of \autoref{clm2}]
Invoking \autoref{prop: composition} and \autoref{example}~\ref{exchar1}, we have the following sequence of identical algebraic maps from $X(\Sigma)$ to $ X(\Sigma^{(m)})$  deduced  from   the commutative diagrams (\ref{equ: standard characteristic surface}) and (\ref{equ: characteristic surface 2}).
\begin{equation}\label{equ: pullback equal}
p^\ast \circ f_1^{ \ast} =\widetilde{f}^\ast \circ p^\ast =\widetilde{\phi}^\ast \circ \widehat{p}{}^\ast = \widehat{p}{}^\ast \circ \phi^\ast = p^\ast \circ f^\ast : X(\Sigma) \tto X(\Sigma^{(m)}) .
\end{equation}

Suppose by contrary that $f_1$ is not conjugacy equivalent with $f$. Then the homeomorphism $h_1$ is not homotopic to $h$, and \cite[Section 2]{Kor85} implies that there exists an unoriented simple closed curve $\alpha \subset \Sigma$ such that $h_1(\alpha)$ is not homotopic to $h(\alpha)$. We denote $\beta_1=h_1(\alpha)$ and $\beta=h(\alpha)$, and let $b_1,b\in \pi_1\Sigma$ be conjugacy representatives for $\beta_1,\beta$ equipped with arbitrary orientations.  According to \cite[Corollary 3.1]{Sun14}, there exists a hyperbolic structure $X$ on $\Sigma$ such that $l_X(\beta_1)\neq l_X(\beta)$, where $l_X$ denotes the hyperbolic length of the geodesic representative. 
The hyperbolic structure $X$ is given by a discrete faithful representation $\rho_0: \pi_1\Sigma \to \PSL_2(\R)$. It is well-known, see for example \cite{Pat75},  that $\rho_0$ lifts to an $\SL_2(\R)$-representation $\rho: \pi_1\Sigma \to \SL_2(\R)$, which we view as an $\SL_2(\C)$-representation via $\R\hookrightarrow \C$. 
According to (\ref{equ: pullback equal}), $\rho\circ f_1 \circ p: \pi_1\Sigma^{(m)}\to \SL_2(\R)$  and $\rho\circ f \circ p: \pi_1\Sigma^{(m)}\to \SL_2(\R)$ have the same character. 

Let $\widetilde{\alpha}$ be an elevation of $\alpha$ in $\Sigma^{(m)}$, and suppose that $\widetilde{\alpha}$ is an $n$-fold cover of $\alpha$. Let $g\in \pi_1\Sigma^{(m)}$ be a conjugacy representative of $\widetilde{\alpha}$ equipped with an arbitrary orientation. Then, 
$$
\abs{\tr(\rho\circ f_1 \circ p(g))}=\abs{\tr(\rho(b_1^{\pm n}))}=2\cosh\left(\frac{n}{2}l_X(\beta_1)\right),
$$
and 
$$
\abs{\tr(\rho\circ f \circ p(g))}=\abs{\tr(\rho(b ^{\pm n}))}=2\cosh\left(\frac{n}{2}l_X(\beta )\right).
$$
However, $l_X(\beta_1)\neq l_X(\beta)$, which implies that $ {\tr(\rho\circ f \circ p(g))}\neq  {\tr(\rho\circ f_1 \circ p(g))}$. Hence, we reach a contradiction with (\ref{equ: pullback equal}). 
\end{proof}

Following the notations in \autoref{sec: Standard characteristic quotients}, it is clear from diagram (\ref{equ: characteristic surface 2}) that  $(f_1)^\flat_{(m)}=\phi^\flat_{(m)}$. Since $f\simeq f_1$, we also have $f^\flat_{(m)}\simeq (f_1)^\flat_{(m)}$ by \autoref{lem: conjugacy equivalence criteria}. Consequently, $f^\flat_{(m)}\simeq \phi^\flat_{(m)}$ for every $m\in \N$, where $f\in \Aut(\pi_1\Sigma)$ is fixed at the beginning. According to \autoref{cor: genuine criteria 1}, we deduce that $\phi$ is genuine. 
\end{proof}

\begin{remark}
\begin{enumerate}[leftmargin=*, label=(\arabic*)]
\item In the setting of \autoref{lem: major step to realisation}, it suffices to assume that the restrictions of $\phi$ to a cofinal system of finite-index normal subgroups, including $\pi_1\Sigma$ itself,  are all bi-semiauthentic. We work with standard characteristic subgroups for the sake of convenience.
\item In the proof of  \autoref{clm2}, we utilize hyperbolic geometry to show that $\tr_{b^n}\neq \tr_{b_1^n}\in \C[X(\Sigma)]$, where $b,b_1\in \pi_1\Sigma$ represent non-homotopic simple closed curves on $\Sigma$. This can also be proven by a more theoretical approach, using the fact that the trace functions evaluated at  multicurves on $\Sigma$ (allowing homotopic components) form a $\C$-linear basis of $\C[X(\Sigma)]$, see \cite{Prz99,PS00}.
\end{enumerate}
\end{remark}

We  now finish the proof of  \autoref{thm: surface genuine}. 

\begin{proof}
We can find a $\phi$-corresponding pair of finite-index proper subgroups  $\pi_1\Sigma_1'\lneq \pi_1\Sigma_1$ and $\pi_1\Sigma_2'\lneq \pi_1\Sigma_2$, corresponding to non-trivial  finite covers $\Sigma_1'$ of $\Sigma_1$ and $\Sigma_2'$ of $\Sigma_2$, such that the restriction $\phi': \widehat{\pi_1\Sigma_1'}\to \widehat{\pi_1\Sigma_2'}$ is hereditarily bi-semiauthentic. In this case, $g(\Sigma'_1)=g(\Sigma_2')\ge 3$. Through a group isomorphism that identifies $\pi_1\Sigma_1'$ with $\pi_1\Sigma_2'$, \autoref{lem: major step to realisation} implies that  $\phi'$ is genuine.  Then, \autoref{cor: virtually genuine} implies that $\phi$ itself is also genuine. 
\end{proof}

\section{The uniform case}\label{sec: Uniform}
In this section, we prove \autoref{mainthm1} and \autoref{mainthm2} for uniform lattices in $\PSL_2(\C)$. We shall first prove these theorems for the fundamental groups of crossfibered manifolds, and then generalize them to all uniform lattices using the virtual crossfibering theorem (\autoref{thm: virtual crossfibering}).

\begin{lemma}\label{lem: crossfibered genuine}
Suppose $M_1$ and $M_2$ are crossfibered manifolds, and $\Phi: \widehat{\pi_1M_1}\to \widehat{\pi_1M_2}$  is an isomorphism. Then, $\pi_1M_1\cong\pi_1M_2$, and $\Phi$ is genuine. 
\end{lemma}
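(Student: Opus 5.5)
The plan is to reduce to the facial fiber surface subgroups and then apply the extension lemma. By \autoref{lem: crossfibered regular} and \autoref{conv: crossfibering 2}, $\Phi$ is regular with homological datum $(1,f)$. Fix a crossfibering datum $(M_1,\psiAyi,\psiFyi,\gamma_1)$ and let $(M_2,\psiAer,\psiFer,\gamma_2)$ be the $\Phi$-corresponding crossfibering datum given by \autoref{prop: detect crossfibering}. After composing $\Phi$ with an inner automorphism (harmless for genuineness), assume $\Phi(g_1)=g_2$ for conjugacy representatives $g_i$ of $\gamma_i$ with $g_i\in\pi_1S_i$, where $S_i=S_{M_i,\psi_{\mathrm F,i}}$. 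As in the proof of \autoref{lem: crossfibered regular}, with $\mu=1$, diagram (\ref{equ: ses commutative crossfibering}) becomes a commutative diagram of short exact sequences
\begin{equation*}
1\to\widehat{\pi_1S_i}\to\widehat{\pi_1M_i}\to\widehat{\Z}\to1,
\end{equation*}
whose left vertical map is the restriction $\phi:\widehat{\pi_1S_1}\to\widehat{\pi_1S_2}$ and whose right vertical map is the identity of $\widehat{\Z}$, which is the completion of $\mathrm{id}_\Z$. Moreover $\phi(g_1)=g_2$ with $g_i\neq1$.

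The next step is to check the hypotheses of \autoref{lem: technical VHBS} for $\phi$. Hypothesis (a) is already in hand. For (b), let $t_1\in\pi_1M_1$ with $\psiFyi(t_1)=1$, so that conjugation by $t_1$ on $\pi_1S_1$ represents the pseudo-Anosov monodromy $[F_1]$. Since $\Phi(t_1)$ maps to $1\in\widehat{\Z}$, we have $\Phi(t_1)=t_2\hat u$ with $\hat u\in\widehat{\pi_1S_2}$. Conjugating by $t_1$ and applying $\Phi$ then gives $\phi\circ\widehat{F_1}\circ\phi^{-1}=\Inn_{\hat u'}\circ\widehat{F_2}$ on $\widehat{\pi_1S_2}$, so $\phi_\ast[\widehat{F_1}]=[\widehat{F_2}]$. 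Hence $\phi$ is virtually hereditarily bi-semiauthentic, and \autoref{thm: surface genuine} shows that $\phi$ is genuine.

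Finally, apply \autoref{lem: extension lemma} to $1\to\pi_1S_i\to\pi_1M_i\to\Z\to1$, after identifying $\pi_1S_1\cong\pi_1S_2$ through the isomorphism underlying the genuine $\phi$; the lemma is stated for a common $H$, so I would transport along this identification. The hypotheses hold: $\pi_1S_i$ is finitely generated; it and its completion are centerless by \autoref{thm: lattice center free}, since surface groups are non-elementary Fuchsian and hence Kleinian; and $\Out(\pi_1S)\to\Out(\widehat{\pi_1S})$ is injective by \autoref{cor: completion map injective}. The lemma then gives that $\Phi$ is genuine, and in particular $\pi_1M_1\cong\pi_1M_2$.

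I expect the main obstacle to be verifying condition (b) cleanly, in particular confirming that the monodromy outer classes match exactly rather than up to inverse or sign. The matching should follow from $\mu=1$ forcing the right vertical map to be the identity, so I would check that carefully first. The transport of $H$ in the extension lemma is routine bookkeeping.
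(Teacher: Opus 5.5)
Your proposal is correct and follows the paper's proof essentially step for step. The steps match in order: regularity lets you normalize to $\Phi(g_1)=g_2$ with the identity on $\widehat{\Z}$; the monodromy classes are matched; \autoref{lem: technical VHBS} and \autoref{thm: surface genuine} make $\phi$ genuine; and \autoref{lem: extension lemma} finishes. Your explicit computation with $t_1$ just unpacks the paper's commutative square $\widehat{\Z}\to\Out(\widehat{\pi_1S_i})$. The one point you leave implicit is that the fibers $S_i$ have genus at least $2$ because $M_i$ is closed hyperbolic, which both \autoref{lem: technical VHBS} and \autoref{thm: surface genuine} require.
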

\begin{proof}
Let $(M_1,\psiAyi,\psiFyi,\gamma_1)$ and $(M_2,\psiAer,\psiFer,\gamma_2)$ be a $\Phi$-corresponding pair of crossfibering data, see \autoref{prop: detect crossfibering}, \autoref{conv: crossfibering 1}, and \autoref{conv: crossfibering 2}. For $i=1,2$, we abbreviate the fiber surface $S_{M_i,\psi_{\mathrm{F},i}}$ into $S_i$, and $\pi_1 S_i$ represents a distinguished normal subgroup in $\pi_1M_i$. 
The  periodic trajectory $\gamma_i$ freely homotopes onto $S_i$, so we can pick a conjugacy representative  $g_i\in \pi_1S_i\le \pi_1M_i$ for its free homotopy class. 
Since we have proven the regularity of $\Phi$ (\autoref{lem: crossfibered regular}), by possibly composing $\Phi$ with an inner automorphism of $\widehat{\pi_1M_2}$ -- which does not change its conjugacy equivalence class -- we can assume that $\Phi(g_1)=g_2$. Recall that in the   proof of \autoref{lem: crossfibered regular}, we have obtained a commutative diagram (\ref{equ: ses commutative crossfibering}) between short exact sequences: 
\begin{equation}\label{equ: closed finish ses}
\begin{tikzcd}
1 \arrow[r] & \widehat{\pi_1S_1} \arrow[r, hook] \arrow[d, "\phi"',"\cong"] & \widehat{\pi_1M_1} \arrow[r,"\widehat{\psiFyi}"] \arrow[d, "\Phi"',"\cong"] & \widehat{\Z} \arrow[d, "\text{id}" ] \arrow[r] & 1 \\
1 \arrow[r] & \widehat{\pi_1S_2} \arrow[r,hook]                    & \widehat{\pi_1M_2} \arrow[r,"\widehat{\psiFer}"]                    & \widehat{\Z} \arrow[r]                   & 1
\end{tikzcd}
\end{equation}
where $\phi$ is the restriction of $\Phi$, and the map $\widehat{\Z}\to \widehat{\Z}$ has been refined to be the identity map due to the regularity of $\Phi$ (see \autoref{conv: crossfibering 2}). In particular, we also have  $\phi(g_1)=g_2$. 

For $i=1,2$, let $[f_i]\in \Mod(S_i)$ be the monodromy   of the fibered class $\psi_{\mathrm{F}_i}$, which is a pseudo-Anosov mapping class, and let $[F_i]\in \Out(\pi_1S_i)$ be its corresponding outer automorphism class under the Dehn--Nielsen--Baer isomorphism (\ref{equ: DNB}). Then, the homomorphism $\widehat{\Z} \to \Out(\widehat{\pi_1S_i})$ induced by $\widehat{\pi_1M_i}$ acting via conjugation on $\widehat{\pi_1S_i}$ sends $1\in \widehat{\Z}$ to $[\widehat{F_i}]\in \Out(\widehat{\pi_1S_i})$. The commutative diagram (\ref{equ: closed finish ses}) implies the following commutative diagram. 
\begin{equation*}
\begin{tikzcd}
\widehat{\Z} \arrow[d,"\mathrm{id}"'] \arrow[r] & \Out(\widehat{\pi_1S_1}) \arrow[d,"\phi_\ast"] \\
\widehat{\Z} \arrow[r] &   \Out(\widehat{\pi_1S_2})
\end{tikzcd}
\end{equation*}
In particular, $\phi_\ast[\widehat{F_1}]=[\widehat{F_2}]$. 

Note that $S_1$ and $S_2$ have   genus   at least $2$. Consequently,  \autoref{lem: technical VHBS} implies that $\phi$ is virtually hereditarily bi-semiauthentic. Then, we apply \autoref{thm: surface genuine} to deduce that $\phi$ is genuine.  

Note that each $\pi_1S_i$ is finitely generated and has trivial center. In addition, $\widehat{\pi_1S_i} $ has trivial center according to \autoref{thm: lattice center free}, and the profinite completion map $\Out(\pi_1S_i)\to \Out(\widehat{\pi_1S_i})$ is injective according to \autoref{cor: completion map injective}. Furthermore, $id:\widehat{\Z}\to \widehat{\Z}$ is the profinite completion of $id: \Z\to \Z$.  Therefore, \autoref{lem: extension lemma} applies to the commutative diagram (\ref{equ: closed finish ses}), which implies that $\pi_1M_1\cong \pi_1M_2$ and $\Phi$ is genuine. 
\end{proof}

\begin{theorem}\label{thm: main for uniform}
Any uniform lattice $\Gamma \le \PSL_2(\C)$ is profinitely rigid among all lattices in $\PSL_2(\C)$. In addition, the profinite completion map $\Out(\Gamma)\to \Out(\widehat{\Gamma})$ is an isomorphism. 
\end{theorem}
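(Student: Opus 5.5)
The plan is to reduce both assertions to a single statement, and then prove that statement by passing to crossfibered finite covers. The single statement is: for uniform lattices $\Gamma_1,\Gamma_2\le\PSL_2(\C)$ and any isomorphism $\Phi:\widehat{\Gamma_1}\to\widehat{\Gamma_2}$, $\Phi$ is genuine.

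First, the reduction. Suppose $\Gamma$ is a uniform lattice and $\Delta$ is any lattice with $\widehat{\Gamma}\cong\widehat{\Delta}$. By \autoref{prop: torsion free}, $\Delta$ is also uniform. Genuineness of an isomorphism $\Phi$ then forces $\Gamma\cong\Delta$, which gives profinite rigidity. For the outer automorphism statement, injectivity of $\Out(\Gamma)\to\Out(\widehat{\Gamma})$ is \autoref{cor: completion map injective}, since a lattice is finitely generated and non-elementary. For surjectivity, take $\Phi\in\Aut(\widehat{\Gamma})$. If $\Phi$ is genuine, then $\Phi=\Inn_{\hat g}\circ\widehat f$ for some $f\in\Aut(\Gamma)$, so the class $[\Phi]$ is the image of $[f]$.

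Second, the reduction to the torsion-free, crossfibered case. By \autoref{cor: virtually genuine}, it suffices to show that $\Phi$ is virtually genuine. By Selberg's lemma, choose a torsion-free finite-index subgroup $\Gamma_1'\le\Gamma_1$; then $\Gamma_1'=\pi_1M_1$ for a closed orientable hyperbolic 3-manifold $M_1$. By \autoref{thm: virtual crossfibering}, after passing to a further finite-index subgroup we may assume $M_1$ is crossfibered. Let $\Gamma_2'$ be the $\Phi$-corresponding subgroup of $\Gamma_2$, so that $\widehat{\Gamma_2'}\cong\widehat{\pi_1M_1}$. By \autoref{prop: torsion free}, applied to $\Gamma_2'$ (itself a lattice), $\Gamma_2'$ is a torsion-free uniform lattice, hence $\Gamma_2'=\pi_1M_2$ for a closed hyperbolic 3-manifold $M_2$. \autoref{prop: detect crossfibering} then shows that $M_2$ is crossfibered as well.

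Now \autoref{lem: crossfibered genuine} applies to the restriction $\Phi':\widehat{\pi_1M_1}\to\widehat{\pi_1M_2}$ and shows it is genuine. So $\Phi$ is virtually genuine, hence genuine by \autoref{cor: virtually genuine}.

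There is no serious obstacle at this stage; all the depth sits in \autoref{lem: crossfibered genuine}. The only point needing care is transferring torsion-freeness and uniformity to the corresponding subgroup $\Gamma_2'$. This works because a finite-index subgroup of a uniform lattice is again a uniform lattice, so \autoref{prop: torsion free} can be applied to the pair $\Gamma_1',\Gamma_2'$ directly.
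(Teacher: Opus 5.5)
Your proposal is correct and follows the paper's proof essentially step for step. The paper likewise gets injectivity from \autoref{cor: completion map injective}, passes via Selberg's lemma and \autoref{thm: virtual crossfibering} to a crossfibered cover, and transfers torsion-freeness, uniformity and crossfibering to the $\Phi$-corresponding subgroup via \autoref{prop: torsion free} and \autoref{prop: detect crossfibering}. It then concludes with \autoref{lem: crossfibered genuine} and \autoref{cor: virtually genuine}; the only cosmetic difference is that you first record that $\Delta$ itself is uniform, which the paper does not state separately.
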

\begin{proof}
Recall that the injectivity of $\Out(\Gamma)\to \Out(\widehat{\Gamma})$ has been proven by  \autoref{cor: completion map injective}, so it suffices to show the profinite rigidity of $\Gamma$ and the surjectivity of $\Out(\Gamma)\to \Out(\widehat{\Gamma})$. In other words, it suffices to show that if $\Delta\le \PSL_2(\C)$ is a lattice and $\Phi: \widehat{\Gamma}\to \widehat{\Delta}$ is an isomorphism, then $\Gamma\cong \Delta$ and  $\Phi$ is genuine.  

According to Selberg's lemma and \autoref{thm: virtual crossfibering}, there exists a finite-index subgroup $\Gamma'\le \Gamma$ such that $\Gamma'$ is the fundamental group of a crossfibered manifold. Let $\Delta'\le \Delta$ be the $\Phi$-corresponding finite-index subgroup, and let $\Phi' : \widehat{\Gamma' }\to \widehat{\Delta'}$ be the restriction of $\Phi$. \autoref{prop: torsion free} implies that $\Delta'$ is a torsion-free uniform lattice in $\PSL_2(\C)$, and \autoref{prop: detect crossfibering} then implies that $\Delta'$ is also the fundamental group of a crossfibered manifold. Consequently, \autoref{lem: crossfibered genuine} implies that $\Phi'$ is genuine. Then, applying \autoref{cor: virtually genuine}, we finally deduce that $\Phi$ is genuine. In particular, $\Gamma \cong \Delta$. 
\end{proof}

\section{The non-uniform case}\label{sec: non-uniform}
In this section, we prove \autoref{mainthm1} and \autoref{mainthm2} for non-uniform lattices in $\PSL_2(\C)$. The proof is based on the conclusions for uniform lattices proven in \autoref{sec: Uniform}, with Thurston's hyperbolic Dehn surgery theory serving once again as the key tool. We shall first deal with the case of torsion-free non-uniform lattices, and then generalize it to all non-uniform lattices.

\subsection{Peripheral regularity}
In this subsection, we recall a notion of peripheral regularity introduced by the author in \cite{Xu25}. 

Suppose $M$ is a compact orientable 3-manifold with  incompressible toral boundary, and we denote the boundary components of $M$ by $\partial_1 M,\cdots, \partial_kM$ respectively. Up to an identification of basepoints, we have an injective homomorphism  $ \text{incl}_\ast: \pi_1\partial_i M \to \pi_1M$ which is  well-defined up to conjugacy equivalence. Note that any  abelian subgroup  in a finitely generated 3-manifold group is closed in the profinite topology \cite{Ham01}. Thus, \autoref{prop: left exact} implies that $\widehat{\text{incl}_\ast}: \widehat{\pi_1\partial_iM}\to \widehat{\pi_1M}$ is injective. In particular, $\overline{\pi_1\partial _i M}\cong \widehat{\Z}^2$. 

\begin{definition}\label{def: peripheral regular}
Suppose $M $ and $N$ are compact orientable 3-manifolds with incompressible toral boundary. An isomorphism $\Phi: \widehat{\pi_1M }\to \widehat{\pi_1N}$ is {\em peripheral regular} if there exists a homeomorphism $\varphi: \partial M \to \partial N$, which we respectively denote as $\varphi_i:\partial_i M \to \partial_i N$ on the corresponding components, such that the following property holds. For each $i$, there exists $g_i \in \widehat{\pi_1N}$ such that the following diagram commutes. 
\begin{equation*}
\begin{tikzcd}[row sep=large]
\widehat{\pi_1\partial _i M} \arrow[rr, " \widehat{{\varphi_i}_\ast}" ,"\cong"'] \arrow[d,"\widehat{\text{incl}_\ast}"', hook ] & & \widehat{\pi_1\partial_i N} \arrow[d,"\widehat{\text{incl}_\ast}" , hook] \\
\widehat{\pi_1M} \arrow[r, "\Phi","\cong"'] & \widehat{\pi_1N} \arrow[r," \Inn_{g_i}","\cong"'] & \widehat{\pi_1N}
\end{tikzcd}
\end{equation*}

To specify this homeomorphism, we say that $\Phi$ is {\em peripheral regular with respect to $\varphi$}. 
\end{definition}


\begin{proposition}[{\cite[Theorem 1.4]{Xu25}}]\label{thm: peripheral regular}
Suppose $M$ and $N$ are orientable cusped finite-volume hyperbolic 3-manifolds, with incompressible toral boundaries obtained from truncating their cusps. 
Then, any isomorphism $\Phi: \widehat{\pi_1M}\to \widehat{\pi_1N}$ is peripheral regular. 
\end{proposition}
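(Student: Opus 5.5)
This result is cited from \cite[Theorem 1.4]{Xu25}, and the plan is to reconstruct its proof from the tools now available, using hyperbolic Dehn filling to reduce to the closed case. First, I will show that $\Phi$ matches up peripheral subgroups up to conjugacy. For a cusped hyperbolic manifold, the maximal $\widehat{\Z}^2$-subgroups of $\widehat{\pi_1M}$ are exactly the conjugates of the $\overline{\pi_1\partial_iM}$. This uses the profinite JSJ/peripheral detection of Wilton--Zalesskii, together with malnormality of the peripheral subgroups in the completion, argued as in \autoref{lem: malnormal} via \cite{WZ17}. Hence $\Phi$ sends each $\overline{\pi_1\partial_iM}$ to a conjugate of some $\overline{\pi_1\partial_{\sigma(i)}N}$, for a bijection $\sigma$. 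Relabelling, and after composing with $\Inn_{g_i}$, I obtain isomorphisms $\Phi_i:\widehat{\Z}^2\cong\widehat{\pi_1\partial_iM}\to\widehat{\pi_1\partial_iN}$, i.e.\ elements of $\GL_2(\widehat{\Z})$ in integral bases. The task is to show that each $\Phi_i$ lies in $\GL_2(\Z)$.

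Second, I will use Dehn filling to pin down slopes. For a primitive slope $c$ on $\partial_iM$, the element $\Phi_i(c)$ is a primitive element of $\widehat{\Z}^2$, but a priori it is not a rational direction. I will fill all cusps of $M$ along slopes far from the exceptional sets of \autoref{thm: hyperbolic dehn surgery}, obtaining closed hyperbolic manifolds. By \autoref{prop: right exact}, $\Phi$ descends to an isomorphism from $\widehat{\pi_1M(c)}$ onto $\widehat{\pi_1N}/\overline{\langle\!\langle \Phi(c_j)\rangle\!\rangle}$. The key point is that this quotient should itself be the completion of a hyperbolic Dehn filling of $N$. That requires that whenever a profinite quotient of $\widehat{\pi_1N}$ by a peripheral element normal closure is ``good'' (for instance, has the right cohomological dimension or volume via \autoref{prop: torsion free}-type invariants), the element is, up to a unit of $\widehat{\Z}$, an integral slope.

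Third, I will use the closed case. Once the fillings are matched, with $\Phi_i(c)=\lambda\, c'$ for $\lambda\in\Zx$ and $c'$ integral, the induced isomorphism between closed hyperbolic manifold groups is regular by \autoref{thm: regular}. Comparing abelianizations across the family of fillings, where the homology class of the core curve is controlled by the slope, forces $\lambda=\pm1$ and makes $\Phi_i$ integral on a spanning set of slopes. Hence $\Phi_i\in\GL_2(\Z)$, and it is realized by a homeomorphism $\varphi_i$ of tori.

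The main obstacle is the second step: showing that $\Phi_i$ sends rational slopes to ($\widehat{\Z}^\times$-multiples of) rational slopes. My first attempt would be to use the finiteness of exceptional fillings together with profinite detection of hyperbolicity and volume growth in finite covers. If that fails, I would fall back on Liu's $\widehat{\Z}^\times$-regularity (\autoref{prop: Liu Zx-regular}) applied to $H_1$ of finite covers where the cusps become homologically independent. This gives $\Phi_i=\mu\cdot(\text{integral})$ on peripheral homology, after which the filling argument above removes $\mu$.
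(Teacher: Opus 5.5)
There is a genuine gap, at exactly the step you flag. Your Step 1 is fine, though malnormality of the cusp closures needs the relatively hyperbolic version from Wilton--Zalesskii, not the hyperbolic-group argument of \autoref{lem: malnormal}.

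Your primary plan for Step 2 is circular. Via $\Phi$, the quotient $\widehat{\pi_1N}/\overline{\langle\!\langle \Phi(c_j)\rangle\!\rangle}$ is isomorphic to $\widehat{\pi_1M(c)}$. So it automatically has every profinite property a completion of a hyperbolic filling has (goodness, cohomological dimension, torsion-freeness, the invariants of \autoref{prop: torsion free}), whether or not $\Phi(c_j)$ is a unit multiple of an integral slope. No criterion on the quotient alone can certify integrality. That integrality, $\Phi_i\in\mu\cdot\GL_2(\Z)$, is the real content, and your Step 3 presupposes it.

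Your fallback is the right mechanism, but as stated it is wrong. By ``half lives, half dies'', the kernel of $H_1(\partial M';\Q)\to H_1(M';\Q)$ always has dimension equal to the number of cusps of $M'$, so the cusps never become homologically independent. What you actually need is a cover $M'$ and a single elevation $T'$ of $\partial_iM$ with $H_1(T';\Q)\to H_1(M';\Q)$ injective. Its existence needs an argument, e.g.\ a virtual retraction onto the peripheral subgroup. Given such $T'$, restrict $\Ab{\Phi'}=\mu'\otimes f'$ to the closure of $H_1(T')$ and use that a subgroup of $\Z^b$ equals the intersection of its closure with $\Z^b$. This gives $\Phi'|_{\overline{\pi_1T'}}=\mu'\cdot(\text{integral})$. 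The conclusion descends to $\partial_iM$ because the sublattice has finite index, and $\mu'=\pm\mu$ because $b_1(M)>0$.

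Your Step 3 for removing $\mu$ also fails as described. If $M$ has one cusp and $b_1(M)=1$, every filling except the rational-longitude one is a rational homology sphere. There, regularity of the induced closed isomorphism is vacuous, and comparing abelianizations of finite groups only yields congruences. The step is unnecessary anyway: \autoref{thm: regular} covers all lattices, non-uniform ones included, so $\mu=\pm1$ for $\Phi$ directly. Then $\Phi_i\in\GL_2(\Z)$, which is realized by a torus homeomorphism. If you insist on fillings, pass to a finite cover of a hyperbolic filling with $b_1>0$, pull it back to a cover of $M$, and apply \autoref{virtually regular}.

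With these repairs your argument becomes the paper's alternative route: \autoref{thm: regular} combined with the peripheral $\Zx$-regularity of \cite[Theorem 7.2]{Xu25A}. The paper itself simply cites \cite[Theorem 1.4]{Xu25}.
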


We remark that \autoref{thm: peripheral regular} can be alternatively proven by combining \autoref{thm: regular} with \cite[Theorem 7.2]{Xu25A}.

\subsection{Cusped hyperbolic manifolds}
Peripheral regularity is introduced primarily in order to   apply the construction of Dehn fillings as described in \autoref{subsec: hyperbolic Dehn surgery}. We have exemplified such an application in \autoref{prop: drilling lattice surjection}, and we will be utilizing this technique once again in the following proposition.

\begin{proposition}\label{prop: major step for non-uniform}
Let $\Gamma$ and $\Delta$ be torsion-free non-uniform lattices in $\PSL_2(\C)$. Suppose $\Phi:\widehat{\Gamma} \to \widehat{\Delta}$ is an isomorphism. Then $\Gamma\cong \Delta$, and $\Phi$ is genuine. 
\end{proposition}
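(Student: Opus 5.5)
Write $\Gamma=\pi_1M$ and $\Delta=\pi_1N$, where $M,N$ are orientable cusped finite-volume hyperbolic 3-manifolds truncated to compact manifolds with incompressible toral boundary $\partial_1M,\dots,\partial_kM$ and $\partial_1N,\dots,\partial_kN$. The plan is to reduce to the uniform case (\autoref{thm: main for uniform}) by hyperbolic Dehn filling, and then to transport genuineness back to $\Phi$ by an argument on finite quotients.

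\textbf{Step 1: matched fillings.} By \autoref{thm: peripheral regular}, $\Phi$ is peripheral regular with respect to some homeomorphism $\varphi:\partial M\to\partial N$. Hence, for any primitive slopes $d_i\in H_1(\partial_iM;\Z)$, we have
\[
\Phi\bigl(\overline{\langle\!\langle \gamma_1,\dots,\gamma_k\rangle\!\rangle}\bigr)=\overline{\langle\!\langle \varphi_\ast\gamma_1,\dots,\varphi_\ast\gamma_k\rangle\!\rangle},
\]
where $\gamma_i$ represents $d_i$. Combining \autoref{thm: hyperbolic dehn surgery} for $M$ and $N$, all but finitely many slopes on each cusp give closed hyperbolic fillings of both manifolds. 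For such slopes, \autoref{orbifold fundamental group} and \autoref{prop: right exact} show that $\Phi$ descends to an isomorphism $\Phi_d:\widehat{\pi_1M_d}\to\widehat{\pi_1N_{\varphi(d)}}$ compatible with the quotient maps. The filled groups are uniform lattices, so \autoref{thm: main for uniform} gives that $\Phi_d$ is genuine: $\Phi_d=\Inn_{\hat g_d}\circ\widehat{f_d}$ for some isomorphism $f_d:\pi_1M_d\to\pi_1N_{\varphi(d)}$.

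\textbf{Step 2: recovering $\Phi$ from its fillings.} Mostow rigidity realizes each $f_d$ by an isometry $M_d\cong N_{\varphi(d)}$. For long enough slopes, the cores of the filling solid tori are the shortest geodesics. The isometry therefore carries cores to cores, and removing them yields a homeomorphism $h_d:M\to N$. We then want to show that $\widehat{(h_d)_\ast}\simeq\Phi$. The natural tool is \autoref{cor: genuine criteria 1}, so it suffices to match the standard characteristic quotients $\Phi^\flat_{(m)}$. Fix $m$. Since $\Gamma$ is residually finite and fillings converge, for all sufficiently long slopes the filling kernel lies inside $\Gamma^{(m)}$ (and likewise for $\Delta$). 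In that range, the quotient $\Gamma/\Gamma^{(m)}$ factors through $\pi_1M_d$, and $\Phi^\flat_{(m)}$ factors through $(\Phi_d)^\flat$, which is conjugate to $(f_d)^\flat$ and hence to $((h_d)_\ast)^\flat_{(m)}$.

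To obtain a single $h$ that works for every $m$, use that $\Out(\Gamma)\cong\mathrm{Isom}(M)$ is finite. Up to this finite ambiguity there are only finitely many candidate isomorphisms $\Gamma\to\Delta$. By pigeonhole, one candidate $h$ satisfies $\Phi^\flat_{(m)}\simeq h^\flat_{(m)}$ for infinitely many $m$, and hence for all $m$, because conjugacy equivalence at level $m'$ descends to every level $m\le m'$ (the subgroups $\Gamma^{(m)}$ are nested). Then \autoref{cor: genuine criteria 1} shows that $\Phi$ is genuine, and in particular $\Gamma\cong\Delta$.

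\textbf{Main obstacle.} The hardest step is the first half of Step 2: guaranteeing that $f_d$ comes from a homeomorphism $M\to N$, rather than from an isometry that fails to preserve the filling cores. I would first handle this with the shortest-geodesic argument for long Dehn fillings. If that proves delicate, the fallback is to choose filling slopes generically, so that the symmetric configuration is excluded, and to use the peripheral compatibility built into $\Phi_d$.
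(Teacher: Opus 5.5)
There is a genuine gap in Step~2, at the claim that for all sufficiently long slopes the filling kernel lies inside $\Gamma^{(m)}$. You fill along primitive slopes, so the kernel $\langle\!\langle\gamma_1,\dots,\gamma_k\rangle\!\rangle$ contains each slope element $\gamma_i\in\pi_1\partial_iM\cong\Z^2$.

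However, the profinite topology of $\Gamma$ induces the full profinite topology on $\pi_1\partial_iM$. This is the injectivity of $\widehat{\pi_1\partial_iM}\to\widehat{\pi_1M}$ recalled before \autoref{def: peripheral regular}; see \autoref{prop: left exact}. Hence for every prime $p$ there is an $m$ such that $\Gamma^{(m)}\cap\pi_1\partial_iM$ is contained in the subgroup of $p$-th powers of $\pi_1\partial_iM$. That subgroup contains no primitive element.

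So for such $m$, no manifold filling at all, long or short, has its kernel inside $\Gamma^{(m)}$, and $\Gamma\to\Gamma/\Gamma^{(m)}$ factors through no $\pi_1M_d$. Convergence of fillings only says that the kernels eventually avoid any fixed finite set of non-trivial elements. That is the opposite of being contained in a fixed finite-index subgroup. Consequently you obtain $\Phi^\flat_{(m)}\simeq h^\flat_{(m)}$ for at most finitely many $m$, and the pigeonhole step feeding \autoref{cor: genuine criteria 1} has nothing to work with.

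The missing idea is to use orbifold Dehn fillings instead, as the paper does. Fix slopes $d_i$ such that all the fillings below of $M$ and $N$ are hyperbolic; this is possible by \autoref{thm: hyperbolic dehn surgery}, since the exceptional sets are finite. Then fill along $n!\,d_i$ and $n!\,\varphi_\ast(d_i)$, i.e.\ with cone angle $2\pi/n!$. By \autoref{orbifold fundamental group}, the kernel is $K_n=\langle\!\langle\alpha_1^{n!},\dots,\alpha_k^{n!}\rangle\!\rangle$.

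Once $n\ge|\Gamma/\Gamma^{(m)}|$, the order of each $\alpha_i$ in $\Gamma/\Gamma^{(m)}$ divides $n!$. Hence $K_n\subseteq\Gamma^{(m)}$, and likewise $L_n\subseteq\Delta^{(m)}$, so every standard characteristic quotient is seen by some filling. The filled groups are still uniform lattices, so the descent of $\Phi$ and the appeal to the uniform case go through unchanged.

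This change also removes what you identified as the main obstacle. For $n\ge 2$, an isometry $X_n\to Y_n$ of hyperbolic orbifolds automatically preserves the singular loci. Deleting small neighbourhoods of the loci therefore gives homeomorphisms $M\to N$, with no shortest-geodesic argument needed. Finiteness of the mapping class group (or of $\Out(\Gamma)$, as you use) then lets you fix one isomorphism $f$ for infinitely many $n$, and your pigeonhole conclusion follows.
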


\begin{proof}
Recall that $\Gamma$ and $\Delta$ are the fundamental groups of compact orientable 3-manifolds $M$ and $N$ with incompressible toral boundaries, whose interiors are homeomorphic to $\mathbb{H}^3/\Gamma$ and $\mathbb{H}^3/\Delta$ respectively. 
%
For the sake of convenience, we will identify $\Gamma$ and $\Delta$ with $\pi_1M$ and $\pi_1N$ henceforth. 

According to \autoref{thm: peripheral regular}, the isomorphism $\Phi:\widehat{\pi_1M}\to\widehat{\pi_1N}$ is peripheral regular with respect to a homeomorphism $\varphi: \partial M \to \partial N$, which we respectively denote as $\varphi_i: \partial _i M \to \partial_i N$ on the corresponding components. For $n\in \N$, let $n!$ denote the factorial of $n$.  According to \autoref{thm: hyperbolic dehn surgery}, we can find non-zero homology classes $d_i\in H_1(\partial_i M ; \Z ) $ such that for any $n\in \N$, the Dehn filled  orbifolds 
$$
X_n = M_{n!\cdot d_1,\cdots , n!\cdot d_k} \quad \text{and} \quad  Y_n= N_{n!\cdot {\varphi_1}_\ast(d_1),\cdots,n!\cdot {\varphi_k}_\ast(d_k)}
$$ 
admit complete hyperbolic structures. 

For  each $1\le i\le k$, let $\alpha_i \in \pi_1M$ be a conjugacy representative for the loop on $\partial _i M$ representing the homology class of $d_i$, and let $\beta_i \in \pi_1N$ be a conjugacy representative for the loop  on $\partial _i N$ representing the homology class of ${\varphi_i}_\ast(d_i)$. Denote 
$$
K_n= \langle \! \langle \alpha_1^{n!} ,\cdots, \alpha_k ^ { n!}\rangle \! \rangle \unlhd \pi_1M \quad \text{and} \quad L_n=\langle \! \langle \beta_1^{n!} ,\cdots, \beta_k^{n!} \rangle \! \rangle \unlhd \pi_1N .
$$
Let $r_n : \pi_1 M \to \pi_1M / K_n$ and  $s_n: \pi_1N \to \pi_1N / L_n$ be the quotient homomorphisms. 
According to \autoref{orbifold fundamental group}, we can identify $\pi_1M / K_n$ with $\piorb X_n$ such that the homomorphism $r_n$ is induced by the inclusion map $M\hookrightarrow X_n$,  where we identify their  basepoints.  Similarly, we can identify $\pi_1N / L_n$ with $\piorb Y_n$ such that $s_n$ is induced by the inclusion map $N\hookrightarrow Y_n$. 

 According to  \autoref{prop: right exact}, $\widehat{r_n}$ and $\widehat{s_n}$ are surjective, with $\ker(\widehat{r_n})= \overline{K_n}$ and $\ker(\widehat{s_n})=\overline{L_n}$. 
From \autoref{def: peripheral regular}, it is clear that  $\Phi(\overline{K_n})= \overline{L_n}$ for any $n\in \N$, since $\Phi(\alpha_i^{n!})$ conjugates with $\beta_i^{n!}$ in $\widehat{\pi_1N}$ for each $i$. 
Thus,  for each $n\in \N$, $\Phi$ descends to an isomorphism $\Psi_n: \widehat{\piorb X_n} \to \widehat{\piorb Y_n}$  that fits into the following commutative diagram. 
\begin{equation}\label{equ: dehn fill orbifold}
\begin{tikzcd}[column sep=large, row sep=large]
  \widehat{\pi_1M}   \arrow[r, two heads, "\widehat{r_n}"] \arrow[d, "\Phi"', "\cong"] & \widehat{\piorb X_n} \arrow[d,"\Psi_n","\cong"']  \\ 
  \widehat{\pi_1N}    \arrow[r, two heads, "\widehat{s_n}"]  & \widehat{\piorb Y_n}
\end{tikzcd}
\end{equation}

Note that $\piorb X_n$ and $\piorb Y_n$ are uniform lattices in $\PSL_2(\C)$  by  \autoref{prop: orbifold good}.
According to \autoref{thm: main for uniform}, $\piorb X_n\cong \piorb Y_n$, and $\Psi_n$ is conjugacy equivalent to the profinite completion of an isomorphism $h_n: \piorb X_n \to \piorb Y_n$. By Mostow's rigidity theorem \cite{Mos68}, $h_n$ is induced, up to conjugacy, by an isometry $H_n: X_n \to Y_n$ with respect to their hyperbolic structures. In particular, $H_n$ preserves the singular loci, which we denote as $\mathscr{S}(X_n)$ and $\mathscr{S}(Y_n)$. 
Let $ \mathscr{S}(X_n)^\epsilon $ and $ \mathscr{S}(Y_n)^\epsilon$ be sufficiently small $\epsilon$-neighbourhoods of the singular loci, which are disjoint unions of (open) singular solid tori. 
Note that when $n\ge 2$, $X_n-\mathscr{S}(X_n)^\epsilon$ reproduces $ M$, and $Y_n-\mathscr{S}(Y_n)^\epsilon$ reproduces  $N$. Hence, $H_n$ restricted to $X_n-\mathscr{S}(X_n)^\epsilon$ yields   a  homeomorphism $F_n: M\to N$ for each $n\ge 2$. In particular, $M$ and $N$ are homeomorphic, and thus $\Gamma=\pi_1M$ and $\Delta=\pi_1N$ are isomorphic. 

According to \cite{Joh79}, the mapping class group of $M$ is finite. Hence, there exists an infinite sequence of distinct integers $ (n_j)_{j\in \N}$, with $n_j\ge 2$, such that the homeomorphisms $F_{n_j}:M\to N$ are isotopic to each other. These homeomorphisms induce a group isomorphism $f: \pi_1M\to \pi_1N$ that is unique up to conjugacy equivalence. Recall that for each $j\in\N$, we have a commutative diagram of maps between manifolds and orbifolds. 
\begin{equation*}
\begin{tikzcd}[column sep=large, row sep=large]
 M \arrow[r, hook,"\mathrm{incl}" ] \arrow[d, " F_{n_j}" ']  & X_n \arrow[d,"H_{n_j}"] \\ 
 N \arrow[r, hook,"\mathrm{incl}" ] & Y_n
\end{tikzcd}
\end{equation*}
Via an identification of basepoints, we deduce that the following diagram commutes up to conjugacy equivalence. 
\begin{equation}\label{equ: up to conj equiv}
\begin{tikzcd}[column sep=tiny, row sep=tiny]
\pi_1M \arrow[rr, "r_{n_j}"] \arrow[dd, "f"'] &        & \piorb X_{n_j} \arrow[dd, "h_{n_j}"] \\
                                          & {\hspace{1mm}\boxed{\simeq} \hspace{-1mm}} &                                      \\
\pi_1N \arrow[rr, "s_{n_j}"]                  &        & \piorb Y_{n_j}                      
\end{tikzcd}
\end{equation}

Now we consider the standard characteristic quotients of $  \pi_1M$ and $ \pi_1N$. 
For any  $m\in \N$, we denote  by  
$\Phi^\flat_{(m)}:  \pi_1M/\pi_1M^{(m)}  \to   \pi_1N/\pi_1N^{(m)}$   the isomorphism between the $m$-th standard characteristic quotients  induced by $\Phi$, as described in \autoref{sec: Standard characteristic quotients}. With $m$ fixed, we can find $j\in \N$ such that the   quotients $\pi_1M\to \pi_1M/\pi_1M^{(m)}$ and $\pi_1N\to \pi_1N/\pi_1N^{(m)}$ factor through $r_{n_j}$ and $s_{n_j}$. In fact, it suffices to choose $$n_j\ge |\pi_1M/\pi_1M^{(m)}| = |\pi_1N/\pi_1N^{(m)}|.$$ In this case, each $\alpha_i^{n_j!}\in \pi_1M^{(m)}$ and  each $\beta_i^{n_j!}\in \pi_1N^{(m)}$, so $K_{n_j}\subseteq \pi_1M^{(m)}$ and $L_{n_j} \subseteq \pi_1N^{(m)}$. 
Let $u: \piorb X_{n_j}= \pi_1M/ K_{n_j}\to  \pi_1M/ \pi_1M^{(m)}$ and $v: \piorb Y_{n_j}= \pi_1N/ L_{n_j} \to \pi_1N/ \pi_1N^{(m)}$ denote the quotient homomorphisms. 
 From  (\ref{equ: dehn fill orbifold}), we derive the following commutative diagram. 
\begin{equation*} 
\begin{tikzcd}[column sep=small, row sep=large]
  \widehat{\pi_1M}   \arrow[rrr, two heads, "\widehat{r_{n_j}}"] \arrow[d, "\Phi"',"\cong" ] & & & \widehat{\piorb X_{n_j}} \arrow[d,"\Psi_{n_j}"',"\cong"]  \arrow[rr, two heads,"\widehat{u}"] & & \pi_1M/\pi_1M^{(m)}   \arrow[d,"\Phi^\flat_{(m)}"',"\cong"] \\ 
 \widehat{\pi_1N}    \arrow[rrr, two heads, "\widehat{s_{n_j}}"]  & & & \widehat{\piorb Y_{n_j}}  \arrow[rr, two heads,"\widehat{v}"] & & \pi_1N/\pi_1N^{(m)} 
\end{tikzcd}
\end{equation*}
Since $\Psi_{n_j}$ is conjugacy equivalent with $\widehat{h_{n_j}}$, combining with   (\ref{equ: up to conj equiv}),  we finally deduce that the following diagram commutes up to conjugacy equivalence. 
\begin{equation*} 
\begin{tikzcd}[column sep=small, row sep=small]
\pi_1M \arrow[rr, "r_{n_j}", two heads] \arrow[dd, "f"',"\cong"] &        & \piorb X_{n_j} \arrow[dd, "h_{n_j}"',"\cong"] \arrow[rr, two heads,"u"] & & \pi_1M/\pi_1M^{(m)} \arrow[dd,"\Phi^\flat_{(m)}"',"\cong"]\\
                                          & {\hspace{2mm}\boxed{\simeq} \hspace{0mm}} &                                     & {\boxed{\simeq}\hspace{-1mm}} & \\
\pi_1N \arrow[rr, "s_{n_j}"]                  &        & \piorb Y_{n_j}                      \arrow[rr, two heads,"v"] & & \pi_1N/\pi_1N^{(m)}
\end{tikzcd}
\end{equation*}
Consequently, $\Phi^\flat_{(m)}\simeq f^\flat_{(m)}$ for every $m\in \N$. As $f$ is fixed,  \autoref{cor: genuine criteria 1} implies that $\Phi$ is genuine. 
\end{proof}

\begin{remark}
Using orbifold Dehn fillings instead of manifold Dehn fillings is important in  our proof. Indeed, some finite quotients of $\pi_1M$ and $\pi_1N$ do not factor through any manifold Dehn fillings. However, all of their finite quotients factor through some orbifold Dehn fillings, so all these quotients can be witnessed from our construction. 
\end{remark}

\subsection{Finishing the proof}
\begin{theorem}\label{thm: main for non-uniform}
Any non-uniform lattice $\Gamma \le \PSL_2(\C)$ is profinitely rigid among all lattices in $\PSL_2(\C)$. In addition, the profinite completion map $\Out(\Gamma)\to \Out(\widehat{\Gamma})$ is an isomorphism. 
\end{theorem}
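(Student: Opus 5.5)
The plan is to reduce to the torsion-free case treated in \autoref{prop: major step for non-uniform}, using Selberg's lemma together with \autoref{cor: virtually genuine}, in the same way \autoref{thm: main for uniform} was deduced from \autoref{lem: crossfibered genuine}. Injectivity of $\Out(\Gamma)\to\Out(\widehat{\Gamma})$ is already given by \autoref{cor: completion map injective}, since a lattice is a finitely generated non-elementary Kleinian group. So it suffices to prove the following: whenever $\Delta\le\PSL_2(\C)$ is a lattice and $\Phi:\widehat{\Gamma}\to\widehat{\Delta}$ is an isomorphism, then $\Gamma\cong\Delta$ and $\Phi$ is genuine. Taking $\Delta=\Gamma$ then gives surjectivity of $\Out(\Gamma)\to\Out(\widehat{\Gamma})$, and the general case gives profinite rigidity.

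First, by \autoref{prop: torsion free}, $\Delta$ is non-uniform, since $\Gamma$ is non-uniform. Next, by Selberg's lemma, I choose a torsion-free finite-index subgroup $\Gamma'\le\Gamma$, and let $\Delta'\le\Delta$ be its $\Phi$-corresponding finite-index subgroup (\autoref{DEF: corresponding}). Finite-index subgroups of lattices are lattices, so $\Gamma'$ and $\Delta'$ are lattices, and they are non-uniform because they are commensurable with non-uniform lattices. Applying \autoref{prop: torsion free} to the restriction $\Phi':\widehat{\Gamma'}\to\widehat{\Delta'}$ shows that $\Delta'$ is torsion-free, since $\Gamma'$ is.

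Now \autoref{prop: major step for non-uniform} applies to $\Phi'$ and shows that $\Phi'$ is genuine. Hence $\Phi$ is virtually genuine. Since $\Gamma$ and $\Delta$ are finitely generated non-elementary Kleinian groups, \autoref{cor: virtually genuine} upgrades this to genuineness of $\Phi$, and in particular $\Gamma\cong\Delta$.

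Finally, I combine this with \autoref{thm: main for uniform}. The uniform case there covers any $\Delta$ when $\Gamma$ is uniform, and \autoref{prop: torsion free} rules out a uniform lattice being profinitely isomorphic to a non-uniform one. Together these give \autoref{mainthm1} and \autoref{mainthm2} for all lattices.

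There is no serious obstacle remaining, since all the substantive work sits in \autoref{prop: major step for non-uniform}. The only points to check are the bookkeeping ones: that $\Delta'$ is indeed a torsion-free non-uniform lattice, and that the hypotheses of \autoref{cor: virtually genuine} (finitely generated, non-elementary) hold. Both follow immediately from the cited results.
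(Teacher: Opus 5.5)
Your proposal is correct and is essentially the paper's own proof: pass via Selberg's lemma to a $\Phi$-corresponding pair of finite-index subgroups that are torsion-free non-uniform lattices (using \autoref{prop: torsion free}), apply \autoref{prop: major step for non-uniform} to the restriction, and upgrade with \autoref{cor: virtually genuine}, with injectivity of $\Out(\Gamma)\to\Out(\widehat{\Gamma})$ supplied by \autoref{cor: completion map injective}. Your extra bookkeeping (non-uniformity of $\Gamma'$ and $\Delta'$, and specializing to $\Delta=\Gamma$ to obtain surjectivity) is accurate and only makes explicit what the paper leaves implicit.
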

\begin{proof}
As in the uniform case, the injectivity of $\Out(\Gamma)\to \Out(\widehat{\Gamma})$ has been proven by  \autoref{cor: completion map injective}, so it suffices to show the profinite rigidity of $\Gamma$ and the surjectivity of $\Out(\Gamma)\to \Out(\widehat{\Gamma})$. 

Suppose $\Delta\le \PSL_2(\C)$ is a lattice and $\Phi: \widehat{\Gamma}\to \widehat{\Delta}$ is an isomorphism. It suffices to prove that  $\Gamma\cong \Delta$ and  $\Phi$ is genuine. Let $\Gamma'\le\Gamma$ be a finite-index torsion-free subgroup, and let $\Delta'\le \Delta$ be the $\Phi$-corresponding finite-index subgroup. We denote by $\Phi': \widehat{\Gamma'} \to \widehat{\Delta'}$ the restriction of $\Phi$. According to \autoref{prop: torsion free}, $\Delta'$ is also a torsion-free non-uniform lattice. 
Hence, \autoref{prop: major step for non-uniform} implies that $\Gamma' \cong \Delta'$ and $\Phi'$ is genuine. Applying \autoref{cor: virtually genuine}, we finally deduce that  $\Gamma \cong \Delta$ and that $\Phi$ is genuine.  
\end{proof}

\autoref{mainthm1} and \autoref{mainthm2} are combinations of \autoref{thm: main for uniform} and \autoref{thm: main for non-uniform}. We say a few more words about \autoref{maincor}. 

\begin{cor3mfd}
Suppose that a finitely generated group $\Gamma$ is virtually a lattice in $\PSL_2(\C)$. Then, $\Gamma$ is profinitely rigid among all finitely generated virtually 3-manifold groups. 
\end{cor3mfd}
\begin{proof}
Let $\Delta$ be a finitely generated virtually 3-manifold group, and let $\Phi:\widehat{\Gamma}\to \widehat{\Delta}$ be an isomorphism. According to Scott’s theorem \cite{Sco73}, any finitely generated 3-manifold group is the fundamental group of a compact 3-manifold. Thus, by passing to an orientable finite cover, $\Delta$ is virtually the fundamental group of a compact orientable 3-manifold. Choosing further finite-index subgroups, we can find a $\Phi$-corresponding pair of finite-index normal subgroups $\Gamma'\unlhd \Gamma$ and $\Delta'\unlhd \Delta$ such that $\Gamma'$ is a torsion-free lattice in $\PSL_2(\C)$, and $\Delta'$ is the fundamental group of a compact orientable 3-manifold $N$. 


Indeed, $\Gamma'$ is the fundamental group of a compact orientable 3-manifold $M$ with empty or incompressible toral boundary, whose interior is homeomorphic to the finite-volume hyperbolic 3-manifold $\mathbb{H}^3/ \Gamma'$.  As such, $\widehat{\pi_1M}\cong \widehat{\pi_1N}$. According to \cite[Lemma A.1]{Xu24},  up to capping off the boundary spheres of $N$ by 3-balls,  which does not affect its fundamental group, the manifold $N$ is irreducible  and  has empty or incompressible toral boundary. According to the results of Wilton--Zalesskii \cite{WZ17,WZ17b,WZ19} on the profinite detection of geometrization of 3-manifolds, $\Int(N)$ also admits a complete finite-volume hyperbolic structure. Consequently, $\Delta'$ is a  torsion-free lattice in $\PSL_2(\C)$. From  \autoref{mainthm1} and \autoref{mainthm2}, we deduce that $\Gamma'\cong \Delta'$, and the restriction $\Phi': \widehat{\Gamma'} \to \widehat{\Delta'}$ is  genuine. 

Since $\Gamma'$ and $\Delta'$ are finite-index normal subgroups in $\Gamma$ and $\Delta$, we obtain a commutative diagram between short exact sequences. 
\begin{equation*}
\begin{tikzcd}
1 \arrow[r] & \widehat{\Gamma'} \arrow[r] \arrow[d,"\Phi'"',"\cong"] & \widehat{\Gamma} \arrow[r] \arrow[d,"\Phi"',"\cong"] & \Gamma/\Gamma' \arrow[r] \arrow[d,"\phi"',"\cong"] & 1 \\
 1 \arrow[r] & \widehat{\Delta'} \arrow[r] & \widehat{\Delta} \arrow[r] & \Delta/ \Delta' \arrow[r] & 1
\end{tikzcd}
\end{equation*}
Note that $\Gamma'$ and $\Delta'$ are finitely generated groups with trivial center, and their profinite completions $\widehat{\Gamma'}$ and $\widehat{\Delta'}$ also have trivial center according to \autoref{thm: lattice center free}. In addition, the completion maps $\Out(\Gamma')\to \Out(\widehat{\Gamma'})$ and $\Out(\Delta')\to \Out(\widehat{\Delta'})$ are injective according to \autoref{mainthm2}. Since $\Gamma/\Gamma'$ and $\Delta/\Delta'$ are already finite groups, and $\Phi'$ is genuine, we deduce from \autoref{lem: extension lemma} that $\Phi$ itself is genuine. In particular, $\Gamma\cong \Delta$. 
\end{proof}

\bibliographystyle{abbrv}
\bibliography{main.bib}

\end{document}